\newtheorem{theorem}{Theorem}[chapter]
\newtheorem{lemma}[theorem]{Lemma}
\newtheorem{cor}{Corollary}[theorem]
\newtheorem*{thm}{Theorem}
\newtheorem*{lem}{Lemma}
\newtheorem*{ex}{Example}
\newtheorem*{cor*}{Corollary}
\renewcommand{\geq}{\geqslant}
\renewcommand{\leq}{\leqslant}
\theoremstyle{definition}
\newtheorem*{defn}{Definition}
\theoremstyle{remark}
\numberwithin{section}{chapter}
\numberwithin{equation}{chapter}
\begin{document}
\frontmatter
\title{Locally flat embeddings of 3-manifolds in $S^4$}

\author{J.A.Hillman}
\address{The University of Sydney}
\email{jonathan.hillman@sydney.edu.au}

\maketitle

\setcounter{page}{4}

\tableofcontents

\chapter*{Preface}

Every closed 3-manifold may be smoothly embedded in $S^5$.
The question of which 3-manifolds embed in $S^4$ 
depends markedly on the interpretation of ``embedding".
Freedman showed that every integral homology 3-sphere 
bounds a contractible TOP 4-manifold, 
and so embeds as a TOP locally flat submanifold of $S^4$. 
This is strikingly simpler than in the case of smooth (or PL locally flat) embeddings,
where the corresponding question remains open, and seems quite delicate.
The exemplary uniqueness result is the TOP Schoenflies Theorem,
which shows that all locally flat embeddings of $S^{n-1}$ in $S^n$
are topologically equivalent.
In every dimension $n\not=4$ the smooth analogue also holds.
There is a remarkable contrast between the simplicity of the proofs in the
TOP case and the as yet unresolved status of the 4-dimensional DIFF 
Schoenflies Conjecture.

We have chosen therefore to focus on TOP locally flat embeddings,
rather than smooth embeddings.
Many of the examples in this book are constructed by ambient surgery on
``bipartite links" in the equatorial $S^3$, 
and so are smooth, 
but most of the algebraic constraints on embeddings that we use 
also obstruct locally flat embeddings into homology 4-spheres.

It seems unlikely that there will ever be a reduction of the question of which 
closed orientable 3-manifolds embed in $S^4$ to the determination of familiar invariants.
However if we restrict the scope of the question to 3-manifolds with
virtually solvable fundamental groups or which are Seifert fibred then much is known. 
Manifolds of the first type have one of the geometries
$\mathbb{S}^3$, $\mathbb{S}^2\times\mathbb{E}^1$, 
$\mathbb{E}^3$, $\mathbb{N}il^3$ and $\mathbb{S}ol^3$.
Just 13 manifolds of this type embed, 
as was shown in joint work with J. S. Crisp.
Seifert fibred manifolds either have one of the first four 
of these geometries or are $\mathbb{H}^2\times\mathbb{E}^1$- 
or $\widetilde{\mathbb{SL}}$-manifolds.
The strong results on smooth embeddings of Seifert fibred 3-manifolds
due to A. Donald and to A. Issa and D. McCoy suggest plausible outcomes
for the Seifert fibred cases.

The main themes of this book are the concentration on well-understood classes 
of 3-manifolds, 
the study of the complementary regions, 
and the use of simple invariants to show that most 3-manifolds which embed 
in $S^4$ do so in many distinct ways.
We use 4-dimensional TOP surgery at various points, 
but  on the other hand we make no use of gauge-theoretic ideas.

The first two chapters are introductory.
Chapter 1 gives the notation and terminology from algebra and topology 
that we shall use.
The key invariants are the Euler characteristic and fundamental groups 
of the complementary regions.
In Chapter 2 we summarize the basic facts about these invariants,
which go back to the first paper on the subject, by W. Hantzsche in 1938,
enhanced much later by A. Kawauchi and S. Kojima.
We also describe the construction of embeddings by 0-framed surgery 
on bipartedly slice links.
This construction was introduced  by P. Gilmer and C. Livingston,
and was used by W. B. R. Lickorish and F. Quinn (independently),
to realize pairs of groups with balanced presentations and isomorphic abelianization.
We then define the notion of 2-knot surgery (first used by Livingston),
by means of which it may be shown that most 3-manifolds which embed do so in infinitely many ways.

In Chapter 3 we consider 3-manifolds which are Seifert fibred over orientable base orbifolds, and apply the $G$-Index Theorem (for finite cyclic $G$) to show that if 
$M$ is an $\mathbb{H}^2\times\mathbb{E}^1$-manifold and
the base orbifold has genus 0 and all cone points of odd order then $M$ embeds in
$S^4$ if and only if the Seifert data is skew-symmetric.
We state here some of the results of Donald, Issa and McCoy.
In Chapter 4 we consider 3-manifolds which are Seifert fibred over non-orientable bases
and give strong bounds for the Euler numbers of manifolds with given base orbifold
which can embed. (The argument here also uses the $G$-Index Theorem,
albeit in a different manner, and for $G=\mathbb{Z}/2\mathbb{Z}$ only.)
In chapter 5 we settle the case of 3-manifolds with virtually solvable group.
We then uses 4-dimensional topological surgery to show that if $M$ is a 3-manifold such that $\pi_1(M)$ is an extension of a torsion-free solvable group $G$ by a perfect normal subgroup then $M$ embeds if and only if $G\cong\pi_1(P)$ for some 3-manifold $P$ which embeds.

In the remaining three chapters we consider the complementary regions in some detail.
If $M=\#^r(S^2\times{S^1})$ then $M$ has different embeddings with 
complementary regions having Euler characteristics of all possible values
allowed by the elementary considerations of Chapter 2. 
We use Massey products to show that this is not the case for $M$ 
a Seifert fibred 3-manifold with $\beta_1(M)>1$.

In order to use 4-dimensional surgery arguments we must restrict 
the possible fundamental groups of the complementary regions.
Under our present understanding of the Disc Embedding Theorem,
these groups should be in the class of groups generated from groups 
with sub-exponential growth by increasing unions and extensions.
In Chapters 7 and 8 we assume that the complementary regions
have abelian or nilpotent fundamental groups (respectively).
The possible groups are known in the abelian case and severely restricted 
in the nilpotent case. 
If the group is also torsion-free then there is scope for identifying the
homotopy types of the complementary regions and applying surgery.

There are three appendices.
In Appendix A we analyze the linking pairings of orientable 
3-manifolds which are Seifert fibred over orientable base orbifolds.
In Appendix B we give some results on nilpotent groups with balanced presentations, and Appendix C is a short list of questions about embeddings of 3-manifolds in $S^4$.

The book is based on a number of my papers and one paper written 
jointly with John Crisp.
These have only been cited when the presentation here 
is different from the earlier version.
Allowing for consolidation of common material into the earlier chapters
and some reorganization, Chapter 3 corresponds to \cite{Hi09},
Chapter 4 and the first half of Chapter 5 to \cite{CH98}, 
the second half of Chapter 5 to \cite{Hi96}, 
and the remaining chapters to \cite{Hi17},
\cite{Hi20a}, and \cite{Hi20b}, 
respectively.
The appendices are based on \cite{Hi11} and \cite{Hi22a}.

I would like to thank John for the collaboration which is recorded
in Chapter 4 and the first part of Chapter 5.
In particular he saw how to extend unpublished work of mine on $S^1$-bundles 
to the Seifert fibred case, 
and then worked out how to handle the $\mathbb{S}ol^3$-manifolds 
which do not fibre over $S^1$.
I would also like to thank Ray Lickorish, 
Ben Burton and Ryan Budney, for permission to include their proofs of Theorems \ref{LQ} , \ref{Kawabeta1} below, and Andy Putnam, for allowing me to make use of his account 
of the proofs of the Generalised Schoenflies Theorem (Theorem \ref{GST} below).
Finally, 
I would like to thank the referee for their suggestions for improving 
the exposition of the material in this book.

\bigskip
Jonathan Hillman

\medskip 
7 August 2024

\mainmatter

\chapter{Preliminaries}

This chapter is intended as a compendium of notation and terminology 
for the algebra and topology used in the subsequent chapters.
Our general approach shall be to try to prove all assertions that are specifically 
about our topic, but to cite standard references for other supporting material.
In particular, 
we invoke the $G$-Signature Theorem in the proofs of two key results 
(Theorems \ref{Hi09-thm4.1} and \ref{CH-thm1.1}),
and we use 4-dimensional TOP surgery to provide homeomorphisms 
(in Chapters 5--7).
As these results and techniques may not be familiar to all 3-manifold topologists,
we give condensed accounts in Sections 1.8 and 1.9 below.


\section{Groups}

Let $G$ be a group.
Our commutator convention is that if $g,h\in{G}$ then $[g,h]=ghg^{-1}h^{-1}$.
We shall let $\zeta{G}$, $G'=[G,G]$ and $G^{ab}=G/G'$ denote the centre,
commutator subgroup and abelianization of $G$,  respectively.
The lower central series is defined by $\gamma_1G=G$ 
and $\gamma_{n+1}G=[G,\gamma_nG]$ for all $n\geq1$.
Similarly, the rational lower central series is given by
letting $\gamma_{1}^\mathbb{Q}G=G$ and $\gamma_{k+1}^\mathbb{Q}G$ 
be the preimage  in $G$ of the torsion subgroup of $G/[G,\gamma_k^\mathbb{Q}G]$.
Then $G/\gamma_k^\mathbb{Q}G$ is a torsion free nilpotent group, and 
$\{\gamma_k^\mathbb{Q}G\}_{k\geq1}$ is the most rapidly descending 
central series of subgroups of $G$ with this property. 

We may also let $G''=[G',G']$ and $I(G)=\gamma_{[2]}^\mathbb{Q}G$
denote the second derived subgroup and  isolator subgroup of $G$, respectively.
(Thus $G/I(G)$ is the maximal torsion-free abelian quotient of $G$.)
Each of the subgroups defined thus far is characteristic in $G$, 
and so is normal in any overgroup $H$ containing $G$ as a normal subgroup.

A group $G$ is {\it restrained\/} if it has no non-cyclic free subgroup.
It is {\it virtually solvable\/} if it has a normal subgroup of finite index 
which is solvable.
The {\it Hirsch length} $h(G)$ of such a group $G$ is the sum of the ranks of the abelian sections of a composition series for $G$.
Amenable groups and groups which are ``good" in the sense of \cite{FQ} are restrained,
but the most important examples for us are virtually solvable, 
and in fact abelian or nilpotent.

Let $F(r)$ be the free group of rank $r$.
Let $D_\infty=\mathbb{Z}/2\mathbb{Z}*\mathbb{Z}/2\mathbb{Z}$ be the infinite dihedral group.
Let $BS(1,m)$ be the Baumslag-Solitar group with presentation
$\langle{t,a}\mid{tat^{-1}=a^m}\rangle$, for $m\in\mathbb{Z}\setminus\{0\}$.
Then $BS(1,1)\cong\mathbb{Z}^2$, while $BS(1,-1)\cong\pi_1Kb$
is the Klein bottle group.
 
Our principal reference for group theory is \cite{Rob}.

\newpage
\section{Homological group theory}

The Universal Coefficient Theorems give exact sequences
\[
0\to{F}\otimes{H_2(G;\mathbb{Z})}\to{H_2(G;F)}\to{Tor(F,G^{ab})}\to0
\]
\[
\mathrm{and}\quad\quad0\to{Ext(G^{ab},F)}\to{H^2(G;F)}\to{Hom(G^{ab},F)}\to0,
\] 
for any group $G$ and field $F$, since $H_1(G;\mathbb{Z})=G^{ab}$.

A finite presentation for a group is {\it balanced\/} if it has
the same number of relations as of generators.
The group then has deficiency $\geq0$.
(We may always add trivial relators to a presentation with positive
deficiency  to get one which is balanced.)
A finitely generated group $G$ is {\it homologically balanced\/} if
$\beta_2(G;R)\leq\beta_1(G;R)$, for any coefficient ring $R$.

If $G$ is finitely presentable then $H_i(G;R)$
is finitely generated for $i\leq2$ and all simple coefficients $R$.
It follows easily that $\beta_i(G;\mathbb{Q})=\beta_i(G;\mathbb{F}_p)$,
for $i\leq2$ and almost all primes $p$. 
Hence $\beta_2(G;\mathbb{Q})=\beta_1(G;\mathbb{Q})$
if and only if $\beta_2(G;\mathbb{F}_p)=\beta_1(G;\mathbb{F}_p)$,
for almost all primes $p$. 
If $A$ is a finitely generated abelian group and $F=\mathbb{F}_p$ then 
$Tor(\mathbb{F}_p,A)\cong{_pA}=\mathrm{Ker}(p.id_A)$.
If $A$ is finite then $A/pA$ and $\mathrm{Ker}(p.id_A)$
have the same dimension.
Hence if $G$ is finite then
$\beta_2(G;\mathbb{F}_p)\geq\beta_1(G;\mathbb{F}_p)$,
for any prime $p$, and $G$ is homologically balanced if and only if 
$H_2(G;\mathbb{Z})=0$.

If $G$ has a balanced presentation then it is homologically balanced, 
and if $G$ is also finite then it must have trivial multiplicator: 
$H_2(G;\mathbb{Z})=0$.
(These assertions follow most easily from consideration of the
homology of the 2-complex associated to a balanced presentation
for the group.)
In general there may be a gap between homological 
necessary conditions and combinatorial sufficient conditions.

Restrained groups have deficiency $\leq1$ \cite{BP78}.
If $G$ is restrained and $def(G)=1$ then $G$ 
is an ascending HNN extension, 
and so the first $L^2$-Betti number $\beta^{(2)}_1(G)=0$.
Hence $c.d.G\leq2$, by \cite[Theorem 2.5]{FMGK}. 
The argument is homological, 
and so it suffices that the augmentation ideal in $\mathbb{Z}[G]$ 
have a presentation of deficiency 1 as a $\mathbb{Z}[G]$-module.

If $A$ is abelian then $H_2(A;\mathbb{Z})=A\wedge{A}$.
A finite abelian group $A$ is homologically balanced 
if and only if it is cyclic,
for if $A/pA$ is not cyclic for some prime $p$ then 
$(A/pA)\wedge(A/pA)\not=0$, and so
$\beta_2(A;\mathbb{F}_p)>\beta_1(A;\mathbb{F}_p)$,
by the Universal Coefficient exact sequences of \S1.
Finite cyclic groups clearly have balanced presentations.

Our principal references for cohomological group theory are \cite{Bie}
and \cite{Br}.

\section{Commutative algebra}

Let $\Lambda=\mathbb{Z}[\mathbb{Z}]=\mathbb{Z}[t,t^{-1}]$ 
and $R\Lambda=R\otimes_\mathbb{Z}\Lambda$, for any coefficient ring $R$.
If $F$ is a field then $F\Lambda$ is a PID.

If $R$ is a PID and $L$ is a finitely generated $R$-torsion module
then $L$ has a square presentation matrix, 
by the Structure Theorem for modules over PIDs.
The {\it order} of $L$ is the principal ideal generated by the determinant
of any such presentation matrix for $L$.
We shall let $\Delta_0(L)$ denote a generator of the  order of $L$.
(This is well-defined up to multiplication by units of $R$.)
If 
\[
0\to{A}\to{C}\to{B}\to0
\]
is a short exact sequence of such $R$-modules then 
$\Delta_0(C)=\Delta_0(A)\Delta_0(B)$ (up to units).
If $f:A\to{B}$ is a homomorphism then
$\Delta_0(\mathrm{Im}(f))=\Delta_0(\mathrm{Im}(\mathrm{Ext_R}(f)))$,
where $\mathrm{Ext_R}(f):\mathrm{Ext_R}(B,R)\to\mathrm{Ext_R}(A,R)$
is the induced homomorphism.

\section{Bilinear pairings}

A {\it linking pairing\/} on a finite abelian group $N$ is a symmetric
bilinear function $\ell:N\times N\to\mathbb{Q/Z}$ which is nonsingular
in the sense that $\mathrm{ad}(\ell):n\mapsto\ell(-,n)$ 
defines an isomorphism from $N$ to $Hom(N,\mathbb{Q/Z})$.
If $L$ is a subgroup of $N$ then $\mathrm{ad}(\ell)$ induces an isomorphism
$L^\perp=\{ t\in N\mid \ell(t,l)=0~\forall{l\in{L}}\}\cong
{Hom(N/L,\mathbb{Q/Z})}$, which is non-canonically isomorphic to $N/L$. 
It is metabolic if there is a subgroup $P$ with $P=P^\perp$,
split \cite{KK80} if also $P$ is a direct summand 
and {\it hyperbolic\/} if $N$ is the direct sum of two such subgroups.
If $\ell$ is split then $N/P$ is (non-canonically) isomorphic to $P$,
and so $N$ is a direct double.
A linking pairing $\ell$ is {\it even\/} if 
$2^{k-1}\ell(x,x)\in\mathbb{Z}$ for all $x\in{N}$ such that $2^kx=0$.
Hyperbolic pairings are even.
We shall say that $\ell$ is {\it odd} if it is not even.

Every linking pairing splits uniquely as the orthogonal sum (over primes $p$) 
of its restrictions to the $p$-primary subgroups of $N$.
If $w=\frac{p}q\in\mathbb{Q}^\times$ (where $(p,q)=1$)
let $\ell_w$ be the pairing on $\mathbb{Z}/q\mathbb{Z}$ given by 
$\ell_w(m,n)=[mnw]\in\mathbb{Q}/\mathbb{Z}$.
Then $\ell_w\cong\ell_{w'}$ if and only if $w'=n^2w$ 
for some integer $n$ with $(n,q)=1$.
When $q=p^k$ is a power of an odd prime there are just 
two isomorphism classes of such pairings.
Thus every linking pairing on an abelian group of odd order
is an orthogonal sum of pairings on cyclic groups.
However, if $q=2^k$ then $\ell_w\cong\ell_{w'}$ 
if and only if $2^kw'\equiv2^kw$ {\it mod} $(2^k,8)$.
In this case there are also indecomposable pairings $E_0^k$ and $E_1^k$ on the groups 
$(\mathbb{Z}/2^k\mathbb{Z})^2$,
with matrices
$\left(\smallmatrix0&2^{-k}\\2^{-k}&0\endsmallmatrix\right)$,
for $k\geq1$,
and 
$\left(\smallmatrix2^{1-k}&2^{-k}\\2^{-k}&2^{1-k}\endsmallmatrix\right)$,
for $k\geq2$, respectively. 
The set of all such pairings is a semigroup with respect to
orthogonal direct sum, and its structure has been completely determined
\cite{KK80,Wa64}.

It is often convenient to study linking pairings via matrices.  
Let $N\cong(\mathbb{Z}/p^k\mathbb{Z})^\rho$, with basis $e_1,\dots,e_\rho$,
and let $\ell$ be a linking pairing on $N$. 
Let be the $\rho\times\rho$ matrix with $(i,j)$ entry $p^k\ell(e_i,e_j)$,
considered as an element of $\mathbb{Z}/p^k\mathbb{Z}$.
Then $L\in\mathrm{GL}(\rho,\mathbb{Z}/p^k\mathbb{Z})$, 
since $\ell$ is non-singular.
The {\it rank} of $\ell$ is $rk(\ell)=\mathrm{dim}_{\mathbb{F}_p}N/pN=\rho$.
If $p$ is odd then a linking pairing $\ell$ 
on a free $\mathbb{Z}/p^k\mathbb{Z}$-module $N$ is determined up to isomorphism 
by $rk(\ell)$ and the image $d(\ell)$ of $\mathrm{det}(L)$ 
in $\mathbb{F}_p^\times/(\mathbb{F}_p^\times)^2=\mathbb{Z}/2\mathbb{Z}$.
(This is independent of the choice of basis for $N$.)
In particular, $\ell$ is hyperbolic if and only if
$\rho=rk(\ell)$ is even and $d(\ell)=[(-1)^{\frac{\rho}2}]$.
If $p=2$ and $k\geq3$ then $\ell$ is determined by the image of $L$ in $GL(\rho,\mathbb{Z}/8\mathbb{Z})$; if moreover $\ell$ is even and $k\geq2$ then $\rho$ is even and $\ell$ is determined by the image of $L$ in
$GL(\rho,\mathbb{Z}/4\mathbb{Z})$ \cite{De05,KK80,Wa64}.

\section{Algebraic topology}

If $W$ is a cell-complex its universal cover $\widetilde{W}$ 
has an induced cellular structure.
Let $\Gamma=\mathbb{Z}[\pi_1W]$, 
and let $C_*=C_*(W;\mathbb{Z}[\pi_1W])$ be 
the chain complex of $\widetilde{W}$, 
considered as a complex of free left $\Gamma$-modules.
Then $H_i(W;\Gamma)=H_i(C_*)$ is $H_i(\widetilde{W})$, 
with the natural $\Gamma$-module structure,
for all $i$.
The {\it equivariant cohomology\/} of $\widetilde{W}$ is defined 
in terms of the  cochain complex $C^*=Hom_\Gamma(C_*,\Gamma)$,
which is naturally a complex of right modules.
Let $\overline{C}^q$ be the left $\Gamma$-module obtained via 
the canonical anti-involution of $\Gamma$,
defined by $g\mapsto{g}^{-1}$ for all $g\in\pi_1W$,
and let $H^j(W;\Gamma)=H^j(\overline{C}^*)$.
We use similar notation for pairs of spaces.
If $W$ is a 4-manifold with boundary then equivariant 
Poincar\'e-Lefshetz duality gives isomorphisms
$H_i(W;\Gamma)\cong{H^{4-i}}(W,\partial{W};\Gamma)$
and $H^j(W;\Gamma)\cong{H_{4-j}}(W,\partial{W};\Gamma)$, for all $i,j\leq4$.

The {\it Wang sequences\/} for homology and cohomology (with coefficients $R$)
associated to an infinite cyclic covering are the long exact sequences corresponding 
to the coefficient module sequence
\[
0\to{R}\Lambda\to{R}\Lambda\to{R}\to0.
\]
In Chapter 8 we shall use such sequences in the context of group (co)homology,
when a group $G$ has a normal subgroup $K$ such that $G/K\cong\mathbb{Z}$.

Let $X$ and $Y$ be connected cell complexes, 
and let $R$ be a $\mathbb{Z}[\pi_1X]$-module.
A map $f:Y\to{X}$ is a {\it $R$-homology isomorphism\/} if $\pi_1f$ is an epimorphism and $f$ induces isomorphisms on homology with local coefficients (induced from) $R$.
A cobordism $W$ with $\partial{W}=M_1\sqcup{M_2}$ is an 
{\it $H$-cobordism over $R$} (or an $R$-homology cobordism)
if the inclusions of the $M_i$ into $W$ are $R$-homology isomorphisms, 
for $i=1,2$.

Let $\mathbb{F} $ be a prime field ($\mathbb{Q}$ or $\mathbb{F}_p$,
where $p$ is a prime).
If $G$ is a group the Massey product structures for classes in $H^1(G;\mathbb{F})$ are closely related to the rational and 
$p$-lower central series of $G$. 
We shall only use the following special case.
Let $a,b,c$ be classes in $H^1(G;R)$ represented by 1-cycles $\alpha$, 
$\beta$ and $\gamma$.
If $a\cup{b}=b\cup{c}=0$ then the Massey triple product$\langle{a,b,c}\rangle$
in $H^2(G;R)$ is the class represented by the 2-cycle $e_{\alpha,\beta}\gamma+\alpha{e_{\beta,\gamma}}$,
where $e_{\alpha,\beta}$ and $e_{\beta,\gamma}$ are 1-chains
such that $\partial^1(e_{\alpha,\beta})=\alpha\beta$ and
$\partial^1(e_{\beta,\gamma})=\beta\gamma$ as 2-cocycles. 
(See \cite{Ma68,Dw75} and also Chapter 12 of \cite{AIL}).

We shall not write coefficients $\mathbb{Z}$ for integral homology,
cohomology or Betti numbers.

\section{Manifolds}

Let $T$ be the torus, $T_g=\#^gT$ the closed orientable surface 
of genus $g\geq0$, $Kb$ the Klein bottle and $\#^c\mathbb{RP}^2$ 
the closed non-orientable surface with $c\geq1$ cross-caps.
In Chapter 6 we shall also use the notation $P_\ell=S^1\cup_\ell{e^2}$
for the pseudo-projective plane with fundamental group $\mathbb{Z}/\ell\mathbb{Z}$.
If $B$ is a 2-orbifold then $|B|$ is the underlying surface.
A {\it $2$-handlebody\/} is a (smooth) 4-manifold constructed 
by adding 1- and 2-handles to $D^4$.

If $M$ is an $n$-manifold $M_o=\overline{M\setminus{D^n}}$ is the bounded manifold obtained by deleting a small open $n$-disc.

An embedding $j$ of an $m$-manifold $M$ in an $(m+k)$-manifold $N$ is 
{\it locally flat\/} if each point $x\in{M}$ has a neighbourhood 
$U\subset{M}$ such that $j(U)$ has a product neighbourhood $V\cong{U}\times(-1,1)$ in $N$. 
(Note that $PL$ embeddings of 3-manifolds into 4-manifolds are 
alwaysy locally flat, in the stronger $PL$ sense, 
since the $PL$ Schoenflies Theorem holds for $PL$ embeddings of $S^{n-1}$ in $S^n$
when $n\leq3$. We shall not need this observation below.)

An $n$-sphere $S^n$ is a {\it twisted double} if $S^n\cong{W\cup_\theta{W}}$, 
where $W$ is a compact $n$-manifold with connected boundary and 
$\theta$ is a self-homeomorphism of $\partial{W}$.

If $L$ is a link in $S^3$ with open regular neighbourhood $n(L)$
then $X(L)=S^3\setminus{n(L)}$ is the link exterior,
$\pi{L}=\pi_1X(L)$ is the link group,
and  $M(L)$ is the 3-manifold obtained by 0-framed surgery on $L$.
The order of $H_1(M(K);\Lambda)$ is generated by the Alexander polynomial
of $K$.

An $m$-component link $L$ in $S^3$ is {\it slice\/} if it bounds a set 
of $m$ disjoint 2-discs properly embedded in $D^4$.
(It is smoothly slice if the discs are also smoothly embedded.)
It is a {\it ribbon link\/} if there is a map $R:m{D^2}\to{S^3}$ 
which is locally an embedding,
and whose only singularities are transverse double points, 
the double point sets being a disjoint union of intervals, 
and such that $R|_{m\partial{D^2}}$ is an embedding with image $L$.
The map $R$ may be homotoped $rel~\partial$ to a proper embedding in $D^4$,
and so every ribbon link is slice.

A knot $K$ in $S^3$ is {\it algebraically slice\/} if its Blanchfield pairing 
is neutral.
It is {\it homotopically ribbon\/} if it bounds a 2-disc $D\subset{D^4}$ 
with an open product neighbourhood $n(D)$ such that the inclusion of
$M(K)=\partial{D^4\setminus{n(D)}}$ into $D^4\setminus{n(D)}$
induces an epimorphism on fundamental groups \cite{CG}.
Ribbon knots are homotopically ribbon.
It is {\it doubly slice\/} if it is the transverse intersection of the equatorial $S^3$
in $S^4$ with an unknotted embedding of $S^2$ in $S^4$.

We shall say that $M$ is a {\it homology handle\/} if
$H_1(M)\cong\mathbb{Z}$, 
so $M$ has the homology of $S^2\times{S^1}$.

\begin{lemma}
\label{homhandlemap}
Let $M$ is a homology handle.
Then there is a $\mathbb{Z}$-homology isomorphism $h:M\to{S^2\times{S^1}}$.
\end{lemma}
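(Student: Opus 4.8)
The plan is to build $h$ cell-by-cell, starting from a map that realises the isomorphism on $H_1$ and then correcting it on the higher skeleta so that it becomes a homology equivalence. First I would choose a generator $g$ of $H_1(M)\cong\mathbb{Z}$ and represent it by a map $M\to S^1 = K(\mathbb{Z},1)$; since $S^2\times S^1$ has the same fundamental group as $S^1$ (up to the obvious $\pi_1$-isomorphism once we fix orientations), this already gives a map $h_1\colon M\to S^1\hookrightarrow S^2\times S^1$ inducing an isomorphism on $H_1$. The obstruction to improving $h_1$ to a map into $S^2\times S^1$ that is also onto $H_2$ lives in $H_2(S^2\times S^1)\cong\mathbb{Z}$: one must produce a class in $H_2(M;h_1^*(\text{local system}))$, but here all coefficients are trivial after pulling back along $M\to S^1$, so this is just ordinary $H_2(M)$. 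By Poincar\'e duality and the fact that $M$ has the homology of $S^2\times S^1$, we have $H_2(M)\cong\mathbb{Z}$, generated by a class dual to $g$; mapping a representing surface suitably into the $S^2$-factor (after a general-position/immersion argument) lets us arrange that $h$ also induces an isomorphism on $H_2$.

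The key step, and the one I expect to be the main obstacle, is organising these choices so that $h$ induces isomorphisms on $H_i$ for \emph{all} $i$ simultaneously, rather than just degree by degree. The clean way to do this is to observe that $S^2\times S^1$ is, up to homotopy, obtained from $S^1$ by attaching a single $2$-cell trivially and then a $3$-cell, i.e.\ it has a CW structure with one cell in each of dimensions $0,1,2,3$; so it suffices to construct a degree-one normal-type map, or more simply to extend $h_1$ over a handle decomposition of $M$. Concretely, take a handle decomposition of $M$; $h_1$ is already defined on the $0$- and $1$-handles (up to homotopy), the attaching data forces the value on the $2$-handles to be controlled by an element of $\pi_2(S^2\times S^1)\cong\mathbb{Z}[\mathbb{Z}]$, and the $3$-handles by $\pi_3$. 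Because the relevant homology groups of $M$ agree with those of $S^2\times S^1$, the algebraic obstructions to making all these extensions compatible vanish after possibly changing the earlier choices, and one checks directly on the chain level (using that a homology equivalence between finite complexes of the given homotopy dimension is detected on $H_*$) that the resulting map $h$ is a $\mathbb{Z}$-homology isomorphism.

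An alternative, and perhaps shorter, route avoids handle decompositions: map $M$ to $S^2\times S^1$ by combining the classifying map $M\to S^1$ for $g\in H_1(M)$ with a map $M\to S^2$ representing a generator of $H^2(M)\cong H_1(M)\cong\mathbb{Z}$ (such a map exists since $S^2 = K(\mathbb{Z},2)$ through dimension $3$ and $M$ is $3$-dimensional). The product map $h\colon M\to S^2\times S^1$ then induces, by the K\"unneth theorem for $S^2\times S^1$ and our computation of $H_*(M)$, an isomorphism on integral homology in every degree. One has to be slightly careful that the chosen $S^2$-class and $S^1$-class pair correctly — i.e.\ that their product is a generator of $H_3(M)\cong\mathbb{Z}$ — which is where the nonsingularity of Poincar\'e duality on $M$ is used; this compatibility check is the only real content, and it is routine given the hypothesis that $M$ has the homology of $S^2\times S^1$.
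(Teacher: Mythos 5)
Your ``alternative route'' is exactly the paper's proof: choose $f:M\to S^1$ and $g:M\to S^2$ classifying generators of $H^1(M)$ and $H^2(M)$ (using that $S^1=K(\mathbb{Z},1)$ and that $K(\mathbb{Z},2)$ is obtained from $S^2$ by attaching cells of dimension $\geq 4$, so $S^2$ suffices for a $3$-complex), and observe that $h=(g,f)$ induces an isomorphism of cohomology rings, the degree-$3$ statement being forced by the nonsingularity of the Poincar\'e duality pairing $H^1(M)\otimes H^2(M)\to H^3(M)$, whence $h$ is a $\mathbb{Z}$-homology isomorphism. That argument is correct and complete, so the preceding obstruction-theoretic/handle-decomposition sketch is superfluous (and, as written, its assertion that the extension obstructions can always be killed by revising earlier choices is not actually justified); the short product-map argument is the one to keep.
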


\begin{proof}
Since $K(\mathbb{Z},1)\simeq{S^1}$ and $K(\mathbb{Z},2)=\mathbb{CP}^\infty$ 
may be constructed by adding cells of dimension $\geq4$ to $S^2$,
there are maps $f:M\to{S^1}$ and $g:M\to{S^2}$ 
such that $f^*\iota_1$ generates $H^1(M)$ and 
$g^*\iota_2$ generates $H^2(M)$. 
Let $h=(g,f):M\to{S^2\times{S^1}}$. 
Then $h$ induces an isomorphism of cohomology rings.
In particular, it is a $\mathbb{Z}$-homology isomorphism.
\end{proof}

Our principal references for knots and links are the books \cite{Rol} and \cite{AIL}, 
and for the Kirby calculus \cite{GS}.
The notation for knots and links is as in the tables in \cite{Rol},
augmented by the symbol $U$ for the unknot. 

\section{Duality pairings for 3-manifolds}

Let $M$ be a closed connected orientable 3-manifold 
with fundamental group $\pi$,
and let $\beta=\beta_1(M;\mathbb{Q})$.
Let $\tau_M$ be the torsion subgroup of $H_1(M)$.
Then Poincar\'e duality determines 
a linking pairing $\ell_M:\tau_M\times\tau_M\to\mathbb{Q}/\mathbb{Z}$.
This is symmetric, bilinear, and nonsingular in the sense that 
the adjoint function $\widetilde{\ell_M}:m\mapsto\ell(-,m)$ defines
an isomorphism from $\tau_M$ to $Hom(\tau_M,\mathbb{Q}/\mathbb{Z})$.
(Open 3-manifolds also have well-defined torsion linking pairings, 
but these may be singular.)

The  linking pairing $\ell_M$ may be 
described as follows.
Let $w$, $z$ be disjoint 1-cycles representing elements of $\tau_M$ and 
suppose that $mz=\partial C$ for some 2-chain $C$ which is transverse 
to $w$ and some nonzero $m\in\mathbb{Z}$.
Then $\ell_M ([w],[z])=(w\bullet{C})/m\in\mathbb{Q/Z}$.
There is a dual formulation, in terms of cohomology.
Let $\beta_{\mathbb{Q}/\mathbb{Z}}:H^1(M;\mathbb{Q}/\mathbb{Z})\to
{H^2(M)}$ be the Bockstein homomorphism associated
with the coefficient sequence
\[0\to\mathbb{Z}\to\mathbb{Q}\to\mathbb{Q}/\mathbb{Z}\to0,\]
and let $D:H_1(M)\to{H^2(M)}$ 
be the Poincar\'e duality isomorphism.
Then $\ell_M$ may be given by the equation
\[\ell_M(w,z)=(D(w)\cup\beta_{\mathbb{Q}/\mathbb{Z}}^{-1}D(z))([M])
\in\mathbb{Q}/\mathbb{Z}.\]

There are analogous pairings on covering spaces of $M$.
In particular, if $\phi:\pi_1M\to\mathbb{Z}$ is an epimorphism with 
associated covering space $M_\phi$, and $t$ is a generator $t$ 
for the covering group then the homology modules $H_1(M_\phi;R)$
are finitely generated $R\Lambda$-modules.
Let $H_i(M;\mathbb{Q}\Lambda)=H_i(M_\phi;\mathbb{Q})$
and $H_i(M;\mathbb{Q}(t))=\mathbb{Q}(t)\otimes_\Lambda{H_i(M;\mathbb{Q}\Lambda)}$.
Let $B$ be the $\mathbb{Q}\Lambda$-torsion submodule of 
$H_1(M;\mathbb{Q}\Lambda)$.
Then equivariant Poincar\'e duality and the universal coefficient theorem 
together define a pairing
$b:B\times{B}\to\mathbb{Q}(t)/\mathbb{Q}[t,t^{-1}]$,
which is called the {\it Blanchfield pairing\/} associated to the covering.
This pairing is nonsingular and hermitian 
with respect to the involution sending $t$ to $t^{-1}$.
(See Chapter 2 of \cite{AIL}.)

When $\phi$ corresponds to a fibre bundle projection from $M$ to $S^1$
with fibre $F$ a closed surface $b_\phi$ is equivalent 
to the isometric structure given by the
intersection pairing $I_F$ on $H_1(F;\mathbb{Q})$, 
together with the isometric action of $\mathbb{Z}$.
(See Appendix A of \cite{Lit84}.)
Such a pairing is {\it neutral\/} if the underlying 
$\mathbb{Q}\Lambda$-torsion module has a submodule 
which is its own annihilator with respect to the pairing.

Two such pairings are {\it Witt-equivalent\/} if they become 
isomorphic after addition of suitable neutral pairings.
The {\it Witt group\/} of isometric structures on finite dimensional
$\mathbb{Q}$-vector spaces is the set
$W_+(\mathbb{Q}(t)/\mathbb{Q}\Lambda)$ of Witt equivalence classes
of such pairings, with the addition induced by direct sum of pairings.
(See \cite{Ne}.)

\section{Surgery}

We shall give a brief, very utilitarian ``black box" outline of the surgery exact sequence 
as it is used in Chapters 5--7 below. 
For more details see either the original source \cite{Wall} 
or the more recent \cite{Ran}.
The  extension of surgery methods to the 4-dimensional TOP case 
is covered in \cite{FQ} and \cite{BKKPR}. 
See also \cite{KT02}.

Let $W$ be a compact, connected orientable $n$-manifold, where $n\geq5$.
The {\it structure set} $\mathcal{S}_{TOP}(W,\partial{W})$ 
is the set of equivalence classes of simple homotopy equivalences 
$f:(P,\partial{P})\to(W,\partial{W})$, 
where $P$ is a compact $n$-manifold and $f|_{\partial{P}}$ is a homeomorphism,
and where two such maps $f_1$ and $f_2$ are equivalent if there is a 
homeomorphism $h:P_2\to{P_1}$ such that $f_2\simeq{f_1\circ{h}}$.
This is a pointed set with distinguished element $id_W$.
It is the central object of interest for surgery, and sits in a sequence
\begin{equation*}
\begin{CD}
L_{n+1}(\mathbb{Z}[G])@>\omega>>\mathcal{S}_{TOP}(W,\partial{W})
@>\nu_W>>\mathcal{N}(W,\partial{W})@>\sigma_n(W,\partial{W})>>L_n(\mathbb{Z}[G])
\end{CD}
\end{equation*}
where the {\it surgery obstruction groups} $L_{n+1}(\mathbb{Z}[G])$ and $L_n(\mathbb{Z}[G])$ are algebraically defined,
$L_{n+1}(\mathbb{Z}[G])$ acts on $\mathcal{S}_{TOP}(W,\partial{W})$
via $\omega$,
the set of {\it normal invariants} $\mathcal{N}(W,\partial{W})=[W,\partial{W};G/TOP,*]$ 
is a topologically defined abelian group and 
$\sigma_n=\sigma_n(W,\partial{W})$ is a homomorphism.
This sequence is exact in the sense that two elements of the structure set 
have the same image in $\mathcal{N}(W,\partial{W})$ if and only if 
they are in the same orbit of $\omega$, 
and the kernel of $\sigma_n$ is the image of the normal invariant map $\nu_W$.
(We use the ring-theoretic notation of Ranicki rather than the original 
group-theoretic notation of Wall for the surgery obstruction groups,
writing $L_n(\mathbb{Z}[G])$ rather than $L_n(G)$.
In particular, the groups $L_n(\mathbb{Z})$ are the obstruction groups for 1-connected surgery.
In the cases we consider all homotopy equivalences are simple,
and so we have written $L_n$ rather than $L_n^s$.)

In dimension 4 we must modify the definition.
The $s$-cobordism structure set $\mathcal{S}^s_{TOP}(W,\partial{W})$
is the set of equivalence classes of simple homotopy equivalences $f$ as above, 
where simple homotopy equivalences $f_1$ and $f_2$ are equivalent 
if there is an $s$-cobordism $Q$ {\it rel} $\partial$ between $P_1$ and $P_2$
and a map $F:(Q,\partial{Q})\to(W,\partial{W})$ which extends $f_1\sqcup{f_2}$.
There is again a sequence of pointed sets 
\[
\mathcal{S}^s_{TOP}(W,\partial{W})\to\mathcal{N}(W,\partial{W})\to{L_4(\mathbb{Z}[G])}.
\]
We may identify $\mathcal{N}(W,\partial{W})$ with
$H^2(W,\partial{W};\mathbb{F}_2)\oplus\mathbb{Z}$,
and $\sigma_4$ is trivial on the image of $\nu_W$ and
injective on the $\mathbb{Z}$ summand.
However, in general we do not know whether a 4-dimensional normal map 
with trivial surgery obstruction must be normally cobordant to 
a homotopy equivalence.
In our applications we shall always have a homotopy equivalence in hand.
A more serious problem is that it is not clear how to define the action $\omega$ in general.
{\it Ad hoc\/} arguments apply in some of the cases of interest to us.
(See \cite[Chapter 6.2]{FMGK}.)

If $\pi_1W$ is virtually solvable (or, more generally, 
is ``good" in the sense of \cite{FQ})
then the $s$-cobordism theorem holds, and so $\mathcal{S}^s_{TOP}(W,\partial{W})=\mathcal{S}_{TOP}(W,\partial{W})$, 
and we again have a 4-term surgery exact sequence as above.
In this case the order of the structure set is bounded 
by the order of $H^2(W,\partial{W};\mathbb{F}_2)\cong{H_2(W;\mathbb{F}_2)}$.

In dimension 3 we need a more drastic modification.
The structure set is now defined in terms of $\mathbb{Z}[G]$-homology equivalences, 
and the equivalence relation is coarsened to $\mathbb{Z}[G]$-homology cobordism \cite{KT02}. 
(We use this in \S5.5 below.)

Two obvious issues are the determination of the surgery obstruction groups $L_n(\mathbb{Z}[\pi])$ and the surgery obstruction homomorphisms $\sigma_n$.
The groups $L_n(\mathbb{Z}[\pi])$ are periodic in $n$, with period 4.
When $\pi$ is finite the calculation of these groups involves representation theory and algebraic number theory (as in \cite{Wa76}),
but when there is a finite $K(\pi,1)$ complex it is expected that
this groups are largely determined by the homology of $\pi$ with coefficients in
$L_0(\mathbb{Z})=\mathbb{Z}$ and $L_2(\mathbb{Z})=\mathbb{Z}/2\mathbb{Z}$.
(This is roughly the content of the Novikov and Farrell-Jones Conjectures.
See \cite{KL}.)
The fundamental groups that we shall consider in the second half of this book 
are all of the latter type,  
and the main difficulty that we shall meet in attempting to apply surgery
is in identifying the homotopy types of pairs $(W,M)$.

\section{The $G$-Signature Theorem}

The Index Theorem is one of the major achievements of the past century.
Like many great theorems,  there are a number of proofs, 
emphasizing different aspects and using a variety of techniques.
Fortunately we shall only need two very special cases, 
for a finite cyclic group $G$ acting on a closed orientable surface 
and for involutions of a closed orientable 4-manifold,
with ``good" fixed point set.
The $G$-Signature Theorem in these cases relates global invariants
based on signatures of bilinear pairings derived from cup-product 
to properties of the action of $G$ on the normal bundle of the fixed point set.

The intersection form $(~,~)$ on the middle dimensional homology of a $2k$-manifold $M$ induces a hermitean form $\phi$ on $H=H_k(M;\mathbb{C})=\mathbb{C}\otimes{H_k(M;\mathbb{Z})}$ by the formulae $\phi(\alpha{x},\beta{y})=\alpha\overline\beta(x,y)$ for $k$ even and  $\phi(\alpha{x},\beta{y})=i\alpha\overline\beta(x,y)$ for $k$ odd.
If $G$ is a finite group acting orientably on $M$ then there is a $G$-equivariant 
orthogonal decomposition $H=H^+\oplus{H^-}\oplus{H^0}$, 
where $\phi$ is positive definite on $H^+$, 
negative definite on $H^-$ and zero on $H^0$.
If $g\in{G}$ then we let
\[
\mathrm{sign}(g,M)=tr(g_*|_{H^+}-tr(g_*|_{H^-}).
\]
Clearly $|sign(g,M)|\leq\beta_k(M;\mathbb{C})$.

Suppose first that $G=\mathbb{Z}/\sigma\mathbb{Z}$ acts on a closed orientable 
surface $M$, with finite fixed point set $F$.
Then the $G$-Index Theorem gives
\[
\mathrm{sign}(g,M)=-i\Sigma_{P\in{F}}\cot(\theta_P),
\]
where $g$ rotates a disc neighbourhood of the fixed point $P$ through 
$\theta_P$ radians.

Suppose now that $g$ is an involution of a closed connected 4-manifold $M$
such that the fixed point set $F$ is a finite disjoint of surfaces, 
and $g$ acts linearly on neighbourhoods of points in $F$.
The $G$-Signature Theorem then gives
\[
\mathrm{sign}(g,M)=e(F),
\]
where $e(F)$ is the normal Euler number, 
which is just the Euler number of $\partial{N(F)}$,
considered as a circle bundle over $F$.

The $G$-Signature Theorem is due to Atiyah, Bott and Singer \cite{AB68,AS68}.
An account of the theorem for $G$ finite 
and $M$ of dimension 2 or 4 which is adapted to the needs and backgrounds 
of low-dimensional topologists may be found in \cite{Go86}.
The manifold and the action are assumed there to be smooth,
but smoothness is required for the proofs only in the neighbourhood of the fixed point set.

\section{Other tools}

We mention briefly several other notions which are perhaps not yet 
part of the standard background of geometric topologists, 
but which appear in minor roles in this book.
The Andrews-Curtis moves are used in Theorem \ref{LQ},
but this result is not used elsewhere in the book. 
See \cite{HAMS}.
In Theorem 6.9 we refer to the Bass Conjectures \cite{Ba76},
in order to show that certain projective modules are 0.
$L^2$-homology is invoked in \S6.9 and \S8.2.
We only need to know the $L^2$-Euler formula $\chi(X)=\Sigma(-1)^q\beta_q^{(2)}(X)$ and that $\beta_1^{(2)}(G)=0$
if $G$ is virtually solvable or has a finitely generated infinite normal subgroup of infinite index. See \cite{Lue}.
Profinite completion is used in Theorem \ref{nilp emb}.
See \cite{Ser}.

\chapter{Invariants and Constructions}

The key algebraic invariants for our purposes are the fundamental groups 
of the spaces involved, the torsion linking pairing on the 3-manifold and the
Euler characteristic of the complementary regions.
In this chapter we shall review the basic constraints on these invariants and
describe the construction by 0-framed surgery on bipartitedly slice links,
from which many of our examples derive.
We also give an application of this construction by W. B. R. Lickorish,
who showed that any two groups with balanced finite presentations 
and isomorphic abelianization could be realized as the fundamental groups
of complementary regions of some embedding of a 3-manifold in $S^4$.
In the final section we introduce 2-knot surgery.

\section{Codimension-1 embeddings}

An embedding of one topological space $A$ into another $B$ is an injective continuous map $f:A\to{B}$ which induces a homeomorphism from $A$ onto its image $f(A)$.
This definition is too broad for the purposes of geometric topology, 
as it allows for pathologies such as Alexander's horned sphere \cite{Al24}.
We shall impose a condition that holds for all smooth embeddings.

\begin{defn}
A {\it locally flat embedding\/} of an $n$-manifold $M$ in an $(n+k)$-manifold $N$
is an injective continuous map $j:M\to{N}$ such that for each $m\in{M}$ there is a neighbourhood $U$ of $j(m)$ in $N$ and a homeomorphism $h:U\to\mathbb{R}^{n+k}$
such that $h(U\cap{j(M)})=\mathbb{R}^n\times\{O_k\}$, 
where $O_k$ is the origin in $\mathbb{R}^k$. 
Two embeddings $j$ and $\tilde{j}$ of $M$ in $N$ are {\it equivalent\/} if there are 
self-homeomorphisms $\phi$ of $M$ and $\psi$ of $N$ such that $\psi{j}=\tilde{j}\phi$.
\end{defn}

Our focus shall be primarily on embeddings of closed 3-manifolds in $S^4$,
but we shall give a very brief overview of codimension-1 embeddings (i.e., $k=1$)
in spheres of other dimensions.

If a closed connected $n$-manifold $M$ embeds in $S^{n+1}$ then $M$ 
must be orientable and the complement must have two components.
These basic observations are consequences of Alexander duality, 
and hold even if the embedding is not locally flat.
In the smooth (or locally flat) case,  there is a more geometric argument, 
since it is then clear that there are arcs transverse to $M$ in $S^{n+1}$.
If $S^{n+1}\setminus{M}$ were connected there would be a simple closed curve 
in $S^{n+1}$ intersecting $M$ transversally in one point,
contradicting the fact that $S^{n+1}$ is simply connected.
It is easy to see that $S^{n+1}\setminus{M}$ has at most two components,
and so it has exactly two components.
Thus $M$ has a product neighbourhood $M\times(-1,1)$, and so must be orientable.
We shall call the closures of the components of $S^{n+1}\setminus{M}$ 
the {\it complementary regions\/} of the embedding, for brevity.

The only connected closed 1-manifold is the circle $S^1$, 
and embeddings of $S^1$ in $S^2$ are all equivalent, 
by the classical Schoenflies theorem. 
(The proof actually shows that every embedding in the broader sense is equivalent to the equatorial embedding. See \cite{Si05} for a modern account.)
The study of embeddings of more than one copy of $S^1$ 
reduces to a simple combinatorial analysis of the lattice of ovals
bounded by a finite family of disjoint simple closed curves in the plane.

Alexander showed that PL embeddings of $S^2$ in $S^3$ are standard,
while locally flat embeddings of $S^n$ into $S^{n+1}$ are all equivalent
(for each $n\geq1$),
by the generalized Schoenflies Theorem of Brown and Mazur.
(This is true also for smooth embeddings of $S^n$ into $S^{n+1}$,
if $n\geq4$, but remains an open problem when $n=3$.)

The following result of M. Brown is a basic consequence of the existence of normal bundles in differential topology, and plays a role in the generalized Schoenflies Theorem.
The present proof is due to R. Connelly \cite{Co71}.
An immediate consequence of this theorem is that every closed locally flat hypersurface $M\subset{S^n}$ has a product neighbourhood $M\times(-1,1)$.

\begin{theorem}[Brown]
\label{Brown}
Let $M$ be a compact manifold with non-empty boundary.
Then $\partial{M}$ has a collared neighbourhood $V\cong\partial{M}\times(0,1]$.
\end{theorem}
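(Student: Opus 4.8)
The plan is to obtain the global collar from \emph{local} collars by a finite patching argument, following Connelly \cite{Co71}. The first and easy step is that $\partial M$ is locally collared: writing $n=\dim M$, every point $x\in\partial M$ has a chart neighbourhood in $M$ modelled on $\mathbb{R}^{n-1}\times[0,\infty)$, in which $\mathbb{R}^{n-1}\times\{0\}$ visibly carries the product neighbourhood $\mathbb{R}^{n-1}\times[0,1)$. Transporting this back, $x$ has an open neighbourhood $W$ in $\partial M$ together with an embedding $c_W\colon W\times[0,1)\hookrightarrow M$ onto an open subset of $M$, with $c_W(w,0)=w$.

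The essential step is a merging lemma: if $U,V\subseteq\partial M$ are open and each admits such a collar embedding, then so does $U\cup V$. Granting this, compactness of $M$ (hence of $\partial M$) lets me cover $\partial M$ by finitely many collared open sets $W_1,\dots,W_k$, and applying the merging lemma $k-1$ times produces a single collar embedding $c\colon\partial M\times[0,1)\hookrightarrow M$ onto an open neighbourhood of $\partial M$. Reparametrising $[0,1)$ as $(0,1]$ so that $\partial M$ corresponds to the closed end gives the asserted $V\cong\partial M\times(0,1]$.

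For the merging lemma, take collars $f\colon U\times[0,2)\hookrightarrow M$ and $g\colon V\times[0,2)\hookrightarrow M$ with open images. Since $(U\cup V)\setminus U$ and $(U\cup V)\setminus V$ are disjoint closed subsets of the (metrizable, hence normal) manifold $U\cup V$, Urysohn's lemma gives $\alpha\colon U\cup V\to[0,1]$ with $\alpha\equiv 0$ off $V$ and $\alpha\equiv 1$ off $U$; then $\{\alpha<1\}\subseteq V$, $\{\alpha>0\}\subseteq U$, and $0<\alpha<1$ only on $U\cap V$. I would then assemble a single embedding $F\colon(U\cup V)\times[0,1)\hookrightarrow M$ by using the $f$-product structure over $\{\alpha=1\}$, the $g$-product structure over $\{\alpha=0\}$, and over the overlap interpolating between the two product structures with $\alpha$ as interpolation parameter. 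The support condition on $\alpha$, together with the openness of the images of $f$ and $g$, is exactly what makes $F$ a homeomorphism onto an open neighbourhood of $U\cup V$ rather than merely a local embedding.

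The main obstacle is this interpolation. One cannot glue or average $f$ and $g$ directly: over a common base point $b\in U\cap V$ the collar arcs $t\mapsto f(b,t)$ and $t\mapsto g(b,t)$ are different curves in $M$, so the comparison must be carried out \emph{inside} the product coordinates of one collar, in which the other collar's arc over $b$ appears as some curve issuing from $(b,0)$. Interpolating the two fibrations, damped by $\alpha$ so that nothing moves over $\partial M\setminus(U\cap V)$, while keeping the resulting map injective with open image, is the crux of Connelly's argument and the step needing essentially all the care; the rest of the proof is routine bookkeeping once it is in place.
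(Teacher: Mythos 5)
Your plan reduces everything to a merging lemma (two collared open subsets of $\partial M$ have collared union) and then defers its proof: you say the interpolation between the two product structures is ``the crux \ldots\ needing essentially all the care'' and do not carry it out. That is a genuine gap, because in your scheme the merging lemma \emph{is} the theorem — the local collars and the finite induction are trivial — and the step you leave open is precisely the one that can fail. Interpolating two embeddings $f,g\colon W\times[0,2)\to M$ fibrewise, even damped by a Urysohn function $\alpha$, does not automatically produce an injective map with open image: over a point of $U\cap V$ the $g$-arc, read in the $f$-coordinates, is an arbitrary arc issuing from $(b,0)$ which may wander sideways over other base points, so the naive convex-combination in the $[0,1)$-coordinate is not defined, and any honest interpolation has to control how the two fibrations shear against each other. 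Until that is done you have a proof outline, not a proof.

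You also attribute this interpolation to Connelly, but the paper's proof (which is Connelly's) is organised to \emph{avoid} exactly this difficulty. It glues an external collar, forming $N=M\cup_{\partial M}\partial M\times\mathbb{R}_+$, extends each local collar $h_i\colon U_i\times[-1,0]\to M$ to $H_i\colon U_i\times[-1,\infty)\to N$ (agreeing with the standard product structure on the external part), and then composes self-homeomorphisms $P_i$ of $N$ that push outward along the $i$-th collar by the amount $\lambda_i$ of a partition of unity. Each $P_i$ uses only one local collar at a time, so no two product structures are ever compared or interpolated; the composite $P=P_1\circ\cdots\circ P_n$ is automatically a homeomorphism of $N$ carrying $M$ onto $M\cup\partial M\times[0,1]$, and $V=P^{-1}(\partial M\times(0,1])$ is the collar. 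If you want to stay with your merging-lemma strategy you must actually construct the interpolated embedding and verify injectivity and openness of image (this is closer to Brown's original argument and is genuinely delicate); alternatively, adopt the external-collar construction, which makes the partition-of-unity bookkeeping do all the work and requires no interpolation at all.
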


\begin{proof}
Let $N=M\cup_{\partial{M}}\partial{M}\times\mathbb{R}_+$,
where we identify $x\in\partial{M}$ with $(x,0)$ in 
$\partial{M}\times\mathbb{R}_+$.
We shall construct a homeomorphism of $N$ which pushes the closed subspace $M$ out onto $M\cup_{\partial{M}}\partial{M}\times[0,1]$.
Since $\partial{M}$ is compact it has a finite open cover $\mathcal{U}=\{U_1,\dots,U_n\}$
by subsets with product neighbourhoods.
Thus there are embeddings $h_i:U_i\times[-1,0]\to{M}$ such that
$h_i(u_i,0)=u_i$ for all $u_i\in{U_i}$ and $i\leq{n}$.
These embeddings  clearly extend to embeddings $H_i:U_i\times[-1,\infty)\to{N}$.

Since $\partial{M}$ is a compact Hausdorff space there is 
a continuous partition of unity $\{\lambda_i\}_{i\leq{n}}$ subordinate to $\mathcal{U}$.
For each $0\leq{a}\leq1$ let $f_a$ be the self-homeomorphism of $[-1,\infty)$ given by
$f_a(t)=t$ if $-1\leq{t}\leq-\frac12$, $f_1(t)=(1-2a)t+a$ if
$-\frac12\leq{t}\leq0$ and $f_a(t)=t+a$ if $t\geq0$.
Let $P_i:N\to{N}$ be the self-homeomorphism which maps $(u_i,t)$ to
$(u_i,f_{\lambda_i(u_i)}(t))$, 
and which is the identity outside $U_i\times[-1,\infty)$,
for all $u_i\in{U_i}$ and $i\leq{n}$.
Then the composite $P=P_1\circ{P_2}\circ\dots\circ{P_n}$ is a homeomorphism 
such that $P(M)=M\cup_{\partial{M}}\partial{M}\times[0,1]$.
Clearly $V=P^{-1}(\partial{M}\times(0,1])$ is as required.
\end{proof} 

When $n=2$ Alexander showed also that if $M$ is the torus $T$ then one 
of the complementary regions is $S^1\times{D^2}$, 
and so the classification of embeddings of $T$ reduces to knot theory.
If $\chi(M)<0$ then we can use pairwise connected sums to construct
embeddings which are ``knotted on both sides",
but there are embeddings of a more complicated nature. 
See \cite{BPW19}.

Alexander's torus theorem has an analogue due to I. R.  Aitchison:
if $M=S^2\times{S^1}$ then one of the complementary regions 
is $S^2\times{D^2}$, 
and so the classification of embeddings of $S^2\times{S^1}$ 
reduces to 2-knot theory. (See Theorem \ref{aitch} below.)

We shall assume henceforth that $n=3$. 
Unless otherwise stated,  
all 3-manifolds considered here shall be closed, connected and orientable,
and  we shall usually write ``homology sphere" instead of ``integral homology 3-sphere".

\section{Embedding 3-manifolds}

A 3-manifold $M$ embeds in $\mathbb{R}^4$ if and only if it embeds in $S^4$.
Since our arguments are largely homological it is often more natural to assume 
that $M$ embeds in a homology 4-sphere $\Sigma$.
However, we shall focus on the standard case.
An embedding $j:M\to{S^4}$ is {\it smoothable\/} if it is smooth with respect
to some smooth structure on $S^4$, 
equivalently, if each complementary region is a 4-dimensional handlebody
(i.e., may be constructed by attaching 1-, 2- and 3-handles to $D^4$).
Although the embeddings that we shall construct are 
usually smooth embeddings in the standard 4-sphere,
we wish to apply 4-dimensional topological surgery,
and so henceforth {\it embedding\/} shall mean TOP locally flat
embedding, unless otherwise qualified.
A {\it Poincar\'e embedding\/} of  $M$ into a homology $4$-sphere $\Sigma$
is a homotopy equivalence $X\cup_MY\simeq\Sigma$,
where $(X,M)$ and $(Y,M)$ are $PD_4$-pairs.

Let $j:M\to{S^4}$ be an embedding,  
with complementary regions $X$ and $Y$,
and let $j_X$ and $j_Y$ be the inclusions of $M$ into $X$ and $Y$, 
respectively.
Clearly $X$ and $Y$ are compact 4-manifolds,
with boundary $\partial{X}=\partial{Y}=M$.
We may assume $X$ and $Y$ are chosen so that $\chi(X)\leq\chi(Y)$.
(On occasion, 
we shall use $W$ for either of the complementary regions when the size of 
$\chi(W)$ is not relevant.)
Let $\pi=\pi_1M$, $\pi_X=\pi_1X$ and $\pi_Y=\pi_1Y$,
and let $j_{X*}=\pi_1j_X$ and  $j_{Y*}=\pi_1j_Y$.
One of the subsidiary themes of this book is the interaction between
$\chi(X)$ and $\pi_1X$.

The symbols $X$, $Y$, $\pi$, $\pi_X$ and $\pi_Y$ shall have the above interpretations 
throughout the book.  
We shall also abbreviate $\beta_1(M)$ as $\beta$, 
when the meaning is clear from the context.

We shall summarize the basic properties of the homology and cohomology of
the complementary regions in the next lemma.
One simple but important observation is that the natural homomorphism
$H_2(X)\to{H_2(X,M)}$ is 0, 
since it factors through $H_2(S^4)\to{H_2(S^4,Y)}$ and an excision isomorphism,
and similarly for $H_2(Y)\to{H_2(Y,M)}$.
Equivalently, the intersection pairings are trivial on $H_2(X)$ and $H_2(Y)$.
(See Theorem \ref{homhandle} below for one use of this observation.)

\begin{lemma}
\label{Hi17-lem2.1}
Suppose $M$ embeds in $S^4$,  with complementary regions $X$ and $Y$.
Then  
\begin{enumerate}
\item$\chi(X)+\chi(Y)=2$;
\item$H_i(M;R)\cong{H_i(X;R)}\oplus{H_i(Y;R)}$, for $i=1,2$,
while $H_i(X;R)=0$ for $i>2$,  
for any simple coefficients $R$ (and similarly for cohomology);
\item$H^1(X;R)\cong{H_2(Y;R)}$ and $H^2(X;R)\cong{H_1(Y;R)}$;
\item$\beta=\beta_1(M)=\beta_1(X)+\beta_2(X)$; 
\item$1-\beta\leq\chi(X)\leq\chi(Y)\leq1+\beta$ and $\chi(X)\equiv\chi(Y)\equiv1+\beta$ mod $(2)$; and
\item{}$\pi_X$ is homologically balanced.
\end{enumerate}
\end{lemma}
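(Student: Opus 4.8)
The plan is to prove the six parts more or less in the order stated, since each feeds into the next, and to conclude with part (6) via the Universal Coefficient Theorems already recalled in \S1.2. The whole argument rests on two inputs: Alexander duality for the locally flat submanifold $M\subset S^4$ (equivalently, the Mayer--Vietoris sequence for $S^4=X\cup_M Y$ together with the fact that $\widetilde{H}_*(S^4)$ is concentrated in degree $4$), and the duality isomorphisms for the compact $4$-manifolds $X,Y$ with boundary $M$ (Poincar\'e--Lefschetz duality, in the simple-coefficient form needed here). Both are standard; the only care required is that $M$ is merely TOP locally flat, but by Theorem~\ref{Brown} (or the remarks following it) $M$ has a product neighbourhood in $S^4$, so all the usual tools apply.

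First I would run the reduced Mayer--Vietoris sequence for $S^4 = X\cup_M Y$ with coefficients in a simple ring $R$. Since $\widetilde H_i(S^4;R)=0$ for $i\neq 4$ and $X,Y,M$ are connected, the sequence splits into short pieces giving $H_i(M;R)\cong H_i(X;R)\oplus H_i(Y;R)$ in the range $i\leq 2$ (using that $H_3(M;R)\to H_3(S^4;R)=0$ and that $M$ is $3$-dimensional so $H_i(M;R)=0$ for $i>3$); that $H_i(X;R)=0$ for $i>2$ follows from $H_i(X;R)\to H_i(S^4;R)$ being an isomorphism for $i=3,4$ by Mayer--Vietoris together with $H_i(X,M;R)\cong H^{4-i}(X;R)$ and the fact that a compact $4$-manifold with boundary has the homotopy type of a $3$-complex. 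This gives part (2) for homology; cohomology is identical, or follows by Universal Coefficients. Part (3) is then Poincar\'e--Lefschetz duality $H^k(X;R)\cong H_{4-k}(X,M;R)$ combined with the long exact sequence of the pair $(X,M)$ and the observation (made just before the Lemma) that $H_2(X)\to H_2(X,M)$ is zero, which forces $H_2(X,M;R)\cong \mathrm{coker}(H_2(M;R)\to H_2(X;R))\oplus\ldots$; more cleanly, one identifies $H_2(Y,M)\cong H_1(X)$ and $H_1(Y,M)\cong H_2(X)$ from the duality $S^4\setminus X \simeq Y$ and the Alexander-duality pairing, then dualizes.

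Having (2) and (3), part (1) is the Euler characteristic count: $\chi(X)+\chi(Y) = \chi(S^4)+\chi(M) = 2 + 0$ by inclusion--exclusion (valid since $M$ is collared), and part (4) is $\beta_1(M)=\beta_1(X)+\beta_1(Y)$ from (2) over $R=\mathbb{Q}$, combined with $\beta_1(Y)=\beta^1(Y)=\beta_2(X)$ from (3). Part (5) then follows: from (4), $0\le \beta_1(X),\beta_2(X)\le\beta$, and $\chi(X)=1-\beta_1(X)+\beta_2(X)$ (using $\beta_0(X)=1$, $\beta_3(X)=\beta_4(X)=0$ by (2)), so $\chi(X)=1-\beta_1(X)+(\beta-\beta_1(X)) \in\{1-\beta,\ldots,1+\beta\}$ with the stated parity $\chi(X)\equiv 1+\beta$, and symmetrically for $Y$; combining with (1) and the normalization $\chi(X)\le\chi(Y)$ gives the full chain of inequalities.

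Finally, for part (6): by (2) over $R=\mathbb{F}_p$ and over $\mathbb{Q}$, $\beta_1(X;\mathbb{F})\le \beta_1(M;\mathbb{F})$ and likewise in degree $2$, so it suffices to show $\beta_2(X;\mathbb{F})\le\beta_1(X;\mathbb{F})$ for every prime field $\mathbb{F}$ directly. Here I would use (3) with $R=\mathbb{F}$: $\beta_2(X;\mathbb{F})=\beta^1(Y;\mathbb{F})=\beta_1(Y;\mathbb{F})$, and then (4)-style bookkeeping over $\mathbb{F}$ — or, most efficiently, the identity $\chi(X)=1-\beta_1(X;\mathbb{F})+\beta_2(X;\mathbb{F})$ holds with $\mathbb{F}$-Betti numbers too (as $H_i(X;\mathbb{F})=0$ for $i>2$ by (2)), and comparing with the same identity over $\mathbb{Q}$ forces $\beta_2(X;\mathbb{F})-\beta_1(X;\mathbb{F})=\beta_2(X;\mathbb{Q})-\beta_1(X;\mathbb{Q})$. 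Now from (3)--(4) over $\mathbb{Q}$ we have $\beta_2(X;\mathbb{Q})\le\beta=\beta_1(X;\mathbb{Q})+\beta_2(X;\mathbb{Q})$, giving $\beta_2(X;\mathbb{Q})\le\beta_1(X;\mathbb{Q})$; but this last step needs $\pi_X$ finitely presentable so that $H_1(X)\cong H_1(\pi_X)$ and $H_2(\pi_X)$ is a quotient of $H_2(X)$ — which holds since $X$ is a compact manifold. Wrapping the comparison above around this gives $\beta_2(\pi_X;\mathbb{F})\le\beta_1(\pi_X;\mathbb{F})$ for all $\mathbb{F}$, i.e.\ $\pi_X$ is homologically balanced. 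I expect the main obstacle to be bookkeeping in part (3)/(6): keeping straight which group is dual to which, and being careful that the Hopf-sequence comparison $H_2(X)\twoheadrightarrow H_2(\pi_X)$ only gives an inequality $\beta_2(\pi_X;\mathbb{F})\le\beta_2(X;\mathbb{F})$ in the right direction — everything else is routine diagram-chasing.
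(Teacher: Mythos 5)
Your overall route --- Mayer--Vietoris for $S^4=X\cup_M Y$, Poincar\'e--Lefschetz/Alexander duality for parts (2)--(3), Euler-characteristic bookkeeping for (1), (4), (5), and Hopf's theorem for (6) --- is the same as the paper's, and parts (1)--(5) are fine. But the final step of your part (6) has a genuine gap: from $\beta_2(X;\mathbb{Q})\leq\beta=\beta_1(X;\mathbb{Q})+\beta_2(X;\mathbb{Q})$ you cannot conclude $\beta_2(X;\mathbb{Q})\leq\beta_1(X;\mathbb{Q})$; that inference only yields $0\leq\beta_1(X;\mathbb{Q})$. Moreover the inequality you want is simply false without the normalization $\chi(X)\leq\chi(Y)$ fixed before the lemma: for the standard embedding of $S^2\times S^1$ the complementary region $S^2\times D^2$ has $\beta_2=1>\beta_1=0$ (its fundamental group is trivial, so it is homologically balanced for a different reason, not because of that inequality). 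So the inequality $\beta_2(X;\mathbb{F})\leq\beta_1(X;\mathbb{F})$ genuinely depends on which region is labelled $X$, and your argument never uses that choice at this point.

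The repair is immediate and already contained in your part (5): by (1) and the convention $\chi(X)\leq\chi(Y)$ we have $\chi(X)\leq1$; since $H_i(X;\mathbb{F})=0$ for $i>2$ by (2), the identity $\chi(X)=1-\beta_1(X;\mathbb{F})+\beta_2(X;\mathbb{F})$ holds over every field $\mathbb{F}$, whence $\beta_2(X;\mathbb{F})\leq\beta_1(X;\mathbb{F})$ directly, with no need to pass through $\mathbb{Q}$ at all. Then $\beta_1(\pi_X;\mathbb{F})=\beta_1(X;\mathbb{F})$ and $\beta_2(\pi_X;\mathbb{F})\leq\beta_2(X;\mathbb{F})$ (Hopf's theorem) give (6); this is exactly how the paper's terse proof of the final assertion is meant to be read, the labelling convention doing the real work.
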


\begin{proof}
The first assertion is clear, 
since $2=\chi(S^4)=\chi(X)+\chi(Y)-\chi(M)$ and $\chi(M)=0$.
The Mayer-Vietoris sequence for $S^4=X\cup_MY$ with coefficients $R$ 
gives isomorphisms 
\[
H_i(M;R)\cong{H_i(X;R)\oplus{H_i(Y;R)}},
\]
for $i=1,2$, while $H_3(M;R)\cong{R}$ and $H_j(X;R)=H_j(Y;R)=0$ for $j>2$.
Moreover, $H_2(X;R)\cong{H^1(Y;R)}$,  by Poincar\'e-Lefshetz duality
and excision (or by Alexander duality).
Then $\beta=\beta_1(X)+\beta_2(X)$, 
so $\chi(X)=1+\beta-2\beta_1(X)$, where $0\leq\beta_1(X)\leq\beta$.

Since $H_2(\pi_X;F$ is a quotient of $H_2(X;F)$ for any field $F$,
by Hopf's Theorem, the final assertion follows from (2) and (3).
\end{proof}

In particular,
 $H^1(M;R)\cong{H^1}(X;R)\oplus{H^1(Y;R)}$
has a basis consisting of epimorphisms which extend on one side or the other.
If $\beta=0$ then $\chi(X)=\chi(Y)=1$, 
while if $\beta=1$ then $\chi(X)=0$ and $\chi(Y)=2$.

The cohomology ring $H^*(M)$ is determined by the
3-fold product 
\[
\mu_M:\wedge^3H^1(M)\to{H^3(M)}
\]
and Poincar\'e duality.
If we identify $H^3(M)$ with $\mathbb{Z}$ we may view $\mu_M$
as an element of $\wedge^3(H_1(M)/\tau_M)$.
Every finitely generated free abelian group $H$ and linear homomorphism
$\mu:\wedge^3H\to\mathbb{Z}$ is realized by some closed orientable 3-manifold \cite{Su75}.
(If $\beta\leq2$ then $\wedge^3\mathbb{Z}^\beta=0$, and so $\mu_M=0$.)

\begin{lemma}
\label{Hi17-lem2.2}
The cup product $3$-form $\mu_M$ is $0$ if and only if all cup products 
of classes in $H^1(M)$ are $0$.
Its restrictions to each of $\wedge^3H^1(X)$ 
and $\wedge^3H^1(Y)$ are $0$.
\end{lemma}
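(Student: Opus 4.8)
The plan is to prove both assertions by the same mechanism: a cup product of classes in $H^1(M)$ that all extend to one side must vanish, because it factors through a group that is zero for dimension reasons. First I would record the elementary fact that $\mu_M = 0$ iff all triple cup products on $H^1(M)$ vanish: since $\mu_M$ is a $3$-form it is determined by its values on triples $a\wedge b\wedge c$, and by Poincaré duality on $M$ the form $\mu_M$ captures all the ring structure (there are no other products to consider, as $H^1(M)\cup H^2(M)\to H^3(M)$ is dual to $H^1(M)\times H^1(M)\to H^2(M)$ under $D$). So it suffices to show that if $a,b,c\in H^1(X)$ then their images in $H^3(M)$ have trivial triple product — and indeed that any product of two classes pulled back from $X$ already dies.

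The key step is naturality of cup products together with the vanishing established just before the lemma. By Lemma \ref{Hi17-lem2.1}(2), for $i=1,2$ the restriction $H^i(M;R)\to H^i(X;R)\oplus H^i(Y;R)$ is an isomorphism, so $H^1(M)$ is spanned by classes that are pulled back via $j_X^*$ from $X$ together with classes pulled back via $j_Y^*$ from $Y$; the subspace $\wedge^3 H^1(X)\subset \wedge^3 H^1(M)$ is (the image of) the part coming entirely from $X$. Now if $u = j_X^* \bar u$ and $v = j_X^* \bar v$ with $\bar u,\bar v\in H^1(X)$, then $u\cup v = j_X^*(\bar u\cup\bar v)$, and $\bar u\cup \bar v\in H^2(X)$. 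But $H^2(X)\to H^2(M)$ is not yet obviously zero; rather, the relevant observation is the dual of the one recorded in the text: the map $H^2(X)\to H^2(X,M)$ is zero, equivalently the intersection pairing on $H_2(X)$ is trivial, equivalently (by duality) every product of two $1$-classes from $X$ restricts to something that pairs trivially against $H_1(M)$ in the top degree. Concretely, for a third class $w = j_X^*\bar w$ we get $u\cup v\cup w = j_X^*(\bar u\cup\bar v\cup\bar w)$, and $\bar u\cup\bar v\cup\bar w\in H^3(X) = 0$ by Lemma \ref{Hi17-lem2.1}(2) again (since $H^i(X;R)=0$ for $i>2$). So the restriction of $\mu_M$ to $\wedge^3 H^1(X)$ is zero, and symmetrically for $Y$.

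The main obstacle is being careful about exactly which composite is being used and making sure the degree-counting is airtight — in particular that $\wedge^3 H^1(X)$, viewed inside $\wedge^3 H^1(M)$, really is spanned by decomposables $u\wedge v\wedge w$ all of whose factors are pulled back from $X$, so that the factorization $u\cup v\cup w = j_X^*(\bar u\cup\bar v\cup\bar w)\in j_X^*(H^3(X)) = j_X^*(0) = 0$ applies to every element, not just the decomposable ones; this is immediate since $\wedge^3$ of a subspace is spanned by decomposables from that subspace. I would also note that the first sentence of the lemma needs only the remark that the full cup product structure on $H^*(M)$ is recovered from $\mu_M$ via Poincaré duality, which is already asserted in the paragraph preceding the lemma, so no separate argument is required there.
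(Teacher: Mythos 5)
Your argument is correct and is essentially the paper's own proof: the first assertion is exactly the Poincar\'e duality observation, and the second follows because a triple product of classes restricted from $X$ (or $Y$) is the restriction of a class in $H^3(X)$ (resp.\ $H^3(Y)$), which vanishes by Lemma \ref{Hi17-lem2.1}. The detour about $H^2(X)\to H^2(X,M)$ and the intersection pairing is not needed and can be deleted; the factorization through $H^3(X)=H^3(Y)=0$ is the whole argument.
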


\begin{proof}
Poincar\'e duality implies immediately that $\mu_M=0$ if and only if 
all cup products from $\wedge^2H^1(M)$ to $H^2(M)$ are 0.
The second assertion is clear,  since $H^3(X)=H^3(Y)=0$.
\end{proof}

See \cite{Le83} for the parallel case of doubly sliced knots.

If $\mu_M\not=0$ then $H^1(X)$ and $H^1(Y)$
are non-trivial proper summands of $H^1(M)$.
However, if $\mu_M=0$ this lemma places no condition on these summands.

The 3-form $\mu_M$ is 0 if and only if
$\pi/\gamma_3^\mathbb{Q}\pi\cong{F(\beta)/\gamma_3^\mathbb{Q}F(\beta)}$ 
\cite{Su75}.
However, this is a rather weak condition.
The next lemma gives a stronger result.

\begin{lemma}
\label{Hi17-lem4.1}
If $H_1(Y)=0$ then $\pi/\gamma_k\pi\cong{F(\beta)/\gamma_kF(\beta)}$, 
for all $k\geq1$.
\end{lemma}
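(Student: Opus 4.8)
We are given an embedding $M\hookrightarrow S^4$ with complementary regions $X,Y$, and we assume $H_1(Y)=0$. The goal is to identify the nilpotent quotients $\pi/\gamma_k\pi$ with those of the free group $F(\beta)$. The natural strategy is to compare $\pi=\pi_1M$ with $\pi_X=\pi_1X$: since $H_1(Y)=0$, part (2) of Lemma \ref{Hi17-lem2.1} gives $H_1(M)\cong H_1(X)$, and part (3) gives $H^2(X)\cong H_1(Y)=0$, hence $H_2(X)=0$ as well (it is free abelian of rank $\beta_2(X)$, and $H^2(X)$ torsion-free part vanishes while $H_1(Y)=0$ kills the torsion too). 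So $X$ has the integral homology of a wedge of $\beta$ circles in degrees $\le 2$. Therefore $\pi_X$ is a finitely presentable group with $H_1(\pi_X)\cong\mathbb{Z}^\beta$ and $H_2(\pi_X)=0$ (the latter because $H_2(\pi_X)$ is a quotient of $H_2(X)=0$ by Hopf's theorem).

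**The core of the argument: Stallings' theorem.** The key tool is Stallings' theorem on the lower central series: if $\phi:G\to H$ is a homomorphism inducing an isomorphism on $H_1(-;\mathbb{Z})$ and an epimorphism on $H_2(-;\mathbb{Z})$, then $\phi$ induces isomorphisms $G/\gamma_kG\cong H/\gamma_kH$ for all finite $k$. First I would apply this to the inclusion-induced map $j_{X*}:\pi\to\pi_X$. This map is surjective (any loop in $X$ can be pushed to $M=\partial X$, by general position, since $\dim X - \dim M = 1$; more carefully, $M$ is connected and $X$ deformation retracts appropriately — in fact surjectivity of $\pi_1(\partial X)\to\pi_1(X)$ for a connected boundary component follows from transversality), so it is certainly epi on $H_2$; and it is an isomorphism on $H_1$ since $H_1(M)\cong H_1(X)$ and the splitting in Lemma \ref{Hi17-lem2.1}(2) is via the inclusions. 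Hence $\pi/\gamma_k\pi\cong\pi_X/\gamma_k\pi_X$ for all $k$.

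**Comparing $\pi_X$ with the free group.** Next I would produce a homomorphism $F(\beta)\to\pi_X$ realizing a basis of $H_1(\pi_X)\cong\mathbb{Z}^\beta$: choose loops in $X$ representing a basis of $H_1(X)/\text{torsion}=H_1(X)$ (there is no torsion here) and map the free generators accordingly. This homomorphism is an isomorphism on $H_1$ by construction, and it is automatically an epimorphism on $H_2$ because $H_2(\pi_X)=0$. Stallings' theorem again gives $F(\beta)/\gamma_kF(\beta)\cong\pi_X/\gamma_k\pi_X$ for all $k$. Composing the two isomorphisms yields $\pi/\gamma_k\pi\cong F(\beta)/\gamma_kF(\beta)$ for all $k\ge1$, as desired.

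**Where the difficulty lies.** The routine homological bookkeeping (extracting $H_1(X)\cong\mathbb{Z}^\beta$ and $H_2(X)=0$ from Lemma \ref{Hi17-lem2.1}) is straightforward. The only genuine points needing care are: (a) confirming $\pi\twoheadrightarrow\pi_X$, so that Stallings applies with the hypotheses on the correct map — this uses that $M$ is the full connected boundary of $X$ and a transversality/general-position argument; and (b) invoking Stallings' theorem correctly, noting that it only gives the result for each \emph{finite} $k$, which is exactly what is claimed. There is no subtlety about torsion in $H_2$ since $H_2(X)=0$ outright. I would expect the write-up to be short once these two points are dispatched, with Stallings' theorem doing essentially all the work.
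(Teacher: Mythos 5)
Your argument is essentially the paper's: after using $H_1(Y)=0$ to deduce $H_2(X)=0$ and $H_1(M)\cong H_1(X)\cong\mathbb{Z}^\beta$ (torsion-free), both you and the text apply Stallings' theorem twice, once to $j_{X*}:\pi\to\pi_X$ and once to a map of $F(\beta)$ (equivalently of $\vee^\beta S^1$) into $X$ realizing a basis of $H_1(X)$, and then compose the resulting isomorphisms of nilpotent quotients.

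One point in your write-up is wrong, though harmlessly so: your item (a). The homomorphism $j_{X*}$ is \emph{not} surjective in general, and no transversality or general-position argument gives this for a 4-manifold with connected boundary; for example, if $X$ is the exterior of the Artin spin of a non-trivial classical knot then $\partial X\cong S^2\times S^1$ and $j_{X*}$ is not onto (this example appears later in the book, and indeed ``bi-epic'' is a genuinely restrictive condition there). Moreover, even if $j_{X*}$ were surjective, surjectivity of a group homomorphism does not imply surjectivity on $H_2$ (consider $F(2)\to\mathbb{Z}^2$). Fortunately neither claim is needed: Stallings' theorem asks only for an isomorphism on $H_1$ and an epimorphism on $H_2$, and the latter is automatic here because $H_2(\pi_X)$ is a quotient of $H_2(X)=0$ by Hopf's theorem --- a fact you had already recorded. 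Deleting point (a) and justifying the $H_2$ hypothesis this way, your proof coincides with the one in the text.
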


\begin{proof}
If $H_1(Y)=0$ then $H_2(X)=0$,
and $T$ must be 0, by the non-degeneracy of $\ell_M$, 
so $H_1(M)\cong{H_1(X)}\cong\mathbb{Z}^\beta$.
Let $f:\vee^\beta{S^1}\to{X}$ be any map such that $H_1(f)$ 
is an isomorphism.
Then $j_X$ and $f$ induce isomorphisms on all quotients of the 
lower central series, by Stallings' Theorem \cite{St65},
and so $\pi/\gamma_k\pi\cong{F(\beta)/\gamma_kF(\beta)}$, 
for all $k\geq1$.
\end{proof}

If $M$ is the result of surgery on a $\beta$-component slice link $L$
then it has an embedding with a 1-connected complementary region,
and so this lemma applies.
However there are such examples for which $\pi$ does not have any 
epimorphisms onto $F(\beta)$.
(See \cite[Figure 8.1]{AIL}.)

There are parallel results for the rational lower central series 
and the $p$-central series, for primes $p$,
with coefficients $\mathbb{Q}$ and $\mathbb{F}_p$, respectively.
In particular, if $\beta_1(Y)=0$ then 
$\pi/\gamma_k^\mathbb{Q}\pi\cong{F(\beta)/\gamma_k^\mathbb{Q}F(\beta)}$, 
for all $k\geq1$.

The diagram of fundamental groups determined by the inclusions of $M$ into $X$ and $Y$
and of $X$ and $Y$ into $S^4$ is a push-out diagram, by Van Kampen's Theorem.
\begin{equation*}
\begin{CD}
\pi@>j_{X*}>>\pi_X\\
@Vj_{Y*}VV @VVV\\
\pi_Y@>>>1.
\end{CD}
\end{equation*}
The use of this observation in the proof of Aitchison's Theorem 
(Theorem \ref{aitch} below) is 
formalized in the next lemma.

\begin{lemma}
\label{VKsplitmono}
If $j_{X*}$ is a split monomorphism then $\pi_Y=1$.
\end{lemma}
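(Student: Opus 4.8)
The assertion is purely group-theoretic once we use the push-out square displayed just above: by Van Kampen's theorem the canonical maps identify $1=\pi_1S^4$ with the amalgamated free product $\pi_X*_\pi\pi_Y$, where the structural maps $\pi\to\pi_X$ and $\pi\to\pi_Y$ are $j_{X*}$ and $j_{Y*}$. So the task is to deduce $\pi_Y=1$ from the existence of a one-sided inverse for $j_{X*}$. The plan is to feed the splitting into the universal property of the push-out so as to produce a retraction of $\pi_X*_\pi\pi_Y$ onto $\pi_Y$.

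Concretely, I would proceed as follows. First, fix a homomorphism $r\colon\pi_X\to\pi$ with $r\circ j_{X*}=\mathrm{id}_\pi$; such an $r$ exists because $j_{X*}$ is a split monomorphism (and its existence already forces $j_{X*}$ to be injective, so the ``monomorphism'' clause of the hypothesis is automatic). Second, consider the two homomorphisms $j_{Y*}\circ r\colon\pi_X\to\pi_Y$ and $\mathrm{id}_{\pi_Y}\colon\pi_Y\to\pi_Y$; they are compatible over $\pi$, since $(j_{Y*}\circ r)\circ j_{X*}=j_{Y*}\circ(r\circ j_{X*})=j_{Y*}=\mathrm{id}_{\pi_Y}\circ j_{Y*}$. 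By the universal property of the push-out they therefore factor through a single homomorphism $\Phi\colon\pi_X*_\pi\pi_Y\to\pi_Y$ whose restriction to the canonical image of $\pi_Y$ is $\mathrm{id}_{\pi_Y}$. Third, invoke $\pi_X*_\pi\pi_Y\cong\pi_1S^4=1$: the canonical map $\pi_Y\to\pi_X*_\pi\pi_Y$ is then the trivial homomorphism, and composing it with $\Phi$ gives $\mathrm{id}_{\pi_Y}$; hence $\mathrm{id}_{\pi_Y}$ is the trivial endomorphism and $\pi_Y=1$.

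There is no real obstacle here — the argument is soft and short. The only points that require a little care are bookkeeping of the structural maps: that $r$ is a \emph{left} inverse of $j_{X*}$ (so that the displayed compatibility identity holds), and that the pair $(j_{Y*}\circ r,\ \mathrm{id}_{\pi_Y})$ is precisely the data to which the universal property of $\pi_X*_\pi\pi_Y$ applies. I would also remark that this is just the group-theoretic shadow of a general principle: in any push-out $G_1*_{G_0}G_2$, if $G_0\to G_1$ admits a retraction then $G_2$ is a retract of the push-out, so if the push-out is trivial then $G_2=1$.
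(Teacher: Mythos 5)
Your argument is correct and is essentially the paper's own proof: the paper takes the same retraction $\sigma$ (your $r$), forms $f_X=j_{Y*}\circ\sigma$ (your $j_{Y*}\circ r$), checks the same compatibility $f_X\circ j_{X*}=\mathrm{id}_{\pi_Y}\circ j_{Y*}$, and concludes via the universal property that $\mathrm{id}_{\pi_Y}$ factors through the trivial push-out group. Your write-up is just a more explicit spelling-out of that same two-line argument.
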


\begin{proof}
Let $\sigma:\pi_X\to\pi$ be a homomorphism such that
$\sigma\circ{j_{X*}}=id_\pi$, and let $f_X=j_{Y*}\circ\sigma$.
Then $f_X\circ{j_{X*}}=id_{\pi_Y}\circ{j_{Y*}}$ and so $id_{\pi_Y}$ factors through 
the pushout group 1.
Hence $\pi_Y=1$.
\end{proof}

\section{Hyperbolicity of the linking pairing}

In the early paper \cite{Ha38} W. Hantzsche observed that if $M$ embeds in $S^4$ then
the torsion subgroup of $H_1(M)$ is a direct double.
This follows easily from the Universal Coefficient Theorem and Alexander duality.
Let $\tau_M$, $\tau_X$ and $\tau_Y$ denote the torsion subgroups of $H_1(M)$,
$H_1(X)$ and $H_1(Y)$, respectively.
Then $\tau_M\cong\tau_X\oplus\tau_Y$, by the Mayer-Vietoris argument, 
and $\tau_X\cong{Ext(\tau_Y,\mathbb{Z})}=Hom(\tau_Y,\mathbb{Q}/\mathbb{Z})$, 
since $H_1(M)\cong{H^2(Y)}$.
Hence $\tau_X$ is non-canonically isomorphic to $\tau_Y$, 
and so $\tau_M\cong\tau_X\oplus\tau_X$.

Looking more closely at the role of duality,
A. Kawauchi and S. Kojima strengthened this result \cite[Lemma 6.1]{KK80}.

\begin{lemma}
[Kawauchi-Kojima]
\label{hyperbolic lp}
If a closed connected $3$-manifold $M$ embeds as a locally flat submanifold 
of $S^4$ then $\ell_M$ is hyperbolic.
\end{lemma}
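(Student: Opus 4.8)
The plan is to upgrade Hantzsche's observation that $\tau_M$ is a direct double to the statement that the submodule realizing this splitting is its own orthogonal complement with respect to $\ell_M$. Concretely, I would show that the image $P$ of $\tau_X$ (equivalently the kernel of the projection $\tau_M\to\tau_Y$ coming from the Mayer--Vietoris isomorphism $\tau_M\cong\tau_X\oplus\tau_Y$) satisfies $P=P^\perp$ in $(\tau_M,\ell_M)$. Since $P$ is automatically a direct summand with complementary summand mapping isomorphically onto $\tau_Y$, and $\tau_Y\cong Hom(\tau_X,\mathbb{Q}/\mathbb{Z})$, this will exhibit $\ell_M$ as hyperbolic in the sense of \S1.4.

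First I would set up the relevant duality square. Write $P=\mathrm{im}(\tau_X\to\tau_M)$ under the splitting of Lemma \ref{Hi17-lem2.1}(2); this is the torsion subgroup of the image of $H_1(X)\to H_1(M)=H_1(\partial X)$. The key point is that $\ell_M$ vanishes on $P$: if $w,z$ are torsion $1$-cycles in $M$ bounding (rationally) in $X$, say $mz=\partial C$ with $C$ a $2$-chain \emph{in $X$} transverse to $w$, then $w\bullet C$ is computed inside $X$, but $w\bullet C$ depends only on the class of $C$ in $H_2(X,M;\mathbb{Q})$ and the class of $mz$ in $H_1(M;\mathbb{Q})$ is zero, while the relevant intersection number factors through the pairing $H_1(X;\mathbb{Q})\times H_2(X,M;\mathbb{Q})\to\mathbb{Q}$, which is the zero pairing on torsion since the Poincar\'e--Lefschetz dual statement is that $H_2(X)\to H_2(X,M)$ is zero (the observation recorded just before Lemma \ref{Hi17-lem2.1}, and restated in Lemma \ref{Hi17-lem2.2}'s vicinity). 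Hence $P\subseteq P^\perp$. The same argument with $X$ replaced by $Y$ shows $\ell_M$ vanishes on $Q:=\mathrm{im}(\tau_Y\to\tau_M)$, so $Q\subseteq Q^\perp$.

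Next I would use nonsingularity of $\ell_M$ to finish: by \S1.4, $\mathrm{ad}(\ell_M)$ induces an isomorphism $P^\perp\cong Hom(\tau_M/P,\mathbb{Q}/\mathbb{Z})\cong Hom(Q,\mathbb{Q}/\mathbb{Z})$, so $|P^\perp|=|Q|=|\tau_Y|=|\tau_X|=|P|$, forcing $P=P^\perp$. Symmetrically $Q=Q^\perp$. Since $\tau_M=P\oplus Q$ as abelian groups (Mayer--Vietoris), this is exactly the decomposition of $\tau_M$ into two subgroups each equal to its own orthogonal complement, i.e.\ $\ell_M$ is hyperbolic.

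The main obstacle is making the vanishing statement $\ell_M|_{P\times P}=0$ genuinely rigorous: one must carefully justify that a $1$-cycle representing a torsion class in $H_1(M)$ which lies in the image of $H_1(X)$ bounds a $2$-chain in $X$ after multiplication by its order (this uses that $\tau_X$ injects into $\tau_M$ so the order in $M$ equals the order in $H_1(X)$ up to the relevant torsion, together with the fact that rational cycles bounding in $M$ and coming from $X$ can be arranged to bound in $X$), and that the resulting self-intersection/linking number is computed by the intersection form of the pair $(X,M)$, which is trivial on the torsion by the excision argument $H_2(X)\to H_2(X,M)\to H_2(S^4,Y)$. Phrasing this cleanly via the cohomological description $\ell_M(w,z)=(D(w)\cup\beta_{\mathbb{Q}/\mathbb{Z}}^{-1}D(z))([M])$ from \S1.7, and tracking how $D$ and the Bockstein interact with the maps $H^*(X,M)\to H^*(X)\to H^*(M)$, is probably the most efficient route and avoids chain-level bookkeeping.
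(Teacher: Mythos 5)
Your overall architecture is the same as the paper's: split $\tau_M$ into the two subgroups of torsion classes dying in one complementary region or the other, show each is self-annihilating, and conclude with an order count (the paper gets $P=P^\perp$ from $|\tau_M|=|\tau_{X,M}||\tau_X|$; your count via $|P^\perp|=|\tau_M/P|$ is equivalent). There is a harmless labelling slip -- your $P=\mathrm{Ker}(\tau_M\to\tau_Y)$ consists of classes bounding in $Y$, yet you argue with chains in $X$, and ``the image of $H_1(X)\to H_1(M)$'' is not a map induced by any inclusion -- but since you treat $P$ and $Q$ symmetrically this is only notation. The genuine gap is in the justification you offer for the crucial vanishing $\ell_M|_{P\times P}=0$. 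In a $4$-manifold a $1$-cycle and a $2$-chain are generically disjoint, so ``$w\bullet C$ computed inside $X$'' with $C$ a $2$-chain in $X$ is not a meaningful quantity; the pairing $H_1(X;\mathbb{Q})\times H_2(X,M;\mathbb{Q})\to\mathbb{Q}$ you invoke is not a Poincar\'e--Lefschetz pairing (duality pairs $H_1(X)$ with $H_3(X,M)$, and $H_2(X)$ with $H_2(X,M)$); and the fact you cite -- vanishing of $H_2(X)\to H_2(X,M)$, i.e.\ triviality of the intersection form on $H_2(X)$ -- is not what drives the result: the paper's proof of this lemma never uses it.

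What actually makes the isotropy work is exactness plus naturality of duality for the pair $(X,M)$: with $D_{X,M}:H_2(X,M)\to H^2(X)$, $D_M:H_1(M)\to H^2(M)$ and $j_X^*D_{X,M}=D_M\delta_X$, one computes, for $a,b\in\tau_{X,M}$,
\[
\ell_M(\delta_Xa,\delta_Xb)=D_M(\delta_Xb)(\delta_Xa)=j_X^*D_{X,M}(b)(\delta_Xa)=D_{X,M}(b)(j_{X*}\delta_Xa)=0,
\]
since $j_{X*}\delta_X=0$. This is precisely the ``cohomological route'' you gesture at in your final sentence, so your instinct about how to repair the step is right; but as written the proposal leaves the central computation unproved, and the geometric mechanism you sketch in its place would not establish it. (A correct geometric variant compares two $2$-chains in $X$ bounding multiples of the two boundary cycles and uses their relative intersection number -- one still needs the duality bookkeeping above, not the triviality of the form on $H_2(X)$.) You should also record, as the paper does, that the torsion sequence $0\to\tau_{X,M}\to\tau_M\to\tau_X\to0$ is exact (via $\tau_{X,M}\cong Hom(\tau_X,\mathbb{Q}/\mathbb{Z})$ and an order count), so that the isotropic subgroup produced by the duality argument really is all of the kernel summand you named.
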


\begin{proof}
Let $D_{X,M}:H_2(X,M)\to{H^2(X)}$ and 
$D_M:H_1(M)\to{H^2(M)}$
be the duality isomorphisms determined by an orientation for $(X,M)$,
and let $\delta_X:H_2(X,M)\to{H_1(M)}$ 
be the connecting homomorphism in the homology exact sequence for the pair.
Then $j_X^*D_{X,M}=D_M\delta_X$.

It follows from the Mayer-Vietoris sequence that
$H_i(M)$ maps onto $H_i(X)$, for $i=1$ or 2,
and hence that there is a short exact sequence
\begin{equation*}
\begin{CD}
0\to{H_2(X,M)}@>\delta_X>>{H_1(M)}@>j_{X*}>>{H_1(X)}\to0.
\end{CD}
\end{equation*}
Let  $\tau_{X,M}$ and $\tau_{Y,M}$ be the torsion subgroups 
of $H_2(X,M)$ and $H_2(Y,M)$,  respectively.
Then $D_{X,M}$ restricts to give an  isomorphism 
$\tau_{X,M}\cong{Ext(\tau_X,\mathbb{Z})}=Hom(\tau_X,\mathbb{Q}/\mathbb{Z})$.
(Similarly,  $D_M$ restricts to give $\tau_M\cong{Hom(\tau_M,\mathbb{Q}/\mathbb{Z})}$.)
Hence $|\tau_M|=|\tau_{X,M}||\tau_X|$, and so the sequence of torsion subgroups
\[
0\to\tau_{X,M}\to\tau_M\to\tau_X\to0
\]
is also exact.
(In particular,  $\tau_{X,M}$ and $\tau_{Y,M}$ are the kernels of
the induced homomorphisms from $\tau_M$ to $H_1(X)$ and $H_1(Y)$, 
respectively.)
Since
\[
\ell_M(\delta_Xa,\delta_Xb)=
D_M(\delta_Xb)(\delta_Xa)=j_X^*D_{X,M}(b)(\delta_Xa)=D_{X,M}(b)(j_{X*}\delta_Xa)=0,
\]
for all $a,b\in\tau_{X,M}$,
the image of $\tau_{X,M}$ under $\delta_X$ is self-annihilating.
Similarly,  $\delta_Y(\tau_{Y,M})$ is self-annihilating,
and clearly $\tau_M$ is the direct sum of these two subgroups.
\end{proof}

The following corollary was used to show that punctured lens spaces $L(2k,q)_o$ do not embed in $S^4$ \cite[Corollary 6.1]{KK80}. 
The proof here is taken from \cite{GL83}.
(Note that open 3-manifolds have well-defined torsion linking pairings, 
which may be singular.)

\begin{cor}
\label{KKcor}
Let $N$ be a connected $3$-manifold (possibly open) which is a locally flat submanifold of $S^4$.
If $x\in{H_1(N)}$ has order $2^k$ for some $k\geq1$ then $2^{k-1}\ell_N(x,x)=0$.
\end{cor}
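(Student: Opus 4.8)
\emph{Plan.} The idea is to reduce to the closed case, where the statement is immediate from Lemma \ref{hyperbolic lp} together with the fact (recorded in \S1.4) that hyperbolic pairings are even. The only work is geometric: although $N$ may be open, the class $x$ and a $2$-chain witnessing $2^kx=0$ are supported in a compact piece of $N$, and a compact codimension-$0$ piece of a locally flat hypersurface can be doubled to give a \emph{closed} $3$-manifold that is still a locally flat submanifold of $S^4$.

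First I would unwind the definition of the linking pairing (which, by \S1.7, makes sense for open $N$ as well, possibly with singular adjoint). Choose disjoint $1$-cycles $z,w$ in $N$ each representing $x$, and a compact $2$-chain $C$ with $\partial C=2^kz$ which is transverse to $w$; then $\ell_N(x,x)=\tfrac1{2^k}(w\bullet C)\in\mathbb Q/\mathbb Z$. Since $z\cup w\cup|C|$ is compact, I would next choose a compact connected $3$-submanifold-with-boundary $N_0\subset N$ containing it. Because $N$ is locally flat in $S^4$, the compact piece $N_0$ has a product neighbourhood $N_0\times(-1,1)$ in $S^4$, so the compact $4$-manifold $N_0\times[-1,1]$ is a codimension-$0$ submanifold of $S^4$; its frontier in $S^4$ is $\partial(N_0\times[-1,1])=(N_0\times\{-1,1\})\cup(\partial N_0\times[-1,1])$, which is homeomorphic to the double $DN_0=N_0\cup_{\partial N_0}N_0$. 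After straightening the corners along $\partial N_0\times\{\pm1\}$ by an ambient homeomorphism of $S^4$, this exhibits $DN_0$ as a closed orientable $3$-manifold which is a locally flat submanifold of $S^4$.

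Then I would transport the computation to $DN_0$. Let $\bar z,\bar w,\bar C$ be the images of $z,w,C$ under the inclusion of the first copy of $N_0$ into $DN_0$. This inclusion is an orientation-preserving codimension-$0$ embedding, so the intersection number is unchanged: $\bar w\bullet\bar C=w\bullet C$. Moreover $\partial\bar C=2^k\bar z$, so $\bar z$ represents a torsion class killed by $2^k$, and $\ell_{DN_0}(\bar z,\bar z)=\tfrac1{2^k}(\bar w\bullet\bar C)=\ell_N(x,x)$. By Lemma \ref{hyperbolic lp} the pairing $\ell_{DN_0}$ is hyperbolic, hence even, and applying evenness to the class $\bar z$ gives $2^{k-1}\ell_{DN_0}(\bar z,\bar z)\in\mathbb Z$; that is, $2^{k-1}\ell_N(x,x)=0$.

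The step needing care is the geometric reduction in the second paragraph: one must check that the double $DN_0$, obtained as the boundary of the cornered manifold $N_0\times[-1,1]$, is genuinely \emph{locally flat} in $S^4$ (the corner along $\partial N_0\times\{\pm1\}$ is straightened since a wedge between two transverse half-spaces in $\mathbb R^4$ is ambiently homeomorphic to a half-space), and that the possibly singular linking pairing of the open manifold $N$ really does agree with the nonsingular linking pairing of $DN_0$ on the classes involved — which it does, since the formula $\tfrac1{2^k}(w\bullet C)$ is computed from the same cycles and the same $2$-chain in either manifold. Everything else is the formal manipulation above and the appeal to Kawauchi--Kojima.
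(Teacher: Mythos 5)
Your argument is correct and is essentially the paper's own proof: both reduce to the double of a compact piece $Q\subset N$ realized as the boundary of a product (regular) neighbourhood in $S^4$, and then invoke the Kawauchi--Kojima hyperbolicity of the linking pairing. The only difference is cosmetic — you cite the fact from \S 1.4 that hyperbolic pairings are even, while the paper spells that out by writing $x'=x_A+x_B$ with $A,B$ self-annihilating and computing $\ell_M(x',x')=2\ell_M(x_A,x_B)$, so $2^{k-1}\ell_M(x',x')=2^k\ell_M(x_A,x_B)=0$.
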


\begin{proof}
We may assume that $x$ and a null-homology of $2^kx$ are supported in a compact 
bounded codimension-0 submanifold $Q\subset{N}$.
Let $M=\partial(Q\times{I})$ be the boundary of a regular neighbourhood of $Q$ in $S^4$.
Then  $M=DQ$ is the double of $Q$ along its boundary, 
and $\tau_M\cong{A\oplus{B}}$,
where $A$ and $B$ are self-annihilating with respect to $\ell_M$.
Let $x'$ denote the image of $x$ in $\tau_M$, and write $x'=x_A+x_B$, with $x_A\in{A}$
and $x_B\in{B}$.
Then $2^kx_A=2^kx_B=0$, and $\ell_N(x,x)=$
\[
\ell_Q(x,x)=\ell_M(x',x')= \ell_M(x_A,X_A)+2\ell_M(x_A,x_B)+\ell_M(x_B,x_B)=
2\ell_M(x_A,x_B).
\]
Hence $2^{k-1}\ell_N(x,x)=2^k\ell_M(x_A,x_B)=0$.
\end{proof}

Kawauchi had earlier given a related result for the Blanchfield pairing on $M$ 
corresponding to an epimorphism from $\pi_1M$ to $\mathbb{Z}$ which is 
the restriction of an epimorphism from $\pi_1X$ to $\mathbb{Z}$
\cite[Theorem 4.2]{Ka77}. 

\begin{thm}
[Kawauchi] 
Let $X$ be an orientable $4$-manifold with connected boundary $M$, 
and let $\phi:\pi_1X\to\mathbb{Z}$ be an epimorphism such that 
the restriction to the image of $\pi_1M$ is also an epimorphism.
Let $X_\phi$ and $M_\phi$ be the corresponding $\mathbb{Z}$-covering spaces,
and let $t$ be a generator of the covering group $Aut(X_\phi/X)\cong\mathbb{Z}$.
If $t^2-1$ acts invertibly on $H_1(M_\phi;\mathbb{Q})$
and $H_2(X_\phi,M_\phi;\mathbb{Q})$ is a $\mathbb{Q}[t,t^{-1}]$-torsion
module then the Blanchfield pairing on $H_1(M_\phi;\mathbb{Q})$ is neutral.
Hence the order ideal of $H_1(M;\mathbb{Q}\Lambda)=H_1(M_\phi;\mathbb{Q})$ 
has a generator of the form $\Delta_\phi(t)=f(t)f(t^{-1})$, 
for some $f(t)\in\mathbb{Q}[t,t^{-1}]$.
\qed
\end{thm}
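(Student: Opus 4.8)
The plan is to exploit equivariant Poincaré--Lefschetz duality over $\mathbb{Q}\Lambda$ for the pair $(X_\phi, M_\phi)$, mirroring the argument of Lemma \ref{hyperbolic lp} but at the level of infinite cyclic covers. First I would set up the homology long exact sequence of the pair $(X_\phi, M_\phi)$ with $\mathbb{Q}$-coefficients, viewing all terms as $\mathbb{Q}\Lambda$-modules. The hypothesis that $t^2-1$ acts invertibly on $H_1(M_\phi;\mathbb{Q})$ forces this module to be $\mathbb{Q}\Lambda$-torsion (since $\mathbb{Q}(t)/\mathbb{Q}\Lambda$ has no such automorphism on a nonzero free part), and combined with the torsion hypothesis on $H_2(X_\phi, M_\phi;\mathbb{Q})$ this will let me conclude that the relevant map $H_2(X_\phi, M_\phi;\mathbb{Q})\to H_1(M_\phi;\mathbb{Q})$ has torsion image and cokernel, so that (after checking $H_1(X_\phi;\mathbb{Q})$ contributes nothing to the torsion) the image $P=\mathrm{Im}(\partial)$ of the connecting homomorphism is a submodule of $B=H_1(M_\phi;\mathbb{Q})$ whose "size" (order ideal) is the square root of that of $B$, exactly as in the finite case.

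Next I would show $P$ is its own annihilator under the Blanchfield pairing $b$ on $B$. The key identity is the compatibility $j_M^* D_{X,M} = D_M \partial$ between the duality isomorphisms $D_{X,M}: H_2(X_\phi,M_\phi)\to H^2(X_\phi)$ and $D_M: H_1(M_\phi)\to H^2(M_\phi)$ (now over $\mathbb{Q}\Lambda$, using the anti-involution $t\mapsto t^{-1}$ and the universal coefficient spectral sequence to pass between homology and cohomology of torsion modules). The same four-term chase as in Lemma \ref{hyperbolic lp},
\[
b(\partial a, \partial b) = D_M(\partial b)(\partial a) = j_M^* D_{X,M}(b)(\partial a) = D_{X,M}(b)(j_{M*}\partial a) = 0,
\]
shows $P \subseteq P^\perp$, and the order-ideal count from the exact sequence forces $P = P^\perp$. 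Hence $b$ is neutral, which is the first conclusion.

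Finally, for the statement about the order ideal: since $B$ is a $\mathbb{Q}\Lambda$-torsion module carrying a nonsingular hermitian pairing into $\mathbb{Q}(t)/\mathbb{Q}\Lambda$ with a self-annihilating submodule $P$, non-singularity gives $B/P \cong \overline{\mathrm{Hom}_{\mathbb{Q}\Lambda}(P, \mathbb{Q}(t)/\mathbb{Q}\Lambda)}$, whose order ideal is generated by $\overline{\Delta_0(P)} = \Delta_0(P)(t^{-1})$ up to units (using the formula $\Delta_0(\mathrm{Im} f)=\Delta_0(\mathrm{Im}\,\mathrm{Ext}_R(f))$ from \S1.3 and the fact that $\mathbb{Q}\Lambda$ is a PID). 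Then by the multiplicativity of $\Delta_0$ in the short exact sequence $0\to P\to B\to B/P\to 0$, setting $f(t)=\Delta_0(P)$ yields $\Delta_\phi(t) = \Delta_0(B) \doteq f(t)f(t^{-1})$. The main obstacle I anticipate is the bookkeeping in transferring the duality-and-universal-coefficients machinery from finite abelian groups (as in Kawauchi--Kojima) to $\mathbb{Q}\Lambda$-torsion modules: one must verify that $\mathrm{Ext}^1_{\mathbb{Q}\Lambda}(-, \mathbb{Q}\Lambda)$ plays the role that $\mathrm{Hom}(-, \mathbb{Q}/\mathbb{Z})$ did, that it is naturally identified with $\mathrm{Hom}_{\mathbb{Q}\Lambda}(-, \mathbb{Q}(t)/\mathbb{Q}\Lambda)$ on torsion modules, and that the duality isomorphisms are compatible with the conjugation involution in the way the pairing computation demands; the rest is a faithful translation of the codimension-one argument already given.
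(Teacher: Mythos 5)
Your proposal is correct and follows essentially the same route as the paper's own argument, which appears as the proof of Theorem \ref{Kawabeta1} (the case the book actually uses): take $P=\mathrm{Im}(\partial)$, use equivariant Poincar\'e--Lefschetz duality together with the universal-coefficient identification of $\overline{\mathrm{Ext}_{\mathbb{Q}\Lambda}(-,\mathbb{Q}\Lambda)}$ with the Blanchfield dual to get $\Delta_0(H/P)=\overline{\Delta_0(P)}$, show $P\subseteq P^\perp$ from the duality/geometric description of the pairing, and force $P=P^\perp$ by comparing orders over the PID $\mathbb{Q}\Lambda$, whence $\Delta_\phi\doteq f(t)f(t^{-1})$. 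The only difference is cosmetic: you run the argument directly under the theorem's general hypotheses, whereas the paper's written proof works in the $\beta=1$ situation where the Wang sequences supply exactly those hypotheses.
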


We shall only use this result when $\beta=1$.
The epimorphism $\phi$ is then unique up to sign,
since $H^1(X)\cong{H^1(M)}\cong\mathbb{Z}$.
See Lemma \ref{Kawabeta1} for a proof of this case.

\section{Comparison of embeddings}

If two embeddings $j$ and $\tilde{j}$ are isotopic through locally flat embeddings 
then they are ambient isotopic, by the Isotopy Extension Theorem 
\cite[Theorem 2.20]{FNOP}.
Embeddings $j_0,j_1:M\to{S^4}$ are {\it $s$-concordant\/}
if they extend to an embedding of $M\times[0,1]$ in $S^4\times[0,1]$ 
whose complementary regions are $s$-cobordisms {\it rel\/} $\partial$.
We need this notion as it is not yet known whether 5-dimensional 
$s$-cobordisms are always products.

We shall say that an embedding has a group-theoretic property 
(e.g., abelian, nilpotent, $\dots$) 
if the groups $\pi_X$ and $\pi_Y$ have this property.
If $j:M\to{S^4}$ is an abelian embedding and all abelian embeddings 
of $M$ in $S^4$ are equivalent to $j$,
we shall say that $j$ is {\it essentially unique}.
 
\begin{lemma}
\label{Hcoblem}
If $M$ and $M'$ are $\mathbb{Z}$-homology cobordant then $M$ embeds 
in a homology $4$-sphere if and only if 
$M'$ embeds in a (possibly different) homology $4$-sphere.
\end{lemma}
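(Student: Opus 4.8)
The plan is to build the homology 4-sphere for $M'$ from a given one $\Sigma$ containing $M$ by cutting and re-gluing along the homology cobordism. Suppose $M$ embeds in a homology 4-sphere $\Sigma$, with complementary regions $X$ and $Y$, so $\Sigma = X \cup_M Y$. Let $V$ be a compact 4-manifold with $\partial V = M \sqcup (-M')$ which is a $\mathbb{Z}$-homology cobordism, meaning both inclusions $M \hookrightarrow V$ and $M' \hookrightarrow V$ are $\mathbb{Z}$-homology isomorphisms. Form $\Sigma' = X \cup_M V \cup_{M'} (\text{glue } M' \text{ to what?})$. The issue is that $V$ has two ends, so I should instead attach $V$ to just one side: set $X' = X \cup_M V$, a compact 4-manifold with $\partial X' = M'$, and symmetrically let $Y'$ be built so that $\partial Y' = M'$. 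The cleanest choice is to stack a homology cobordism on only one side: define $\Sigma' = X \cup_M V \cup_{M'} V^{op} \cup_M Y$ — but that returns to $M$, not $M'$.

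So the correct construction is: $\Sigma' = X' \cup_{M'} Y'$ where $X' = X \cup_M V$ has boundary $M'$, and $Y' = Y$ if we could glue $Y$ to $M'$ — which we cannot directly. The resolution is to use the homology cobordism on both sides symmetrically is unnecessary; instead glue $Y$ along $M$ to the other end of $V$ via the same cobordism used for $X$. That is, take $\Sigma' = X \cup_M V \cup_M Y$? No — one copy of $V$ has boundary components $M$ and $M'$; we can attach $X$ to the $M$-end and we need the $M'$-end to be filled. The honest fix: since $\mathbb{Z}$-homology cobordism is symmetric and transitive, let $W$ be a $\mathbb{Z}$-homology cobordism from $M'$ to $M'$ obtained by gluing $V$ to $V^{op}$ along $M$; this is a homology cobordism $M' \to M'$, and $X' := X \cup_M (V) $ has $\partial X' = M'$ while $Y' := (V^{op}) \cup_M Y$ has $\partial Y' = M'$. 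Then $\Sigma' := X' \cup_{M'} Y' = X \cup_M V \cup_{M'} V^{op} \cup_M Y$, which contains $M'$ as the middle level set and has $X'$, $Y'$ as complementary regions.

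The key steps, in order: (1) Set up $V$, $X$, $Y$, $\Sigma$ as above and form $X' = X \cup_M V$ and $Y' = V^{op} \cup_M Y$, noting $\partial X' = \partial Y' = M'$ and $\Sigma' = X' \cup_{M'} Y'$. (2) Verify $\Sigma'$ is a closed TOP 4-manifold containing $M'$ as a locally flat submanifold with complementary regions $X'$ and $Y'$; local flatness is automatic since $M'$ sits in a product collar coming from the bicollar of the gluing. (3) Compute $H_*(\Sigma')$: since $M \hookrightarrow V$ is a $\mathbb{Z}$-homology isomorphism, a Mayer–Vietoris argument shows $X \hookrightarrow X'$ and $Y \hookrightarrow Y'$ are $\mathbb{Z}$-homology isomorphisms, hence $H_*(X';\mathbb{Z}) \cong H_*(X;\mathbb{Z})$ and likewise for $Y'$; then another Mayer–Vietoris for $\Sigma' = X' \cup_{M'} Y'$, using that $H_*(M';\mathbb{Z}) \cong H_*(M;\mathbb{Z})$ via $V$, gives $H_*(\Sigma';\mathbb{Z}) \cong H_*(\Sigma;\mathbb{Z}) \cong H_*(S^4;\mathbb{Z})$. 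So $\Sigma'$ is a homology 4-sphere. (4) By symmetry (swap the roles of $M$ and $M'$, using $V^{op}$), if $M'$ embeds in a homology 4-sphere then so does $M$, completing the ``if and only if''.

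The main obstacle is purely bookkeeping: making sure the Mayer–Vietoris comparisons genuinely give $\mathbb{Z}$-homology isomorphisms at each gluing, and that orientations are consistent so the glued spaces are manifolds with the claimed boundaries. One subtlety worth flagging is that we only get a \emph{homology} 4-sphere, not $S^4$ — which is exactly why the statement is phrased in terms of homology 4-spheres rather than $S^4$, and why this lemma is weaker than an ambient statement. No surgery or Disc Embedding Theorem input is needed here; this is elementary cut-and-paste topology together with Mayer–Vietoris, and the definition of $\mathbb{Z}$-homology cobordism recorded in Section 1.5.
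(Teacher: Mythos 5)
Your construction $\Sigma' = X \cup_M V \cup_{M'} V^{op} \cup_M Y$ is exactly the paper's proof: the paper doubles the homology cobordism along $M'$ to get $Z = W \cup_{M'} W$ and inserts it between $X$ and $Y$, with the same Mayer--Vietoris argument showing the result is a homology $4$-sphere containing $M'$ as a locally flat middle level. The proposal is correct and takes essentially the same route, differing only in bookkeeping (comparing $X$ with $X' = X\cup_M V$ and $Y$ with $Y' = V^{op}\cup_M Y$ rather than doubling first).
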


\begin{proof}
Suppose that $M$ embeds in a homology 4-sphere $\Sigma$,
with complementary regions $X$ and $Y$, and $W$ is a 
$\mathbb{Z}$-homology cobordism with $\partial{W}=M\sqcup{M'}$.
Let $Z=W\cup_{M'}W$ be the union of two copies of $W$, 
identified along the boundary $M'$.
A Mayer-Vietoris argument shows that $\Sigma'=X\cup_MW'\cup_MY$ 
is a homology 4-sphere, and $M'$ is clearly a locally flat submanifold of $W'$.
\end{proof}
 
We shall use a variation of this construction,
taking into account the fundamental groups, 
in Theorem \ref{Hi96-thm1} below.

Let $L$ be the $(2,2k)$-torus link and $M=M(L)$.
Then $\tau_M=H_1(M)\cong(\mathbb{Z}/k\mathbb{Z})^2$, 
and $\ell_M$ is hyperbolic, since $M$ embeds in $S^4$.
Taking connected sums shows that every hyperbolic linking pairing is
realized by some 3-manifold which embeds in $S^4$.
Every closed orientable 3-manifold is $\mathbb{Z}$-homology cobordant 
to a 3-manifold which is Haken and hyperbolic \cite{Liv81,My83}.
It then follows from Lemma 2.7 that every hyperbolic linking pairing 
is realized by a hyperbolic $\mathbb{Q}$-homology sphere which embeds 
in a ($\mathbb{Z}$-)homology 4-sphere.
In Chapter 3 we shall show that every hyperbolic linking pairing on a finite abelian group of odd order is realized by a Seifert fibred 3-manifold which embeds in $S^4$.

\section{Some constructions of embeddings}

Any closed orientable 3-manifold $M$ may be obtained by integrally 
framed surgery on some $r$-component link $L$ in $S^3$, with $r\geq\beta$.
We may assume that the framings are even \cite{Kap}, and then
after adjoining copies of the 0-framed Hopf link $Ho=2^2_1$ 
(i.e., replacing $M$ by $M\#S^3\cong{M}$) 
we may modify $L$ so that it is 0-framed.
(If the component $L_i$ has framing $2k\not=0$ we adjoin $|k|$ 
disjoint copies of $Ho$ and band-sum $L_i$ to each of the $2k$ 
new components, with appropriately twisted bands.)
Let $M(L)$ be the closed 3-manifold obtained by 0-framed surgery on $L$. 

\begin{defn}
A link $L$ is {\it bipartedly trivial\/} (respectively, {\it ribbon} or {\it slice}) 
if it has a partition $L=L_+\cup{L_-}$ into two sublinks 
which are each trivial links (respectively, ribbon or slice links).
\end{defn}

The partition then determines an embedding $j_L:M\to{S^4}$,
given by ambient surgery on an equatorial $S^3$ in $S^4=D_+\cup{D_-}$.
We add 2-handles to these 4-balls along $L_+$ on one side 
and along $L_-$ on the other,  
using sets of disjoint slice discs to achieve the ambient surgery.
The complementary components have $\chi=1+2s-r$ and $1-2s+r$.
If $L_+$ and $L_-$ are smoothly slice then $j_L$ is smooth, 
and if they are trivial each complementary region 
has a natural Kirby-calculus presentation,
with 1-handles represented by dotting the components of one part of $L$
and 2-handles represented by the remaining components of $L$,
with framing $0$ \cite{GS}.
Hence it is homotopy equivalent to a finite 2-complex,
and its fundamental group has a presentation with generators corresponding to
the meridians of the dotted circles and relators corresponding to the remaining components.
We extend this Kirby-calculus notation to allow the deleted discs 
corresponding to the dotted components to be a set of slice discs.
(In our figures we shall distinguish the moieties by using thick and thin lines, with dots $\bullet$ on the thin lines.)

The notation $j_L$  is ambiguous, 
for if $L$ has more than one component it may have 
several different partitions leading to distinct embeddings.
Moreover we must choose a set of slice discs for each of $L_+$ and $L_-$.
If $L_-$ or $L_+$ is non-trivial, but is obviously a ribbon link, 
we shall use the discs obtained by desingularizing the ribbon discs,
and then finding a presentation for the fundamental group of the corresponding
complementary region is also straightforward.

If $L$ is itself a slice link then $\beta=r$ and there are embeddings of $M(L)$
realizing each value of $\chi(X)$ allowed by Lemma \ref{Hi17-lem2.1},
including one with a 1-connected complementary region.

F.  Quinn showed that any pair of connected finite 2-complexes $\{C,D\}$ 
which satisfy the conditions $H^2(C)\cong{H_1(D)}$ and $H^2(D)\cong{H_1(C)}$ 
deriving from Alexander duality (as in Lemma 2.2)
may be realized up to homotopy as the complementary regions of 
a smoothable embedding of some 3-manifold \cite[Corollary 1.5]{Qu01}.
(The case when $D$ and $D$ are acyclic was treated earlier
\cite{Cr88,Hu90}.
See also \cite{Li03} and \cite{Liv03}.)
Although we shall not use this result directly, 
it provides the right conceptual framework for some of our examples.
W. B. R. Lickorish found a similar result with more algebraic hypotheses
and a more explicit use of link presentations,
for the cases with $\chi(C)=\chi(D)$, 
and we shall outline his argument \cite{Li04}.
(The hypothesis in Theorem \ref{LQ} that the groups have balanced
presentations is equivalent to requiring that $\chi(C)=\chi(D)$ 
in Quinn's result.)
We shall give the topological part of the argument, 
and refer to \cite[Corollary 2.2]{Li04} for the proof of the next lemma.

\begin{lem}
Let $\mathcal{P}$ be a balanced finite presentation of a group $G$
and $B$ be a square presentation matrix for $G^{ab}$.
Then $\mathcal{P}$ is Andrews-Curtis equivalent to a presentation $\mathcal{Q}$ for which the associated presentation matrix for $G^{ab}$ is
$\left(\smallmatrix{B}&0\\0&{I_r}\endsmallmatrix\right)$ for some $r\geq0$.
\qed
\end{lem}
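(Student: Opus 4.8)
The plan is to reduce the statement to linear algebra over $\mathbb{Z}$. Write $\mathcal{P}=\langle x_1,\dots,x_n\mid w_1,\dots,w_n\rangle$ and let $A$ be the $n\times n$ integer matrix whose rows correspond to the relators and whose columns correspond to the generators, the $(i,j)$ entry being the exponent sum of $x_j$ in $w_i$; thus $A$ is a presentation matrix for $G^{ab}$. Say $B$ is $m\times m$. First I would set up the dictionary recording how each Andrews--Curtis move acts on this matrix: replacing $w_i$ by $w_i^{-1}$ negates the $i$th row, replacing $w_i$ by $w_iw_j^{\pm1}$ with $j\neq i$ adds $\pm$ the $j$th row to the $i$th, conjugating a relator leaves the matrix unchanged, a Nielsen transformation of the generators realises the corresponding elementary column operation (and a permutation or inversion of generators the corresponding column permutation or sign change), and adjoining a new generator $x_{n+1}$ together with the relator $x_{n+1}$ replaces $A$ by $A\oplus(1)$. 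Since the elementary row operations, column operations, sign changes and permutations together generate $\mathrm{GL}_N(\mathbb{Z})$ on either side, it follows that any finite composite of such matrix operations is induced by a corresponding sequence of Andrews--Curtis moves, and that the presentation one arrives at has for its abelianized relation matrix precisely the matrix produced by the composite, not merely one $\mathrm{GL}$-equivalent to it.

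Next I would match up sizes and invoke the structure theorem. Put $N=\max(n,m)$ and $r=N-m\geq0$. Applying $N-n\geq0$ stabilisations of the type ``adjoin $x$ and the relator $x$'' to $\mathcal{P}$ produces a balanced presentation of $G$ with relation matrix $A\oplus I_{N-n}$, of size $N$. Both $A\oplus I_{N-n}$ and $B\oplus I_r$ are $N\times N$ presentation matrices for the \emph{same} group $G^{ab}$, since adjoining an identity block does not change the cokernel; hence, by uniqueness of Smith normal form over the PID $\mathbb{Z}$, they have the same invariant factors, so there are $P,Q\in\mathrm{GL}_N(\mathbb{Z})$ with $P(A\oplus I_{N-n})Q=B\oplus I_r$. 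Writing $P$ and $Q$ as products of elementary, permutation and sign matrices and lifting each factor to the Andrews--Curtis move dictated by the table above, I would transform $\mathcal{P}$ into a balanced presentation $\mathcal{Q}$ of $G$ whose abelianized relation matrix is exactly $\left(\smallmatrix{B}&0\\0&{I_r}\endsmallmatrix\right)$.

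The mathematical content is slight once the dictionary is fixed, and the main effort lies in getting the conventions straight. One must use a notion of Andrews--Curtis equivalence that permits Nielsen transformations of the generators (equivalently, automorphisms of the free group on the generators), for otherwise only row operations are available on the relation matrix and one cannot in general reach the submodule spanned by the rows of $B\oplus I_r$; this is the convention adopted in \cite{HAMS}. One must also choose $N=\max(n,m)$ precisely so that only stabilisations --- never destabilisations --- are needed and so that the two matrices genuinely have equal size; and one must remember that, after fixing explicit transforming matrices $P$ and $Q$, the resulting presentation realises the target block matrix on the nose. With these points settled, the Smith-normal-form step and the lifting are both routine, so I expect the delicate part to be the precise formulation of the move-to-matrix dictionary rather than anything deeper.
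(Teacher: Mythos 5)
Your argument is correct and is essentially the proof the paper has in mind: the paper does not prove this lemma itself but defers to \cite[Corollary 2.2]{Li04}, remarking only that it rests on the dictionary between Andrews--Curtis moves and elementary matrix operations (plus block diagonal enlargements), which is exactly what you spell out, with the Smith normal form/elementary divisor step supplying the transforming matrices $P,Q\in\mathrm{GL}_N(\mathbb{Z})$. One point of convention to tidy up: in the formulation of \cite{HAMS} and \cite{Li04} the primitive stabilization is ``add a generator $g$ together with a relator $gw$'' ($w$ a word in the old generators), and Nielsen transformations of the generators --- hence your column operations --- are \emph{derivable} moves rather than primitive ones; as you rightly sense, with the naive stabilization (relator $g$ alone) the statement would actually fail, since the multiset of images of the generators in $G^{ab}$ (up to adding and deleting zeros) would then be an invariant of the equivalence, so this is the crux rather than a mere bookkeeping choice.
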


The proof is based on the correspondence between Andrews-Curtis moves 
on a group presentation \cite[Chapter 1]{HAMS}
and elementary matrix operations (plus block diagonal enlargements and their inverses) on the associated  presentation for the 
abelianization. 

\begin{theorem}
[Lickorish-Quinn]
\label{LQ}
Let $\mathcal{P}_1$ and $\mathcal{P}_2$ be balanced presentations of groups $G_1$ and $G_2$ such that $G_1^{ab}\cong{G_2^{ab}}$.
Then $S^4=X_1\cup{X_2}$ is the union of two codimension-$0$ 
submanifolds with $\pi_1X_1\cong{G_1}$, $\pi_1X_2\cong{G_2}$ and connected boundary 
$\partial{X_1}=\partial{X_2}=M$.
Each of $X_1$ and $X_2$ has a handle structure consisting of one $0$-handle,
$n$ $1$-handles and $n$ $2$-handles, 
with the associated presentations for $\pi_1X_1$ and $\pi_1X_2$ 
being Andrews-Curtis equivalent to $\mathcal{P}_1$ and $\mathcal{P}_2$,
respectively.
\end{theorem}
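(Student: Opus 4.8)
The plan is to build the embedding by realizing each presentation as a relative handlebody, normalizing the two abelianization presentations so they literally agree, and then gluing the complements along a common boundary 3-manifold. First I would use the preceding Lemma to replace $\mathcal{P}_1$ and $\mathcal{P}_2$ by Andrews–Curtis equivalent balanced presentations $\mathcal{Q}_1$ and $\mathcal{Q}_2$ whose associated presentation matrices for $G_1^{ab}\cong G_2^{ab}$ are $\left(\smallmatrix B&0\\0&I_{r_1}\endsmallmatrix\right)$ and $\left(\smallmatrix B&0\\0&I_{r_2}\endsmallmatrix\right)$ for a common square matrix $B$ (a fixed square presentation matrix for the common abelianization). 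By stabilizing with trivial generator/relator pairs — which are Andrews–Curtis moves and change the handlebody only by cancelling $1$-$2$ handle pairs — I may assume both presentations have the same number $n$ of generators and the same number $n$ of relators, and that their abelianization matrices are \emph{identical}, namely $\left(\smallmatrix B&0\\0&I_{n-s}\endsmallmatrix\right)$ where $s$ is the size of $B$. Attaching $n$ one-handles and $n$ two-handles to $D^4$ according to $\mathcal{Q}_i$ produces a $2$-handlebody $X_i$ with $\pi_1X_i\cong G_i$; the precise handle attaching data can be chosen so that the framings are $0$, since we are free to append copies of the $0$-framed Hopf link as in \S2.5.

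Next I would compute $\partial X_i$ and check it is independent of $i$. Each $X_i$ has a Kirby diagram: an $n$-component framed link in $S^3$ (the attaching circles of the $2$-handles) together with $n$ dotted circles (the belt circles of the $1$-handles). The homology $H_1(\partial X_i)$ is presented by the linking matrix, which — once the diagram is put in a standard form realizing the matrix $\left(\smallmatrix B&0\\0&I_{n-s}\endsmallmatrix\right)$ — is the same for $i=1,2$; in particular $H_1(\partial X_i)\cong G^{ab}\oplus(\text{the cokernel of the full matrix})$, and one checks directly that $M:=\partial X_1$ and $\partial X_2$ are homeomorphic by arranging both diagrams to present $M$ by the \emph{same} surgery description. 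More robustly: since $\chi(X_i)=1-n+n=1$... wait — in fact $\chi(X_i)=1$ for each $i$, so $\chi(X_1)+\chi(X_2)=2$, matching Lemma \ref{Hi17-lem2.1}(1), and $\beta_1(M)=\beta_1(X_1)+\beta_2(X_1)$ forces the Betti numbers to be consistent on the two sides. The key algebraic input making $\partial X_1\cong\partial X_2$ is exactly the hypothesis $G_1^{ab}\cong G_2^{ab}$ together with the normal form from the Lemma; I would spell this out by exhibiting an explicit handle-slide sequence taking one boundary diagram to the other, or equivalently by noting that the double $DX_i$ has the $S^2\times S^1$-homology forced by $\chi(X_i)=1$, pinning down $\partial X_i$ up to the relevant equivalence.

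Finally I would glue: given a homeomorphism $\theta:\partial X_1\to\partial X_2$, form $Z=X_1\cup_\theta X_2$, a closed $4$-manifold with $\chi(Z)=\chi(X_1)+\chi(X_2)-\chi(M)=2$. A Mayer–Vietoris and Van Kampen argument, using that each $X_i$ is built from one $0$-handle and equal numbers of $1$- and $2$-handles, shows $H_*(Z)\cong H_*(S^4)$ and (since the amalgamation of $G_1$ and $G_2$ over the surface group maps onto each $G_i$ but the inclusions kill each other's relators) that $Z$ is simply connected, hence $Z\cong S^4$ by Freedman's classification of simply connected homology $4$-spheres. Then $M=\partial X_1=\partial X_2$ is a locally flat (indeed smooth) submanifold of $S^4$ with complementary regions $X_1,X_2$ and $\pi_1X_i\cong G_i$, and by construction the handle-induced presentations are Andrews–Curtis equivalent to $\mathcal{P}_1,\mathcal{P}_2$. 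The main obstacle is the middle step — verifying that the two relative handlebodies can be arranged to have literally the same boundary $3$-manifold $M$; this is where the normalization lemma and the isomorphism of abelianizations must be leveraged carefully, because a priori the boundaries only agree homologically, and one must produce an actual homeomorphism (or at least identify the boundaries after further stabilization) rather than merely matching invariants.
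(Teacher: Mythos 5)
There is a genuine gap, and it sits exactly where you flag it: your plan builds $X_1$ and $X_2$ \emph{independently} as $2$-handlebodies realizing $\mathcal{Q}_1$ and $\mathcal{Q}_2$, and then needs $\partial X_1\cong\partial X_2$. Nothing in your argument delivers this. The boundary of a $2$-handlebody depends on the actual attaching data (the knotting, linking and framings of the attaching circles), not merely on the presentation it induces, and certainly not on the abelianization matrix, the Euler characteristic, or the homology of the double; two handlebodies inducing the very same presentation typically have non-homeomorphic boundaries. So "arranging both diagrams to present $M$ by the same surgery description" is not a verification step, it \emph{is} the theorem, and the proposal never produces it. The second step also fails: even granted a homeomorphism $\theta:\partial X_1\to\partial X_2$, Van Kampen gives $\pi_1(X_1\cup_\theta X_2)$ as the pushout of $G_1\leftarrow\pi_1M\rightarrow G_2$, which depends on $\theta$ and is in general nontrivial; your parenthetical reason for simple connectivity (an amalgamation "over the surface group" whose inclusions "kill each other's relators") does not make sense here, and without simple connectivity Freedman cannot be invoked.

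The paper's proof avoids both issues by never gluing two separately built pieces. It normalizes the presentations so that the abelianization matrix of $\mathcal{P}_1$ is the \emph{transpose} of that of $\mathcal{P}_2^{AC}$ (transpose, not equal, because the two complementary regions read one and the same diagram with the roles of generators and relators interchanged), then pads relators with cancelling pairs $a_ja_j^{-1}$ so that the unsigned occurrence counts of each generator in each relator agree on the two sides. With that combinatorial matching it builds a single bipartedly trivial link $L$ in the equatorial $S^3$ out of discs, Hopf links and bands: banding $\partial\Delta_i$ to first components according to $r_i$ and second components to $D_j$ according to $\rho_j$ makes each half of $L$ a trivial link, and the two choices of which half to dot and which to $0$-frame give $X_1$ and $X_2$ directly as the complementary regions of $M(L)\subset S^4$. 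The common boundary $M$ and the ambient $S^4$ are thus built in from the start, the induced handle presentations are AC-equivalent to $\mathcal{P}_1$ and $\mathcal{P}_2$ by construction, and no recognition theorem (Freedman) or boundary-identification step is needed. To repair your write-up you would essentially have to replace its middle and final steps by this single-link construction, including the transpose normalization and the occurrence-count matching that your "identical matrices" normalization does not provide.
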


\begin{proof}
We shall begin by using Andrews-Curtis (AC) moves to modify the presentations.
Abelianizing 
$\mathcal{P}_1=\langle{a_1,\dots,a_n}\mid{r_1,\dots,r_n}\rangle$ determines 
an $n\times{n}$ integral matrix $C$ which is a presentation matrix for $G_1^{ab}$.
The transpose $C^{tr}$ is also a presentation matrix for $G_1^{ab}$,
by the Elementary Divisor Theorem.
We shall view $C^{tr}$ as a presentation matrix for $G_2^{ab}$.
Then there is a presentation $\mathcal{P}_2^{AC}=
\langle{\alpha_1,\dots,\alpha_{n+r}}\mid{\rho_1,\dots,\rho_{n+r}}\rangle$ 
for $G_2$ which is AC-equivalent to $\mathcal{P}_2$ and for which the corresponding presentation matrix for $G_2^{ab}$ is the block diagonal matrix
$D_2=\left(\smallmatrix{C^{tr}}&0\\0&{I_r}\endsmallmatrix\right)$
 \cite[Corollary 2.2]{Li04}.
Add $r$ new generators $a_{n+1},\dots,a_{n+r}$ and $r$ new relators
$r_{n+1}=a_{n+1},\dots,r_{n+r}=a_{n+r}$ to $\mathcal{P}_1$, 
to get an AC-equivalent presentation  for $G_1$ for which the
corresponding presentation matrix for $G_1^{ab}$ is $A=D_2^{tr}$.

Suppose that in each relator $r_i$ there are $n_+^{i,j}$ occurrences of the 
generator $a_j$ and $n_-^{i,j}$ occurrences of its inverse $a_j^{-1}$,
and that in each $\rho_j$ there are $\nu_+^{i,j}$ occurrences of $\alpha_i$
and $\nu_-^{i,j}$ occurrences of the symbol $\alpha_i^{-1}$.
Then $n_+^{i,j}-n_-^{i,j}=A_{i,j}=\nu_+^{i,j}-\nu_-^{i,j}$.
If $e(i,j)=n_+^{i,j}-\nu_+^{i,j}>0$ alter $\mathcal{P}_2^{AC}$ by changing $\rho_j$ to 
$\rho_j(\alpha_i\alpha_i^{-1})^{e(i,j)}$.
If $e(i,j)<0$ alter $\mathcal{P}_1$ by changing $r_i$ to $r_i(a_ja_j^{-1})^{-e(i,j)}$.
In this way we may assume that $n_+^{i,j}=\nu_+^{i,j}$ and hence also
$n_-^{i,j}=\nu_-^{i,j}$, for all $i, j$.

We now construct a link in $S^3$ as follows.
Let $D_1,\dots,D_{n+r}$ and $\Delta_1,\dots,\Delta_{n+r}$ be 
mutually disjoint oriented discs.
For each pair $(i,j)$  with $1\leq{i,j}\leq{n+r}$
let $H_+^{i,j}$ be set of $n_+^{i,j}$ copies of the positive Hopf link (of two ordered,
oriented components with linking number $+1$) and
$H_-^{i,j}$ a set of $n_-^{i,j}$ copies of the negative Hopf link.
Each of these Hopf links is to be in a (small) ball in which each of the two components 
bounds an oriented disc meeting the other component in one point.
These balls are to be all mutually disjoint and disjoint from the original discs.
Now join the boundary of $\Delta_i$ once to the first component of each link in
$\bigcup(H_+^{i,j}\cup{H_-^{i,j}}$ with (long thin) bands.
Do this in order around $\partial\Delta_i$ specified by the relator $r_i$.
When $a_j$ occurs in the relator connect to the first component of one of the links
in $H_+^{i,j}$,
and when $a_j^{-1}$ occurs in the relator connect to the first component of 
one of the links in $H_-^{i,j}$.
Similarly when $\alpha_i^{\pm1}$ occurs in $\rho_j$ connect to the first component 
of one of the links in $H_{\pm}^{i,j}$.
For an occurrence of $\alpha_i$ any unused second component of any Hopf link in 
$H_+^{i,j}$ may be selected and similarly for $\alpha_i^{-1}$,
it can easily be ensured that all the bands used are mutually disjoint 
and that they respect all orientations.
(However there are many ways of choosing the bands.)
The numbers of links in the $H_\pm^{i,j}$ are to be chosen so that each link in each 
$H_\pm^{i,j}$ has its first component banded to $\Delta_i$ and its second component banded to $D_j$.
This banding process changes the original discs to two new collections $D_1',\dots,D_{n+r}'$
and $\Delta_1',\dots,\Delta_{n+r}'$ , each of mutually disjoint discs, by adding to the 
original discs the bands and discs spanning the components of the Hopf links.
Let $L_+=\{\partial{D_i'}\}$ and $L_-=\{\partial\Delta_j'\}$ be the links 
given by the boundaries of these discs. 
Then $L=L_+\cup{L_-}$ is bipartedly trivial.

Let $X_\pm$  be the handlebodies determined by giving $L_\pm$ 
the 0-framing and dotting $L_\mp$.
The handle decomposition of $X_\pm$ gives rise to a presentation for $\pi_1X_\pm$ 
in the standard way.
We associate a generator $a_i$ to the 1-handle corresponding to $D_i'$ and
a relator $r_j$ to to 2-handle corresponding to $\Delta_j'$.
Then $r_j$ has an entry $a_i^{\pm1}$ for every signed point of $\partial\Delta_j'\cap{D_i'}$
taken in order along  $\partial\Delta_j'$.
The construction has been engineered so that $X_-$ gives rise to
$\mathcal{P}_1$ and $X_+$ gives rise to
$X_-$ and $\mathcal{P}_2^{AC}$.
\end{proof}

We note that the embeddings given by this theorem always have $\chi(X)=\chi(Y)$.

In particular, 
any two perfect groups with balanced presentations can be realized as 
$\pi_X$ and $\pi_Y$ for some embedding of a homology sphere in $S^4$.
C. Livingston has given examples in which $\pi_X$ is superperfect but has no balanced presentation \cite{Liv05}.

In the following lemma of A. Donald,  
the notion of doubly slice knot is extended to say that a link $L$ in $S^3$ 
is {\it  doubly slice\/} if it is the transverse intersection of
the equator in $S^4$ with an unknotted embedding of $S^2$.
(The embeddings studied in \cite{Do15} are all smooth, 
but the argument also applies here.)

\begin{lemma}
\cite{Do15}
Let $L$ be a doubly slice link in $S^3$, 
and let $B_n(L)$ be the $n$-fold cyclic branched cover of $S^3$, 
branched over $L$.
Then $B_n(L)$ embeds  in $S^4$.
\end{lemma}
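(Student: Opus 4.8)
The plan is to realize $B_n(L)$ as the preimage of the equatorial $S^3$ inside the $n$-fold cyclic branched cover of $S^4$ along an unknotted $2$-sphere witnessing that $L$ is doubly slice. So first I would fix an unknotted $S^2\subset S^4$ meeting the equator $S^3$ transversely in $L$. Since $S^2$ is unknotted, a tubular neighbourhood $n(S^2)\cong S^2\times D^2$ has complement $S^4\setminus n(S^2)\cong D^3\times S^1$, so $H_1(S^4\setminus S^2)\cong\mathbb{Z}$, generated by a meridian of $S^2$. The $n$-fold cyclic branched cover $\Sigma$ of $(S^4,S^2)$ is built from the connected $n$-fold cover of $S^4\setminus n(S^2)$ (again $D^3\times S^1$) by regluing $S^2\times D^2$ via $z\mapsto z^n$ on the disc factor; the two $S^1$-factors on $S^2\times S^1$ match up compatibly, so $\Sigma\cong(D^3\times S^1)\cup_{S^2\times S^1}(S^2\times D^2)\cong S^4$. (Concretely, writing $S^4=\{(z,w)\in\mathbb{C}\times\mathbb{R}^3:|z|^2+|w|^2=1\}$ with $S^2=\{z=0\}$, the map $(z,w)\mapsto(z^n/|z|^{n-1},w)$ exhibits $\Sigma\cong S^4$.)

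The key local point is the identification of the induced cover of $S^3\setminus L$. Because $S^3$ is transverse to $S^2$ along $L$, a meridian circle of a component of $L$ in $S^3$ bounds a small disc in $S^3$ that meets $S^2$ in exactly one point, hence represents a generator of $H_1(S^4\setminus S^2)\cong\mathbb{Z}$. Therefore the restriction over $S^3\setminus L$ of the covering projection $\Sigma\setminus\widetilde{S^2}\to S^4\setminus S^2$ is precisely the connected $n$-fold cyclic cover of $S^3\setminus L$ associated to the total linking number homomorphism $\pi_1(S^3\setminus L)\to H_1(S^3\setminus L)\to\mathbb{Z}\to\mathbb{Z}/n\mathbb{Z}$. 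Since the branch locus $\widetilde{S^2}$ maps homeomorphically onto $S^2$ and $L\subset S^2$, the preimage of $S^3$ in $\Sigma$ is the union of this $n$-fold cyclic cover of $S^3\setminus L$ with a copy of $L$; that is, it is exactly $B_n(L)$.

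Finally I would check that this preimage is a closed $3$-manifold locally flatly embedded in $\Sigma\cong S^4$: away from $L$ it is a covering space of the submanifold $S^3\setminus S^2\subset S^4\setminus S^2$, and near $L$ the transversality of $S^3$ and $S^2$ provides the standard local model for a branched cover, so local flatness is preserved. Hence $B_n(L)$ embeds in $S^4$. The step requiring the most care is the middle one — pinning down the covering data of $S^3\setminus L$ by verifying that meridians of $L$ map to a generator of $\pi_1(S^4\setminus S^2)$ — while the branched-cover computation and the flatness check are routine; in the locally flat setting one should also note that transversality and the branched-cover construction still make sense, which is the content of the parenthetical remark about \cite{Do15}.
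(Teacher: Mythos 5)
Your proposal is correct and follows essentially the same route as the paper's proof: place $(S^3,L)$ inside $(S^4,U)$ with $U$ an unknotted $2$-sphere, observe that the $n$-fold cyclic branched cover of $S^4$ along $U$ is again $S^4$, and identify $B_n(L)$ with the preimage of $S^3$ in that cover. The paper states these steps tersely, whereas you supply the supporting details (the meridian/covering-data check and the local model along the branch locus), but the argument is the same.
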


\begin{proof}
Since $L$ is  doubly slice the pair $(S^3,L)$ sits inside $(S^4,U)$, 
where $U$ is an unknotted copy of $S^2$.
The branched covering of $S^3$,
branched over $L$, extends to a branched covering of $S^4$,
branched over $U$,
and $B_n(L)$ embeds  in the covering space,
as the preimage of $S^3$.
Cyclic branched coverings of $S^4$ branched over $U$ are again homeomorphic to $S^4$,
and so $B_n(L)$ embeds in $S^4$.
\end{proof}

\setlength{\unitlength}{1mm}
\begin{picture}(95,75)(-40,5)

\put(23,47){$b$}
\put(41,47){$-b$}

\qbezier(-8,60)(-8,75)(7,75)
\qbezier(-5,33)(-5,18)(10,18)

\qbezier(-8,60)(-8,58)(-6,58)
\put(-6,58){\line(1,0){2}}
\qbezier(-4,58)(-2,58)(-2,56)
\qbezier(-4,54)(-2,54)(-2,56)
\qbezier(-8,52)(-8,54)(-6,54)
\qbezier(-8,52)(-8,50)(-6,50)
\put(-6,50){\line(1,0){2}}

\qbezier(-4,50)(-2,50)(-2,48)
\qbezier(-4,46)(-2,46)(-2,48)
\qbezier(-8,44)(-8,46)(-6,46)
\qbezier(-8,44)(-8,42)(-6,42)
\put(-6,42){\line(1,0){2}}
\qbezier(-4,42)(-2,42)(-2,40)
\qbezier(-4,38)(-2,38)(-2,40)
\qbezier(-8,36)(-8,38)(-6,38)
\qbezier(-8,36)(-8,34)(-6,34)
\put(-6,34){\line(1,0){2}}

\put(-5,35){\line(0,1){6}}
\put(-5,43){\line(0,1){6}}
\put(-5,51){\line(0,1){6}}
\put(-5,59){\line(0,1){2}}

\qbezier(-5,59)(-5,65)(1,65)
\qbezier(1,65)(7,65)(7,59)
\put(6,58){\line(1,0){2}}

\qbezier(4,56)(4,58)(6,58)
\qbezier(4,56)(4,54)(6,54)

\qbezier(8,54)(10,54)(10,52)
\qbezier(8,50)(10,50)(10,52)

\qbezier(4,48)(4,50)(6,50)
\qbezier(4,48)(4,46)(6,46)
\put(6,42){\line(1,0){2}}

\qbezier(4,40)(4,42)(6,42)
\qbezier(4,40)(4,38)(6,38)

\qbezier(8,46)(10,46)(10,44)
\qbezier(8,42)(10,42)(10,44)
\put(6,50){\line(1,0){2}}

\qbezier(8,38)(10,38)(10,36)
\qbezier(8,34)(10,34)(10,36)
\put(-4,34){\line(1,0){12}}

\put(7,35){\line(0,1){6}}
\put(7,43){\line(0,1){6}}
\put(7,51){\line(0,1){6}}

\qbezier(8,58)(10,58)(10,60)
\qbezier(10,60)(10,65)(15,65)
\qbezier(15,65)(20,65)(20,60)

\put(7,75){\line(1,0){26}}
\put(9,18){\line(1,0){24}}

\qbezier(33,75)(48,75)(48,60)
\qbezier(33,18)(48,18)(48,33)
\put(48,33){\line(0,1){2}}

\put(17,35){\line(0,1){25}}
\put(30,35){\line(0,1){25}}
\put(17,35){\line(1,0){13}}
\put(17,60){\line(1,0){13}}

\put(37,35){\line(0,1){25}}
\put(50,35){\line(0,1){25}}
\put(37,35){\line(1,0){13}}
\put(37,60){\line(1,0){13}}

\qbezier(7,33)(7,28)(12,28)
\put(12,28){\line(1,0){3}}
\qbezier(15,28)(20,28)(20,33)
\put(20,33){\line(0,1){2}}

\put(27,33){\line(0,1){2}}
\qbezier(27,33)(27,28)(32,28)
\put(32,28){\line(1,0){3}}
\qbezier(35,28)(40,28)(40,33)
\put(40,33){\line(0,1){2}}

\qbezier(27,60)(27,65)(32,65)
\put(32,65){\line(1,0){3}}
\qbezier(35,65)(40,65)(40,60)

\put(-12,10){Figure 2.1.  The pretzel link $P(-7,7,b,-b)$}

\end{picture}

Since cyclic branched coverings of $S^4$ branched over a smoothly unknotted
2-sphere are diffeomorphic to $S^4$, 
the same argument shows that if $L$ is smoothly doubly slice then
$B_n(L)$ embeds smoothly in $S^4$.
Donald shows also that the pretzel links $P(a,-a,a)$,  $P(a,-a,a,-a\pm1)$ and
$P(a,-a,a,-a)$ are all doubly slice, for any $a\in\mathbb{Z}$,
while $P(a,-a,b,-b)$ is doubly slice if $a$ and $b$ are both odd.
In Figure 2.1 the letter $b$ represents $b$ half twists of a 2-strand tangle, 
with the minus sign representing reversal of the twist.

The ``1-surgical embedding" of R.  Budney and B. A. Burton gives smooth embeddings in smooth homotopy 4-spheres for homology spheres obtained by surgery on a link 
with all framings $\pm1$ \cite{BB22}.

\section{Connected sums}

We may form ambient connected sums of embeddings as follows.
Let $j:M\to\Sigma$ and $j':M'\to\Sigma'$ be two embeddings of 3-manifolds in homology 4-spheres,  with complementary regions $X,Y$ and $X',Y'$, respectively.
We shall assume that $\Sigma$ and $\Sigma'$ are oriented, 
and that $M$ and $M'$ have the induced orientation as the boundaries of $X$ and $X'$.
Choose small balls $D\subset\Sigma$ and $D'\subset\Sigma'$ such that
$D\cap{j(M)}$ and $D'\cap{j'(M')}$ are each the equatorial 3-discs,
and let $\Sigma_o=\overline{\Sigma\setminus{D}}$,
$\Sigma_o'=\overline{\Sigma'\setminus{D'}}$,
$M_o=\overline{M\setminus{D}}$ and $M'_o=\overline{M'\setminus{D'}}$.
Let $h^+:\partial{D}\to\partial{D'}$ and $h^-:\partial{D}\to\partial{D'}$ 
be orientation reversing homeomorphisms 
such that $h^+(\partial{D}\cap{X})=\partial{D'}\cap{X'}$
and $h^-(\partial{D}\cap{X})=\partial{D'}\cap{Y'}$.
Then $j|_{M_o}\cup{j'|{M'_o}}$ defines an embedding $j\#^+j'$
of $M\#{M'}$ in $\Sigma_o\cup_{h^+}\Sigma_o'$ and an embedding 
$j\#^-j'$ of $M\#-M'$ in $\Sigma_o\cup_{h^-}\Sigma_o'$.
(If $\Sigma=\Sigma'=S^4$ then the new ambient homology 4-spheres are again 
copies of $S^4$.)
The complementary regions are $X\natural{X'}$ and $Y\natural{Y'}$ for  $j\#^+j'$,
and $X\natural{Y'}$ and $Y\natural{X'}$ for  $j\#^-j'$.
Thus these two ambient connected sums are usually distinct.

A 3-manifold which is a proper connected sum may embed in $S^4$ 
even though its indecomposable summands do not.
The lens space $L=L(p,q)$ does not embed in $S^4$, 
since $H_1(L)$ is not a direct double. 
The connected sum $L\#L$ does not embed either, 
since its torsion linking pairing is not hyperbolic.
However if $p$ is odd then $L\#-L$ embeds as a regular neighbourhood 
of a Seifert hypersurface for the 2-twist spin of a suitable 2-bridge knot.
A. Donald has shown that the connected sums of lens spaces which 
embed smoothly in $S^4$ are just those which are connected sums of pairs 
$L(p_i,q_i)\#-L(p_i,q_i)$ with all $p_i$ odd \cite{Do15}.
(See also Corollary \ref{KKcor} above.)

\section{Bi-epic embeddings}

\begin{defn}
The embedding $j$ is {\it bi-epic\/} if each of the homomorphisms 
$j_{X*}=\pi_1j_X$ and  $j_{Y*}=\pi_1j_Y$ is an epimorphism.
\end{defn}

If $j$ is an embedding such that $\pi_X$ and $\pi_Y$ are nilpotent,
then it is bi-epic,
since $H_1(j_X)$ and $H_1(j_Y)$ are always epimorphisms.

\begin{lemma}
\label{0framedbiepic}
Let $L$ be a bipartedly ribbon link.
Then the embedding $j_L$ constructed using the ribbon discs for each of the sublinks
 is bi-epic.
\end{lemma}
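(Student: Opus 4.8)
The plan is to read off $\pi_1X$ and $\pi_1Y$ from the handle descriptions of the complementary regions given in \S2.6, and then to exhibit an explicit generating set of each which visibly lies in the image of $j_{X*}$, respectively $j_{Y*}$. Recall that, when $j_L$ is built from the ribbon discs, one complementary region --- call it $X$ --- is the union of $D^4\setminus N(\Delta_+)$, the complement of the pushed-in ribbon discs $\Delta_+$ for $L_+$, with $2$-handles attached along the components of $L_-$ (these $2$-handles being furnished by the ribbon discs $\Delta_-$); the other region $Y$ is obtained symmetrically from $D^4\setminus N(\Delta_-)$ by attaching $2$-handles along $L_+$. In particular $\pi_1X$ is a quotient of $\pi_1(D^4\setminus N(\Delta_+))$.

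Next I would record that $\pi_1X$ is generated by the images of the meridians of the components of $L_+$. As noted in \S2.6 this is visible from the handle presentation; equivalently, since $L_+$ is ribbon it is homotopically ribbon (the link version of the statement in \S1.6, proved in the same way), so the boundary $\partial(D^4\setminus N(\Delta_+))=M(L_+)$ of the disc complement includes $\pi_1$-surjectively, and $\pi_1(D^4\setminus N(\Delta_+))$ --- hence also its quotient $\pi_1X$ --- is generated by meridians of $L_+$, these being the images under $\pi_1X(L_+)\to\pi_1M(L_+)\to\pi_1(D^4\setminus N(\Delta_+))$ of the meridional generators of the link group.

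I would then fix a basepoint in the link exterior $X(L)=S^3\setminus N(L)$, which is a common subspace of $M=M(L)$ and of $X$. For each component of $L_+$ its meridian is represented by a loop in $X(L)\subset M$, and the image of that loop under $j_{X*}$ is precisely the corresponding generator of $\pi_1X$ (pushed into $X$ it is a meridian of $\Delta_+$). Hence $\mathrm{Im}(j_{X*})$ contains a generating set of $\pi_1X$, so $j_{X*}$ is onto. Interchanging the roles of $L_+$ and $L_-$ gives that $j_{Y*}$ is onto, and therefore $j_L$ is bi-epic.

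The only substantive ingredient is that a ribbon link is homotopically ribbon; everything else is bookkeeping with inclusion-induced maps. The point to be careful about is the identification of the handle structures of $X$ and $Y$ --- in particular which disc complement and which family of $2$-handles sit on which side --- because the argument rests on the meridians of $L_+$ being simultaneously a generating set for $\pi_1X$ and (obviously) loops in $M$; a wrong bookkeeping of the two sides would break this.
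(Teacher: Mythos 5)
Your argument is correct and is exactly the paper's (one-line) proof with the details filled in: the paper simply observes that $\pi$, $\pi_X$ and $\pi_Y$ are all generated by images of the meridians of $L$, which is precisely the content of your identification of the handle structures together with the fact that ribbon disc complements have meridian-generated fundamental group. Your bookkeeping of which disc complement and which $2$-handles lie on which side is also consistent with the construction in \S2.5.
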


\begin{proof}
In this case $\pi$,  $\pi_X$ and $\pi_Y$ are generated by images of the meridians of $L$.
\end{proof}

\begin{lemma} 
\label{minimal}
The homomorphisms $j_{X*}$ and $j_{Y*}$ are both epimorphisms if and only if 
$j_\Delta=(j_{X*},j_{Y*})$ is an epimorphism.
\end{lemma}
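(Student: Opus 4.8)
The statement to prove is Lemma \ref{minimal}: the pair of inclusion-induced maps $j_{X*}:\pi\to\pi_X$ and $j_{Y*}:\pi\to\pi_Y$ are both epimorphisms if and only if the combined map $j_\Delta=(j_{X*},j_{Y*}):\pi\to\pi_X\times\pi_Y$ is an epimorphism. The plan is to exploit the pushout square for the fundamental groups of $S^4=X\cup_M Y$ established by Van Kampen's Theorem just before Lemma \ref{VKsplitmono}, namely that $\pi_X *_\pi \pi_Y \cong \pi_1(S^4)=1$. This amalgamated free product being trivial is the engine of the whole argument.

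The easy direction is ($\Rightarrow$). If $j_{X*}$ and $j_{Y*}$ are each onto, then for any $(x,y)\in\pi_X\times\pi_Y$ choose $g,h\in\pi$ with $j_{X*}(g)=x$ and $j_{Y*}(h)=y$; I would show $j_\Delta(g)\cdot$ (something in the image) reaches $(x,y)$ — more directly, the image of $j_\Delta$ contains $j_\Delta(g)=(x, j_{Y*}(g))$ and I want to correct the second coordinate. Here is where I use the pushout: since $\pi_X *_\pi \pi_Y=1$, every element of $\pi_X$ (viewed in the amalgam) equals $1$, so there is a relation in $\pi$ forcing $j_{Y*}$ of a preimage of $1\in\pi_X$ to be an arbitrary element of $\pi_Y$ — actually the cleaner route: the image of $j_\Delta$ is a subgroup $H\le\pi_X\times\pi_Y$ whose two coordinate projections are onto (that is exactly the hypothesis), and I claim such $H$ with $\pi_X*_\pi\pi_Y=1$ must be everything. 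By Goursat's lemma $H$ corresponds to an isomorphism $\pi_X/A\cong\pi_Y/B$ for normal subgroups $A\trianglelefteq\pi_X$, $B\trianglelefteq\pi_Y$; one then checks that the amalgam $\pi_X*_\pi\pi_Y$ surjects onto this common quotient $\pi_X/A\cong\pi_Y/B$, which being trivial forces $A=\pi_X$, $B=\pi_Y$, hence $H=\pi_X\times\pi_Y$. The reverse direction ($\Leftarrow$) is immediate: if $j_\Delta$ is onto then composing with either coordinate projection $\pi_X\times\pi_Y\to\pi_X$ (resp.\ $\to\pi_Y$) shows $j_{X*}$ (resp.\ $j_{Y*}$) is onto.

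The step I expect to be the main obstacle is the Goursat-lemma argument in the forward direction, specifically verifying that the common quotient $Q:=\pi_X/A\cong\pi_Y/B$ receives a surjection from the pushout $\pi_X*_\pi\pi_Y$ — one must check the two maps $\pi\to\pi_X\to Q$ and $\pi\to\pi_Y\to Q$ agree (so that a map out of the amalgam is well-defined), and this agreement is precisely the defining property of $H=\mathrm{im}(j_\Delta)$ sitting as the fibre product $\pi_X\times_Q\pi_Y$. Once that compatibility is in place the triviality of $\pi_X*_\pi\pi_Y$ (from Van Kampen and $\pi_1 S^4=1$) kills $Q$ and the conclusion follows. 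An alternative, more hands-on argument avoiding Goursat: pick $(x,y)\in\pi_X\times\pi_Y$, lift $x$ to $g\in\pi$ via $j_{X*}$; then $(x,y)=j_\Delta(g)\cdot(1, j_{Y*}(g)^{-1}y)$, so it suffices to show every element of the form $(1,z)$ lies in $\mathrm{im}(j_\Delta)$; writing $z\in\pi_Y$ as a word and using that each generator of $\pi_Y$ is, in the amalgam, expressible via $\pi$ and $\pi_X$-elements which are themselves trivial, one rewrites $z$ as $j_{Y*}$ of a product of $\pi$-elements each mapping to $1$ in $\pi_X$. This is really the same content; I would present whichever is shorter in final write-up, but the conceptual Goursat version makes the role of "$\pi_X*_\pi\pi_Y=1$" most transparent.
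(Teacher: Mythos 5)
Your proposal is correct, and your main (Goursat) argument is a genuinely different packaging of the key step from the paper's. The paper works directly with the kernels $K_X=\mathrm{Ker}(j_{X*})$ and $K_Y=\mathrm{Ker}(j_{Y*})$: when both maps are onto, the Van Kampen pushout is $\pi/K_XK_Y\cong\pi_X/j_{X*}(K_Y)\cong\pi_Y/j_{Y*}(K_X)$, and triviality of $\pi_1(S^4)$ forces $j_{X*}(K_Y)=\pi_X$ and $j_{Y*}(K_X)=\pi_Y$; the explicit formula $j_\Delta(gh)=(j_{X*}(h),j_{Y*}(g))$ for $g\in K_X$, $h\in K_Y$ then gives surjectivity in one line. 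You instead view $H=\mathrm{Im}(j_\Delta)$ as a subdirect product, invoke Goursat's lemma to realize $H$ as the fibre product over a common quotient $Q=\pi_X/A\cong\pi_Y/B$, check (correctly) that the two maps to $Q$ agree on $\pi$ precisely because $j_\Delta(\pi)\subseteq H$, and then use the universal property of the pushout to surject the trivial group onto $Q$, killing $Q$ and hence giving $H\supseteq A\times B=\pi_X\times\pi_Y$. Both arguments hinge on the same input (the pushout is $\pi_1(S^4)=1$); yours buys a structural picture of the image as a fibre product at the cost of the Goursat machinery, while the paper's is more elementary and also isolates the stronger intermediate fact that $j_{X*}$ restricted to $K_Y$ (and $j_{Y*}$ restricted to $K_X$) is already onto. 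Your sketched ``hands-on'' alternative is essentially the paper's proof — the elements $(1,z)$ you need are exactly $j_\Delta(g)$ for $g\in K_X$ with $j_{Y*}(g)=z$, which exist by the kernel computation — and would be cleaner phrased through the kernels than through word rewriting in the amalgam.
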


\begin{proof}
Let $K_X=\mathrm{Ker}(j_{X*})$ and $K_Y=\mathrm{Ker}(j_{Y*})$.
If $j_{X*}$ and $j_{Y*}$ are epimorphisms then they induce isomorphisms
$\pi/K_X\to\pi_X$ and $\pi/K_Y\to\pi_Y$.
Hence $\pi/K_XK_Y\cong\pi_X/j_{X*}(K_Y)$ and $\pi/K_XK_Y\cong\pi_Y/j_{Y*}(K_X)$.
Since  $\pi_1(X\cup_MY)=1$, these quotients must all be trivial.
If $g\in{K_X}$ and $h\in{K_Y}$ then $j_\Delta(gh)=(j_{X*}(h),j_{Y*}(g))$.
Hence $j_\Delta$ is an epimorphism.

Conversely, if $j_\Delta$ is an epimorphism then so are 
its components  $j_{X*}$ and $j_{Y*}$.
\end{proof}

The link $L$ obtained from the Borromean rings $Bo=6^3_2$ by replacing 
one component by its $(2,1)$-cable and another by its $(3,1)$-cable 
may be partitioned as the union of two trivial links in three ways. 
The resulting three embeddings of $M(L)$ in $S^4$ each have $Y\simeq{S^1}\vee2S^2$,
but the groups $\pi_X$ have presentations $\langle{a,b}\mid[a,b^2]^3\rangle$,
$\langle{a,c}\mid[a,c^3]^2\rangle$, and $\langle{b,c}\mid[b^2,c^3]\rangle$, 
respectively, and so are distinct.
In the first two cases $\pi$ has torsion, 
while in the third case $X$ is aspherical.
(None of these groups is  abelian.)
This example can obviously be generalized in various ways.
The homology sphere in  Figure 2.2 is another example;
the embedding determined by the link is bi-epic, but the 3-manifold also has
an embedding with both complementary regions contractible.
However the latter embedding may not derive from a
0-framed link representing the homology sphere.

\setlength{\unitlength}{1mm}
\begin{picture}(95,68)(-29.3,10)

\put(-5,69){$\vartriangleright$}
\put(-8,66){$x$}
\put(60,69){$\vartriangleright$}
\put(58,66){$y$}
\put(26,65){$\bullet$}
\put(28,67){$s$}
\put(26,50.6){$\bullet$}
\put(25.5,53){$r$}

\linethickness{1pt}
\put(-15,70){\line(1,0){22}}
\put(-20,30){\line(0,1){35}}
\qbezier(-20,65)(-20,70)(-15,70)
\put(-15,25){\line(1,0){22}}
\qbezier(-20,30)(-20,25)(-15,25)
\qbezier(7,25)(10,25)(10,28)
\qbezier(7,70)(10,70)(10,67)
\put(10,30){\line(0,1){6}}
\put(10,38){\line(0,1){6}}
\put(10,46){\line(0,1){6}}
\put(10,54){\line(0,1){11}}

\put(48,70){\line(1,0){22}}
\put(75,30){\line(0,1){35}}
\put(48,25){\line(1,0){22}}
\qbezier(70,70)(75,70)(75,65)
\qbezier(70,25)(75,25)(75,30)
\qbezier(45,67)(45,70)(48,70)
\qbezier(45,28)(45,25)(48,25)

\put(45,30){\line(0,1){8}}
\put(45,40){\line(0,1){4}}
\put(45,46){\line(0,1){4}}
\put(45,52){\line(0,1){3}}
\put(45,57){\line(0,1){3}}
\put(45,62){\line(0,1){3}}

\thinlines
\put(5,29){\line(1,0){45}}
\qbezier(5,29)(3.25,29)(3.25,30.75)
\qbezier(3.25,30.75)(3.25,32.5)(5,32.5)
\put(5,32.5){\line(1,0){4}}
\qbezier(15, 32.5)(17.25, 32.5)(17.25,34.75)
\qbezier(15,37)(17.25,37)(17.25,34.75)
\put(5,37){\line(1,0){10}}
\qbezier(5,37)(2.75,37)(2.75,39.25)
\qbezier(2.75,39.25)(2.75,41.5)(5,41.5)
\put(5, 41.5){\line(1,0){4}}
\put(5,45){\line(1,0){13}}
\qbezier(18,45)(20.5,45)(20.5,42.5)
\qbezier(5,45)(2.75,45)(2.75,46.25)
\qbezier(2.75,46.75)(2.75,48.5)(5,48.5)
\put(5,48.5){\line(1,0){4}}
\put(5,53){\line(1,0){10}}
\qbezier(5,53)(2.75,53)(2.75,55.25)
\qbezier(2.75, 55.25)(2.75,57.5)(5,57.5)
\put(5,57.5){\line(1,0){4}}

\put(5,53){\line(1,0){16}}
\qbezier(21,53)(23.5,53)(23.5,50.5)
\put(23.5,34.5){\line(0,1){16}}
\qbezier(23.5,34.5)(23.5,32)(26,32)
\put(26,32){\line(1,0){15}}

\put(5,61.5){\line(1,0){4}}
\qbezier(5,61.5)(2.75,61.5)(2.75,63.75)
\qbezier(2.75,63.75)(2.75,66)(5,66)
\put(5,66){\line(1,0){45}}

\put(11, 32.5){\line(1,0){4}}
\put(11,41.5){\line(1,0){11.5}}
\qbezier(24.6,41.5)(34.3,42)(44,42.5)
\put(11,48.5){\line(1,0){11.5}}
\put(24.5,48.5){\line(1,0){2}}
\put(28,48.5){\line(1,0){7}}
\put(37,48.5){\line(1,0){7}}

\put(11,57.5){\line(1,0){13}}
\qbezier(24,57.5)(27,57.5)(27,54.5)
\put(27,47.5){\line(0,1){7}}
\qbezier(27,47.5)(27,45)(29.5,45)
\put(29.5,45){\line(1,0){20.5}}

\put(11,61.5){\line(1,0){20}}

\put(46,63.5){\line(1,0){4}}
\qbezier(50,63.5)(51.25,63.5)(51.25,64.75)
\qbezier(50,66)(51.25,66)(51.25,64.75)

\put(46,58.5){\line(1,0){4}}
\qbezier(50,58.5)(51.25,58.5)(51.25,59.75)
\qbezier(50,61)(51.25,61)(51.25,59.75)
\put(46,53.5){\line(1,0){4}}
\qbezier(50,53.5)(51.25,53.5)(51.25,54.75)
\qbezier(50,56)(51.25,56)(51.25,54.75)

\put(40,51){\line(1,0){10}}
\qbezier(40,51)(38.75,51)(38.75,52.25)
\qbezier(38.75,52.25)(38.75,53.5)(40,53.5)
\put(40,53.5){\line(1,0){4}}
\qbezier(40,56)(38.75,56)(38.75,57.25)
\qbezier(38.75,57.25)(38.75,58.5)(40,58.5)
\put(40, 58.5){\line(1,0){4}}
\put(40,56){\line(1,0){10}}
\put(40,61){\line(1,0){10}}
\qbezier(40,61)(38.75,61)(38.75,62.25)
\qbezier(38.75,62.25)(38.75,63.5)(40,63.5)

\put(40,63.5){\line(1,0){4}}

\put(38.2,39){\line(1,0){11.8}}
\put(46,32){\line(1,0){4}}
\put(46, 36){\line(1,0){4}}
\qbezier(50,29)(51.5,29)(51.5,30.5)
\qbezier(50,32)(51.5,32)(51.5,30.5)
\qbezier(50,36)(51.5,36)(51.5,37.5)
\qbezier(50,39)(51.5,39)(51.5,37.5)
\qbezier(50,42.5)(51.25,42.5)(51.25,43.75)
\qbezier(50,45)(51.25,45)(51.25,43.75)

\qbezier(50,48.5)(51.25,48.5)(51.25,49.75)
\qbezier(50,51)(51.25,51)(51.25,49.75)

\put(46,42.5){\line(1,0){4}}
\put(46, 48.5){\line(1,0){4}}

\put(40,32){\line(1,0){4}}

\put(36,42.5){\line(0,1){1.5}}
\put(36,46){\line(0,1){10.5}}

\qbezier(36,41.2)(36,39)(38.2,39)
\qbezier(31,61.5)(36,61.5)(36,56.5)

\put(37,36){\line(1,0){7}}
\put(24.5,36){\line(1,0){12.5}}
\qbezier(20.5,38.5)(20.5,36)(23,36)
\put(20.5,38.5){\line(0,1){2}}

\put(-8.8,15){Figure 2.2 \quad{Both complements have group $I^*$}}
\end{picture}

If the embedding is nilpotent then $j_\Delta$ induces epimorphisms on all corresponding quotients of the lower central series, 
since $H_1(j_\Delta)$ is an isomorphism.
However these quotients are rarely isomorphic.

\begin{theorem}
\label{Hi17-thm6.2}
If $\pi/\gamma_3^\mathbb{Q}\pi\cong
(\pi_X)/\gamma_3^\mathbb{Q}\pi_X)\times(\pi_Y)/\gamma_3^\mathbb{Q}\pi_Y)$
then $\chi(X)=1-\beta$ or $3-\beta$.
\end{theorem}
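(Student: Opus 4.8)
The plan is to convert the hypothesis into a statement about ranks of cup products and then to exploit the interplay between the cup-product structure of $M$ and those of $X$ and $Y$. Recall the classical fact that for a finitely generated group $G$ the second graded piece of its rational lower central series — the degree-$2$ part of the holonomy Lie algebra — satisfies
\[
\mathrm{rank}\bigl(\gamma_2^\mathbb{Q}G/\gamma_3^\mathbb{Q}G\bigr)=\binom{b_1(G)}{2}-\mathrm{rank}(\cup_G),
\]
where $b_1(G)=\dim H^1(G;\mathbb{Q})$ and $\cup_G:\wedge^2H^1(G;\mathbb{Q})\to H^2(G;\mathbb{Q})$ is the cup product. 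This follows from the five-term exact sequence of the central extension $1\to\gamma_2^\mathbb{Q}G/\gamma_3^\mathbb{Q}G\to G/\gamma_3^\mathbb{Q}G\to G/\gamma_2^\mathbb{Q}G\to1$, together with the fact that the natural map $H_2(G;\mathbb{Q})\to H_2(G^{ab};\mathbb{Q})=\wedge^2H_1(G;\mathbb{Q})$ is dual to $\cup_G$ and so has rank $\mathrm{rank}(\cup_G)$. Since the rational lower central series of a direct product is the product of those of the factors, the assumed isomorphism $\pi/\gamma_3^\mathbb{Q}\pi\cong(\pi_X/\gamma_3^\mathbb{Q}\pi_X)\times(\pi_Y/\gamma_3^\mathbb{Q}\pi_Y)$ gives both $\beta=\beta_1(X)+\beta_1(Y)$ and, using $\binom{a+b}{2}=\binom a2+\binom b2+ab$,
\[
\beta_1(X)\,\beta_1(Y)=\mathrm{rank}(\cup_\pi)-\mathrm{rank}(\cup_{\pi_X})-\mathrm{rank}(\cup_{\pi_Y}).
\]

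Next I would pass from group cup products to manifold cup products. By Hopf's theorem $H_2(G;\mathbb{Q})$ is a quotient of $H_2(Z;\mathbb{Q})$ whenever $\pi_1Z\cong G$, so the classifying map induces a ring monomorphism $H^2(G;\mathbb{Q})\hookrightarrow H^2(Z;\mathbb{Q})$ which is the identity on $H^1$; hence $\mathrm{rank}(\cup_\pi)=\mathrm{rank}(\cup_M)$, and similarly $\mathrm{rank}(\cup_{\pi_X})=\mathrm{rank}(\cup_X)$ and $\mathrm{rank}(\cup_{\pi_Y})=\mathrm{rank}(\cup_Y)$. The Mayer--Vietoris sequence for $S^4=X\cup_MY$ identifies $H^i(M;\mathbb{Q})$ with $H^i(X;\mathbb{Q})\oplus H^i(Y;\mathbb{Q})$ for $i=1,2$, the two summands being the images of $i_X^*$ and $i_Y^*$. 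Since $i_X^*$ is injective on $H^2$ and respects cup products, $\mathrm{rank}(\cup_X)$ equals the rank of the restriction of $\cup_M$ to $\wedge^2V_X$, where $V_X:=i_X^*H^1(X;\mathbb{Q})\subseteq V:=H^1(M;\mathbb{Q})$, and likewise for $Y$. As $\mathrm{rank}(\cup_M)\le\dim H^2(M;\mathbb{Q})=\beta=\beta_1(X)+\beta_1(Y)$ and the two restricted ranks are nonnegative, the displayed identity forces $\beta_1(X)\beta_1(Y)\le\beta_1(X)+\beta_1(Y)$, i.e. $(\beta_1(X)-1)(\beta_1(Y)-1)\le1$.

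We may assume $\chi(X)\le\chi(Y)$, equivalently $\beta_1(Y)\le\beta_1(X)$; then the inequality forces $\beta_1(Y)\le1$ unless $\beta_1(X)=\beta_1(Y)=2$, and excluding this borderline case is the crux. In it the displayed identity becomes $\mathrm{rank}(\cup_M)=4+\mathrm{rank}\bigl(\cup_M|_{\wedge^2V_X}\bigr)+\mathrm{rank}\bigl(\cup_M|_{\wedge^2V_Y}\bigr)$, so both restricted ranks are $0$ and $\mathrm{rank}(\cup_M)=4=\dim H^2(M;\mathbb{Q})$. Identifying $H^2(M;\mathbb{Q})$ with $H^1(M;\mathbb{Q})^*$ by Poincar\'e duality, the cup-product $3$-form $\mu_M\in\wedge^3V^*$ decomposes according to $V=V_X\oplus V_Y$; its components in $\wedge^3V_X^*$ and $\wedge^3V_Y^*$ vanish (here trivially since $\dim V_X=\dim V_Y=2$, and in general by Lemma~\ref{Hi17-lem2.2}), while the vanishing of $\cup_M|_{\wedge^2V_X}$ and of $\cup_M|_{\wedge^2V_Y}$ kills the remaining components, in $\wedge^2V_X^*\otimes V_Y^*$ and $V_X^*\otimes\wedge^2V_Y^*$. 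Hence $\mu_M=0$ and $\mathrm{rank}(\cup_M)=0\neq4$, a contradiction. Therefore $\beta_1(Y)\in\{0,1\}$, so $\beta_1(X)\in\{\beta,\beta-1\}$ and $\chi(X)=1+\beta-2\beta_1(X)\in\{1-\beta,\,3-\beta\}$. The principal obstacle is precisely the borderline case $\beta_1(X)=\beta_1(Y)=2$; the remainder is bookkeeping with the holonomy formula and Mayer--Vietoris.
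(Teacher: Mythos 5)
Your proof is correct and follows essentially the same route as the paper's: the identification of the rank of $\gamma_2^\mathbb{Q}G/\gamma_3^\mathbb{Q}G$ with the corank of the rational cup product, the resulting binomial inequality $\beta_1(X)\beta_1(Y)\le\beta$, and the exclusion of the borderline case $\beta_1(X)=\beta_1(Y)=2$ (so $\beta=4$) by consideration of $\mu_M$. Your handling of that borderline case, showing that the vanishing of the restricted cup products forces $\mu_M=0$ and hence $\mathrm{rank}(\cup_M)=0\neq4$, is just a more explicit version of the paper's one-line appeal to $\mu_M$.
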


\begin{proof}
We use the fact that if $G$ is a group then the kernel of cup product 
$\cup_G$ from $\wedge^2H^1(G;\mathbb{Q})$ to $H^2(G;\mathbb{Q})$ 
is isomorphic to $\gamma_q^\mathbb{Q}G/\gamma_3^\mathbb{Q}G$.
(See \cite[\S12.2]{AIL}.)
Hence the rank of $\gamma_2^\mathbb{Q}G/\gamma_3^\mathbb{Q}G$ lies between 
$\binom{\beta_1(G)}2-\beta_2(G)$ and $\binom{\beta_1(G)}2$.

If the 2-step quotients ($G/\gamma_3^\mathbb{Q}G$) are isomorphic then so are their commutator subgroups
$\gamma_2^\mathbb{Q}\pi/\gamma_3^\mathbb{Q}\pi\cong
(\gamma_2^\mathbb{Q}\pi_X/\gamma_3^\mathbb{Q}\pi_X)\times
(\gamma_2^\mathbb{Q}\pi_Y/\gamma_3^\mathbb{Q}\pi_Y)$.
Let $\gamma=\beta_1(X)$. 
Then $\gamma\geq\frac\beta2$, and $\beta_2(\pi)\leq\beta$, 
so the above bounds give
\[
\binom\beta2-\beta\leq\binom\gamma2+\binom{\beta-\gamma}2.
\]
This reduces to $\beta\geq\gamma(\beta-\gamma)$, and so
either $\gamma\geq\beta-1$ or $\beta=4$ and $\gamma=2$.
In the latter case, consideration of $\mu_M$ shows that the rank of 
$\gamma_3^\mathbb{Q}\pi/\gamma_3^\mathbb{Q}\pi$ 
is at least $3\not=\binom22+\binom22$, 
so this cannot occur.
Thus $\chi(X)=1+\beta-2\gamma\leq3-\beta$.
\end{proof}

If $j$ is any embedding with $\chi(X)=1-\beta$ 
then $H_2(j_\Delta;\mathbb{Q})$ is an epimorphism,
and so $j_\Delta$ induces isomorphisms on all quotients of
the rational lower central series.
(If $H_1(Y)=0$ then $\pi/\gamma_n\pi\cong\pi_X/\gamma_n\pi_X$, for all $n$.)

If $F$ is a closed orientable surface then the embedding $j$ of
$M\cong{F}\times{S^1}$ as the boundary of a regular neighbourhood 
of the standard unknotted embedding of $F$ in $S^4$ has 
$\chi(X)=3-\beta$ and $j_\Delta$ an isomorphism.

The cases when $j_\Delta$ is an isomorphism are quite rare.

\begin{lemma}
If $j_\Delta$ is an isomorphism then either $M\cong{F}\times{S^1}$ for some 
aspherical closed orientable surface $F$ or $M\cong\#^r(S^2\times{S^1})$ for some $r\geq0$.
\end{lemma}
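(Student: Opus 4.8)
The plan is to use $j_\Delta$ to identify $\pi$ with the direct product $\pi_X\times\pi_Y$ --- so that $j_{X*}$ and $j_{Y*}$ become the two coordinate projections --- and then to invoke the structure theory of $3$-manifold groups, treating separately the case in which one factor is trivial and the case in which both are nontrivial.

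First I would dispose of the case in which one factor, say $\pi_Y$, is trivial. Then $j_{X*}$ is an isomorphism $\pi\cong\pi_X$, so a classifying map $X\to K(\pi,1)$ restricts on $M=\partial X$ to a classifying map $c_M\colon M\to K(\pi,1)$; since $[M]$ bounds in $X$, this gives $c_{M*}[M]=0$ in $H_3(\pi;\mathbb{Z})$. Writing $M$ as a connected sum $P_1\#\dots\#P_k$ of prime manifolds, so that $K(\pi,1)\simeq\bigvee_iK(\pi_1P_i,1)$ and $c_{M*}[M]=\sum_i(c_{P_i})_*[P_i]$ under $H_3(\pi)\cong\bigoplus_iH_3(\pi_1P_i)$, I would check that $(c_{P_i})_*[P_i]$ generates $H_3(\pi_1P_i)$ and is nonzero unless $\pi_1P_i$ is trivial: this is clear if $P_i\cong S^3$; if $P_i$ is aspherical then $c_{P_i}$ is a homotopy equivalence; and if $\pi_1P_i$ is finite and nontrivial then $P_i=S^3/\pi_1P_i$ and the Cartan--Leray spectral sequence identifies $H_3(\pi_1P_i)$ with a nonzero cyclic group on which $(c_{P_i})_*$ is onto. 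Hence every prime summand is $S^3$ or $S^2\times S^1$, so $M\cong\#^r(S^2\times S^1)$; the subcase $\pi_X=1$ is symmetric.

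Now suppose both $\pi_X$ and $\pi_Y$ are nontrivial. If $M$ were not prime, $\pi$ would be a free product of two nontrivial groups, which is directly indecomposable --- a contradiction --- so $M$ is prime. If $\pi$ were finite, then $M=S^3/\pi$ by the elliptization theorem, so every Sylow subgroup of $\pi=\pi_X\times\pi_Y$ is cyclic or generalized quaternion; as a direct product of two nontrivial $p$-groups is neither, $\gcd(|\pi_X|,|\pi_Y|)=1$. But $\pi_X^{ab}\cong\pi_Y^{ab}$ by the Hantzsche observation of \S2.3 (reading $\tau_X=\pi_X^{ab}$, $\tau_Y=\pi_Y^{ab}$), so coprimality forces $\pi_X^{ab}=\pi_Y^{ab}=1$; then $\pi_X$ and $\pi_Y$ are nontrivial perfect groups of coprime order, which is impossible since a nontrivial perfect group has even order. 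Thus $\pi$ is infinite, and since $\pi\not\cong\mathbb{Z}$ we have $M\not\cong S^2\times S^1$, so $M$ is a closed irreducible $3$-manifold with infinite fundamental group, hence aspherical; therefore $\pi$ is an orientable $PD_3$ group that splits as a nontrivial direct product.

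The heart of the matter is then the fact that a nontrivial direct product $A\times B$ which is a $PD_3$ group must, after renaming, have $A$ a $PD_1$ group and $B$ a $PD_2$ group --- so $A\cong\mathbb{Z}$ and $B\cong\pi_1F$ for a closed surface $F$, which is orientable (because $\pi$ is) and aspherical (because $B$ is infinite and torsion-free). I would obtain this either by citing the classification of Poincar\'e duality groups that decompose as direct products, or by computing $H^*(\pi;\mathbb{Z}\pi)$ via a K\"unneth argument, combined with Strebel's bound $\mathrm{cd}\leq2$ for infinite-index subgroups of a $PD_3$ group and the theorem of Eckmann--M\"uller--Linnell that every $PD_2$ group is a surface group. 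Finally $M$ and $F\times S^1$ are homotopy-equivalent closed aspherical (Haken) $3$-manifolds, hence homeomorphic by Waldhausen's rigidity theorem, which completes the proof. The step I expect to be the main obstacle is this Poincar\'e-duality splitting --- in particular, carrying out the K\"unneth computation cleanly over $\mathbb{Z}$ rather than merely over $\mathbb{Q}$, and tracking the orientation character so as to conclude that $F$ is orientable and not merely that $B$ is some $PD_2$ group.
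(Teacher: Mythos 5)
Your proof is correct and follows the same case division as the paper's, but where the paper disposes of the two main cases by citation you supply arguments, and in one case your argument is genuinely different. For $\pi_Y=1$ the paper simply cites Daverman \cite{Da94} for the fact that a $\pi_1$-isomorphism from the boundary of a compact $4$-manifold forces the group to be free; your $H_3$-vanishing argument through the prime decomposition is the standard proof of that result, and the paper itself sketches it two paragraphs after the lemma. For the case with both factors nontrivial and one infinite the paper cites Epstein \cite{Ep61}, whose proof is by direct $3$-manifold topology; you instead reduce to the statement that a $PD_3$-group which is a nontrivial direct product must be $\mathbb{Z}\times\pi_1F$, and then invoke Waldhausen rigidity. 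This is a more algebraic and self-contained route, at the cost of the splitting theorem for direct products of Poincar\'e duality groups: the version you would cite (Bieri) carries $FP$ hypotheses on the factors, which you should discharge explicitly --- both factors are torsion-free and infinite, so each has infinite index and Strebel's bound gives $c.d.\leq2$, and a direct factor of the finitely presentable group $\pi$ is a quotient by a finitely generated normal subgroup and hence finitely presentable; together with Eckmann--M\"{u}ller--Linnell this closes the step you flagged as the main obstacle.

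Two smaller remarks. The claim that a nontrivial free product is directly indecomposable is true but deserves a reference (Kurosh, or ends of groups); alternatively it can be bypassed, since the $PD_3$ splitting and the $H_3$ argument already force irreducibility through the group. In the finite case your detour through Feit--Thompson is heavier than necessary: the paper's version observes that $\tau_X\cong\tau_Y\not=0$ would give a common prime $p$ and hence a subgroup $(\mathbb{Z}/p\mathbb{Z})^2\leq\pi$, which is impossible for a group with periodic cohomology, and rules out the remaining perfect case $\pi_X\cong\pi_Y\cong{I^*}$ by the same token using the unique involution of $I^*$.
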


\begin{proof}
If $\pi\cong\pi_X\times\pi_Y$ with $\pi_X$ infinite and $\pi_Y\not=1$ then
$M\cong{F}\times{S^1}$ for some aspherical closed orientable surface $F$ \cite{Ep61}.
If $\pi_Y=1$ then $j_{X*}$ is an isomorphism, 
and so $\pi$ is a free group \cite{Da94}.
Hence $M\cong\#^r(S^2\times{S^1})$ for some $r\geq1$.
Finally, if $\pi_X$ and $\pi_Y$ are both finite and have non-trivial abelianization
then their orders have a common prime factor $p$, and so $\pi$ has
$(\mathbb{Z}/p\mathbb{Z})^2$ as a subgroup, which is not possible.
We may also exclude $\pi_X\cong\pi_Y\cong{I^*}$, for a similar reason,
and so there remains only the case $\pi=1$, when $M=S^3=\#^0(S^2\times{S^1})$.
\end{proof}

These 3-manifolds do in fact have bi-epic embeddings with $j_\Delta$ an isomorphism.

If $\pi_X$ is a non-trivial proper direct factor of $\pi$ then 
$\pi\cong\pi_1F\times\mathbb{Z}$ for some closed orientable surface $F$,
and so $M\cong{F}\times{S^1}$.
In this case, either $F=S^2$ and $\pi_1X\cong\mathbb{Z}$ 
or $F$ is aspherical and $\pi_1X\cong\pi_1F$.

If $\pi_X$ is a free factor of $\pi$ then $M\cong{M_X}\#{M'}$, 
where $\pi_1M_X\cong\pi_X$. 
The degree-1 collapse of $M$ onto $M_X$ induces an isomorphism
$H_3(M)\cong{H_3(M_X)}$.
Since $M=\partial{X}$ the images of $[M]$ and hence of $[M_X]$ in $H_3(\pi_X)$ are 0. 
Hence $\pi_X$ is a free group (as in \cite{Da94}). 
In particular,
$\pi\cong\pi_X*\pi_Y$ only if $\pi$ is itself a free group,
and then $M\cong\#^\beta(S^2\times{S^1})$.

\section{Modifying the group}

We may modify embeddings by ``2-knot surgery" on a complementary region, 
as follows.
Let $N_\gamma$ be a regular neighbourhood in $X$ of a simple closed curve representing $\gamma\in\pi_X$.
Then $\overline{S^4\setminus{N_\gamma}}\cong{S^2\times{D^2}}$ 
contains $Y$ and $M$.
If $K$ is a 2-knot with exterior $E(K)$ then 
$\Sigma=\overline{S^4\setminus{N_\gamma}}\cup{E(K)}$ 
is a homotopy 4-sphere, and so is homeomorphic to $S^4$.
The complementary components to $M$ in $\Sigma$ are
$X_{\gamma,K}=\overline{X\setminus{N_\gamma}}\cup{E(K)}$ and $Y$.
This construction applies equally well to simple closed curves in $Y$.
We shall say that a 2-knot surgery is {\it proper\/} if $\gamma$ is essential in $X$ (or $Y$).

When $M=S^2\times{S^1}$ is embedded as the boundary of 
a regular neighbourhood of the trivial 2-knot, 
with $X=D^3\times{S^1}$ and $Y=S^2\times{D^2}$,
the core $S^2\times\{0\}\subset{Y_1}$ is $K$,
realized as a satellite of the  trivial knot.
This construction gives all possible embeddings of $S^2\times{S^1}$ 
in $S^4$ (up to equivalence), 
by the result of Aitchison cited above.

Let $t$ be the image of a meridian for $K$ in the knot group $\pi{K}=\pi_1E(K)$.
If $\gamma$ has infinite order in $\pi_X$ then 
$\pi_1X_{\gamma,K}$ is a free product with amalgamation $\pi_X*_\mathbb{Z}\pi{K}$; 
if it has finite order $c$ then 
$\pi_1X_{\gamma,K}\cong
\pi_X)*_{\mathbb{Z}/c\mathbb{Z}}(\pi{K}/\langle\langle{t^c}\rangle\rangle)$.
(Note that if $K=\tau_ck$ is a non-trivial twist spin then
$\pi{K}/\langle\langle{t^c}\rangle\rangle\cong\pi{K}'\rtimes
\mathbb{Z}/c\mathbb{Z}$.)

If $\gamma=1$ then any simple closed curve representing $\gamma$
is isotopic to one contained in a small ball,
since homotopy implies isotopy for curves in 4-manifolds.
Hence in this case 2-knot surgery does not change the topology of $X$.

It is well known that  a nilpotent group with cyclic abelianization is cyclic.
It follows that the natural projection of $\pi_1X_{\gamma,K}$ onto $\pi_X$ 
induces isomorphisms of corresponding quotients by terms of the lower central series.
Thus we cannot distinguish these groups by such quotients.
Nevertheless, we have the following result.

\begin{theorem}
If $\pi_X\not=1$ then there are infinitely many groups of the form $\pi_1X_{\gamma,K}$.
\end{theorem}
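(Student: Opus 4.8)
The plan is to keep $\gamma$ fixed and vary the $2$-knot $K$, distinguishing the resulting groups by torsion. (``Infinitely many groups'' is to be read as infinitely many isomorphism classes.) The one input needed is the standard fact that in an amalgamated free product $A*_CB$ every element of finite order is conjugate into $A$ or into $B$ --- a finite subgroup fixes a vertex of the Bass--Serre tree, hence lies in a vertex stabiliser --- so the set of orders of finite-order elements of $A*_CB$ is the union of the corresponding sets for $A$ and $B$. Fix $\gamma\in\pi_X$ with $\gamma\neq1$, and write $S$ for the (fixed) set of orders of finite-order elements of $\pi_X$.

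First suppose $\gamma$ has infinite order. For each large odd prime $q$ let $K_q$ be the $2$-twist spin of the $(2,q)$-torus knot; by Zeeman's theorem $\pi K_q\cong(\mathbb{Z}/q\mathbb{Z})\rtimes_{-1}\mathbb{Z}$, with the meridian $t$ generating the $\mathbb{Z}$-factor and $\pi K_q'=\mathbb{Z}/q\mathbb{Z}$. Then $\pi_1X_{\gamma,K_q}\cong\pi_X*_\mathbb{Z}\bigl((\mathbb{Z}/q\mathbb{Z})\rtimes_{-1}\mathbb{Z}\bigr)$, the amalgamated $\mathbb{Z}$ being $\langle\gamma\rangle=\langle t\rangle$. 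This group contains an element of order $q$, while by the remark above its finite-order elements have orders in $\{1,q\}\cup S$. Hence $\pi_1X_{\gamma,K_q}\not\cong\pi_1X_{\gamma,K_{q'}}$ for distinct primes $q,q'\notin S$, which yields infinitely many isomorphism classes as soon as $S$ is finite --- in particular when $\pi_X$ is torsion-free (then $S=\varnothing$), and in all the situations met later in the book.

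When $\gamma$ has finite order $c\geq2$ --- the case forced when $\pi_X$ is finite and non-trivial --- we have $\pi_1X_{\gamma,K}\cong\pi_X*_{\mathbb{Z}/c\mathbb{Z}}\bigl(\pi K/\langle\langle t^c\rangle\rangle\bigr)$, which for $K=\tau_ck$ equals $\pi_X*_{\mathbb{Z}/c\mathbb{Z}}\bigl(\pi_1\Sigma_c(k)\rtimes\mathbb{Z}/c\mathbb{Z}\bigr)$, where $\Sigma_c(k)$ is the $c$-fold cyclic branched cover of $k$. Choose knots $k_q$ so that the $\Sigma_c(k_q)$ are closed aspherical $3$-manifolds which are pairwise non-commensurable --- e.g. $k_q=T(2,q)$ with $\frac12+\frac1c+\frac1q<1$ for $c\geq3$ (Seifert $\widetilde{\mathbb{SL}}$- or $\mathbb{N}il$-manifolds), and hyperbolic knots $k_q$ with $\Sigma_2(k_q)$ hyperbolic of distinct volumes for $c=2$. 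Each $\pi_1\Sigma_c(k_q)$ is a $PD_3$-group, hence one-ended and freely indecomposable, so by Bass--Serre theory an isomorphism $\pi_1X_{\gamma,K_q}\cong\pi_1X_{\gamma,K_{q'}}$ would conjugate it into one of the two vertex groups of the other amalgam. It cannot land in $\pi_1\Sigma_c(k_{q'})\rtimes\mathbb{Z}/c\mathbb{Z}$ for $q\neq q'$, since a $PD_3$-subgroup of that virtually-$PD_3$ group has finite index and the $\Sigma_c(k_q)$ were chosen non-commensurable; and it cannot land in $\pi_X$ because $\pi_X$ is finite. This last point is exactly where the hypothesis on $\pi_X$ enters, and ruling out such ``absorption into $\pi_X$'' for a completely arbitrary $\pi_X$ is the main obstacle: for a general infinite $\pi_X$ with torsion of infinitely many orders one must combine the two constructions with some care (if $\pi_X$ is itself a torsion group one still wins, as it cannot contain the $\mathbb{Z}$-subgroups of the $\pi_1\Sigma_c(k_q)$), or else enlarge the $\Sigma_c(k_q)$ enough to escape the fixed group $\pi_X$.
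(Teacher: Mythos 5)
Your first case (take $\gamma$ of infinite order, amalgamate with $\mathbb{Z}/q\mathbb{Z}\rtimes_{-1}\mathbb{Z}$, and read off the set of orders of torsion elements) is essentially the paper's own argument for torsion-free $\pi_X$, and it is correct; your observation that it still works whenever the set $S$ of torsion orders of $\pi_X$ misses infinitely many primes is a genuine small strengthening. The gap is the case you flag yourself and do not close: an infinite $\pi_X$ containing both elements of infinite order and torsion elements of all but finitely many prime orders. There the prime-selection in your first case fails outright, and in your second case nothing prevents an isomorphism from conjugating $\pi_1\Sigma_c(k_q)$ into the vertex group $\pi_X$. This cannot be repaired just by choosing the $k_q$ more cleverly: there are only countably many branched covers $\Sigma_c(k)$ available, and a finitely presented group can contain isomorphic copies of every recursively presented group, so within your framework ``escaping'' an arbitrary $\pi_X$ is not possible. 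Since the theorem assumes only $\pi_X\neq1$, this is a genuine gap rather than a cosmetic one.

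The paper closes exactly this case by a different mechanism. For Cappell--Shaneson $2$-knots $K$ the quotient $F=\pi{K}/\langle\langle{t^c}\rangle\rangle$ is \emph{finite}, of order $c\,|\mathrm{Res}(f_a(t),t^c-1)|>ca^{c-1}$, hence can be made arbitrarily large; so $\pi_1X_{\gamma,K}\cong\pi_X*_{\mathbb{Z}/c\mathbb{Z}}F$. The groups are then distinguished not by which finite subgroups they contain (that would hit the same absorption problem, since the one-ended pieces of $\pi_X$ may contain huge finite subgroups) but by the finite \emph{vertex groups} of the essentially unique decomposition of the finitely presentable group $\pi_1X_{\gamma,K}$ as a graph of groups with all vertex groups finite or one-ended \cite[Prop. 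IV.7.4]{DD}. Accessibility gives $\pi_X$ such a decomposition with only finitely many vertex groups, hence a finite bound on the orders of its finite vertex groups --- a bound depending only on that canonical splitting and not on the full torsion spectrum of $\pi_X$. Choosing $|F|$ above this bound makes $|F|$ an isomorphism invariant, and varying $a$ gives infinitely many groups. That uniform bound from accessibility is the idea your proposal is missing; your construction does give a correct (and rather more geometric) proof whenever $\pi_X$ is finite, is a torsion group, or has only finitely many torsion orders.
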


\begin{proof} 
Suppose first that $\pi_X$ is torsion-free and that $\gamma\not=1$.
If $\pi{K}\cong\mathbb{Z}/n\mathbb{Z}\rtimes\mathbb{Z}$ then
$\pi_1X_{\gamma,K}\cong\pi_X*_\mathbb{Z}\pi{K}$ is an extension 
of a torsion-free group by the free product of countably many copies 
of $\mathbb{Z}/n\mathbb{Z}$.
Since $\mathbb{Z}/n\mathbb{Z}\rtimes\mathbb{Z}$ 
is the group of the 2-twist spin of a 2-bridge knot, 
for every odd $n$, the result follows.

If $\pi_X$ has an element $\gamma$ of finite order $c>1$ then we use instead
Cappell-Shaneson 2-knots.
Let $a$ be an integer, and let $f_a(t)=t^3-at^2+(a-1)t-1$.
If $a>5$ the roots $\alpha,\beta$ and $\gamma$ of $f_a$ are real,
and we may assume that $\gamma<\beta<\alpha$.
Elementary estimates give the bounds
\[
\frac1a<\gamma<\frac12<\beta<1-\frac1a<a-2<\alpha<a.
\]
If $A\in{SL(3,\mathbb{Z})}$ is the companion matrix of $f_a$ then 
$\mathbb{Z}^3\rtimes_A\mathbb{Z}$ is the group of a ``Cappell-Shaneson" 2-knot  $K$.
The quotient $\mathbb{Z}^3/(A^c-I)\mathbb{Z}^3$ is a finite group of order
the resultant $Res(f_a(t),t^c-1)=(\alpha^c-1)(\beta^c-1)(\gamma^c-1)$,
where $\alpha,\beta$ and $\gamma$ are the roots of $f_a(t)$.
This simplifies to
\[
\alpha^p+\beta^p+\gamma^p-(\alpha\beta)^p-(\beta\gamma)^p-(\gamma\alpha)^p
=\alpha^p(1-\beta^p-\gamma^p)+\varepsilon,
\]
where $0<\varepsilon<2$.
It follows easily from our estimates that $|Res(f_a(t),t^c-1)|>a^{c-1}$, if $a>3c$.
Hence $\pi{K}/\langle\langle{t^c}\rangle\rangle$ is a finite group of order $>ca^{c-1}$.
We then use the fact that finitely presentable groups have an essentially unique representation as the fundamental group of a graph of groups, 
with all vertex groups finite or one ended \cite[Prop. IV.7.4]{DD}.)
Thus if $K$ and $L$ are two such 2-knots such that
$\pi{K}/\langle\langle{t^c}\rangle\rangle$ and
$\pi{L}/\langle\langle{t^c}\rangle\rangle$ are finite groups of different orders,
both greater than that of any of the finite vertex groups in such a representation of $\pi_X$
then $\pi_1X_{\gamma,K}\not\cong\pi_1X_{\gamma,L}$.
\end{proof}

The following observation can be construed as a minimality condition, 
since it shows that bi-epic embeddings cannot be obtained from other embeddings 
by non-trivial 2-knot surgery.

\begin{lemma}
\label{knot surg not biepic}
Let $M$ be a closed $3$-manifold with an embedding $j:M\to{S^4}$,
and let $J=j_{K,\gamma}$ be the embedding obtained from $j$
by a proper $2$-knot surgery using the $2$-knot $K$ 
and the loop $\gamma\in\pi_{X(j)}$.
Then $J$ is not bi-epic, and $\pi_{X(J)}$ is not restrained,
unless $\pi_{X(J)}$ is itself a restrained $2$-knot group, 
in which case $\beta_1(M;\mathbb{Q})=1$ or $2$.
\end{lemma}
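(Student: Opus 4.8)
The plan is to compute $\pi_{X(J)}$ from the description of $X(J)=X_{\gamma,K}$ recalled above and then read the three conclusions off the structure of the resulting amalgam, using Lemma~\ref{Hi17-lem2.1} only at the very end to bound $\beta$. Write $\pi_X=\pi_{X(j)}$. Since $X(J)=\overline{X\setminus N_\gamma}\cup_{S^2\times S^1}E(K)$ while $Y(J)=Y$, and deleting a regular neighbourhood of the \emph{curve} $\gamma$ from the $4$-manifold $X$ changes neither $\pi_1$ (codimension $3$) nor, since $\chi(S^1\times D^3)=\chi(S^1\times S^2)=\chi(E(K))=0$, the Euler characteristic, van Kampen's theorem and this bookkeeping give
\[
\pi':=\pi_{X(J)}\cong\pi_X*_{C}G_K,\qquad \chi(X(J))=\chi(X(j)),
\]
where $C$ is the image of $\pi_1(S^2\times S^1)\cong\mathbb Z$, namely $\langle\gamma\rangle$ in $\pi_X$ and $\langle t\rangle$ in the other factor, so that $C\cong\mathbb Z$ and $G_K=\pi K$ when $\gamma$ has infinite order, and $C\cong\mathbb Z/c\mathbb Z$ and $G_K=\pi K/\langle\langle t^c\rangle\rangle$ when $\gamma$ has order $c$. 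Moreover, because $M\subset\overline{X\setminus N_\gamma}$, the map $j_{X(J)*}\colon\pi\to\pi'$ is the composite of $j_{X*}\colon\pi\to\pi_X$ with the vertex inclusion $\pi_X\hookrightarrow\pi'$; and the identity $\chi(X(J))=\chi(X(j))$ shows the normalization $\chi(X)\le\chi(Y)$ is preserved.

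Bi-epicity goes first. Because $K$ is non-trivial, $G_K$ strictly contains $C$: for $\gamma$ of infinite order this is the statement that $\pi K\not\cong\mathbb Z$ unless $K$ is trivial, and for $\gamma$ of finite order it requires ruling out $\pi K/\langle\langle t^c\rangle\rangle\cong\mathbb Z/c\mathbb Z$ (see the last paragraph). Granting $G_K\supsetneq C$, the normal form theorem for amalgamated products shows the vertex group $\pi_X$ is a proper subgroup of $\pi'$; since $\mathrm{Im}(j_{X(J)*})\subseteq\pi_X\subsetneq\pi'$, the map $j_{X(J)*}$ is not onto and $J$ is not bi-epic.

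Now suppose $\pi'$ is restrained, so it has no non-cyclic free subgroup. Then the amalgam $\pi_X*_CG_K$ cannot be essential: a group acting on its Bass--Serre tree either contains a non-cyclic free subgroup, or fixes a vertex, or preserves a line; the third alternative forces all vertex valences to be $2$, i.e. $[\pi_X:C]=[G_K:C]=2$. A short calculation with abelianizations rules out $[G_K:C]=2$ (for $G_K=\pi K$ one uses that a $2$-knot group has abelianization $\mathbb Z$; for $G_K=\pi K/\langle\langle t^c\rangle\rangle$ that this group has abelianization $\mathbb Z/c\mathbb Z$). Hence the amalgam degenerates, and since $G_K\neq C$ we must have $\pi_X=C=\langle\gamma\rangle$, so $\pi_X$ is cyclic. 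If $\gamma$ has infinite order then $\pi_X\cong\mathbb Z$ and $\pi'\cong\pi_X*_{\mathbb Z}\pi K\cong\pi K$, a restrained $2$-knot group; and then $\beta_1(X(j))=1$, so by Lemma~\ref{Hi17-lem2.1}(4),(5) and the normalization $\chi(X)\le\chi(Y)$ we get $\beta-1=\chi(X(j))\le 1$ while $\beta\ge\beta_1(X(j))=1$, whence $\beta_1(M;\mathbb Q)\in\{1,2\}$, as required.

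The step demanding real work is the combinatorial/algebraic one just used, together with the finite-order case. In that case the degenerate amalgam gives $\pi_X\cong\mathbb Z/c\mathbb Z$ with $\pi'\cong\pi K/\langle\langle t^c\rangle\rangle$, and one must verify that the surviving possibilities are compatible with the statement, using the non-triviality inputs $\pi K\not\cong\mathbb Z$ and $\pi K/\langle\langle t^c\rangle\rangle\not\cong\mathbb Z/c\mathbb Z$ for non-trivial $K$ (the latter being essentially the statement that the $c$-fold cyclic branched cover of $(S^4,K)$ is not a cyclic-$\pi_1$ manifold unless $K$ is trivial). By contrast the purely topological input — van Kampen, the codimension count for deleting $\gamma$, and the Euler-characteristic identity $\chi(X(J))=\chi(X(j))$ — is routine.
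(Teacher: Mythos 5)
Your overall route is the one the paper takes: identify $\pi_{X(J)}$ as the pushout $\pi_{X}*_{C}G_K$ via van Kampen, observe that $j_{X(J)*}$ factors through the vertex group $\pi_X$, and use the Bass--Serre trichotomy to force the amalgam to degenerate when $\pi_{X(J)}$ is restrained. Your handling of the index-two (line) alternative via abelianizations and the Euler-characteristic bookkeeping at the end are correct, and are in fact spelled out in more detail than in the paper.

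The problem is the step you defer to your final paragraph. Both the non-surjectivity of $j_{X(J)*}$ and the reduction of the restrained case to $\pi_X=C$ rest on the non-degeneracy $G_K\supsetneq C$, and for $\gamma$ of finite order $c$ you assert this in the form ``$\pi K/\langle\langle t^c\rangle\rangle\not\cong\mathbb{Z}/c\mathbb{Z}$ for non-trivial $K$.'' That assertion is false. Take $K=\tau_2 3_1$, the $2$-twist spin of the trefoil, so that $\pi K\cong\langle a,t\mid a^3=1,\ tat^{-1}=a^{-1}\rangle$, and take $c=3$. Since $t^3$ inverts $a$, the element $at^3a^{-1}t^{-3}=a^2$ lies in $\langle\langle t^3\rangle\rangle$, hence so does $a$, and $\pi K/\langle\langle t^3\rangle\rangle\cong\mathbb{Z}/3\mathbb{Z}$ generated by the image of the meridian; that is, $G_K=C$ and the amalgam collapses to $\pi_X$. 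Your proposed justification via cyclic branched covers does not repair this: it is only the quotient by $t^r$ for the \emph{same} $r$ as the twist parameter that is identified with $\pi_1$ of the branched cover (extended by $\mathbb{Z}/r\mathbb{Z}$), and nothing prevents $\langle\langle t^c\rangle\rangle$ from having cyclic quotient for other values of $c$. So the finite-order branch of your argument is not closed. (For $\gamma$ of infinite order the required input is only $\pi K\not\cong\mathbb{Z}$ for non-trivial $K$, i.e.\ Freedman's unknotting theorem, and there your argument is complete.) The paper's own proof is also silent on this point --- it records only that $\langle\langle t^q\rangle\rangle$ is a proper normal subgroup of $\pi K$, which gives $G_K\neq1$ but not $G_K\neq C$ --- but your write-up stakes the argument on an explicit intermediate statement that is not true, so this step must be counted as a failure rather than an omission.
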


\begin{proof}
Let $C\cong\mathbb{Z}/q\mathbb{Z}$ be the subgroup of $\pi_{X(j)}$
generated by $\gamma$, and let $t$ be a meridian for 
the knot group $\pi{K}$.
Then 
\[
\pi_{X(J)}\cong\pi_{X(j)}*_C\pi{K}/\langle\langle{t^q}\rangle\rangle.
\]
Since the 2-knot surgery is proper,
$\langle\langle{t^q}\rangle\rangle$ is a proper normal subgroup 
of $\pi{K}$.
Since the image of $\pi_1M$ lies in $\pi_{X(j)}$,
the embedding $J$ cannot be bi-epic.
Moreover $\pi_{X(J)}$ can only be restrained if $\pi_{X(j)}\cong\mathbb{Z}$,
in which case $\pi_{X(J)}\cong\pi{K}$ and
$\chi(X(j))=0$ or 1, and so $\beta_1(M;\mathbb{Q})=1$ or 2.
\end{proof}

\begin{cor}
Embeddings given by proper $2$-knot surgery on another embedding
are not of the form $j_L$ for any $0$-framed bipartedly ribbon link $L$.
\qed
\end{cor}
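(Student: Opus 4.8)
The plan is to read the corollary off from Lemmas~\ref{0framedbiepic} and~\ref{knot surg not biepic}, once one observes that being bi-epic is an invariant of the equivalence class of an embedding. So I would first record this: if $\psi j=\tilde{j}\phi$ for self-homeomorphisms $\phi$ of $M$ and $\psi$ of $S^4$, then $\psi$ carries the complementary regions of $j$ onto those of $\tilde{j}$ (possibly interchanging them), compatibly with the inclusions of $M$, and hence $j_{X*}$ and $j_{Y*}$ are epimorphisms if and only if the corresponding homomorphisms for $\tilde{j}$ are. Thus the property ``$j$ is bi-epic'' depends only on the equivalence class of $j$.

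Next I would apply Lemma~\ref{0framedbiepic}: for any $0$-framed bipartedly ribbon link $L$, the embedding $j_L$ built from the ribbon discs of the two sublinks is bi-epic, since $\pi$, $\pi_X$ and $\pi_Y$ are then all generated by the images of the meridians of $L$, so that $j_{X*}$ and $j_{Y*}$ are onto. On the other hand, suppose $J=j_{K,\gamma}$ arises from an embedding $j$ by a proper $2$-knot surgery along a loop $\gamma\in\pi_{X(j)}$. Then $\pi_1M$ maps into the subgroup $\pi_{X(j)}$ of $\pi_{X(J)}\cong\pi_{X(j)}*_C\pi{K}/\langle\langle{t^q}\rangle\rangle$, and since the surgery is proper this is a \emph{proper} subgroup, so $\pi_1j_{X(J)}$ is not surjective; this is precisely the content of Lemma~\ref{knot surg not biepic}, namely that $J$ is not bi-epic. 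Combining the two observations: an embedding produced by a proper $2$-knot surgery is not bi-epic, whereas every $j_L$ with $L$ a $0$-framed bipartedly ribbon link is bi-epic, and bi-epicity is an equivalence invariant; hence no embedding of the first kind is equivalent to one of the second kind, which is the assertion.

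The one place I would be careful is the claim that ``$J$ is not bi-epic'' holds for \emph{every} proper $2$-knot surgery, with no caveat: the ``unless'' clause in Lemma~\ref{knot surg not biepic} only weakens the statement that $\pi_{X(J)}$ fails to be restrained, not the statement that $J$ is not bi-epic, which the proof of that lemma establishes unconditionally (from the fact that $\langle\langle{t^q}\rangle\rangle$ is a proper normal subgroup of $\pi{K}$ when the surgery is proper). Granting this, the argument is complete and there is no further work to do.
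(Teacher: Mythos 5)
Your argument is correct and is essentially the paper's own (implicit) proof: the corollary is exactly the combination of Lemma \ref{0framedbiepic} (every $j_L$ built from ribbon discs is bi-epic) with the unconditional non-bi-epicity assertion of Lemma \ref{knot surg not biepic}, and you read the ``unless'' clause there correctly as qualifying only the restrainedness statement. Your additional remark that bi-epicity is invariant under equivalence of embeddings is a harmless strengthening beyond what the paper needs, so there is nothing further to add.
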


If $H_1(M)\not=0$ then $X$ is not simply-connected,
and so  we may use 2-knot surgery to construct infinitely many embeddings 
with one complementary region $Y$ 
and distinguishable by the fundamental groups of the other region.
However if $M$ is a homology sphere then $X$ and $Y$ are homology balls,
and it may not be easy to decide whether $\pi_X$ and $\pi_Y$ are non-trivial.
When $M=S^3$ the complementary regions are homeomorphic to the 4-ball $D^4$,
by the Brown-Mazur-Schoenflies Theorem.
If $\pi_1M\not=1$ is there an homology 4-ball $X$ with $M\cong\partial{X}$,
$\pi_X\not=1$ and the normal closure of the image of $\pi_1M$ in $\pi_X$ 
being the whole group?
If so, there is an embedding with one complementary region $X$ and the other 1-connected.

Perhaps the simplest non-trivial example of a smooth embedding of an homology
3-sphere with neither complementary region 1-connected is given by the link 
in Figure 2.2.
If we swap the 0-framings and the dots, we obtain a Kirby-calculus presentation for $Y$.
Since the loops $r$, $s$, $x$ and $y$ determine words $x^{-2}yxy$, $y^{-4}xyx$,
$srsr^{-2}$ and $s^{-4}rsr$, respectively,  $\pi_X$ and $\pi_Y$ have equivalent presentations, and $\pi_X\cong\pi_Y\cong{I^*}$,
the binary icosahedral group.

\section{Smooth embeddings}

In this section we shall comment briefly on some closely related problems: 
embeddings of punctured 3-manifolds in $S^4$ and
smooth embeddings in $S^4$ and in other 4-manifolds.
We refer to the discussions of Problems 3.20, 4.2 and 4.5 in \cite{Ki}
and the surveys \cite{APM24}, \cite{BB22} and \cite{Sa24}
for more details and further references.

As observed above,  most known constructions 
(other than those using surgery) give smooth embeddings.
On the other hand,  the arguments that we use to show that a 3-manifold 
does not embed in $S^4$ usually show that 
it does not even have a Poincar\'e embedding in $S^4$.

If $M$ embeds then so does the punctured manifold $M_o$. 
Conversely, if $M_o$ embeds then $M\#-M$ embeds as the boundary of a regular neighbourhood of $M_o$ in $S^4$.
If $M_o$ and $N_o$ each embed then so does $(M\#N)_o=M_o\natural{N_o}$;
the converse is clear.
Thus for the question of whether $M_o$ embeds we may assume $M$ irreducible.
No lens space $L(p,q)$ embeds (since $\tau_{L(p,q)}$ is cyclic),
but if $p$ is odd then $L(p,q)_o$ embeds smoothly as the fibre of a fibration 
of the complement of a twist spun 2-bridge knot \cite{Ze65},
and so $L(p,q)\#-L(p,q)$ embeds smoothly.
If $p$ is even then $L(p,q)_o$ does not embed \cite{Ep65}.
(More generally,
if $M_o$ embeds then the 2-primary summand of $\tau_M$ 
is a direct double \cite{Ka79}.)
Thus whether a closed manifold embeds does not reduce to the embeddability of its summands.

Much of the work on smooth embeddings has concentrated on 
$\mathbb{Q}$-homology 3-spheres,
in particular on homology spheres and connected sums of lens spaces.
Aarguments based on the intersection pairings of orientable 4-manifolds 
with given boundary have been prominent.
For instance, 
since $S^3/I^*$ bounds the $E_8$-plumbing \cite[page 153]{GS},
it has no smooth embedding, 
by Rochlin's Theorem on signatures of smooth $Spin$ 4-manifolds.
The Diagonalization Theorem of Donaldson (on smooth 4-manifolds with definite intersection pairing \cite{Do87}) has had striking consequences.
After early partial results \cite[Proposition 6.1]{KK80} and \cite[Theorem 3.4]{GL83},
A. Donald settled a long standing question by 
showing that the only connected sums 
of lens spaces which embed are sums of the type just described.

\begin{thm}
\cite[Theorem 1.1]{Do15}
Let $L=\#_{i=1}^nL(p_i,q_i)$ be a connected sum of lens spaces.
Then $L$ embeds smoothly in $S^4$ if and only if each $p_i$ is odd and 
there is a closed $3$-manifold $N$ such that $L\cong{N}\#-N$.
\qed
\end{thm}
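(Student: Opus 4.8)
The plan is to prove the two implications separately: the ``if'' direction by an explicit construction, and the ``only if'' direction by combining Lemma \ref{hyperbolic lp}, Donaldson's Diagonalization Theorem, and a lattice-embedding analysis. (This is \cite[Theorem 1.1]{Do15}; here we only indicate the route.)

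\textbf{Sufficiency.} Suppose each $p_i$ is odd and $L\cong N\#-N$. By uniqueness of the prime (connected-sum) decomposition, $N$ is a connected sum of lens spaces whose orders all occur among the $p_i$, hence are odd. As recalled earlier in this section, for odd $p$ the punctured lens space $L(p,q)_o$ embeds smoothly in $S^4$, as a fibre of a fibration of the exterior of a twist-spun $2$-bridge knot \cite{Ze65}. Forming the boundary connected sum of these embeddings inside $S^4$ shows that $N_o$ embeds smoothly in $S^4$; a tubular neighbourhood of $N_o$ is $N_o\times[-1,1]$, whose boundary is the double $D(N_o)\cong N\#-N\cong L$. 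Hence $L$ embeds smoothly in $S^4$.

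\textbf{Necessity, parity and set-up.} Suppose $S^4=X\cup_LY$ is a smooth embedding, with complementary regions $X$ and $Y$. Since $\beta_1(L)=0$, Lemma \ref{Hi17-lem2.1} gives $\chi(X)=\chi(Y)=1$, so $X$ and $Y$ are smooth $\mathbb{Q}$-homology $4$-balls with $\partial X=L$, $\partial Y=-L$. By Lemma \ref{hyperbolic lp} the pairing $\ell_L=\bigoplus_i\ell_{L(p_i,q_i)}$ is hyperbolic, hence even; but if some $p_i$ were even, the $2$-primary part of $\ell_L$ would be an orthogonal sum of rank-one pairings $\ell_w$ on cyclic $2$-groups with $w$ a unit, and evaluating $2^{k-1}\ell(x,x)$ on a generator $x$ of a summand of maximal order $2^k$ produces a non-integer, so $\ell_L$ would be odd --- a contradiction. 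Hence every $p_i$ is odd.

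\textbf{Necessity, the lattice embedding.} For each $i$ let $W_i$ be the negative-definite linear plumbing bounding $L(p_i,q_i)$ determined by the negative continued fraction expansion of $p_i/q_i$; its intersection form is a negative-definite lattice $\Lambda(p_i,q_i)$ of determinant $p_i$. Gluing $\natural_iW_i$ (boundary $L$) to $Y$ (boundary $-L$) yields a closed smooth $4$-manifold $Z$. Because $Y$ is a $\mathbb{Q}$-homology ball, the long exact sequence of the pair $(Z,\natural_iW_i)$ shows that inclusion embeds $H_2(\natural_iW_i)$ isometrically onto a finite-index sublattice of $H_2(Z)/\mathrm{torsion}$, and that the rational intersection form of $Z$ is negative definite. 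By Donaldson's theorem $H_2(Z)/\mathrm{torsion}$ is the standard lattice $\langle-1\rangle^N$, so $\bigoplus_i\Lambda(p_i,q_i)$ embeds isometrically into $\langle-1\rangle^N$ as a sublattice whose index has square $\prod_ip_i$. Repeating the construction with $X$ and the plumbings bounding the $-L(p_i,q_i)=L(p_i,-q_i)$ gives the analogous embedding for the mirrored data.

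\textbf{The main obstacle.} It remains to deduce from these lattice embeddings that the multiset $\{(p_i,q_i)\}$ splits into mirror pairs $\{(p,q),(p,-q)\}$; taking one lens space from each pair then gives a $3$-manifold $N$ with $L\cong N\#-N$. This is the combinatorial heart of the argument and the step I expect to be hardest: it refines Lisca's classification of isometric embeddings of linear lattices into standard diagonal lattices, using the constraint on the index of the sublattice together with the data coming from \emph{both} $X$ and $Y$ to rule out every ``mixed'' configuration. I would argue by induction on the total length of the continued fraction expansions, at each stage splitting off a norm $-1$ basis vector of $\langle-1\rangle^N$ that meets the linear chains in a restricted pattern and tracking how the index is apportioned; for the precise case analysis I would follow \cite{Do15}.
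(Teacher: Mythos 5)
This theorem is stated in the book as a quoted result from \cite{Do15} (note the \qed immediately after the statement): the book gives no proof of its own, only the surrounding discussion in the section on smooth embeddings, so there is no internal argument to measure you against. Your \textbf{sufficiency} argument is complete and is exactly the construction the book itself records there: for odd $p$ the punctured lens space $L(p,q)_o$ embeds smoothly as a fibre of a fibred twist-spun $2$-bridge knot \cite{Ze65}, punctured embeddings add under boundary connected sum, and the boundary of a product neighbourhood of $N_o$ is $N\#-N$. Your parity argument for necessity is also sound (hyperbolic pairings are even, and a cyclic $2$-primary summand generated by an element of maximal order $2^k$ with self-linking a unit over $2^k$ violates evenness), and in fact only uses the locally flat hypothesis, consistent with Corollary \ref{KKcor}.

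The genuine gap is in the rest of the necessity direction. What you establish from Donaldson's theorem is that $\bigoplus_i\Lambda(p_i,q_i)$ and its mirror each embed as finite-index sublattices of standard diagonal lattices, with the square of the index equal to $\prod_ip_i$ (your phrase ``index has square $\prod_ip_i$'' has this backwards, and ``determinant $p_i$'' should be ``$|\det|=p_i$''). None of this yet says anything about the multiset $\{(p_i,q_i)\}$ beyond $\prod p_i$ being a square: for example it does not by itself exclude summands like $L(p^2,pq-1)$, which bounds a rational homology ball and whose linking form is hyperbolic when $p$ is odd, yet is not of the form $N\#-N$. The entire content of Donald's theorem lies in the combinatorial analysis (refining Lisca's classification of embeddings of linear lattices, and exploiting that the constraint holds on \emph{both} sides simultaneously) that forces the lens-space summands to pair off as mirrors, and this is precisely the step you defer wholesale to \cite{Do15}. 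As a result the proposal is an accurate road map of Donald's proof with one direction fully verified, but not a proof of the stated theorem.
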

 
Donald,  and A.Issa and D.McCoy have also applied the Diagonalization Theorem
to study the smooth embeddability of Seifert manifolds. 
We shall outline their results in the final sections of Chapters 3 and 4.

C.McDonald has revealed a further subtlety of the smooth case:
there are infinitely many homology spheres 
which embed smoothly in homology 4-spheres, 
but which have no smooth embedding in any smooth homotopy 4-sphere \cite{Mc22}.
This contrasts strongly with the TOP locally flat case, 
where Freedman has shown that every homology sphere embeds in $S^4$.
McDonald's argument also uses the Diagonalization Theorem.

Closed orientable 3-manifolds have trivial tangent bundles, and so are $Spin$ manifolds.
Each $Spin$ structure $\mathfrak{s}$ on $M$ determines a Rochlin invariant 
$\bar\mu(M,\mathfrak{s})\in\mathbb{Q}/2\mathbb{Z}$.
Similarly,  $\mathbb{Q}$-homology 3-spheres have $Spin^c$ structures, 
and Heegaard-Floer theory has been used to define $\mathbb{Q}$-valued
functions $d$ on the set of $Spin^c$ structures of such manifolds \cite{OS03}.
There are 149 manifolds in the census of 3-manifolds built from at most 11 tetrahedra 
and with hyperbolic linking form.
Of these, 41 are known to embed smoothly in $S^4$, 
and 4 more embed smoothly in some homotopy 4-sphere.
The Rochlin $\bar\mu$-invariant and Ostv\'ath-Sz\'abo $d$-invariant have been used 
to show that 67 of the remaining 104 cannot embed smoothly in $S^4$ \cite{BB22}.

The earliest work on embeddings of 3-manifolds in other 4-manifolds may be \cite{Ka88}.
Kawauchi considers embeddings of 3-manifolds $M$ with $H_1(M)$ infinite
and obtains bounds for signatures associated to infinite cyclic covers of $M$,
in terms of the Betti numbers of a 4-manifold $W$ with $M=\partial{W}$.

Beyond this,
little consideration has been given to 3-manifolds other than lens spaces in other 4-manifolds, according to \cite{APM24}.
Every lens space embeds in $\#^n\overline{\mathbb{CP}}^2$, 
for $n$ sufficiently large \cite{EL96}.
For each $n>0$ there are lens spaces that do not embed smoothly as a separating hypersurface into any smooth negative-definite 4-manifold $W$ with $\beta_2(W)=n$ \cite{AMP22}.
This again uses the Diagonalization Theorem.
However, if the lens space $L$ is amphicheiral then it embeds in a 4-manifold $W$ with
$\beta_1(W)=0$ and $\beta_1(W)=2$  \cite{APM24}.
A key idea is to characterize such lens spaces in terms of their canonical plumbing graph.

Embeddings of  homology spheres in homology 4-spheres leads naturally 
to the study of homology cobordism \cite{Sa24}.
There are parallel questions about $R$-homology cobordism of $R$-homology 3-spheres;
the cases $R=\mathbb{F}_2$ and $R=\mathbb{Q}$ are of greatest interest.

\chapter{3-manifolds with $S^1$-actions}

The class of Seifert manifolds is in many respects well-understood, 
and has a natural parametrization in terms of Seifert data, 
and so we might expect criteria for embedding in terms of such data.
We first review the notion of Seifert manifold and Seifert invariants.
In this chapter we shall consider orientable Seifert manifolds which
are Seifert fibred over orientable base orbifolds.
These may also be described as (orientable) 3-manifolds of $S^1$-action type.
(We shall consider 3-manifolds which are Seifert fibred over nonorientable 
base orbifolds in Chapter 4.)
Our goal is to show that skew-symmetry of the Seifert data is a necessary and sufficient condition for a Seifert manifold $M$ with generalized Euler number $\varepsilon(M)=0$ 
to embed in $S^4$.
We are partially successful.
In \S2 we show that if the Seifert data of $M$ is skew-symmetric and 
all the cone point orders are odd then $M$ embeds smoothly in $S^4$.
Our main result (in \S3) is that if the base orbifold is $S^2(\alpha_1,\dots,\alpha_r)$, 
where all the cone point orders $\alpha_i$ are odd, 
and  $\varepsilon(M)=0$ , 
then $M$ embeds in $S^4$ if and only if the Seifert data is skew-symmetric.
In \S4 we outline briefly some apparent difficulties in extending this result.
Motivated by the results of Hantzsche and of Kawauchi and Kojima described in \S2.3,
we determine the torsion subgroup $\tau_M$ (for $M$ of $S^1$-action type) in \S5, 
and show that if $M$ is Seifert fibred over $T_g(\alpha_1,\dots,\alpha_r)$ and $\tau_M$ is a nonzero direct double then $|\varepsilon(M)|$ is determined by the cone point orders 
$\{\alpha_1,\dots,\alpha_r\}$.
A related calculation for $M$ Seifert fibred over a nonorientable base orbifold plays an essential  role in Chapter 4.
In the final section we outline some recent work by A. Donald and by
A.  Issa and D. McCoy on smooth embeddings.

\section{Seifert manifolds}

In this book a {\it Seifert manifold} is a closed orientable 3-manifold $M$
with an orbifold $S^1$-fibration $p:M\to{B}$ over a 2-orbifold $B$.
Since $M$ is orientable, the base $B$ has finitely many cone point singularities,  
and no reflector curves.
If  the surface $|B|$ underlying the base orbifold $B$ is orientable and of genus $g$
then $B=T_g(\alpha_1,\dots,\alpha_r)$, where $\alpha_i$ is the order of the $i$th cone point.
(The cone point order is also known as the multiplicity of the associated exceptional fibre.)
The {\it Seifert data\/} for $M$ is then a finite string of ordered pairs
$S=((\alpha_1,\beta_1),\dots,(\alpha_r,\beta_r))$,
where $(\alpha_i,\beta_i)=1$, for all $1\leq{i}\leq{r}$,
and we shall write $M=M(g;S)$.
If the base is non-orientable, so that $|B|=\#^c\mathbb{RP}^2$ for some $c>0$,
we shall write $M=M(-c;S)$.
(We may refer to $S$ as a Seifert data {\it set\/}, but the multiplicities
of the pairs $(\alpha,\beta)$ are significant.)

Our notation is based on that of \cite{JN}. 
(In particular, we do not assume that $0<\beta_i<\alpha_i$.)
We shall allow also cone points of order $\alpha=1$.
Such points are nonsingular, 
but are useful in that they allow a uniform notation which includes 
the Seifert fibrations associated to $S^1$-bundles.
The reference \cite{JN} also considers a {\it generalized Seifert manifold\/},
in which fibres of type $(0,1)$ are allowed as well,
corresponding to fibres fixed pointwise under a circle action.
We use this definition, 
as a notational convenience only, 
in proving Lemma \ref{CH-lem3.1} and Theorem \ref{CH-thm1.1}.

If $p:E\to{F}$ is an $S^1$-bundle with base a closed surface $F$
and orientable total space $E$ then $\pi_1F$ acts on the fibre 
via $w=w_1(F)$, 
and such bundles are classified by an Euler class $e(p)$ in $H^2(F;\mathbb{Z}^w)\cong\mathbb{Z}$.
If we fix a generator $[F]$ for $H_2(F;\mathbb{Z}^w)$ we may define
the Euler number of the bundle by $e=e(p)\cap[F]$.
(We may change the sign of $e$ by reversing the orientation of $E$.)
Let $M(g;(1,-e))$ and $M(-c;(1,-e))$ be the total spaces of the $S^1$-bundles
with base $T_g$ and $\#^cRP^2$ (respectively), and Euler number $e$.
The {\it generalized Euler number\/} of a Seifert fibration $p:M\to{B}$ is
\[
\varepsilon_S=-\Sigma_{i=1}^r\frac{\beta_i}{\alpha_i}.
\]

There is an orientation- and fibre-preserving homeomorphism between 
any two Seifert manifolds with the same base orbifolds
if and only if their Seifert data are equivalent under a finite sequence
of the following operations
\begin{enumerate}
\item{}add or delete any pair $(1,0)$;
\item replace each pair $(\alpha_i,\beta_i)$ by $(\alpha_i,\beta_i+c_i\alpha_i)$,
where $\Sigma_{i=1}^rc_i=0$;
\item{}permute the indices.
\end{enumerate}
Every  Seifert data set is equivalent to one of the  form
$S=S'\cup\{(1,-e)\}$, where $S'=((\alpha_1,\beta_1),\dots,(\alpha_s,\beta_s))$
is {\it strict\/} Seifert data,
with $0<\beta_i<\alpha_i$ for all $i\leq{s}$.
After reversing the orientation of the general fibre,
if necessary,  we may assume that $\varepsilon_S\geq0$.
In virtually all the cases of interest to us the Seifert fibration is
unique, and so we shall usually write $\varepsilon(M)$ for $\varepsilon_S$.

Lens spaces,
the manifolds $M(-1;(\alpha,\beta))\cong{M(0;(2,1),(2,-1),(-\beta,\alpha))}$
and the flat 3-manifold $M(-2;(1,0))\cong{M(0;(2,1),(2,1),(2,-1),(2,-1))}$
admit more than one Seifert fibration \cite{JN, Orl}.

Seifert manifolds $M=M(k;S)$ with $\varepsilon(M)=0$
are generically $\mathbb{H}^2\times\mathbb{E}^1$-manifolds,
while those with $\varepsilon(M)\not=0$ are generically $\widetilde{\mathbb{SL}}$-manifolds.
The exceptions have virtually solvable fundamental groups
and small Seifert data; 
the base orbifold has at most 4 singularities.
They are the spherical manifolds $S^3/G$, the two $\mathbb{S}^2\times\mathbb{E}^1$-manifolds $S^2\times{S^1}$ and $\mathbb{RP}^3\#\mathbb{RP}^3$, 
six (orientable) flat 3-manifolds and the $\mathbb{N}il^3$-manifolds.
We shall settle the question of which of these embed in $S^4$ in Chapter 5.

The {\it fibred sum\/} of two Seifert manifolds $M=M(k;S)$ and $M'=M(k';S')$ 
is defined as follows.
Let $N$ and $N'$ be regular neighbourhoods of regular fibres in $M$ and $M'$,
and let $h:\partial{N}\to\partial{N'}$ be an orientation-reversing, fibre-preserving homeomorphism.
Then  $M\sharp_fM'=(M\setminus{int(N)})\cup_h(M'\setminus{int(N')}$ is Seifert fibred
over the connected sum of the base orbifolds.
In fact $M\sharp_fM'=M(k_\#;S\sqcup{S'})$,
where $k_\#=k+k'$ if $k$ and $k'$ have the same sign,
and $k_\#=-2k-k'$ if $k>0$ and $k'<0$.
If $M'=M(0;(\alpha_i,\beta_i),(\alpha_i,-\beta_i))$ for some $(\alpha_i,\beta_i)\in{S}$
then  $M\sharp_fM'$ is obtained from $M$ by {\it expansion\/}, in the terminology of \cite{IM20}.

As $B$ is the connected sum of $|B|$ and $S^2(\alpha_1,\dots,\alpha_r)$,
the Seifert manifold $M$ is a fibre sum $M=M(0;S)\sharp_fM(k;\emptyset)$.
(If $B$ is orientable then $M(g;\emptyset)\cong{T_g\times{S^1}}$.)
If the Seifert data is given in normalized form $S=S'\cup\{(1,e)\}$ 
then we also have $M=M(0;S')\#_fM(k;(1,e))$, 
where $M(k;(1,e))$ is the total space of an $S^1$-bundle  over $F$,
with Euler number $e$.

If $h$ is the image of the regular fibre in $\pi=\pi_1M$
then the subgroup generated by $h$ is normal in $\pi$,
and $\pi^{orb}(B)\cong\pi/\langle{h}\rangle$.
An orientable 3-manifold admits a fixed-point free $S^1$-action 
if and only if it is Seifert fibred over an orientable base orbifold,
and then $h$ is central in $\pi$ \cite{Orl}.
All Seifert manifolds considered in this chapter are of this type.

\section{An easy embedding}

\begin{defn} 
The Seifert data $S$  is {\it skew-symmetric\/} if it is equivalent to Seifert data 
of the form $((\alpha_1,\beta_1),\dots,(\alpha_r,\beta_r))$, 
where $r$ is even, $\alpha_{2i-1}=\alpha_{2i}$,
$0<\beta_i<\alpha_i$ for all $i$ and $\beta_{2j-1}=-\beta_{2j}$ for $1\leq{j}\leq{r/2}$.
\end{defn}

Our goal is to show that skew-symmetry of the Seifert data is a necessary and sufficient condition for a Seifert manifold $M$ with $\varepsilon(M)=0$ to embed in $S^4$.

\begin{lemma}
\label{CH-lem3.1}
Let $S$ be skew-symmetric Seifert data for which all cone point orders 
$\alpha_i$ are odd.
Then $M(0;S)$ embeds smoothly in $S^4$.
\end{lemma}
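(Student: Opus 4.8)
The plan is to realise $M(0;S)$ as a $2$-fold cyclic branched cover of $S^3$ over a smoothly doubly slice link, and then invoke the lemma of Donald \cite{Do15} from \S2.5 in its smooth form. First I would use the defining equivalence to put $S$ in the form
\[
S=((\alpha_1,\beta_1),(\alpha_1,-\beta_1),\dots,(\alpha_m,\beta_m),(\alpha_m,-\beta_m)),
\]
with $r=2m$, $0<\beta_j<\alpha_j$ and each $\alpha_j$ odd; note that $\varepsilon_S=-\sum_{j=1}^{m}\bigl(\tfrac{\beta_j}{\alpha_j}+\tfrac{-\beta_j}{\alpha_j}\bigr)=0$, as is forced by skew-symmetry. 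Let $\mathcal L$ be the Montesinos link obtained by inserting the rational tangles of slopes $\tfrac{\beta_1}{\alpha_1},-\tfrac{\beta_1}{\alpha_1},\dots,\tfrac{\beta_m}{\alpha_m},-\tfrac{\beta_m}{\alpha_m}$ consecutively around an unknotted circle, with no extra integral twisting. Computing the $2$-fold cover of $S^3$ branched over $\mathcal L$ (with respect to the homomorphism sending every meridian to the generator of $\mathbb{Z}/2\mathbb{Z}$) tangle by tangle, each rational tangle of slope $\beta_j/\alpha_j$ lifting to a solid torus attached with that slope, I would identify this cover with $M(0;S)$; the Euler number of the cover is $-(0+\sum_j(\tfrac{\beta_j}{\alpha_j}+\tfrac{-\beta_j}{\alpha_j}))=0$, consistent with $\varepsilon(M)=0$. (When every $\beta_j=1$, this $\mathcal L$ is the pretzel link $P(\alpha_1,-\alpha_1,\dots,\alpha_m,-\alpha_m)$ discussed in \S2.5.)

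The substantial step is to show that $\mathcal L$ is smoothly doubly slice. The $2m$ rational tangles of $\mathcal L$ occur in mirror pairs, the tangle of slope $-\beta_j/\alpha_j$ being the mirror image of that of slope $\beta_j/\alpha_j$; placed consecutively around the defining circle, such a pair can be cancelled by an isotopy sliding one tangle across the other (within the Montesinos calculus, possibly after flypes). Carrying out these cancellations one pair at a time, I would assemble a movie exhibiting $\mathcal L$ as the transverse intersection of the equatorial $S^3$ in $S^4$ with an embedded $2$-sphere $U$: starting from a round circle, open up the $m$ mirror pairs in succession to reach $\mathcal L$ at the middle level, then close them up again in the reverse order. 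Since the movie is symmetric about its middle level, $U$ is smoothly isotopic to a standard $S^2$, that is, $\mathcal L$ is smoothly doubly slice. As cyclic branched covers of $S^4$ over a smoothly unknotted $S^2$ are diffeomorphic to $S^4$, the preimage of $S^3$ then shows that $B_2(\mathcal L)\cong M(0;S)$ embeds smoothly in $S^4$. (Equivalently, $\mathcal L$ is bipartedly ribbon with the two moieties mutually mirror images, and the embedding $j_{\mathcal L}$ of \S2.5 is the one sought.)

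The main obstacle, and the sole point where oddness of the $\alpha_j$ enters, is this tangle cancellation: the connectivity type of a rational tangle of slope $\beta/\alpha$ depends on the parity of $\alpha$, and a mirror pair of such tangles cancels down to a trivial tangle — so that the movie above does sweep out an \emph{unknotted} sphere — precisely when $\alpha$ is odd. For even $\alpha$ the construction breaks down, in accordance with the fact recalled in \S2.6 that $L(\alpha,\beta)\#-L(\alpha,\beta)$ embeds in $S^4$ only when $\alpha$ is odd. So almost all of the work is in making the cancellation movie precise for rational tangles of arbitrary odd denominator — Donald's explicit argument handles the pretzel case $\beta_j=1$ — and in checking that the assembled $S^2$ is genuinely unknotted; the Montesinos and branched-cover dictionaries supply the rest.
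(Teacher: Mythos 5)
Your overall strategy --- realise $M(0;S)$ as the double branched cover of a Montesinos link with mirror-paired odd-denominator tangles and feed this into Donald's doubly-slice lemma --- is a legitimate alternative to the paper's route (the paper instead embeds each punctured lens space $L(\alpha_i,\beta_i)_o$ as the fibre of the $2$-twist spin $\tau_2k_i$ of a $2$-bridge knot, takes the boundary of a thickening of the fibre exterior to embed $M(0;(\alpha_i,\beta_i),(\alpha_i,-\beta_i))$, and then performs embedded fibre sums). But your proof has a genuine gap exactly at the step you yourself flag as "the substantial step": you never actually establish that $\mathcal L$ is smoothly doubly slice, and the two justifications you sketch do not work. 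First, a mirror pair of rational tangles cannot be "cancelled by an isotopy": if each of the $m$ pairs cancelled up to isotopy of the link in $S^3$, then $\mathcal L$ would be an unknot and its double branched cover would be $S^3$, not $M(0;S)$. What you must mean is a cancellation by saddle (band) moves, i.e.\ a cobordism, not an isotopy. Second, and more seriously, the resulting symmetry of the movie does \emph{not} imply that the swept-out sphere $U$ is unknotted. A movie $\emptyset\to\mathcal L\to\emptyset$ whose second half is the reverse of the first produces the double $D\cup_{\mathcal L}D$ of a ribbon surface $D\subset D^4$ along its boundary, and such doubles are knotted in general: already for a nontrivial ribbon knot $k$ with ribbon disc $D$, the $2$-knot $D\cup_kD$ has group $\pi_1(D^4\setminus D)*_{\pi k}\pi_1(D^4\setminus D)$, which is nontrivial. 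So symmetry alone buys you "$B_2(\mathcal L)$ embeds in the double branched cover of $S^4$ over $U$", but that ambient manifold need not be $S^4$ unless you separately prove $U$ unknotted.

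Proving that these Montesinos links are smoothly doubly slice is precisely where the content lies, and it is delicate: Donald's argument in \cite{Do15} for the pretzel links $P(a,-a,b,-b)$ (the case of tangles $1/a$, $-1/a$, $1/b$, $-1/b$) is an explicit, nontrivial construction, and already there the hypothesis that $a$ and $b$ are odd enters in an essential way that is not captured by a formal "mirror pairs cancel" heuristic. Extending it to arbitrary odd denominators $\alpha_j$ and arbitrary numerators $\beta_j$ is not in the paper, is not in the cited literature in that generality, and is not something you can wave through. If you want to complete a proof along these lines you would either need to carry out that construction in detail, or fall back on the mechanism the paper actually uses: Zeeman's theorem that $\tau_2k$ is fibred with fibre the punctured double branched cover of $k$, which hands you the smooth embedding of $L(\alpha,\beta)_o$ (for $\alpha$ odd, via a $2$-bridge knot) without ever having to certify that a particular $2$-sphere is unknotted; the skew-symmetric manifold then arises as the boundary of a product neighbourhood of the fibre exterior, and the general case follows by fibre-summing these model embeddings.
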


\begin{proof}
Let $L_i=M(0;(\alpha_i,\beta_i), (0,1))$. 
(This is the lens space $L(\alpha_i,\beta_i)$, 
expressed as a generalised Seifert manifold.)
Let $L_{io}$ be the complement of a small open ball about a point 
on the fibre of type $(0,1)$,
and let $L_i^*$ be the complement of an open regular neighbourhood
of the whole fibre.
Then $L_{io}\cong{L_i^*}\cup_{f\times{I}}{D^2\times{I}}$,
where $f$ is a regular fibre of $L_i^*$, $I=[0,1]$ 
and the identification maps $\partial{D^2}$ to $f$.

Since $\alpha_i$ is odd, $L_i$ is a 2-fold branched cover of $S^3$, branched over a knot
$k_i$, and so $L_{io}$ embeds smoothly in $S^4$ as the fibre of the 2-twist spin  
$\tau_2k_i$ \cite{Ze65}.
In fact $L_{io}$ has a product neighbourhood 
$L_{io}\times[-\epsilon,\epsilon]$ in the knot exterior $X(\tau_2k_i)$.
This neighbourhood may be written as a union 
$L_i^*\times[-\epsilon,\epsilon]\cup(D^2\times{I}\times[-\epsilon,\epsilon])$,
and so we obtain a smooth embedding of 
$M(S^2;((\alpha_i,\beta_i),(\alpha_i,-\beta_i)\cong
\partial(L_i^*\times[-\epsilon,\epsilon])$ in $S^4$.
Furthermore,  this embedding has the property that there exists a 4-ball 
$B(M_i)=D^2\times{I}\times[-\epsilon,\epsilon]$ embedded in $S^4$ such that
$M_i\cap{B(M_i)}=\partial{D^2\times{I}\times[-\epsilon,\epsilon]}$,
and each circle $\partial{D^2}\times(r,s)$ is a regular fibre of $M_i$.

Given embeddings of  two manifolds $M$ and $M'$ in $S^4$, 
each with this property,
we may construct an embedding of $M\sharp_fM'$, 
also with this property,
by taking connected sums of the 4-spheres.
Remove the balls
$\frac12B(M)=D^2\times{I}\times[-\epsilon,\epsilon]$
and $\frac12B(M')$ from the corresponding spheres,.
The sum is then the union of the closures of the remaining components 
(with embedded pieces), in the obvious way.
Thus, by induction,
we obtain a smooth embeddding of $M_1\sharp_f\dots\sharp_fM_n$.
\end{proof}

The above result holds also if $\alpha_{2i-1}=\alpha_{2i}$ is even, 
for one value of $i$ \cite[Proposition 7.8]{IM20}.

\begin{lemma}
\label{CH-lem3.2}
If $M=M(k;S)$ embeds smoothly into a $4$-manifold $W$,
then so does $M\sharp_fM(g;\emptyset)$.
\end{lemma}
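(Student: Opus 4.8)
The strategy is to modify the given smooth embedding $j\colon M\hookrightarrow W$ inside a tubular neighbourhood of a single regular fibre, cutting out that neighbourhood and gluing in a copy of $M(g;\emptyset)$ with an open fibred neighbourhood of a regular fibre removed. First I would fix a regular fibre $f$ of the Seifert fibration $p\colon M\to B$. Local triviality of $p$ near $f$ gives a fibred product neighbourhood $N(f)\cong S^1\times D^2$ in $M$, with the $S^1$ factor the fibre direction; and since $j$ is smooth, $M$ has a product neighbourhood $M\times(-1,1)$ in $W$, so $N(f)$ has a neighbourhood $U\cong S^1\times D^3$ in $W$ with $D^3=D^2\times(-1,1)$ and $M\cap U=S^1\times D^2\times\{0\}$.

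The key construction is to embed $(T_g)_o$, the once-punctured closed orientable surface of genus $g$, smoothly in the ball $D^3$ as a Seifert surface for the unknot: the circle $\partial D^2\times\{0\}\subset\partial D^3$ is unknotted and therefore bounds an orientable surface of genus $g$ in $D^3$, which one may arrange, by pushing its interior into $D^2\times(-1,0)$ away from a collar of the boundary, so that $(T_g)_o\cap (D^2\times\{0\})=\partial D^2\times\{0\}=\partial (T_g)_o$. Taking the product with the fibre circle, $\Sigma:=(T_g)_o\times S^1$ is then a smooth submanifold of $U$ meeting $M$ exactly in $\partial\Sigma=S^1\times\partial D^2\times\{0\}=\partial N(f)$. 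Hence $M':=(M\setminus\mathrm{int}\,N(f))\cup\Sigma$, with collars matched along $\partial N(f)$, is a smoothly embedded closed $3$-manifold in $W$.

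It then remains to recognise $M'$ as $M\sharp_fM(g;\emptyset)$. Since $M(g;\emptyset)\cong T_g\times S^1$, deleting an open fibred neighbourhood of one of its regular fibres leaves $(T_g)_o\times S^1$, so $M'$ is obtained from $M\setminus\mathrm{int}\,N(f)$ and this piece by gluing along a torus. The gluing is fibre-preserving, since it identifies the $S^1$ factor of $\Sigma$ with the fibre circles of $\partial N(f)$, and it is orientation-reversing on the torus, as is any gluing of two oriented $3$-manifolds-with-boundary yielding a closed oriented $3$-manifold (here the hypersurface $M'$ in the oriented $W$). Thus $M'$ is a fibred sum $M\sharp_fM(g;\emptyset)$ in the sense defined above, smoothly embedded in $W$.

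I do not expect a serious obstacle; the argument is entirely soft. The one point needing care is the choice of the embedded Seifert surface $(T_g)_o\subset D^3$, which must meet the level $D^2\times\{0\}$ exactly in its boundary circle, both so that $M'$ is genuinely embedded and so that the induced identification of boundary tori is the fibre-preserving one occurring in the definition of the fibred sum. Since any two fibre-preserving, orientation-reversing gluings of these two pieces differ only by the admissible operations on Seifert data (adding or deleting $(1,0)$-pairs, the $(\alpha_i,\beta_i)\mapsto(\alpha_i,\beta_i+c_i\alpha_i)$ moves, and permutations), this is enough.
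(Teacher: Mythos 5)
Your argument is correct and is essentially the paper's own proof: both excise a fibred product neighbourhood of a regular fibre, embed a punctured genus-$g$ surface in the transverse $3$-disc with boundary the relevant circle, and cross with the fibre to graft in $(T_g)_o\times S^1$. The only loose point is your closing claim that any fibre-preserving, orientation-reversing regluing yields equivalent Seifert data (a twist along the fibre direction changes the Euler number); but your gluing visibly extends the product/section structure across the new piece, so the identification with $M\sharp_fM(g;\emptyset)$ does hold, and the paper is no more explicit on this point.
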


\begin{proof}
Let $f$ be a regular fibre of $M$ smoothly embedded in $W$, and let $D$ be a 
3-disc with equatorial circle $C\subset\partial{D}$.
Then $f$ has a neighbourhood $D\times{f}$ in $W$ which is consistently fibred 
with $M$ so that $N=M\cap(D\times{f})$ is a neighbourhood of $f$ in $M$ 
with boundary $\partial{N}=C\times{f}$.
Write $W^*=W\setminus{int(D\times{f})}$ and $M^*=M\setminus{int(N)}$.
Then we have a smooth embedding $\phi_M:M^*\to{W^*}$
such that the restriction $\partial{M^*}\to\partial{W^*}=\partial{D}\times{f}$
is a fibre-preserving map with image $C\times{f}$.
On the other hand, write $T_g^*=T_g\setminus{int(D^2)}$.
Since there are smooth embeddings of $T_g^*$ in $D$ with 
$\mathrm{Im}(\partial{T_g^*} )=C$,
there is a smooth fibre-preserving embedding
$\phi_T:T_g^*\times{f}\to{D\times{f}}$ with $\mathrm{Im}(\partial{T_g^*}\times{f})=C\times{f}$.
Together $\phi_M$ and $\phi_T$ give a smooth embedding of
$M\sharp_fM(g;\emptyset)$ into $W=W^*\cup(D\times{f})$.
\end{proof}

See \cite[Proposition 7.2]{IM20} for an alternative account (ascribed to Donald), which uses Kirby calculus to show that $M(g+1;S)$ embeds in $M(g;S)\times[-1,1]$.

It would be convenient to have a converse to this stabilization result.
However, when the base is non-orientable there is no such cancellation.
See Chapter 3.

Together Lemmas \ref{CH-lem3.1} and \ref{CH-lem3.2} imply the following.

\begin{cor}
Let $S$ be skew-symmetric Seifert data for which all cone point orders 
$\alpha_i$ are odd.
Then $M(g;S)$ embeds smoothly in $S^4$ for all $g\geq0$.
\qed
\end{cor}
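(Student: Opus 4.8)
The plan is to deduce this directly from the two preceding lemmas together with the fibred-sum decomposition recorded earlier in this section, so the argument is short. First I would recall that a Seifert manifold over an orientable base decomposes as a fibred sum $M(g;S)\cong M(0;S)\sharp_f M(g;\emptyset)$, where $M(g;\emptyset)\cong T_g\times S^1$; this was observed above, and holds because the base orbifold $B$ is the connected sum of $|B|=T_g$ with $S^2(\alpha_1,\dots,\alpha_r)$.

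Next, Lemma \ref{CH-lem3.1} supplies a smooth embedding of $M(0;S)$ in $S^4$, using precisely the hypotheses that $S$ is skew-symmetric and that all cone point orders $\alpha_i$ are odd. Since this embedding is smooth, every regular fibre of $M(0;S)$ is a smoothly embedded circle in $S^4$, so the hypothesis of Lemma \ref{CH-lem3.2} is satisfied with $W=S^4$. Applying Lemma \ref{CH-lem3.2} with $M=M(0;S)$ then produces a smooth embedding of $M(0;S)\sharp_f M(g;\emptyset)\cong M(g;S)$ in $S^4$, for every $g\geq0$; the lemma already handles all $g$ simultaneously, so no separate induction on $g$ is needed.

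There is essentially no obstacle here. The only point requiring a word of care is that the regular fibre along which one stabilises in Lemma \ref{CH-lem3.2} should be taken to be a fibre of the copy of $M(0;S)$ embedded by Lemma \ref{CH-lem3.1}; this is legitimate since that embedding is fibre-aware and smooth. Alternatively, one could instead iterate the observation (ascribed to Donald) that $M(g+1;S)$ embeds in $M(g;S)\times[-1,1]$, starting from the embedding of $M(0;S)$, to reach the same conclusion.
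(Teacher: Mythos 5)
Your argument is correct and is exactly the paper's intended proof: the corollary is stated with no written argument beyond the remark that Lemmas \ref{CH-lem3.1} and \ref{CH-lem3.2} together imply it, which is precisely the combination you spell out (embed $M(0;S)$ by Lemma \ref{CH-lem3.1}, then apply the fibred-sum stabilisation of Lemma \ref{CH-lem3.2} to obtain $M(g;S)\cong M(0;S)\sharp_f M(g;\emptyset)$ in $S^4$). Nothing further is needed.
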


\begin{lemma}
\label{CH-lem3.3}
If $g\geq0$ and $e=0$ or $\pm1$ then  $M(g;(1,e))$ embeds smoothly in $S^4$.
\end{lemma}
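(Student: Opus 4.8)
The plan is to reduce immediately to the case $g=0$ by invoking the fibred-sum stabilization result (Lemma \ref{CH-lem3.2}). Recall from the discussion above that every Seifert manifold with this base decomposes as a fibre sum; in the present situation this gives $M(g;(1,e)) \cong M(0;(1,e)) \sharp_f M(g;\emptyset)$. Hence, once I know that $M(0;(1,e))$ embeds smoothly in $S^4$, Lemma \ref{CH-lem3.2} (applied with $W=S^4$) produces a smooth embedding of $M(0;(1,e))\sharp_f M(g;\emptyset)=M(g;(1,e))$ for every $g\geq 0$. So the entire content is the base case.

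For $g=0$, the manifold $M(0;(1,e))$ is the total space of the $S^1$-bundle over $S^2$ with Euler number $-e$. When $e=0$ this is $S^2\times S^1$, which embeds smoothly in $S^4$ as the boundary of a tubular neighbourhood of the standard unknotted $2$-sphere $S^2\subset S^3\subset S^4$: that sphere is two-sided in $S^3$, so its normal bundle in $S^4$ is trivial and the neighbourhood is $S^2\times D^2$ with boundary $S^2\times S^1$ (this is exactly the embedding with complementary regions $D^3\times S^1$ and $S^2\times D^2$ noted in \S2.8). When $e=\pm1$ the bundle has Euler number $\mp1$, so its total space is the Hopf bundle, i.e.\ the lens space $L(1,\cdot)\cong S^3$; reversing the sign of $e$ only reverses an orientation and changes nothing. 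Since $S^3$ obviously embeds smoothly in $S^4$, the base case is complete.

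I do not expect a genuine obstacle here: the argument is essentially a bookkeeping exercise built on results already in this chapter. The only points requiring a little care are citing the fibre-sum identity $M(g;(1,e))\cong M(0;(1,e))\sharp_f M(g;\emptyset)$ in the form already established, correctly identifying $M(0;(1,\pm1))$ with $S^3$ via the Euler-number classification of circle bundles over $S^2$, and observing that passing from $e$ to $-e$ is realized by an orientation reversal and therefore has no effect on embeddability. None of these steps needs any computation beyond what has already been done.
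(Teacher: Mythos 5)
Your proof is correct and follows essentially the same route as the paper: reduce to $g=0$ via the fibre-sum stabilization of Lemma \ref{CH-lem3.2}, then identify $M(0;(1,0))\cong S^2\times S^1$ and $M(0;(1,\pm1))\cong S^3$. The only (immaterial) difference is that for $e=0$ the paper embeds $M(g;(1,0))\cong T_g\times S^1$ directly as the boundary of a regular neighbourhood of $T_g\subset S^4$ rather than stabilizing from the $g=0$ case.
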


\begin{proof}
The product $M(g;(1,0))\cong{T_g\times{S^1}}$ embeds smoothly 
as the boundary of any smooth embedding of $T_g$ in $S^4$.
Since $M(0;(1,\pm1))\cong{S^3}$, it embeds smoothly as the equator of $S^4$.
The lemma now follows from Lemma \ref{CH-lem3.2}.
\end{proof}

\section{$\mathbb{H}^2\times\mathbb{E}^1$-manifolds with $g=0$}

When the Seifert data is skew-symmetric, all cone point orders are odd 
and $\varepsilon(M)=0$ then $M$ embeds smoothly in $S^4$, by Lemma \ref{CH-lem3.1}.
We shall show that when $g=0$, $\varepsilon(M)=0$ and all cone point orders 
are odd then skew-symmetry is a necessary condition for $M(0;S)$ 
to embed in a homology 4-sphere. 
If $r\leq2$ then $M$ must be $S^3$ or $S^2\times{S^1}$, 
while if $r\geq4$ then $M$ is a $\mathbb{H}^2\times\mathbb{E}^1$-manifold.

If the $S^1$-action extends to a fixed-point free action onto 
a complementary region $W$ then there is a direct, geometric argument.
For the exceptional orbits with non-trivial isotropy subgroup 
have even codimension and are foliated by circles.
Therefore they are tori (in the interior of the region) and annuli
with boundary components exceptional fibres of $M$, and $\chi(W)=0$.
Consideration of relative orientations implies that the Seifert data of
the boundary components of such an annulus have the form 
$\{(\alpha,\beta),(\alpha,-\beta)\}$.

\begin{theorem}
\label{Hi09-thm4.1}
Let $M=M(g;S)$, 
and let $\phi:\pi_1M\to\mathbb{Z}$ be an epimorphism
such that $\phi(h)\not=0$.
If $b_\phi$ is neutral and all cone point orders  are odd then $S$ is skew-symmetric.
\end{theorem}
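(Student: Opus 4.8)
The plan is to read off, from the hypothesis $\phi(h)\neq0$, that $M$ is a surface bundle over $S^1$ with periodic monodromy, to identify $b_\phi$ with the associated isometric structure, and then to feed this into the $G$-Index Theorem for a finite cyclic action on a surface, as recorded in \S1.9.

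First I would note that $\phi(h)\neq0$ forces $\varepsilon(M)=0$: since $h$ is central, the Seifert relations give $\alpha_ix_i=-\beta_ih$ in $H_1(M;\mathbb{Q})$, together with a relation which, modulo the free classes coming from the genus, reads $\varepsilon(M)\,h=0$, so $h$ has infinite order in $H_1(M)$ only when $\varepsilon(M)=0$. It is then classical that a Seifert manifold with $\varepsilon(M)=0$ fibres over $S^1$ with fibre a closed surface $F$ and \emph{periodic} monodromy $\theta$: $F$ is a branched covering of $B$, $\theta$ acts freely away from the (finitely many) preimages of the cone points, and near a preimage of the $i$-th cone point $\theta$ is a rotation through an angle $2\pi r_i/\alpha_i$ with $\gcd(r_i,\alpha_i)=1$, where $r_i$ is determined by $\beta_i$ and reversing $\beta_i$ to $-\beta_i$ reverses this rotation. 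Since $\phi$ is an epimorphism and $\phi(h)\neq0$, the kernel of $\phi$ is a finite-index torsion-free subgroup of $\pi^{orb}(B)$, hence a surface group, $M_\phi\simeq F$, and a generator $t$ of the covering group acts on $H_*(F)$ as $\theta_*$, of finite order.

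By the discussion of \S1.7, for such a fibred $\phi$ the Blanchfield pairing $b_\phi$ is equivalent to the isometric structure $(H_1(F;\mathbb{Q}),I_F,\theta_*)$. Neutrality of $b_\phi$ therefore means there is a $\theta_*$-invariant subspace $P\subset H_1(F;\mathbb{Q})$ with $P=P^{\perp}$ for $I_F$. As an invariant metabolic Hermitian form has vanishing equivariant signature, this gives $\mathrm{sign}(\theta_*^{\,k},F)=0$ for every $k$ (for powers acting freely this is automatic and carries no information, and for $\theta_*^k=\mathrm{id}$ one gets only the trivial statement $\mathrm{sign}(\mathrm{id},F)=0$). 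Now apply the $G$-Index Theorem of \S1.9 to the $\mathbb{Z}/n$-action generated by $\theta$ on $F$: for those $k$ with $\theta^k$ nontrivial the fixed points lie over the cone points, where $\theta^k$ rotates through $2\pi k r_i/\alpha_i$, so
\[
0=\mathrm{sign}(\theta^{\,k},F)=-\,i\!\!\sum_{\{\,i\,:\,\alpha_i\nmid k\,\}}\!\!\cot\!\left(\frac{2\pi k r_i}{\alpha_i}\right).
\]
It remains to prove the arithmetic statement: a finite multiset of fractions $r_i/\alpha_i$, all $\alpha_i$ \emph{odd} and $\gcd(r_i,\alpha_i)=1$, for which all these cotangent sums vanish, must be invariant under $r_i\mapsto-r_i$. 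I would do this by a Fourier/inclusion–exclusion argument on $\mathbb{Z}/\mathrm{lcm}(\alpha_i)$, reducing the vanishing of the signatures $\mathrm{sign}(\theta^k,F)$ for all $k$ to the vanishing of each eigenvalue-$\zeta$ signature of $(H_1(F;\mathbb{Q}),I_F,\theta_*)$, and then to a local condition at each cone point. Since reversing the rotation at the $i$-th exceptional fibre is precisely the replacement $(\alpha_i,\beta_i)\rightsquigarrow(\alpha_i,-\beta_i)$, this invariance of the multiset is exactly skew-symmetry of $S$.

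The hard part will be this last step, together with the bookkeeping that converts ``$b_\phi$ neutral'' into the simultaneous vanishing of all the equivariant signatures: one must separate off the genus/free part of the $\mathbb{Z}/n$-action (which contributes no fixed points and hence nothing) and treat the self-conjugate eigenspaces ($\zeta=\pm1$) on their own, and the cotangent-sum rigidity genuinely uses oddness of the $\alpha_i$. For even cone-point orders some power of $\theta$ acts near a fixed point by the rotation $-1$ (angle $\pi$), where the $\cot$ term degenerates; there the Witt obstruction is carried by a Milnor signature at the eigenvalue $-1$ rather than by the fixed-point data in this form, so the argument as above no longer applies — which is consistent with the fact that skew-symmetry is known to be necessary only under the oddness hypothesis.
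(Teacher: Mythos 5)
Your overall architecture coincides with the paper's: from $\phi(h)\neq0$ you get $\varepsilon(M)=0$ and a fibration of $M$ over $S^1$ with fibre $F$ and periodic monodromy $\theta$ of order $\sigma=|\phi(h)|$, with $B=F/G$ for $G=\mathbb{Z}/\sigma\mathbb{Z}$; you identify $b_\phi$ with the isometric structure $(H_1(F;\mathbb{Q}),I_F,\theta_*)$; neutrality (the metabolizer is a $\mathbb{Q}\Lambda$-submodule, hence $\theta_*$-invariant) forces all equivariant signatures $\mathrm{sign}(g^{\sigma/m},F)$ to vanish; and the $G$-Index Theorem converts these into cotangent sums indexed by the cone points. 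This is exactly the paper's route.

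The genuine gap is the step you defer to a ``Fourier/inclusion--exclusion argument ... reducing ... to a local condition at each cone point.'' Vanishing of the sums $\sum(s(m,m-k)-s(m,k))\cot(\pi k/m)$ does not localize by any formal Fourier inversion: passing from the equivariant signatures $\mathrm{sign}(g^j,F)$ to the eigenvalue signatures $\sigma_\zeta$ is indeed an invertible (Vandermonde) change of variables, but each $\sigma_\zeta$ is still a single linear combination of the unknown multiplicities $s(m,k)$ with cotangent coefficients, so one linear relation per $\zeta$ rather than one per cone point. What actually closes the argument in the paper is a nontrivial number-theoretic input: for $m>2$ the values $\{\cot(\pi k/m):k\in K(m)\}$ are linearly independent over $\mathbb{Q}$ (Edmonds--Ewing, cited as [EE]). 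Only this independence lets you conclude $s(m,m-k)=s(m,k)$ for each $k$ separately, which after the normalization $0<\beta_i<\alpha_i$ (adjusting the $\beta_i$ by multiples of $\alpha_i$ subject to $\varepsilon_S=0$) is skew-symmetry. Your sketch would have to either invoke this independence or reprove it; as written it assumes the localization it needs. Your closing remark about even cone point orders is consistent with the paper, but note that the oddness hypothesis enters precisely because $K(2)=\emptyset$, so order-$2$ data is invisible to the signatures, not because the independence statement fails for even $m>2$.
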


\begin{proof}
We note first that $\varepsilon(M)=0$ since the image of $h$ in $H_1(M)$ has infinite order.
Let $\sigma=|\phi(h)|$.
Since $h$ is central 
$\phi^{-1}(\sigma\mathbb{Z})\cong\mathrm{Ker}(\phi)\times\mathbb{Z}$, 
and so the covering space associated to this subgroup 
is a product $F\times{S^1}$,
where $F$ is a closed surface.
Then $M$ fibres over $S^1$ with fibre $F$ and monodromy $\theta$ 
of order $\sigma$,
and so the base orbifold $B$ is the quotient of $F$ by an effective action of 
$G=\mathbb{Z}/\sigma\mathbb{Z}$.

Let $\varphi:\pi^{orb}(B)=\pi_1M/\langle{h}\rangle\to\mathbb{Z}/\sigma\mathbb{Z}$ 
be the epimorphism induced by $\phi$,
and let $g\in{G}$ have image $\varphi(g)=[1]$.
Then $F$ is the covering space associated to $\mathrm{Ker}(\varphi)$.
The points $P$ with non-trivial isotropy subgroup $G_P$ 
lie above the cone points of $B$, 
and the representation of the isotropy subgroup 
$G_P=\langle{g}^{\sigma/\alpha_i}\rangle$ 
on the tangent space $T_P$ determines and is determined by the 
Seifert invariant $(\alpha_i,\beta_i)$,
corresponding to the $i$th cone point of $B$,
since $\varphi(q_i)=-[\frac\sigma\alpha_i\beta_i]$,
for all $i\leq{r}$.

Since $M_\phi\cong{F}\times\mathbb{R}$ the Blanchfield pairing $b_\phi$ 
on $H_1(M_\phi;\mathbb{Q})$ reduces to the intersection pairing $I_F$ 
on the fibre $F$, 
together with the isometric action of $G=Aut(F/B)\cong\mathbb{Z}/\sigma\mathbb{Z}$.
Let $s(n,k)=|\{ j:\alpha_j=n, \beta_j\equiv{k}~mod~(n)\}|$ and 
$K(m)=\{k:1\leq{k}<m/2, (k,m)=1\}$.
Then the equivariant signatures of the $G$-Signature Theorem are given by 
\[
sign(I_F,g^{\sigma/m})=i\Sigma_{k\in{K(m)}}(s(m,m-k)-s(m,k))\cot(\frac{\pi{k}}m).
\]
\cite[Theorem 6.27]{AB68}. (See also \cite{Go86} and \S1.9 above.)
 
If $b_\phi$ is neutral, its image in the Witt group 
$W_+(\mathbb{Q}(t),\mathbb{Q}\Lambda)$ is trivial.
The equivariant signature $sign(I_F,g^{\sigma/m})$ 
is an invariant of the Witt class of $b_\phi$, for each $m|\sigma$.
(See page 75 of \cite{Ne}.)
Since $b_\phi$ is neutral these signatures are 0.
If $m>2$, the algebraic numbers $\{\cot(\frac{\pi{k}}m):k\in{K(m)}\}$ 
are linearly independent over $\mathbb{Q}$ \cite{EE}.
Hence $s(m,m-k)=s(m,k)$ for all $k\in{K(m)}$ and $m>2$.
Since we may modify each $\beta_i$ by multiples of $\alpha_i$,
subject to $\varepsilon_S=0$, the Seifert data is skew-symmetric.
\end{proof}

The converse is also true, but we shall only sketch the argument, 
as we do not need the result.
Suppose that $S=((\alpha_1,\beta_1),\dots,(\alpha_{2s},\beta_{2s}))$ 
is skew-symmetric, and let $T=\{(\alpha_{2j},\beta_{2j})\mid j\leq{s}\}$.
Then $M\cong{N}\#_f-N\#_fM(g;\emptyset)$, where $N=M(0;T)$,
and $\phi$ induces nonzero homomorphisms $\phi_N:\pi_1N\to\mathbb{Z}$ 
and $\psi:\pi_1M(g;\emptyset)\to\mathbb{Z}$.
Let $n(f)$ be an open tubular neighbourhood of a regular fibre of $N$, 
and let $N_0=N\setminus{n(f)}$.
Then $N\#_f-N=N_0\cup_\partial-N_0$.
If $g=0$, then $\phi_N$ is an epimorphism, 
and the diagonal copy of $H_1(N_0;\mathbb{Q}\Lambda)$ in 
$H_1(M;\mathbb{Q}\Lambda)\cong
{H_1(N_0;\mathbb{Q}\Lambda)}\oplus{H_1(N_0;\mathbb{Q}\Lambda)}$
is a maximal self-annihilating submodule,
so $b_\phi$ is neutral.
In general, $\phi_N$ and $\psi$ need not be onto,
and we must allow for infinite cyclic covering spaces with finitely many
components.

The argument for the following Lemma extends that of \cite[Theorem 2.4]{BB22}.

\begin{theorem}
\label{Kawabeta1}
Let $W$ be a compact $4$-manifold with connected boundary $M$,
and let $\pi=\pi_1W$.
Suppose that $H^1(W)\cong{H^1(M)}\cong\mathbb{Z}$ and that $H_2(W)=0$.
Let $\phi:\pi\to\mathbb{Z}$  be an epimorphism representing a generator of $H^1(W)$.
Then the Blanchfield pairing on $H_1(M_\phi;\mathbb{Q})=H_1(M;\mathbb{Q}\Lambda)$ is neutral.
\end{theorem}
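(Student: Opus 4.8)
The plan is to run the classical ``half lives, half dies'' argument for the Blanchfield pairing, taking $W$ as the bounding $4$-manifold, and to verify that the hypotheses $H^1(W)\cong H^1(M)\cong\mathbb{Z}$ and $H_2(W)=0$ supply exactly the algebraic input it needs. Throughout, $\mathbb{Q}\Lambda=\mathbb{Q}[t,t^{-1}]$ is a PID, and $W_\phi\to W$, $M_\phi\to M$ are the infinite cyclic covers, so that $M_\phi=\partial W_\phi$. Note that the earlier theorem of Kawauchi is not directly applicable here, since its hypotheses (invertibility of $t^2-1$ on $H_1(M_\phi;\mathbb{Q})$, and $\mathbb{Q}\Lambda$-torsionness of $H_2(W_\phi,M_\phi;\mathbb{Q})$) are not automatic from ours; the point is that the direct argument does not need them.

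I would begin with two reductions. From the homology--cohomology exact sequence of $(W,M)$ together with Poincar\'e--Lefschetz duality $H^k(W,M)\cong H_{4-k}(W)$ (so $H^2(W,M)\cong H_2(W)=0$), the restriction $H^1(W)\to H^1(M)$ is onto, hence an isomorphism of copies of $\mathbb{Z}$; thus $\phi$ restricts to an epimorphism $\pi_1M\to\mathbb{Z}$ and $M_\phi$ is connected. Next, feeding $\beta_1(W)=\beta_1(M)=1$ and $H_2(W;\mathbb{Q})=0$ into the Wang sequences associated to $0\to\mathbb{Q}\Lambda\xrightarrow{t-1}\mathbb{Q}\Lambda\to\mathbb{Q}\to0$ for $W$ and for $M$ shows that $t-1$ acts surjectively on each of $H_1(M;\mathbb{Q}\Lambda)$, $H_1(W;\mathbb{Q}\Lambda)$ and $H_2(W;\mathbb{Q}\Lambda)$, so all three are finitely generated $\mathbb{Q}\Lambda$-torsion modules. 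In particular $H=H_1(M;\mathbb{Q}\Lambda)$ is the whole module carrying the Blanchfield pairing $b_\phi$. Set $P=\mathrm{Ker}\big(i_*\colon H\to H_1(W;\mathbb{Q}\Lambda)\big)$; by the exact sequence of $(W,M)$ this equals $\mathrm{Im}\big(\partial\colon H_2(W,M;\mathbb{Q}\Lambda)\to H\big)$.

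It remains to see that $P$ is self-annihilating and ``half'' of $H$. For self-annihilation, this is the standard fact that $\mathrm{Ker}(H_1(M;\mathbb{Q}\Lambda)\to H_1(W;\mathbb{Q}\Lambda))$ is isotropic for $b_\phi$ whenever $\phi$ extends over $W$ with $H_2(W;\mathbb{Q}\Lambda)$ $\mathbb{Q}\Lambda$-torsion; the argument --- computing $b_\phi$ on such classes via an equivariant intersection carried out inside $W_\phi$, which then takes values in $\mathbb{Q}\Lambda$ rather than $\mathbb{Q}(t)$ --- is the one already used in the proof of Kawauchi's theorem above and in \cite[Theorem 2.4]{BB22}, and I would transcribe it. For the counting, I would use the duality ladder
\[
\begin{CD}
H_2(W,M;\mathbb{Q}\Lambda) @>\partial>> H_1(M;\mathbb{Q}\Lambda) @>i_*>> H_1(W;\mathbb{Q}\Lambda)\\
@V\cong VV @V\cong VV @V\cong VV\\
H^2(W;\mathbb{Q}\Lambda) @>i^*>> H^2(M;\mathbb{Q}\Lambda) @>>> H^3(W,M;\mathbb{Q}\Lambda),
\end{CD}
\]
whose vertical maps are the Poincar\'e--Lefschetz and Poincar\'e duality isomorphisms ($D_M$ in the middle) and whose squares commute up to sign. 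The left square gives $D_M(P)=\mathrm{Im}(i^*)$. Since $H_2(W;\mathbb{Q}\Lambda)$ is torsion and $H_2(M;\mathbb{Q}\Lambda)\cong\mathbb{Q}$ (by Poincar\'e duality and the universal coefficient sequence), the universal coefficient sequence identifies $H^2(W;\mathbb{Q}\Lambda)$ and $H^2(M;\mathbb{Q}\Lambda)$ with $\mathrm{Ext}_{\mathbb{Q}\Lambda}(H_1(W;\mathbb{Q}\Lambda),\mathbb{Q}\Lambda)$ and $\mathrm{Ext}_{\mathbb{Q}\Lambda}(H,\mathbb{Q}\Lambda)$ respectively (up to the involution $t\mapsto t^{-1}$), carrying $i^*$ to the map induced by $i_*$. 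The identity $\Delta_0(\mathrm{Im}\,f)=\Delta_0(\mathrm{Im}\,\mathrm{Ext}(f))$ of \S1.3 then makes $\Delta_0(P)$ and $\Delta_0(\mathrm{Im}\,i_*)$ equal up to the involution, and combined with $\Delta_0(H)=\Delta_0(P)\,\Delta_0(\mathrm{Im}\,i_*)$ from $0\to P\to H\to\mathrm{Im}\,i_*\to0$ this is the usual conclusion $\Delta_0(H)=f(t)f(t^{-1})$ with $f=\Delta_0(P)$; in particular $\dim_\mathbb{Q}H=2\dim_\mathbb{Q}P$.

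To finish, nonsingularity of the hermitian pairing $b_\phi$ gives $H/P^{\perp}\cong\mathrm{Hom}_{\mathbb{Q}\Lambda}(P,\mathbb{Q}(t)/\mathbb{Q}\Lambda)$ (up to the involution), which has the same $\mathbb{Q}$-dimension as $P$; hence $\dim_\mathbb{Q}P^{\perp}=\dim_\mathbb{Q}H-\dim_\mathbb{Q}P=\dim_\mathbb{Q}P$. Since $P\subseteq P^{\perp}$ and these finite-dimensional $\mathbb{Q}$-vector spaces have equal dimension, $P^{\perp}=P$, so $b_\phi$ is neutral. I expect the main obstacle to be the bookkeeping in the counting step --- making the duality ladder commute in the paper's equivariant conventions, and checking naturality of the universal coefficient identifications for $i^*$ against $i_*$ --- so that one lands cleanly on $\mathrm{Im}\,\mathrm{Ext}(i_*)$ with the correct placement of the involution. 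The self-annihilation of $P$ and the verifications that the various modules are $\mathbb{Q}\Lambda$-torsion are routine once conventions are fixed, and for the former I would simply point to the adjacent proof of Kawauchi's theorem rather than re-derive the linking formula.
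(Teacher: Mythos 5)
Your proposal is correct and follows essentially the same route as the paper's proof: Wang sequences to get the torsion statements, $P=\mathrm{Im}(\partial)$ made isotropic via the geometric linking form in $W_\phi$, and the Poincar\'e--Lefschetz/universal-coefficient ladder giving $\Delta_0(P)=\overline{\Delta_0(H/P)}$, whence $P=P^\perp$. The only cosmetic difference is at the last step, where you count $\mathbb{Q}$-dimensions using nonsingularity of $\tilde b_\phi$ while the paper compares orders via a second $\mathrm{Ext}$ diagram; these are the same computation.
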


\begin{proof}
Consideration of the Wang sequences associated to the infinite cyclic covers
of $M$, $W$ and $(W,M)$ shows that $t-1$ acts epimorphically 
on $H_1(M;\mathbb{Q}\Lambda)$,
$H_1(W;\mathbb{Q}\Lambda)$ and $H_2(W;\mathbb{Q}\Lambda)$.
Since  $\mathbb{Q}\Lambda$ is noetherian and $W$ and $M$ are compact 
these modules are finitely generated,
and so are $\mathbb{Q}\Lambda$-torsion modules on which $t-1$ acts invertibly.
Poincar\'e duality then implies that the same is true for all the homology 
and cohomology modules of $M$, $W$ and $(W,M)$
with coefficients $\mathbb{Q}\Lambda$.

Let $j$ be the inclusion of $M$ as the boundary of $W$,
and let $\delta:H_2(W,M;\mathbb{Q}\Lambda)\to
{H=H_1(M;\mathbb{Q}\Lambda)}$ be the connecting homomorphism
in the long exact sequence of homology.
Let $P=\mathrm{Im}(\delta)$ be the image of $H_2(W,M;\mathbb{Q}\Lambda)$ in 
$H$.
Poincar\'e duality combined with the Universal Coefficient Theorem gives us isomorphisms of the three short exact sequences:
\begin{equation*}
\begin{CD}
0\to{P}@>>>{H=H_1(M;\mathbb{Q}\Lambda)}@>>>\mathrm{Im}(j_*)\cong{H/P}\to0\\
@VV{PD}V @VV{PD}V @VV{PD}V\\
0\to\overline{\mathrm{Im}(j^*)}@>>>\overline{H^2(M;\mathbb{Q}\Lambda)}@>>>
\overline{\mathrm{Im}(\delta^*)}\to0\\
@VV{UCT}V @VV{UCT}V @VV{UCT}V\\
0\to\overline{\mathrm{Im}(\mathrm{Ext}(j_*))}@>>>
\overline{\mathrm{Ext}_{\mathbb{Q}\Lambda}(H,\mathbb{Q}\Lambda)}@>>>\overline{\mathrm{Im}(\mathrm{Ext}(\delta))}\to0.
\end{CD}
\end{equation*}
Hence $\Delta_0(H/P)=\overline{\Delta_0(P)}$.
This is enough to show that the ``Alexander polynomial"
$\Delta_0(H)$ is a product $\lambda\overline\lambda$, where $\lambda=\Delta_0(P)$.

The central isomorphism from $H$ to 
$\overline{\mathrm{Ext}_{\mathbb{Q}\Lambda}(H,\mathbb{Q}\Lambda)}=
\mathrm{Hom}(H,\mathbb{Q}(t)/\mathbb{Q}\Lambda)$
is the adjoint $\tilde{b}_\phi$ of the Blanchfield pairing.
Let 
\[
P^\perp=\{u\in{H}\mid b_\phi(u,p)=0~\forall{p}\in{P}\}.
\]
Then we have another commutative diagram
\begin{equation*}
\begin{CD}
0\to{P^\perp}@>>>{H}@>>>{H/P^\perp}\to0\\
@VVV @VV{\tilde{b}_\phi}V @VVV\\
0\to\overline{\mathrm{Ext}_{\mathbb{Q}\Lambda}(H/P,\mathbb{Q}\Lambda)}
@>>>
\overline{\mathrm{Ext}_{\mathbb{Q}\Lambda}(H,\mathbb{Q}\Lambda)}@>>>\overline{\mathrm{Ext}_{\mathbb{Q}\Lambda}(P,\mathbb{Q}\Lambda)}\to0.
\end{CD}
\end{equation*}
Hence $\Delta_0(H/P^\perp)=\overline{\Delta_0(P)}$. 
Comparing these equations, and using the multiplicity of orders in short exact sequences, 
we see that $\Delta_0(P^\perp)=\Delta_0(P)$.

If we use the geometric formulation of the Blanchfield pairing as in the proof of \cite[Theorem 2.4]{AIL}, 
we see easily that  $P\leq{P^\perp}$.
Since $\mathbb{Q}\Lambda$ is a PID and these are torsion modules 
of the same order it follows that $P=P^\perp$.
Hence $b_\phi$ is neutral.
\end{proof}

\begin{cor}
\label{g=0skewsym}
Let $M=M(0;S)$, where $\varepsilon(M)=0$ and all cone point orders 
$\alpha_i$ are odd. 
Then $M$ embeds in $S^4$ if and only if $S$ is skew-symmetric.
\end{cor}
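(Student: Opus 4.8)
The plan is to read off the ``if'' direction from the construction of Lemma~\ref{CH-lem3.1}, and to obtain the ``only if'' direction by feeding a complementary region into Theorem~\ref{Kawabeta1} and then applying Theorem~\ref{Hi09-thm4.1}. For the ``if'' direction there is nothing to prove beyond Lemma~\ref{CH-lem3.1}: if $S$ is skew-symmetric and all cone point orders are odd it already gives a smooth, hence locally flat, embedding of $M(0;S)$ in $S^4$.

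So suppose $M=M(0;S)$ embeds in $S^4$, with complementary regions $X$ and $Y$, $\chi(X)\le\chi(Y)$. The first step is to compute the relevant homology of $X$. Because $\varepsilon(M)=0$, the image of the regular fibre $h$ has infinite order in $H_1(M)$, while $H_1(M)$ modulo the subgroup generated by that image is $H_1(\pi^{orb}(S^2(\alpha_1,\dots,\alpha_r)))$, which is finite; hence $\beta=\beta_1(M)=1$. Then Lemma~\ref{Hi17-lem2.1}(5) forces $\chi(X)=0$ and $\chi(Y)=2$, and parts (2)--(4) give $\beta_1(X)=1$, $\beta_2(X)=0$ and $\beta_1(Y)=0$; in particular $H^1(X)\cong\mathbb{Z}\cong H^1(M)$. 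Next I would verify that $H_2(X)=0$ over $\mathbb{Z}$, not merely rationally: part~(3) gives $H_2(Y)\cong H^1(X)\cong\mathbb{Z}$, Poincar\'e duality on the closed $3$-manifold $M$ gives $H_2(M)\cong H^1(M)\cong\mathbb{Z}$, and the splitting $H_2(M)\cong H_2(X)\oplus H_2(Y)$ of part~(2) then leaves no room for torsion in $H_2(X)$. Finally, the exact sequence $0\to H_2(X,M)\to H_1(M)\to H_1(X)\to0$ from the proof of Lemma~\ref{hyperbolic lp} has rank-zero, hence finite, kernel term, so $j_{X*}$ is onto with finite kernel and thus an isomorphism modulo torsion; dualising, a generator $\phi$ of $H^1(X)$ restricts to a generator of $H^1(M)$, that is, to an epimorphism $\pi_1M\to\mathbb{Z}$, and $\phi(h)\ne0$ since $h$ is non-torsion in $H_1(M)$.

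It now remains to assemble these facts. Theorem~\ref{Kawabeta1}, applied with $W=X$ (whose hypotheses $H^1(X)\cong H^1(M)\cong\mathbb{Z}$ and $H_2(X)=0$ were just checked), shows that the Blanchfield pairing $b_\phi$ on $H_1(M_\phi;\mathbb{Q})=H_1(M;\mathbb{Q}\Lambda)$ is neutral; here $M_\phi$ is the full infinite cyclic cover of $M$ because $\phi$ restricts to an epimorphism on $\pi_1M$. Since $\phi(h)\ne0$ and all the $\alpha_i$ are odd, Theorem~\ref{Hi09-thm4.1} then yields that $S$ is skew-symmetric. The only delicate point in the whole argument is the matching of hypotheses between the two cited theorems --- that $H_2(X)$ vanishes integrally, and that the generator of $H^1(X)$ does not restrict to a proper multiple of a primitive class on $M$ --- and both of these reduce to the Betti-number bookkeeping of Lemma~\ref{Hi17-lem2.1}; the genuine content sits in Theorems~\ref{Kawabeta1} and~\ref{Hi09-thm4.1}.
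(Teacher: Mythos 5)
Your proof is correct and follows essentially the same route as the paper: the ``if'' direction is Lemma~\ref{CH-lem3.1}, and the ``only if'' direction notes that $\beta=1$ forces $H^1(X)\cong H^1(M)\cong\mathbb{Z}$ and $H_2(X)=0$, so Theorem~\ref{Kawabeta1} applied to $X$ gives neutrality of $b_\phi$ and Theorem~\ref{Hi09-thm4.1} then yields skew-symmetry. Your extra bookkeeping (integral vanishing of $H_2(X)$, primitivity of the restricted class, $\phi(h)\ne0$) merely makes explicit the hypothesis checks the paper leaves implicit.
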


\begin{proof}
In this case $\beta=1$, and so $H^1(X)\cong{H^1(M)}\cong\mathbb{Z}$.
Let $\phi:\pi_X\to\mathbb{Z}$ be an epimorphism.
The restriction $\phi:\pi=\pi_1M\to\mathbb{Z}$ is also an epimorphism, 
since $H_1(M)$ maps onto $H_1(X)$,
and $H_1(M;\mathbb{Q}(t))=0$.
Thus $b_\phi$ is neutral, by Theorem \ref{Kawabeta1}, 
and so $S$ is skew-symmetric, 
by Theorem \ref{Hi09-thm4.1}.
\end{proof}

This settles the question of which Seifert fibred rational homology 
$S^2\times{S^1}$s embed, when all cone point orders are odd; 
the answer is not known in general.

\section{An open question}

Corollary \ref{g=0skewsym} suggests the question:
{\it if $M(g;S)$ embeds in $S^4$ and $\varepsilon(M)=0$,
must $S$ be skew-symmetric?} 
This is true if $g=0$ and all cone point orders are odd,
by Theorem \ref{g=0skewsym}.
However,
skew-symmetry alone is not enough to ensure an embedding.
The 2-torsion must be homogeneous.
(See Appendix A.)

We consider some hindrances in extending our argument.

If there is an embedding with $H_2(X)=0$ then 
the inclusions of $M$ and of $\vee^\beta{S^1}$ into $X$ induce isomorphisms on all 
the rational lower central series quotients of the fundamental groups \cite{St65}.
Hence these quotients are those of the free group $F(\beta)$.
This is never true for $M$ an $\mathbb{H}^2\times\mathbb{E}^1$-manifold
with $\beta>1$, since the centre has nonzero image in $H_1(M;\mathbb{Q})$.
It follows immediately from Lemma \ref{Hi17-lem2.1} that if $g\leq1$
then there is a complementary region $X$ with $\chi(X)=0$.
Does $h$ have nonzero image in $H_1(X;\mathbb{Q})$ when $\chi(X)=0$ and $g=1$?

If $g>1$, the condition $\chi(X)=0$ holds for neither  complementary region
of $M(g;\emptyset)=T_g\times{S^1}$, when embedded in $S^4$ 
as the boundary of a regular neighbourhood of an embedding of $T_g$.
It remains possible that $b_\phi$ is neutral
when $\Phi(h)\not=0$ for some $\Phi$ as above.
(This would follow by the argument of \cite[Theorem 2.3]{AIL},
if the torsion submodule of $H_2(X,M;\mathbb{Q}\Lambda)$ maps onto 
the image of $H_2(X,M;\mathbb{Q}\Lambda)$ in $H_1(M;\mathbb{Q}\Lambda)$.)

When $r=2$ and $\varepsilon(M)=0$ the Seifert data $S$ is skew-symmetric,
and $M=M(0;S)\cong{S^2\times{S^1}}$,
with $S^1$-action given by $u.(w,z)=(u^\alpha{w},u^\beta{z})$ 
for $u,z\in{S^1}$ and $w\in\widehat{\mathbb{C}}\cong{S^2}$.
If a regular fibre of $M$ bounds a locally flat disc 
in one complementary component of some embedding of $M$ in $S^4$, 
ambient surgery gives an embedding of $L\#-\!L$ in $S^4$, 
where $L=L(\alpha,\beta)$.
This is only possible if $\alpha$ is odd \cite{KK80}.
Thus it is not clear whether a fibre sum construction can be used to
build up embeddings of other $\mathbb{H}^2\times\mathbb{E}^1$-manifolds 
with some exceptional fibres of even multiplicity.

If $M=M(0;(3,1),(5,-2),(15,1))$, then the $\alpha_i$s are odd,
$\varepsilon_S=0$ and $\tau_M=0$, so $\ell_M$ is hyperbolic. 
Thus these conditions alone do not imply that $S$ must be skew-symmetric.

\section{The torsion subgroup}

In this section we shall describe the torsion subgroup $\tau_M$
in terms of the Seifert invariants of $M=M(g;S)$, 
when $g\geq0$.

Let $N_i$ be a regular neighbourhood of the $i$th exceptional fibre, 
and let $B_o$ 
be a section of the restriction of the Seifert fibration to $\overline{M\setminus\cup\{N_i\}}$.
Then $B_o$ is homeomorphic to $T_g$ with $r$ open 2-discs deleted,
and $\overline{M\setminus\cup\{N_i\}}\cong{B_o\times{S^1}}$.
Let $\xi_i$ and $\theta_i$ be simple closed curves on $\partial{N_i}$ corresponding to the 
$i$th boundary component of $B_o$ and a regular fibre respectively.
The fibres are naturally oriented as orbits of the $S^1$-action.
We may assume that $B_o$ is oriented so that regular fibres have negative intersection 
with $B_o$, and the $\xi_i$ are oriented compatibly with $\partial{B_o}=\Sigma\xi_i$.
Then there are 2-discs $D_i$ in $N_i$ such that $\partial{D_i}=\alpha_i\xi_i+\beta_i\theta_i$.
It follows from Van Kampen's Theorem that $\pi_1M$ has a presentation
\[
\langle{a_1},b_1,\dots,a_g,b_g,q_1,\dots,q_r,h\mid
\Pi[a_i,b_i]\Pi{q_j}=1,~q_i^{\alpha_i}h^{\beta_i}=1,~h~central\rangle,
\]
where $\{a_i,b_i\}$ are the images of curves in $B_o$ which form a canonical basis for $|B|$,
$q_j$ is the image of $\xi_j$ and $h$ is the image of regular fibres such as $\theta_j$.

\begin{theorem}
\label{Hi09-thm1}
Let $M=M(g;S)$ be a Seifert manifold with orientable base orbifold.
Then $H_1(M)\cong\mathbb{Z}^{2g}\oplus
(\bigoplus_{i\geq0}(\mathbb{Z}/\lambda_i\mathbb{Z}))$,
where $\lambda_i$ is determined by $\{\alpha_1,\dots,\alpha_r\}$ 
and is nonzero, 
for all $i>0$.
Moreover, $\lambda_{i+1}$ divides $\lambda_i$, for all $i\geq0$,
and $|\varepsilon(M)|\Pi_{i=1}^n\alpha_i=\lambda_0\Pi_{j\geq1}\lambda_j$.
\end{theorem}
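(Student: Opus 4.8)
The plan is to read $H_1(M)$ off the presentation of $\pi_1M$ displayed above. Abelianising kills the commutators $[a_i,b_i]$ and makes the centrality relations vacuous, so $H_1(M)$ is the abelian group on $a_1,b_1,\dots,a_g,b_g,q_1,\dots,q_r,h$ subject to $\sum_jq_j=0$ and $\alpha_iq_i+\beta_ih=0$ for $1\le i\le r$. Since the $a_i,b_i$ appear in no relation, they split off a free summand and $H_1(M)\cong\mathbb{Z}^{2g}\oplus A$, where $A=\mathrm{coker}(P)$ for the $(r+1)\times(r+1)$ integer matrix $P$ with top row $(1,1,\dots,1,0)$ and, for $1\le i\le r$, entry $\alpha_i$ in position $(i+1,i)$, entry $\beta_i$ in position $(i+1,r+1)$, and zeros elsewhere. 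The invariant factor form of a finitely generated abelian group immediately gives $A\cong\bigoplus_{i\ge0}\mathbb{Z}/\lambda_i\mathbb{Z}$ with $\lambda_{i+1}\mid\lambda_i$, so the divisibility assertion is free; what remains is to bound the rank, evaluate the product $\prod_i\lambda_i$, and show that each $\lambda_i$ with $i>0$ depends only on $\{\alpha_1,\dots,\alpha_r\}$ (and not on the $\beta_i$, nor on $g$, which has already disappeared from $A$).

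For the rank, over $\mathbb{Q}$ the relations $\alpha_iq_i+\beta_ih=0$ give $q_i=-(\beta_i/\alpha_i)h$, so $\sum_jq_j=0$ becomes $\varepsilon(M)\,h=0$; hence $A$ has rank $1$ when $\varepsilon(M)=0$ and rank $0$ otherwise, so at most one invariant factor — namely $\lambda_0$ — can vanish, which is the claimed non-vanishing for $i>0$. Cofactor expansion of $\det P$ along the top row (equivalently, eliminating $q_r$ using $\sum_jq_j=0$) yields $\det P=\pm\bigl(\prod_i\alpha_i\bigr)\sum_i(\beta_i/\alpha_i)=\mp\varepsilon(M)\prod_i\alpha_i$, up to a sign which one verifies is uniform. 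Thus if $\varepsilon(M)\ne0$ then $A$ is finite of order $|\det P|$ and $\lambda_0\prod_{j\ge1}\lambda_j=|A|=|\varepsilon(M)|\prod_i\alpha_i$; and if $\varepsilon(M)=0$ then $\lambda_0=0$ and both sides of the identity are $0$.

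The substantive step is that $\lambda_1,\lambda_2,\dots$ depend only on the multiset $\{\alpha_i\}$, and I would prove this one prime at a time. Fix $p$ and localise at $p$. For each $i$ with $p\nmid\alpha_i$ the relation $\alpha_iq_i+\beta_ih=0$ lets us delete the generator $q_i$. Writing $I=\{i:p\mid\alpha_i\}$, we have $p\nmid\beta_i$ for $i\in I$ because $\gcd(\alpha_i,\beta_i)=1$; so if we choose $i_0\in I$ with $v_p(\alpha_{i_0})$ minimal, its relation lets us delete $h$, and then the surface relation — in which the coefficient of $q_{i_0}$ has become $\equiv1\pmod p$, hence a unit — lets us delete $q_{i_0}$. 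What survives is a square presentation of $A_{(p)}$ of size $m=|I|-1$, of the shape $\mathrm{diag}(p^{a_i}u_i)$ minus $p^{a_{i_0}}$ times a rank-one matrix with unit entries, where $a_i=v_p(\alpha_i)>0$ and the $u_i$ are units. A short minor computation then shows that $v_p(\Delta_k)$ is the sum of the $k$ smallest members of $\{v_p(\alpha_i):i\in I\}$ for $1\le k<m$ — the diagonal $k$-minors and the rank-one correction both give this bound and it is attained — while $\Delta_m=\det$ is the only determinantal divisor involving $\varepsilon(M)$. Consequently the $p$-parts of $\lambda_1,\dots,\lambda_{m-1}$ are the $m-1$ smallest members of $\{v_p(\alpha_i):i\in I\}$, the $p$-parts of the remaining $\lambda_i$ vanish, and only $v_p(\lambda_0)$ sees the $\beta_i$. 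Assembling over all $p$ gives the claim.

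I expect the localised elimination in the third paragraph to be the main obstacle: one has to check that, after $h$ is removed, the surface relation really is a legitimate Tietze move for deleting a generator — this is precisely where $\gcd(\alpha_i,\beta_i)=1$ and the choice of $i_0$ with minimal $v_p(\alpha_{i_0})$ are needed — and then to pin down the valuations of the determinantal ideals of the resulting ``diagonal plus rank one'' matrix. By contrast, the uniformity of the sign in the evaluation of $\det P$ is routine, and the divisibility and non-vanishing statements are immediate once the rank is known.
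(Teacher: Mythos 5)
Your proposal is correct, and at its core it is the same proof as the paper's: abelianize the standard presentation of $\pi_1M$, split off $\mathbb{Z}^{2g}$, and read the invariant factors off the determinantal divisors of the resulting $(r+1)\times(r+1)$ presentation matrix, with the divisibility chain and the product formula $\lambda_0\Pi_{j\geq1}\lambda_j=|\det|=|\varepsilon(M)|\Pi\alpha_i$ coming for free from the Elementary Divisor Theorem. The one place where you genuinely diverge is the evaluation of the intermediate determinantal divisors. The paper does this globally in a single step: since every row of the matrix has coprime entries (the top row by inspection, the others because $(\alpha_i,\beta_i)=1$), the gcd of the $(r+1-i)\times(r+1-i)$ minors is just the gcd of the $(r-i-1)$-fold products of distinct $\alpha_j$'s. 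You instead localize at each prime, eliminate the generators $q_i$ with $p\nmid\alpha_i$, then $h$, then $q_{i_0}$, and compute the determinantal divisors of the resulting ``diagonal plus $p^{a_{i_0}}\cdot(\mathrm{rank\ one})$'' matrix. Your sketch of that computation is sound, with one small caveat worth recording: the principal $k\times k$ minors alone need not attain the lower bound $b_1+\dots+b_k$ (the unit coefficients in the rank-one correction can conspire to cancel modulo $p$), but the off-diagonal minors with $|R\cap C|=k-1$ always do, since for such $R,C$ the formula $\det(D_{RC}+p^{b_1}v_Rw_C^{tr})=p^{b_1}w_C^{tr}\mathrm{adj}(D_{RC})v_R$ reduces to a single monomial times a unit. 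Your route is longer, but it has the advantage of making the prime-by-prime description of the $\lambda_i$ completely explicit, which is exactly the form the paper needs later (Corollary \ref{Hi09-cor3.3}); the paper's route is shorter but leaves its one-line gcd claim for the reader to unwind.
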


\begin{proof}
On abelianizing the above presentation for $\pi_1M$,
we see that $H_1(M)\cong\mathbb{Z}^{2g}\oplus\mathrm{Cok}(A)$, 
where $A$ is the matrix
\[ A=
\left(
\begin{matrix}
0& 1 & \dots & 1\\
\beta_1 & \alpha_1 &\dots &0\\
\vdots & 0 & \ddots & \vdots\\
\beta_r& 0& \dots &\alpha_r
\end{matrix}
\right).
\]
Let $E_i(A)$ be the ideal generated by the $(r+1-i)\times(r+1-i)$ subdeterminants of $A$, 
and let $\delta_i$ be the positive generator of $E_i(A)$.
Then $\Delta_0=|det(A)|=|\varepsilon(M)|\Pi_{i=1}^n\alpha_i$.
Since the elements of each row are relatively prime, 
$\Delta_i$  is the highest common factor of the $(r-i-1)$-fold
products of distinct $\alpha_j$s, if $0<i<r$, 
and $\Delta_i=1$ if $i\geq\max\{r-1,1\}$.
(In particular, if $r>2$ then $\Delta_{r-2}=\mathrm{hcf}(\alpha_1,\dots,\alpha_r)$.)
Thus $\Delta_i$ depends only on $\{\alpha_1,\dots,\alpha_r\}$ and is nonzero, for all $i>0$.
If we set $\lambda_i=\Delta_i/\Delta_{i+1}$ for $i\geq0$
then $\mathrm{Cok}(A)\cong\bigoplus_{i\geq0}(\mathbb{Z}/\lambda_i\mathbb{Z})$, 
and $\lambda_{i+1}$ divides $\lambda_i$, for all $i\geq0$,
by the Elementary Divisor Theorem.
In particular, $|\varepsilon(M)|\Pi_{i=1}^n\alpha_i=\lambda_0\Pi_{j\geq1}\lambda_j$.
\end{proof}

Note that $\tau_M\cong\tau_{M(0;S)}$ and the image of $h$ is in $\tau_M$ 
if and only if $\varepsilon(M)\not=0$.

\begin{cor}
\label{Hi09-cor3.2}
If $\Delta_1=1$ then $\tau_M$ is cyclic, and $\tau_M=0$ if and only if $\varepsilon(M)=0$ or $\pm1/\Pi_{i=1}^n\alpha_i$.
If $\Delta_1>1$ then $\tau_M\not=0$.
Given $\{\alpha_1,\dots,\alpha_r\}$ such that $\Delta_1>1$, 
there is at most one value of $|\varepsilon|$ for which $\tau_M$ is a direct double.
\end{cor}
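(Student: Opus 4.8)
The whole of Corollary \ref{Hi09-cor3.2} will be read off from the invariant factor decomposition $\mathrm{Cok}(A)\cong\bigoplus_{i\geq0}\mathbb{Z}/\lambda_i\mathbb{Z}$ of Theorem \ref{Hi09-thm1}, together with three facts recorded there: $\lambda_{i+1}$ divides $\lambda_i$ for all $i\geq0$; each $\Delta_i$ with $i\geq1$, and hence each $\lambda_i$ with $i\geq1$, depends only on $\{\alpha_1,\dots,\alpha_r\}$; and $\lambda_0=\Delta_0/\Delta_1=|\varepsilon(M)|\prod_{i=1}^n\alpha_i/\Delta_1$, so that $\lambda_0=0$ exactly when $\varepsilon(M)=0$, in which case that term contributes a free summand and $\tau_M=\bigoplus_{i\geq1}\mathbb{Z}/\lambda_i\mathbb{Z}$. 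I would also note that $\Delta_1=\prod_{i\geq1}\lambda_i$ (a finite product, since $\lambda_i=1$ for large $i$), whence $\Delta_1>1$ forces $\lambda_1>1$, the $\lambda_i$ being descending under divisibility.

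If $\Delta_1=1$ then $\Delta_i=1$ for all $i\geq1$ since $\Delta_{i+1}\mid\Delta_i$, so $\lambda_i=1$ for $i\geq1$ and $\tau_M$ is cyclic: it is $\mathbb{Z}/\lambda_0\mathbb{Z}$ when $\varepsilon(M)\neq0$ and is $0$ when $\varepsilon(M)=0$. Hence $\tau_M=0$ if and only if $\varepsilon(M)=0$ or $\lambda_0=1$, that is, if and only if $\varepsilon(M)=0$ or $|\varepsilon(M)|\prod_{i=1}^n\alpha_i=1$, which is the stated condition. If instead $\Delta_1>1$ then $\lambda_1>1$, so $\mathbb{Z}/\lambda_1\mathbb{Z}$ is a nonzero direct summand of $\tau_M$ and $\tau_M\neq0$.

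For the last assertion, fix $\{\alpha_1,\dots,\alpha_r\}$ with $\Delta_1>1$ and let $\lambda_1,\dots,\lambda_m$ (all $>1$, with $\lambda_{i+1}\mid\lambda_i$) be the nontrivial terms among $\lambda_1,\lambda_2,\dots$; these are fixed by the $\alpha_i$. The combinatorial input is that a finite abelian group whose nontrivial invariant factors, listed so that each divides the previous one, are $d_1,\dots,d_k$ is a direct double if and only if $k$ is even and $d_{2j-1}=d_{2j}$ for each $j$, which follows by comparing $p$-primary elementary divisors on the two sides. Now $\tau_M$ depends on $\varepsilon(M)$ only through $\lambda_0$: when $\varepsilon(M)=0$ its nontrivial invariant factors are $\lambda_1,\dots,\lambda_m$, and when $\varepsilon(M)\neq0$ the relations $\lambda_1\mid\lambda_0$ and $\lambda_1>1$ give $\lambda_0\geq\lambda_1>1$, so they are $\lambda_0,\lambda_1,\dots,\lambda_m$. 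In the latter case $\tau_M$ is a direct double only if $\lambda_0=\lambda_1$, which determines $\lambda_0$ and hence $|\varepsilon(M)|=\lambda_1\Delta_1/\prod_{i=1}^n\alpha_i$, so at most one nonzero value of $|\varepsilon(M)|$ can occur. Finally $\varepsilon(M)=0$ and this nonzero value cannot both yield direct doubles, since the former needs $m$ even and the latter needs $m+1$ even; as these parities clash, at most one value of $|\varepsilon(M)|$ works.

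The first two paragraphs are routine bookkeeping with elementary divisors. The delicate point, in the last paragraph, is the interplay between the frozen tail $\lambda_1,\lambda_2,\dots$ (determined by the cone point orders) and the single term $\lambda_0$ that moves with $\varepsilon(M)$: the Elementary Divisor relation $\lambda_1\mid\lambda_0$ makes $\lambda_0$ the top invariant factor whenever $\varepsilon(M)\neq0$, which is precisely what forces $\lambda_0=\lambda_1$ and pins down $|\varepsilon(M)|$, and the parity count of nontrivial invariant factors is the one new ingredient where an off-by-one slip would be easy.
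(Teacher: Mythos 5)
Your proof is correct and follows essentially the same route as the paper: everything is read off the invariant factor decomposition of Theorem \ref{Hi09-thm1}, with the two cases $\varepsilon(M)=0$ and $\varepsilon(M)\neq0$ compared via the criterion that a finite abelian group is a direct double exactly when its nontrivial invariant factors pair up. The only (cosmetic) difference is in the final step, where you rule out both cases occurring by a parity count of the nontrivial invariant factors, while the paper combines the two systems of equalities $\lambda_{2i}=\lambda_{2i+1}$ and $\lambda_{2i-1}=\lambda_{2i}$ to force all $\lambda_i=1$; these are the same observation.
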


\begin{proof}
As $\Delta_1=\Pi_{i\geq1}\lambda_i$ divides the order of $\tau_M$,
this group is nonzero unless $\Delta_1=1$.
If $\varepsilon(M)\not=0$ then
$\tau_M\cong\bigoplus_{i\geq0}(\mathbb{Z}/\lambda_i\mathbb{Z})$,
and so is a direct double if and only if $\lambda_{2i}=\lambda_{2i+1}$ for all $i\geq0$.
In particular, $\varepsilon(M)=(\Delta_1)^2/\Pi_{i=1}^n\alpha_i\Delta_2$,
and so is determined by  $\{\alpha_1,\dots,\alpha_r\}$.
If $\varepsilon(M)=0$ then 
$\tau_M\cong\bigoplus_{i\geq1}(\mathbb{Z}/\lambda_i\mathbb{Z})$,
and so is a direct double if and only if $\lambda_{2i-1}=\lambda_{2i}$ for all $i>0$.

Clearly these two systems of equations can both be satisfied 
only if $\lambda_i=1$ for all $i>0$ and $\lambda_0=0$ or 1,
in which case $\tau_M=0$.
\end{proof}

If $M$ is Seifert fibred over a non-orientable base orbifold then $\tau_M\not=0$,
as we shall recall in Lemma \ref{CH-lem3.4}.

It follows from Theorem \ref{Hi09-thm1} that a Seifert manifold $M$ is a homology sphere 
if and only if $M=M(0;S)$ for some Seifert data $S$ with 
$\varepsilon(M)\Pi_{i=1}^n\alpha_i=\pm1$.
In particular,
$\mathrm{hcf}\{\alpha_i,\alpha_j\}=1$ for all $i<j\leq{r}$.
It is easy to find such examples for any $r\geq1$.
Thus there is no reason to expect parity constraints for 
embedding such manifolds, since every homology sphere embeds in $S^4$ \cite{Fr82}.

If $M=M(g;S',(1,-e))$ is a homology sphere then $g=0$ and 
$\varepsilon(M)\Pi_{i=1}^n\alpha_i=1$.
Hence $e=\frac1{\Pi\alpha_i}+\Sigma_{i=1}^k\frac{\beta_i}{\alpha_i}$.
It is easy to see that since $e$ is an integer we must have $e<k$.
This bound is best possible, by the following purely arithmetic lemma.

\begin{lemma}
\label{arithbound}
Let $\alpha_1,\dots,\alpha_k$ be relatively prime integers greater than 1,
and let $\Pi=\Pi_{i=1}^k\alpha_i$.
If $k\geq2$ then there are integers $\beta_1,\dots,\beta_k$ such that
$0<\beta_i<\alpha_i$ and $(\alpha_i,\beta_i)=1$, for $i\leq{k}$,
and
$
\frac1{\Pi\alpha_i}+\Sigma_{i=1}^k\frac{\beta_i}{\alpha_i}=k-1.
$
\end{lemma}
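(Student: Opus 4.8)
The plan is to treat the lemma as the sharpness statement that it is used for. For a fixed tuple $\alpha_1,\dots,\alpha_k$ the equation is rigid: clearing denominators turns it into $1+\sum_j\beta_j(\Pi/\alpha_j)=(k-1)\Pi$, and reducing modulo $\alpha_i$ forces $\beta_i(\Pi/\alpha_i)\equiv-1\pmod{\alpha_i}$. Since $\Pi/\alpha_i$ is a unit modulo $\alpha_i$, this determines each $\beta_i$ uniquely in $(0,\alpha_i)$ (and the resulting $\beta_i$ is automatically prime to $\alpha_i$, as any common factor would divide $1$), whereupon the left-hand side equals a single integer $e$ — the Euler number of the associated Seifert homology sphere. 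That integer need not be $k-1$ in general (for example $\alpha=(3,4,5)$ yields $e=1$), so the real content is that the value $k-1$ \emph{is attained} for a suitable choice of pairwise coprime $\alpha_i$; the task is to exhibit one such choice and read off its $\beta_i$. When $k=2$ the forced value is always $e=1=k-1$, so any coprime pair works; the substance is in $k\geq3$.

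For $k\geq3$ I would look for a solution of the special shape $\beta_i=\alpha_i-1$. This meets the side conditions $0<\beta_i<\alpha_i$ and $\gcd(\alpha_i,\beta_i)=1$ automatically, and it collapses the target to a condition on the $\alpha_i$ alone, because
\[
\frac1\Pi+\sum_{i=1}^k\frac{\alpha_i-1}{\alpha_i}=\frac1\Pi+k-\sum_{i=1}^k\frac1{\alpha_i}.
\]
Hence it suffices to produce pairwise coprime integers $\alpha_i>1$ satisfying the unit-fraction identity $\sum_{i=1}^k 1/\alpha_i=1+1/\Pi$.

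To construct these I would use the Sylvester sequence $s_1=2$, $s_{n+1}=s_1\cdots s_n+1$, whose terms are pairwise coprime and obey the telescoping identity $\sum_{i=1}^n 1/s_i=1-1/P_n$, where $P_n=s_1\cdots s_n$ (immediate by induction). The mechanism is that a partial sum sitting just below $1$ is nudged just above $1$ by a single extra reciprocal: put $\alpha_i=s_i$ for $i<k$ and $\alpha_k=P_{k-1}-1$. As $P_{k-1}\equiv0$ modulo each $s_i$ we get $\alpha_k\equiv-1\pmod{s_i}$, so $\alpha_k$ is coprime to all earlier terms and is $>1$; thus $\Pi=P_{k-1}(P_{k-1}-1)$ and
\[
\sum_{i=1}^k\frac1{\alpha_i}=\Bigl(1-\frac1{P_{k-1}}\Bigr)+\frac1{P_{k-1}-1}=1+\frac1{P_{k-1}(P_{k-1}-1)}=1+\frac1\Pi .
\]
Taking $\beta_i=\alpha_i-1$ then gives exactly $k-1$; concretely this recovers $(2,3,5)$ for $k=3$ and $(2,3,7,41)$ for $k=4$. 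The case $k=2$ is handled separately by the explicit pair $(2,3)$ with $\beta_1=\beta_2=1$ (here $\beta_2\neq\alpha_2-1$, so it falls outside the reduction above).

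I expect no analytic obstacle: once the problem is recast as the identity $\sum1/\alpha_i=1+1/\Pi$, the only thing to get right is the combinatorial idea of appending $P_{k-1}-1$ to a Sylvester block, after which coprimality and the telescoping evaluation are routine. The one point demanding care is the degeneration at $k=2$, where $P_1-1=1$ is inadmissible, which is why that case is done by hand.
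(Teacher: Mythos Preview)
Your reading is sharp: the lemma as written (for \emph{arbitrary} pairwise coprime $\alpha_1,\dots,\alpha_k$) is false, and your counterexample $(3,4,5)$ shows it. The congruences $\beta_i(\Pi/\alpha_i)\equiv-1\pmod{\alpha_i}$ pin down $(\beta_1,\beta_2,\beta_3)=(1,1,2)$, and then $\tfrac1{60}+\tfrac13+\tfrac14+\tfrac25=1$, not $2$. The paper attempts a proof by induction on $k$: given $\alpha_1,\dots,\alpha_{k+1}$, apply the hypothesis to $(\alpha_1,\dots,\alpha_{k-1},A)$ with $A=\alpha_k\alpha_{k+1}$ to get $B/A$, and then split $B/A$ back into two fractions by writing $A+B=x\alpha_{k+1}+y\alpha_k$ with $x,y>0$. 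The step that breaks is the claim that from $A+B<2A$ one can also arrange $x<\alpha_k$ and $y<\alpha_{k+1}$ simultaneously; for $(\alpha_k,\alpha_{k+1})=(4,5)$ the induction produces $A+B=33$, whose only positive representations are $5\cdot1+4\cdot7$ and $5\cdot5+4\cdot2$, neither in range.

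Your reinterpretation as an existence statement is exactly what the two applications in the paper need (showing that $e=k-1$ is attained by some Seifert homology sphere with $k$ exceptional fibres), and your Sylvester-sequence construction is clean and correct: with $\alpha_i=s_i$ for $i<k$ and $\alpha_k=s_1\cdots s_{k-1}-1$, the identity $\sum_i 1/\alpha_i=1+1/\Pi$ drops out of the telescoping, and taking $\beta_i=\alpha_i-1$ does the rest. This is more direct than the paper's intended inductive argument and, unlike it, actually works; the separate treatment of $k=2$ is necessary exactly where you place it.
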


\begin{proof}
We shall induct on $k$.
If $k=2$ we may  choose $\beta_1$ so that $0<\beta_1<\alpha_1$ and 
$1+\beta_1\alpha_2=m\alpha_1$, for some $m\in\mathbb{Z}$.
Clearly $0<m<\alpha_2$, and so $0<\beta_2=\alpha_2-m<\alpha_2$.
We then have 
\[
\frac1{\alpha_1\alpha_2}+\frac{\beta_1}{\alpha_1}+\frac{\beta_2}{\alpha_2}=1.
\]

Suppose the result holds for $k\leq{j}$.
Let $\alpha_1,\dots,\alpha_{k+1}$ be relatively prime integers,
and let $A=\alpha_k\alpha_{k+1}$.
Let $\Pi=(\Pi_{j=1}^{k-1}\alpha_j)A$.
Then there are integers $\beta_1,\dots,\beta_{k-1},B$ such that 
$0<\beta_i<\alpha_i$ and $(\alpha_i,\beta_i)=1$, for all $i<k$,
$0<B<A$,$(A,B)=1$ and
$
\frac1\Pi+\Sigma_{i=1}^{k-1}\frac{\beta_i}{\alpha_i}+\frac{B}A=k-1,
$
by the inductive hypothesis.

If $p, q$ are natural numbers then the additive subsemigroup of $\mathbb{N}$ 
generated by $\{p,q\}$ contains every integer $\geq(p-1)(q-1)$.
Since $\alpha_k$ and $\alpha_{k+1}$ are relatively prime, and
$A+B>\alpha_k\alpha_{k+1}>(\alpha_k-1)(\alpha_{k+1}-1)$,
we may write $A+B=x\alpha_{k+1}+y\alpha_{k+2}$, where $x,y>0$.
Since $A+B<2A$ and $(A,B)=1$, we have $x<\alpha_k$ and $y<\alpha_{k+1}$,
and $(x,\alpha_1)=(y,\alpha_2)=1$.
Let $\beta_k=x$ and $\beta_{k+1}=y$.
Then $\frac{\beta_k}{\alpha_k}+\frac{\beta_{k+1}}{\alpha_{k+1}}=\frac{B}A+1$, and so
\[
\frac1\Pi+\Sigma_{i=1}^{k+1}\frac{\beta_i}{\alpha_i}=
\frac1\Pi+\Sigma_{i=1}^{k-1}\frac{\beta_i}{\alpha_i}+\frac{B}A+1=k-1.
\]
Since $\Pi_{i=1}^{k+1}\alpha_i=\Pi_{j=1}^{k-1}\alpha_jA=\Pi$, this proves the lemma.
\end{proof}

There is a similar criterion for a Seifert manifold to be a homology handle.

\begin{cor}
\label{seifert homhandle}
Let $M=M(g;S)$.
Then there is a $\mathbb{Z}$-homology equivalence $f:M\to{S^2}\times{S^1}$
if and only if $\varepsilon(M)=0$ and $\mathrm{hcf}\{\alpha_i,\alpha_j,\alpha_k\}=1$ 
for all $i<j<k\leq{r}$.
If there are $r>2$ cone points then no such map is a $\Lambda$-homology equivalence.
\end{cor}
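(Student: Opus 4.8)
The plan is to reduce the existence of a $\mathbb{Z}$-homology equivalence $f\colon M\to S^2\times S^1$ to the homological condition that $M$ is a homology handle, i.e.\ $H_1(M)\cong\mathbb{Z}$. One direction is Lemma \ref{homhandlemap}; conversely, since $H_1(S^2\times S^1)\cong\mathbb{Z}$, any such $f$ forces $H_1(M)\cong\mathbb{Z}$. I would then apply Theorem \ref{Hi09-thm1}, which gives $H_1(M)\cong\mathbb{Z}^{2g}\oplus\bigoplus_{i\ge0}\mathbb{Z}/\lambda_i\mathbb{Z}$ with $\lambda_{i+1}\mid\lambda_i$ and $|\varepsilon(M)|\,\Pi_{i=1}^r\alpha_i=\lambda_0\,\Pi_{j\ge1}\lambda_j$. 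This group is infinite cyclic exactly when $g=0$ (otherwise its rank is $2g\ge2$), $\lambda_0=0$, and $\lambda_i=1$ for all $i\ge1$. Here $\lambda_0=0$ is equivalent to $\Delta_0=|\varepsilon(M)|\,\Pi_{i=1}^r\alpha_i=0$, i.e.\ to $\varepsilon(M)=0$, while $\lambda_i=1$ for $i\ge1$ is equivalent to $\Delta_1=\Pi_{j\ge1}\lambda_j=1$. So the whole equivalence comes down to identifying the condition $\Delta_1=1$ (with $g=0$, i.e.\ $M=M(0;S)$, understood throughout).

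For that I would use the description of $\Delta_1$ obtained in the proof of Theorem \ref{Hi09-thm1}: when $r\ge2$, $\Delta_1$ is the highest common factor of the $(r-2)$-fold products of distinct $\alpha_j$'s, and $\Delta_1=1$ automatically if $r\le2$. A one-line counting argument then finishes the equivalence: a prime $p$ divides $\Delta_1$ iff every $(r-2)$-element subset $S\subseteq\{1,\dots,r\}$ contains an index $j$ with $p\mid\alpha_j$, and a subset avoiding all indices $j$ with $p\mid\alpha_j$ exists iff at most two of the $\alpha_j$ are divisible by $p$. Hence $\Delta_1=1$ iff no prime divides three of the $\alpha_j$, i.e.\ iff $\mathrm{hcf}\{\alpha_i,\alpha_j,\alpha_k\}=1$ for all $i<j<k\le r$ (which is vacuous when $r\le2$). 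Together with the previous paragraph this proves the stated criterion.

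For the final assertion, suppose $r>2$; I would show the infinite cyclic cover carries too much $H_1$ for $f$ to be a $\Lambda$-homology equivalence. Let $\phi\colon\pi\to\mathbb{Z}$ be the (essentially unique) epimorphism, with infinite cyclic cover $M_\phi$. A $\Lambda$-homology equivalence $M\to S^2\times S^1$ would force $H_1(M;\Lambda)\cong H_1(M_\phi)\cong H_1(S^2\times S^1;\Lambda)\cong H_1(S^2\times\mathbb{R})=0$, so it suffices to show $H_1(M_\phi)\ne0$. Since $\varepsilon(M)=0$, the central element $h$ has infinite order in $H_1(M)$, so $\phi(h)\ne0$; then, exactly as in the proof of Theorem \ref{Hi09-thm4.1}, $\phi^{-1}(\sigma\mathbb{Z})\cong\ker\phi\times\mathbb{Z}$ (with $\sigma=|\phi(h)|$) is the fundamental group of a finite cover of $M$ of the form $F\times S^1$ with $F$ a closed orientable surface, and therefore $M_\phi\cong F\times\mathbb{R}\simeq F$, so $H_1(M_\phi)\cong H_1(F)\cong\mathbb{Z}^{2\gamma}$ with $\gamma$ the genus of $F$. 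Now $F\to B=S^2(\alpha_1,\dots,\alpha_r)$ is a branched cover of degree $\sigma$, so Riemann--Hurwitz gives $\chi(F)=\sigma\,\chi^{orb}(B)$ with $\chi^{orb}(B)=2-\sum_{i=1}^r(1-1/\alpha_i)$. If $r\ge4$ then $\sum_{i=1}^r(1-1/\alpha_i)\ge r/2\ge2$, so $\chi^{orb}(B)\le0$, $\chi(F)\le0$, and $\gamma\ge1$. If $r=3$ and $\chi^{orb}(B)\le0$ the same conclusion holds, and the only remaining possibility is $r=3$ with $B$ a spherical triangle orbifold $S^2(2,2,n)$, $S^2(2,3,3)$, $S^2(2,3,4)$ or $S^2(2,3,5)$; there I would rule out $\varepsilon(M)=\sum\beta_i/\alpha_i=0$ directly. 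A short $2$-adic valuation argument using $\gcd(\alpha_i,\beta_i)=1$ shows $\sum\beta_i/\alpha_i$ has negative $2$-adic valuation for $S^2(2,3,3)$, $S^2(2,3,4)$ and $S^2(2,3,5)$, while for $S^2(2,2,n)$ either $n$ is even and the hcf criterion above already fails, or $n$ is odd and $\varepsilon(M)=0$ would give $n(\beta_1+\beta_2)/2=-\beta_3$, contradicting $\gcd(\beta_3,n)=1$. (Alternatively one may invoke the classification of geometries: $\chi^{orb}(B)>0$ would make $M$ an $\mathbb{S}^2\times\mathbb{E}^1$- or $\mathbb{S}^3$-manifold, and neither is Seifert fibred over a base with more than two cone points while also having $\varepsilon(M)=0$.) In every case $\gamma\ge1$, so $H_1(M_\phi)\ne0$ and $f$ is not a $\Lambda$-homology equivalence.

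I expect the main obstacle to be exactly this spherical-base sub-case: establishing $\gamma\ge1$ when $r>2$ needs the fact that $\varepsilon(M)=0$ is incompatible with a spherical triangle base orbifold, which forces a small (if routine) case analysis, or else an appeal to the classification of $\mathbb{S}^2\times\mathbb{E}^1$-manifolds. The only other mildly delicate point is the coefficient bookkeeping identifying $H_1(M;\Lambda)$ with $H_1(F)$; everything else reduces to the arithmetic of the determinantal ideals already set up in Theorem \ref{Hi09-thm1}.
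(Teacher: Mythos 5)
Your argument is correct and follows essentially the same route as the paper: the first assertion is read off from Theorem \ref{Hi09-thm1} via the arithmetic of $\Delta_1$ (and you are right that $g=0$ must be understood, since otherwise $H_1(M)$ has rank $2g+1>1$), and the second assertion comes from identifying the infinite cyclic cover with $F\times\mathbb{R}$ for $F$ a fibre of positive genus. The only place you work harder than the paper is in showing $F\not\cong S^2$: the spherical-triangle case analysis can be skipped by observing that for $r>2$ the orbifold group $\pi^{orb}(B)$ is non-cyclic, so $\pi_1M\not\cong\mathbb{Z}$ and hence $\pi_1F\not=1$.
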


\begin{proof}
The first assertion follows from Theorem 3.5.
If $M$ is Seifert fibred then the base orbifold is orientable,
since $M$ is orientable and $\pi/\pi'$ is torsion free.
Since $\varepsilon(M)=0$, $\beta$ is odd, and $M$ is a mapping torus. 
If $r>2$ then $\pi_1M\not\cong\mathbb{Z}$, and so the infinite cyclic cover of $M$ has positive genus.
\end{proof}

The elementary divisors $\lambda_i$ may be determined more explicitly by localization.
If $p$ is a prime, an integer $m$ has {\it $p$-adic valuation\/} 
$val_p(m)=v$ if $m=p^vq$, 
where $p$ does not divide $q$.

\begin{cor}
\label{Hi09-cor3.3}
Let $p$ be a prime and let $v_i=val_p(\alpha_i)$, for $i\geq1$.
Assume that the indexing is such that $v_i\geq{v_{i+1}}$ for all $i$, 
and that $\tau_M$ is a direct double. Then
\begin{enumerate}
\item{}if $\varepsilon(M)=0$ then $v_{2j-1}=v_{2j}$ for all $j\geq1$;
\item{}if $\varepsilon(M)\not=0$ then $val_p(\lambda_0)=v_3$, 
and $v_{2j}=v_{2j+1}$ for all $j\geq2$.
\end{enumerate}
\end{cor}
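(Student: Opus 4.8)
The plan is to reduce both parts, by localisation at $p$, to the explicit formulas of Theorem~\ref{Hi09-thm1} and the direct-double criterion of Corollary~\ref{Hi09-cor3.2}. First I would compute the $p$-adic valuations of the elementary divisors $\lambda_i$. By Theorem~\ref{Hi09-thm1}, $\lambda_i=\Delta_i/\Delta_{i+1}$, where $\Delta_0=|\varepsilon(M)|\prod_{i=1}^r\alpha_i$ and, for $0<i<r$, $\Delta_i$ is the highest common factor of the $(r-i-1)$-fold products of distinct $\alpha_j$, while $\Delta_i=1$ for $i\geq\max\{r-1,1\}$. The $p$-adic valuation of such an hcf is the minimum of $\sum_{j\in T}v_j$ over $(r-i-1)$-element subsets $T\subseteq\{1,\dots,r\}$; since the indexing has been chosen so that $v_1\geq v_2\geq\cdots$, this minimum is attained by $T=\{i+2,\dots,r\}$, so $val_p(\Delta_i)=\sum_{j=i+2}^r v_j$ for every $i\geq1$ (with $v_j=0$ for $j>r$ and an empty sum equal to $0$). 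Telescoping gives the uniform identity $val_p(\lambda_i)=v_{i+2}$ for all $i\geq1$, while for $\varepsilon(M)\neq0$ one also finds $val_p(\lambda_0)=val_p(\Delta_0)-val_p(\Delta_1)=val_p(\varepsilon(M))+v_1+v_2$.

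I would then feed these into Corollary~\ref{Hi09-cor3.2}. Suppose $\varepsilon(M)\neq0$, so that $\tau_M\cong\bigoplus_{i\geq0}\mathbb{Z}/\lambda_i\mathbb{Z}$ is a direct double exactly when $\lambda_{2k}=\lambda_{2k+1}$ for all $k\geq0$. Taking $p$-adic valuations, the case $k=0$ gives $val_p(\lambda_0)=val_p(\lambda_1)=v_3$, which is the first assertion of~(2), and the cases $k\geq1$ give $v_{2k+2}=v_{2k+3}$, that is, $v_{2j}=v_{2j+1}$ for all $j\geq2$, the second assertion. Suppose instead $\varepsilon(M)=0$. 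Then $\Delta_0=0$, so $\tau_M\cong\bigoplus_{i\geq1}\mathbb{Z}/\lambda_i\mathbb{Z}$, and this is a direct double exactly when $\lambda_{2k-1}=\lambda_{2k}$ for all $k\geq1$; valuations now give $v_{2k+1}=v_{2k+2}$, that is, $v_{2j-1}=v_{2j}$ for all $j\geq2$. So all of~(2), and all of~(1) except the case $j=1$, come straight out of this localisation.

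The remaining point in~(1), $v_1=v_2$, is the one place where the invariants $\lambda_i$ do not suffice: when $\varepsilon(M)=0$ the valuations $v_1$ and $v_2$ are exactly what is consumed in forcing $\sum_i\beta_i\prod_{j\neq i}\alpha_j=0$, and they never reappear among the $\lambda_i$. Here I would argue directly from that relation, which holds because $\varepsilon(M)=-\sum_i\beta_i/\alpha_i=0$. We may assume $v_1>0$, since otherwise all $v_j=0$. Then $p\mid\alpha_1$ and $(\alpha_1,\beta_1)=1$ force $p\nmid\beta_1$, so the $i=1$ summand has $p$-adic valuation $\sum_{j\geq2}v_j$; and if $v_1>v_2$ then every summand with $i\neq1$ has strictly larger valuation --- for $v_i>0$ it equals $\sum_{j\neq i}v_j=\sum_{j\geq2}v_j+(v_1-v_i)>\sum_{j\geq2}v_j$ because $v_i\leq v_2<v_1$, while for $v_i=0$ it is at least $\sum_{j\neq i}v_j=\sum_{j\geq1}v_j>\sum_{j\geq2}v_j$. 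Hence the sum would have finite $p$-adic valuation $\sum_{j\geq2}v_j$, a contradiction, so $v_1=v_2$; together with the previous paragraph this proves~(1). The only genuinely non-routine step is this last one: the rest is bookkeeping with the two cited results, but pinning down $v_1=v_2$ forces a return to the explicit presentation, and a little care is needed with the summands $\beta_i\prod_{j\neq i}\alpha_j$ for which $p$ divides $\beta_i$.
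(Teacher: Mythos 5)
Your proposal is correct and follows essentially the same route as the paper: both compute $val_p(\Delta_i)=\sum_{j\geq i+2}v_j$ and hence $val_p(\lambda_i)=v_{i+2}$ for $i\geq1$ from Theorem~\ref{Hi09-thm1}, feed the pairing conditions of Corollary~\ref{Hi09-cor3.2} through $p$-adic valuations, and settle the one case ($v_1=v_2$ when $\varepsilon(M)=0$) not captured by the $\lambda_i$ by observing that $\sum\beta_i/\alpha_i=0$ forces the $-v_1$-valuation term to cancel against terms of valuation $\geq-v_2$. Your treatment of that last point is just a more explicit version of the paper's one-line remark, and your derivation of $val_p(\lambda_0)=v_3$ directly from $\lambda_0=\lambda_1$ is a harmless shortcut past the paper's intermediate computation $val_p(\lambda_0)=v_1+v_2+val_p(\varepsilon(M))$.
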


\begin{proof}
It is clear from the theorem that $val_p(\Delta_i)=\Sigma_{j\geq{i+2}}v_j$,
and so $val_p(\lambda_i)=v_{i+2}$, for all $i\geq1$.
If $\varepsilon(M)=0$  then $v_1=v_2$ follows immediately from the fact that 
$p^{v_2}\varepsilon(M)$ is an integer.
If also $\bigoplus_{i\geq1}(\mathbb{Z}/\lambda_i\mathbb{Z})$ is a direct double
then $v_{2j-1}=v_{2j}$ for all $j\geq2$.

Suppose now that $\varepsilon(M)\not=0$.
Then $val_p(\varepsilon(M)))\geq-v_1$, 
and $val_p(\varepsilon(M))=-v_1$ if $v_1>v_2$.
Since $val_p(\Pi)=\Sigma_{i=1}^nv_i$ and $\Pi_{j\geq1}\lambda_j=\Delta_1$,
so $val_p(\Pi_{j\geq1}\lambda_j)=val_p(\Delta_1)$,
we have $val_p(\lambda_0)=v_1+v_2+val_p(\varepsilon(M))\geq{v_2}$
(with equality if $v_1>v_2$).
If $\tau_M$ is a direct double we must have 
$val_p(\lambda_0)=v_2=v_3$ and $v_{2j}=v_{2j+1}$ for all $j\geq2$.
\end{proof}

We note here a related observation of Issa and McCoy.

\begin{lemma}
\cite[Lemma 4.4]{IM20}
\label{IM4.4}
Let $M=M(g; \frac{\beta_1}{\alpha_1}, \dots, \frac{\beta_k}{\alpha_k}, (1,e))$, 
where $0<\beta_i<\alpha_i$ for all $i$ and $\varepsilon(M)>0$, 
and suppose that $\tau_M\cong{A}\oplus{A}$
for some finite abelian group $A$.
If the index set $\{1,\dots,k\}$ is partitioned into $n\leq{e}$ classes $\{C_i\}$
such that $\Sigma_{j\in{C_i}}\frac{\beta_j}{\alpha_j}\leq1$ for all $i\leq{n}$ then $n=e$ and
there is precisely one value of $i$ for which $\Sigma_{j\in{C_i}}\frac{\beta_j}{\alpha_j}<1$.
For this value we have 
$1-\Sigma_{j\in{C_i}}\frac{\beta_j}{\alpha_j}=\frac1{\mathrm{lcm}(a_1,\dots,a_k)}$.
\end{lemma}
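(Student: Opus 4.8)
The plan is to extract from the hypotheses the single identity $\varepsilon(M)=1/L$, where $L=\mathrm{lcm}(\alpha_1,\dots,\alpha_k)$ (written $\mathrm{lcm}(a_1,\dots,a_k)$ in the statement), and then to read off all three conclusions by an elementary integrality argument. Write $\varepsilon=\varepsilon(M)=e-\sum_{i=1}^k\beta_i/\alpha_i$; since $0<\beta_i<\alpha_i$ we have $\alpha_i\geq2$ for each $i$, so $L\geq2$ and $0<\varepsilon$. Nothing in the statement — neither $\tau_M$, nor $\varepsilon$, nor the $\alpha_i,\beta_i,e$ — depends on $g$, so I may assume $g=0$.

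The first and main step is to prove $\varepsilon=1/L$, by comparing two estimates of $val_p(\varepsilon)$ for each prime $p$. Fix $p$ and let $v_1\geq v_2\geq\cdots$ be the valuations $val_p(\alpha_1),\dots,val_p(\alpha_k)$ listed in non-increasing order and padded by zeros, so that $val_p(L)=v_1$. One direction is immediate: $\sum_i\beta_i/\alpha_i$ has denominator dividing $L$ and $e\in\mathbb{Z}$, whence $val_p(\varepsilon)\geq -v_1$. For the reverse, the hypothesis that $\tau_M$ is a direct double, together with $\varepsilon\neq0$, gives by Corollary \ref{Hi09-cor3.2} the formula $\varepsilon=\Delta_1^2/\bigl(\Delta_2\prod_{i=1}^k\alpha_i\bigr)$, where $\Delta_1,\Delta_2$ are as in Theorem \ref{Hi09-thm1}. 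By that theorem $\Delta_1$ and $\Delta_2$ are highest common factors of products of distinct cone-point orders drawn from $\{\alpha_1,\dots,\alpha_k,1\}$; because one of these orders is $1$ (from the $(1,e)$-summand), the products of smallest $p$-valuation are $(k-2)$- and $(k-3)$-fold products of distinct $\alpha_i$'s, so a short count gives $val_p(\Delta_1)=\sum_{j\geq3}v_j$ and $val_p(\Delta_2)=\sum_{j\geq4}v_j$. Substituting yields $val_p(\varepsilon)=v_3-v_1-v_2\leq -v_1$, since $v_3\leq v_2$. Combining the two estimates, $val_p(\varepsilon)=-v_1=-val_p(L)$ for every $p$, so $\varepsilon=1/L$.

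With $\varepsilon=1/L$ in hand the remainder is routine. Put $x_i=\sum_{j\in C_i}\beta_j/\alpha_j$ for $1\leq i\leq n$; then $0<x_i\leq1$, and since the $C_i$ partition $\{1,\dots,k\}$ we have $\sum_{i=1}^n x_i=\sum_{j=1}^k\beta_j/\alpha_j=e-\varepsilon=e-1/L$. From $x_i\leq1$ we get $e-1/L\leq n$, and as $1/L<1$ this forces $n\geq e$; with the hypothesis $n\leq e$ this gives $n=e$. Hence $\sum_{i=1}^n(1-x_i)=n-(e-1/L)=1/L$. Each $1-x_i$ has denominator dividing $\mathrm{lcm}_{j\in C_i}\alpha_j$, which divides $L$, so each $L(1-x_i)$ is a non-negative integer, and $\sum_i L(1-x_i)=L\cdot(1/L)=1$. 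Therefore exactly one of these integers equals $1$ and the others vanish: there is exactly one index $i$ with $x_i<1$, and for it $1-x_i=1/L=1/\mathrm{lcm}(\alpha_1,\dots,\alpha_k)$, as required.

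I expect the valuation bookkeeping in the first step to be the only real difficulty: one must pin down $val_p(\Delta_1)$ and $val_p(\Delta_2)$ correctly from Theorem \ref{Hi09-thm1}, being careful that the order-$1$ cone point contributed by the $(1,e)$-piece lowers the sizes of the relevant subproducts by one, and one should check separately the small cases $k\leq2$, where $\Delta_1$ or $\Delta_2$ is forced to be $1$. Everything after $\varepsilon=1/L$ is bookkeeping.
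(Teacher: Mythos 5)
Your proof is correct and follows essentially the same route as the paper's: both arguments first extract the key identity $\varepsilon(M)=1/\mathrm{lcm}(\alpha_1,\dots,\alpha_k)$ from the direct-double hypothesis by a prime-by-prime valuation count, and then run the identical integrality argument on the partition sums $\sigma_i=1-\Sigma_{j\in{C_i}}\frac{\beta_j}{\alpha_j}$. The only cosmetic difference is that you reach that identity via the formula $\varepsilon(M)=\Delta_1^2/(\Delta_2\Pi\alpha_i)$ of Corollary \ref{Hi09-cor3.2} together with the valuations of $\Delta_1$ and $\Delta_2$, whereas the paper reads $val_p(\varepsilon(M))=-val_p(\mathrm{lcm})$ directly off the $p$-primary decomposition of $\tau_M$ given by Theorem \ref{Hi09-thm1}; the two computations are equivalent, and your version also disposes correctly of the small cases $k\leq2$ that you flag.
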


\begin{proof}
The $p$-primary part of $\tau_M\cong{A}\oplus{A}$is a direct double, for each prime $p$.
We may assume the cone points are labelled so that the $p$-adic valuations of their orders are
$v_k\geq{v_{k-1}}\geq\dots\leq{v_1}$. 
Let $w$ be the $p$-adic valuation of $\varepsilon(M)$.
The $p$-primary part of $\tau_M$ is then 
$\bigoplus_{i=3}^k(\mathbb{Z}/p^{v_i}\mathbb{Z})\oplus \mathbb{Z}/p^v\mathbb{Z}$,
where $v=v_1+v_2+w$, and $v\geq{v_2}$.
Since there must be an even number of summands,
either $k$ is odd or $v_1=0$.
Since the exponents $v_i$ are increasing, 
this can be a direct double only if $v=v_1=v_2$.
But then $w=-v_1$ is the $p$-adic valuation of $\mathrm{lcm}(\alpha_1,\dots,\alpha_k)$.

This argument applies to each prime,
and so 
\[
\varepsilon(M)=\frac1{\mathrm{lcm}(\alpha_1,\dots,\alpha_k)}.
\]
If $k$ is even then $\mathrm{hcf}(\alpha_1,\dots,\alpha_k)=1$.

Now suppose that the index set $\{1,\dots,k\}$ is partitioned into $n\leq{e}$ classes $\{C_i\}$
such that $\Sigma_{j\in{C_i}}\frac{\beta_j}{\alpha_j}\leq1$ for all $i\leq{n}$.
Then we may write
\[
\varepsilon(M)=e-n+\Sigma_{j=1}^n\sigma_j
=(1-\Sigma_{i\in{C_j}}\frac{\beta_i}{\alpha_i})=
\frac1{\mathrm{lcm}(\alpha_1,\dots,\alpha_k)},
\]
where $\sigma_j=1-\Sigma_{i\in{C_j}}\frac{\beta_i}{\alpha_i})\geq0$ for all $j$.
Then we must have $e-n\leq0$ and so $n=e$.
If $\sigma_j>0$ 
then $\sigma_j\geq\frac1{\mathrm{lcm}(\alpha_1,\dots,\alpha_k)}$,
and so $\sigma_h=0$ for all $h\not=j$.
\end{proof}

The following bound is an immediate consequence.
(Note that this result bounds $e$ in terms of the number of singular fibres, 
rather than in terms of the full Seifert data for these singular fibres,
as in Corollary \ref{Hi09-cor3.2}.)

\begin{theorem}
\cite[Prop. 4.6]{IM20}
\label{IM4.6}
Let $M=M(g; \frac{\beta_1}{\alpha_1}, \dots, \frac{\beta_k}{\alpha_k}, (1,-e))$, 
where $k>1$,
$0<\beta_i<\alpha_i$  for all $i$ and $\varepsilon(M)>0$.
If $M$ embeds  in $S^4$ then $e\leq{k-1}$.
\end{theorem}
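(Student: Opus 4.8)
The plan is to derive this directly from Lemma \ref{IM4.4}, once its hypotheses have been checked. The one nontrivial input is that if $M$ embeds in $S^4$ then $\tau_M$ is a direct double: this is immediate from the Kawauchi--Kojima theorem (Lemma \ref{hyperbolic lp}), which gives that $\ell_M$ is hyperbolic, and hence $\tau_M\cong\tau_X\oplus\tau_X$, where $\tau_X$ is the torsion subgroup of $H_1(X)$ for one complementary region $X$. Writing $A=\tau_X$, we have $\tau_M\cong A\oplus A$, and since $\varepsilon(M)>0$ and $0<\beta_i<\alpha_i$ for all $i$ by hypothesis, the standing assumptions of Lemma \ref{IM4.4} are in force.

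Next I would argue by contradiction, supposing $e\geq k$. Since $0<\beta_i<\alpha_i$, each ratio $\beta_i/\alpha_i$ lies strictly between $0$ and $1$. Take the partition of $\{1,\dots,k\}$ into the $k$ singleton classes $C_i=\{i\}$. Then $n=k\leq e$ and $\sum_{j\in C_i}\beta_j/\alpha_j=\beta_i/\alpha_i\leq 1$ for every $i$, so Lemma \ref{IM4.4} applies. It forces $n=e$, so $e=k$, and it asserts that there is precisely one index $i$ for which $\sum_{j\in C_i}\beta_j/\alpha_j<1$. But each singleton class satisfies $\beta_i/\alpha_i<1$, and since $k>1$ there are at least two such classes --- a contradiction. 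Hence $e\leq k-1$, as claimed.

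There is essentially no obstacle in this last step: all of the substance sits in Lemma \ref{IM4.4}, whose proof is the prime-by-prime analysis of the elementary divisors $\lambda_i$ of $\tau_M$ supplied by Theorem \ref{Hi09-thm1} and Corollary \ref{Hi09-cor3.3}. The only point requiring care here is the bookkeeping with the normalization $0<\beta_i<\alpha_i$ and the sign of $\varepsilon(M)$, which is exactly what guarantees both that the singleton partition is admissible (each part has sum $\leq 1$) and that each of its $k\geq 2$ parts has sum strictly less than $1$, contradicting the uniqueness clause of Lemma \ref{IM4.4}.
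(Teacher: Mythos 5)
Your proof is correct and follows exactly the paper's argument: apply the Kawauchi--Kojima hyperbolicity result to get that $\tau_M$ is a direct double, then feed the singleton partition into Lemma \ref{IM4.4} and derive a contradiction from the uniqueness clause when $e\geq k$ and $k>1$. The only difference is that you spell out the direct-double step and the bookkeeping more explicitly than the paper does.
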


\begin{proof}
If $M$ embeds in $S^4$ then $\tau_M$ is a direct double.
If $e\geq{k}$ then the partition of $\{1,\dots,k\}$ into $k$ singletons would violate the conclusion of Lemma \ref{IM4.4}, 
since $\frac{\beta_i}{\alpha_i}<1$ for all $i\leq{k}$, and $k>1$.
\end{proof}

For every $k\geq3$ there are Seifert fibred $\mathbb{Z}$-homology 3-spheres of the form $M(0; \frac{\beta_1}{\alpha_1}, \dots, \frac{\beta_k}{\alpha_k}, (1,1-k))$, by Lemma \ref{arithbound}.
Since these embed in $S^4$, by Freedman's result, 
the bound in this theorem is best possible.

Let $\ell$ be a linking pairing on a finite abelian group of order a
power of an odd prime $p$.
Then $\ell\cong\perp_{j=1}^t\ell_j$, where $\ell_j$ is a pairing on 
$(\mathbb{Z}/p^{k_j}\mathbb{Z})^{\rho_j}$ and $k_1>k_2>\dots>k_t\geq1$.
If $\ell$ is hyperbolic then each of the summands $\ell_j$ is also hyperbolic, 
and so the ranks $\rho_j$ are all even.
Let $r_1=\frac12{\rho_1}+1$ and $r_i=\frac12{\rho_i}$ for $2\leq{j}\leq{t}$,
and let $S(p)$ be the concatenation of $r_j$
copies of $((p^{k_j},1),(p^{k_j},-1))$, for $1\leq{j}\leq{t}$.
Then $M=M(0;S(p))$ embeds in $S^4$, by Lemma 3.2,  and $\ell_M\cong\ell$, 
since $\tau_M\cong\oplus_{j=1}^t(\mathbb{Z}/p^{k_j}\mathbb{Z})^{\rho_j}$
and $\ell_M$ is hyperbolic.
This argument extends to show that every hyperbolic linking pairing 
on a finite abelian group of odd order is realized by some Seifert 3-manifold $M$ 
with $\varepsilon(M)=0$ and which embeds in $S^4$.

In Appendix A we analyze the linking pairings of
orientable 3-manifolds with fixed-point free $S^1$-actions.
In particular, 
Theorem A10 of the Appendix implies that the linking pairing is even 
if and only if all even cone point orders $\alpha_i$ have the same 
2-adic valuation, and if there is one such point there are at least 3.

The following lemma  supports the conjectures at the end of the next section.

\begin{lemma}
\cite[Lemma 1.6]{IM20}
If $M$ embeds in $S^4$ and $M_1$ is obtained from $M$ by expansion then $M_1$
embeds in $S^4$.
\end{lemma}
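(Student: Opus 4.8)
The statement to prove is: if $M$ embeds in $S^4$ and $M_1$ is obtained from $M$ by expansion, then $M_1$ embeds in $S^4$. Recall from \S3.1 that expansion means $M_1 = M\sharp_f M(0;(\alpha_i,\beta_i),(\alpha_i,-\beta_i))$ for some $(\alpha_i,\beta_i) \in S$; that is, we fibre-sum $M$ with a copy of the ``doubled'' Seifert piece $N = M(0;(\alpha_i,\beta_i),(\alpha_i,-\beta_i))$, where $(\alpha_i,\beta_i)$ is one of the Seifert pairs already occurring in the data for $M$.

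The plan is to exploit the fibre-sum structure together with the geometry already set up in the proof of Lemma \ref{CH-lem3.1}. First I would observe that $N = \partial(L_i^* \times [-\epsilon,\epsilon])$, where $L_i^* $ is the complement of an open regular neighbourhood of the whole exceptional fibre in the lens space $L(\alpha_i,\beta_i)$; in the proof of Lemma \ref{CH-lem3.1} it is shown that $N$ has a smooth embedding in $S^4$ with a distinguished ``banding'' 4-ball $B(N) = D^2 \times I \times [-\epsilon,\epsilon]$ meeting $N$ exactly in $\partial D^2 \times I \times [-\epsilon,\epsilon]$, with each circle $\partial D^2 \times (r,s)$ a regular fibre of $N$. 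The key point is that this distinguished-4-ball property is exactly what is needed to perform a fibre-connected-sum of embeddings: given an embedding $j\colon M \hookrightarrow S^4$ and a regular fibre $f$ of $M$ embedded in $S^4$, one thickens $f$ to $D^3 \times f$ consistently fibred with $M$ (so $M \cap (D^3\times f)$ is a fibred neighbourhood of $f$ with boundary an annulus $C \times f$, $C \subset \partial D^3$ the equator), deletes $\operatorname{int}(D^3\times f)$ from $S^4$, and glues in the corresponding piece of the $N$-embedding after deleting $\tfrac12 B(N)$ — precisely the construction used at the end of the proof of Lemma \ref{CH-lem3.1} to build $M_1 \sharp_f \cdots \sharp_f M_n$. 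Carrying this out yields a smooth embedding of $M \sharp_f N = M_1$ in $S^4 \# S^4 \cong S^4$.

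The one subtlety — and the place where a little care is needed — is that the given embedding of $M$ is only assumed to be a TOP locally flat embedding, not a smooth one, and moreover a regular fibre $f$ of $M$ need not a priori be \emph{unknotted} or even have a consistently fibred tubular neighbourhood inside $S^4$. I would handle this by working TOP throughout: a regular fibre $f \subset M$ always has a product neighbourhood $n(f) \cong D^2 \times S^1$ inside $M$ consistent with the Seifert fibration, and since $M$ is locally flat in $S^4$ it has a product neighbourhood $M \times (-1,1)$ (by the collar/Brown theorem, Theorem \ref{Brown}), so $n(f) \times (-1,1) \cong D^2 \times S^1 \times (-1,1)$ is a neighbourhood of $f$ in $S^4$ that is fibred compatibly with $M$; this furnishes the $D^3 \times f$ (up to rechoosing the product coordinates) needed for the cut-and-paste. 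The $N$-side embedding constructed in Lemma \ref{CH-lem3.1} is smooth but in particular TOP locally flat, and the gluing homeomorphism along $C \times f$ (a fibre-preserving map of $S^1$-fibred annuli) extends over a collar, so the resulting embedding of $M_1$ is again TOP locally flat. Thus the main obstacle is purely bookkeeping: matching up the fibred product structures on the two sides along the annulus $C\times f$ and checking local flatness of the result; there is no new geometric input beyond what Lemma \ref{CH-lem3.1} already provides. (If one wants the conclusion ``embeds smoothly'' when the original embedding of $M$ is smooth, the same argument works verbatim with ``TOP'' replaced by ``smooth'' throughout, since then $f$ has a smooth consistently fibred neighbourhood.)
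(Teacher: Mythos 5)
There is a genuine gap. Your construction leans on the special embedding of $N=M(0;(\alpha_i,\beta_i),(\alpha_i,-\beta_i))$ with its ``banding'' $4$-ball from Lemma \ref{CH-lem3.1}, but that embedding is only produced there when $\alpha_i$ is odd (it comes from realizing $L(\alpha_i,\beta_i)_o$ as the fibre of the $2$-twist spin $\tau_2k_i$, which needs $L(\alpha_i,\beta_i)$ to be a $2$-fold branched cover of $S^3$). The expansion lemma carries no parity hypothesis, and the remark in \S 3.4 shows the obstruction is real: the banding-ball property means a regular fibre bounds a disc in a complementary region, and ambient surgery on that disc would embed $L(\alpha,\beta)\#-L(\alpha,\beta)$, which forces $\alpha$ odd. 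So your argument cannot reach the even case at all. There is also a more mechanical problem: the gluing you describe mixes the two sum constructions of Lemmas \ref{CH-lem3.1} and \ref{CH-lem3.2}. Deleting $\mathrm{int}(D^3\times f)$ from the $M$-side leaves a boundary $S^2\times S^1$, while deleting the $4$-ball $\tfrac12B(N)$ from the $N$-side leaves a boundary $S^3$; these cannot be identified, so the ``connected sum'' does not assemble as stated. To repair it along the lines of Lemma \ref{CH-lem3.2} you would instead need a fibre-preserving embedding of $N$ minus a fibred neighbourhood of a regular fibre into $D^3\times f$ rel boundary, which is not what Lemma \ref{CH-lem3.1} supplies.

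The paper's proof sidesteps both issues by never leaving a collar of the given embedding. Since $(\alpha_i,\beta_i)$ already occurs in $S$, the expansion piece $M'$ is exactly $\partial(N_1\times[-\tfrac12,\tfrac12])$, where $N_1$ is the Seifert-fibred neighbourhood of the corresponding exceptional fibre of $M$ itself. One then assembles $M\sharp_fM'$ inside $M\times[-1,1]$ from $M$ minus a fibred neighbourhood $N_2$ of a regular fibre (at one level), the relevant part of $\partial(N_1\times[-\tfrac12,\tfrac12])$, and a fibred tube $\partial N_2\times I$ joining them. Since a locally flat $M\subset S^4$ has a product neighbourhood, this gives the embedding of $M_1$ with no auxiliary embedding of $M'$, no parity restriction, and no assumption on the base orbifold.
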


\begin{proof}
Suppose that $M_1=M\sharp_fM'$, 
where $M'=M(0;(\alpha_1,\beta_1),(\alpha_1,-\beta_1))$ 
with $(\alpha_i,\beta_1)\in{S}$ and
relatively prime $0<\beta<\alpha$.
Let $N_1$ be a Seifert fibred neighbourhood of the first exceptional fibre of $M$,
and let $U=N_1\times[-\frac12,\frac12]$.
Then $\partial{U}\cong{S^2}\times{S^1}$ has a natural Seifert structure,
so that $\partial{U}=M'$.
The restrictions of the fibration to $N_1\times\{\pm\frac12\}$
are copies of the fibration of $N_1$, and is the obvious product structure on 
$\partial{N_1}\times[-\frac12,\frac12]\cong{T}\times[-\frac12,\frac12]$.
Now let $N_2\subset{N_1}$ be a Seifert fibred neighbourhood of a regular fibre.
Let \[
V=(\overline{U\setminus{N_2}})\times\{0\}\cup\partial{N_2}\times[-1,-\frac12]\cup
(\overline{\partial{U}\setminus{N_2}}\times\{\frac12\}).
\]
Then $V\cong{M\#_fU}\cong{M\#_fM'}$. 
By smoothing the corners we obtain a smooth embedding of  $V$ into $M\times[-1,1]$.
Thus if $M$ embeds in $S^4$ so does $M_1$.
\end{proof}

This lemma does not require that the base orbifold be orientable.

\section{Smooth embeddings}

We conclude this chapter by summarizing the work of Donald and of Issa and McCoy 
on smooth embeddings.
It is not yet known whether there are similar restrictions on locally flat embeddings,
and so these may suggest further work.

If the Seifert data for $M$ is skew-symmetric and all the cone point orders are odd 
then $M$ embeds smoothly in $S^4$, by Lemmas \ref{CH-lem3.1} and  \ref{CH-lem3.2}.
This also holds if there is also one pair $\{(\alpha,\pm\beta)\}$ in $S$ with $\alpha$ even  \cite[Proposition 7.8]{IM20}.
However this is not yet known if more than one pair has even cone point order.
Donald has shown that skew-symmetry is a necessary condition for a
$\mathbb{H}^2\times\mathbb{E}^1$-manifold to embed smoothly \cite{Do15}.
(The cone point orders are not assumed to be odd here.)

\begin{thm}
\cite[Theorem 1.3]{Do15}
Let $M$ be a Seifert manifold with orientable base orbifold and $\varepsilon(M)=0$.
If $M$ embeds smoothly in $S^4$ then the Seifert data for $M$ is skew-symmetric.
\qed
\end{thm}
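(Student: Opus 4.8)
The plan is to apply Donaldson's Diagonalization Theorem for closed smooth negative-definite $4$-manifolds to a manifold obtained by capping off a complementary region of the embedding; smoothness of the embedding is used only at this point.

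First I would normalise the Seifert data to the form $S=S'\cup\{(1,-e)\}$ with $S'=((\alpha_1,\beta_1),\dots,(\alpha_s,\beta_s))$ strict, so that $\varepsilon(M)=e-\sum_{i=1}^s\beta_i/\alpha_i$; the hypothesis $\varepsilon(M)=0$ forces $e$ to be a positive integer with $1\le e\le s-1$, and we may assume $s\ge3$, since otherwise $M$ is $T_g\times S^1$ or already has skew-symmetric data. Let $P$ be the plumbed $4$-manifold built from the graph with one central vertex of weight $-e$ carrying a genus-$g$ surface and, for each $i$, a linear arm with weights $-a^{(i)}_1,\dots,-a^{(i)}_{l_i}$ ($a^{(i)}_j\ge2$) read off from the Hirzebruch--Jung continued fraction $\alpha_i/\beta_i=[a^{(i)}_1,\dots,a^{(i)}_{l_i}]^-$; then $\partial P=M$ up to orientation. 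A determinant computation (equivalently, comparison with Theorem \ref{Hi09-thm1}) gives $\det Q_P=\pm\bigl(\prod_i\alpha_i\bigr)\varepsilon(M)=0$, while a perturbation of the central weight (a rank-one perturbation of $Q_P$) makes the form definite; hence $Q_P$ is negative semidefinite with a radical of dimension exactly one, and $\overline{Q}_P:=Q_P/\mathrm{rad}$ is negative definite of rank $n:=\sum_i l_i$. The presence of this one-dimensional radical, rather than an honestly definite form, is the algebraic trace of the hypothesis $\varepsilon(M)=0$, and it is what will ultimately make room for skew-symmetry.

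Given an embedding $S^4=X\cup_MY$ with $\partial X=M$, I would then form the closed oriented smooth $4$-manifold $Z=X\cup_MP$. Because the intersection form vanishes on $H_2(X)$ (the observation preceding Lemma \ref{Hi17-lem2.1}), a Mayer--Vietoris and Novikov-additivity computation — which needs some care with the homology classes introduced by the fact that $\beta_1(M)=2g+1>0$ — shows that the intersection form of $Z$ on $H_2(Z)/\mathrm{tors}$ is negative definite and that $H_2(P)\to H_2(Z)$ descends to an embedding of $\overline{Q}_P$ as a primitive sublattice, indeed (with more work) a direct summand. By the Diagonalization Theorem $H_2(Z)/\mathrm{tors}\cong\langle-1\rangle^N$, so we obtain an embedding into the diagonal lattice $\langle-1\rangle^N$ of the lattice $\overline{Q}_P$, which consists of the $r$ Hirzebruch--Jung chains $\Gamma(a^{(i)}_1,\dots,a^{(i)}_{l_i})$ attached to a common vertex of square $-e$.

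The hard part, and the technical core, is the resulting lattice-embedding problem: showing that $\overline{Q}_P$ can embed (as a summand) in $\langle-1\rangle^N$ only when the arms occur in matched pairs, the two arms of a pair being the Hirzebruch--Jung strings of $\alpha/\beta$ and of $\alpha/(\alpha-\beta)$ (Riemenschneider-dual strings). This is a combinatorial analysis in the spirit of Lisca's classification of lens spaces that bound rational homology $4$-balls: one records how a standard basis vector of $\langle-1\rangle^N$ may be shared between consecutive vertices of a chain and between a chain and the central vertex, and shows that the resulting bookkeeping, constrained by the central square $-e$ and by $n=\sum_i l_i$, can close up only for the matched configuration. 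A pair of arms dual to $\alpha/\beta$ and $\alpha/(\alpha-\beta)$ corresponds precisely to a pair of Seifert invariants $(\alpha,\beta)$ and $(\alpha,-\beta)$, so this matched configuration is exactly skew-symmetric Seifert data. For $g>0$ nothing essential changes: the central vertex merely carries a genus-$g$ surface and $H_1(P)=\mathbb{Z}^{2g}$ is torsion-free, so — unlike the nonorientable case, where $2$-torsion in the first homology of the analogous plumbing obstructs the argument — the orientability hypothesis lets the same reasoning run through.
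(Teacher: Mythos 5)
This theorem is quoted in the book from Donald's paper \cite{Do15} and is not proved here (hence the $\square$ immediately after the statement), so there is no in-house proof to measure yours against. What you have written is, in outline, Donald's own argument: cap a complementary region of the embedding with the canonical star-shaped plumbing $P$, which is negative semidefinite with a one-dimensional radical precisely because $\varepsilon(M)=0$; apply the Diagonalization Theorem to the resulting closed smooth $4$-manifold; and then classify the embeddings of $\overline{Q}_P$ into $\langle-1\rangle^N$ by Lisca-style combinatorics, concluding that the arms must occur in Riemenschneider-dual pairs, i.e.\ that $S$ is skew-symmetric. The techniques actually developed in this book (Theorem \ref{Hi09-thm4.1} and Corollary \ref{g=0skewsym}) reach the same conclusion by a completely different route --- neutrality of the Blanchfield pairing plus the $G$-Signature Theorem applied to the finite-order monodromy of the induced fibration --- which applies to TOP locally flat embeddings (indeed to embeddings in homology $4$-spheres), but only under the extra hypotheses $g=0$ and all cone point orders odd. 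Your route needs smoothness (for Donaldson) but imposes no parity or genus restriction; that trade-off is exactly the gap between Corollary \ref{g=0skewsym} and the cited theorem.

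Two substantive gaps remain. First, the assertion that $Q_Z$ is negative definite on $H_2(Z)/\mathrm{tors}$ is not a routine Mayer--Vietoris/Novikov-additivity consequence, and it is false for one of the two possible gluings when $g>0$: take $M=T^3$ with its standard embedding and glue $P=T^2\times D^2$ to the regular-neighbourhood side $X\cong T^2\times D^2$; then $Z\cong T^2\times S^2$ has $b_2^+=1$. One must glue $P$ to the complementary region whose rational $H_2$ is carried by the classes that die in $P$ (equivalently, the region into which the fibre class has nonzero image in $H_1$), and then verify $b_2(Z;\mathbb{Q})=n$ so that the $2g$ ``vertical'' tori in $H_2(M;\mathbb{Q})$ contribute nothing; with that choice the radical of $Q_P$ also dies in $H_2(Z)$, which is what you need for $\overline{Q}_P$ to inject. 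You should also retract the claim that $\overline{Q}_P$ sits as a direct summand: the argument produces, and the combinatorics uses, only an isometric embedding of lattices.

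Second, and more seriously, the lattice-embedding classification --- showing that a star-shaped negative definite lattice with central weight $-e$ can embed in $\langle-1\rangle^N$ only when its Hirzebruch--Jung arms pair off into dual strings --- is where essentially the entire content of the theorem lives, and you have described its flavour rather than carried it out. In \cite{Do15} this step occupies most of the paper and rests on a nontrivial extension of Lisca's analysis of linear lattices to star-shaped ones. As written, your proposal is the correct strategy (it is the strategy of the cited proof) but not yet a proof.
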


Issa and McCoy give strong constraints on which $\widetilde{\mathbb{SL}}$-manifolds 
with orientable base orbifold embed smoothly in $S^4$.
Assume henceforth that $M=M(g;S',(1,-e)$,
where $S'$ is strict Seifert data with index set $\{1,\dots,k\}$
and $e>\Sigma_{i=1}^k\frac{\beta_i}{\alpha_i}$, so that $\varepsilon(M)>0$.
(We have used our notational conventions for the Seifert data in what follows.)
Their main technical result involves the following notion.

The Seifert manifold $M$ is {\it partitionable\/} if $\tau_M$ 
is a direct double and the index set $\{1,\dots,k\}$
has two partitions $\mathcal{P}$ and $\mathcal{P}'$ into $e$ classes
$\{C_1,\dots,C_e\}$ such that 
\begin{enumerate}
\item$\Sigma_{i\in{C_1}}\frac{\beta_i}{\alpha_i}=1-\frac1{\mathrm{lcm}(a_1,\dots,a_k)}$;
\item $\Sigma_{i\in{C_j}}\frac{\beta_i}{\alpha_i}=1$ for $j>1$; and
\item{}
no non-empty union of a proper subset of classes in $\mathcal{P}'$ is a union 
of classes in $\mathcal{P}$. 
\end{enumerate}
The notion of partitionable Seifert manifold was introduced by Issa and McCoy,
and has a key role in their results on smooth embeddings \cite{IM20}.

\begin{thm}
\cite[Theorem 1.4]{IM20}
If  $M$ embeds smoothly in $S^4$ then $M$ is partitionable.
\qed
\end{thm}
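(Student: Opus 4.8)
The plan is to run the standard Donaldson-obstruction argument for smooth embeddings of Seifert manifolds, but applied to \emph{both} complementary regions simultaneously, and to read the two partitions off the resulting lattice embeddings. First note that the requirement in the definition of partitionable that $\tau_M$ be a direct double is automatic here: a smooth embedding is locally flat, so $\ell_M$ is hyperbolic by Lemma \ref{hyperbolic lp}, whence $\tau_M\cong{A}\oplus{A}$. Feeding this into the $p$-adic valuation analysis of the elementary divisors of Theorem \ref{Hi09-thm1} — exactly as in the proof of Lemma \ref{IM4.4} — already forces $\varepsilon(M)=1/\mathrm{lcm}(\alpha_1,\dots,\alpha_k)$, the value appearing in condition (1).

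Next I would bring in the canonical negative-definite plumbing $P$ with $\partial{P}\cong{M}$: the star-shaped plumbing whose central node is the disc bundle over $T_g$ of Euler number $-e$, with one arm for each $i$ carrying as weights the negatives of the terms of the continued-fraction expansion $\alpha_i/\beta_i=[b_{i1},b_{i2},\dots]^{-}$ (all $b_{ij}\geq2$); the hypothesis $\varepsilon(M)>0$ is precisely what makes $Q_P$ negative definite on $H_2(P)/\mathrm{tors}$. Given a smooth embedding of $M$ in $S^4$ with complementary regions $X$ and $Y$, cap each of $X$ and $Y$ off with $P$ (reversing its orientation on one side so the gluing is coherent) to obtain closed smooth $4$-manifolds $Z_X$ and $Z_Y$. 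Since the intersection pairing is trivial on $H_2(X)$ and on $H_2(Y)$ (the remark preceding Lemma \ref{Hi17-lem2.1}), a Mayer--Vietoris computation identifies the free part of $H_2(Z_X)$, with its intersection form, with $\pm{Q_P}$, and likewise for $Z_Y$; both are therefore definite. Donaldson's Diagonalization Theorem then makes $Q_{Z_X}$ and $Q_{Z_Y}$ standard, so the inclusions $P\hookrightarrow{Z_X}$ and $P\hookrightarrow{Z_Y}$ exhibit $H_2(P)/\mathrm{tors}$, up to sign, as a sublattice of a standard definite lattice $\mathbb{Z}^{n_X}$, respectively $\mathbb{Z}^{n_Y}$, in two a priori different ways.

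The combinatorial heart is the analysis of such a lattice embedding $\phi$. The image $\phi(v_0)$ of the central vertex has square $e$, and since each arm is a chain of weights at most $-2$ attached to $v_0$ with intersection number $1$, one shows $\phi(v_0)$ is, after a sign change and reindexing, $e_1+\cdots+e_e$, using exactly $e$ standard basis vectors, and that the vectors along any arm meet only these $e$. Grouping the $k$ arms by which of $e_1,\dots,e_e$ they interact with partitions $\{1,\dots,k\}$ into classes $C_1,\dots,C_e$, each nonempty since an unused basis vector could be deleted; tracking the continued-fraction identities along the arms — the computation in the proof of Lemma \ref{IM4.4} run backwards — shows $\Sigma_{i\in{C_j}}\beta_i/\alpha_i=1$ for all but one $j$ and $=1-1/\mathrm{lcm}(\alpha_1,\dots,\alpha_k)$ for the exceptional one, giving conditions (1) and (2). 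Carrying this out for $Z_X$ produces $\mathcal{P}$ and for $Z_Y$ produces $\mathcal{P}'$. Finally, condition (3) is where the hypothesis $S^4=X\cup_M{Y}$ genuinely enters: were some nonempty union of a proper subset of the classes of $\mathcal{P}'$ equal to a union of classes of $\mathcal{P}$, the two lattice embeddings would be redundant along that sub-collection of arms, permitting $M$ to be realized as a fibre sum $M\cong{M_0}\#_f{M_1}$ in which one summand violates the numerical constraint that Lemma \ref{IM4.4} imposes on every embeddable Seifert summand of this type, a contradiction.

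I expect the combinatorial lattice analysis of the third paragraph to be the main obstacle: extracting precise continued-fraction identities from a sublattice embedding of a star-shaped plumbing lattice into a standard definite one is delicate (it is the technical engine behind Donaldson-type obstruction arguments for plumbed $3$-manifolds), and some care is also needed to make the Mayer--Vietoris identification of the forms of $Z_X$ and $Z_Y$ go through in the case $g>0$, where $X$ and $Y$ are no longer rational homology balls, to exclude the possibility that $\phi(v_0)$ carries a coefficient $\pm2$ rather than being a sum of $e$ distinct basis vectors, and to pin down exactly why the interaction of the two embeddings inside $S^4$ yields the non-refinement condition (3).
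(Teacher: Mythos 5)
The book itself does not prove this statement: it is quoted from Issa and McCoy with a citation and no argument, so there is no in-text proof to measure your proposal against. What you have written is a reconstruction of the Issa--McCoy argument, and in outline it is the right strategy: hyperbolicity of $\ell_M$ (Lemma \ref{hyperbolic lp}) gives the direct-double condition for free, the valuation computation in the proof of Lemma \ref{IM4.4} pins down $\varepsilon(M)=1/\mathrm{lcm}(\alpha_1,\dots,\alpha_k)$, and the two partitions do arise from applying Donaldson's Diagonalization Theorem to the two closed definite $4$-manifolds obtained by capping $X$ and $Y$ with the canonical negative-definite star-shaped plumbing.

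As a proof, however, the proposal has three genuine gaps, only some of which you acknowledge. (i) Definiteness of $Z_X$ and $Z_Y$ is not automatic when $\beta_1(M)>0$: by Novikov additivity $\sigma(Z_X)=\sigma(P)+\sigma(X)=-b_2(P)$, so $Z_X$ is negative definite only if $b_2(Z_X)=b_2(P)$, and one must actually run the Mayer--Vietoris sequence to show that the gluing creates no extra second homology; the vanishing of the intersection form on $H_2(X)$ is not by itself enough. (ii) The lattice analysis --- that $\phi(v_0)$ is a sum of $e$ distinct unit basis vectors, that every class $C_j$ is nonempty, and that the arm images force the continued-fraction identities yielding conditions (1) and (2) --- is the entire content of the theorem and is only asserted; note in particular that a basis vector met by $v_0$ but by no arm cannot simply be ``deleted'', so nonemptiness of the classes needs a real argument. (iii) Most seriously, your derivation of condition (3) is not a proof. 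The claim that a common coarsening of the two partitions would exhibit $M$ as a fibre sum $M_0\#_fM_1$ with a summand violating Lemma \ref{IM4.4} is speculative: you give no construction of such a decomposition from the lattice data, and Lemma \ref{IM4.4} only constrains Seifert manifolds whose torsion is already known to be a direct double, a property the putative summand $M_0$ need not inherit. Condition (3) has to be extracted from the way the two lattice embeddings interact, which is forced by $X\cup_MY=S^4$, and that step is missing from your sketch.
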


From this they derive the following results (among others).

\begin{thm}
\cite[Theorem 1.1]{IM20}
If  $M$ embeds smoothly in $S^4$ then $2e\leq{k+1}$.
Moreover, if $k=2e-1$ is odd then $M$ embeds smoothly if and only if
\[S'=\{e(\alpha,\alpha-1), (e-1)((\alpha,1)\}~(and~so~\varepsilon(M)=\frac1\alpha).
\quad\quad\quad\quad\qed
\]
\end{thm}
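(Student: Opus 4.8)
The plan is to derive both assertions from the preceding theorem of Issa and McCoy that a $3$-manifold embedding smoothly in $S^4$ is partitionable (\cite[Theorem 1.4]{IM20}), by a combinatorial analysis of the two partitions $\mathcal P,\mathcal P'$ supplied by that notion, together with the realization constructions of the earlier sections. For the bound, suppose $M$ embeds smoothly, so it is partitionable with partitions $\mathcal P$ and $\mathcal P'$ of $\{1,\dots,k\}$ into $e$ classes each. First I would form the bipartite intersection graph $\Gamma$ whose vertices are the classes of $\mathcal P$ together with those of $\mathcal P'$, with an edge joining $C\in\mathcal P$ to $C'\in\mathcal P'$ precisely when $C\cap C'\neq\emptyset$; its edges are in bijection with the non-empty blocks of the common refinement $\mathcal P\wedge\mathcal P'$, so $\Gamma$ has at most $k$ edges. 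I would then observe that $\Gamma$ is connected: were $\Gamma$ disconnected, a component would have its $\mathcal P'$-vertices forming a non-empty proper subset of $\mathcal P'$ whose union equals the union of its $\mathcal P$-vertices, hence is a union of classes of $\mathcal P$, contradicting condition (3) in the definition of partitionable. Since a connected graph on $2e$ vertices has at least $2e-1$ edges, $k\geq 2e-1$, that is $2e\leq k+1$. (For $k\geq 3$ this strengthens Theorem \ref{IM4.6}.)

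Now assume $k=2e-1$. Then the two inequalities above are equalities, so $\Gamma$ is a tree on $2e$ vertices and every block of $\mathcal P\wedge\mathcal P'$ is a singleton; hence the edges of $\Gamma$ are labelled bijectively by $1,\dots,2e-1$, edge $i$ joining the $\mathcal P$-class and the $\mathcal P'$-class that contain $i$. Each leaf of $\Gamma$ is then a singleton class $\{i\}$, and since $0<\beta_i/\alpha_i<1$, condition (2) forces that class to be the unique ``deficient'' class (the one with fractional sum $1-\delta$, where $\delta=1/\mathrm{lcm}(\alpha_1,\dots,\alpha_k)$) of its own partition. As there is one deficient class in $\mathcal P$ and one in $\mathcal P'$, $\Gamma$ has at most two leaves, hence is a path $D_0-D_1-\cdots-D_{2e-1}$ whose two endpoints are the deficient classes of $\mathcal P$ and of $\mathcal P'$ and whose colours alternate. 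Writing $y_m=\beta_{i_m}/\alpha_{i_m}$, where $i_m$ labels the edge $D_mD_{m+1}$, conditions (1)--(2) applied at the vertices give the recursion $y_0=1-\delta$ and $y_{2e-2}=1-\delta$ at the two endpoints and $y_{m-1}+y_m=1$ at each interior vertex; solving it gives $y_m=1-\delta$ for $m$ even and $y_m=\delta$ for $m$ odd. Summing the fractional sums over the $e$ classes of $\mathcal P$ also gives $\varepsilon(M)=e-\sum_i\beta_i/\alpha_i=\delta$. Since $(\alpha_i,\beta_i)=1$ and $0<\beta_i<\alpha_i$, the relation $\beta_i/\alpha_i=1/\mathrm{lcm}$ forces $\alpha_i=\mathrm{lcm}=:\alpha$ and $\beta_i=1$, while $\beta_i/\alpha_i=(\mathrm{lcm}-1)/\mathrm{lcm}$ forces $(\alpha_i,\beta_i)=(\alpha,\alpha-1)$; counting parities (there are $e$ even and $e-1$ odd values of $m$) yields $S'=\{e(\alpha,\alpha-1),(e-1)(\alpha,1)\}$ with $\varepsilon(M)=1/\alpha$, which is the ``only if'' direction.

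For the ``if'' direction I would build the embedding from a simple piece by repeated expansion. Given $S'$ of the stated form, $M(0;(\alpha,-1))$ is a homology sphere Seifert fibred over $S^2$ with a single exceptional fibre, hence homeomorphic to $S^3$, so it embeds smoothly in $S^4$; by Lemma \ref{CH-lem3.2}, $M(g;(\alpha,-1))=M(0;(\alpha,-1))\#_fM(g;\emptyset)$ then embeds smoothly in $S^4$. Applying the expansion lemma \cite[Lemma 1.6]{IM20} $e-1$ times, each time adjoining a pair $\{(\alpha,-1),(\alpha,1)\}$ (permissible since $(\alpha,-1)$ stays in the Seifert data), shows that $M(g;(\alpha,-1)^e,(\alpha,1)^{e-1})$ embeds smoothly in $S^4$. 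Finally, adjoining a pair $(1,0)$ and then applying move (2) on the Seifert data with integer multipliers $+1$ on each copy of $(\alpha,-1)$ and $-e$ on $(1,0)$ (these sum to zero) replaces the $e$ pairs $(\alpha,-1)$ by $(\alpha,\alpha-1)$ and the pair $(1,0)$ by $(1,-e)$, identifying this manifold with $M$; hence $M$ embeds smoothly. The step I expect to be the main obstacle is the combinatorial rigidity in the ``only if'' part --- upgrading the bare conditions (1)--(3) at $k=2e-1$ to the conclusions that $\Gamma$ is a path and that the ratios $\beta_i/\alpha_i$ must alternate between $\delta$ and $1-\delta$; the two embedding constructions and the Seifert-data bookkeeping are routine once this graph-theoretic picture is in place.
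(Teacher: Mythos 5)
Your derivation is correct and follows exactly the route the paper indicates: the book states this theorem without proof, citing \cite[Theorem 1.1]{IM20}, where it is derived from the partitionability theorem \cite[Theorem 1.4]{IM20} just as you do --- connectivity of the bipartite intersection graph of the two partitions (forced by condition (3)) gives $k\geq 2e-1$, and in the extremal case the graph is a path with singleton blocks whose alternating edge labels $1-\delta,\delta,1-\delta,\dots$ pin down the Seifert data, with the converse supplied by repeated expansion starting from $M(0;(\alpha,-1))\cong S^3$. The only quibble is that the expansion lemma as restated in the book assumes the expanding pair satisfies $0<\beta<\alpha$ whereas you expand along $(\alpha,-1)$; this is harmless, since a Seifert-data move of type (2) puts your intermediate manifolds in normalized form without changing their homeomorphism type.
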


\begin{thm}
\cite[Theorem 1.12]{IM20}
If  $k=2e$ and $M$ embeds smoothly in $S^4$ then 
there are integers $p,q,r,s$ with $0<q<p$, $0<s<r$, $(p,q)=(r,s)=1$
and $ps+qr+1=pr$ such that $S'$ is either
\begin{enumerate}
\item$\{(x(p,q),y(r,s), (x-1)(p,p-q),(y-1)(r, r-s)\}$,  where $x+y=k-2$; or
\item$\{(x(p,q),y(r,s), (x-1)(p,p-q),(y-1)(r, r-s),z(pr,1), z((pr,pr-1)\}$, 
where $x+y+z=k-2$ and $z>0$.
\end{enumerate}
Moreover, in case (1) each such $M$ embeds smoothly in $S^4$.
\qed
\end{thm}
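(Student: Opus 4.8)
The plan is to follow the route of Issa and McCoy \cite{IM20}, whose engine is the partitionability criterion \cite[Theorem 1.4]{IM20}. First I would apply that criterion: since $M$ embeds smoothly, $\tau_M$ is a direct double, $\varepsilon(M)=1/\mathrm{lcm}(\alpha_1,\dots,\alpha_k)$ (as in the proof of Lemma \ref{IM4.4}), and there are partitions $\mathcal{P}=\{C_1,\dots,C_e\}$ and $\mathcal{P}'=\{C_1',\dots,C_e'\}$ of the index set into $e$ classes satisfying conditions (1)--(3) above. Write $f_i=\beta_i/\alpha_i\in(0,1)$ and $L=\mathrm{lcm}(\alpha_1,\dots,\alpha_k)$. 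A short counting argument using $k=2e$ and $f_i<1$ shows that every non-special class has at least two elements and the special class at most two, so each of $\mathcal{P}$ and $\mathcal{P}'$ is either a perfect matching into pairs or else consists of one singleton special class, one triple, and $e-2$ pairs; moreover a singleton special class $\{i\}$ forces $\alpha_i=L$ and $\beta_i=L-1$. This dichotomy is what ultimately separates cases (1) and (2).

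Assume first that $\mathcal{P}$ and $\mathcal{P}'$ are both perfect matchings on the $k=2e$ exceptional fibres. I would encode $(\mathcal{P},\mathcal{P}')$ as a $2$-regular graph with the $\mathcal{P}$-pairs red and the $\mathcal{P}'$-pairs blue; its components are alternating cycles, and condition (3) forces a single $2e$-cycle $v_1v_2\cdots v_{2e}$, for otherwise the blue edges of one component would be a proper sub-family of $\mathcal{P}'$ whose support is a union of $\mathcal{P}$-classes. Traversing the cycle and using that every pair sums to $1$ except for the unique special pair in each colour, which sums to $1-1/L$, one propagates the relations $f_j+f_{j+1}=1$ to see that along each of the two arcs cut out by the two special pairs the fractions alternate between two constant values, say $q/p$ and $(p-q)/p$ on one arc and $s/r$ and $(r-s)/r$ on the other. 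Counting vertices on each arc gives the multiplicities $x(p,q),(x-1)(p,p-q),y(r,s),(y-1)(r,r-s)$ of case (1). The two endpoints of a special pair necessarily lie on opposite arcs, so the special-pair sum is $q/p+s/r=1-1/L$; combining this with $\varepsilon(M)=1/L$ and the direct-double condition on $\tau_M$ (via Corollary \ref{Hi09-cor3.3}) pins down $L=pr$, hence $(p,q)=(r,s)=1$, and clears to $ps+qr+1=pr$. When instead a singleton special class is present, the same bookkeeping, now applied to the hypergraph containing one triple, produces in addition $z>0$ pairs $\{(pr,1),(pr,pr-1)\}$ of order-$L$ fibres, which is case (2).

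For the converse in case (1) — that every such $M$ embeds smoothly — I would construct an explicit smooth embedding rather than re-use partitionability. After the normalising moves of \S3.1 and a final application of Lemma \ref{CH-lem3.2} to absorb the genus $g$, the task is to embed $M(0;x(p,q),(x-1)(p,p-q),y(r,s),(y-1)(r,r-s),(1,-e))$. The natural construction is as the boundary of a $2$-handlebody presented by a $0$-framed bipartedly slice link whose two slice sublinks reflect $\mathcal{P}$ and $\mathcal{P}'$, the bipartite slicing being available exactly because $ps+qr+1=pr$ lets the relevant bands close up; equivalently, one can realise $M$ from a short seed Seifert manifold by a sequence of expansions and fibred sums, using Lemma \ref{CH-lem3.2} and \cite[Lemma 1.6]{IM20} to propagate embeddability, after checking that the seed embeds by a doubly-slice branched-cover argument in the style of \cite{Do15} or by the $1$-surgical embedding of Budney--Burton. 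The main obstacle is the combinatorial classification of the second paragraph: controlling exactly how $\mathcal{P}$ and $\mathcal{P}'$ interleave, extracting the precise Diophantine relation (in particular the coprimality of $p$ and $r$), and verifying that the only configurations are the two listed — together with exhibiting the seed for case (1) and checking it bounds the required codimension-zero submanifold of $S^4$.
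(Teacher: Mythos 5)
The paper does not prove this statement: it is quoted verbatim from Issa--McCoy and closed with \qed{} as an imported result, so there is no in-paper argument to measure you against. Judged on its own terms, your route is the intended one. Reducing to the partitionability criterion \cite[Theorem 1.4]{IM20}, observing that non-special classes have size $\geq 2$ while the special class has size $1$ or $2$ (so that $k=2e$ forces either two perfect matchings or a singleton-plus-triple configuration), encoding the two matchings as a two-coloured $2$-regular graph, and using condition (3) to force a single alternating $2e$-cycle is exactly the right skeleton, and the propagation of $f_j+f_{j+1}=1$ along the two arcs cut out by the two special edges correctly produces the four families of fibres in case (1).

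There are, however, several places where the argument is asserted rather than carried out. First, the branch leading to case (2) is essentially missing: you must analyse the configurations in which one or both partitions contain a singleton special class and a triple, rule out the mixed possibilities (one partition a matching, the other not), and show that the only surviving data are the $z>0$ extra pairs $(pr,1),(pr,pr-1)$; this is the bulk of the combinatorial work and cannot be dispatched by "the same bookkeeping." Second, your arc computation gives $q/p+s/r=1-1/L$ with $L=\operatorname{lcm}(p,r)$, i.e.\ $qr+ps=pr-\gcd(p,r)$; the stated relation $ps+qr+1=pr$ requires $\gcd(p,r)=1$, and you need to extract this from the direct-double condition on $\tau_M$ (e.g.\ via the $p$-adic valuation analysis of Corollary \ref{Hi09-cor3.3}) rather than merely gesture at it. Third, counting vertices on the two arcs yields $x+y=e+1$; you should reconcile this with the normalisation $x+y=k-2$ appearing in the statement rather than leave the two counts unexamined. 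Finally, the converse in case (1) is not a routine corollary of expansions and Lemma \ref{CH-lem3.2}: one must actually exhibit the seed manifold, verify that it bounds the required codimension-zero piece of $S^4$ (by a doubly-slice or Kirby-calculus construction), and check that the expansion moves reach every $(x,y)$ allowed by the statement. As it stands the proposal is a correct plan with the hardest cases left open.
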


The argument for the next result uses the Rochlin invariant $\bar\mu(M;\mathfrak{s})$.

\begin{thm}
\cite[Theorem 1.10]{IM20}.
If $g=0$ and $M$ embeds smoothly in $S^4$ then $\beta_1(M;\mathbb{F}_2)\leq2e$.
\qed
\end{thm}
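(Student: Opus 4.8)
The plan is to play the embedding off against a computation of the Rochlin invariants of $M$ from its Seifert data.

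Since $g=0$, $M$ is a rational homology $3$-sphere: $H_1(M)=\tau_M$ is finite, $\beta_1(M;\mathbb{F}_2)=\dim_{\mathbb{F}_2}H^1(M;\mathbb{F}_2)$ equals the $2$-rank of $\tau_M$, and $\mathrm{Spin}(M)$ is a torsor over $H^1(M;\mathbb{F}_2)$. If $M$ embeds smoothly in $S^4$, write $S^4=X\cup_M Y$. By Lemma~\ref{Hi17-lem2.1} (with $\beta=0$) we have $\beta_1(X)=\beta_2(X)=\beta_1(Y)=\beta_2(Y)=0$, so $X$ and $Y$ are $\mathbb{Q}$-homology $4$-balls; and they are spin, since $TX=TS^4|_X$ and $w_2(S^4)=0$. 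The Mayer--Vietoris sequence for $S^4=X\cup_M Y$ with $\mathbb{F}_2$ coefficients gives an isomorphism $H^1(X;\mathbb{F}_2)\oplus H^1(Y;\mathbb{F}_2)\cong H^1(M;\mathbb{F}_2)$, so, writing $V_X$ and $V_Y$ for the images of the two restriction homomorphisms, $H^1(M;\mathbb{F}_2)=V_X\oplus V_Y$ and
\[
\beta_1(M;\mathbb{F}_2)=\dim V_X+\dim V_Y .
\]

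The spin structures on $M$ extending over $X$ form a coset $C_X$ of $V_X$, nonempty because it contains $\mathfrak{s}_0$, the restriction to $M$ of the spin structure of $S^4$; likewise the spin structures extending over $Y$ form a coset $C_Y\ni\mathfrak{s}_0$ of $V_Y$, and $C_X\cap C_Y=\{\mathfrak{s}_0\}$ since $V_X\cap V_Y=0$. If $\mathfrak{s}\in C_X$, then $X$ is a spin $4$-manifold bounding $(M,\mathfrak{s})$, so $\bar\mu(M,\mathfrak{s})=0$ because $\sigma(X)=0$; similarly $\bar\mu$ vanishes on $C_Y$. Thus $\bar\mu(M,-)$ vanishes on $C_X\cup C_Y$, a set with $2^{\dim V_X}+2^{\dim V_Y}-1$ elements.

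On the other hand, $\bar\mu(M,-)$ can be computed intrinsically. Because $\varepsilon(M)>0$ and $g=0$, one of the two orientations of $M$ bounds a definite star-shaped plumbing $P$ whose central vertex has weight determined by $e$ and whose $k$ legs are the linear chains coming from the continued fractions of the $\alpha_i/\beta_i$. For each spin structure $\mathfrak{s}$ on $M$ one picks a characteristic covector $c_{\mathfrak{s}}$ on $(H_2(P;\mathbb{Z}),Q_P)$ representing it, and then $\bar\mu(M,\mathfrak{s})$ is computed from $\sigma(P)-c_{\mathfrak{s}}\!\cdot\!c_{\mathfrak{s}}$; the resulting function on the torsor $\mathrm{Spin}(M)$ is a quadratic refinement of the mod-$2$ reduction of $\ell_M$, which is itself described by Theorem~\ref{Hi09-thm1}, Corollary~\ref{Hi09-cor3.3} and Appendix~A. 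Using that description, the aim is to show that this function is constant on no subspace of $H^1(M;\mathbb{F}_2)$ of dimension greater than $e$ --- morally, the single central vertex of weight $\sim -e$ is the only part of $P$ that can alter a spin structure without changing $\bar\mu$. Granting this, $\dim V_X\le e$ and $\dim V_Y\le e$, whence $\beta_1(M;\mathbb{F}_2)=\dim V_X+\dim V_Y\le 2e$. (Equivalently: the number of $\bar\mu$-vanishing spin structures visible from $P$ is at most about $2^{e+1}$, and comparing with $|C_X\cup C_Y|=2^{\dim V_X}+2^{\dim V_Y}-1$ and using convexity of $a\mapsto 2^a$ forces $\dim V_X+\dim V_Y\le 2e$.)

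The homological input of the first two paragraphs is routine once one observes that $X$ and $Y$ are spin. \textbf{The main obstacle is the Seifert-theoretic step:} extracting from the plumbing presentation of $M$ the statement that $\bar\mu(M,-)$ has no constant subspace of dimension exceeding $e$, i.e.\ that the Euler number $e$ --- rather than merely the number $k$ of exceptional fibres --- controls the multiplicity of each value of $\bar\mu$. I expect this to require combining the explicit presentation matrix for $\tau_M$ from Theorem~\ref{Hi09-thm1} with the analysis of the even summand of $\ell_M$ in Appendix~A (recall that $\tau_M$ is a direct double by Lemma~\ref{hyperbolic lp}, since $M$ embeds), and this is exactly where the factor $2e$ enters.
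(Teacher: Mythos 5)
The paper itself offers no proof of this statement: it is quoted from \cite[Theorem 1.10]{IM20} with only the remark that the argument uses the Rochlin invariant $\bar\mu(M,\mathfrak{s})$, so your proposal has to be judged on its own. Your first two paragraphs are correct and are indeed the standard opening move: with $g=0$ and $\varepsilon(M)>0$ the manifold is a $\mathbb{Q}$-homology sphere, $X$ and $Y$ are spin $\mathbb{Q}$-homology balls of signature zero, $H^1(M;\mathbb{F}_2)=V_X\oplus V_Y$, and $\bar\mu$ vanishes on the two cosets $C_X$ and $C_Y$, giving at least $2^{\dim V_X}+2^{\dim V_Y}-1$ spin structures with vanishing $\bar\mu$. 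This is consistent with the route taken in \cite{IM20}.

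However, the proposal has a genuine gap exactly where the theorem's content lies, and you say so yourself: the claim that $\bar\mu(M,-)$ is constant on no affine subspace of $\mathrm{Spin}(M)$ of dimension exceeding $e$ (equivalently, that the number of spin structures with $\bar\mu=0$ is small enough that $2^{\dim V_X}+2^{\dim V_Y}-1$ forces $\dim V_X,\dim V_Y\le e$) is only asserted, with a heuristic about the central vertex of the plumbing. Nothing in the material you cite (Theorem~\ref{Hi09-thm1}, Corollary~\ref{Hi09-cor3.3}, Appendix~A) computes $\bar\mu$; those results describe $\tau_M$ and $\ell_M$, whereas $\bar\mu$ is a quadratic refinement that is \emph{not} determined by the linking form alone, so one genuinely needs the Neumann--Raymond/plumbing formula for $\bar\mu$ of a Seifert manifold and a careful count of its zeros in terms of $e$ and the Seifert data. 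Until that combinatorial step is carried out, the factor $2e$ --- as opposed to, say, a bound in terms of the number $k$ of exceptional fibres --- has not been established, and the proof is incomplete.
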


They make the following conjectures, and prove the second of these under
the additional assumption that all cone point orders are even.

\smallskip
\noindent\cite[Conjecture 1.8]{IM20}. 
{\sl A Seifert manifold $M(0;S',(1,-e))$ with $\varepsilon(M)>0$
embeds smoothly in $S^4$
if and only if it is obtained by a (possibly empty) sequence of expansions 
from some $M(0;S',(1,-1))$ with $S'$ strict Seifert data 
which also embeds smoothly in $S^4$.}

\smallskip
\noindent\cite[Conjecture 1.9]{IM20}. 
{\sl If $M=M(0;S',(1,-e))$ embeds smoothly in $S^4$ and $\varepsilon(M)>0$ 
then $k-1\leq2e\leq{k+1}$.}

\chapter{Seifert manifolds with non-orientable base orbifolds}

In this chapter we shall consider Seifert manifolds with non-orientable base orbifold.

W. S. Massey settled an old question of Whitney
when he showed that the normal bundles to smooth
embeddings of $\#^k\mathbb{RP}^2$ in $S^4$ have Euler number 
$e\in\{-2k,4-2k,\dots,2k\}$ \cite{Ma69},
This was an early application of the $G$-Signature Theorem
to low dimensional topology.
We shall use this theorem in a similar manner in proving 
the main result of this chapter, 
which is a bound on the range of values for $\varepsilon(M)$,
where $M$ embeds in $S^4$.

\section{The torsion subgroup}

We begin by determining the torsion $\tau_M$ for Seifert manifolds $M$
with non-orientable base orbifold.

Let $N_i$ be a regular neighbourhood of the $i$th exceptional fibre, 
and let $B_o$ 
be a section of the restriction of the Seifert fibration to $\overline{M\setminus\cup\{N_i\}}$.
Then $B_o$ is homeomorphic to $\#^kRP^2$ with $r$ open 2-discs deleted,
and $\overline{M\setminus\cup\{N_i\}}\cong{B_o\times{S^1}}$.
Let $\xi_i$ and $\theta_i$ be simple closed curves on $\partial{N_i}$ corresponding to the 
$i$th boundary component of $B_o$ and a regular fibre respectively.
Since $B$ is non-orientable $\pi_1M$ has a presentation
\[
\langle{u_1,\dots,u_k, q_1,\dots,q_n, h}\mid\Pi{u}_i^2\Pi{q_j}=1,
~q_i^{\alpha_i}h^{\beta_i}=1,~u_jhu_j^{-1}=h^{-1}\rangle,
\]
where the $u_i$s are orientation-reversing loops in the $\mathbb{RP}^2$ 
summands of $|B|$,
$q_j$ is the image of $\xi_j$ and $h$ is the image of regular fibres such as $\theta_j$.
In this case $u_jhu_j^{-1}=h^{-1}$ for all $j$, since $M$ is orientable.

\begin{lemma}
\label{CH-lem3.4}
Let $M=M(-k;S)$ be a Seifert manifold with non-orientable base orbifold 
(so $k\geq1$).
Let $r$ be the maximal index such  that $t_i=t_1$ for $i\leq{r}$.
Let $c=r$ if $t_1\not=0$ and let $c=\Sigma\beta_i$ if $t_1=0$
(i.e., if all the $\alpha_i$ are odd). 
Then 
\[
\tau_M\cong\mathbb{Z}/2\alpha_1\mathbb{Z}\oplus\mathbb{Z}/2\alpha_2\mathbb{Z}\oplus(\bigoplus_{i=3}^n\mathbb{Z}/\alpha_i\mathbb{Z})\quad\mathrm{if}~c~\mathrm{is~even},
\]
and 
\[
\tau_M\cong\mathbb{Z}/4\alpha_1\mathbb{Z}\oplus(\bigoplus_{i=2}^n\mathbb{Z}/\alpha_i\mathbb{Z})\quad\mathrm{if}~c~\mathrm{is~odd}.
\]
\end{lemma}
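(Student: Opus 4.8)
The plan is to abelianize the given presentation of $\pi_1M$ and compute the cokernel of the resulting relation matrix. Writing $H_1(M)$ additively, the relators $\prod u_i^2\prod q_j = 1$, $q_i^{\alpha_i}h^{\beta_i}=1$, and $u_jhu_j^{-1}=h^{-1}$ become, respectively, $2\sum u_i + \sum q_j = 0$, $\alpha_i q_i + \beta_i h = 0$, and $2h = 0$. So $h$ has order dividing $2$, and I would first isolate $h$: either $h$ already vanishes in $H_1(M)$, or it contributes a $\mathbb{Z}/2\mathbb{Z}$. The key is to track exactly when $h=0$ versus $h\neq 0$ in $H_1(M)$, and this is precisely where the quantity $c$ and its parity enters.

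First I would set up the $(n+1)\times(n+2)$ (or conveniently square, after using $2h=0$ to eliminate) presentation matrix in the generators $u_1,\dots,u_k, q_1,\dots,q_n, h$. Using the relation $2h=0$, the relators $\alpha_iq_i+\beta_ih=0$ split according to the parity of $\alpha_i$: if $\alpha_i$ is even then $t_i=\mathrm{val}_2(\alpha_i)\geq 1$ and $\beta_i$ is odd (coprimality), so reducing mod $2$ we learn nothing new about $h$ from that relation alone beyond $\alpha_i q_i = -\beta_i h = \beta_i h$; if $\alpha_i$ is odd then $q_i$ is determined mod the subgroup generated by $h$, and such relations collectively force a relation on $h$. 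The hypothesis orders the cone points so that $t_i = t_1$ for $i\leq r$ and $t_i \neq t_1$ for $i>r$ (I read this as the first $r$ having the common minimal $2$-adic valuation $t_1$); and $c = r$ when $t_1\neq 0$, while $c = \sum\beta_i$ when $t_1=0$ (all $\alpha_i$ odd). The next step is a Smith-normal-form computation: I would perform integer row and column operations to diagonalize, being careful about the $2$-torsion coming from $h$ interacting with the even-order $q_i$'s.

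The computation divides naturally into the two cases of the statement. When $t_1 = 0$ (all $\alpha_i$ odd), each $q_i$ can be solved for in terms of $h$ up to an $\alpha_i$-torsion summand $\mathbb{Z}/\alpha_i\mathbb{Z}$, and substituting into $2\sum u_i + \sum q_j = 0$ together with $2h=0$ produces a presentation whose $2$-primary part is governed by $\sum\beta_i = c$: the orientation-reversing generators $u_i$ contribute a $\mathbb{Z}$ that gets killed modulo $2$, and one finds $\mathbb{Z}/2\alpha_1\mathbb{Z}\oplus\mathbb{Z}/2\alpha_2\mathbb{Z}$ when $c$ is even (so $h$ is absorbed, roughly, into two cyclic summands) versus $\mathbb{Z}/4\alpha_1\mathbb{Z}$ when $c$ is odd (the $2$-part stacks). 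When $t_1 > 0$ (some $\alpha_i$ even, with $r$ of them sharing the minimal valuation), the same bookkeeping applies but now the even $\alpha_i$ carry the $2$-torsion directly, and the number $r$ of minimal-valuation cone points plays the role $\sum\beta_i$ did before — its parity $c = r$ decides whether the extra $\mathbb{Z}/2\mathbb{Z}$ from $h$ merges with one even summand (giving $\mathbb{Z}/4\alpha_1\mathbb{Z}$) or splits across two (giving $\mathbb{Z}/2\alpha_1\mathbb{Z}\oplus\mathbb{Z}/2\alpha_2\mathbb{Z}$), after reindexing so that $\alpha_1,\alpha_2$ are the two of smallest valuation.

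The main obstacle will be the careful $2$-adic bookkeeping in the Smith normal form: one must verify that away from the prime $2$ the matrix simply diagonalizes to $\bigoplus \mathbb{Z}/\alpha_i\mathbb{Z}$ (the odd parts are unaffected by $h$ and by the single relation $2\sum u_i + \sum q_j = 0$), and that at the prime $2$ the interaction of the relation $2h=0$, the relations $\alpha_iq_i = \beta_i h$ for even $\alpha_i$, and the $u_i$-relation produces exactly the claimed $2$-group — neither losing a $\mathbb{Z}/2\mathbb{Z}$ nor gaining one. I would handle this by localizing at $2$, replacing each odd $\alpha_i$ by a unit, and explicitly row-reducing the small resulting matrix over $\mathbb{Z}_{(2)}$, checking the two parities of $c$ separately. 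The identification of $c$ with the parity of the determinant of a $2\times 2$ block (whose entries are the $2$-adic valuations $t_1, t_2$ and the relevant $\beta_i$'s) is the crux, and once that is pinned down the two displayed formulas follow.
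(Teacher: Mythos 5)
Your overall strategy coincides with the paper's: abelianize the standard presentation, note that the class $a=\Sigma u_i$ and the $q_j$ and $h$ carry all the torsion (so $H_1(M)\cong\mathbb{Z}^{k-1}\oplus\tau_M$), observe that away from $2$ the relation matrix diagonalizes to the odd parts of the $\mathbb{Z}/\alpha_i\mathbb{Z}$, and then do the $2$-adic bookkeeping on the interaction of $2h=0$, $\alpha_iq_i+\beta_ih=0$ and $2a+\Sigma q_j=0$. (The paper carries out the Smith reduction by explicit changes of generators such as $\bar{q}_j=q_j+2^{-t_j}\alpha_1q_1$ rather than by matrix operations, but that is cosmetic.) There is, however, a concrete error in your reading of the indexing, and it changes the answer. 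You take $t_1$ to be the \emph{minimal} $2$-adic valuation and propose to ``reindex so that $\alpha_1,\alpha_2$ are the two of smallest valuation.'' The convention forced by the statement is the opposite: the parenthetical ``$t_1=0$, i.e., all the $\alpha_i$ are odd'' can only hold if $t_1=\max_it_i$, so the cone points are listed in \emph{descending} order of $2$-adic valuation, $r$ counts those of \emph{largest} valuation, and the doubled summands attach to the $\alpha_i$ of largest valuation. This is not a matter of labels: for $M(-1;(4,1),(2,1))$ a direct Smith computation gives $\tau_M\cong\mathbb{Z}/16\mathbb{Z}\oplus\mathbb{Z}/2\mathbb{Z}$, which is $\mathbb{Z}/4\alpha_1\mathbb{Z}\oplus\mathbb{Z}/\alpha_2\mathbb{Z}$ with $\alpha_1=4$ (descending order, $r=c=1$ odd), whereas your convention would predict $\mathbb{Z}/8\mathbb{Z}\oplus\mathbb{Z}/4\mathbb{Z}$, a non-isomorphic group of the same order.

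Beyond that, the step you yourself identify as ``the crux'' is exactly the content of the lemma and is not carried out. Two points in particular need proof rather than description. First, the excess of $|\tau_M|$ over $\Pi\alpha_i$ is a factor of $4$, contributed jointly by $h$ (of order $2$) and by $a$ (of order $2$ modulo the $q_j$ and $h$); it is not ``the extra $\mathbb{Z}/2\mathbb{Z}$ from $h$'' merging or splitting, and one must show how this $\mathbb{Z}/4\mathbb{Z}$-worth of extra torsion distributes onto the cyclic summands. Second, in the case $t_1>0$ you must actually show why the relevant parity is $r$, the number of cone points of maximal valuation: the paper does this by normalizing all $\beta_i$ to be odd (so each relation becomes $\alpha_jq_j=h$) and then making the substitutions $\bar{q}_j$ and $\bar{a}$ explicit, where the term $\omega$ in $\bar{a}$ depends on the parity of $r$ and one must check that the coefficient of $q_1$ is an integer. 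Localizing at $2$ and row-reducing, as you propose, will work, but the two parities of $c$ and the placement of the factors $2$ and $4$ on $\alpha_1$ (and $\alpha_2$) have to emerge from that reduction, not be asserted.
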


\begin{proof}
On abelianizing the above presentation for $\pi_1M$,
we see that $H_1(M)$ has the presentation
\[
\langle{a_1,\dots,a_k,q_1,\dots,q_n,h}\mid2h=0,~\alpha_jq_j+\beta_jh=0,~
2\Sigma{a_i}+\Sigma{q_j}=0\rangle,
\]
where $h=[f]$, $a_i=[u_i]$ and $q_j=[\xi_j]$, 
for $1\leq{i}\leq{k}$ and $1\leq{j}\leq{n}$.
Since $h$,  $q$ and $a=\Sigma{a_i}$ are clearly torsion elements, 
we see that $H_1(M)\cong\mathbb{Z}^{k-1}\oplus\tau_M$,
where $\tau_M$ has the presentation
\[
\langle{a,h,q_1\dots,q_n}\mid2h=0,~\alpha_jq_j+\beta_jh=0,~
2a+\Sigma{q_j}=0\rangle.
\]
We shall write the cone point orders as $\alpha_i=2^{t_i}s_i$, 
with $s_i$ odd,  and in descending order of 2-adic valuation $t_i$.

Suppose first that $t_1>0$.
Then after modifying the Seifert data, if necessary,
we may assume that every $\beta_i$ is odd, 
and so rewrite the appropriate relations in the above presentation for $\tau_M$ 
as $\alpha_jq_j=h$.
Replace $q_j$ by $\bar{q}_j=q_j+2^{-t_j}\alpha_1q_1$, and replace $a$ with
\[
\bar{a}=a-\left(\Sigma_{j=2}^n\frac{\alpha_1}{2^{t_j+1}}\right)q_1+
\Sigma_{j=3}^n\frac12(1-s_j)\bar{Q}_j+\omega
\]
where $\omega=\frac12q_1$ if $r$ is even and $\omega=\frac12(1-s_2)\bar{q}_2$
if $r$ is odd.
It is easily checked that the coefficient of $q_1$ in this expression is an integer,
in either case.
Now the presentation for $\tau_M$ reduces to one of the following:
\[
\langle{h, \bar{a},q_1,\bar{q}_2,\dots,\bar{q}_n}\mid2h=0,~\alpha_1q_1=h,~
\alpha_j\bar{q}_j=0,~2\bar{a}+\bar{q}_2+\Sigma_{j=3}^ns_j\bar{q}_j=0\rangle,
\]
or
\[
\langle{h, \bar{a},q_1,\bar{q}_2,\dots,\bar{q}_n}\mid2h=0,~\alpha_1q_1=h,~
\alpha_j\bar{q}_j=0,~2\bar{a}+\bar{q}_1+\Sigma_{j=2}^ns_j\bar{q}_j=0\rangle,
\]
according as $r$ is even or odd.

If $r$ is even, replace $\bar{q}_2$ by 
$\bar{\bar{q}}_2=\bar{q}_2+\Sigma_{j=3}^ns_j\bar{q}_j$.
With this, the relation $\alpha_2\bar{q}_2$ may be replaced by $\alpha_2\bar{\bar{q}}_2=0$
(noting that $\alpha_j$ divides $\alpha_2s_j$ for $j\geq3$), and the presentation 
for $\tau_M$ becomes
\[
\langle{h, \bar{a},q_1,\bar{\bar{q}}_2,\bar{q}_3,\dots,\bar{q}_n}\mid2h=0,~\alpha_1q_1=h,~\alpha_2\bar{\bar{q}}_2=\alpha_j\bar{q}_j=0,~2\bar{a}+\bar{\bar{q}}_2=0
\rangle,
\]
which reduces to
\[
\langle{\bar{a},q_1,\bar{q}_3,\dots,\bar{q}_n}\mid2\alpha_1q_1=0,~
2\alpha_2\bar{a}=0,~\alpha_j\bar{q}_j=0\rangle,
\]
and so $\tau_M\cong\mathbb{Z}/2\alpha_1\mathbb{Z}\oplus\mathbb{Z}/2\alpha_2\mathbb{Z}
\oplus(\bigoplus_{i=3}^n\mathbb{Z}/\alpha_i\mathbb{Z})$.

If $r$ is odd, replace $q_1$ by $\bar{q}_1=q_1+\Sigma_{j=2}^ns_j\bar{q}_j$.
With this, the relation $\alpha_1q_1=h$ may be replaced by $\alpha_1\bar{q}_1=h$,
and the presentation for $\tau_M$ becomes 
\[
\langle{h, \bar{a},\bar{q}_1,\dots,\bar{q}_n}\mid2h=0,~\alpha_1\bar{q}_1=h,~
\alpha_j\bar{q}_j=0 ~(j\geq2),~2\bar{a}+\bar{q}_1=0\rangle,
\]
which reduces to
\[
\langle{\bar{a},\bar{q}_1,\dots,\bar{q}_n}\mid
4\alpha_1\bar{q}_1=0,~
\alpha_j\bar{q}_j=0 ~(j\geq2)\rangle,
\]
and so $\tau_M\cong\mathbb{Z}/4\alpha_1\mathbb{Z}\oplus
(\bigoplus_{i=2}^n\mathbb{Z}/\alpha_i\mathbb{Z})$.

Finally, suppose that $t_1=0$ and every $\alpha_j$ is odd.
After reindexing, if necessary, 
we may assume that $\beta_j$ is odd for $j\leq{s}$ and is even for $j>s$,
for some $s\geq0$.
Then $\alpha_jq_j=h$ for $j\leq{s}$ and $\alpha_jq_j=0$ for $j>s$, 
and $s\equiv{c}=\Sigma_{j=1}^n\beta_j$ {\it mod} (2).
Let 
\[
\bar{q}_j=q_j+\alpha_1q_1=q_j+h\quad\mathrm{for}~2\leq{j}\leq{s}\quad
\mathrm{and}\quad\bar{q}_j=q_j\quad\mathrm{for}~{j>s},
\]
and let 
\[\bar{a}=a-\left(\Sigma_{j=2}^s\frac12{\alpha_1}\right)q_1+
\Sigma_{j=3}^n\frac12(1-s_j)\bar{Q}_j+\omega
\]
where $\omega=\frac12q_1$ if $c$ is even and $\omega=\frac12(1-s_2)\bar{q}_2$
if $c$ is odd.
Then the rest of the argument follows exactly as in the preceding cases.
\end{proof}

If $M$ is as in the above Lemma then $\tau_M$ is a direct double only if the 
prime power factors of the $\alpha_i$s occur in pairs. 
If $\alpha_i$ is odd  for all $i$ then 
we need also that $\Sigma_{i=1}^n\beta_i$ is even.
In particular, this requires that there are an even number $2b$ (possibly 0)
of cone points with $\alpha_i$ even.
As we shall see in Theorem \ref{CH-thm3.7},
hyperbolicity of the torsion linking form requires furthermore 
that the non-zero $t_i$s must all be equal, or, 
when all the $\alpha_i$s are odd, that
$\Sigma_{i=1}^n\alpha_i\beta_i\equiv2k$ {\it mod} (4).

If $M$ is Seifert fibred over a nonorientable base then $\tau_M\not=0$, 
by Lemma \ref{CH-lem3.4}, and so $M$ is neither a homology sphere
nor a homology $S^2\times{S^1}$.

\section{The linking pairing}

In order to determine the linking pairing we shall have to look 
more carefully at geometric representatives for the homology classes.

We begin by finding a convenient choice of generators for $\tau_M$.
Fix a base-point for $\#^k\mathbb{RP}^2$ and let $\sigma_1,\dots,\sigma_k$ be orientation-reversing loops corresponding to the distinct summands.
Choose loops $\xi_1,\dots,\xi_n$ at $*$ which bound discs $N_i$, which meet only at $*$.
Orient and label these loops (clockwise) as in Figure 4.1.

\medskip

\setlength{\unitlength}{1mm}
\begin{picture}(100,68)(-38.5,4)

\put(17,38.2){$\bigstar$}

\put(20,48){$\sigma_1$}
\put(32,39){$\sigma_2$}
\put(32,24){$\sigma_k$}

\put(32,31){$\vdots$}
\put(42,29){$\vdots$}
\put(8,34){$\ddots$}

\put(0,41){$\xi_n$}
\put(10,25){$\xi_1$}
\put(8,62){$f$}

\put(4,61){$\nwarrow$}
\put(21,56.7){$\blacktriangleright$}
\put(23,37.1){$\blacktriangleright$}
\put(35.1,15.8){$\blacktriangleleft$}
\put(5,46){$\blacktriangleright$}
\put(16.5,23.8){$\blacktriangleleft$}

\thinlines

\qbezier(19,39)(10,39)(5,42)
\qbezier(5,42)(3,44)(5,46)
\qbezier(5,46)(8,50)(19,39)

\qbezier(19,39)(18,56)(21,57)
\qbezier(21,57)(22,58)(24.5,57.5)
\qbezier(19,39)(23,39)(26,45)
\qbezier(26,45)(28,50)(27.5,54)
\qbezier(27.5,54)(27.5,56)(26.5,56.8)
\qbezier(19,39)(21,38)(24,38)
\qbezier(24,38)(30,38)(33.5,44)
\qbezier(33.5,44)(36,49)(40,47.7)
\qbezier(19,39)(22,36)(26,36)
\qbezier(26,36)(34,35)(38,38)
\qbezier(38,38)(43,42)(41.5,46.5)

\qbezier(19,39)(16,38)(15,32)
\qbezier(19,39)(21,38)(21,30)
\qbezier(15,32)(14,26)(16,25)
\qbezier(21,30)(21,26)(19,25)
\qbezier(16,25)(17.5,24)(19,25)

\qbezier(19,39)(30,28)(35,28)
\qbezier(35,28)(43,27)(42.3,20)
\qbezier(19,39)(28,30)(28,27)
\qbezier(28,27)(28,20)(32,18)
\qbezier(32,18)(35,16)(39,17)

\qbezier(43,39) (42,37)(42,35)
\qbezier(43.5,23)(43,25)(43,26)

\linethickness{1pt}

\qbezier(19,40)(8,65)(0,65)

\qbezier(-10,20)(0,15)(20,17)
\qbezier(-10,20)(-13,25)(-10,28)
\qbezier(-10,28)(-5, 35)(-8,40)
\qbezier(-8,40)(-10,45)(-6,50)
\qbezier(-6,50)(0,57)(8,57)
\qbezier(12,57)(14,57)(16,58)
\qbezier(16,58)(22,61)(23,60)

\qbezier(23,60)(26,60)(26,56)

\qbezier(26,56)(25,52)(22,52)
\qbezier(22,52)(21,52)(21,54)
\qbezier(21,54)(22,56)(25,57)

\qbezier(27,57.5)(29,58)(29,54)
\qbezier(29,54)(29,48)(32,48)
\qbezier(32,48)(34,47)(38,49)
\qbezier(38,49)(41,50)(41,47)
\qbezier(41,47)(40,43)(37,43)
\qbezier(37,43)(36,43)(36,45)
\qbezier(36,45)(37,47)(40,47)

\qbezier(42,47)(44,47)(44,43)
\qbezier(44,43)(44,42)(43,39)

\qbezier(20,17)(27,20)(35,15)
\qbezier(35,15)(38,13)(40,17)
\qbezier(37,19)(39,17)(43,19)
\qbezier(37,19)(36,20)(37,22)
\qbezier(37,22)(38,24)(40,23)
\qbezier(40,23)(42,21)(41,19)
\qbezier(43,19)(44,20)(43.5,23)

\put(9.4,8){Figure 4.1}

\end{picture}

Let $P_{k,n*}=\#^k\mathbb{RP}^2\setminus\cup\,{int{N_i}}$. 
Then $P_{k,n*}$ deformation retracts onto a wedge of circles and thus there is an unique orientable $S^1$-bundle $P_{k,n*}\tilde\times{S^1}$ over $P_{k,n*}$, 
with a section which carries the loops
$\sigma_i$ and $\xi_j$ into $P_{k,n*}\tilde\times{S^1}$.
Let $f$ denote the oriented fibre over $*\in{P_{k,n*}}$.
Finally let $N(\gamma_i)$ be solid tori with cores $\gamma_i$, for $1\leq{i}\leq{n}$, 
and write 
\[M=P_{k,n*}\tilde\times{S^1}\cup_{\cup{h_i}}(\bigcup_{i=1}^nN(\gamma_i)).
\]
where each attaching homomorphism $h_i$ identifiers the boundary of 
a meridianal disc in $N(\gamma_i)$ with a curve representing 
the homology class $\alpha_i[\xi_i]+\beta_i[f]$.
Then $M=M(-k;S)$, where $S=\{(\alpha_i,\beta_i)\mid1\leq{i}\leq{n}\}$, 
and the cores $\gamma_i$ represent the singular fibres.

Now consider the loops $\sigma_i$, $\xi_j$ and $f$ of Figure 1 to 
be 1-cycles in $M$.
Let $\sigma=\Sigma_{i=1}^k\sigma_i$.
Then $\sigma$ has image $a=[\sigma]$ in $\tau_M$.
The relations in our original presentation for $\tau_M$ correspond to 
the presence of the following 2-chains in $M$.
The union of fibres over $\sigma_1$ may be considered as a 2-chain $A$ (with support 
the image of an annulus), with $\partial{A}=2f$.
(Note that the $\sigma_i$s are orientation-reversing loops!)
For each $i\leq{n}$ there is a 2-chain $D_i$ in $N(\gamma_i)$ with $\partial{D_i}=\alpha_i\xi_i+\beta_if$ in $M$. 
(Thus $D_i$ is homologous to a meridianal disc.)
Finally, we may consider $P_{k,n*}$ as a 2-chain in $M$ with $\partial{P_{k,n*}}=2\sigma+\Sigma_{i=1}^n\xi_i$.
Note that we have described each of $\partial{A}$, $\partial{P_{k,n*}}$ 
and the $\partial{D_i}$s explicitly  as 1-cycles.
We may now compute various linkings.

\begin{lemma}
\label{CH-lem3.6}
Using the above generators for $\tau_M$, we have 
\[
\ell_M(q_i,q_i)=\frac{\beta_i}{\alpha_i},\quad
\ell_M(q_i,q_j)=0~\mathrm{ if}~j\not=i,\quad\ell_M(q_i,h)=\ell_M(h,h)=0,
\]
\[
\ell_M(a,q_i)=-\frac{\beta_i}{2\alpha_i},\quad
\ell_M(a,a)=\frac14(2k-\varepsilon(M))\quad\mathrm{and}\quad\ell_M(a,h)=\frac12.
\]
\end{lemma}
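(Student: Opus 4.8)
The plan is to compute each linking number directly from the geometric description of $\tau_M$ and the explicit $2$-chains $A$, $P_{k,n*}$ and $D_i$ constructed just before the statement, using the formula $\ell_M([w],[z])=(w\bullet C)/m$ whenever $mz=\partial C$. The idea is that all six values follow from a small bookkeeping of intersection numbers of the $1$-cycles $\sigma_i$, $\xi_j$, $f$ with the interiors of those $2$-chains, after arranging everything to be transverse. I would first record the basic intersection data: the loops $\xi_i$ and $f$ are disjoint from each other and from the $\sigma_j$ except at the base-point, where $f$ and $\xi_i$ each meet $P_{k,n*}$, and the $\xi_i$ meet $D_i$ once (in a meridian disc) and no other $D_j$. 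One has to be careful about the base-point: I would push off parallel copies of the cycles to make intersections generic, exactly as in the standard description of Seifert-manifold linking forms, and keep track of signs via the chosen orientations in Figure 4.1.

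Then I would go through the cases. For $\ell_M(q_i,q_i)=\beta_i/\alpha_i$: since $\alpha_i\xi_i = \partial D_i - \beta_i f$ and $\alpha_i f$ is null-homologous rel the bundle structure (so $\alpha_i^2\xi_i$ bounds a chain built from $\alpha_i D_i$ minus $\beta_i$ copies of the bounding chain for $f$), intersecting a parallel copy of $\xi_i$ with that chain picks up $-\beta_i$ times the self-intersection contribution, giving $\beta_i/\alpha_i$ mod $\mathbb{Z}$ (this is the classical lens-space computation localized in $N(\gamma_i)$). The relation $\ell_M(q_i,q_j)=0$ for $j\neq i$ is immediate since $\xi_j$ is disjoint from $D_i$ and from $N(\gamma_i)$. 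For $\ell_M(q_i,h)$ and $\ell_M(h,h)$: $2h=\partial A$ and $A$ is supported on an annulus of fibres over $\sigma_1$, which is disjoint from $\xi_i$ and from a parallel copy of $f$, so both pairings vanish. For $\ell_M(a,q_i)=-\beta_i/(2\alpha_i)$: here I use $2a = \partial P_{k,n*} - \sum_j \xi_j$, so $2\alpha_i a$ differs from $-\alpha_i\xi_i = -(\partial D_i - \beta_i f)$ by boundaries, and intersecting with $q_i$ gives $\beta_i f \bullet (\text{parallel }\xi_i)$-type terms; combined with the self-pairing $\ell_M(q_i,q_i)$ already computed, this yields $2\alpha_i\,\ell_M(a,q_i)\equiv -\beta_i$, hence $\ell_M(a,q_i)=-\beta_i/(2\alpha_i)$. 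For $\ell_M(a,h)=\tfrac12$: from $2a=\partial P_{k,n*}-\sum\xi_j$ and $2h=\partial A$, one computes $4\,\ell_M(a,h)\equiv \ell_M(\partial P_{k,n*}-\textstyle\sum\xi_j,\,\partial A)$ via the intersection of $A$ with $P_{k,n*}$; the annulus $A$ of fibres over $\sigma_1$ meets $P_{k,n*}$ in $\sigma_1$, contributing $2$ (the coefficient of $\sigma$ in $\partial P_{k,n*}$), giving $\ell_M(a,h)=\tfrac12$.

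The main obstacle — and the step I would write out most carefully — is $\ell_M(a,a)=\tfrac14(2k-\varepsilon(M))$. Here I would compute $4\,\ell_M(a,a)$ by taking $2a=\partial P_{k,n*}-\sum_j\xi_j$ and a parallel pushoff $a'$ of $a$, and evaluating $(2a')\bullet P_{k,n*} - \sum_j (2a')\bullet(\text{chains bounding }\xi_j)$. The term $P_{k,n*}\bullet P_{k,n*}$-type contribution (framing of the non-orientable surface with its twisted $S^1$-bundle) is what produces the $2k$: each cross-cap $\sigma_i$ contributes $1$ because the fibering is twisted along the orientation-reversing loop, and there are $k$ of them, and the factor $2$ comes from $\partial P_{k,n*}=2\sigma+\cdots$. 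The $-\varepsilon(M)=\sum\beta_i/\alpha_i$ contribution comes from the cross-terms with the $\xi_j$: using $\alpha_i\xi_i=\partial D_i-\beta_i f$ one replaces each $\xi_i$-boundary contribution by $-(\beta_i/\alpha_i)$ times an $f\bullet(\text{fibre-annulus})$ term, and summing gives $\sum_i\beta_i/\alpha_i$. Assembling, $4\,\ell_M(a,a)\equiv 2k-\sum_i\beta_i/\alpha_i = 2k-\varepsilon(M)$ mod $4\mathbb{Z}$, i.e. $\ell_M(a,a)=\tfrac14(2k-\varepsilon(M))$ in $\mathbb{Q}/\mathbb{Z}$. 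I would double-check consistency against the order of $a$ in $\tau_M$ (namely $2\alpha_1$ or $4\alpha_1$, per Lemma \ref{CH-lem3.4}) to make sure the value is well-defined, and verify that the formula reduces correctly in the bundle case $n=0$, where $\varepsilon(M)=0$ and $a$ has order $2$ or $4$ according to the parity of the Euler number.
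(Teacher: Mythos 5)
Your proposal follows essentially the same route as the paper: compute each value from the geometric definition of $\ell_M$ using the explicit $2$-chains $A$, $D_i$ and $P_{k,n*}$, after perturbing the $1$-cycles to transverse position. The paper organizes the bookkeeping a little more cleanly by forming honest integral chains $C_j=2D_j-\beta_jA$ (so $\partial C_j=2\alpha_j\xi_j$) and $R=2(\Pi\alpha_j)P_{k,n*}-\Sigma\widehat\alpha_iC_i$ (so $\partial R=4(\Pi\alpha_j)\sigma$), rather than manipulating fractional multiples of $f$ and $\xi_i$; in particular your claim that ``$\alpha_if$ is null-homologous'' needs the factor $2$ from $\partial A=2f$, which is exactly what the $C_j$ supply. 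One genuine slip: in the $\ell_M(a,a)$ computation you write $4\ell_M(a,a)\equiv 2k-\Sigma\beta_i/\alpha_i$ and then identify $\Sigma\beta_i/\alpha_i$ with $\varepsilon(M)$, but by definition $\varepsilon(M)=-\Sigma\beta_i/\alpha_i$; the correct intersection count (as in the paper, where $\sigma'\bullet C_i=-\beta_i$ enters $R$ with a minus sign) gives $4\ell_M(a,a)=2k+\Sigma\beta_i/\alpha_i=2k-\varepsilon(M)$, so your two sign errors cancel but the intermediate line should be fixed.
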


\begin{proof}
The 1-cycles corresponding to the generators are shown in Figure 1 with $f$ directed out of the page, and we may use the right-hand rule to determine a positive intersection.
we may perturb each $\xi_i$ to a homologous 1-cycle $\xi_i'$ just inside $N(\gamma_i)$,
and perturb $\sigma$ to $\sigma'$ as illustrated in Figure 4.2.

\setlength{\unitlength}{1mm}
\begin{picture}(110,60)(-33.4,4)

\put(9,37){.}
\put(10,35){.}
\put(11,33){.}

\put(39,34){.}
\put(39,31.5){.}
\put(39,29){.}

\put(-3,43){$\xi_n'$}
\put(8,28){$\xi_1'$}
\put(23.5,16){$\sigma'$}

\put(22,57.1){$\blacktriangleright$}
\put(37,46.2){$\blacktriangleright$}
\put(6,46.1){$\blacktriangleright$}
\put(15.8,23.7){$\blacktriangleleft$}
\put(36.5,16){$\blacktriangleleft$}

\put(32,32.6){.}
\put(33,33.7){.}
\put(34.2,34.4){.}

\put(11.8,28){$\to$}

\put(21.7,25){$\nwarrow$}
\put(26.6,16){$\to$}
\put(1.5,43){$\to$}

\thinlines

\qbezier(15,35)(12,39)(15,44)
\qbezier(15,44)(16,47)(19,49)
\qbezier(19,49)(21,52)(23,55)
\qbezier(23,55)(26, 60)(29,55)
\qbezier(29,55)(30,52)(28,44)
\qbezier(28,44)(28,40)(32,40)
\qbezier(32,40)(34,40)(36,44)
\qbezier(36,44)(39,48)(42,44)
\qbezier(42,44)(43,42)(42,40)
\qbezier(42,40)(39, 36)(36,35)

\qbezier(15,35)(16,32)(19,30)
\qbezier(19,30)(26,25)(28,20)
\qbezier(28,20)(35,12)(39,20)
\qbezier(39,20)(41,23)(38,26)
\qbezier(38,26)(36,28)(33,30)
\qbezier(33,30)(32,31)(32,32)

\linethickness{1pt}

\qbezier(6,42.5)(4.5,44)(6,45.5)
\qbezier(6,45.5)(8,47)(10,45)
\qbezier(6,42.5)(8,41)(10,42)
\qbezier(10,42)(12, 43.5)(10,45)

\qbezier(16,26)(17.5,24.5)(19,26)
\qbezier(16,26)(15,29)(16,32)
\qbezier(19,26)(21,29)(20,32)
\qbezier(16,32)(18,35)(20,32)

\qbezier(19,39)(10,39)(5,42)
\qbezier(5,42)(3,44)(5,46)
\qbezier(5,46)(8,50)(19,39)
\qbezier(19,39)(18,56)(21,57)
\qbezier(21,57)(23,59)(26,57)
\qbezier(19,39)(23,39)(26,45)
\qbezier(26,45)(28,50)(27.5,54)
\qbezier(27.5,54)(27.5,56)(26,57)
\qbezier(19,39)(21,38)(24,38)
\qbezier(24,38)(30,38)(33.5,44)

\qbezier(33.5,44)(36,49)(41,46.5)
\qbezier(19,39)(22,36)(26,36)
\qbezier(26,36)(34,35)(38,38)
\qbezier(38,38)(44,42)(41,46.5)

\qbezier(19,39)(16,38)(15,32)
\qbezier(19,39)(21,38)(21,30)
\qbezier(15,32)(14,26)(16,25)
\qbezier(21,30)(21,26)(19,25)
\qbezier(16,25)(17.5,24)(19,25)

\qbezier(19,39)(30,28)(35,28)
\qbezier(35,28)(43,27)(41,19)
\qbezier(19,39)(28,30)(28,27)
\qbezier(28,27)(28,20)(32,18)
\qbezier(32,18)(39,15)(41,19)

\put(13,8){Figure 4.2}

\end{picture}

Then we have intersection numbers $\xi_i'\bullet{A}=\xi_i'\bullet{P_{n*}}=0$,
$\xi_i'\bullet{D_i}=\beta_i$ and $\xi_i'\bullet{D_j}=0$ if $j\not=i$,
while $\sigma'\bullet{A}=-1$  and $\sigma'\bullet{D_i}=-\beta_i$.
We also have $\sigma'\bullet{P_{n*}}=k$ (with the right choice of $\sigma'$,
for a different choice may give $-k$).
Now let $C_j=2D_j-\beta_jA$, so that $\partial{C_j}=2\alpha_j\xi_j$.
Then $\xi_i'\bullet{C_j}=2\xi_i'\bullet{D_j}=\delta_{ij}2\beta_i$,
and consequently $\ell_M(q_i,q_j)=\delta_{ij}\frac{\beta_i}{\alpha_i}$.
Also $\sigma'\bullet{C_i}=-2\beta_i+\beta_i=-\beta_i$, so that
$\ell_M(a,q_i)=-\frac{\beta_i}{2\alpha_i}$.
Now write $C=\Sigma_{i=1}^n\widehat\alpha_iC_i$, 
where $\widehat\alpha_i=\alpha_i^{-1}\Pi_{j=1}^n\alpha_j$, and let $R=2(\Pi_{j=1}^n\alpha_j)P_{n*}-C$.
Then $\partial{R}=4(\Pi_{j=1}^n\alpha_j)\sigma$.
Also
\[
\sigma'\bullet{R}=2(\Pi_{j=1}^n\alpha_j)k+\Sigma_{i=1}^n\widehat\alpha_i\beta_i=
(\Pi_{j=1}^n\alpha_j)(2k-\varepsilon(M)).
\]
Thus $\ell_M(a,a)=\frac14(2k-\varepsilon(M)$.
(Note that we may also use $R$ to check that
$\ell_M(q_i,a)=\frac{-\beta_i}{2\alpha_i}=\ell_M(a,q_i)$,
which also follows by the symmetry of the form.)
Finally, $\xi_i'\bullet{A}=0$ implies that $\ell_M(q_i,h)=0$ for $i\leq{n}$,
and $\sigma'\bullet{A}=-1$ implies that $\ell_M(a,h)=-\frac12=\frac12$, 
since $\partial{A}=2f$.
Also we may perturb $f$ to $f'$ so that $f'\bullet{A}=0$.
Thus $\ell_M(h,h)=0$.
\end{proof}

\begin{cor}
\label{P(2,2)e}
If  $M=M(-1;((2,1),(2,-1),(1,-e))$ then $\ell_M$ is hyperbolic if and only if 
$e\equiv2$  mod $(4)$.
\end{cor}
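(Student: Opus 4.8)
The plan is to extract $\tau_M$ and the torsion linking pairing $\ell_M$ explicitly from Lemmas \ref{CH-lem3.4} and \ref{CH-lem3.6}, rewrite $\ell_M$ in a basis adapted to the splitting $\tau_M\cong(\mathbb{Z}/4\mathbb{Z})^2$, and then settle each residue class of $e$ modulo $4$ directly.

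First I would pin down the group. Abelianizing the presentation of $\pi_1M$ from \S4.1 (the degenerate fibre $(1,-e)$ contributes the relation $q_3=eh$) gives
\[
H_1(M)\cong\mathbb{Z}\oplus\langle\,a,q_1,q_2,h\mid 2h=0,\ 2q_1+h=0,\ 2q_2-h=0,\ 2a+q_1+q_2+eh=0\,\rangle,
\]
with $a$ the class of the orientation-reversing loop and $q_i$ the class of $\xi_i$. Eliminating $h=-2q_1$ and $q_2$ yields the relations $4q_1=0$ and $4(a-eq_1)=0$, so that, setting $b=a-eq_1$, we obtain $\tau_M=\langle q_1\rangle\oplus\langle b\rangle\cong(\mathbb{Z}/4\mathbb{Z})^2$ (in agreement with Lemma \ref{CH-lem3.4}, where here $c=2$ is even and there are two genuine cone points). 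Also $\varepsilon(M)=-\bigl(\tfrac12-\tfrac12-e\bigr)=e$.

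Next I would read the pairing off Lemma \ref{CH-lem3.6}: $\ell_M(q_1,q_1)=\tfrac12$, $\ell_M(q_1,a)=-\tfrac14$, $\ell_M(q_1,h)=\ell_M(h,h)=0$, and $\ell_M(a,a)=\tfrac14(2-\varepsilon(M))=\tfrac{2-e}{4}$. Substituting $a=b+eq_1$ and expanding bilinearly gives, in the basis $(q_1,b)$,
\[
\ell_M(q_1,q_1)=\tfrac12,\qquad \ell_M(q_1,b)=-\tfrac{1+2e}{4},\qquad \ell_M(b,b)=\tfrac{2+e+2e^2}{4}.
\]
For $e$ even this simplifies to $\ell_M(q_1,b)=\tfrac34$, while $\ell_M(b,b)=\tfrac12$ when $e\equiv0\pmod 4$ and $\ell_M(b,b)=0$ when $e\equiv2\pmod 4$. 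Then I would run the case analysis. If $e$ is odd then $\ell_M(b,b)\in\{\tfrac14,\tfrac34\}$ has order $4$, so $2\,\ell_M(b,b)\notin\mathbb{Z}$ although $4b=0$; hence $\ell_M$ is not even, and since hyperbolic pairings are even (\S1.4) it is not hyperbolic. If $e\equiv0\pmod 4$ then for a primitive vector $v=mq_1+nb$ one has $\ell_M(v,v)=\tfrac{m^2}{2}+2mn\cdot\tfrac34+\tfrac{n^2}{2}\equiv\tfrac{m+mn+n}{2}\pmod 1$, which equals $\tfrac12\neq0$ whenever $(m,n)\not\equiv(0,0)\pmod 2$; thus $\tau_M$ contains no order-$4$ self-annihilating direct summand, so $\ell_M$ cannot be hyperbolic on $(\mathbb{Z}/4\mathbb{Z})^2$ (a hyperbolic pairing there splits as an orthogonal sum of two such summands). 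If $e\equiv2\pmod 4$ then $\ell_M(b,b)=0$ and $\ell_M(q_1+b,q_1+b)=\tfrac12+2\cdot\tfrac34+\ell_M(b,b)=\tfrac12+\tfrac32+0=0$, while $\ell_M(b,q_1+b)=\tfrac34$ has order $4$; since $(b,\,q_1+b)$ is a basis, $\tau_M=\langle b\rangle\oplus\langle q_1+b\rangle$ displays $\ell_M$ as the orthogonal sum of two self-annihilating $\mathbb{Z}/4\mathbb{Z}$-summands, i.e.\ $\ell_M$ is hyperbolic (rescaling $q_1+b$ by the unit $3$ puts the Gram matrix in the standard form $E_0^2$).

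The only delicate point is the case $e\equiv0\pmod 4$, where one must exclude \emph{every} metabolizer rather than an obvious one; the device is the observation that a hyperbolic pairing on $(\mathbb{Z}/4\mathbb{Z})^2$ forces a primitive isotropic vector to exist, which then fails here. Everything else is the routine bookkeeping of abelianizing a Seifert presentation and performing one change of basis. As an alternative to the final case-by-case step one could instead invoke the classification of even linking pairings on $(\mathbb{Z}/4\mathbb{Z})^2$ from \S1.4 and reduce the matrix of $4\ell_M$ modulo $4$, but the hands-on argument above is shorter.
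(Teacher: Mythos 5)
Your proof is correct and follows essentially the same route as the paper: extract $\tau_M\cong(\mathbb{Z}/4\mathbb{Z})^2$ and the Gram matrix of $\ell_M$ from Lemmas \ref{CH-lem3.4} and \ref{CH-lem3.6}, then decide hyperbolicity by an elementary analysis over $\mathbb{Z}/4\mathbb{Z}$ (the paper works in the basis $(a,q_1)$ and simply asserts the congruence check that you carry out case by case). Two cosmetic slips are worth fixing: here $k=1$, so $H_1(M)=\tau_M$ is finite and the spurious $\mathbb{Z}$ summand in your first display should be deleted; and in the case $e\equiv2\pmod 4$ the decomposition $\langle b\rangle\oplus\langle q_1+b\rangle$ is a \emph{direct} sum of two subgroups each equal to its own annihilator, not an orthogonal sum (indeed $\ell_M(b,q_1+b)=\frac34\neq0$) --- but that is exactly what the definition of hyperbolic in \S1.4 requires, so the conclusion stands.
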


\begin{proof}
In this case $\tau_M\cong(\mathbb{Z}/4\mathbb{Z})^2$, with generators $a$ and $q_1$,
and the matrix of $\ell_M$ with respect to this basis is 
\[
L=\left(\smallmatrix\frac14(2-e)&-\frac14\\
-\frac14&\frac12\endsmallmatrix\right).
\]
It is easily checked that there is a matrix $A\in{GL(2,\mathbb{Z}/4\mathbb{Z})}$
such that $A^{tr}LA=\left(\smallmatrix0&1\\1&0\endsmallmatrix\right)$
if and only if $e\equiv2$  mod $(4)$.
\end{proof}

In particular,  the Hantzsche-Wendt flat manifold $M(-1;(2,1),(2,-1))$ does not embed in any homology 4-sphere. 
On the other hand,  0-framed surgery on the link $8_2^2$ gives an embedding of the $\mathbb{N}il^3$-manifold $M(-1;(2,1),(2,-1),(1,2))$.

\begin{theorem}
\label{CH-thm3.7}
Let $M=M(-k;S)$.
If $\ell_M$ is hyperbolic then all the even cone point orders $\alpha_i$ 
must have the same $2$-adic valuation.
If all the $\alpha_i$ are odd then we must also have
$\eta=\Sigma\alpha_i\beta_i\equiv2k~mod~(4)$.
\end{theorem}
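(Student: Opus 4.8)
The plan is to deduce both assertions from the explicit matrix of $\ell_M$ computed in Lemma \ref{CH-lem3.6} (this is the analogue of Corollary \ref{P(2,2)e} but without restricting to the special case $k=1$, $S=((2,1),(2,-1))$). Recall from Lemma \ref{hyperbolic lp} and the discussion in \S1.4 that every linking pairing splits as an orthogonal sum over primes of its restrictions to the $p$-primary subgroups of $\tau_M$, and that a pairing is hyperbolic if and only if each $p$-primary summand is hyperbolic; in particular each summand must have \emph{even} rank. So the first step is to read off from Lemma \ref{CH-lem3.4} the $2$-primary part $(\tau_M)_2$ of $\tau_M$: writing $\alpha_i=2^{t_i}s_i$ with $s_i$ odd and $t_1\geq t_2\geq\cdots$, if $c$ is even then $(\tau_M)_2$ has a summand $\mathbb{Z}/2^{t_1+1}\mathbb{Z}\oplus\mathbb{Z}/2^{t_2+1}\mathbb{Z}\oplus\bigoplus_{i\geq3,\,t_i>0}\mathbb{Z}/2^{t_i}\mathbb{Z}$, while if $c$ is odd it is $\mathbb{Z}/2^{t_1+2}\mathbb{Z}\oplus\bigoplus_{i\geq2,\,t_i>0}\mathbb{Z}/2^{t_i}\mathbb{Z}$.

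First I would handle the case where some $\alpha_i$ is even, i.e. $t_1>0$. Here the hypothesis forces $(\tau_M)_2$ to support a hyperbolic pairing, hence a metabolic one. The key observation is that a metabolic linking pairing on a finite abelian $2$-group $A$ must have, for each $n$, an \emph{even} number of cyclic summands of order exactly $2^n$ (this is the rank condition applied to the ``graded pieces'' $2^{n-1}A/2^nA$ relative to $A[2]$, or equivalently follows from the classification in \cite{KK80,Wa64} and the fact that a self-annihilating subgroup has order $\sqrt{|A|}$ and its intersection with each $2^nA$ is constrained); actually the cleanest route is: if $P=P^\perp$ then the adjoint induces $P^\perp\cong\mathrm{Hom}(A/P,\mathbb{Q}/\mathbb{Z})$, so $A/P\cong P$, and comparing the number of invariant factors of a fixed order $2^n$ on both sides of $0\to P\to A\to A/P\to 0$ shows that number is even in $A$. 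Applying this to the explicit description of $(\tau_M)_2$ above: in the $c$ even case the orders appearing are $2^{t_1+1},2^{t_2+1}$ and $2^{t_i}$ for $i\geq3$ with $t_i>0$; if not all nonzero $t_i$ are equal one checks that some order $2^n$ occurs an odd number of times (the largest value among $\{t_1+1,t_2+1\}\cup\{t_i:i\geq3,t_i>0\}$, or the one just below it, fails the parity), and similarly in the $c$ odd case the summand $\mathbb{Z}/2^{t_1+2}\mathbb{Z}$ has strictly larger order than all the others unless there are none, forcing contradictions unless all nonzero $t_i$ coincide. This yields the first assertion.

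Next I would treat the case where all $\alpha_i$ are odd, so $(\tau_M)_2$ is cyclic, namely $\mathbb{Z}/2\mathbb{Z}$ if $c=\Sigma\beta_i$ is even (generated by $h$, or trivial — need to re-examine the $t_1=0$ branch of Lemma \ref{CH-lem3.4}: there $\tau_M\cong\mathbb{Z}/2\alpha_1\mathbb{Z}\oplus\mathbb{Z}/2\alpha_2\mathbb{Z}\oplus\bigoplus_{i\geq3}\mathbb{Z}/\alpha_i\mathbb{Z}$ when $c$ even, so $(\tau_M)_2\cong(\mathbb{Z}/2\mathbb{Z})^2$) and $\mathbb{Z}/4\mathbb{Z}$ if $c$ odd. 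A cyclic $2$-group of order $4$ admits no metabolic pairing (rank $1$, odd), so hyperbolicity forces $c$ even and $(\tau_M)_2\cong(\mathbb{Z}/2\mathbb{Z})^2$. It remains to decide when the rank-$2$ pairing over $\mathbb{Z}/2\mathbb{Z}$ is hyperbolic; by the $p=2$ classification a pairing on $(\mathbb{Z}/2\mathbb{Z})^2$ is hyperbolic iff it is the form $\ell_{1/2}\perp\ell_{-1/2}$, equivalently iff its Gauss/Brown invariant (equivalently the value of $\Sigma\ell(e_i,e_i)\in\mathbb{Q}/\mathbb{Z}$ together with the determinant) matches that of $E_0^1$. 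So I would restrict the matrix from Lemma \ref{CH-lem3.6} to the $2$-primary generators — the relevant entries are $\ell_M(a,a)=\tfrac14(2k-\varepsilon(M))$, $\ell_M(a,q_i)=-\tfrac{\beta_i}{2\alpha_i}$, $\ell_M(q_i,q_i)=\tfrac{\beta_i}{\alpha_i}$, and $\ell_M(a,h)=\tfrac12$ — reduce mod $\mathbb{Z}$ keeping only the $2$-primary contributions, and compute that the discriminant form condition becomes exactly $2k-\varepsilon(M)\equiv 2\pmod 4$ after clearing denominators, i.e. $2k-\Sigma\alpha_i\beta_i\cdot(\text{unit})\equiv 2$, which rearranges (using that the $\alpha_i$ are odd and $\varepsilon(M)=-\Sigma\beta_i/\alpha_i$, so $\Pi\alpha_j\cdot\varepsilon(M)\equiv-\Sigma\widehat\alpha_i\beta_i\equiv-\Sigma\alpha_i\beta_i\pmod 4$ since $\widehat\alpha_i\equiv\alpha_i\bmod 4$ — here one needs $\Pi\alpha_j\equiv\alpha_i^2\bmod 4$, true as odd squares are $1$ mod $8$) to $\eta=\Sigma\alpha_i\beta_i\equiv 2k\pmod 4$.

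The main obstacle I expect is the bookkeeping in the last step: carefully passing from the rational matrix entries of Lemma \ref{CH-lem3.6} to the discriminant-form invariant of the $2$-primary summand, and in particular verifying the congruence $\widehat\alpha_i\equiv\alpha_i\pmod 4$ and that the change of generators used in the proof of Lemma \ref{CH-lem3.4} (which introduced $\bar a$, $\bar q_j$, etc.) does not alter the relevant mod-$2$ discriminant — it is cleanest to compute the Brown invariant, which is a linking-pairing isomorphism invariant, directly from any convenient set of generators. A secondary subtlety is the parity-counting argument in the even-$\alpha$ case: I should phrase it as ``the multiset of invariant factor exponents of $(\tau_M)_2$ must be symmetric under the involution $2^n\leftrightarrow$ (its partner in a metabolizer)'', which in the concrete list above is equivalent to all nonzero $t_i$ being equal, and this needs a short but careful case check between the $c$ even and $c$ odd forms of Lemma \ref{CH-lem3.4}.
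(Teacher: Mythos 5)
Your treatment of the second assertion (all $\alpha_i$ odd) is essentially the paper's argument: after splitting off the odd-order part, the $2$-primary block of $\ell_M$ is a rank-two pairing on $(\mathbb{Z}/2\mathbb{Z})^2$ of the form $\left(\smallmatrix0&\frac12\\\frac12&\frac14(2k+\eta)\endsmallmatrix\right)$, and hyperbolicity forces the corner entry to vanish, i.e.\ $2k+\eta\equiv0$ mod $(4)$. (Watch the bookkeeping here: your intermediate condition ``$2k-\varepsilon(M)\equiv2$ mod $(4)$'' is off by $2$ from the one that yields $\eta\equiv2k$.)

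The first assertion, however, has a genuine gap. You propose to deduce that all even cone point orders share the same $2$-adic valuation from the isomorphism type of $(\tau_M)_2$ alone, via the fact that hyperbolicity forces each invariant factor to occur an even number of times. That necessary condition is strictly weaker than the conclusion: take $t_1=t_2=2$, $t_3=t_4=1$ and $t_i=0$ for $i>4$. Then $c=r=2$ is even, and Lemma \ref{CH-lem3.4} gives $2$-primary part $(\mathbb{Z}/8\mathbb{Z})^2\oplus(\mathbb{Z}/2\mathbb{Z})^2$ --- a direct double --- even though the nonzero $t_i$ are not all equal. So no parity count of invariant factors can detect the failure; one must use the \emph{values} of the pairing. (This is precisely the point made in the paragraph following Lemma \ref{CH-lem3.4}: being a direct double only requires the prime-power factors to occur in pairs, while hyperbolicity requires more.) The paper's proof supplies the missing ingredient: when $0<t_m<t_1$ it constructs a generator $\bar{d}_m$ of order $2^{t_m}$ spanning an \emph{orthogonal} direct summand on which the self-linking is an odd multiple of $2^{-t_m}$, and then invokes \cite[Theorem 0.1]{KK80}, which says a hyperbolic linking on a $2$-group admits no such summand. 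A secondary flaw: your justification of the parity claim for \emph{metabolic} pairings, by counting invariant factors along $0\to{P}\to{A}\to{A/P}\to0$, fails for non-split extensions (e.g.\ $\mathbb{Z}/2\mathbb{Z}\subset\mathbb{Z}/4\mathbb{Z}$); it is only valid here because hyperbolicity gives an honest direct sum $A\cong{P}\oplus{P}$.
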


Remark. The quantity $\eta$ is invariant {\it mod} (4) under Seifert data equivalences.

\begin{proof}
Enlarge the set of generators to $\{c_1,\dots,c_n,d_1,\dots,d_n,h,\hat{a}\}$,
where $c_i=2^{t_i}q_i-\beta_ih$ and $d_i=s_iq_i$ for $i\leq{n}$,
and $\hat{a}=a+\Sigma_{i=1}^n\frac12\rho_i(s_i+1)c_i$,
with $\rho_i$ chosen so that $2^{t_i}\rho_i\equiv1$ {\it mod} $(s_i)$.
This corresponds to a decomposition of $H_1(M)$
into a direct sum of $n$ cyclic factors $\langle{c_i}\rangle$ of odd order $s_i$,
for $i\leq{n}$, and a 2-group.
By using Lemma \ref{CH-lem3.6}, one can check that,
with respect to this set of generators, 
the linking matrix for $\ell_M$ is a block sum of the diagonal matrix
\[
\ell_M(c_i,c_j)=\mathrm{diag}
\left(\frac{2^{t_1}\beta_1}{s_1},\dots,\frac{2^{t_n}\beta_n}{s_n}\right)
\]
with the 2-group block
\[
\left(
\begin{matrix}
\frac{s_1\beta_1}{2^{t_1}}&0&\dots&0&-\frac{\beta_1}{2^{t_1+1}}\\
0&\frac{s_2\beta_2}{2^{t_2}}&\dots&0&-\frac{\beta_2}{2^{t_2+1}}\\
\vdots & \vdots & \ddots & \vdots&\vdots\\
0&0&\dots&0&\frac12\\
-\frac{\beta_1}{2^{t_1+1}}&-\frac{\beta_2}{2^{2_1+1}}&\dots&\frac12&{x}
\end{matrix}
\right),
\]
where $x=\ell_M(\hat{a},\hat{a})$.
When every $\alpha_i$ is odd this block simplifies to
$\left(\smallmatrix0&\frac12\\\frac12&\frac14(2k+\eta)\endsmallmatrix\right)$,
which is itself hyperbolic if and only if $2k+\eta\equiv0$ {\it mod} (4),
which gives the second part of the theorem.

Now suppose that, contrary to the conclusion of the first part of the theorem, 
there is some $m\in\{2,\dots,n\}$ such that $0<t_m<t_1=t$.
It remains to show in this case that $\ell_M$ is not hyperbolic.
Note that $\alpha_1$ and $\alpha_m$ are even, so that $\beta_1$ and $\beta_m$ are odd
(as well as $s_1$ and $s_m$ being odd).
Replace the generator $d_m$ with $\bar{d}_m=d_m+2^{t-t_m}s_md_1$
(or just $\bar{d}_m=s_m\bar{q}_m$ for $\bar{q}_m$ as in the proof of Lemma
\ref{CH-lem3.4}.
Then $\ell_M(\bar{d}_m,h)=0$, $\ell_M(\bar{d}_m,d_i)=0$ for $i\not=1$ or $m$ and
\[
\ell_M(\bar{d}_m,\bar{d}_m)=\ell_M(d_m,d_m)+(2^{t-t_M}s_m)^2\ell_M(d_1,d_1)
=\frac{u}{2^{t_m}},
\]
where $u=s_m\beta_m+2^{t-t_m}s_m^2t_1\beta_1$ is odd, 
since $t_m<t$ and $s_m\beta_m$ is odd.
Moreover
\[
\ell_m(\bar{d}_m,d_1)=\frac{v}{2^{t_m}}
\]
where $v=s_ms_1\beta_1$, and
\[
\ell_M(\bar{d}_m,\hat{a})=-\frac{\beta_m}{2^{t_m+1}}-2^{t-t_m}s-m\frac{\beta_1}{2^{t+1}}+
\frac{w}{2^{t_m}},
\]
where $w=-\frac12(\beta_m+s_m\beta_1)$ is also an integer.
Now choosing $r$ such that $ru\equiv1$ {\it mod} $(2^{t_m})$, 
replace $d_1$ and $\hat{a}$ with $\bar{d}_1=d_1-vr\bar{d}_m$ and
$\bar{a}=\hat{a}-wr\bar{d}_m$, respectively.
Then $\ell_M(\bar{d}_m,\bar{d}_1)=0$ and $\ell_M(\bar{d}_m,\bar{a})=0$.
Thus $\bar{d}_m$ generates an orthogonal direct summand of the 2-group.
But a hyperbolic linking on a 2-group admits no such summand
\cite[Theorem 0.1]{KK80}.
\end{proof}

Using Wall's presentation of the semigroup of linkings on $p$-groups
(for a fixed odd prime $p$) \cite{KK80,Wa64}, 
it is a routine matter to check whether or not the odd-order part of 
a given torsion linking pairing is hyperbolic.
In the case of Proposition \ref{CH-prop1.2},
where the strictly singular fibres occur in opposite pairs, 
we can easily check that $\eta\equiv-\varepsilon~mod~(4)$.
Moreover, 
since
\[
\left(\smallmatrix\frac{\beta_i}{\alpha_i}&0\\0&-\frac{\beta_i}{\alpha_i}
\endsmallmatrix\right)\sim
\left(\smallmatrix0&\frac1{\alpha_i}~\\
\frac1{\alpha_i}&0\endmatrix\right),
\]
the linking pairing is clearly hyperbolic  in this case if and only if
$\varepsilon\equiv2k$ {\it mod} (4).
Manifolds with all cone point orders $\alpha_i$ even may be treated case by case with
the help of the presentation of the semigroup of linkings on 2-groups given in \cite{KK80}.


\section{The main result}

 We shall assume that $\alpha_i=2^{t_i}s_i$ with $s_i$ odd, for all $i$.
We may also assume that there is an $r\geq0$ such that $t_1=\dots=t_r$,
and $t_i=0$ if $i>r$, by Theorem \ref{CH-thm3.7}.
In what follows we shall write $t=t_1$.
Since $\ell_M$ must be hyperbolic there is an even number, $2b$ say,
of cone points with $\alpha_i$ even.
Let $b_S=b-1$ if $b>0$ and $b_S=0$ if all the $\alpha_i$s are odd.

\begin{theorem}
\label{CH-thm1.1}
Let $S$ be strict Seifert data.
If $M_1=M(-k_1;S,(1,e_1))$ and $M_2=M(-k_2;S,(1,e_2))$ each embed in homology 
$4$-spheres then 
\[
(\varepsilon(M_1)-\varepsilon(M_2))\in\mathbb{Z}\cap2^{-t}\{-2K, 4-2K, \dots,2K-4,2K\},
\]
where $K=2^t(k_1+k_2+4b_S)+b_S$.
\end{theorem}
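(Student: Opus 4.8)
The plan is to mimic the strategy that Massey used for $\#^k\mathbb{RP}^2$ and that underlies Theorem \ref{CH-thm3.7}, namely to build a closed oriented 4-manifold carrying an involution with 2-dimensional fixed set and to apply the $G$-Signature Theorem for $G=\mathbb{Z}/2\mathbb{Z}$. First I would take embeddings $j_i\colon M_i\hookrightarrow\Sigma_i$ into homology 4-spheres, with complementary regions $X_i,Y_i$. The key point is that $M_i$ is Seifert fibred over a non-orientable base, and that the orientation double cover of the base induces a double cover $\widetilde{M_i}\to M_i$; since $M_i=M(-k_i;S,(1,e_i))$, the covering space $\widetilde{M_i}$ is Seifert fibred over the orientable double cover of $\#^{k_i}\mathbb{RP}^2$ with doubled Seifert data, and $\widetilde{M_1}\cong\widetilde{M_2}\cong N$ say (this uses that $S$ is the same for both and that the $(1,e_i)$ summand lifts to a bundle over the double-covered base — one has to check the Euler numbers match up after doubling, and the ``$4b_S$'' correction term in $K$ will come from bookkeeping the exceptional fibres of even order, which is exactly where Theorem \ref{CH-thm3.7} restricts us to all even $t_i$ equal).

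Next I would form $Z = X_1 \cup_{M_1}\!\!\big(\text{mapping cylinder}\big)\!\cup\ldots$ — more precisely, glue $\Sigma_1$ and $-\Sigma_2$ along a common piece. Since the common double cover $N$ of $M_1$ and $M_2$ bounds on each side (take the double covers $\widetilde{X_i},\widetilde{Y_i}$, which are $\mathbb{Z}[\mathbb{Z}/2]$-coverings of $X_i,Y_i$), the idea is to build a closed 4-manifold with a $\mathbb{Z}/2$-action. Concretely: let $W_i$ be the complementary region of $M_i$ in $\Sigma_i$ (say $X_i$), pass to the double cover $\widetilde{W_i}$ determined by the composite $\pi_1 W_i \twoheadleftarrow \pi_1 M_i \to \mathbb{Z}/2$ — here I need this homomorphism to extend over $W_i$, which requires a lemma that the orientation character of the base orbifold extends, or else I restrict to one complementary region where it does and argue the other side separately. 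Then $V = \widetilde{X_1}\cup_N (-\widetilde{X_2})$ (when the boundary identifications are arranged) is a closed oriented 4-manifold with a free-away-from-nothing involution $\tau$ whose fixed point set $F$ is a disjoint union of surfaces coming from the exceptional fibres and the base sections; $\mathrm{sign}(\tau,V) = e(F)$, the normal Euler number. Computing $e(F)$ in terms of the Seifert data gives a linear expression in $k_1,k_2,b_S$ and $\varepsilon(M_1)-\varepsilon(M_2)$, essentially $e(F) = \pm 2^t(\varepsilon(M_1)-\varepsilon(M_2))$ up to the correction terms, while $|\mathrm{sign}(\tau,V)| \le \beta_2(V;\mathbb{C})$; since $V$ is assembled from pieces of homology 4-spheres, equivariant Mayer–Vietoris bounds $\beta_2(V)$ by a quantity of the form $2K$. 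Combining the parity (the signature and $e(F)$ have the same parity, forcing steps of the stated size $2^{-t}\cdot$(even)) with the bound yields the displayed containment.

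The main obstacle, I expect, is \emph{constructing the closed 4-manifold with the involution and pinning down its $H_2$}. Two subtleties: (i) the orientation character of the base of $M_i$ must be shown to extend over a complementary region $X_i$ (or $Y_i$), so that a genuine double cover $\widetilde{X_i}$ with boundary the common $N$ exists — if it extends over neither side one needs a different gluing (perhaps gluing $\widetilde{X_1}$ to $\widetilde{Y_2}$, or using both copies of $\Sigma_i$) and the exact shape of $K$ depends on which works; (ii) controlling $\beta_2$ of the glued manifold: one must check via the Mayer–Vietoris sequence for $V = \widetilde{X_1}\cup_N (-\widetilde{X_2})$ that $H_2(V;\mathbb{Q})$ injects into $H_1(N;\mathbb{Q})$ (up to bounded error), using that $H_*(X_i;\mathbb{Q})$ and $H_*(\widetilde{X_i};\mathbb{Q})$ are as small as possible because $\Sigma_i$ is a homology sphere and the complementary regions are homologically balanced (Lemma \ref{Hi17-lem2.1}). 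This is where the $4b_S$ and the factor $2^t$ enter $K$: $H_1(N;\mathbb{Q})$ has rank controlled by the genus of the orientable surface double-covering $\#^{k_i}\mathbb{RP}^2$ with the extra exceptional fibres of even order contributing, and one tracks the $2$-adic valuations via Lemma \ref{CH-lem3.4} and Theorem \ref{CH-thm3.7}.

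Finally I would assemble the estimate: write $d = \varepsilon(M_1)-\varepsilon(M_2)$, show $2^t d \in \mathbb{Z}$ (from $M_i$ being genuine Seifert manifolds with the given data and $t_i\le t$), show $\mathrm{sign}(\tau,V) = e(F)$ is congruent to $2^t d$ modulo $2$ (a Wu-type parity argument, or direct computation of $e(F)$ from the attaching data of the exceptional tori as in Lemma \ref{CH-lem3.6}) so that $2^t d \in \{-2K,4-2K,\dots,2K\}$, and then divide by $2^t$. The relation $\chi(X_i)+\chi(Y_i)=2$ and $\beta_1(M_i)$ being whatever it is (here $H_1(M_i)$ has positive rank $k_i-1$ in general, so $\beta_1(M_i;\mathbb{Q}) = k_i - 1$ typically) feed into the count giving $K = 2^t(k_1+k_2+4b_S)+b_S$; I would verify this constant by carefully matching the genus of $F$ and $\beta_2(V)$ rather than by a slick argument. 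I do not expect to need gauge theory — only the $G$-Signature Theorem of \S1.9 and equivariant Poincaré–Lefschetz duality — consistent with the book's stated methods.
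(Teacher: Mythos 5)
Your overall strategy --- pass to a covering of one complementary region, close it up to a $4$-manifold carrying an involution with ``good'' two-dimensional fixed set, apply the $G$-Signature Theorem, and bound $\beta_2$ using the homology-ball structure of the complementary regions --- is exactly the strategy of the paper, and your anticipation of where the constants $2^t$, $b_S$ and the parity statement come from is sound. However, two of your concrete choices fail, and each is a genuine gap. First, the gluing $V=\widetilde{X_1}\cup_N(-\widetilde{X_2})$ cannot be ``arranged'': the covers $\widetilde{M_1}$ and $\widetilde{M_2}$ are \emph{not} homeomorphic, precisely because their (generalized) Euler numbers differ by a multiple of $\varepsilon(M_1)-\varepsilon(M_2)$ --- the very quantity you are trying to bound --- so there is no common $N$ to glue along. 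The paper's resolution is the essential missing idea: cap each $\widetilde{X_i}$ with the mapping cylinder of the Seifert fibration of $\widetilde{M_i}$ (the ``modified associated disc bundle'' $D_{\widetilde{M_i}}$, with a neighbourhood $E_i$ of the non-manifold points over the singular fibres removed), obtaining $W_i$ with boundary $\partial E_i$ a rational homology sphere that depends only on the strict Seifert data $S$ and hence is the \emph{same} for $i=1,2$; then glue $W=W_1\cup_{\partial E}W_2$. The fixed set becomes $\widetilde F_1\#\widetilde F_2$ with normal Euler number $e_1-e_2=2^{t-1}(\varepsilon(M_1)-\varepsilon(M_2))$.

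Second, your choice of cover is the wrong one. The double cover of $M_i$ induced by the orientation character of the base has deck transformation acting \emph{freely} (it covers the free involution of the orientation double cover of the base), so after capping with a disc bundle the involution is still free away from isolated points and the $G$-Signature Theorem gives you nothing; moreover there is no reason for that character to extend over a complementary region. What the paper uses instead (Lemma \ref{CH-lem3.8}) is a $2^{t+1}$-fold \emph{cyclic} cover extracted from the hyperbolic splitting $\tau_M\cong\tau_X\oplus\tau_Y$: the cyclic summand of order $2^{t+1}$ generated by $\widehat q=s_1q_1$ must inject into one of $\tau_X$, $\tau_Y$, which is exactly what makes the homomorphism extend over that side, and the final stage of the resulting tower is a double cover whose deck transformation rotates the Seifert fibres through $\pi$ --- so that its extension over the disc bundle fixes the base surface pointwise. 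This cover also has non-orientable base $\#^c\mathbb{RP}^2$ with $c=2^t(k+2b_S)-2b_S$ (Lemma \ref{CH-lem3.9}), which is where the precise constant $K=2^t(k_1+k_2+4b_S)+b_S$ ultimately comes from, via the Betti-number estimate $\beta_1(\widetilde X_i)\le 2^{t+1}(\beta_1(X_i)+b_S)$ of Lemmas \ref{CH-lem3.10} and \ref{CH-lem3.11} rather than from a Mayer--Vietoris injection of $H_2$ into $H_1$ of the gluing locus.
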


In particular, if the cone point orders $\alpha_i$ are all odd (so $t=b_S=0$)
and $M(-k; S, (1,e))$ embeds in a homology 4-sphere then there are
at most $k$ other values of $e$ for which the corresponding 3-manifold
embeds in a homology 4-sphere.

\begin{cor}
\label{CH-prop1.2}
Suppose that $S=\{\gamma_i(\alpha_i,\beta_i),\gamma_i(\alpha_i,\alpha_i-\beta_i)\mid 1\leq{i}\leq{n}\}$,
where each $\alpha_i$ is odd, and let $M=M(-c;S,(1,e))$.
Then $\varepsilon(M)=-n-e$, and $M$ embeds in a homology $4$-sphere 
if and only if 
$-2c\leq\varepsilon(M)\leq{2c}$ and $\varepsilon(M)\equiv{2c}\mod~(4)$.
Every such manifold embeds smoothly in $S^4$.
\end{cor}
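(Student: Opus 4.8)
The plan is to establish the three assertions in turn. The value of $\varepsilon(M)$ is read off directly from $\varepsilon_S=-\sum_i\beta_i/\alpha_i$: each skew pair $\{(\alpha_i,\beta_i),(\alpha_i,\alpha_i-\beta_i)\}$ contributes $\frac{\beta_i}{\alpha_i}+\frac{\alpha_i-\beta_i}{\alpha_i}=1$ and the fibre of type $(1,e)$ contributes $e$, so $\varepsilon(M)=-(n+e)$, where $n$ is the number of skew pairs counted with multiplicity. It then remains to prove the embedding criterion; I will obtain the ``if'' direction together with the last sentence from an explicit smooth construction, and the ``only if'' direction from Theorem~\ref{CH-thm1.1}.

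For sufficiency, assume $\varepsilon_0:=\varepsilon(M)$ satisfies $-2c\le\varepsilon_0\le 2c$ and $\varepsilon_0\equiv 2c\pmod 4$, so that $\varepsilon_0\in\{-2c,\,4-2c,\,\dots,\,2c\}$. First I would write $M$ as a fibre sum: of the $S^1$-bundle $M(-c;(1,-\varepsilon_0))$ over $\#^c\mathbb{RP}^2$ of Euler number $\varepsilon_0$, together with $\gamma_i$ copies of the lens-space double $M(S^2;(\alpha_i,\beta_i),(\alpha_i,-\beta_i))$ for each $i$. The fibre-sum construction keeps the base $\#^c\mathbb{RP}^2$, takes the union of the Seifert data, and adds the generalized Euler numbers, giving $\varepsilon_0+0=\varepsilon(M)$; a short bookkeeping with the Seifert-data equivalence moves then identifies this Seifert manifold with $M$. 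Now each summand embeds smoothly in $S^4$ with the ``special property'' isolated in the proof of Lemma~\ref{CH-lem3.1} --- a regular fibre bounding a properly embedded $D^2\times I$ in a $4$-ball that meets the $3$-manifold only along $\partial D^2\times I$, with the boundary circles regular fibres. For the lens-space doubles this is Lemma~\ref{CH-lem3.1}. For the bundle it follows from Massey's realization \cite{Ma69} of every Euler number in $\{-2c,\dots,2c\}$ by a smooth embedding of $\#^c\mathbb{RP}^2$ in $S^4$: the bundle is the boundary of a tubular neighbourhood $N$ of such an embedding, so embeds smoothly as $\partial N$, and a parallel family of normal discs over an arc in $\#^c\mathbb{RP}^2$ provides the $D^2\times I$. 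The connected-sum-of-$4$-spheres gluing from the proof of Lemma~\ref{CH-lem3.1} then applies verbatim --- it is insensitive to the orientability of the bases --- and yields a smooth embedding of the fibre sum, i.e. of $M$, in $S^4$. In particular, taking $\varepsilon_0=\pm 2c$ gives manifolds $M_\pm=M(-c;S,(1,e_\pm))$ that embed smoothly in $S^4$ with $\varepsilon(M_\pm)=\pm 2c$.

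For necessity, suppose $M$ embeds in a homology $4$-sphere. Since every $\alpha_i$ is odd we have $t=b_S=0$ in the notation of Theorem~\ref{CH-thm1.1}, so $K=2^t(c+c+4b_S)+b_S=2c$ and $2^{-t}=1$. Applying Theorem~\ref{CH-thm1.1} to the pair $(M,M_+)$ gives $\varepsilon(M)-2c\in\{-4c,\,4-4c,\,\dots,\,4c\}$, whence $\varepsilon(M)\ge -2c$ and $\varepsilon(M)\equiv 2c\pmod 4$; applying it to $(M,M_-)$ gives $\varepsilon(M)+2c\in\{-4c,\,\dots,\,4c\}$, whence $\varepsilon(M)\le 2c$. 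Together these are exactly the stated conditions, and the corollary follows.

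The step I expect to be the main obstacle is the verification, in the sufficiency argument, that the $S^1$-bundle over $\#^c\mathbb{RP}^2$ at the extreme Euler numbers $\pm 2c$ --- realized as the boundary of a Massey-type tubular neighbourhood --- really carries a smooth embedding enjoying the fibre-sum-compatible ``special property'' of Lemma~\ref{CH-lem3.1}, so that it can be glued to the lens-space doubles. Once that is in hand, what remains is routine bookkeeping with Seifert-data equivalences and the attendant sign conventions.
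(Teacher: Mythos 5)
Your proof is correct and follows essentially the same route as the paper: Theorem \ref{CH-thm1.1} (with $t=b_S=0$, so $K=2c$) supplies the constraints, and embedded fibre sums of the lens-space doubles from Lemma \ref{CH-lem3.1} with Massey's disc-bundle embeddings of $\#^c\mathbb{RP}^2$ supply the realizations. The only (harmless) variation is at the very end: the paper concludes by counting (at most $c+1$ admissible values of $e$, and exactly $c+1$ realized), whereas you pin down $\varepsilon(M)$ by comparing $M$ against the two extremal realized manifolds $M_{\pm}$.
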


\begin{proof}
On the one hand, there are at most $c+1$ possible values of $e$ corresponding to 3-manifolds which embed in homology 4-spheres, by Theorem \ref{CH-thm1.1}.

On the other hand,  we may adapt the construction of Lemma \ref{CH-lem3.1}.
There we saw that $M(S^2;((\alpha_i,\beta_i),(\alpha_i,-\beta_i)\cong\partial(L_i^*\times[-\epsilon,\epsilon])$ has a smooth embedding in $S^4$ 
for which there exists a 4-ball $B(M_i)=D^2\times{I}\times[-\epsilon,\epsilon]$ 
embedded in $S^4$ such that
$M_i\cap{B(M_i)}=\partial{D^2\times{I}\times[-\epsilon,\epsilon]}$,
and each circle $\partial{D^2}\times(r,s)$ is a regular fibre of $M_i$.
The embeddings of $M(-c;(1,e))$ (for $e$ in the above range) due to Massey \cite{Ma69} 
also have this property, since in fact each one bounds a disc bundle.
As in Lemma \ref{CH-lem3.1} we may form embedded fibre sums to obtain
the desired embedding.
These must exhaust the possibilities.
\end{proof}

Of course we may change the ambient homology 4-sphere by taking connected sums.

\begin{cor}
\label{CH-prop1.3}
Let $M$ be an $S^1$ bundle over a closed surface $F$, with Euler number $e$.
Then $M$ embeds in a homology $4$-sphere if and only if either
\begin{enumerate}
\item$F$ is orientable and $e=0$ or $\pm1$; or
\item$F=\#^c\mathbb{RP}^2$ and $e\in\{-2c,4-2c,\dots,2c\}$.
\end{enumerate}
In all cases $M$ embeds smoothly in $S^4$.
\end{cor}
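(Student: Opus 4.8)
The plan is to deduce Corollary \ref{CH-prop1.3} by specializing the results already established for Seifert manifolds with orientable and non-orientable base orbifolds, since an $S^1$-bundle over a closed surface $F$ is just a Seifert manifold with base orbifold $F$ (no cone points) and Euler number $e$. I would split into the two cases according to whether $F$ is orientable or not.

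For case (1), $M=M(g;(1,e))$ with $F=T_g$. The ``if'' direction is immediate from Lemma \ref{CH-lem3.3}, which already states that $M(g;(1,e))$ embeds smoothly in $S^4$ when $e=0$ or $\pm1$. For the ``only if'' direction, observe that $M$ fibres over $S^1$ only in the trivial way: the point is that $\tau_M=0$ when $e=0$ (and $H_1(M)\cong\mathbb{Z}^{2g+1}$) while for $e\neq0$ one has $\tau_M\cong\mathbb{Z}/e\mathbb{Z}$ by Theorem \ref{Hi09-thm1} (with $r=0$, so $\Delta_1=1$ and $\lambda_0=|e|$). If $M$ embeds in a homology $4$-sphere then $\ell_M$ is hyperbolic, by Lemma \ref{hyperbolic lp}; a hyperbolic linking pairing on a \emph{cyclic} group forces that group to be trivial, so $e=0$, \emph{or} we must have $e=\pm1$ where $\tau_M=0$ vacuously. (More directly: a cyclic group is a direct double only if it is trivial, so $\tau_M=0$, i.e. $|e|\le1$.) This disposes of (1).

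For case (2), $M=M(-c;(1,e))$ with $F=\#^c\mathbb{RP}^2$, this is exactly the content of Corollary \ref{CH-prop1.2} applied to empty ``strict'' Seifert data $S=\emptyset$ (so $n=0$): then $\varepsilon(M)=-e$, Theorem \ref{CH-thm1.1} (with $b_S=0$, $t=0$, $k_1=k_2=c$, comparing against a known embedding) bounds the admissible values of $e$ to the arithmetic progression $\{-2c,4-2c,\dots,2c\}$, and Massey's disc-bundle embeddings \cite{Ma69} realize precisely these values, giving smooth embeddings in $S^4$. I would also note the congruence condition $\varepsilon(M)\equiv 2c \pmod 4$ of Corollary \ref{CH-prop1.2} is automatically compatible with membership in $\{-2c,4-2c,\dots,2c\}$, since that progression consists exactly of the integers $\equiv 2c\pmod 4$ in $[-2c,2c]$ — so the stated condition $e\in\{-2c,4-2c,\dots,2c\}$ is the right reformulation. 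The final sentence (``in all cases $M$ embeds smoothly in $S^4$'') follows from Lemma \ref{CH-lem3.3} in case (1) and from the Massey embeddings in case (2).

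The main obstacle — really the only non-bookkeeping point — is making sure the hypotheses of Theorem \ref{CH-thm1.1} and Corollary \ref{CH-prop1.2} genuinely cover the degenerate case of empty Seifert data (no singular fibres), so that ``strict Seifert data'' is allowed to be the empty string and the quantities $b$, $b_S$, $r$, $t$, $\eta$ all take their evident values; once that is checked, everything is a direct quotation of earlier results, and the only genuine input is the combination of Massey's theorem for the realization half and the $G$-signature bound (via Theorem \ref{CH-thm1.1}) for the obstruction half.
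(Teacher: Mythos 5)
Your proposal is correct and follows essentially the same route as the paper: the non-orientable case is quoted directly from Corollary \ref{CH-prop1.2} (with empty strict Seifert data), and the orientable case reduces to the observation that $\tau_M\cong\mathbb{Z}/e\mathbb{Z}$ is cyclic, hence a direct double only when trivial, with Lemma \ref{CH-lem3.3} supplying the smooth embeddings. Your extra remark about checking that the earlier results tolerate empty Seifert data is a reasonable point of care, but it does not change the argument.
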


\begin{proof}
The case when $F$ is non-orientable follows immediately from Corollary
\ref{CH-prop1.2}.
If $F$ is orientable of genus $g$  then 
$H_1(M)\cong\mathbb{Z}^{2g}\oplus(\mathbb{Z}/e\mathbb{Z})$, 
and so $\tau_M$ is a direct double if and only if $e=0$ or $\pm1$.
Lemma \ref{CH-lem3.3} gives smooth embeddings in $S^4$ in each of these cases.
\end{proof}

Thus orientable $S^1$-bundle spaces over non-orientable bases embed 
if and only if they embed smoothly, 
as boundaries of regular neighbourhoods of smooth embeddings of the base.
Note also that although $M(-3;(1,6))=M(-1;(1,6)\#_fT\times{S^1}$ embeds in $S^4$,
we cannot cancel the $T\times{S^1}$ term as $M(-1;(1,6))$ does not embed.

An involution of a connected orientable manifold $W$ is an orientation-preserving self-homeomorphism of $W$ of order 2.

\begin{lemma}
\label{CH-lem3.8}
Let $M=M(-k;S)$.
Then one of the complementary regions, $X$ say,
has a $2^{t+1}$-fold covering $\psi:\widetilde{X}\to{X}$
such that the restriction of the covering involution to $\widetilde{M}$
acts by rotating the fibres through $\pi$ radians.
(Here we assume that $\widetilde{M}$ has the Seifert structure induced by the covering
$\psi|_M$.)
\end{lemma}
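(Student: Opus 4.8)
The plan is to realise $\widetilde X$ as the regular covering of $X$ associated to a suitable epimorphism $\phi\colon\pi_1X\to\mathbb{Z}/2^{t+1}\mathbb{Z}$, chosen so that $\phi\circ{j_{X*}}$ sends the regular fibre class $h\in\pi_1M$ to the unique element of order $2$. I use throughout that $M$ embeds in a homology $4$-sphere with complementary regions $X$ and $Y$, so that $\tau_M\cong\tau_X\oplus\tau_Y$ by Mayer--Vietoris and $\ell_M$ is hyperbolic, the two complementary self-annihilating summands of $\tau_M$ being $\tau_X$ and $\tau_Y$ themselves under this isomorphism (Lemma \ref{hyperbolic lp} and its proof). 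From the presentation of $\pi_1M$ in \S4.1 the fibre class $h$ has order $2$ in $H_1(M)$ and lies in $\tau_M$ (Lemma \ref{CH-lem3.4}). Granting such a $\phi$, write $\psi\colon\widetilde X\to X$ for the associated connected $2^{t+1}$-fold covering and $\widetilde M=\psi^{-1}(M)$, with the Seifert structure pulled back along $\psi|_M$, and let $\iota$ be the unique involution in the deck group $\mathbb{Z}/2^{t+1}\mathbb{Z}$. Since $\phi\,j_{X*}(h)=\iota$ has order $2$, each component of the preimage of a regular fibre is a connected double covering of that fibre; as the deck group is abelian, $\iota$ preserves each such component and restricts to it as the non-trivial deck transformation, i.e.\ rotation through $\pi$. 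A short computation with the local relation $[h]=\alpha_i[\gamma_i]$ near the $i$th exceptional fibre $\gamma_i$ shows that $\phi\,j_{X*}(\gamma_i)$ generates a subgroup containing $\iota$, so $\iota$ likewise rotates the exceptional fibres of $\widetilde M$ through $\pi$. Thus $\iota|_{\widetilde M}$ has the stated property.

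It remains to produce $\phi$, and this is where the embedding hypothesis does the work. Because $\tau_M$ is a direct double we are in the case $c$ even of Lemma \ref{CH-lem3.4} --- in the case $c$ odd the $2$-primary part of $\tau_M$ has a single cyclic summand of maximal order $2^{t+2}$ and so cannot be a direct double --- and combining this with Theorem \ref{CH-thm3.7} (all even cone point orders share the $2$-adic valuation $t$) shows that $(\tau_M)_2$ has exponent exactly $2^{t+1}$. Since $\ell_M$ is hyperbolic, the complementary summands $\tau_X$ and $\tau_Y$ of $\tau_M$ are self-annihilating, hence split, so $\tau_X\cong\tau_M/\tau_Y\cong\tau_Y$; therefore $(\tau_X)_2$ and $(\tau_Y)_2$ both have exponent $\exp((\tau_M)_2)=2^{t+1}$. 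As $h\neq0$ in $\tau_M=\tau_X\oplus\tau_Y$, its image is a non-zero element of order $\leq2$ in $\tau_X$ or in $\tau_Y$; after relabelling the complementary regions if necessary, assume it is non-zero in $\tau_X$. A non-zero element of order $\leq2$ in a finite abelian $2$-group of exponent $2^{t+1}$ is not killed by every homomorphism to $\mathbb{Z}/2^{t+1}\mathbb{Z}$ (in a basis adapted to the invariant factors it has a non-zero coordinate in a cyclic summand of order $\leq2^{t+1}$, which maps onto the order-$2$ subgroup of $\mathbb{Z}/2^{t+1}\mathbb{Z}$), so we may choose a surjection $\psi_X\colon H_1(X)\to\mathbb{Z}/2^{t+1}\mathbb{Z}$ non-zero on the image of $h$. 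Taking $\phi$ to be $\psi_X$ composed with the abelianization $\pi_1X\to H_1(X)$, the homomorphism $\phi\,j_{X*}$ factors through the surjection $H_1(M)\to H_1(X)$ (Lemma \ref{Hi17-lem2.1}), so it is onto --- making $\widetilde X$ and $\widetilde M$ connected --- and carries $h$ to an element of order $2$, as required.

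The main obstacle is precisely the exponent computation just used: one must rule out the possibility that on both sides of the embedding $h$ maps into the ``too divisible'' part of $\tau_W$, where it could not be detected by a cyclic $2$-group of the prescribed size. What rescues the argument is that Kawauchi--Kojima supply the self-annihilating subgroups as genuine \emph{direct summands}, so that $\ell_M$ is split and $\tau_M$ is a direct double in the strong sense; this forces $(\tau_M)_2\cong A\oplus A$ with $A$ a $2$-group of exponent $2^{t+1}$, hence pins down the exponents of $(\tau_X)_2$ and $(\tau_Y)_2$. Everything else --- the Seifert presentation of $\pi_1M$ from \S4.1 and the covering-space bookkeeping of the first paragraph --- is routine; the only point needing care there is to carry along the pulled-back Seifert fibration on $\widetilde M$, under which a regular fibre of $M$ is covered by $2^t$ circles, each of which $\iota$ rotates through $\pi$, while an exceptional fibre of even multiplicity is covered by a single circle on which $\iota$ again acts as rotation through $\pi$.
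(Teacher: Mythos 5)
Your argument is correct and follows essentially the same route as the paper: both proofs produce the cover from a surjection of $H_1$ of one complementary region onto $\mathbb{Z}/2^{t+1}\mathbb{Z}$ that is non-zero on the image of the fibre class $h$, using Lemma \ref{CH-lem3.4}, Theorem \ref{CH-thm3.7} and the direct-double decomposition $\tau_M\cong\tau_X\oplus\tau_Y$ to guarantee such a surjection exists on one side. The only (harmless) difference is bookkeeping: the paper projects onto an explicit cyclic summand $\langle\widehat{q}\rangle\cong\mathbb{Z}/2^{t+1}\mathbb{Z}$ containing $h=2^t\widehat{q}$ and observes that one of its two components must map to a generator, whereas you first compute the exponent of $(\tau_X)_2$ and then build a detecting surjection abstractly.
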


\begin{proof}
Since $\tau_M$ must be a direct double,
we may assume by Lemma \ref{CH-lem3.4} that it has a direct summand 
$\mathbb{Z}/2\alpha_1\mathbb{Z}$ generated by $q_1$,
as in the proof.
In fact $\widehat{q}=s_1q_1$ generates a direct summand of $\tau_M$ of order $2^{t+1}$.
Note also that $h=\alpha_1q_1=2^t\widehat{q}$, 
where $h$ is the element of order 2 in $H_1(M)$ which is the image 
of a regular fibre $f$ of $M$.
Define an epimorphism $\phi:H_1(M)\to\mathbb{Z}/2^{t+1}\mathbb{Z}$ by projection onto the summand of $\tau_M$ generated by $\widehat{q}$.
Now $\widehat{q}$ may be identified with an element 
$(x_0,y_0)\in{H_1(X)}\oplus{H_1(Y)}$,
and one of $(x_0,0)$ or $(0,y_0)$ must have order $2^{t+1}$ under the quotient map $\phi$.
Suppose that $x_0$ satisfies this condition.
Thus $\phi$ defines a surjection
$\phi_X:H_1(X)\to\mathbb{Z}/2^{t+1}\mathbb{Z}$
such that $x\mapsto\phi(x,0)$, 
and there exists a $2^{t+1}$-fold covering 
$\psi:\widetilde{X}\to{X}$ with $\pi_1\widetilde{X}=\mathrm{Ker}(\phi_X\circ\mathrm{ab})$.

Since the inclusion $M\subset{X}$ induces the projection $p_X:H_1(M)\to{H_1(X)}$,
the regular fibre $f$ of $M$ represents the element $[f]=p_X(h)=2^tx_0$ in $H_1(X)$,
and since $\phi_X(x_0)$ has order $2^{t+1}$,
$\phi_X([f])$ will have order 2 in $\mathbb{Z}/2^{t+1}\mathbb{Z}$.
Thus $\psi|_M^{-1}(f)$ is a collection of $2^t$ disjoint circles, 
each of which doubly covers $f$.
On the other hand, if we take
\[
\overline{\phi}_X:H_1(X)\to\mathbb{Z}/2^{t+1}\mathbb{Z}\to
\mathbb{Z}/2^t\mathbb{Z}
\]
such that $[f]\mapsto0$, then we obtain an intermediate covering 
$\overline{\psi}_X:\overline{X}\to{X}$
where each fibre of $M$ lifts to a fibre in $\overline{M}$.
Thus $\widetilde{X}$ is a double cover of $\overline{X}$ with covering automorphism as required.
That is, each regular fibre of $\widetilde{M}$ doubly covers a fibre of $\overline{M}$,
the same being necessarily true for singular fibres as well.
\end{proof}

\begin{lemma}
\label{CH-lem3.9}
Let $M$ be as in the previous lemma. 
Then $\widetilde{M}$ is fibred over $\#^c\mathbb{RP}^2$,
where $c=2^t(k+b_S)-2b_S$ and $\varepsilon(\widetilde{M})=2^{t-1}\varepsilon(M)$.
\end{lemma}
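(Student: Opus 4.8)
\textbf{Proof strategy for Lemma \ref{CH-lem3.9}.}

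The plan is to work out what the covering space $\widetilde{M}\to M$ looks like as a Seifert manifold, using the explicit covering $\psi|_M\colon\widetilde{M}\to M$ produced in Lemma \ref{CH-lem3.8}. First I would recall that $\psi|_M$ is a $2^{t+1}$-fold covering which factors through the intermediate covering $\overline{\psi}_X|_{\overline M}\colon\overline M\to M$ of degree $2^t$ (each fibre of $M$ lifting to a fibre), followed by the double covering $\widetilde M\to\overline M$ in which each regular fibre of $\widetilde M$ wraps twice around a fibre of $\overline M$, i.e. the covering involution rotates fibres through $\pi$. So the computation splits into two parts: the effect of the degree-$2^t$ ``fibre-preserving'' stage on the base orbifold, and the effect of the final fibre-doubling stage on the Seifert data.

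For the fibre-preserving stage, the covering $\overline M\to M$ is induced by a homomorphism $\pi_1^{orb}(B)\to\mathbb{Z}/2^t\mathbb{Z}$, and $\overline M$ is Seifert fibred over the corresponding orbifold covering $\overline B\to B$. I would compute the Euler characteristic of $|\overline B|$ by the multiplicativity of orbifold Euler characteristic under coverings, together with the Riemann--Hurwitz count of how many of the cone points of $B$ (and of the non-orientable handles, which are the $\sigma_i$) are ``unwrapped'' versus ``cut''. The cone points of odd order (the $i>r$ ones) have order coprime to $2^t$, so each lifts to $2^t/\gcd=2^t$ copies of a cone point of the same order and contributes nothing to the change in $c$; the even cone points, of which there are $2b$, each of $2$-adic valuation exactly $t$, get their orders divided by $2^t$, becoming nonsingular or of odd order, and this is exactly where the $b_S$ corrections enter. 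The non-orientable summands: since the homomorphism to $\mathbb{Z}/2^t\mathbb{Z}$ restricted to the $\sigma_i$ must be worked out from the explicit generator $\widehat q$ of Lemma \ref{CH-lem3.8} (whose image under $\mathrm{ab}$ involves $a=\sum\sigma_i$), I would track exactly which $\sigma_i$ lift to orientation-reversing loops and which become orientation-preserving; the count of crosscaps in $|\overline B|$ is then $2^t k$ adjusted by the $2b$ even cone points turning into $-2b$ via the arithmetic ``$2^t b - 2b$'' contribution, giving the intermediate crosscap number, and the final doubling stage $\widetilde M\to\overline M$ acts freely on the base (it is a fibre covering, not a base covering), so $|B(\widetilde M)|=|\overline B|=\#^c\mathbb{RP}^2$ with $c=2^t(k+b_S)-2b_S$ once the bookkeeping is assembled. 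The Euler number statement $\varepsilon(\widetilde M)=2^{t-1}\varepsilon(M)$ I would get from the two multiplicative factors: a fibre-preserving degree-$d$ base covering multiplies $\varepsilon$ by $d$ if it is unramified over the fibre direction and divides appropriately over the cone points (here the factor $2^t$), while the fibre-doubling stage halves each $\beta_i/\alpha_i$ in the right normalization, giving a net factor $2^t\cdot\tfrac12=2^{t-1}$; I would verify this directly from $\varepsilon=-\sum\beta_i/\alpha_i$ using that the lifted Seifert data of $\widetilde M$ consists, over each cone point, of copies with orders and multiplicities scaled by the appropriate local degree.

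The main obstacle will be the crosscap bookkeeping in the base: getting the precise count of orientation-reversing loops in $\overline B$ right requires care with the homomorphism $\pi_1^{orb}(B)\to\mathbb{Z}/2^{t+1}\mathbb{Z}$ defined by $\widehat q=s_1 q_1$ — in particular one must see how the relation $2\sum a_i+\sum q_j=0$ in $H_1(M)$ constrains the values $\phi(\sigma_i)$, and then apply the standard description of coverings of non-orientable surfaces to pin down whether the covering surface's non-orientable genus is $2^t(k+b)-2b$ or something off by the $b_S=b-1$ shift. I expect the cleanest route is to compute $\chi^{orb}$ and the first $\mathbb{F}_2$-Betti number of $\overline B$ independently and solve for the crosscap number, cross-checking against small cases (e.g. $S=\{(2,1),(2,-1)\}$, $t=1$, $b=1$, $b_S=0$), rather than trying to exhibit the covering surface by hand. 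Everything else — the $\varepsilon$ computation and the fibre-doubling analysis — is a routine consequence of the normalization conventions of \S3.1 and the construction in Lemma \ref{CH-lem3.8}.
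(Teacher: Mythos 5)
Your strategy coincides with the paper's: factor the covering through the fibre\-/preserving degree-$2^t$ covering $\overline M\to M$ (under which the odd-order cone points unwrap into $2^t$ copies and each of the $2b$ even-order cone points is totally ramified, becoming a single point of odd order), followed by the fibre-doubling covering $\widetilde M\to\overline M$, which leaves the base orbifold unchanged; then compute $\chi(|\widetilde B|)$ by Riemann--Hurwitz and obtain $\varepsilon(\widetilde M)$ from the two factors $2^t$ and $\tfrac12$, exactly as in the paper. The one caution is arithmetic: your interim crosscap heuristic ``$2^tk$ adjusted by $2^tb-2b$'' is not what the Euler-characteristic count yields --- Riemann--Hurwitz gives $\chi(|\widetilde B|)=2^t(\chi(|B|)-2b)+2b$ and hence $c=2^t(k+2b_S)-2b_S$, which is the value derived in the paper's own proof and the one used for $\tilde k_i$ in Theorem~\ref{CH-thm1.1} (it differs from the coefficient of $b_S$ displayed in the lemma's statement), and your proposed test case $t=1$, $b=1$ has $b_S=0$, so it cannot distinguish these formulas. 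So carry out the $\chi$ computation you describe as your fallback, rather than the handle-by-handle bookkeeping, and it will land on the paper's argument.
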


\begin{proof}
Clearly $\widetilde{M}$ and $\overline{M}$ each have the same base orbifold,
$\widetilde{B}$, say.
Observe also that the cone point orders of $\widetilde{B}$ must all be odd since,
given a fibre $\gamma$ in $\overline{M}$ of type $(\alpha,\beta)$, 
$\beta$ must be even as $\gamma$ is covered by a fibre $\tilde\gamma$ 
of type $(\alpha, \frac12\beta+c\alpha)$ \cite[1.3]{NR78}.
Now, in the covering $\overline{M}\to{M}$ each regular fibre of $M$ lifts to $2^t$ distinct fibres, as do the singular fibres of odd multiplicity, while each of the remaining fibres, 
having multiplicity $2^ts_i$ for some odd $s_i$, 
must be $2^t$-fold covered by a single fibre of odd multiplicity.
The covering $\widetilde{M}\to\overline{M}$ induces a covering of orbifolds,
or a $2^t$-fold branched covering of surfaces $|\widetilde{B}|\to|B|$ with a branch set consisting of the $2b$ points corresponding to the $2b$ singular fibres with even multiplicity.
Thus $|\widetilde{B}|$ is also a non-orientable surface,
and so $|\widetilde{B}|\cong\sharp^{c}\mathbb{RP}^2$, 
for some $c>0$.
Recall that by definition of $b_S$, 
if $t\not=0$ then there are $2b_S+2$ fibres of even multiplicity.
In this case $\chi(|\widetilde{B}|)=2^t(\chi(|B|)-2b_S-2)+2b_S+2$, 
which gives $c=2^t(k+2b_S)-2b_s$.
But this formula also holds when $t=0$, in which case $c=k$.
Finally, $\varepsilon(\widetilde{M})=\frac12\varepsilon(\overline{M})=2^{t-1}\varepsilon(M)$,
as in \cite[1.2]{NR78} or \cite{JN}.
\end{proof}

We aim to construct involutions $g$ on a 4-manifold $W$ with fixed point set 
a closed connected surface $F$, smoothly embedded in $W$, and such that $g$ acts smoothly on a neighbourhood of $F$.
That is, $F$ will have a smooth $D^2$-bundle neighbourhood $N(F)\subset{W}$,
on which $g$ acts by rotation of the fibres.
We shall call such a surface a ``good" fixed point set.
Given such an $F$,
let $e(F)$ be the normal Euler number, which is just the Euler number of $\partial{N(F)}$,
considered as a circle bundle over $F$.
According to the $G$-Signature Theorem, 
if $g$ is an involution of a closed connected 4-manifold with ``good" fixed point set $F$ then $\mathrm{sign}(g,W)=e(F)$ \cite{AS68,Go86}. 
In proving Theorem \ref{CH-thm1.1}, we shall use also the fact that 
$|\mathrm{sign}(g,W)|\leq\beta_2(W)$,
and is congruent to $\beta_2(W)$ {\it mod} (2), 
for $W$ a closed 4-manifold.
(See also \S1.9.)

If we take $M$ to be the circle bundle $M(F;(1,-e))$ then,
in Lemma \ref{CH-lem3.8}, $\widetilde{M}$ is also a circle bundle, 
with Euler number $\tilde{e}=\frac12e$ and associated disc bundle $D_{\widetilde{M}}$.
In this case the covering involution of $\widetilde{X}$ given by  Lemma \ref{CH-lem3.8},
extends to an involution $g$ of the closed connected 4-manifold
$W_{\widetilde{M}}=\widetilde{X}\cup_{\widetilde{M}}D_{\widetilde{M}}$,
with ``good" fixed point set $\widetilde{F}\cong{F}$.
Thus $\mathrm{sign}(g,W)=e(\widetilde{F})=\tilde{e}$.

More generally, given a Seifert manifold $\widetilde{M}$ with fibration 
$\tilde\pi:\widetilde{M}\to\widetilde{F}$,
the ``associated disc bundle", or more precisely the mapping cylinder
\[
C_{\widetilde{M}}=\widetilde{M}\times[0,1]/(x_1,0)\sim
(x_2,0)~\mathrm{whenever}~\tilde\pi(x_1)=\tilde\pi(x_2)
\]
is in fact not necessarily a manifold.
It ceases to be a manifold precisely at the isolated points $p_i=(x_i,0)$ 
for $x_i$ on a strictly singular fibre.
However, we may modify $C_{\widetilde{M}}$ by removing a neighbourhood which contains all such points.
There exists a closed disc $\delta\subset\widetilde{F}$ such that the point
$\tilde\pi(\tilde\gamma)\in\mathrm{int}(\delta)$ for each singular fibre $\tilde\gamma$ of
$\widetilde{M}$.
Let $E=\tilde\pi^{-1}(\delta)\times[0,\frac12)/(\sim)\subset{C_{\widetilde{M}}}$ .
This defines a neighbourhood which meets $\widetilde{F}$ in $\delta$. 
(Note that $C_{\widetilde{M}}$  contains $\widetilde{F}$ as 
$\{(x,0)\in{C_{\widetilde{M}}}\}$.)
Define the ``modified associated disc-bundle" to be
$D_{\widetilde{M}}=C_{\widetilde{M}}\setminus\mathrm{int}(E)$.

Note that in the above construction $E$ is contractible to $\delta\sim*$,
so that $\chi(E)=1$.
Also its boundary $\partial{E}$ must be a rational homology sphere.
This is true since $\partial{E}$  may be wriiten as a generalized Seifert manifold
$M(0;(0,1),(\tilde\alpha_1,\tilde\beta_1),\dots,
(\tilde\alpha_m,\tilde\beta_m))$,
which has  finite first homology, the rest following by Poincar\'e duality,
given that $\partial{E}$  is closed, connected and orientable.

As in the statement of the theorem, for fixed $S$, 
let $M_1=M(k_1,S, (1,e_1))$ and $M_2=M(k_2,S, (1,e_2))$ 
be Seifert manifolds which embed in  homology 4-spheres $\Sigma_1$ and $\Sigma_2$, respectively.
Then we may write $\Sigma_1=X_1\cup_{M_1}Y_1$ and
$\Sigma_2=X_2\cup_{M_2}Y_2$ , where we may suppose there are coverings 
$\psi_1:\widetilde{X}_1\to{X_1}$ and $\psi_2:\widetilde{X}_2\to{X_2}$, 
as described in Lemma \ref{CH-lem3.8}.

Consider the manifold $W_1=\widetilde{X}_1\cup_{\widetilde{M}_1}D_{\widetilde{M}_1}$,
with boundary $\partial{E_1}$.
By Lemma \ref{CH-lem3.8},
there is a covering involution of $\widetilde{X}_1$ which extends to 
an involution $g_1$ of $W_1$ in the obvious way, 
with fixed point set $\widetilde{F}_1\setminus\mathrm{int}(\delta_1)$.
There is a similar pair $(g_2,W_2)$.
Since $\widetilde{M}_1$ and $\widetilde{M}_2$ have the same set $\widetilde{S}$ of strictly singular fibres, we may write $\widetilde{M}_i=
M(-\tilde{k_i};(\tilde\alpha_1, \tilde\beta_1),\dots, (\tilde\alpha_m, \tilde\beta_m),(1,-e_i))$,
and we may choose $E_1$ and $E_2$ to have exactly the same fibred structure.
That is,  there is a fibre-preserving homeomorphism $E_1\cong{E_2}$ which respects
the sets $\widetilde{F}_i\cap{E_i}$ and the involution on $\partial{E_i}$.
Now, if we write
\[
W=W_1\cup_{\partial{E}}W_2
\]
then $W$ is a closed connected 4-manifold,
and $g_1$ and $g_2$ together define an involution $g$ on $W$ with ``good" fixed
point set $F=\widetilde{F}_1\sharp\widetilde{F}_2$.
Furthermore $F$ has normal Euler number $e(F)=e_1-e_2=\varepsilon(\widetilde{M}_1)-\varepsilon(\widetilde{M}_2)$,
which is simply $2^{t-1}(\varepsilon(M_1)-\varepsilon(M_2))$, by Lemma \ref{CH-lem3.9}.
Note that $\partial(-W_2)\cong-\partial{E}$, 
while $\partial(W_1)\cong\partial{E}$.
However, in order to orient the union of two oriented manifolds consistently, 
we require that the attaching homeomorphism to be orientation-reversing, 
and here it is taken to be the obvious one.
Now, 
by the $G$-Index Theorem we may write
\[
\mathrm{sign}(g,W)=2^{t-1}(\varepsilon(M_1)-\varepsilon(M_2)).
\]
This signature is bounded by the value of $\beta_2(W)$, independently of the $\varepsilon_i$.
It remains now to express this constraint in terms of $k_1$ and $k_2$, and the constants $t$ and $b_S$, which depend only on $S$. We begin  by establishing a bound on $\beta_1(W_i)$ (for $i=1,2$), for which we need the next lemma.

\begin{lemma}
\label{CH-lem3.10}
Let $p$ be a prime. 
If $f:\widetilde{X}\to{X}$ is a $p^s$-fold cyclic covering then 
$\beta_1(\widetilde{X})\leq{p^s\beta_1(X;\mathbb{F}_p)}$.
\end{lemma}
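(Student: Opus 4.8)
The plan is to prove the bound by comparing the ordinary ($\mathbb{Q}$-coefficient) first Betti number of $\widetilde{X}$ with the mod $p$ first Betti number of $X$, using the transfer together with a spectral sequence or an iterated Wang-type argument. First I would reduce to the case $s=1$: a $p^s$-fold cyclic covering factors as a tower of $p$-fold cyclic coverings $\widetilde{X}=X_s\to X_{s-1}\to\dots\to X_0=X$, so if the bound $\beta_1(X_{i+1})\leq p\,\beta_1(X_i;\mathbb{F}_p)$ holds at each stage, then iterating and using $\beta_1(X_i)\leq\beta_1(X_i;\mathbb{F}_p)$ (which holds for finite complexes, since $\dim_{\mathbb{F}_p}H_1(X_i;\mathbb{F}_p)=\beta_1(X_i)+(\text{$p$-torsion contribution})\geq\beta_1(X_i)$) gives $\beta_1(\widetilde{X})\leq p^s\beta_1(X;\mathbb{F}_p)$ after collapsing the chain of inequalities.

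So the heart of the matter is the single-step estimate for a $p$-fold cyclic cover $f:\widehat{X}\to X$ with deck group $\mathbb{Z}/p\mathbb{Z}$. Here I would work with $\mathbb{F}_p$-coefficients throughout. The covering corresponds to an epimorphism $\pi_1X\to\mathbb{Z}/p\mathbb{Z}$, and $H_*(\widehat{X};\mathbb{F}_p)$ is computed from the $\mathbb{F}_p[\mathbb{Z}/p\mathbb{Z}]$-chain complex of $\widehat{X}$. Since $\mathbb{F}_p[\mathbb{Z}/p\mathbb{Z}]\cong\mathbb{F}_p[s]/(s^p)$ with $s=\tau-1$ ($\tau$ a generator of the deck group), one has the exact "Wang-type" couple coming from multiplication by $s$: the short exact sequences of coefficient modules $0\to\mathbb{F}_p[\mathbb{Z}/p]\xrightarrow{s}\mathbb{F}_p[\mathbb{Z}/p]\to\mathbb{F}_p\to 0$ are not available directly (that map is not injective on $\mathbb{F}_p[s]/(s^p)$), but one does have $0\to \mathrm{(ideal)}\to \mathbb{F}_p[\mathbb{Z}/p]\to\mathbb{F}_p\to 0$ and more usefully the filtration of $\mathbb{F}_p[\mathbb{Z}/p]$ by powers of the augmentation ideal. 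The cleanest route is the Cartan–Leray spectral sequence $H_i(\mathbb{Z}/p\mathbb{Z};H_j(\widehat{X};\mathbb{F}_p))\Rightarrow H_{i+j}(X;\mathbb{F}_p)$, or equivalently the transfer argument: I would instead directly estimate $\dim_{\mathbb{F}_p}H_1(\widehat{X};\mathbb{F}_p)$ by noting that the $\mathbb{F}_p[\mathbb{Z}/p\mathbb{Z}]$-module $H_1(\widehat{X};\mathbb{F}_p)$ has, as an $\mathbb{F}_p$-vector space, dimension at most $p$ times the minimal number of $\mathbb{F}_p[\mathbb{Z}/p\mathbb{Z}]$-generators, and that minimal number is controlled by $H_0$ and $H_1$ of $X$ with $\mathbb{F}_p$-coefficients via the comparison of bar resolutions. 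Concretely, from the five-term exact sequence of the Cartan–Leray spectral sequence one gets control on the coinvariants $H_1(\widehat X;\mathbb{F}_p)_{\mathbb{Z}/p}$, and since for a module $V$ over $\mathbb{F}_p[\mathbb{Z}/p\mathbb{Z}]$ one has $\dim_{\mathbb{F}_p}V\leq p\cdot\dim_{\mathbb{F}_p}(V_{\mathbb{Z}/p})$ (each cyclic summand $\mathbb{F}_p[s]/(s^k)$ contributes $k\leq p$ to the dimension and $1$ to the coinvariants), the bound $\beta_1(\widehat X)\leq\dim_{\mathbb{F}_p}H_1(\widehat X;\mathbb{F}_p)\leq p\,\beta_1(X;\mathbb{F}_p)$ follows once one checks $\dim_{\mathbb{F}_p}H_1(\widehat X;\mathbb{F}_p)_{\mathbb{Z}/p}\leq\beta_1(X;\mathbb{F}_p)$.

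That last inequality is where I expect the only real work to be, and it is the step I would flag as the main obstacle — one must make sure that the contribution of $H_0(\widehat X;\mathbb{F}_p)=\mathbb{F}_p$ (with trivial action) to the edge maps does not inflate the estimate. The relevant portion of the Cartan–Leray five-term sequence reads
\[
H_2(X;\mathbb{F}_p)\to H_0(\mathbb{Z}/p;H_1(\widehat X;\mathbb{F}_p))\to H_1(X;\mathbb{F}_p)\to H_1(\mathbb{Z}/p;\mathbb{F}_p)\to 0,
\]
so $H_1(\widehat X;\mathbb{F}_p)_{\mathbb{Z}/p}$ injects into $H_1(X;\mathbb{F}_p)$ modulo the image of $H_2(X;\mathbb{F}_p)$; in particular $\dim_{\mathbb{F}_p}H_1(\widehat X;\mathbb{F}_p)_{\mathbb{Z}/p}\leq\dim_{\mathbb{F}_p}H_1(X;\mathbb{F}_p)=\beta_1(X;\mathbb{F}_p)$. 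Combining this with the module-theoretic inequality $\dim_{\mathbb{F}_p}V\leq p\dim_{\mathbb{F}_p}(V_{\mathbb{Z}/p})$ applied to $V=H_1(\widehat X;\mathbb{F}_p)$ gives the single-step bound, and then the telescoping over the tower of $p$-fold covers yields $\beta_1(\widetilde X)\leq p^s\beta_1(X;\mathbb{F}_p)$, completing the proof. I would write the module inequality out explicitly since it is elementary (decompose the finitely generated $\mathbb{F}_p[\mathbb{Z}/p\mathbb{Z}]$-module into indecomposables $\mathbb{F}_p[s]/(s^{k})$ with $1\le k\le p$), and cite $\beta_1(X_i)\le\beta_1(X_i;\mathbb{F}_p)$ for finite complexes from the universal coefficient discussion in \S1.2.
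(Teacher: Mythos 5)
Your strategy is sound and reaches the right bound, but it is packaged differently from the paper's. The paper argues in one step over the full deck group $\mathbb{Z}/p^s\mathbb{Z}$: it passes to the quotient $K=\overline{K}/p\overline{K}\cong(\mathbb{Z}/p\mathbb{Z})^r$ of $\pi_1\widetilde{X}$ (where $\overline{K}$ is the maximal torsion-free abelian quotient, so $r=\beta_1(\widetilde{X})$ on the nose), observes that the deck generator acts on $K$ through an operator annihilated by $(T-1)^{p^s}$, decomposes $K$ over the PID $\mathbb{F}_p[T]$ into cyclic summands of length at most $p^s$, and bounds the number of summands by $\beta_1(X;\mathbb{F}_p)$ using that $G^{ab}$ is a quotient of $H_1(X;\mathbb{Z})$. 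You instead iterate $p$-fold covers and apply the same Jordan-block count to all of $H_1(\widehat{X};\mathbb{F}_p)$ as an $\mathbb{F}_p[\mathbb{Z}/p\mathbb{Z}]$-module, bounding its coinvariants via the Cartan--Leray five-term sequence. This is a genuinely different organisation: your version proves the stronger intermediate statement $\beta_1(X_{i+1};\mathbb{F}_p)\le p\,\beta_1(X_i;\mathbb{F}_p)$, which is exactly what legitimises the telescoping, at the cost of invoking the spectral sequence where the paper needs only elementary group theory. Both arguments hinge on the same fact: a cyclic module over $\mathbb{F}_p[T]/((T-1)^k)$ has dimension at most $k$.

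One step needs repair. The five-term sequence you wrote omits a term; the correct sequence is
\[
H_2(X;\mathbb{F}_p)\to H_2(\mathbb{Z}/p\mathbb{Z};\mathbb{F}_p)\to H_1(\widehat{X};\mathbb{F}_p)_{\mathbb{Z}/p\mathbb{Z}}\to H_1(X;\mathbb{F}_p)\to H_1(\mathbb{Z}/p\mathbb{Z};\mathbb{F}_p)\to0,
\]
and from the sequence as you wrote it the assertion ``in particular $\dim H_1(\widehat{X};\mathbb{F}_p)_{\mathbb{Z}/p\mathbb{Z}}\le\beta_1(X;\mathbb{F}_p)$'' does not follow: exactness of $A\to B\to C$ only gives $\dim B\le\dim C+\dim\mathrm{Im}(A)$. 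The inequality is nevertheless true, for the reason you half-anticipated when flagging the contribution of $H_0(\widehat{X};\mathbb{F}_p)$: the term feeding into the coinvariants is $H_2(\mathbb{Z}/p\mathbb{Z};\mathbb{F}_p)\cong\mathbb{F}_p$, which is one-dimensional, while the map $H_1(X;\mathbb{F}_p)\to H_1(\mathbb{Z}/p\mathbb{Z};\mathbb{F}_p)\cong\mathbb{F}_p$ is onto, so the image of the coinvariants lies in a codimension-one subspace of $H_1(X;\mathbb{F}_p)$. The two corrections of $1$ cancel, giving $\dim H_1(\widehat{X};\mathbb{F}_p)_{\mathbb{Z}/p\mathbb{Z}}\le(\beta_1(X;\mathbb{F}_p)-1)+1$. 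With that bookkeeping made explicit the argument is complete.
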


\begin{proof}
The covering gives an exact sequence
\[
1\to{K_0}\to{G_0}\to\mathbb{Z}/p^s\mathbb{Z}\to1,
\]
where $K_0=\pi_1\widetilde{X}$ and $G_0=\pi_1X$.
Since $K_0$ is finitely generated,  
$\overline{K}=K_0/I(K_0)$ is free abelian of rank 
$r=\beta_1(\widetilde{X})$.
Since $I(K_0)$ is characteristic in $K_0$ it is normal in $G_0$,
and so we have an exact sequence
\[1\to\overline{K}\to{G_0/I(K_0)}\to\mathbb{Z}/p^s\mathbb{Z}\to1.
\]
Let $K=\overline{K}/p\overline{K}\cong(\mathbb{Z}/p\mathbb{Z})^r$.
Then we get another exact sequence
\[
1\to{K}\to{G}\to\mathbb{Z}/p^s\mathbb{Z}\to1,
\]
where $G$ is the quotient of $G_0/I(K_0)$ by $p\overline{K}$.
Note that $G^{ab}$ is a a quotient of $H_1(X;\mathbb{Z})$.

Now 
$G\cong\langle\zeta,K\mid\zeta^{p^s}\in{K},~\zeta^{-1}x\zeta=Ax,~\forall{x}\in{K}\rangle$,
where $\zeta$ acts on $K$ via some $A\in{GL(r,p)}$.
Let $\Gamma=\mathbb{F}_p[T]$.
Then $K$ is a $\Gamma$-module, with $T$ acting via $t.x=Ax$, for all $x\in{K}$.
Since $\zeta^{p^s}\in{K}$ and $K$ is abelian, $T^{p^s}x=x$, for all $x\in{K}$,
and so $K$ is annihilated by $T^{p^s}-I=(T-I)^{p^s}$.
Since $\Gamma$ is a PID we  have $K\cong\oplus_{i=1}^m(\Gamma/(T-I)^i)^{e_i}$, 
for some $e_i\geq0$ and $m\leq{p^s}$.
Hence $r=\Sigma_{i=1}^mie_i\leq{p^s}\Sigma{e_i}$.
On the other hand,
$G^{ab}=\langle\zeta,K/(T-I)K\mid\zeta^{p^s}\in{K/(T-I)K}\rangle$, 
and $K/(T-I)K\cong\oplus_{i=1}^m(\Gamma/(T-I))^{e_i}\cong\mathbb{F}_p^R$,
where $R=\Sigma{e_i}$.
Thus $\beta_1(\widetilde{X})\leq{p^sR}$
and $R\leq\beta_1(X;\mathbb{F}_p)$.
\end{proof}

\begin{lemma}
\label{CH-lem3.11}
$\beta_1(W_i)\leq\beta_1(\widetilde{X}_i)\leq2^{t+1}(\beta_1(X_i)+b_S)$.
\end{lemma}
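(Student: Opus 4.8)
The plan is to establish the two inequalities in turn.

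\emph{First inequality.} Recall $W_i=\widetilde X_i\cup_{\widetilde M_i}D_{\widetilde M_i}$. The modified associated disc bundle $D_{\widetilde M_i}$ deformation retracts onto $\widetilde F_i\setminus\mathrm{int}(\delta_i)$, a compact non-orientable surface with one boundary circle, hence onto a wedge of circles. Over $\widetilde F_i\setminus\mathrm{int}(\delta_i)$ the Seifert fibration restricts to an orientable $S^1$-bundle over a space homotopy equivalent to a $1$-complex, so its Euler class vanishes and it admits a section; consequently every loop in $D_{\widetilde M_i}$ is freely homotopic within $D_{\widetilde M_i}$ to one lying on $\widetilde M_i$. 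I would then apply the Mayer--Vietoris sequence to $W_i=\widetilde X_i\cup_{\widetilde M_i}D_{\widetilde M_i}$ with arbitrary coefficients: since all four spaces are connected, $H_1(W_i)$ is generated by the images of $H_1(\widetilde X_i)$ and of $H_1(D_{\widetilde M_i})$, and by the previous remark the second image lies in the image of $H_1(\widetilde M_i)$, hence in that of $H_1(\widetilde X_i)$. Thus $H_1(\widetilde X_i)\to H_1(W_i)$ is onto and $\beta_1(W_i)\le\beta_1(\widetilde X_i)$.

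\emph{Structure of the covering.} By Lemma~\ref{CH-lem3.8} the covering $\psi_i:\widetilde X_i\to X_i$ is the $2^{t+1}$-fold cyclic covering classified by the epimorphism $\phi_{X_i}:H_1(X_i)\to\mathbb{Z}/2^{t+1}\mathbb{Z}$ obtained by restricting to $H_1(X_i)$ the projection of $\tau_{M_i}$ onto its order-$2^{t+1}$ direct summand $\langle\widehat q\rangle$. The element $x_0\in\tau_{X_i}$ carrying $\widehat q$ on the $X_i$-side has order dividing $2^{t+1}$, while $\phi_{X_i}(x_0)$ generates $\mathbb{Z}/2^{t+1}\mathbb{Z}$; so $x_0$ has order exactly $2^{t+1}$ and $\phi_{X_i}$ splits: $H_1(X_i)\cong\mathbb{Z}/2^{t+1}\mathbb{Z}\oplus\ker\phi_{X_i}$. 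Hence $\ker\phi_{X_i}$ has rank $\beta_1(X_i)$, and the $2$-rank of its torsion subgroup is one less than the $2$-rank of $\tau_{X_i}$. Now I would compute the latter: by Lemma~\ref{CH-lem3.4}, together with Theorem~\ref{CH-thm3.7} (which forces the $2b$ even cone point orders to share a common $2$-adic valuation $t$, and which in the all-odd case excludes the variant of Lemma~\ref{CH-lem3.4} in which the $2$-part of $\tau_{M_i}$ is $\mathbb{Z}/4\mathbb{Z}$, that not being a direct double), the $2$-primary part of $\tau_{M_i}$ has $2$-rank $2b_S+2$; since $M_i$ embeds in a homology $4$-sphere, $\tau_{M_i}\cong\tau_{X_i}\oplus\tau_{X_i}$ (§2.3), so $\tau_{X_i}$ has $2$-rank $b_S+1$, and the torsion of $\ker\phi_{X_i}$ has $2$-rank $b_S$.

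\emph{The module bound and conclusion.} Finally, $H_1(\widetilde X_i;\mathbb{F}_2)$ is a module over $\mathbb{F}_2[\mathbb{Z}/2^{t+1}\mathbb{Z}]\cong\mathbb{F}_2[T]/(T-1)^{2^{t+1}}$, a local $\mathbb{F}_2$-algebra of dimension $2^{t+1}$, so by Nakayama $\dim_{\mathbb{F}_2}H_1(\widetilde X_i;\mathbb{F}_2)\le 2^{t+1}\dim_{\mathbb{F}_2}\bigl(H_1(\widetilde X_i;\mathbb{F}_2)/(T-1)H_1(\widetilde X_i;\mathbb{F}_2)\bigr)$. The quotient on the right is $\bigl(H_1(\widetilde X_i)_{\mathbb{Z}/2^{t+1}\mathbb{Z}}\bigr)\otimes\mathbb{F}_2$, and the integral five-term exact sequence of $1\to\pi_1\widetilde X_i\to\pi_1 X_i\to\mathbb{Z}/2^{t+1}\mathbb{Z}\to1$, using $H_2(\mathbb{Z}/2^{t+1}\mathbb{Z};\mathbb{Z})=0$, identifies $H_1(\widetilde X_i)_{\mathbb{Z}/2^{t+1}\mathbb{Z}}$ with $\ker\phi_{X_i}$; so this quotient has $\mathbb{F}_2$-dimension $\beta_1(X_i)+b_S$, whence $\beta_1(\widetilde X_i)\le\dim_{\mathbb{F}_2}H_1(\widetilde X_i;\mathbb{F}_2)\le 2^{t+1}(\beta_1(X_i)+b_S)$, as required. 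This sharpens Lemma~\ref{CH-lem3.10}: the gain over the naive bound $2^{t+1}\beta_1(X_i;\mathbb{F}_2)$ comes precisely from the splitting of $\phi_{X_i}$, which is the one genuinely delicate point and is what makes the estimate tight enough for the signature count in Theorem~\ref{CH-thm1.1}; everything else (Mayer--Vietoris, the five-term sequence, Nakayama over a local ring) is routine.
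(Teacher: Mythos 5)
Your second inequality is, in substance, the paper's own argument: both proofs rest on (i) the observation that the order-$2^{t+1}$ element $x_0$ maps isomorphically under $\phi_{X_i}$ onto $\mathbb{Z}/2^{t+1}\mathbb{Z}$ and hence splits off as a direct summand, (ii) the count $\dim_{\mathbb{F}_2}\tau_{X_i}/2\tau_{X_i}=b_S+1$ coming from Lemma \ref{CH-lem3.4} together with the direct-double condition, and (iii) the bound of a finitely generated module over $\mathbb{F}_2[T]/(T-1)^{2^{t+1}}$ by $2^{t+1}$ times the dimension of its coinvariants -- which is exactly the content of the proof of Lemma \ref{CH-lem3.10}, repackaged by you as Nakayama plus the five-term sequence. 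That part is correct and matches the paper.

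The problem is in your first inequality: $D_{\widetilde{M}_i}$ does \emph{not} deformation retract onto $\widetilde{F}_i\setminus\mathrm{int}(\delta_i)$. It is a compact $4$-manifold with boundary $\widetilde{M}_i\sqcup(-\partial{E}_i)$, and the Mayer--Vietoris sequence for $C_{\widetilde{M}_i}=D_{\widetilde{M}_i}\cup{E_i}$ (using $H_3(C_{\widetilde{M}_i})=H_4(C_{\widetilde{M}_i})=0$ and $H_3(E_i)=0$) gives $H_3(D_{\widetilde{M}_i})\cong H_3(\partial{E}_i)\cong\mathbb{Z}$; for instance, when there are no singular fibres one has $D_{\widetilde{M}_i}\simeq\widetilde{F}_i\vee{S^3}$. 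So $D_{\widetilde{M}_i}$ is not homotopy equivalent to a $1$-complex and your ``section over a wedge of circles'' step has nothing to act on. The statement you actually need -- that the image of $H_1(D_{\widetilde{M}_i};\mathbb{Q})$ in $H_1(W_i;\mathbb{Q})$ is contained in the image of $H_1(\widetilde{M}_i;\mathbb{Q})$ -- is nevertheless true, but for a different reason: since $H_1(E_i;\mathbb{Q})=0$ and $\partial{E}_i$ is a rational homology sphere, the map $H_1(D_{\widetilde{M}_i};\mathbb{Q})\to H_1(C_{\widetilde{M}_i};\mathbb{Q})\cong H_1(\widetilde{F}_i;\mathbb{Q})$ is injective, while the Seifert projection makes $H_1(\widetilde{M}_i;\mathbb{Q})\to H_1(\widetilde{F}_i;\mathbb{Q})$ surjective, whence $H_1(\widetilde{M}_i;\mathbb{Q})\to H_1(D_{\widetilde{M}_i};\mathbb{Q})$ is onto and your Mayer--Vietoris conclusion survives. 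The paper avoids the issue entirely by first gluing $E_i$ back in, working with $W^0=W_i\cup_{\partial{E}}E=\widetilde{X}_i\cup_{\widetilde{M}_i}C_{\widetilde{M}_i}$, where the full mapping cylinder genuinely does retract to $\widetilde{F}_i$, and then using $\beta_1(E_i)=0$ and the fact that $\partial{E}_i$ is a rational homology sphere to conclude $\beta_1(W_i)=\beta_1(W^0)$.
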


\begin{proof}
For convenience, we shall drop the subscript $i$ throughout, except to distinguish $W_i$ from $W$.
Apply Lemma \ref{CH-lem3.10} to the covering $\psi$ of Lemma \ref{CH-lem3.8},
where $p^s=2^{t+1}$.
The image of $\zeta$ in $G^{ab}$ has order $2^{t+1}$ or $2^{t+2}$.
But $\tau_X$ has no elements of order $2^{t+2}$, so $\zeta$ has order $2^{t+1}$
and $G^{ab}\cong\mathbb{Z}/2^{t+1}\mathbb{Z}\oplus(\mathbb{Z}/2\mathbb{Z})^R$,
where $R$ is as defined in Lemma \ref{CH-lem3.10}.
Since $\tau_M\cong\tau_X\oplus\tau_Y$ is a direct double,
$\tau_X/2\tau_X$ has dimension $b_S+1$, by Lemma \ref{CH-lem3.4}.
Hence we get an epimorphism from
$(\mathbb{Z}/2\mathbb{Z})^{\beta_1(X)+b_S+1}$ to $(\mathbb{Z}/2\mathbb{Z})^{R+1}$,
and hence $R\leq\beta_1(X)+b_S$.
But then $\beta_1(\widetilde{X})\leq2^{t+1}(\beta_1(X)+b_S)$.

Write 
$W^0=W_i\cup_{\partial{E}}E=\widetilde{X}\cup_{\widetilde{M}}C_{\widetilde{M}}$.
(Recall that $C_{\widetilde{M}}$ is contractible to $\widetilde{F}$.)
Then we have the Mayer-Vietoris sequence
\begin{equation*}
\begin{CD}
H_1(\widetilde{M})@>\phi>>H_1(\widetilde{X})\oplus{H_1(\widetilde{F})}\to{H_1(W^0)}\to0
\end{CD}
\end{equation*}
where $\phi$ is induced by the inclusions of $\widetilde{M}$ into 
$\widetilde{X}$ and $\widetilde{C}_{\widetilde{M}}$.
But the map to $H_1(\widetilde{F})$ is surjective,
so $\dim(\mathrm{Im}(\phi))\geq\beta_1(F)$,
and hence 
$\beta_1(W^0)=\beta_1(\widetilde{X})+\beta_1(\widetilde{F})-\dim(\mathrm{Im}(\phi))\leq
\beta_1(\widetilde{X})$.
Finally, since $\beta_1(E)=0$ and $\partial{E}$ is a rational homology sphere, the Mayer-Vietoris sequence (for $W^0=W_i\cup_{\partial{E}}E$) gives $\beta_1(W_i)=\beta_1(W^0)$,
and we have proved the lemma.
\end{proof}

We may now  prove Theorem \ref{CH-thm1.1}.

\begin{proof}
For  each $i=1,2$ the union $C_{\widetilde{M}_i}=D_{\widetilde{M}_i}\cup{E_i}$
gives $\chi(C_{\widetilde{M}_i})=\chi(D_{\widetilde{M}_i})+\chi(E_i)$.
That is, $\chi(D_{\widetilde{M}_i})=\chi(\widetilde{F}_i)-1$,
since $C_{\widetilde{M}_i}$ is contractible to $\widetilde{F}_i$ and $\chi(E_i)=1$.
Thus since $\Sigma=
\widetilde{X}_1\cup{D_{\widetilde{M}_1}}\cup{D_{\widetilde{M}_2}}\cup\widetilde{X}_2$,
where the unions are along closed 3-manifolds, we have
\[
\chi(W)=2^{t+1}\chi(X_1)+2^{t+1}\chi(X_2)+\chi(\widetilde{F}_1)+\chi(\widetilde{F}_2)-2.
\]
By Poincar\'e duality, we may then write
\[
\beta_2(W)=2(\beta_1(W_1)+\beta_1(W_2)+2^{t+1}(\chi(X_1)+\chi(X_2))+\chi(\widetilde{F}_1)+\chi(\widetilde{F}_2)-4
\]
\[
=\Xi_1+\Xi_2, \quad\mathrm{where}\quad\Xi_i=2\beta_1(W_i)+2^{t+1}\chi(X_i)+\chi(\widetilde{F}_i)-2,
\]
where the Mayer-Vietoris sequence (for $W=W_1\cup_{\partial{E}}W_2$)
gives $\beta_1(W)=\beta_1(W_1)+\beta_1(W_2)$,
since $\partial{E}$ is a rational homology sphere.

Since  $X_i$ is a complementary region for an embedding of $M_i$ 
in a homology 4-sphere $\Sigma_i$,
$\beta_1(X_i)+\beta_2(X_i)=\beta_1(M_i)=k_i-1$, 
while $\beta_j(X_i)=0$ for $j>2$.
The inequality of Lemma \ref{CH-lem3.11} and these relations now give
\[
\Xi_i\leq2^{t+2}(\beta_1(X_i)+b_S)+2^{t+1}\chi(X_i)+\chi(\widetilde{F}_i)-2
\]
\[
=2^{t+1}(1+\beta_1(X_i)+\beta_2(X_i)+2b_S)+\chi(\widetilde{F}_i)-2
\]
\[
=2^{t+1}(k_i+2b_S)-\tilde{k}_i=K_i, \quad\mathrm{say}.
\]
Consequently, putting $K=K_1+K_2$, we have
$|\mathrm{sign}(g,W)|\leq\beta_2(W)\leq{K}$.
Also $\Xi_i\equiv\chi(\widetilde{F}_i)
\equiv\tilde{k_i}\equiv{K_i}$ {\it mod} (2),
so that $\mathrm{sign}(g,W)\equiv\beta_2(W)\equiv{K}$ {\it mod} (2).
Thus  $2^{t-1}(\varepsilon(M_1)-\varepsilon(M_2))=
\mathrm{sign}(g,W)\in\{-K,2-K,\dots ,K\}$.
Hence, since $\varepsilon(M_1)-\varepsilon(M_2)$ must be an integer, 
it follows that
\[
\varepsilon(M_1)-\varepsilon(M_2)\in\{ -\frac{2K}{2^t}, \frac{4-2K}{2^t},\dots,\frac{2k}{2^t}\}\cap\mathbb{Z}.
\]
Finally, note that since $\tilde{k}_i=2^t(k_i+2b_S)-2b_S$, by Lemma \ref{CH-lem3.9},
we have $K_i=2^2(k_i+2b_S)=2b_S$, and so $K=2^t(k_1+k_2+4b_S)+4b_s$,
which completes the proof.
\end{proof}

\section{Some further remarks}

Let $M=M(-c;S)$ be a Seifert manifold
with base $B=\#^k\mathbb{RP}^2(\alpha_1,\dots,\alpha_r)$,
where $c>0$ and all cone point orders $\alpha_i$ are odd.
(Then $M$ is an $\mathbb{H}^2\times\mathbb{E}^1$-manifold or a 
$\widetilde{\mathbb{SL}}$-manifold unless $c+r\leq2$.)
Since $\varepsilon(M)$ is a rational number with odd denominator  
it has a well-defined image in $\mathbb{Z}/2^s\mathbb{Z}$, for any $s\geq0$.
In particular,
$\varepsilon(M)\equiv\Sigma\beta_i$ {\it mod\/} $(2)$ and 
$\varepsilon(M)\equiv-\Sigma\alpha_i\beta_i$ {\it mod\/} $(4)$,
since $\alpha_i\equiv1$ {\it mod\/} $(2)$ and 
$\alpha_i\equiv\alpha_i^{-1}$ {\it mod\/} $(4)$ if $\alpha_i$ is odd.
Thus the invariants $c$ and $\eta$ used in 
Lemma \ref{CH-lem3.4} and Theorem \ref{CH-thm3.7}
are just the images of $-\varepsilon(M)$ in $\mathbb{Z}/2\mathbb{Z}$ and 
$\mathbb{Z}/4\mathbb{Z}$ (respectively).
Hence $\varepsilon(M)\equiv2k$ {\it mod\/} $(4)$ if $\ell_M$ is hyperbolic, 
by 
Theorem \ref{CH-thm3.7}.

If $\varepsilon(M)=0$, 
then $\tau_M\cong(\mathbb{Z}/2\mathbb{Z})^2\oplus
\bigoplus_{i\geq1}\mathbb{Z}/\alpha_i\mathbb{Z}$,
by Theorem \ref{CH-thm3.7}.
Therefore if $\tau_M$ is a direct double the numbers $\#\{i:v_p(\alpha_i)=j\}$ 
are even for all odd primes $p$ and exponents $j\geq1$.
If, moreover, $r=3$, then it follows from Lemma \ref{CH-lem3.6} 
that $\ell_M$ is hyperbolic.
Must $S$ be skew-symmetric if $\varepsilon(M)=0$ and $M(-k,S)$ embeds in $S^4$?
The first cases to test are perhaps those with base $P_2(p,q,pq)$,
where $p$ and $q$ are distinct odd primes.
(When $k=2$ one complementary domain, $X$ say, 
has $H_1(M;\mathbb{Q})\cong{H_1(X;\mathbb{Q})}$ and $\chi(X)=0$.
However the argument of Theorem \ref{Hi09-thm4.1} does not appear to apply usefully here,
as we must first pass to the 2-fold cover of $M$ induced by the orientation 
cover of the base $B$ before continuing to an infinite cyclic cover 
homeomorphic to $F\times\mathbb{R}$.
There is no obvious reason that the Blanchfield pairing associated to 
the latter cover should be neutral.)

Donald shows that if $S'$ is strict Seifert data and $M=M(-c;S', (1,-e))$
embeds smoothly in $S^4$ then the Seifert invariants must occur in pairs 
$\{(\alpha, \beta),(\alpha,\beta')\}$ where $\beta'=\alpha-\beta$ or $\beta'=\alpha-\beta^{[-1]}$,
with $\beta.\beta^{[-1]}\equiv1$ {\it mod} $(\alpha)$.
Moreover, if any of the cone point orders are even then all such cone point orders are equal,
and if $(\alpha, \beta)$ and $(\alpha, \beta')$ are Seifert invariants with $\alpha$ even then
$\beta'=\beta$, $\alpha-\beta$, 
$\beta^{[-1]}$ or $\alpha-\beta^{[-1]}$
\cite[Theorem 1.2]{Do15}.

\chapter{3-manifolds with restrained fundamental group}

In this chapter we shall show first that there are just thirteen 3-manifolds with restrained fundamental group which embed in homology 4-spheres, and all embed in $S^4$.
The most difficult part of the argument involves consideration of 3-manifolds which are the union of two copies of the mapping cylinder $N$ of the orientation cover of the Klein bottle.
(This is an extension of the work in Chapter 4.)
We then show that if there is a degree-1 map $f:M\to{P}$ which induces isomorphisms 
on $\mathbb{Z}[\pi_1P]$-homology and $\pi_1P$ is torsion-free and solvable
then $M$ embeds in $S^4$ if and only if $P$ does.
(This result involves 4-dimensional surgery over $\pi_1P$.)

\section{Statement of the main result on virtually solvable groups}

If the fundamental group of a 3-manifold $M$ is restrained then it is either finite or solvable,
and $M$ has one of the geometries $\mathbb{S}^3$, 
$\mathbb{S}^2\times\mathbb{E}^1$,
$\mathbb{E}^3$, $\mathbb{N}il^3$ or $\mathbb{S}ol^3$.
In the first four cases $M$ is a Seifert manifold.

The only infinite solvable 3-manifold group with torsion is $D_\infty$,
which we may exclude by the next result.

\begin{lemma}
\label{Dinfty}
If $\pi_1M\cong{D_\infty}$ then $M$
does not embed in any homology $4$-sphere.
\end{lemma}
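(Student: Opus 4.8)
The plan is to determine the torsion linking pairing $\ell_M$ and then derive a contradiction from the Kawauchi--Kojima constraint.

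First I would record that $H_1(M)\cong D_\infty^{ab}\cong(\mathbb{Z}/2\mathbb{Z})^2$, so $\beta=0$ and $\tau_M\cong(\mathbb{Z}/2\mathbb{Z})^2$. To compute $\ell_M$ I would use the prime decomposition of $M$: van Kampen's theorem identifies $\pi_1M$ with the free product of the fundamental groups of the prime summands (the $S^3$ summands contributing the trivial group), and each of these factors is freely indecomposable. Since $D_\infty=\mathbb{Z}/2\mathbb{Z}*\mathbb{Z}/2\mathbb{Z}$ has finite abelianization it has no infinite cyclic free factor, so no $S^1\times S^2$ summands occur; and by the uniqueness of free product decompositions into freely indecomposable factors there are exactly two prime summands, each with fundamental group $\mathbb{Z}/2\mathbb{Z}$. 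Hence $M\cong M_1\#M_2$ with $H_1(M_i)\cong\mathbb{Z}/2\mathbb{Z}$. (Elliptization would refine this to $M\cong\mathbb{RP}^3\#\mathbb{RP}^3$, but the refinement is not needed.)

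Next I would use that the linking pairing is additive under connected sum, so $\ell_M\cong\ell_{M_1}\perp\ell_{M_2}$. A nonsingular linking pairing on $\mathbb{Z}/2\mathbb{Z}$ must be the pairing $\ell_{1/2}$ of \S1.4, with $\ell_{1/2}(x,x)=\frac12$ on a generator $x$. Therefore $\ell_M\cong\ell_{1/2}\perp\ell_{1/2}$, and a generator $x$ of the first summand has order $2$ with $\ell_M(x,x)=\frac12\neq0$ in $\mathbb{Q}/\mathbb{Z}$. In particular $\ell_M$ is odd, hence not hyperbolic, since hyperbolic pairings are even. On the other hand, if $M$ were a locally flat submanifold of a homology $4$-sphere $\Sigma=X\cup_MY$ then $\ell_M$ would be hyperbolic: the proof of Lemma \ref{hyperbolic lp} uses only the Mayer--Vietoris sequence and Poincar\'e--Lefschetz duality for the splitting $\Sigma=X\cup_MY$, and so applies verbatim with $S^4$ replaced by $\Sigma$; equivalently, Corollary \ref{KKcor} would force $\ell_M(x,x)=2^{1-1}\ell_M(x,x)=0$. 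This contradiction completes the proof.

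The one step that needs genuine care is the reduction $M\cong M_1\#M_2$, which rests on combining the prime decomposition theorem with the uniqueness of free product decompositions; everything afterwards is a short computation with linking pairings, together with the routine observation that the Kawauchi--Kojima obstruction holds over any homology $4$-sphere and not only over $S^4$.
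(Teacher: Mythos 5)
Your proof is correct and follows essentially the same route as the paper: the paper identifies $M\cong\mathbb{RP}^3\#\mathbb{RP}^3$, observes that $\tau_M$ is generated by two elements of order $2$ with nonzero self-linking, and invokes Corollary \ref{KKcor}. Your version merely fills in the details the paper leaves implicit (the free-product/prime-decomposition step, the computation $\ell_M\cong\ell_{1/2}\perp\ell_{1/2}$ without needing to recognise the summands as $\mathbb{RP}^3$, and the remark that the Kawauchi--Kojima obstruction applies over any homology $4$-sphere), all of which are sound.
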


\begin{proof}
If $\pi_1M\cong{D_\infty}$ then $M\cong\mathbb{RP}^3\#\mathbb{RP}^3$,
and so $\tau_M=H_1(M)$ is generated by elements $x_1$ and $x_2$ such that 
$\ell_M(x_i,x_i)\not=0$. 
The lemma now follows immediately from Corollary \ref{KKcor}.
\end{proof}

If $M$ is flat, a $\mathbb{N}il^3$-manifold or $\mathbb{S}ol^3$-manifold 
and $\pi/\pi'$ is infinite then $M$ is the total space of a torus bundle over $S^1$,
and conversely every such bundle has one of these geometries. 

If $M$ has one of these geometries and $\pi/\pi'$ is finite then either
$M$ is the Hantzsche-Wendt flat manifold,
or $M$ is a $\mathbb{N}il^3$-manifold and is Seifert fibred over one of
$S(2,2,2,2)$, $S(2,3,6)$, $S(2,4,4)$, $S(3,3,3)$ or $P(2,2)$, 
or $M$ is a $\mathbb{S}ol^3$-manifold.
The Hantzsche-Wendt flat manifold has fundamental group $G_6$ with presentation 
 \[
 \langle{x,y}\mid{xy^2x^{-1}=y^{-2},~
yx^2y^{-1}=x^{-2}}\rangle,
\]
and $G_6/\langle\langle{x^2,y^2}\rangle\rangle\cong{D_\infty}$.
The orbifold fundamental groups of $S(2,2,2,2)$ and $P(2,2)$ each map onto $D_\infty$.
If $\pi$ is the group of a $\mathbb{S}ol^3$-manifold then $\sqrt\pi\cong\mathbb{Z}^2$, 
and so $\pi/\sqrt\pi$ is virtually $\mathbb{Z}$.
Moreover, $\pi/\sqrt\pi$ has no non-trivial finite normal subgroup,
 and so $\pi/\sqrt\pi\cong{D_\infty}$, if $\pi/\pi'$ is finite.
In all cases, 
excepting only $\mathbb{N}il^3$-manifolds which have base orbifold  
$S(2,3,6)$, $S(2,4,4)$ or  $S(3,3,3)$, 
if $\pi/\pi'$ is finite then $\pi$ maps onto $D_\infty$, with kernel $\mathbb{Z}^2$.
We then have $\pi\cong{A*CB}$, 
where $A$ and $B$ are torsion-free, $[A:C]=[B:C]=2$ and $C\cong\mathbb{Z}^2$.
Since $\pi/\pi'$ is finite the subgroups $A$ and $B$ cannot be $\mathbb{Z}^2$.

We may assume that $A$ and $B$ have presentations 
$\langle{u,v}\mid{vuv^{-1}=u^{-1}}\rangle$ and
$\langle{x,y}\mid{yxy^{-1}=x^{-1}}\rangle$, respectively.
Let $C<A$ have basis $\{u,v^2\}$.
Then we may define a monomorphism $\phi:C\to{B}$ by
 $\phi(u)= x^ay^{2b}$ and $\phi(v^2)=x^cy^{2d}$,  where $ad-bc=1$.
Let $G=A*_\phi{B}$ be the resulting generalized free product with amalgamation.
Then $G/G'$ is infinite if and only if $c=0$.
Otherwise, $G$ is virtually abelian if $a=d=0$, is virtually nilpotent
if and only if exactly one of $a,b$ or $d$ is 0,
and is solvable but not virtually nilpotent if all the entries are non-zero.

Such 3-manifolds have a corresponding decomposition as unions of two
copies of the mapping cylinder $N$ of the orientation cover of $Kb$.
This 3-manifold $N$ is orientable and $\partial{N}\cong{T}$,
and $\pi_1N=\pi_1Kb$ has generators $x$ and $y$ such that $xyx^{-1c}=y^{-1}$.
Fix a homeomorphism $h:\partial{N}\to{T}$ which carries $x^2$ and $y$ to
the standard basis of $\pi_1T=\mathbb{Z}^2$.
Then isotopy classes of self-homeomorphisms of $\partial{N}$ correspond to
automorphisms of $\mathbb{Z}^2$.

Every 3-manifold $M$ with a solvable Lie geometry ($\mathbb{E}^3$,
$\mathbb{N}il^3$ or $\mathbb{S}ol^3$) and such that $\pi_1M$ maps onto $D_\infty$
is a union $N\cup_\phi{N}$,  for some $\phi$ is a self homeomorphism of $N$.
It is easily seen that the automorphism 
$\left(\smallmatrix1&0\\0&-1\endsmallmatrix\right)$ is induced by 
a self-homeomorphism of $N$, and so we may assume that $\det\phi=1$.

We shall state our main result now, but defer the more difficult parts of the proof.

\begin{theorem}
\label{CH-thm2.2}
If $\pi$ is finite or solvable and $M$ embeds in a homology $4$-sphere then $M$ 
is either the Poincar\'e homology sphere or is one of the twelve $3$-manifolds 
listed above.
More precisely, either
\begin{enumerate}
\item$\pi=1$, $Q(8)$ or $I^*=SL(2,5)$ and $M$ is an $\mathbb{S}^3$-manifold; or
\item$\pi\cong\mathbb{Z}$ and $M\cong{S^2\times{S^1}}$; or
\item$\pi\cong\mathbb{Z}^3$ or $G_2=\mathbb{Z}^2\rtimes_{-I}\mathbb{Z}$, 
and $M$ is a flat $3$-manifold; or
\item$\pi\cong\mathbb{Z}^2\rtimes_A\mathbb{Z}$, where
$A=\left(\smallmatrix 1&1\\
0&1\endsmallmatrix\right)$ 
or $A=\left(\smallmatrix -1&4\\
0&-1\endsmallmatrix\right)$ 
and $M$ is a $\mathbb{N}il^3$-manifold; 
\item$M\cong{N\cup_\phi{N}}$, where $\phi=
\left(\smallmatrix2&-1\\1&0\endsmallmatrix\right)$,
and $M$ is a $\mathbb{N}il^3$-manifold;  or
\item$M\cong{N\cup_\phi{N}}$, where $\phi=
\left(\smallmatrix2&-3\\1&2\endsmallmatrix\right)$,
$\left(\smallmatrix2&-5\\1&-2\endsmallmatrix\right)$ or
$\left(\smallmatrix2&-9\\1&-4\endsmallmatrix\right)$,
and $M$ is a $\mathbb{S}ol^3$-manifold; 
\end{enumerate}
\end{theorem}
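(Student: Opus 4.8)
The plan is to run through the geometric cases one at a time, using the constraints assembled in Chapters~2--4 together with a handful of direct constructions. The necessary conditions are: $\tau_M$ must be a direct double and $\ell_M$ hyperbolic (Lemma~\ref{hyperbolic lp}); the 2-primary behaviour is further constrained by Corollary~\ref{KKcor}; and $\pi_X$ must be homologically balanced (Lemma~\ref{Hi17-lem2.1}(6)). Lemma~\ref{Dinfty} disposes of $D_\infty$, hence of $\mathbb{RP}^3\#\mathbb{RP}^3$. For the spherical case I would invoke the classification of 3-manifold groups acting on $S^3$: a finite group acting freely on $S^3$ embeds only if it is homologically balanced (forcing cyclic or binary-polyhedral with the right multiplicator) and then one checks $\ell_M$ hyperbolic; this leaves $1$, $Q(8)$ (realized by $0$-framed surgery on a suitable link, e.g. giving the quaternionic space as $L(2,1)\#{-}L(2,1)$-type double), and $I^*=SL(2,5)$ — here I would exhibit the smooth embedding coming from Figure~2.2, whose complementary regions both have group $I^*$, which also shows the Poincar\'e sphere $S^3/I^*$ does embed (in contrast to the smooth case, by Rochlin). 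For $\pi\cong\mathbb{Z}$, Aitchison's theorem (Theorem~\ref{aitch}) gives $M\cong S^2\times S^1$ and the standard unknotted embedding.

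\textbf{The solvable cases with infinite abelianization.} If $\pi$ is flat, $\mathbb{N}il^3$ or $\mathbb{S}ol^3$ with $\pi/\pi'$ infinite then $M$ is a torus bundle over $S^1$. I would first record that $\beta=\beta_1(M)=1$ or $2$ or $3$ here, and that such $M$ fibre over $S^1$, so Theorem~\ref{Kawabeta1} (when $\beta=1$) and direct fibre-sum/Kirby-calculus constructions apply. For the flat manifolds: $\mathbb{Z}^3$ is $T^3$, embedding as the boundary of a neighbourhood of an unknotted $T^2$ then applying a stabilization; $G_2=\mathbb{Z}^2\rtimes_{-I}\mathbb{Z}$ is $Kb\times S^1$, again embedded as a regular-neighbourhood boundary of $Kb$ unknotted in $S^4$. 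The two $\mathbb{N}il^3$-bundle cases $A=\left(\smallmatrix1&1\\0&1\endsmallmatrix\right)$ and $A=\left(\smallmatrix-1&4\\0&-1\endsmallmatrix\right)$: the first is $M(-1;(2,1),(2,-1),(1,2))$, which I noted after Corollary~\ref{P(2,2)e} embeds via $0$-framed surgery on $8^2_2$; for the second I would use a parallel $0$-framed-link or twist-spin construction and check $\ell_M$ is hyperbolic. The non-existence of the excluded torus-bundle monodromies follows by showing $\ell_M$ (on the finite cokernel of $A-I$, when $\beta=1$) is not hyperbolic, or that the Blanchfield pairing $b_\phi$ fails to be neutral.

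\textbf{The cases $M\cong N\cup_\phi N$.} This is the heart of the proof and the main obstacle, extending the Chapter~4 analysis. With $\det\phi=1$ and the standard presentations of $A,B$, one parametrizes $\phi$ by the matrix $\left(\smallmatrix a&-c\\ b&d\endsmallmatrix\right)$ with $ad-bc=1$ (the notation of \S5.1), and $M$ has a virtually solvable Lie geometry determined by which entries vanish: $\mathbb{N}il^3$ when exactly one of $a,b,d$ is zero, $\mathbb{S}ol^3$ when all are nonzero. I would compute $\tau_M$ and $\ell_M$ explicitly from a Heegaard/surgery presentation of $N\cup_\phi N$ — this is a finite-rank linking-pairing computation analogous to Lemma~\ref{CH-lem3.6} and Corollary~\ref{P(2,2)e} — and impose hyperbolicity plus Corollary~\ref{KKcor}. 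The arithmetic constraints this imposes on $(a,b,c,d)$ should cut the list down to finitely many candidates, and I expect the surviving $\mathbb{N}il^3$ case to be $\phi=\left(\smallmatrix2&-1\\1&0\endsmallmatrix\right)$ and the three $\mathbb{S}ol^3$ cases $\left(\smallmatrix2&-3\\1&2\endsmallmatrix\right)$, $\left(\smallmatrix2&-5\\1&-2\endsmallmatrix\right)$, $\left(\smallmatrix2&-9\\1&-4\endsmallmatrix\right)$. For these I would produce explicit embeddings (again by $0$-framed surgery on an appropriate bipartedly slice/ribbon link, or by an ambient-surgery/fibre-sum construction on the pieces $N$, which is smoothly embeddable as half of a twisted double); conversely, the remaining candidates are eliminated either because $\ell_M$ is not hyperbolic, because the $2$-primary part violates Corollary~\ref{KKcor}, or — for the borderline $\mathbb{S}ol^3$ cases where the linking pairing is hyperbolic — by a Blanchfield-pairing or $L^2$-signature obstruction showing no Poincar\'e embedding exists. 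Getting this last non-existence argument to cover every surviving non-embedding $\mathbb{S}ol^3$ monodromy cleanly is the step I anticipate being delicate, since hyperbolicity of $\ell_M$ alone is not enough and one must feed in the extra structure of the infinite cyclic (or index-2) cover.
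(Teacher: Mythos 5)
Your case division and general shape (geometric trichotomy, linking-pairing constraints for non-existence, explicit link surgeries for existence, deferral of the $N\cup_\phi N$ case) match the paper, but there is a genuine gap that recurs in three places: your stated obstruction toolkit (hyperbolicity of $\ell_M$, Corollary \ref{KKcor}, homological balance, Blanchfield neutrality) is not strong enough to prove the non-existence half of the theorem. Concretely: (i) for $S^3/Q(8k)$ with $k>1$ odd, one computes $\tau_M\cong(\mathbb{Z}/2\mathbb{Z})^2$ with $\ell_M(a,a)=\frac{1-k}2\equiv0$, $\ell_M(a,h)=\frac12$, $\ell_M(h,h)=0$, so $\ell_M$ \emph{is} hyperbolic and $H_2(Q(8k))=0$, so none of your criteria exclude these; (ii) for the $S^1$-bundles over $Kb$ (the torus bundles with $A=\left(\smallmatrix-1&b\\0&-1\endsmallmatrix\right)$), hyperbolicity only forces $4\mid b$ and leaves infinitely many candidates; (iii) for $M=N\cup_\phi N$ with $\pi/\pi'$ finite, Theorem \ref{CH-thm4.1} shows hyperbolicity forces $c=1$ and $m,n$ even, not both divisible by $4$ — again infinitely many survivors, so your expectation that the linking-form arithmetic "cuts the list down to finitely many candidates" fails. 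Moreover the $M_{m,n}$ are rational homology spheres, so there is no infinite cyclic cover and no Blanchfield pairing to appeal to, and an $L^2$-signature over an infinite cover is likewise unavailable.

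The missing idea, which the paper uses in all three places, is the $\mathbb{Z}/2\mathbb{Z}$-Signature Theorem applied to involutions on closed $4$-manifolds assembled from finite ($2$- and $4$-fold) covers of a complementary region, capped off with (modified) disc bundles: this yields the Euler-number bound of Corollary \ref{CH-prop1.3} (disposing of $Q(8k)$, $k>1$, and restricting the $Kb$-bundles to $b=0,\pm4$), and Lemmas \ref{CH-lem4.2} and \ref{CH-lem4.3}, where the self-orthogonal splitting $H_1(M)\cong H_1(X)\oplus H_1(Y)\cong(\mathbb{Z}/4\mathbb{Z})^2$ determines the covers, the fixed-point set is a union of $\mathbb{RP}^2$'s or tori with computable normal Euler number, and the signature is bounded by $\beta_2(W)$ via the covering-space Betti-number estimate of Lemma \ref{CH-lem3.10}. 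That equivariant-signature step is the heart of the non-existence argument and is absent from your proposal. (Smaller slips: the half-turn flat manifold is not $Kb\times S^1$ but the mapping torus of $-I$, i.e.\ the Euler-number-$0$ bundle over $Kb$; and the mapping torus of $\left(\smallmatrix1&1\\0&1\endsmallmatrix\right)$ is $M(Wh)$ with $\beta=2$, not the rational homology sphere $M(-1;(2,1),(2,-1),(1,2))$.)
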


\begin{proof}
Suppose first that $\pi$ is finite. 
Then it is a finite subgroup of $SO(4)$ which acts freely on $S^3$.
If $M$ embeds in $S^4$ then $\pi/\pi'$ is a direct double, 
and so $\pi=1$, $I^*$ or is a generalized quaternionic group $Q(8k)$, where $k\geq1$. 
The 3-sphere $S^3$ is the equator of $S^4$,
while the homology sphere $S^3/I^*$ embeds in $S^4$ by Freedman's result.

The quotients $S^3/Q(8k)$ with $k\geq1$ are the total spaces of $S^1$ bundles over 
$\mathbb{R}^2$ with normal Euler number $2k$.
In particular,  $S^3/Q(8)$ is the boundary of a regular neighbourhood of a
smooth embedding of $\mathbb{RP}^2$ in $S^4$.
However none of the others embed.
See Corollary \ref{CH-prop1.3}.

If $\pi$ has two ends then $\pi\cong\mathbb{Z}$,
since $D_\infty$ is excluded by Lemma \ref{Dinfty}.
Hence $M\cong{S^2\times{S^1}}$,
which is the boundary of a regular neighbourhood of a
smooth embedding of $S^2$ in $S^4$.

If $\pi$ is not $\mathbb{Z}$ and $\pi/\pi'$ is infinite then 
$\beta=\beta_1(M)=1,2$ or 3.
If $\beta=1$ then $\rho$ is uniquely a semidirect product 
$\mathbb{Z}^2\rtimes_A\mathbb{Z}$, for some $A\in{SL(2,\mathbb{Z})}$.
Let $\Delta_A(t)=\det(A-tI)$ be the characteristic polynomial.
Then $\Delta_A(1)\not=0$, since $\beta=1$.
If $\Delta_A(-1)\not=0$ also then Kawauchi's Theorem applies,
and $\Delta_A(t)$ is irreducible, giving a contradiction.
Hence we must have $\Delta_A(t)=(t+1)^2$, 
and so $A=\left(\smallmatrix -1&b\\
0&-1\endsmallmatrix\right)$ for some $b$.
The mapping torus $T\rtimes_AS^1$ is also the total space of an $S^1$-bundle 
over $Kb$ with Euler number $b$.
This embeds in $S^4$ only if $b=0$ or $\pm4$, by Corollary \ref{CH-prop1.3}.

When $b=0$ we have $\pi=\mathbb{Z}^2\rtimes_{-I}\mathbb{Z}$
and $M$ is the flat 3-manifold with holonomy of order 2.
(The corresponding link is $8^3_9$.)
This is the boundary of a regular neighbourhood of 
a smooth embedding of $Kb$ with normal Euler number 0.
When $b=4$ the corresponding link is $9^3_{19}$.

If $\beta=2$ then $M$ is a $\mathbb{N}il^3$-manifold,
and $\pi/\pi'\cong\mathbb{Z}^2\oplus(\mathbb{Z}/q\mathbb{Z})$,
for some $q\geq1$.
Clearly the torsion is a direct double only when $q=1$.
In this case $M$ is the Heisenberg 3-manifold, 
with corresponding link $5^2_1$.

If $\beta=3$ then $\pi\cong\mathbb{Z}^3$ and $M$ is the 3-torus,
which embeds smoothly as the boundary of a regular neighbourhood of a
smooth embedding of $T$ in $S^4$ with a product neighbourhood.
(The corresponding link is $6^3_1$.)

We may assume henceforth that $\pi$ has one end and $\pi/\pi'$ is finite.
The only such flat 3-manifold group is the group of the Hantzsche-Wendt
flat 3-manifold with holonomy $(\mathbb{Z}/2\mathbb{Z})^2$.
This does not embed as its torsion linking form is not hyperbolic,
by Corollary \ref{P(2,2)e}.

If $M$ is a $\mathbb{N}il^3$-manifold with
Seifert base $S(3,3,3)$ and $\pi/\pi'$ is a direct double 
then $M\cong{M(0;(3,1),(3,1),(3,-1))}$, with corresponding link $6^2_1$.

The remaining cases all involve 3-manifolds of the form $M\cong{N\cup_\phi{N}}$.
We defer discussion of this final group.
\end{proof}

Eleven of the twelve such 3-manifolds which embed in homology 4-spheres 
have presentations in terms of 0-framed bipartedly trivial links
(and thus embed smoothly in $S^4$),
namely six relatively familar links:

$U=0_1$ (the unknot), giving $S^2\times{S^1}$;

$Ho=2^2_1$ (the Hopf link), giving $S^3$;

$4^2_1$ (the $(2,4)$-torus link), giving $S^3/Q(8)$;

$Wh=5^2_1$ (the Whitehead link), giving the $\mathbb{N}il^3$-manifold
$M(1;(1,-1))$;

$6^2_1$ (the $(2,6)$-torus link), giving the $\mathbb{N}il^3$-manifold
$M(0;(3,1),(3,1),(3,-1))$;

$Bo=6^3_2$ (the Borromean rings), giving the 3-torus $S^1\times{S^1}\times{S^1}$;\\
and five without popular names:
$8^3_9,8^2_2, 9^3_{19}, 9^2_{53}$ and $9^2_{61}$, giving the half-turn flat 3-manifold, two $\mathbb{N}il^3$-manifolds and two $\mathbb{S}ol^3$-manifolds,
respectively.
One further $\mathbb{S}ol^3$-manifold is given by a bipartedly 
ribbon link $(8_{20},U)$.

The thirteenth 3-manifold with $\pi$ restrained and which embeds is
the Poincar\'e homology sphere $S^3/I^*$.
This does not embed smoothly, as it bounds a 1-connected smooth 4-manifold with intersection form $E_8$ and so has non-zero Rochlin invariant.
(Hence $S^3/I^*$ does not have a bipartedly slice link presentation.)

Seven of the first twelve also arise as the boundaries of 
regular neighbourhoods 
of smooth embeddings of the 2-sphere  $S^2$,  the torus $T$, 
the projective plane $\mathbb{RP}^2$ or the Klein bottle $Kb$ in $S^4$. 
Closed orientable surfaces embedded in $S^4$ have product neighbourhoods
(normal Euler number 0), while the normal Euler number of an embedding of $\#^c\mathbb{RP}^2$
may have any value congruent to $2c~mod~(4)$ and between $-2c$ and $2c$  \cite{Ma69}.

\section{Unions of mapping cylinders}

The hard work is in dealing with the final two cases.
We need some further preparatory discussion and several lemmas before we can complete the proof of the main result (in Theorem \ref{CH-thm4.4}).

\begin{theorem}
\label{CH-thm4.1}
Let $M=N\cup_\phi{N}$,  where $N$ and $\phi=\left(\smallmatrix{a}&b\\c&d\endsmallmatrix\right)$ are as above. Then
\begin{enumerate}
\item{}if $c=0$ then $\ell_M$ is hyperbolic if and only if $4$ divides $b$;
\item{}if $c\not=0$ then $\ell_M$ is hyperbolic if and only if $b$ is odd, $c=1$ and $a$
and $d$ are both even, but NOT both divisible by $4$.
\end{enumerate}
\end{theorem}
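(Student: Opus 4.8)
The plan is to compute the torsion linking pairing $\ell_M$ explicitly from the decomposition $M=N\cup_\phi N$ and then apply the classification of linking pairings on $2$-groups reviewed in \S1.4, together with the Kawauchi--Kojima criterion (Lemma \ref{hyperbolic lp}) that hyperbolicity is obstructed by any nontrivial orthogonal $2$-group summand. First I would set up the Mayer--Vietoris calculation. Writing $N_1$ and $N_2$ for the two copies of the mapping cylinder $N$ of the orientation cover of $Kb$, we have $\partial N_i\cong T$, $H_1(N;\mathbb{Z})\cong\mathbb{Z}\oplus(\mathbb{Z}/2\mathbb{Z})$ generated by the image $\bar x$ of $x$ (of infinite order) and the image $\bar y$ of $y$ (of order $2$, since $xyx^{-1}=y^{-1}$ forces $2\bar y=0$ in the abelianization). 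The inclusion $\partial N\hookrightarrow N$ sends the chosen basis $\{x^2,y\}$ of $\pi_1T=\mathbb{Z}^2$ to $\{2\bar x,\bar y\}$. Gluing by $\phi=\left(\smallmatrix a&b\\c&d\endsmallmatrix\right)\in SL(2,\mathbb{Z})$, the Mayer--Vietoris sequence $H_1(T)\to H_1(N_1)\oplus H_1(N_2)\to H_1(M)\to 0$ gives $H_1(M)$ as the cokernel of the map $\mathbb{Z}^2\to(\mathbb{Z}\oplus\mathbb{Z}/2)^2$ determined by the two inclusions, one of which is the "identity" embedding and the other twisted by $\phi$. Carrying this out yields a finite presentation for $\tau_M$ whose structure splits according to whether $c=0$ (so $\beta_1(M)=1$, $M$ a torus bundle) or $c\ne 0$ (so $\beta_1(M)=0$, $M$ a rational homology sphere).

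Next I would compute $\ell_M$ on this presentation. The key geometric input is that a regular fibre $f$ of $N$ — the curve $y$ — bounds in $N$ a $2$-chain whose boundary records the relation $2\bar y=0$ (desingularising the mapping cylinder structure), and $x^2$ is nullhomologous in $N$ up to a multiple; the linkings of the generators of $\tau_M$ with one another are then read off exactly as in the proof of Lemma \ref{CH-lem3.6}, using intersection numbers of perturbed representatives against these $2$-chains. In the case $c=0$ one finds $\tau_M\cong(\mathbb{Z}/4\mathbb{Z})^2$ (when $b$ is even; one checks $\tau_M$ is cyclic or has odd-order obstructions otherwise) with linking matrix of the shape $\left(\smallmatrix * & 1/4\\ 1/4 & * \endsmallmatrix\right)$ depending on $b$ mod $8$, and the same $GL(2,\mathbb{Z}/4\mathbb{Z})$-congruence computation used in Corollary \ref{P(2,2)e} shows this is hyperbolic iff $b\equiv 0\pmod 4$. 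In the case $c\ne 0$, hyperbolicity forces $|\det|$ considerations: $\tau_M$ has order a function of $a,b,c,d$, and requiring it to be a direct double already forces $c=1$ (otherwise the order is not a square, or the $2$-primary part fails to pair up). With $c=1$ and $ad-b=1$, the $2$-primary part of $\tau_M$ is governed by the parities of $a$ and $d$: I would tabulate the four parity cases $(a,d)$ mod $2$, compute the $2$-group block of $\ell_M$ in each, and apply Wall's list \cite{Wa64,KK80} together with Theorem 0.1 of \cite{KK80} (a hyperbolic linking on a $2$-group has no orthogonal summand) to eliminate $a,d$ both odd and $a,d$ both $\equiv 0\pmod 4$, leaving exactly the stated condition; one also checks that the odd-primary part, being a sum of the standard hyperbolic pairs $\left(\smallmatrix 0 & 1/m\\ 1/m & 0\endsmallmatrix\right)$, imposes nothing further.

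The main obstacle will be the bookkeeping in the $c\ne 0$ case: correctly identifying $\tau_M$ as an abelian group from the $\phi$-twisted presentation, and then pinning down the $2$-adic refinements ($a,d$ even but not both divisible by $4$) via the congruence classes of the entries of the linking matrix in $GL(\rho,\mathbb{Z}/8\mathbb{Z})$. I expect to reduce the argument, after a change of generators analogous to the substitutions $\bar q_j\mapsto\bar q_j+2^{-t_j}\alpha_1 q_1$ in the proof of Lemma \ref{CH-lem3.4}, to a $2\times 2$ or $3\times 3$ block over $\mathbb{Z}/8\mathbb{Z}$ and then invoke the classification directly; the congruence $ad-b=1$ is what couples the parity of $b$ to those of $a$ and $d$, explaining why "$b$ odd" is equivalent to "$ad$ even" and hence to "not both $a,d$ odd". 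The "not both divisible by $4$" clause will come out of the distinction between the hyperbolic pairing $E_0^2$ on $(\mathbb{Z}/4\mathbb{Z})^2$ and the nonhyperbolic even pairing $E_1^2$, exactly the dichotomy recorded in \S1.4.
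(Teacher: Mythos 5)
Your overall strategy coincides with the paper's: derive a presentation of $H_1(M)$ from the gluing (the paper gets $\langle x_1,y_1,x_2,y_2\mid 2y_1=2y_2=0,\ 2ax_1+by_1=2x_2,\ 2cx_1+dy_1=y_2\rangle$ directly from the framing $([\xi],[f'])=(2x,y)$ of $\partial N$), compute $\ell_M$ geometrically by intersecting perturbed $1$-cycles with explicit $2$-chains (the M\"obius band section $C$ with $\partial C=2\zeta-\xi$ and the chain $S=Kb+2A$ with $\partial S=2f'$ play the roles you assign to the fibre-bounding chains), and then read off hyperbolicity from the classification of pairings on $2$-groups. So the route is the same; the issues are in the execution.

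There is a concrete error in your $c=0$ case: since $ad=1$ forces $a=d=\pm1$, the relation $y_2=dy_1$ identifies the two order-$2$ generators and $\tau_M$ has order $4$, not $16$. When $b$ is even it is $(\mathbb{Z}/2\mathbb{Z})^2$ (generated by $y_1$ and $z=ax_1-x_2$), not $(\mathbb{Z}/4\mathbb{Z})^2$, and the hyperbolicity check is the elementary one for the matrix $\left(\smallmatrix 0&\frac12\\ \frac12&\pm\frac{b}4\endsmallmatrix\right)$ on an exponent-$2$ group, which is hyperbolic exactly when $\frac{b}4\in\mathbb{Z}$; invoking the $GL(2,\mathbb{Z}/4\mathbb{Z})$ computation of Corollary \ref{P(2,2)e} here would be applying the wrong classification to the wrong group. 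Relatedly, your reason for forcing $c=1$ in case (2) does not work as stated: $|\tau_M|=16c$ is a perfect square whenever $c$ is, so ``the order is not a square'' does not rule out $c=4,9,\dots$; one must actually analyse the presentation to see that $\tau_M$ is a direct double only when $c=1$ and $b$ is odd, which is what the paper does. Once $c=1$ is established, $\tau_M\cong(\mathbb{Z}/4\mathbb{Z})^2$ with linking matrix having entries $\frac14(2\pm a)$, $\pm\frac14$, $\frac14(2\pm d)$, and your proposed parity/$E_0^2$-versus-$E_1^2$ analysis does finish the argument (note there is no odd-primary part to worry about, since the order is $16$).
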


\begin{proof}
We shall identify the Klein bottle $Kb$ with its image in $N$.
Then the inclusion of $\partial{N}\cong{T}$ into $N\simeq{Kb}$ is homotopic 
to the orientation cover $\psi:T\to{Kb}$.
The Klein bottle is also a twisted $S^1$ bundle over the circle.
Let $f$ and $\zeta$ be 1-cycles in $Kb$ corresponding to a fibre and section of this bundle.
We take $\xi=\psi^{-1}(\zeta)$ and a component $f'$ of $\psi^{-1}(f)$ as 1-cycles in 
$\partial{N}$.
Note that the bundle structure on $Kb$ induces a bundle structure on $N$ 
with typical fibres $f$ and $f'$ and section the mapping cylinder of $\pi|\xi$.
This section is a M\"obius band with centreline $\zeta$ and boundary $\xi$, which may be regarded as a 2-chain $C$ with $\partial{C}=2\zeta-\xi$,
The Klein bottle itself may also be considered as a 2-chain with $\partial{Kb}=2f$.
Also, the fibres $f$ and $f'$ cobound an annular 2-chain $A$ such that, 
putting $S=Kb+2A$, we have $\partial{S}=2f'$.
Perturbing $\zeta$ and $f'$ to intersect these 2-chains transversely, we see that $\zeta\bullet{S}=\pm1$, $\zeta\bullet{C}=\pm1$, $f'\bullet{S}=0$ and $f'\bullet{C}=\pm1$.

Now $H_1(N)\cong\langle{x,y}\mid 2y=0\rangle$, where $x=[\zeta]$ and $y=[f']$,
and $([\xi],[f'])=(2x,y)$ provides the standard framing of $\partial{N}$.
Thus $H_1(M)$ has the presentation
\[
\langle{x_1,y_1,x_2,y_2}\mid2y_1=2y_2=0,~2ax_1+by_1=2x_2,~2cx_1+dy_1=y_2\rangle,
\]
where we use subscripts to distinguish objects related to the two copies of $N$ forming $M$.
It is easily seen that if $c=0$ then $H_1(M)$ is the direct sum of $\mathbb{Z}$ 
with a group of order 4, and the torsion subgroup is a direct double if and only if $b$ is even, while if $c\not=0$ then $H_1(M)$ is finite, of order $16c$, 
and is a direct double if and only if $c=1$ and $b$ is odd.
We shall now determine $\ell_M$ in these two cases.

If $c=0$ and $b$ is even then $a=d=\pm1$, 
and $\tau_M$ is generated by $y_1$ and $z=ax_1-x_2$.
The latter element is represented by the 1-cycle $\zeta'=a\zeta_1-\zeta_2$.
Under the identification map $\phi$, the 1-cycles $a\xi_!+bf+1'$ and $\xi_2$
are homologous and so bound a 2-chain $\Delta\subset\partial{N}$,
and the chain $E=aC_1-C_2-\frac12bS_1+\Delta$ has boundary $2\zeta'$.
Hence
$\ell_M(y_1,y_1)=\frac12(f_1'\bullet{S_1})=0$,
$\ell_M(y_1,z)=\frac12(f_1'\bullet{E})=\frac12(\zeta'\bullet{S_1})=\frac12$
and $\ell_M(z,z)=\frac12(\zeta'\bullet{E})=\pm\frac14b$ in $\mathbb{Q}/\mathbb{Z}$,
and (i) follows easily.

If $c=1$ and $b$ is odd then $H_1(M)$ is generated by $x_1$ and $x_2$.
There are 2-chains $\Delta_1,\Delta_2\subset\partial{N}$ such that the chains
$G_1= 2C_1-dS_1+S_2+\Delta_1$ and $G_2= 2C_2-S_1+aS_2+\Delta_2$ 
have boundaries $4x_1$ and $4x_2$, respectively.
(Here we have used both identifications $a\xi_1+bf_1'\sim\xi_2$ and $\xi_1+df_1'\sim{f_2'}$
due to $\phi$, as well as the fact that $\det(\phi)=ad-bc=1$.)
Hence $\ell_M(x_1,x_1)=\frac14(\zeta_1\bullet{G_1})=\frac14(2\pm{d})$,
$\ell_M(x_1,x_2)=\frac14(\zeta_1\bullet{G_2})=\pm\frac14$ and
$\ell_M(x_2,x_2)=\frac14(\zeta_2\bullet{G_2})=\frac14(2\pm{a})$
in $\mathbb{Q}/\mathbb{Z}$,
and (ii) follows easily.
\end{proof}

It follows from Theorem \ref{CH-thm4.1} that if $M=N\cup_\phi{N}$ embeds in a homology
4-sphere then either $c=0$, in which case it is a torus bundle over $S^1$ and thus one of the manifolds listed in Theorem \ref{CH-thm2.2}, or $c=1$.
In the second case we introduce the notation $M_{m,n}$ for the union corresponding to 
$\phi=\left(\smallmatrix{m}&{mn-1}\\1&{n}\endsmallmatrix\right)$.

The group $\pi_1M_{0,n}$ has the presentation $\langle{x,u}\mid{x^2=(xu^2)^2=1}\rangle$.
The quotient by the infinite cyclic normal subgroup generated  by $x^2$ is $\pi^{orb}P(2,2)$.
Hence $M_{0,n}$ is Seifert fibred over $P(2,2)$,
with generalized Euler invariant $n$. 
Thus $M_{0,n}$ has hyperbolic linking form 
if and only if $n\equiv2$ {\it mod} (4),
by Corollary \ref{P(2,2)e}.

Let $N(e)$ denote the orientable $S^1$-bundle over $\mathbb{RP}^2$ 
with Euler number $e$,  and let $N^*(e)$  denote the framed exterior of a fibre $f$ in $N(e)$, with framing $([\mu],[f'])$ given by a meridian $\mu$ and a neighbouring fibre $f'$.
For example, the mapping cylinder with the $S^1$-bundle structure and framing
$([\xi],[f']_=(2x,y)$ used in the preceding proof is precisely the bundle $N^*(0)$.
An alternative description for $M_{m,n}$ is $N\cup_{\phi'}N$,
where $\phi'=\left(\smallmatrix{m}&1-mn\\1&-n\endsmallmatrix\right)$.
Now $\phi'=\left(\smallmatrix1&{m}\\0&1\endsmallmatrix\right)
\left(\smallmatrix0&1\\1&0\endsmallmatrix\right)
\left(\smallmatrix1&-n\\0&1\endsmallmatrix\right)$.
Hence we may write
\[
M_{m,n}=N^*(m)\cup_{\left(\smallmatrix0&1\\1&0\endsmallmatrix\right)}N^*(n).
\]
That is, $M_{m,n}$ may be described by the plumbing diagram

\setlength{\unitlength}{1mm}
\begin{picture}(80,20)(-16,0)
\linethickness{2pt}
\put(30,10.1){$\bullet$}
\put(30,13){$m$}
\put(28,5){$[-1]$}
\put(50,10.1){$\bullet$}
\put(50,13){$n$}
\put(48,5){$[-1]$}
\put(31,11){\line(1,0){20}}
\end{picture}\\
as in \cite{Ne81}.
It is clear from this description firstly that $M_{m,n}\cong{M_{n,m}}$ and secondly that reversing the orientation of $M_{m,n}$ corresponds to changing the sign 
of both $m$ and $n$.

By using an argument similar to that for the proof of Theorem \ref{CH-thm1.1},
 we are able to put strong homological constraints on embeddings of the manifolds $M_{m,n}$, and hence complete the classification of those unions $N\cup_\phi{N}$ which embed.

Suppose that $M=M_{m,n}$ embeds in a homology 4-sphere $\Sigma$,
with complementary regions $X$ and $Y$.
Then each summand of  the finite group $H_1(M)\cong{H_1(X)\oplus{H_1(Y)}}$, 
is self-annihilating with respect to $\ell_M$, by Lemma \ref{hyperbolic lp}.
We shall relabel  the generators $x_1,y_1,x_2,y_2$ as $x,y,u,v$, respectively.
Then $H_1(M)\cong(\mathbb{Z}/4\mathbb{Z})^2$, with generators $x$ and $u$.
We also have $y=2u$ and $v=2x$.
By the proof of Theorem \ref{CH-thm4.1} we may write the matrix for $\ell_M$ 
with respect to the basis $\{x,u\}$ as
\[
\ell_M=\left(\smallmatrix\frac14(2+m)&\frac14\\ \frac14&\frac14(2+n)\endsmallmatrix\right)
=\left(\smallmatrix0&\frac14\\ \frac14&\frac14(2+n)\endsmallmatrix\right),\quad\mathrm{if}~m\equiv2~mod~(4).
\]
Thus, in the case that $m\equiv{n}\equiv2$ {\it mod} (4), there are four possible self-orthogonal decompositions 
for $H_1(X)\oplus{H_1(Y)}$:
\[\langle{x}\rangle\oplus\langle{u}\rangle,~
\langle{x}\rangle\oplus\langle{u+2x}\rangle,~
\langle{x}+2u\rangle\oplus\langle{u}\rangle, ~\mathrm{or}~
\langle{x+2u}\rangle\oplus\langle{u+2x}\rangle.
\]
If $m\equiv2$ {\it mod} (4) and $n\equiv0$ {\it mod} (4) the four possibilities are
\[\langle{x}\rangle\oplus\langle{u-x}\rangle,~
\langle{x}\rangle\oplus\langle{u+x}\rangle,~
\langle{-x+2u}\rangle\oplus\langle{u-x}\rangle, ~\mathrm{or}~
\langle{-x+2u}\rangle\oplus\langle{u+x}\rangle.
\]

\begin{lemma}
\label{CH-lem4.2}
If $M=M_{m,n}$ embeds in a homology $4$-sphere $\Sigma$ and $m\equiv2$ mod $(4)$
then $m=\pm2$.
\end{lemma}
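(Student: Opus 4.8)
The plan is to mimic the argument that led to Theorem \ref{CH-thm1.1}, applying it now to a single $M=M_{m,n}$ with $m\equiv2\bmod(4)$. The key observation is that, by the plumbing description $M_{m,n}=N^*(m)\cup_{\left(\smallmatrix0&1\\1&0\endsmallmatrix\right)}N^*(n)$, the manifold $M_{m,n}$ is the analogue of a circle bundle over $\#^2\mathbb{RP}^2$ built from the two Euler numbers $m$ and $n$; indeed $M_{m,n}$ is Seifert fibred over a base with underlying surface $\#^2\mathbb{RP}^2$, and its generalized Euler number is (up to sign) a rational number whose image mod $4$ records $m+n$. First I would identify, from the self-orthogonal decompositions of $H_1(M)\cong(\mathbb{Z}/4\mathbb{Z})^2$ listed just above the lemma, which of the two summands $H_1(X)$, $H_1(Y)$ contains the image of the regular fibre class of order $2$; one of them necessarily contains a cyclic summand of order $4$ on which the projection from $\tau_M$ is an isomorphism. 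This plays the role of the element $\widehat q$ in Lemma \ref{CH-lem3.8}.

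Next I would construct, exactly as in Lemma \ref{CH-lem3.8}, a $4$-fold cyclic covering $\psi:\widetilde X\to X$ (here $2^{t+1}=4$, since the relevant $2$-adic valuation is $t=1$) whose covering involution acts on the induced Seifert structure of $\widetilde M=\psi^{-1}(M)$ by rotating fibres through $\pi$. As in Lemma \ref{CH-lem3.9}, $\widetilde M$ is again of the same type — a circle bundle over a non-orientable surface — and its Euler number is $\frac12$ times that of $\overline M$, so the normal Euler number of the good fixed surface $\widetilde F$ will be a fixed multiple of $m$ (after arranging, by symmetry $M_{m,n}\cong M_{n,m}$, that it is the $m$-side we cover). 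One then forms a closed $4$-manifold $W=\widetilde X\cup_{\widetilde M}D_{\widetilde M}$ carrying an involution $g$ with good fixed point set $\widetilde F$, so that $\mathrm{sign}(g,W)=e(\widetilde F)$ equals a fixed nonzero constant times $m$. The $G$-Signature Theorem inequality $|\mathrm{sign}(g,W)|\le\beta_2(W)$, together with the bounds on $\beta_1(W)$ coming from the covering (Lemma \ref{CH-lem3.10}) and from $\beta_1(X)+\beta_2(X)=\beta_1(M)=0$ (since $H_1(M)$ is finite), will then bound $|m|$ by an absolute constant. Running the arithmetic as in the proof of Theorem \ref{CH-thm1.1}, with $k=2$ (two cross-caps), $b_S=0$ (all singular fibres here have odd multiplicity $1$, i.e.\ there are none) and $t=1$, should yield $\beta_2(W)\le K$ with $K$ small enough that $|m|\le2$; combined with $m\equiv2\bmod(4)$ this forces $m=\pm2$.

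The main obstacle I anticipate is bookkeeping rather than conceptual: one must verify that the Seifert/circle-bundle structure on $M_{m,n}$ over $\#^2\mathbb{RP}^2$ is set up so that Lemmas \ref{CH-lem3.8}--\ref{CH-lem3.11} literally apply (the base here is $\#^2\mathbb{RP}^2$ with \emph{no} cone points, which is the cleanest case), and that the constant $K$ obtained is indeed $<4$ in absolute value of $m$, so that $m\equiv2\bmod(4)$ genuinely pins down $m=\pm2$ rather than leaving $m=\pm2,\pm6,\dots$ as possibilities. A secondary point to check carefully is that the element of order $4$ used to build the covering really lies in $H_1(X)$ (and not split between the two sides): this is where the explicit list of self-orthogonal decompositions, which depends on whether $n\equiv0$ or $2\bmod(4)$, is needed, and one must see that in every case one of $X$, $Y$ receives a genuine $\mathbb{Z}/4\mathbb{Z}$ summand mapped isomorphically by the projection. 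Once that is in hand, the signature estimate closes the argument.
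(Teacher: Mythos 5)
Your overall strategy---pass to a $4$-fold cyclic covering of one complementary region, extend the covering involution to a closed $4$-manifold with ``good'' fixed point set, and apply $|\mathrm{sign}(g,W)|\leq\beta_2(W)$---is the right one, and is what the paper does. But the step you dismiss as ``bookkeeping rather than conceptual'' is precisely where your argument breaks down: $M_{m,n}$ is \emph{not} a circle bundle over $\#^2\mathbb{RP}^2$, nor Seifert fibred over any orbifold with that underlying surface. An orientable $S^1$-bundle space over $\#^2\mathbb{RP}^2$ has $\beta_1=1$, whereas $H_1(M_{m,n})\cong(\mathbb{Z}/4\mathbb{Z})^2$ is finite; and for generic $\phi$ the manifolds $N\cup_\phi N$ are $\mathbb{S}ol^3$-manifolds, which admit no Seifert fibration at all (the Seifert-fibred case $M_{0,n}$ has base $P(2,2)$, not $\#^2\mathbb{RP}^2$). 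So Lemmas \ref{CH-lem3.8}--\ref{CH-lem3.11} cannot be invoked as stated. The covering analysis must instead be done by hand on the plumbing decomposition $M=N^*(m)\cup N^*(n)$: in every self-orthogonal splitting of $(\mathbb{Z}/4\mathbb{Z})^2$ the projection $p_Y$ sends $u$ to a generator and kills $x$ and $y=2u$ modulo $2$, so in the induced $4$-fold cover of $M$ the piece $N^*(m)$ lifts to two disjoint copies of $N^*(\tfrac{m}2)$ on which the covering involution rotates fibres, while $N^*(n)$ unwraps to $S^1$-bundles over annuli on which the involution acts freely (on the base). Consequently the good fixed point set is \emph{two} copies of $\mathbb{RP}^2$, not a single connected surface as in Lemma \ref{CH-lem3.9}, with total normal Euler number $\tfrac{m}2+\tfrac{m}2=m$, and the closed manifold $W$ is obtained by capping the two boundary components $N(\tfrac{m}2)$ with their disc bundles and the annular piece with its associated disc bundle.

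Your numerology also would not deliver the conclusion. The constant $K$ of Theorem \ref{CH-thm1.1} governs a comparison of \emph{two} embeddings glued along $\partial E$, so it does not directly bound a single Euler number; and even formally, with $t=1$, $k_1=k_2=2$, $b_S=0$ it gives a bound of $8$, which together with $m\equiv2\bmod(4)$ would still leave $m=\pm6$ open. What actually closes the argument is that $H_1(M)$ is finite, so $X$ and $Y$ are rational homology balls: $\chi(Y)=1$ gives $\chi(W)=6$, and $H_1(Y)\cong\mathbb{Z}/4\mathbb{Z}$ being cyclic forces $R=0$ in the notation of Lemma \ref{CH-lem3.10}, hence $\beta_1(\widetilde{Y}')=0$ and $\beta_1(W)=0$. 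Poincar\'e duality then gives $\beta_2(W)=4$, so $|m|=|\mathrm{sign}(\theta_W,W)|\leq4$, and $m\equiv2\bmod(4)$ forces $m=\pm2$. You should also note that both $H_1(X)$ and $H_1(Y)$ are cyclic of order $4$; the point is not to locate which side carries an order-$4$ class, but that in all eight decompositions the cover is built from the $Y$-side and is trivial mod $2$ on $H_1(N^*(m))$, which is what places the fixed $\mathbb{RP}^2$'s over the $m$-side.
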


\begin{proof}
The maps $H_1(Y)\cong\mathbb{Z}/4\mathbb{Z}\to\mathbb{Z}/2\mathbb{Z}$
determine 2-fold coverings $\widetilde{Y}'\to\widetilde{Y}\to{Y}$ which induce coverings
$\widetilde{M}'\to\widetilde{M}\to{M}$
on the boundary corresponding to the maps
\begin{equation*}
\begin{CD}
H_1(M)@>p_Y>>\mathbb{Z}/4\mathbb{Z}\to\mathbb{Z}/2\mathbb{Z},
\end{CD}
\end{equation*}
where $p_Y$ is the projection onto $H_1(Y)$.
Now given any one of the eight possible self-orthogonal decompositions listed above,
these maps are such that
\[
x\mapsto0~\mathrm{or}~2\mapsto0,\quad{y=2u}\mapsto2\mapsto0,
\]
\[
u\mapsto\pm1\mapsto1,\quad\quad{v=2x}\mapsto0\mapsto0.
\]
Now $M=N^*(m)\cup{N^*(n)}$, 
and we may write $\widetilde{M}=\widetilde{N}^*(m)\cup\widetilde{N}^*(n)$.
The piece $\widetilde{N}^*(m)$ is determined by the map
\begin{equation*}
\begin{CD}
H_1(N^*(m))@>i_*>>H_1(M)\to\mathbb{Z}/2\mathbb{Z}.
\end{CD}
\end{equation*}
But since $H_1(N^*(m))$ is generated by $x$ and $y$, this map is trivial.
Consequently $\widetilde{N}^*(m)$ consists of two disjoint copies of $N^*(m)$.
On the other hand we can see that since $u$ (represented by an orientation reversing path in the base space of $N^*(n)$ is mapped to 1, while $v$ (represented by a fibre of $N^*(n)$)
is mapped to 0,
then $\widetilde{N}^*(n)$ is just a framed $S^1$-bundle over the annulus,
or $M(0;(1,2n))$ with two regular fibres removed.
Using the plumbing notation of \cite{Ne81}, 
we have

\setlength{\unitlength}{1mm}
\begin{picture}(80,20)(-18.3,0)
\linethickness{2pt}
\put(7,10){$\tilde{M}=$}
\put(20,10){$\bullet$}
\put(20,13){$m$}
\put(18,5){$[-1]$}
\put(40,10){$\bullet$}
\put(39,13){$2n$}
\put (60,10){$\bullet$}
\put(60,13){$n$}
\put(58,5){$[-1]$}
\put(21,11){\line(1,0){40}}
\put(65,10.5){.}
\end{picture}\\
The boundary of $N^*(m)$ has framing $([\mu],[f])$
where $[\mu]=my+2x$ and $[f]=y$.
Since $y\mapsto2$ under the map onto $\mathbb{Z}/4\mathbb{Z}$,
in the covering map $\widetilde{M}'\to\widetilde{M}$ each copy of $N(m)$ is 
covered by $N(\frac12m)$, and the covering projection,
$\theta$ say, acts by a half-turn rotation of each fibre.
On the other hand, $\mu$ is identified (in $\widetilde{M}$) with a fibre of $\widetilde{N}^*(n)$, and since $[\mu]\mapsto0$ in $\mathbb{Z}/4\mathbb{Z}$ (since $m$ is even),
this piece is itself covered by another $S^1$-bundle over the annulus, 
$S_A$ say, where here $\theta$ acts on the base.
Thus we have the plumbing description

\setlength{\unitlength}{1mm}
\begin{picture}(80,20)(-18.3,0)
\linethickness{2pt}
\put(7,10){$\tilde{M}'=$}
\put(20,10.1){$\bullet$}
\put(19,14){$\frac12m$}
\put(18,5){$[-1]$}
\put(40,10.1){$\bullet$}
\put(39,14){$4n$}
\put(60,10.1){$\bullet$}
\put(59,14){$\frac12n$}
\put(58,5){$[-1]$}
\put(21,11){\line(1,0){40}}
\put(65,10.5){.}
\end{picture}\\
Note that since $\theta$ acts on the base of $S_A$, 
it extends to a fixed point free involution of the associated disc bundle $D_A$.
Thus writing $W^0=\widetilde{Y}'\cup_{S_A}D_A$, we see that the covering
involution of $\widetilde{Y}'$ extends to a fixed point free involution $\theta^0$
on $W^0$.
Also observe that $\partial{W^0}=N(\frac{m}2)\sqcup{N(\frac{m}2)}$ (disjointly),
since attaching the discs of $D_A$ corresponds to a trivial filling of the boundary of each $N^*(\frac{m}2)$ piece.
Writing $DN(\frac{m}2)$ for the disc bundle associated to $N(\frac{m}2)$, 
we may now construct the closed 4-manifold
\[
W=
W^0\cup_{N(\frac{m}2)\sqcup{N(\frac{m}2)}}(DN(\frac{m}2)\sqcup{DN(\frac{m}2))}.
\]
Now $\theta^0$ extends to an involution $\theta_W$ of $W$ with fixed point set the disjoint union of two copies of $\mathbb{RP}^2$,
and the total normal Euler number of the fixed point set is $\frac{m}2+\frac{m}2=m$.
Thus by the $\mathbb{Z}/2\mathbb{Z}$-Index Theorem,
$\mathrm{Sign}(\theta_W,W)=m$.
We now calculate $\beta_2(W)$.

Since $H_1(M)$ is finite it is a $\mathbb{Q}$-homology sphere, and so the complementary regions$X$ and $Y$ are  $\mathbb{Q}$-homology balls.
In particular, $\chi(Y)=1$.
Hence $\chi(W^0)=\chi(\widetilde{Y}')=4\chi(Y)=4$,
since $\chi(S_A)=\chi(T)=0$ and $\chi(D_A)=\chi(S^1)=0$.
Thus $\chi(W)=\chi(W^0)+2\chi(DN(\frac{m}2))=4+2\chi(\mathbb{RP}^2)=6$.
But by Poincar'e duality this gives $\beta_2(W)=4+2\beta_1(W)$.

By Lemma \ref{CH-lem3.10} the fact that $H_1(Y)=\mathbb{Z}/4\mathbb{Z}$
implies that $\beta_1(\widetilde{Y}')=0$.
Also, since the map on $H_1$ induced by the inclusion of an $S^1$-bundle into 
its associated disc bundle is an epimorphism, a Mayer-Vietoris argument shows that 
attaching disc bundles, as we have, does not increase $\beta_1$.
Thus $\beta_1(W)=0$ as well, and so $\beta_2(W)=4$.
Since $|\mathrm{Sign}(\theta_W,W)|$ is bounded above by this rank, we must 
have $m=\pm2$.
\end{proof}

\begin{lemma}
\label{CH-lem4.3}
If $M=M_{m,n}$ embeds in a homology $4$-sphere $\Sigma$ and $n\equiv0$ mod $(4)$
(and so $m\equiv2$ mod $(4)$ necessarily) then $m+n=\pm2$.
\end{lemma}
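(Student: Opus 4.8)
The plan is to follow the same $G$-signature strategy used for Lemma \ref{CH-lem4.2}, but now exploiting the other natural covering of a complementary region. Since $n\equiv0$ mod $(4)$ forces $m\equiv2$ mod $(4)$ by Theorem \ref{CH-thm4.1}(ii), one of the four self-orthogonal decompositions in the case $m\equiv2$, $n\equiv0$ mod $(4)$ must realize $H_1(X)$ and $H_1(Y)$. In each case there is a generator of $H_1(Y)$ (or $H_1(X)$) whose reduction mod $2$ hits both $u$-type and $x$-type generators — this is visible in the listed decompositions $\langle x\rangle\oplus\langle u\pm x\rangle$ and $\langle -x+2u\rangle\oplus\langle u\pm x\rangle$. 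First I would fix such a decomposition, say with $H_1(Y)$ containing a cyclic summand of order $4$ on which the mod-$2$ projection is nontrivial on both pieces of the plumbing $M=N^*(m)\cup_{\left(\smallmatrix0&1\\1&0\endsmallmatrix\right)}N^*(n)$.

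Next I would carry out the covering-space analysis as in Lemma \ref{CH-lem4.2}. The relevant map $H_1(M)\to\mathbb{Z}/4\mathbb{Z}\to\mathbb{Z}/2\mathbb{Z}$ now restricts \emph{nontrivially} to each of $H_1(N^*(m))$ and $H_1(N^*(n))$, so both plumbing pieces get genuinely covered: $N^*(m)$ is covered by $N^*(\tfrac12 m)$ with a half-turn rotation of the fibres, and similarly $N^*(n)$ is covered by $N^*(\tfrac12 n)$. Building the double cover $\widetilde{Y}'$ of $Y$ and capping off with the two associated disc bundles $DN(\tfrac m2)$ and $DN(\tfrac n2)$, I would obtain a closed $4$-manifold $W$ with an involution $\theta_W$ whose fixed point set is a disjoint union of two copies of $\mathbb{RP}^2$ with total normal Euler number $\tfrac m2+\tfrac n2$. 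The $\mathbb{Z}/2\mathbb{Z}$-Index Theorem then gives $\mathrm{Sign}(\theta_W,W)=\tfrac{m+n}{2}$.

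Then I would compute $\beta_2(W)$ exactly as before. Since $M$ is a $\mathbb{Q}$-homology sphere, $Y$ is a $\mathbb{Q}$-homology ball with $\chi(Y)=1$, so $\chi(\widetilde{Y}')=4$; the two plumbing annulus-bundles and their disc fillings contribute Euler characteristic $0$, and each $DN(\tfrac m2)$, $DN(\tfrac n2)$ contributes $\chi(\mathbb{RP}^2)=1$. Thus $\chi(W)=4+2=6$, and Poincaré duality together with Lemmas \ref{CH-lem3.10} and the Mayer–Vietoris argument (attaching disc bundles does not raise $\beta_1$, and $\beta_1(\widetilde{Y}')=0$ since $H_1(Y)=\mathbb{Z}/4\mathbb{Z}$) gives $\beta_1(W)=0$ and hence $\beta_2(W)=4$. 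Since $|\mathrm{Sign}(\theta_W,W)|\le\beta_2(W)$, I get $|m+n|\le 8$; combined with the parity constraint $\mathrm{Sign}(\theta_W,W)\equiv\beta_2(W)\equiv 0$ mod $(2)$ and a closer bookkeeping of the congruences — using $m\equiv2$, $n\equiv0$ mod $(4)$ so $\tfrac{m+n}2$ is odd — I would pin down $\tfrac{m+n}2=\pm1$, i.e. $m+n=\pm2$.

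The main obstacle I anticipate is precisely the last sharpening: the crude inequality only yields $|m+n|\le 8$ (or a similar bound), and I expect the congruence $\mathrm{Sign}(\theta_W,W)\equiv\beta_2(W)$ mod $(2)$ plus the knowledge that $\tfrac{m+n}2$ is odd to leave residual possibilities $m+n=\pm2,\pm6$. Ruling out $|m+n|=6$ will require either refining the estimate on $\beta_1(\widetilde Y')$ and hence $\beta_2(W)$ below $4$, or choosing the covering/decomposition more cleverly (as in Lemma \ref{CH-lem4.2}, where the analogous argument gave the exact value $\pm2$ rather than merely a bound), or invoking the symmetry $M_{m,n}\cong M_{n,m}$ together with Lemma \ref{CH-lem4.2} ($m=\pm2$) to reduce to checking finitely many cases directly. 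I would first try the route of showing $\beta_2(W)\le 4$ cannot be improved but $\mathrm{Sign}$ is genuinely realized as $\pm2$ by the known embeddings (the Seifert fibration over $P(2,2)$), and use this to exclude the larger values.
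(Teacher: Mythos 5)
There is a genuine gap, and it starts with the covering-space geometry. In the relevant decompositions for $m\equiv2$, $n\equiv0$ mod $(4)$, the composite $H_1(M)\to\mathbb{Z}/4\mathbb{Z}\to\mathbb{Z}/2\mathbb{Z}$ sends \emph{both} base classes $x$ and $u$ to $1$ and both fibre classes $y=2u$, $v=2x$ to $0$. A double cover of $N^*(m)$ that is nontrivial on the base loop $x$ is the pullback along the orientation double cover of the base (M\"obius band $\to$ annulus); it is an $S^1$-bundle over an annulus, \emph{not} $N^*(\tfrac m2)$ with a half-turn rotation of the fibres. The ``fibrewise double cover halving the Euler number'' picture you are importing is exactly the Lemma \ref{CH-lem4.2} situation, where the $\mathbb{Z}/2\mathbb{Z}$-quotient map kills $H_1(N^*(m))$ and only the $\mathbb{Z}/4\mathbb{Z}$-level unwraps the fibre. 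Here, after the first cover both plumbing pieces become bundles over annuli (plumbing weights $2m$, $2n$), the second cover unwraps the fibres, the covering involution acts freely on the interior, and the fixed-point set that eventually appears after capping off is a pair of \emph{tori}, not projective planes; so your normal Euler number $\tfrac m2+\tfrac n2$ and signature $\tfrac{m+n}2$ are not what the construction produces.

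The second problem is the one you already flag: even on your own accounting the inequality leaves $m+n=\pm6$ alive, and none of your proposed escape routes closes it. The paper's fix is a different construction: it runs the covering analysis for \emph{two} embedded manifolds $M_1=M_{m,n}$ and $M_2=-M=M_{-m,-n}$, glues $\overline{X}_1'$ to $-\overline{X}_2'$ along the common piece $(T_a'\cup T_b')\times[-1,1]$ (possible because the involutions agree there, independently of $m,n$), and only then caps off with disc bundles over tori. This yields a closed $W$ with a ``good'' involution whose fixed set $F=T_x\sqcup T_u$ has $e(F)=2(m+n)$, while $\chi(W)=8\chi(X)=8$ and $\beta_1(W)=0$ give $\beta_2(W)=8$. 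Hence $|m+n|\le4$, and since $m+n\equiv m\equiv2$ mod $(4)$ this forces $m+n=\pm2$. The doubling is essential: it doubles the signature relative to $\beta_2$ and is what turns your $|m+n|\le8$ into the sharp $|m+n|\le4$.
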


\begin{proof}
This time we consider the 2-fold coverings $\overline{X}'\to\overline{X}\to{X}$ 
which induce coverings $\overline{M}'\to\overline{M}\to{M}$ corresponding to the maps
\begin{equation*}
\begin{CD}
H_1(M)@>p_X>>\mathbb{Z}/4\mathbb{Z}\to\mathbb{Z}/2\mathbb{Z}.
\end{CD}
\end{equation*}
Here $p_X$ is the projection onto the first factor in one of the four posssible
self orthogonal decompositions listed for the case  $m\equiv2$ {\it mod} $(4)$ and
$n\equiv0$ {\it mod} $(4)$.
Thus the above maps are given by
\[
x\mapsto1\mapsto1,\quad{y=2u}\mapsto2\mapsto0,
\]
\[
u\mapsto\pm1\mapsto1,\quad\quad{v=2x}\mapsto2\mapsto0.
\]
Again $M=N^*(m)\cup{N^*(n)}$ and we write 
$\overline{M}=\overline{N}^*(m)\cup\overline{N}^*(n)$.
this time, since both $x$ and $u$ are mapped to 1 while $y,v\mapsto0$ in
$\mathbb{Z}/2\mathbb{Z}$,
both $\overline{N}^*(m)$ and $\overline{N}^*(n)$ are $S^1$-bundles over annuli, and $\overline{M}$ has plumbing diagram

\setlength{\unitlength}{1mm}
\begin{picture}(90,20)(-17,0)
\linethickness{2pt}
\put(30,10.1){$\bullet$}
\put(22,10){$2m$}
\qbezier(31,11)(40,17)(51,11)
\qbezier(31,11)(40,5)(51,11)
\put(50,10.1){$\bullet$}
\put(54,10){$2n$}
\end{picture}\\
That is,
$\overline{M}=N^{**}(2m)\cup_{T_a\sqcup{T_b}} {N}^{**}(2n)$, 
where the piece $N^{**}(2m)$ is just the framed exterior of a pair of fibres 
$f_a$ and $f_b$ in the bundle $M(0;(1,2m))$ with Euler number $m$,
and similarly for  $N^{**}(2n)$.
The boundary between the two pieces is a disjoint union $T_a\sqcup{T_b}$ of tori
with framings $(y_a,v_a)$ and  $(y_b,v_b)$, respectively, such that 
when $T_a\sqcup{T_b}$ covers the torus $T$ in $M=N^*(m)\cup{N^*(n)}$ we have
$y_a,y_b\mapsto{y}$ and $v_a,v_b\mapsto{v}$.
The framings of $T_a$ and $T_b$  given here correspond to the usual framings 
of $f_a$ and $f_b$, where for $N^{**}(2m)$ the fibres represent $y_a$ and $-y_b$, 
while for $N^{**}(2n)$ the fibres represent $v_a$ and $-v_b$, respectively.

Now, since both $y,v\mapsto2$ in $\mathbb{Z}/4\mathbb{Z}$, 
both the fibre of $N^{**}(2m)$ and the fibre of $N^{**}(2n)$ are ``unwrapped" 
in the covering $\overline{M}'\to\overline{M}$.
Thus, $\overline{M}'=N^{**}(m)\cup_{T_a\sqcup{T_b}}N^{**}(n)$,
and we may now view $\overline{X}'$ as a 4-manifold with (covering) involution $\theta$
which acts on each fibred piece of the boundary by 
Observe also that the behaviour of $\theta$ on a neighbourhood
$(T_a'\cup{T_b'})\times[-1,1]$ of $T_a'\sqcup{T_b'}$ is independent of $m$ and $n$.
Thus, if we begin with  $M_1=M_{m+1,n_1}$ in $\Sigma_1$ and $M_2=M_{m_2,n_2}$
in $\Sigma_2$, each of which satisfies our hypotheses,
and construct $\overline{X}_1'$ and $\overline{X}_2'$ as above,
and define $W^0=\overline{X}_1'\cup_{(T_a'\cup{T_b'})\times[-1,1] }-\overline{X}_2'$, 
then the covering involutions $\theta_1$ and $\theta_2$ behave identically on 
$(T_a'\cup{T_b'})\times[-1,1]$, 
and together define a fixed point free involution $\theta^0$ of $W^0$.
Also $\partial{W^0}$ is the disjoint union  of $S^1$-bundles over tori
$M(1;(1,m_1-m_2))\sqcup{M(1;(1,(n_1-n_2))}$,
and $\theta$ acts on each boundary component by a half-turn rotation of the fibre.
Attaching the associated disc bundles, we obtain a closed 4-manifold $W$ with involution 
$\theta$ which has fixed point set $F=T_x\sqcup{T_u}$ with total normal Euler number $e(F)=(m_1-m_2)+(n_1-n_2)$.
Now, supposing that $M=M_{m,n}$ embeds in $\Sigma$, wecmay take $M_1=M$ and $M_2=-M=M_{-m,-n}$ (embedded in $-\Sigma)$ so that, by the
$\mathbb{Z}/2\mathbb{Z}$-Index Theorem,
\[
\mathrm{sign}(\theta,W)=e(F)=2(m+n).
\]
Consequently, $|m+n|\leq\frac12\beta_2(W)$.

We have $\chi(W)=\chi(W^0)=2\chi(\overline{X'})=8\chi(X)=8$,
by arguments similar to those of Lemma \ref{CH-lem4.2}.
Poincar\'e duality gives $\beta_2(W)=6+2\beta_1(W)$.
But $\beta_1(W)\leq\beta_1(W^0)+1$, by a Mayer-Vietoris argument, since once again $\beta_1(\overline{X}')=0$.
Thus $\beta_2(W)=8$, and so $|m+n|\leq4$.
Hence we must have $m+n=\pm2$.
\end{proof}

We may now complete the proof of Theorem \ref{CH-thm2.2}.

\begin{theorem}
\label{CH-thm4.4}
If  $M=N\cup_\phi{N}$ embeds in a homology $4$-sphere then either
it is a torus bundle or $\phi$ is one of 
$\left(\smallmatrix2&-1\\1&0\endsmallmatrix\right)$,
$\left(\smallmatrix2&-3\\1&2\endsmallmatrix\right)$,
$\left(\smallmatrix2&-5\\1&-2\endsmallmatrix\right)$ or
$\left(\smallmatrix2&-9\\1&-4\endsmallmatrix\right)$.
\end{theorem}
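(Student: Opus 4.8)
The plan is to combine the hyperbolicity constraint already extracted in Theorem~\ref{CH-thm4.1} with the two homological obstructions of Lemmas~\ref{CH-lem4.2} and~\ref{CH-lem4.3}, and then collapse the remaining finitely many gluings by the obvious symmetries of the construction. So suppose $M=N\cup_\phi N$ embeds in a homology $4$-sphere, with $\phi=\left(\smallmatrix a&b\\c&d\endsmallmatrix\right)$ and $\det\phi=1$ as arranged above. Since $\ell_M$ is then hyperbolic (Lemma~\ref{hyperbolic lp}), Theorem~\ref{CH-thm4.1} leaves only two possibilities. If $c=0$ then $ad=1$, so $a=d=\pm1$ and $4\mid b$; here $M$ is a torus bundle over $S^1$, and we are in the first alternative of the statement (these are among the manifolds already listed in Theorem~\ref{CH-thm2.2}). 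Otherwise $c=1$, $b$ is odd, $a$ and $d$ are both even, and they are not both divisible by $4$; writing $m=a$ and $n=d$ we then have $b=mn-1$, i.e.\ $M=M_{m,n}$ in the notation introduced above.

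First I would record the symmetries. From the plumbing description, $M_{m,n}\cong M_{n,m}$, and reversing the orientation of $M_{m,n}$ gives $M_{-m,-n}$. Since $m$ and $n$ cannot both be $\equiv0\ (4)$, one of them is $\equiv2\ (4)$; after an interchange we may assume $m\equiv2\ (4)$, and then Lemma~\ref{CH-lem4.2} forces $m=\pm2$, so after possibly reversing orientation (which preserves $m$ mod $4$) we may take $m=2$. Now split on $n$ mod $4$. If $n\equiv2\ (4)$, applying Lemma~\ref{CH-lem4.2} to $M_{n,m}\cong M_{m,n}$ gives $n=\pm2$. If $n\equiv0\ (4)$, Lemma~\ref{CH-lem4.3} gives $m+n=\pm2$, hence $n=0$ or $n=-4$. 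Thus, up to the symmetries above, the only surviving pairs are $(m,n)\in\{(2,0),(2,2),(2,-2),(2,-4)\}$, with standard gluing matrices $\left(\smallmatrix2&mn-1\\1&n\endsmallmatrix\right)$ equal respectively to $\left(\smallmatrix2&-1\\1&0\endsmallmatrix\right)$, $\left(\smallmatrix2&3\\1&2\endsmallmatrix\right)$, $\left(\smallmatrix2&-5\\1&-2\endsmallmatrix\right)$, $\left(\smallmatrix2&-9\\1&-4\endsmallmatrix\right)$. It then remains to check that these four agree with the matrices in the statement after recomposition on either side by the (small) group of mapping classes of $N$ that fix the chosen framing of $\partial N$ --- in particular $\left(\smallmatrix1&0\\0&-1\endsmallmatrix\right)$ --- together with the interchange of the two copies of $N$; the $(2,2)$ case uses such a recomposition, the other three are already in the listed form.

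The real content of the argument lies in Lemmas~\ref{CH-lem4.2} and~\ref{CH-lem4.3}, whose proofs run the $G$-signature argument of Theorem~\ref{CH-thm1.1} on appropriate coverings of a complementary region; within the present theorem only the bookkeeping remains. The main obstacle I anticipate is making the symmetry reduction airtight: one must be certain that $M_{m,n}\cong M_{n,m}$, the orientation reversal, and recomposition of $\phi$ by mapping classes of $N$ collapse the a priori sign ambiguities in ``$m=\pm2$'' and ``$m+n=\pm2$'' down to exactly the four manifolds claimed --- with no spurious coincidences among them and none overlooked. This in turn needs a clean description of which elements of $\mathrm{GL}(2,\mathbb{Z})$ are realised by self-homeomorphisms of $N$ preserving the framing of $\partial N$, and of the induced equivalence relation on the $M_{m,n}$; once that is settled the enumeration is routine.
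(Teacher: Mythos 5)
Your argument is correct and follows essentially the same route as the paper: reduce via Theorem \ref{CH-thm4.1} to the torus-bundle case or to $M_{m,n}$ with $m,n$ even and not both divisible by $4$, normalise $m\equiv2$ mod $(4)$ using $M_{m,n}\cong M_{n,m}$ and orientation reversal, and then apply Lemmas \ref{CH-lem4.2} and \ref{CH-lem4.3} to pin down $(m,n)\in\{(2,0),(2,2),(2,-2),(2,-4)\}$. Your side observation about the $(2,2)$ case is a genuine catch: the listed matrix $\left(\smallmatrix2&-3\\1&2\endsmallmatrix\right)$ has determinant $7$, so it cannot be a gluing matrix under the convention $\det\phi=1$, and the intended entry is $\left(\smallmatrix2&3\\1&2\endsmallmatrix\right)$, corresponding to $M_{2,2}$.
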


\begin{proof}
It is clear from Theorem \ref{CH-thm4.1} that $\beta_1(M)>0$ if and only if $c=0$,
and then $M$ is a torus bundle.
If $c\not=0$ and $\ell_M$ is hyperbolic then $c=1$ and $M=M_{m,n}$ 
for some even integers $m,n$,  which are not both divisible by 4.
Since $M_{n,m}\cong{M_{m,n}}$, we may assume that $m\equiv2$ {\it mod} (4).
We then have $m=\pm2$ and either $n=\pm2$ or $n\equiv0$ {\it mod} (4).
In the latter case $m+n=\pm2$, by Lemma \ref{CH-lem4.3}.
After changing the orientation, if necessary, we may assume that $m=2$,
and so $n=0$ or $-4$.
\end{proof}

\section{A link presentation for $M_{2,4}$}

In Figure 5.1 we have redrawn the diagram from \cite[Figure 1]{CH98}
so that the non-trivial component is visibly a ribbon knot.
In \cite{CH98} the Kirby calculus was used to show that $M_{2,4}\cong{M(L)}$.
Here we shall instead use the Wirtinger presentation
for the link group $\pi{L}$ to show that $\pi_1M(L)\cong\pi_1M_{2,4}$.
(The group $\pi_1M(L)$ is the quotient of $\pi{L}$ by
the normal closure of the longitudes.)

\setlength{\unitlength}{1mm}
\begin{picture}(110,75)(-29.6,2)

\put(72.7,37.5){$\vartriangle$}
\put(76,38){$a$}
\put(24.6,27){$\triangledown$}
\put(21.5,27.2){$b$}
\put(47,23){$\vartriangleleft$}
\put(46,25){$c$}
\put(68.6,24){$\vartriangle$}
\put(72,25){$s$}
\put(25,69){$\vartriangleleft$}
\put(23,71){$t$}
\put(17.5, 14){$\vartriangleright$}
\put(20.5,16){$u$}
\put(36.6,27){$\vartriangle$}
\put(40,27){$v$}
\put(17.5,39){$\vartriangleleft$}
\put(16,41.5){$w$}
\put(0.6,26.3){$\triangledown$}
\put(-2,26.8){$y$}
\put(47,34.1){$\vartriangleleft$}
\put(46,36.5){$x$}
\put(29,19.1){$\vartriangleright$}
\put(32,21){$z$}
\put(25.5,63.5){$\bullet$}

\thinlines
\put(-15,70){\line(1,0){80}}
\put(-20,30){\line(0,1){35}}
\qbezier(-20,65)(-20,70)(-15,70)

\put(-15,35){\line(0,1){25}}
\qbezier(-15,60)(-15,65)(-10,65)
\put(-10,65){\line(1,0){70}}

\put(-15,25){\line(1,0){15}}
\put(-10,30){\line(1,0){10}}
\qbezier(-10,30)(-15,30)(-15,35)
\qbezier(-15,25)(-20,25)(-20,30)
\qbezier(2,25)(2,20)(7,20)
\put(2,25){\line(0,1){5}}
\qbezier(2,30)(3,35)(7,35)
\qbezier(4,25)(9,25)(9,20)
\qbezier(9,20)(9,15)(14,15)

\put(14,15){\line(1,0){20}}
\qbezier(34,15)(38,15)(38,19)

\qbezier(4,30)(9,30)(9,35)
\qbezier(9,35)(9,40)(14,40)

\put(14,40){\line(1,0){20}}
\qbezier(34,40)(38,40)(38,36)

\put(11,20){\line(1,0){45}}
\put(11,35){\line(1,0){13}}
\put(28,35){\line(1,0){32}}

\put(38,21){\line(0,1){2}}
\put(38, 25){\line(0,1){9}}

\put(65,40){\line(0,1){10}}
\qbezier(60,35)(65,35)(65,40)
\put(60,20){\line(1,0){5}}
\qbezier(65,20)(70,20)(70,25)

\put(70,25){\line(0,1){19.5}}
\put(70,47.5){\line(0,1){2.5}}
\put(70,60){\line(0,1){5}}
\qbezier(65,70)(70,70)(70,65)
\qbezier(60,65)(65,65)(65,60)

\put(65,50){\line(1,1){5}}
\put(65,55){\line(1,1){5}}
\qbezier(65,60)(65.8,59.2)(66.6,58.4)
\qbezier(65,55)(65.8,54.2)(66.6,53.4)
\qbezier(68.4,51.6)(69.2,50.8)(70,50)
\qbezier(68.4,56.6)(69.2,55.8)(70,55)

\linethickness{1pt}
\put(31,46){\line(1,0){33}}
\qbezier(26,41)(26,46)(31,46)
\put(66,46){\line(1,0){3}}
\qbezier(69,46)(74,46)(74,41)

\qbezier(26,19)(26,17.5)(27.5,17.5)
\put(27.5,17.5){\line(1,0){8.5}}
\put(39,17.5){\line(1,0){15.75}}
\put(26,21){\line(0,1){17.5}}

\put(37,24){\line(1,0){17.75}}
\qbezier(37,31)(33.5,31)(33.5,27.5)
\qbezier(33.5,27.5)(33.5,24)(37,24)
\qbezier(54.75,24)(58,24)(58,20.75)
\qbezier(54.75,17.5)(58,17.5)(58,20.75)
\put(39,31){\line(1,0){30}}
\qbezier(71,31)(74,31)(74,34)
\put(74,34){\line(0,1){7}}

\put(7,6){Figure 5.1. \quad$L=(8_{20},{U})$}
\end{picture}\\
The group $\pi{L}$ has the Wirtinger presentation
\[
\langle{a,b,c,s,t,u,v,w,x,y,z}\mid
axtx^{-1}a^{-1}=s,~bwyw^{-1}b^{-1}=x,
\]
\[csc^{-1}=z,~uyu^{-1}=z,~vsas^{-1}v^{-1}=c,~wx^{-1}axw^{-1}=b,~xwx^{-1}=v,
\]
\[
ytxt^{-1}y^{-1}=w, ~yty^{-1}=u, ~zucu^{-1}z^{-1}=b,~zuz^{-1}=cvc^{-1}
\rangle
\]
The longitudes associated with the meridianal generators $a$ and $x$ are
$\ell_a=xw^{-1}zuvs$
and $\ell_s=axy^{-1}z^{-1}cxytbu^{-1}cs^{-1}$.
We may use the relations $z=csc^{-1}=uyu^{-1}$, $v=xwx^{-1}$ and $u=yty^{-1}$
to eliminate these generators, to obtain the presentation
\[
\langle{a,b,c,s,t,w,x,y,}\mid
axt=sax,~bwy=xbw,~ytx=wyt, ~ytc=byt,
\]
\[csc^{-1}=ytyt^{-1}y^{-1},~xwx^{-1}sas^{-1}xw^{-1}x^{-1}=c,~wx^{-1}axw^{-1}=b,
\]
\[
~sc^{-1}yty^{-1}cs^{-1}=xwx^{-1}
\rangle.
\]
Let $B=bw$ and $E=yt$.
Then  $\pi{L}$ has the presentation 
\[
\langle{a,B,c,s,w,x,y,E}\mid
{axy^{-1}E=sax,~By=xB,~Ex=wE, }
\]
\[Ec=Bw^{-1}E,~csc^{-1}=EyE^{-1},~xwx^{-1}sas^{-1}xw^{-1}x^{-1}=c,
\]
\[
wx^{-1}ax=B,~sc^{-1}Ey^{-1}cs^{-1}=xwx^{-1}\rangle.
\]
The longitudes are now
$\ell_a=xw^{-1}zuvs=xw^{-1}Exwx^{-1}s$
and\\ $\ell_s=axy^{-1}z^{-1}cxytbwu^{-1}cs^{-2}=axy^{-1}cs^{-1}xEByE^{-1}cs^{-2}$.

Eliminating $y=B^{-1}xB$ and adjoining the relations $\ell_a=\ell_s=1$ 
gives a presentation for $\pi=\pi_1M(L)$.
\[
\langle{a,B,c,s,w,x,E}\mid
axB^{-1}x^{-1}BE=sax,~Ex=wE, ~Ec=Bw^{-1}E,
\]
\[csc^{-1}=EB^{-1}xBE^{-1},~xwx^{-1}sas^{-1}xw^{-1}x^{-1}=c,~a=xw^{-1}Bx^{-1},
\]
\[
~sc^{-1}EB^{-1}x^{-1}Bcs^{-1}=xwx^{-1},~xw^{-1}Exwx^{-1}s=1,
\]
\[
axB^{-1}x^{-1}Bcs^{-1}xExBE^{-1}cs^{-2}=1
\rangle.
\]
Eliminating $a=xw^{-1}Bx^{-1}$ gives
\[
\langle{B,c,s,w,x,E}\mid
xw^{-1}x^{-1}BE=sxw^{-1}B,~Ex=wE, ~Ecx=BE,
\]
\[csc^{-1}=EB^{-1}xBE^{-1},~BE=cxwx^{-1}sx,~Exwx^{-1}sx=w,
\]
\[
~sc^{-1}EB^{-1}x^{-1}Bcs^{-1}=xwx^{-1},~
xw^{-1}x^{-1}Bcs^{-1}xExBE^{-1}cs^{-2}=1
\rangle.
\]
The second and sixth relations give $Ewx^{-1}sx=1$, and with the first we get 
$BE=xwx^{-1}sxw^{-1}B=xE^{-1}w^{-1}B=E^{-1}B$. 
Using the fourth and sixth relations,
the seventh simplifies to $c^{-1}Ec=xwx^{-1}s=xE^{-1}x^{-1}$,
or $EcxE=cx$,
and so the presentation now becomes
\[
\langle{B,c,s,w,x,E}\mid
{BE=E^{-1}B},~Ex=wE, ~Ecx=BE,~
\]
\[csc^{-1}=EB^{-1}xBE^{-1},~BE=cxwx^{-1}sx,~Ewx^{-1}sx=1,
\]
\[
~EcxE=cx,~xw^{-1}x^{-1}Bcs^{-1}xExBE^{-1}cs^{-2}=1
\rangle.
\]
Eliminating $w=ExE^{-1}$ gives:
\[
\langle{B,c,s,x,E}\mid
{BE=E^{-1}B},~Ecx=BE,~csc^{-1}=EB^{-1}xBE^{-1},~
\]
\[BE=cxExE^{-1}x^{-1}sx,~xE^{-1}x^{-1}sx=E^{-2},
\]
\[
xEx^{-1}E^{-1}x^{-1}Bcs^{-1}xExBE^{-1}cs^{-2}=1:~~cx=BE^2,~Ecx=cxE^{-1}
\rangle,
\]
in which the last two relations are redundant.
Applying the fifth relation to the fourth gives $BE=cxE^{-1}$ or $BE^2=cx$,  
and rewriting the sixth as
\[
E^{-1}x^{-1}Bcs^{-1}xExBE^{-1}cs^{-1}=xE^{-1}x^{-1}s=E^{-2}x^{-1}
\]
gives
\[
\langle{B,c,s,x,E}\mid
{BE=E^{-1}B},~Ecx=BE,~s=c^{-1}EB^{-1}xBE^{-1}c,~
\]
\[
xE^{-1}x^{-1}sx=E^{-2},~Ex^{-1}Bcs^{-1}xExBE^{-1}cs^{-1}x=1\rangle.
\]
Eliminating $s=c^{-1}EB^{-1}xBE^{-1}c$ gives
\[
\langle{B,c,x,E}\mid
{BE=E^{-1}B},~Ecx=BE,~
\]
\[xE^{-1}x^{-1}c^{-1}EB^{-1}xBE^{-1}cx=E^{-2},~
Ex^{-1}E^{-1}x^{-1}BE^{-1}cxE^2Bcx=1\rangle.
\]
Eliminating $c=BE^2x^{-1}$ and using $BE=E^{-1}B$ to simplify gives
\[
\langle{B,x,E}\mid{BE=E^{-1}B},~
xB^{-2}E^{-4}xB^2E^5=1,~x^{-1}E^{-1}x^{-1}B^4E^8=1\rangle.
\]
Since $xB^2E^4x^{-1}=xB^2E^5x(xEx)^{-1}=B^{-2}E^{-4}$
and hence\\ $x^2E=xB^4E^8x^{-1}=B^{-4}E^{-8}$, this becomes
\[
\langle{B,x,E}\mid
{BE=E^{-1}B},~x^2=B^{-4}E^{-9},~xEx^{-1}=B^8E^{17},
\]
\[ xB^2E^4x^{-1}=(B^2E^4)^{-1}\rangle.
\]
We shall now relabel the generators to bring out the similarities with
the coordinates used in describing $N\cup_\phi{N}$.
Let $X=E, Y=B^{-1},U=B^{-2}E^{-4}$ and $V=x$.
Then we have the presentation
\[
\langle{X,Y,U,V}\mid{XYX^{-1}=Y^{-1}}, ~UVU^{-1}=V^{-1}, 
\]
\[
~U^2=(X^2)^2Y^{-9},~V=(X^2)Y^{-4}\rangle,
\]
and so $\pi\cong\pi_1M_{2,4}=\pi_1N*_\phi\pi_1N$, where 
$\phi=\left(\smallmatrix2&-9\\1&-4\endsmallmatrix\right)$.

\section{Homologically related 3-manifolds}

The question of which homology 3-spheres embed smoothly in $S^4$ is delicate, 
and of continuing interest.
However Freedman showed that the corresponding question for
TOP locally flat embeddings has a dramatically simpler answer:
every homology 3-sphere embeds in $S^4$ \cite[Corollary 9.3c]{FQ}.
Inspired by this, 
we may consider the question of locally flat embeddings of 3-manifolds 
with a given homology type.
A homology 3-sphere is a 3-manifold $\Sigma$ with a map $f:\Sigma\to 
{S^3}$ which induces an isomorphism on integral homology.
Wen the model 3-manifold is not $S^3$, 
we must use {\it local\/} coefficients in formulating the appropriate homological relation.
For instance, the manifolds $M(K)$ obtained by 0-framed surgery on knots 
$K$ in $S^3$ admit maps $f:M(K)\to{S^2\times{S^1}}$ which induce 
isomorphisms on homology with simple coefficients $\mathbb{Z}$. 
If the knot $K$ is not algebraically slice then $M(K)$ does not even 
admit a Poincar\'e embedding into $S^4$.
On the other hand, if $f$ induces isomorphisms on homology with local coefficients
$\mathbb{Z}[\mathbb{Z}]$ then (as we shall see) $M(K)$ embeds in $S^4$.
Our strategy involves 4-dimensional surgery, 
and so we must assume that the fundamental group of the model manifold 
is ``good" in the sense of \cite{FQ}.
At the time of writing, 
all known good 3-manifold groups are either solvable or finite.

\begin{lemma}
\label{Hi96-1}
Let $W$ be an $(n+1)$-dimensional $H$-cobordism over $\mathbb{Z}$ with
$\partial{W}=M_1\sqcup{M_2}$, where $M_1$ and $M_2$ are closed $n$-manifolds.
Then $M_1$ embeds in a homology $(n+1)$-sphere if and only if $M_2$
embeds in a homology $(n+1)$-sphere.
\end{lemma}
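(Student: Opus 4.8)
The plan is to run exactly the construction used to prove Lemma~\ref{Hcoblem}, observing that that argument never used the dimension. Since the hypothesis on $W$ is symmetric in $M_1$ and $M_2$, it is enough to show that if $M_1$ embeds in a homology $(n+1)$-sphere then so does $M_2$. So suppose $M_1$ is a locally flat submanifold of a homology $(n+1)$-sphere $\Sigma$, with complementary regions $X$ and $Y$, so that $\Sigma=X\cup_{M_1}Y$ and $\partial X=\partial Y=M_1$.

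First I would assemble $V=W\cup_{M_2}W$, the union of two copies of $W$ glued along the boundary component $M_2$ (using $\overline W$ for one of the copies, if necessary, so that the orientations match up). Then $V$ is a compact $(n+1)$-manifold with $\partial V=M_1\sqcup M_1$, and a Mayer--Vietoris argument shows it is again an $H$-cobordism over $\mathbb Z$: since $M_2\hookrightarrow W$ is a $\mathbb Z$-homology isomorphism, the inclusion of each copy of $W$ into $V$ is a $\mathbb Z$-homology isomorphism, and precomposing with $M_1\hookrightarrow W$ shows that the inclusion of each boundary component $M_1$ into $V$ is a $\mathbb Z$-homology isomorphism.

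Next I would form the closed $(n+1)$-manifold $Z=X\cup_{M_1}V\cup_{M_1}Y$, attaching $X$ and $Y$ to the two ends of $V$. The middle level $M_2\subset V\subset Z$ is a separating, locally flat hypersurface, with complementary regions the compact manifolds $X\cup_{M_1}W$ and $Y\cup_{M_1}W$ (each with boundary $M_2$), so $M_2$ embeds in $Z$. It remains to see that $Z$ is a homology $(n+1)$-sphere, and here the key point is that $Z$ is obtained from $\Sigma$ by replacing a collar $M_1\times[0,1]$ of $M_1$ by the homology cobordism $V$. Concretely, excision gives $H_*(X\cup_{M_1}V,X)\cong H_*(V,M_1)=0$, so $X\hookrightarrow X\cup_{M_1}V$ is a homology isomorphism; and the long exact sequence of the triple $(X\cup_{M_1}V,\,V,\,M_1^{*})$, where $M_1^{*}$ denotes the other end of $V$, together with $H_*(V,M_1^{*})=0$ and the excision $H_*(X\cup_{M_1}V,V)\cong H_*(X,M_1)$, identifies the inclusion $M_1^{*}\hookrightarrow X\cup_{M_1}V$ homologically with $M_1\hookrightarrow X$. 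Feeding this into the Mayer--Vietoris sequence of $Z=(X\cup_{M_1}V)\cup_{M_1^{*}}Y$ makes it agree, term by term, with the Mayer--Vietoris sequence of $\Sigma=X\cup_{M_1}Y$; since $X$, $Y$, $X\cup_{M_1}V$ are compact manifolds with nonempty boundary, the top-degree and middle-degree groups then force $H_*(Z)\cong H_*(\Sigma)\cong H_*(S^{n+1})$.

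The one point requiring care is this last homological bookkeeping --- checking that grafting the $\mathbb Z$-homology cobordism $V$ in place of a collar leaves the Mayer--Vietoris picture unchanged in every degree, including the top degree carrying the fundamental class --- together with the routine verification that the gluings can be made orientation-consistent so that $Z$ is a genuine closed manifold. But this is precisely the computation already carried out, in a dimension-free way, in the proof of Lemma~\ref{Hcoblem}, so the argument transfers without change, and there is no essentially new obstacle.
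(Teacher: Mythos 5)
Your proposal is correct and follows essentially the same route as the paper: double $W$ along $M_2$ to get $V=W\cup_{M_2}W$, graft it into $\Sigma=X\cup_{M_1}Y$ to form $Z=X\cup_{M_1}V\cup_{M_1}Y$, and check via Mayer--Vietoris that $Z$ is still a homology $(n+1)$-sphere containing $M_2$ as a locally flat separating hypersurface. Your explicit identification of $M_1^*\hookrightarrow X\cup_{M_1}V$ with $M_1\hookrightarrow X$ is exactly the point the paper flags parenthetically (that both ends of $2W$ include into it by the same homology isomorphism), just carried out in more detail.
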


\begin{proof}
Suppose that $M_1$ is a locally flat submanifold of the homology $(n+1)$-sphere $\Sigma$,
and let $X$ and $Y$ be the complementary regions, so that $\Sigma=X\cup_{M_1}Y$.
Let $2W=W\cup_{M_2}W$ be the union of two copies of $W$, doubled along $M_2$.
Then $\Sigma'=X\cup_{M_1}2W\cup_{M_1}Y$ is an $(n+1)$-manifold 
containing $M_2$ as a locally flat submanifold, 
and an easy Mayer-Vietoris argument shows that $\Sigma'$ is a homology sphere.
(Note that the inclusions of $M_1$ into $W$ at either end of $2W$ induce 
the {\it same\/} isomorphism on homology.)
\end{proof}

By retaining more control over the fundamental group we shall be able to refine this argument to obtain embeddings of certain 3-manifolds in $S^4$.
However Lemma \ref{Hi96-1} may be used to obtain non-embedding results.

A $PD_3^+$-group is an orientable 3-dimensional Poincar\'e duality group.

\begin{lemma}
\label{Hi96-2}
Let $M$ be a  $3$-manifold and $\nu$ a perfect normal subgroup of
$\pi=\pi_1M$ such that $\rho=\pi/\nu$ has finitely many ends. Then either
\begin{enumerate}
\item$\rho$ is finite and has cohomological period dividing $4$; or
\item$\rho\cong\mathbb{Z}$ or $D_\infty$; or
\item$\rho$ is a $PD_3^+$-group.
\end{enumerate}
\end{lemma}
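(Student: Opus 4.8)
The plan is to exploit the fact that $\nu$ is perfect and normal, so that $M$ is an $H$-cobordism-style source for a Poincaré complex modelling $\rho$. First I would take the covering space $M_\nu$ of $M$ with $\pi_1 M_\nu=\nu$; since $\nu$ is perfect, $H_1(M_\nu)=0$, and the transfer/Wang-type comparison of $M$ with $M_\nu$ shows that the quotient map $\pi\to\rho$ is realized by a map $c:M\to K(\rho,1)$ which is $2$-connected on homology with coefficients $\mathbb{Z}[\rho]$ — essentially because killing a perfect subgroup does not change $H_1$ or $H_2$ rationally, and more precisely because the Cartan--Leray spectral sequence for $\nu\to\pi\to\rho$ together with $H_*(\nu;\mathbb{Z})$ being concentrated in degrees $0$ (and the perfectness killing $H_1$) forces $c_*$ to be an isomorphism on $H_i(-;\mathbb{Z}[\rho])$ for $i\le 1$ and a surjection for $i=2$. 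The upshot I want is that $K(\rho,1)$, or rather a suitable quotient complex of $M$, carries the fundamental class $c_*[M]$, and Poincaré duality for $M$ descends to give a $PD_3$-structure over $\mathbb{Z}[\rho]$ on the relevant chain complex.

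The cleanest route is via Strebel's theorem or the standard ``perfect normal subgroup'' argument (as used by Hausmann–Vogel and in \cite{FQ}): if $\nu\trianglelefteq\pi$ is perfect and $\rho=\pi/\rho$, then $\rho$ has a $3$-dimensional Poincaré duality structure with respect to the orientation class pushed forward from $M$, provided $\rho$ is not too large — but since $\rho$ is a quotient of a $3$-manifold group it has cohomological dimension at most $3$ over $\mathbb{Q}$, and one gets that the Eilenberg–MacLane space $K(\rho,1)$ is a $PD_3$-space \emph{unless} the pushforward of $[M]$ degenerates, which happens exactly when $\rho$ has ``smaller dimension''. So I would split into cases according to $\mathrm{cd}_{\mathbb{Q}}\rho$ (equivalently the number of ends and the virtual cohomological dimension): if $\rho$ has two ends it is virtually $\mathbb{Z}$, hence $\mathbb{Z}$ or $D_\infty$ (the only virtually-$\mathbb{Z}$ groups, and with finite abelianization being excluded or not as needed — here both are allowed, giving case (2)); if $\rho$ has one end then the pushforward class is nonzero in $H_3(\rho;\mathbb{Z})$ and a dimension count (using $\mathrm{cd}\,\rho\le 3$ from $M$ being aspherical-dominated in the relevant range) gives that $\rho$ is a $PD_3$-group, and orientability of $M$ forces it to be $PD_3^+$, which is case (3); if $\rho$ has no ends it is finite, and then the infinite cyclic/finite covering argument shows $S^3$ (or a homology sphere) covers the relevant piece, so $\rho$ acts freely up to homotopy on a homotopy $3$-sphere, whence $\rho$ has periodic cohomology of period dividing $4$, which is case (1).

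Concretely the key steps in order are: (i) pass to $M_\nu$ and observe $H_1(M_\nu)=0$; (ii) deduce that $c:M\to K(\rho,1)$ induces iso on $H_0,H_1$ and epi on $H_2$ with $\mathbb{Z}[\rho]$-coefficients, using the Hopf formula and the five-term exact sequence for $1\to\nu\to\pi\to\rho\to 1$ together with perfectness; (iii) push the fundamental class of $M$ forward and, via Poincaré duality for $M$ combined with the homology equivalence in low degrees, obtain that the chain complex $C_*(K(\rho,1);\mathbb{Z}[\rho])$ satisfies $3$-dimensional duality in degrees $\le 1$ — i.e. $\rho$ is ``almost $FP_3$ with duality''; (iv) invoke the dichotomy for the number of ends of $\rho$ (Stallings: $0$, $1$, $2$, or $\infty$; the $\infty$ case is impossible since $\rho$ is $FP_2$ and a quotient of a $3$-manifold group that is not a free product in a way that would contradict the low-degree duality — or more simply, $\rho$ has a $2$-dimensional duality obstruction that rules out infinitely many ends), and in each surviving case identify $\rho$ as in (1)–(3). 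The main obstacle I expect is step (iii)–(iv): making rigorous that the \emph{partial} homology equivalence (iso through degree $1$, epi in degree $2$) is enough to force full $PD_3$-ness of $\rho$ rather than merely $PD_3$ ``through a range''. This is exactly the technical heart of the Hausmann–Vogel/Kervaire-style results on quotients by perfect normal subgroups, and I would handle it by the standard trick: the homotopy fibre of $c$ has the homology of a point in degrees $\le 1$ and, because $\nu$ is perfect, Quillen's plus-construction applied to $M$ relative to $\nu$ produces a genuine homology equivalence $M^+\to K(\rho,1)$ through dimension $2$ with $M^+$ still a Poincaré complex of formal dimension $3$; then a direct comparison of duality chain maps shows $\rho$ is a $PD_3$-group (or the lower-dimensional degenerations (1),(2) occur), and orientability is inherited from $M$.
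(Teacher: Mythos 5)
Your overall strategy (pass to the cover $M_\nu$, note $H_1(M_\nu)=0$ by perfectness, and split on the number of ends of $\rho$) starts in the right place, but there are two genuine gaps. First, your two-ends case is wrong as stated: $\mathbb{Z}$ and $D_\infty$ are \emph{not} the only two-ended groups (e.g.\ $\mathbb{Z}\oplus\mathbb{Z}/n\mathbb{Z}$ is two-ended), so "virtually $\mathbb{Z}$, hence $\mathbb{Z}$ or $D_\infty$" does not follow. The torsion has to be killed by a genuine argument: since $\rho$ has two ends and $\nu$ is perfect, $M_\nu$ has the integral homology of $S^2$, the subgroup of $\rho$ acting trivially on $H_2(M_\nu)\cong\mathbb{Z}$ has index at most $2$, and a nontrivial finite-order element of that subgroup would act freely on a homology $S^2$ with Lefschetz number $2$, a contradiction; finally $\mathbb{Z}\oplus\mathbb{Z}/2\mathbb{Z}$ is excluded by orientability of $M$. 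None of this is in your proposal, and without it case (2) is unproved.

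Second, in the one-end case you never actually close the gap you yourself flag. The machinery of partial homology equivalences, the plus construction, and "duality through a range" is not needed, and as written it does not deliver full $PD_3$-ness. The missing observation is that one-endedness of $\rho$ forces $H_2(M_\nu)=0$ (via $H_2(M_\nu)\cong H^1_c(M_\nu)$ and the end cohomology of the cover), so $M_\nu$ is \emph{acyclic}, not merely $1$-acyclic. Then $C_*(M_\nu)$ is already a length-$3$ free resolution of $\mathbb{Z}$ over $\mathbb{Z}[\rho]$ (the action of $\rho$ on $M_\nu$ is free and cellular), giving $c.d.\,\rho\le 3$ and $H^i(\rho;\mathbb{Z}[\rho])\cong H^i(M;\mathbb{Z}[\rho])\cong H_{3-i}(M_\nu)$ by Poincar\'e duality for $M$; this identifies $\rho$ as a $PD_3^+$-group directly, with no need for Hausmann--Vogel or a plus-construction comparison. (Your finite case is essentially correct: $M_\nu$ is then a closed homology $3$-sphere on which $\rho$ acts freely, so $\rho$ has cohomological period dividing $4$. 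Also note that the "infinitely many ends" case is excluded by hypothesis, so there is nothing to rule out there.)
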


\begin{proof}
Let $M_\nu$ be the covering space associated to $\nu$.

If $\rho$ is finite then $M_\nu$ is a homology sphere, since $\nu$ is perfect.
Since $\rho$ acts freely on $M_\nu$ it has cohomological period dividing 4.

If $\rho$ has two ends then it has a finite normal subgroup $F$ such that
$\rho/F\cong\mathbb{Z}$ or $D_\infty$.
Moreover, $M_\nu$ has the homology of $S^2$.
Hence the subgroup of $\rho=Aut(M/M_\nu)$ that acts trivially on 
$H_2(M_\nu)\cong\mathbb{Z}$ has index at most 2.
If $g$ is an element of finite order in this group then $g=1$.
Hence $\rho\cong\mathbb{Z}$, $\mathbb{Z}\oplus(\mathbb{Z}/2\mathbb{Z})$ or $D_\infty$.
(Compare \cite[Theorem 4.4]{Wa67}.)
Since $M$ is orientable we may exclude the possibility
$\mathbb{Z}\oplus(\mathbb{Z}/2\mathbb{Z})$.

If $\rho$ has one end then $M_\nu$ is acyclic,
We may assume that $M_\nu$ has the $\rho$-equivariant triangulation
lifted from a triangulation of $M$.
The cellular chain complex of $M_\nu$ is now a finitely generated free 
$\mathbb{Z}[\rho]$-complex, of length 3.
Since it is a resolution of the augmentation $\mathbb{Z}[\rho]$-module $\mathbb{Z}$,
we see that $c.d.\rho\leq3$.
The universal coefficient spectral sequence for $M$ with coefficients $\mathbb{Z}[\rho]$
collapses to give isomorphisms $H^i(\rho;\mathbb{Z}[\rho])\cong{H^i(M;\mathbb{Z}[\rho])}$.
As this is in turn isomorphic to $H_{3-i}(M;\mathbb{Z}[\rho])=H_{3-i}(M_\nu)$,
by Poincar\'e duality,  $\rho$ is a $PD_3^+$-group.
\end{proof}

This lemma is enough for our purposes, 
since virtually solvable groups have finitely many ends.

\begin{lemma}
\label{Hi96-3}
Let $M$ be a $3$-manifold such that $\pi=\pi_1M$  is an extension 
of a torsion-free solvable group $\rho$ by a perfect normal subgroup $\nu$.
Then either $\rho=1$ and $M$ is a homology sphere,
or $\rho\cong\mathbb{Z}$ and there is a $\Lambda$-homology
isomomorphism from $M$ to $S^2\times{S^1}$,
or $\rho$ is the fundamental group of an aspherical  $3$-manifold $P$ 
and there is a $\mathbb{Z}[\rho]$-homology isomorphism from $M$ to $P$. 
\end{lemma}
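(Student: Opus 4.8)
The plan is to apply Lemma \ref{Hi96-2} to identify $\rho$, and then in each case construct an explicit map from $M$ to a model space and check it is a homology isomorphism with the appropriate local coefficients. Since $\rho$ is torsion-free and solvable (hence virtually solvable, so with finitely many ends), the trichotomy of Lemma \ref{Hi96-2} applies: either $\rho$ is finite, or $\rho\cong\mathbb{Z}$ or $D_\infty$, or $\rho$ is a $PD_3^+$-group. Torsion-freeness immediately kills the finite case unless $\rho=1$ (a nontrivial finite group with periodic cohomology certainly has torsion), and it kills $D_\infty$. In the $PD_3^+$ case I would invoke the (by now standard, via geometrization) fact that a torsion-free $PD_3^+$-group which is solvable is the fundamental group of an aspherical closed $3$-manifold $P$ — indeed the solvable ones are exactly the flat and $\mathbb{N}il^3$ and $\mathbb{S}ol^3$ groups; alternatively one can cite that $\rho$ being solvable of Hirsch length $3$ and $PD_3$ forces $P$ to be one of these Seifert- or torus-bundle types. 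So the three surviving possibilities are $\rho=1$, $\rho\cong\mathbb{Z}$, and $\rho=\pi_1P$ with $P$ aspherical.

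First, the case $\rho=1$: then $\pi=\nu$ is perfect, so $H_1(M)=0$ and $M$ is an integral homology sphere; take the map $M\to S^3$ classifying a generator of $H^3$, which is a $\mathbb{Z}$-homology isomorphism. Next, $\rho\cong\mathbb{Z}$: here $H_1(M)\cong\mathbb{Z}$ since $\nu$ is perfect (so $\pi^{ab}=\rho^{ab}=\mathbb{Z}$), and the covering space $M_\nu$ associated to $\nu$ is the infinite cyclic cover, which is acyclic because $\nu$ is perfect and one computes $H_*(M_\nu;\mathbb{Z})$ via the chain complex of $\widetilde M$ over $\Lambda=\mathbb{Z}[\mathbb{Z}]$ — in fact $M_\nu$ is $\mathbb{Z}$-acyclic (a "homology line"). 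By Lemma \ref{homhandlemap} there is a $\mathbb{Z}$-homology isomorphism $h\colon M\to S^2\times S^1$; I then upgrade this to a $\Lambda$-homology isomorphism by checking that $h$ induces an isomorphism $H_*(M;\Lambda)=H_*(M_\nu;\mathbb{Z})\to H_*((S^2\times S^1)_\infty;\mathbb{Z})=H_*(S^2;\mathbb{Z})$, which follows since both sides are $H_*(\text{pt})$ and $h$ is already a $\mathbb{Z}[\mathbb{Z}/n]$-homology isomorphism for all $n$ on the intermediate covers (or more directly: $H_*(M_\nu)$ is generated over $\Lambda$ by the image of $H_*(M)$ via a transfer/Wang argument, and $h$ respects the generators $\iota_1,\iota_2$ used in Lemma \ref{homhandlemap}).

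The main case, and the main obstacle, is $\rho=\pi_1P$ with $P$ aspherical. Here I want a map $f\colon M\to P$ which is a $\mathbb{Z}[\rho]$-homology isomorphism. Since $P=K(\rho,1)$, the projection $\pi\to\rho$ is realized by a map $f\colon M\to P$, unique up to homotopy, with $\pi_1f$ the quotient map. The associated cover $M_\nu\to M$ is the pullback of $\widetilde P=\mathbb{R}^3\to P$, so $H_*(M;\mathbb{Z}[\rho])=H_*(M_\nu;\mathbb{Z})$, and $f$ being a $\mathbb{Z}[\rho]$-homology isomorphism is equivalent to $M_\nu$ being acyclic. That acyclicity is exactly the content of the one-ended case of the proof of Lemma \ref{Hi96-2}: $\nu$ perfect forces $H_1(M_\nu)=0$, and then Poincaré duality plus $c.d.\,\rho\le 3$ plus the fact that the chain complex of $M_\nu$ resolves $\mathbb{Z}$ forces $H_2(M_\nu)=H_3(M_\nu)=0$ as well (this is essentially the computation showing $\rho$ is $PD_3^+$ run in reverse — the Poincaré duality isomorphisms $H^i(M_\nu)\cong H_{3-i}(M_\nu)$ combined with $H^i(\rho;\mathbb{Z}[\rho])\cong H^i(M;\mathbb{Z}[\rho])$ and the known cohomology of a $PD_3$-group pin everything down). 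I expect the delicate point to be the identification of $\rho$ with a $3$-manifold group in the $PD_3^+$ case — i.e. quoting the resolution of the $PD_3$-group conjecture in the solvable case — and the careful bookkeeping that "$\mathbb{Z}[\rho]$-homology isomorphism" is precisely "$M_\nu$ acyclic," after which the acyclicity is already available from Lemma \ref{Hi96-2}'s proof. I would close by remarking that in all three cases $P$ is $S^3$, $S^2\times S^1$, or an aspherical $3$-manifold as claimed.
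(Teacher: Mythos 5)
Your overall skeleton is the paper's: invoke Lemma \ref{Hi96-2}, discard the finite and $D_\infty$ possibilities using torsion-freeness, and treat the cases $\rho=1$, $\rho\cong\mathbb{Z}$ and $\rho$ a $PD_3^+$-group separately. In the $PD_3^+$ case your route is genuinely different from the paper's, and arguably cleaner: you note that since $\widetilde{P}$ is contractible, ``$h$ is a $\mathbb{Z}[\rho]$-homology isomorphism'' is exactly ``$M_\nu$ is acyclic,'' which is already available from the one-ended case of Lemma \ref{Hi96-2}; the paper instead passes to the cover of $M$ corresponding to a finite-index subgroup $\kappa\cong\mathbb{Z}^2$ of $\rho$ and argues via cohomology rings generated in degree $1$. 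Your appeal to geometrization to identify $\rho$ with an aspherical $3$-manifold group can be replaced by the purely algebraic fact the paper cites (Bieri): a torsion-free virtually solvable $PD_3$-group is poly-$\mathbb{Z}$ of Hirsch length $3$, hence the group of a flat, $\mathbb{N}il^3$- or $\mathbb{S}ol^3$-manifold. Either input works.

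The genuine error is in the case $\rho\cong\mathbb{Z}$. You assert that $M_\nu$ is acyclic (``a homology line'') and that both $H_*(M_\nu)$ and $H_*((S^2\times S^1)_\infty)$ are $H_*(\mathrm{pt})$. This is false: $S^2\times\mathbb{R}\simeq S^2$ has $H_2\cong\mathbb{Z}$, and Lemma \ref{Hi96-2} itself records that when $\rho$ has two ends $M_\nu$ has the homology of $S^2$, not of a point. (If $M_\nu$ were acyclic, the Wang sequence would give $H_2(M)=0$, contradicting $H_2(M)\cong H^1(M)\cong\mathbb{Z}$ for a homology handle.) So the entire content of this case is the degree-$2$ statement that the lift $\tilde{h}_*:H_2(M_\nu)\to H_2(S^2\times\mathbb{R})\cong\mathbb{Z}$ is an isomorphism, and your primary justification does not address it because it rests on a false premise. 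The paper's argument is to lift $h$ to $\tilde{h}:M_\nu\to S^2\times\mathbb{R}$ and compare the Wang sequences of the two infinite cyclic covers, using $H_1(M_\nu)=\nu^{ab}=0$ and the fact that $h$ is already a $\mathbb{Z}$-homology isomorphism to conclude that $\tilde{h}$ is a homology isomorphism. Your parenthetical alternative (a Wang-type argument that $H_*(M_\nu)$ is generated over $\Lambda$ by classes coming from $H_*(M)$) is essentially the right repair, but as written the case is not proved.
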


\begin{proof}
Since $\rho$ is torsion free and has finitely many ends it is either 1, $\mathbb{Z}$ or is a $PD_3^+$-group.
The result is clear if $\rho=1$.

If $\rho=\mathbb{Z}$ then there is a $\mathbb{Z}$-homology equivalence
$h:M\to{S^2\times{S^1}}$, by Lemma \ref{homhandlemap}.
Let $\tilde{h}:M_\nu\to{S^2}\times\mathbb{R}$ be a lift of $h$ 
to the infinite cyclic covering spaces.
Since $\nu$ is perfect it follows easily from the Wang sequence 
for the covering projection that $\tilde{h}$ is a homology isomorphism, 
and hence that $h$ is a $\mathbb{Z}[\rho]$-homology isomorphism.

Suppose now that $\rho$ is a $PD_3^+$-group.
As it is virtually solvable it is virtually poly-$\mathbb{Z}$, 
of Hirsch length 3 \cite[Theorem 9.23]{Bie}.
Hence it is the fundamental group of a 3-manifold $P$,
which is either flat (if $\rho$ is virtually abelian), a  $\mathbb{N}il^3$-manifold
(if $\rho$ is virtually nilpotent but not virtually abelian) or a $\mathbb{S}ol^3$-manifold
(if $\rho$ is not virtually nilpotent).
In all cases, every automorphism of $\rho$ is realizable by some based
self-homeomorphism of $P$, 
and so the epimorphism from $\pi$ to $\rho$ may be realized by a map $h:M\to{P}$. 
Let $\sigma$ be a normal subgroup of finite index in $\rho$
which is  a poly-$\mathbb{Z}$ group, and let $\lambda:\sigma\to\mathbb{Z}$ be an epimorphism. 
Then $\kappa=\mathrm{Ker}(\lambda)$ is poly-$\mathbb{Z}$ of Hirsch length 2, 
and so is isomorphic to $\mathbb{Z}^2$, since $\rho$ is orientable.
Let $h_\kappa:M_\kappa\to{P_\kappa}\simeq{S^1\times{S^1}\times\mathbb{R}}$ 
be a lift of $h$ to the covering spaces associated to $\kappa$ and its preimage in $\pi$.
The cohomology rings of $M_\kappa$ and $P_\kappa$ are generated in degree 1
and so $h_\kappa$ induces a (co)homology isomorphism.
It follows easily that $h$ is a $\mathbb{Z}[\rho]$-homology isomorphism.
\end{proof}

In the final case $P$ is either flat, a $\mathbb{N}il^3$-manifold or a $\mathbb{S}ol^3$-manifold.

\newpage

\section{Application of surgery}

The symbols $M$,  $P$, $\rho$ and $h$ shall retain their meaning from 
Lemma \ref{Hi96-3} in the following lemmas,
except that we shall allow $P$ to denote $S^3$ or $S^2\times{S^1}$ also, 
where appropriate.

\begin{lemma}
\label{Hi96-4}
The maps $h$ and $id_P$ are normally cobordant.
\end{lemma}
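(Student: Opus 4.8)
Applying the augmentation $\mathbb{Z}[\rho]\to\mathbb{Z}$, any $\mathbb{Z}[\rho]$-homology isomorphism is in particular a $\mathbb{Z}$-homology isomorphism, so $h$ has degree $\pm1$; after fixing orientations we may assume $h$ has degree $1$. Since $M$ and $P$ are closed orientable $3$-manifolds they are parallelizable, so $h$ is covered by a map of stable normal bundles and thus represents an element $\nu(h)$ of the normal invariant set $\mathcal{N}(P)=[P;G/TOP]$, which by definition is the set of normal cobordism classes of degree-one normal maps to $P$, with base point $\mathrm{id}_P$. Thus the lemma is equivalent to the assertion that $\nu(h)=0$. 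Now the low-degree homotopy groups of $G/TOP$ are $0$ in degrees $0,1,3$ and $\pi_2(G/TOP)=\mathbb{Z}/2\mathbb{Z}$ (this being $L_2(\mathbb{Z})$), so the Postnikov $3$-stage of $G/TOP$ is $K(\mathbb{Z}/2\mathbb{Z},2)$ and obstruction theory gives $[P;G/TOP]\cong H^2(P;\mathbb{Z}/2\mathbb{Z})$; in particular there is nothing to prove when $P=S^3$.

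\textbf{Kervaire--Arf detection and reduction to the fibred case.} Under the identification $\mathcal{N}(P)\cong H^2(P;\mathbb{Z}/2\mathbb{Z})$ the invariant $\nu(h)$ is the Kervaire class: for $u\in H_2(P;\mathbb{Z}/2\mathbb{Z})$ represented by an embedded surface $F\subset P$ to which $h$ is transverse, $\langle\nu(h),u\rangle$ is the Arf invariant of the degree-one normal map of surfaces $h^{-1}(F)\to F$, i.e.\ the Arf invariant of the $\mathbb{Z}/2\mathbb{Z}$-valued quadratic form on its surgery kernel. So it suffices to kill all these Arf invariants. Because $h$ is a $\mathbb{Z}$-homology isomorphism it is an isomorphism on $H^1(-;\mathbb{Z}/2\mathbb{Z})$, and $H^1(P;\mathbb{Z}/2\mathbb{Z})=\mathrm{Hom}(\rho,\mathbb{Z}/2\mathbb{Z})$; thus each such $F$ is Poincar\'e dual to a class pulled back from $\rho$, and replacing $P$ and $M$ by the associated finite covers (which again satisfy the hypotheses of Lemma \ref{Hi96-3}, since the perfect subgroup $\nu$ lies in the covering group of $M$ and $h$ restricts to a homology equivalence over the corresponding finite-index subgroup of $\rho$) we reduce to the case of a primitive class. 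There one can take $F$ connected, perform surgery below the middle dimension, and identify $F$ with the fibre of an $S^1$-fibration of $P$ (in the $S^2\times S^1$ and the orientable-base aspherical cases) or with a surface carrying the relevant $\mathbb{Z}$-cover data; the restriction $h^{-1}(F)\to F$ is then the restriction of $h$ over an infinite cyclic cover, and the hypothesis that $h$ is a $\mathbb{Z}[\rho]$-homology equivalence forces it to be a $\mathbb{Z}[\pi_1F]$-homology equivalence. Its surgery kernel over $\mathbb{Z}[\pi_1F]$ then vanishes, so a fortiori its simply-connected Arf invariant is $0$. Hence $\nu(h)=0$ and $h$ is normally cobordant to $\mathrm{id}_P$.

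\textbf{The main obstacle.} The hypothesis that $h$ is a $\mathbb{Z}[\rho]$-homology equivalence, rather than merely a $\mathbb{Z}$-homology equivalence, is essential: the map $M(K)\to S^2\times S^1$ from $0$-surgery on a knot $K$ is a $\mathbb{Z}$-homology equivalence whose normal invariant in $H^2(S^2\times S^1;\mathbb{Z}/2\mathbb{Z})\cong\mathbb{Z}/2\mathbb{Z}$ is $\mathrm{Arf}(K)$, possibly nonzero, whereas the stronger $\Lambda$-homology condition forces $\Delta_K=1$ and hence $\mathrm{Arf}(K)=0$. Accordingly, the delicate point of the proof is the reduction in the second paragraph: handling the classes $w\in H^1(P;\mathbb{Z}/2\mathbb{Z})$ that do not lift to integral classes (so that $F$ is one-sided, requiring one further passage to a double cover) and the non-fibred integral classes, and verifying in every case that the induced surface surgery problem is genuinely a $\mathbb{Z}[\pi_1F]$-homology equivalence — equivalently, that the resulting $\mathbb{Z}/2\mathbb{Z}$-quadratic form is metabolic. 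This is where I would expect the bookkeeping to be heaviest.
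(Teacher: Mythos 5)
Your overall strategy is genuinely different from the paper's. You identify $\mathcal{N}(P)\cong H^2(P;\mathbb{F}_2)$ and try to kill the normal invariant geometrically, by evaluating codimension-one Arf invariants on transverse preimages of surfaces. The paper instead observes that the surgery obstruction map $\sigma_3(P):\mathcal{N}(P)\to L_3(\mathbb{Z}[\rho])$ is injective --- by an explicit computation for $P=S^2\times S^1$ (the normal map $T\to S^2$ of Arf invariant $1$, crossed with $S^1$, hits the generator of $L_3(\Lambda)\cong\mathbb{Z}/2\mathbb{Z}$) and by Farrell--Jones for virtually poly-$\mathbb{Z}$ $\rho$ --- and then notes that $\sigma_3(h)=0$ because $h$ is a $\mathbb{Z}[\rho]$-homology isomorphism. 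Your route, if completed, would amount to reproving this injectivity by hand; the paper's route outsources exactly the part where your bookkeeping gets heavy.

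There is, however, a genuine gap at your key step. You assert that ``the restriction $h^{-1}(F)\to F$ is the restriction of $h$ over an infinite cyclic cover'' and that its surgery kernel over $\mathbb{Z}[\pi_1F]$ vanishes, so the Arf invariant is zero \emph{a fortiori}. The transverse preimage is not a restriction of $h$ to a covering space: it is a degree-one normal map of closed surfaces whose kernel is generically large. In your own test case $M(K)\to S^2\times S^1$ with $F=S^2\times\{*\}$, the preimage is a capped-off Seifert surface of genus $g$ mapping to $S^2$, with kernel $\mathbb{Z}^{2g}\neq 0$; what vanishes is not the kernel but the Arf invariant of the quadratic refinement on it, and that vanishing is the nontrivial fact $\Delta_K=1\Rightarrow\mathrm{Arf}(K)=0$ (via $\mathrm{Arf}(K)\equiv 0$ iff $\Delta_K(-1)\equiv\pm1\bmod 8$, or $c_2(K)\bmod 2$), which you correctly invoke in your final paragraph but which contradicts the mechanism claimed in your second. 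So the argument as written proves nothing even in the easiest case; you would need to show directly that the $\mathbb{Z}[\rho]$-homology-equivalence hypothesis forces each surface surgery problem to have \emph{metabolic} quadratic form, and you would still have to handle the non-fibred and mod-$2$-only classes (one-sided $F$) that you explicitly leave open. Both issues are bypassed by the paper's appeal to the injectivity of $\sigma_3$.
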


\begin{proof}
The result is clear if $P=S^3$.
Suppose that $P=S^2\times{S^1}$.
Then $\mathbb{Z}[\rho]\cong\Lambda$.
There is a normal map $n:T=S^1\times{S^1}\to{S^2}$ with Arf invariant 1,
and $n\times{id_S^1}$
is a normal map with non-trivial surgery obstruction in 
$L_3(\Lambda)\cong\mathbb{Z}/2\mathbb{Z}$.
Thus the surgery obstruction map $\sigma_3(S^2\times{S^1})$ from $\mathcal{N}(S^2\times{S^1})=\mathbb{Z}/2\mathbb{Z}$
to $L_3(\Lambda)$ is bijective.

If $\rho$ is virtually poly-$\mathbb{Z}$ we may appeal to \cite{FJ88} 
instead to see that $Wh(\rho)=0$ and 
$\sigma_3(P):\mathcal{N}(P)\to{L_3(\mathbb{Z}[\rho])}$ is an isomorphism.

Since in all cases $h$ is a $\mathbb{Z}[\rho]$-homology isomorphism,
it has trivial surgery obstruction and so is normally cobordant to $id_P$.
\end{proof}


\begin{lemma}
\label{Hi96-5}
There is an $H$-cobordism $W$ over $\mathbb{Z}[\rho]$ from $M$ to $P$ with
$\pi_1W=\rho$.
\end{lemma}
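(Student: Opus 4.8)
The plan is to produce $W$ by the standard surgery-theoretic construction, starting from the normal cobordism supplied by Lemma \ref{Hi96-4}. First I would take the normal cobordism $V$ from $h:M\to{P}$ to $id_P$ guaranteed by Lemma \ref{Hi96-4}; thus $\partial{V}=M\sqcup{-P}$ and $V$ comes with a normal map $F:(V,\partial{V})\to(P\times[0,1],P\times\{0,1\})$ restricting to $h$ and $id_P$ on the two ends. The surgery obstruction of a normal cobordism rel boundary lives in $L_4(\mathbb{Z}[\rho])$, and since one end is already a homotopy (indeed $\mathbb{Z}[\rho]$-homology) equivalence, I would do surgery on the interior of $V$ rel $\partial{V}$ to make $F$ highly connected, then handle the middle-dimensional obstruction. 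Because $\rho$ is torsion-free virtually poly-$\mathbb{Z}$ (or trivial, or $\mathbb{Z}$), it is a ``good'' group in the sense of \cite{FQ}, so the $4$-dimensional surgery machinery applies: the relevant $L_4(\mathbb{Z}[\rho])$-obstruction can be realized by a normal cobordism rel boundary on the $P\times\{0,1\}$ side (using the action of $L_4$ on the structure set for the closed $4$-manifold-with-boundary problem, or equivalently by \cite{FJ88}, $Wh(\rho)=0$ and the surgery sequence is available), and since $h$ already has trivial surgery obstruction the net obstruction is trivial. The upshot is a cobordism $W$ with $\partial{W}=M\sqcup{-P}$ and a map $G:W\to{P}$ extending $h$ and $id_P$ which is a $\mathbb{Z}[\rho]$-homology equivalence; in particular $\pi_1W=\rho$ (the inclusion $P\hookrightarrow{W}$ induces an isomorphism on $\pi_1$ by general position and the fact that $G$ restricted to the $P$ end is the identity).

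The key steps, in order, are: (1) invoke Lemma \ref{Hi96-4} to get the normal cobordism; (2) improve $F$ by interior surgery below the middle dimension so that the only obstruction to making it a homology equivalence over $\mathbb{Z}[\rho]$ lies in $H_2$ of the relevant covers, i.e.\ in $L_4(\mathbb{Z}[\rho])$; (3) use ``goodness'' of $\rho$ and the realization of $L_4$-elements by normal cobordisms of $P$ rel $\partial$ to kill this obstruction, absorbing the correction into $W$; (4) check that the resulting $W$ is an $H$-cobordism over $\mathbb{Z}[\rho]$: the inclusion of $P$ is a $\mathbb{Z}[\rho]$-homology equivalence by construction, and the inclusion of $M$ is then also one by Poincar\'e--Lefschetz duality for the pair $(W,\partial{W})$ together with the long exact sequences (exactly the Mayer--Vietoris/duality bookkeeping used in Lemma \ref{Hi96-1}); (5) confirm $\pi_1W=\rho$.

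I expect step (3) to be the main obstacle, for two related reasons. First, in dimension $4$ the surgery exact sequence is only available (in the strong form with an $L_4$-action on the structure set) when $\pi_1$ is good, so one must be careful that $\rho$ — which here is $1$, $\mathbb{Z}$, a flat $3$-manifold group, a $\mathbb{N}il^3$-group, or a $\mathbb{S}ol^3$-group — genuinely falls under \cite{FQ}; all of these are virtually poly-$\mathbb{Z}$, hence good, and for these one has the sharper input of \cite{FJ88} ($Wh(\rho)=0$, assembly isomorphisms), so this is manageable but needs explicit citation. Second, the delicate point is that we want a normal cobordism \emph{rel boundary} whose obstruction we can cancel; the cleanest route is to realize the desired element of $L_4(\mathbb{Z}[\rho])$ by a normal cobordism from $\mathrm{id}_P$ to some self-homotopy-equivalence of $P$, glue it onto the $P$-end of $V$, and observe that the glued-up normal map now has trivial surgery obstruction and hence (since we have a homotopy equivalence in hand on one end, as the excerpt's surgery discussion emphasizes) can be surgered to a $\mathbb{Z}[\rho]$-homology equivalence. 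The remaining steps are routine duality and Van Kampen arguments.

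\begin{proof}
By Lemma \ref{Hi96-4} the map $h:M\to{P}$ is normally cobordant to $id_P$; let $F:(V,\partial{V})\to(P\times[0,1],P\times\{0\}\sqcup{P}\times\{1\})$ be such a normal cobordism, with $F|_{P\times\{0\}}=h$ and $F|_{P\times\{1\}}=id_P$. Doing surgery on the interior of $V$, rel $\partial{V}$, we may assume $F$ is $2$-connected; the obstruction to completing the surgery to a $\mathbb{Z}[\rho]$-homology equivalence then lies in $L_4(\mathbb{Z}[\rho])$. Since $\rho$ is $1$, $\mathbb{Z}$, or a torsion-free virtually poly-$\mathbb{Z}$ group, it is good in the sense of \cite{FQ}; moreover $Wh(\rho)=0$ and the surgery obstruction map $\sigma_4$ for $P$ (rel $\partial$) is defined, by \cite{FJ88}. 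Realizing the obstruction by a normal cobordism of $id_P$ rel $\partial$ to a self-equivalence of $P$ and gluing it onto the $P\times\{1\}$ end of $V$, we obtain a normal map with trivial surgery obstruction; since one end is already a homotopy equivalence we may complete the surgery (rel $\partial$) to obtain a cobordism $W$ with $\partial{W}=M\sqcup{P}$ and a $\mathbb{Z}[\rho]$-homology equivalence $G:W\to{P}$ restricting to $h$ on $M$ and $id_P$ on $P$.

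In particular the inclusion $P\hookrightarrow{W}$ is a $\mathbb{Z}[\rho]$-homology equivalence and, by Van Kampen's theorem together with $G|_P=id_P$, induces an isomorphism on fundamental groups, so $\pi_1W=\rho$. Finally, applying Poincar\'e--Lefschetz duality for $(W,\partial{W})$ and the long exact homology sequences of the pairs $(W,M)$ and $(W,P)$ with coefficients $\mathbb{Z}[\rho]$, the fact that $P\hookrightarrow{W}$ is a homology equivalence forces $M\hookrightarrow{W}$ to be one as well. Hence $W$ is an $H$-cobordism over $\mathbb{Z}[\rho]$ from $M$ to $P$ with $\pi_1W=\rho$.
\end{proof}
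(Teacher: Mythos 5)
Your overall architecture matches the paper's: start from the normal cobordism of Lemma \ref{Hi96-4}, arrange for the surgery obstruction in $L_4(\mathbb{Z}[\rho])$ to vanish, and then invoke goodness of $\rho$ and \cite[Theorem 11.3A]{FQ} to complete surgery rel $\partial$; the resulting homotopy equivalence $W\to{P\times[0,1]}$ restricting to $h$ and $id_P$ is manifestly an $H$-cobordism over $\mathbb{Z}[\rho]$ with $\pi_1W=\rho$. Your closing duality/Van Kampen bookkeeping is correct, though once one has a homotopy equivalence to $P\times[0,1]$ it is redundant: $f|_M=h$ is already a $\mathbb{Z}[\rho]$-homology isomorphism by Lemma \ref{Hi96-3}.

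The one step that would fail as written is your mechanism for killing the obstruction: ``realizing the obstruction by a normal cobordism of $id_P$ rel $\partial$ to a self-equivalence of $P$ and gluing it onto the $P\times\{1\}$ end.'' That is Wall realization of $L_4(\mathbb{Z}[\rho])$ acting on the structure set of the \emph{three}-manifold $P$, which is precisely the kind of low-dimensional realization that \S 1.8 warns against (the action $\omega$ is already problematic in dimension $4$, and in dimension $3$ the structure set must be redefined in terms of homology equivalences and homology cobordism); moreover, even where realization is available the other end of the realizing cobordism need not be $P$ itself, so your glued-up cobordism could terminate at some $P'\not\cong{P}$. The paper instead changes the normal cobordism \emph{without touching its ends}: for $\rho\cong\mathbb{Z}$ it forms connected sums in the interior with copies of the degree-$1$ map from the $E_8$-manifold to $S^4$ (using that $L_4(\mathbb{Z})\to{L_4(\mathbb{Z}[\mathbb{Z}])}$ is an isomorphism, so the obstruction is detected by the signature), and for $\rho$ virtually poly-$\mathbb{Z}$ it uses \cite{FJ88} to see that $\sigma_4(P\times[0,1],\partial)$ is an isomorphism, so a normal cobordism from $h$ to $id_P$ with trivial obstruction can be chosen. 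Note that the case $\rho\cong\mathbb{Z}$ genuinely needs separate treatment, since $P=S^2\times{S^1}$ is not aspherical and the assembly isomorphisms of \cite{FJ88} concern $B\rho$ rather than $P$. Your parenthetical alternative points in the right direction, but what is actually used is surjectivity of $\sigma_4$ on the normal invariants of $(P\times{I},\partial)$, not an action of $L_4$ on $\mathcal{S}(P)$.
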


\begin{proof}
Suppose first that $\rho\cong\mathbb{Z}$.
Let $F:N\to{S^2}\times{S^1}\times[0,1]$ be a normal cobordism from $h$ to $id_{S^2\times{S^1}}$.
The surgery obstruction of $F$ is determined by the signature of $N$, 
since the natural homomorphism from $L_4(\mathbb{Z})$ to $L_4(\mathbb{Z}[\mathbb{Z}])$
is an isomorphism.
By forming the connected sum of $F$ with an appropriate multiple of the
degree-1 map from the $E_8$-manifold to $S^4$ we may obtain a
normal cobordism with trivial surgery obstruction.

If $\rho$ is virtually poly-$\mathbb{Z}$ then we may appeal to \cite{FJ88} instead,
to see that $\sigma_4(P\times[0,1],\partial)$ is an isomorphism.
Thus we may in all cases assume that there is a normal cobordism from $h$ to $id_P$ with trivial surgery obstruction.

Since $\rho$ is ``good",
we may apply \cite[Theorem 11.3A]{FQ} to complete surgery
relative to the boundary to obtain a simple homotopy equivalence $f:W\to{P\times[0,1]}$, 
where $\partial{W}={M}\sqcup{P}$, $f|_M=h$ and $f|_P=id_P$.
Such a 4-manifold is clearly an $H$-cobordism over $\mathbb{Z}[\rho]$.
\end{proof}

The argument of the first paragraph of Lemma \ref{Hi96-4} together with 
Lemma \ref{Hi96-5} shows that any 3-manifold with the same homology as 
$S^2\times{S^1}$ is normally cobordant to $S^2\times{S^1}$.
However in general the normal cobordism cannot be improved to 
an $H$-cobordism over $\mathbb{Z}$. See \cite[\S11.8]{FQ}.

\begin{theorem}
\label{Hi96-thm1}
Let $P$ be a $3$-manifold which embeds in $S^4$ and such that 
every automorphism of $\rho=\pi_1P$ is induced by some 
(base-point preserving) self homeomorphism of $P$.
Let $W$ be a cobordism with $\partial{W}=P\sqcup{M}$ such that 
the inclusion of $P$ into $W$ induces an isomorphism $\rho\cong\pi_1W$, 
and which is an $H$-cobordism over $\mathbb{Z}[\rho]$. 
Then $M$ also embeds in $S^4$.
\end{theorem}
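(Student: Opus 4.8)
The plan is to build an embedding of $M$ in $S^4$ by cutting the given embedding of $P$ along $P$, inserting the $H$-cobordism $W$ (or rather a doubled version of it, as in Lemma \ref{Hi96-1}), and then using $4$-dimensional surgery over $\rho$ to repair the resulting complementary regions into genuine complements of an embedding. Concretely, write $S^4=X\cup_P Y$, where $X$ and $Y$ are the complementary regions of the given embedding of $P$. Form $\Sigma'=X\cup_P 2W\cup_P Y$, where $2W=W\cup_M W$ is the double of $W$ along $M$; here the two copies of $P$ at the outer ends of $2W$ are glued to $\partial X$ and $\partial Y$. By the Mayer--Vietoris argument of Lemma \ref{Hi96-1}, using that the inclusions $P\hookrightarrow W$ at the two ends of $2W$ induce the same isomorphism on homology (indeed an isomorphism of $\mathbb{Z}[\rho]$-homology), the space $\Sigma'$ is a homology $4$-sphere containing $M$ as a locally flat submanifold, with complementary regions $X'=X\cup_P W$ and $Y'=Y\cup_P W$. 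So the immediate outcome is that $M$ embeds in \emph{some} homology $4$-sphere; the real work is to see that $\Sigma'$ is in fact $S^4$.

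To do this I would control the fundamental groups. Since the inclusion $P\hookrightarrow W$ induces an isomorphism $\rho\cong\pi_1W$, and $W$ is an $H$-cobordism over $\mathbb{Z}[\rho]$, van Kampen shows $\pi_1X'\cong\pi_1X$ and $\pi_1Y'\cong\pi_1Y$ (the gluing is along $P$ with $\pi_1P\to\pi_1W$ an isomorphism), and $\pi_1\Sigma'=1$ by the same push-out diagram as for the original embedding. Moreover $\Sigma'$ is a simply-connected homology $4$-sphere, hence a homotopy $4$-sphere, hence homeomorphic to $S^4$ by Freedman. The one subtlety is that a priori we only get that $M$ is a locally flat submanifold of a manifold homeomorphic to $S^4$; since ``embeds in $S^4$'' is exactly that, we are done at the level of the statement. (If one wants the complementary regions to be honest $4$-manifolds rather than possibly-singular unions, note that $X\cup_P W$ and $Y\cup_P W$ are smooth/TOP manifolds since $W$ is glued along the closed $3$-manifold $P$ in its boundary.)

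The step I expect to be the main obstacle is verifying that the construction does not secretly require improving the normal cobordism underlying $W$ to an $s$-cobordism or an $H$-cobordism over $\mathbb{Z}$, which Lemmas \ref{Hi96-4} and \ref{Hi96-5} explicitly \emph{cannot} do in general (cf.\ \cite[\S11.8]{FQ}). The resolution is that we do not need this: the Mayer--Vietoris computation that $\Sigma'$ is a homology sphere only uses that $W$ is an $H$-cobordism over $\mathbb{Z}$ (a fortiori over $\mathbb{Z}[\rho]$), and the identification $\Sigma'\cong S^4$ only uses simple-connectivity plus Freedman's theorem, for which ``goodness'' of $\rho=1$ is automatic. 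Thus the goodness hypothesis on $\rho$ is consumed entirely inside Lemma \ref{Hi96-5} (to produce $W$ in the first place, via \cite[Theorem 11.3A]{FQ}), and plays no further role here. I would therefore structure the write-up as: (1) form $2W$ and $\Sigma'$; (2) Mayer--Vietoris to get a homology $4$-sphere and to read off the complementary regions; (3) van Kampen to get $\pi_1\Sigma'=1$; (4) invoke Freedman to conclude $\Sigma'\approx S^4$; (5) observe that by construction $M\subset\Sigma'$ is locally flat, so $M$ embeds in $S^4$.
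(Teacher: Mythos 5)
Your construction is essentially the paper's: double $W$ along $M$ to get $Z=W\cup_MW$ with $\partial Z=P\sqcup P$, splice $Z$ into the given embedding $S^4=X\cup_PY$, and identify the resulting closed $4$-manifold with $S^4$ (the paper does this via $\pi_1=1$ and $\chi=2$ rather than via a homology computation, but that difference is cosmetic).

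There is, however, a genuine gap at the step ``$\pi_1\Sigma'=1$ by the same push-out diagram as for the original embedding.'' The two boundary components of $Z$ come with identifications $j_1,j_2:P\to\partial Z$, and the induced isomorphisms $\pi(j_1),\pi(j_2):\rho\to\pi_1Z$ need not coincide. If you glue $Y$ to the second end by the naive identification, the van Kampen pushout for $\Sigma'$ amalgamates $\pi_1X$ and $\pi_1Y$ over $\rho$ via $i_{X*}$ and $i_{Y*}\circ\bigl(\pi(j_2)^{-1}\pi(j_1)\bigr)$; twisting one of the amalgamating maps by an automorphism of $\rho$ can change the pushout, so triviality is not inherited from the original embedding. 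This is exactly where the hypothesis that every automorphism of $\rho$ is realized by a basepoint-preserving self-homeomorphism $\psi$ of $P$ enters: the paper reglues $Y$ along $j_2\circ\psi$ with $\psi$ inducing $\pi(j_2)^{-1}\pi(j_1)$, after which the pushout really is the one computing $\pi_1S^4=1$. Your write-up never invokes this hypothesis, which is a warning sign; either you must insert the correcting homeomorphism as the paper does, or you must prove that $\pi(j_2)^{-1}\pi(j_1)$ is always inner for the literal double (in which case you should say so explicitly and explain why the hypothesis can be dispensed with). Your final paragraph about not needing to upgrade $W$ to an $s$-cobordism or a $\mathbb{Z}$-$H$-cobordism is correct and consistent with the paper.
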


\begin{proof}
Let $Z=W\cup_{M}W$ be the union of two copies of $W$, doubled along $M'$, 
and let $j_1$ and $j_2$ be the natural identifications of $P$ with the two components of
$\partial{Z}$.
Since the inclusion of $M$ into $W$ induces an epimorphism on fundamental groups, 
the maps $\pi(j_i)$ are isomorphisms, for $i=1,2$, by Van Kampen's Theorem.
Let $\psi$ be a (basepoint preserving) self homeomorphism of $P$ inducing the 
isomorphism $\pi(j_2)^{-1}\pi(j_1)$.
Let $X$ and $Y$ be the complementary regions of an embedding of $P$ in $S^4$,
let $i_X:P\to\partial{X}$ and $i_Y:P\to\partial{Y}$ be the natural inclusions,
and let $\Sigma=X\cup{Z}\cup{Y}$, 
where we identify $i_X(p)$ with $j_1(p)$ and $i_Y(p)$ with $j_2\psi(p)$, 
for all $p\in {P}$.
Then $\Sigma$ is simply connected and $\chi(\Sigma)=\chi(X)+\chi(Y)=\chi(S^4)$, 
so $\Sigma\cong{S^4}$.
As $\Sigma$ contains $M$ as a locally flat submanifold this proves the theorem.
\end{proof}

There is a parallel argument on the homotopy level.
If $h:M\to{P}$ is a $\mathbb{Z}[\rho]$-homology isomorphism
where $\rho=\pi_1P$, and $Z(h)$ is the mapping cylinder of $h$ then 
$(Z(h);m,P)$ is a Poincar\'e duality triad with $\pi_1Z(h)\cong\rho$.
Hence if $P$ Poincar\'e embeds in $S^4$ and automorphisms of $\rho$
are realizable by self-homotopy equivalences of $P$ then $M$ also
Poincar\'e embeds in $S^4$.

It remains possible that a model manifold $P$ might not embed in $S^4$,
while $M$ does. 
However, we may settle the question for 3-manifolds 
with models having virtually solvable fundamental group.

\begin{cor}
\label{solvmodel}
If $\pi$ is an extension of an infinite solvable group $\rho$ 
by a perfect normal subgroup then $M$ embeds in $S^4$ if and only if 
$\rho$ is one of the $10$ torsion-free infinite groups
corresponding to cases (2) to (6) of Theorem \ref{CH-thm2.2}.
\end{cor}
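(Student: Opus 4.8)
The plan is to combine the classification in Theorem~\ref{CH-thm2.2} with the surgery machinery of Theorem~\ref{Hi96-thm1} and the homological analysis of Lemma~\ref{Hi96-3}. First I would observe that if $\pi$ is an extension of an infinite solvable group $\rho$ by a perfect normal subgroup $\nu$, then since solvable groups have finitely many ends, Lemma~\ref{Hi96-2} forces $\rho$ to be $\mathbb{Z}$, $D_\infty$, or a $PD_3^+$-group. The case $\rho\cong{D_\infty}$ is excluded: a subgroup of finite index in $D_\infty$ is infinite cyclic, but then the corresponding finite cover of $M$ would have fundamental group an extension of $\mathbb{Z}$ by a perfect group, and I would need to check that the ``two-ended with perfect kernel'' possibilities ruled out in Lemma~\ref{Hi96-2} (specifically the exclusion of $D_\infty$ as a quotient acting on $H_2(M_\nu)$) already obstruct this at the level of $M$ itself. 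Since $M$ is orientable and $\rho$ acts on $H_2(M_\nu)\cong\mathbb{Z}$ with the trivial-action subgroup of index $\le2$, no element of finite order can act, so $\rho$ cannot be $D_\infty$. Hence $\rho$ is torsion-free, and by Lemma~\ref{Hi96-3} either $\rho\cong\mathbb{Z}$ with a $\Lambda$-homology isomorphism $h:M\to{S^2\times{S^1}}$, or $\rho$ is the fundamental group of a flat, $\mathbb{N}il^3$- or $\mathbb{S}ol^3$-manifold $P$ with a $\mathbb{Z}[\rho]$-homology isomorphism $h:M\to{P}$.

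Next I would run the surgery argument. By Lemma~\ref{Hi96-5} there is an $H$-cobordism $W$ over $\mathbb{Z}[\rho]$ from $M$ to $P$ with $\pi_1W=\rho$; here $\rho$ is ``good'' since it is virtually solvable. Every automorphism of a torsion-free virtually poly-$\mathbb{Z}$ group of Hirsch length $3$ (or of $\mathbb{Z}$) is realized by a based self-homeomorphism of the corresponding 3-manifold. So if $P$ embeds in $S^4$, then Theorem~\ref{Hi96-thm1} applies and $M$ embeds in $S^4$. Conversely, if $M$ embeds in a homology 4-sphere then by Lemma~\ref{Hi96-1} applied to $W$ (an $H$-cobordism over $\mathbb{Z}$, a fortiori), $P$ embeds in a homology 4-sphere; since $\pi_1P=\rho$ is infinite and virtually solvable, Theorem~\ref{CH-thm2.2} forces $P$ (hence $\rho$) into one of the listed cases, and then Theorem~\ref{CH-thm2.2} also tells us $P$ actually embeds in $S^4$, so by the forward direction $M$ embeds in $S^4$ too. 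Thus $M$ embeds in $S^4$ if and only if $\rho$ is the fundamental group of one of the embeddable 3-manifolds in cases (2)--(6) of Theorem~\ref{CH-thm2.2}. I would then simply enumerate: case (2) gives $\rho\cong\mathbb{Z}$ (one group); cases (3)--(6) give the nine torsion-free infinite groups $\mathbb{Z}^3$, $G_2$, the two $\mathbb{N}il^3$ semidirect products, and the four $N\cup_\phi{N}$ groups (one $\mathbb{N}il^3$, three $\mathbb{S}ol^3$), for a total of $10$.

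The main obstacle I expect is bookkeeping of exactly which $\rho$ occur, and ensuring the hypothesis of Theorem~\ref{Hi96-thm1} that ``$P$ embeds in $S^4$'' (not merely in a homology 4-sphere) is genuinely available for each case. This is handled because Theorem~\ref{CH-thm2.2} asserts in each of cases (2)--(6) that the manifold embeds in $S^4$, and the eleven manifolds with $\pi$ restrained that embed all embed in $S^4$ (the Poincar\'e sphere being the only exception, and it is not relevant here since its group is finite). A secondary subtlety is that $P$ itself must be one of the \emph{standard} geometric 3-manifolds with the given $\rho$; since a torsion-free virtually poly-$\mathbb{Z}$ group of Hirsch length $3$ determines its closed 3-manifold up to homeomorphism, there is no ambiguity, and the realizability-of-automorphisms hypothesis is classical in each geometry. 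Finally I would note that the case $\rho\cong\mathbb{Z}$ must be treated slightly apart (using $\Lambda$-homology equivalences and the bijectivity of $\sigma_3(S^2\times{S^1})$ established in Lemma~\ref{Hi96-4}), but the conclusion is uniform.
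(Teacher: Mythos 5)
Your proposal follows essentially the same route as the paper: Lemma \ref{Hi96-2} to pin down the possible quotients $\rho$, Lemma \ref{Hi96-3} to produce the model manifold $P$ and the $\mathbb{Z}[\rho]$-homology isomorphism, Lemmas \ref{Hi96-4} and \ref{Hi96-5} together with Theorem \ref{Hi96-thm1} for sufficiency, and homology-cobordism invariance plus Theorem \ref{CH-thm2.2} for necessity. Your explicit step of upgrading ``$P$ embeds in a homology $4$-sphere'' to ``$M$ embeds in $S^4$'' by feeding the classification back through the forward direction is correct and worth making explicit.

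The one genuine gap is your exclusion of $\rho\cong{D_\infty}$. You assert that, since $M$ is orientable and the subgroup of $\rho$ acting trivially on $H_2(M_\nu)\cong\mathbb{Z}$ has index at most $2$, ``no element of finite order can act,'' so $\rho$ cannot be $D_\infty$. That is not what Lemma \ref{Hi96-2} gives: its proof shows only that a nontrivial element of finite order cannot act \emph{trivially} on $H_2(M_\nu)$; order-2 elements acting by $-1$ are entirely possible (this is exactly what happens for $\mathbb{RP}^3\#\mathbb{RP}^3$ itself), and the lemma explicitly retains $D_\infty$ as one of its listed cases. This matters because the surgery machinery you invoke for necessity (Lemmas \ref{Hi96-3} and \ref{Hi96-5}, which supply the model $P$ and the $H$-cobordism $W$) is only set up for torsion-free $\rho$, so your ``only if'' argument simply does not reach the case $\rho\cong{D_\infty}$, which must instead be killed by a separate obstruction. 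The paper does this (see \S 5.6): when $\rho\cong{D_\infty}$ one shows directly that $M$ cannot embed in any homology $4$-sphere, using Lemma \ref{Hcoblem} together with Lemma \ref{Dinfty} (i.e.\ the linking-pairing obstruction coming from the order-2 classes in $H_1(M)\cong(\mathbb{Z}/2\mathbb{Z})^2$). With that substitution in place of your orientability argument, the rest of your proof is complete and agrees with the paper's.
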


\begin{proof}
As observed in Lemma \ref{Hi96-3}, solvable $PD_3$-groups are 3-manifold groups.
The condition is necessary, by Theorem \ref{CH-thm2.2} and Lemma \ref{Hcoblem}.
In each case, 
every automorphism of $\rho$ is induced by a self-homeomorphism of $P$,
and $P$ embeds smoothly in $S^4$.
Therefore the condition is also sufficient, by Lemmas \ref{Hi96-3}, \ref{Hi96-4} and \ref{Hi96-5}
and Theorem \ref{Hi96-thm1}.
\end{proof}

The connected sum of a homology sphere with $S^2\times{S^1}$ 
satisfies the hypotheses of Theorem \ref{Hi96-thm1}, 
but in this case the embeddability follows immediately from the corresponding result for homology spheres.
We may construct more interesting examples from knot manifolds.
Let $M=M(K)$, where $K$ is a knot with Alexander polynomial $\Delta_1(K)=1$,
and let $\pi=\pi_1M(K)$.
Then $\pi'$ is perfect,  and so $M$ embeds, by the above corollary.
If $K$ is non-trivial then $M$ is aspherical \cite{Ga87},
and thus is not a connected sum of $S^2\times{S^1}$ with a homology sphere.

There is another argument for the case $\rho=\mathbb{Z}$.
For a 3-manifold $M$ such that $\pi_1M$ is an extension of 
$\mathbb{Z}$ by a perfect normal subgroup $\nu$ may be obtained by 
0-framed surgery on a knot $K$ in an homology 3-sphere $\Sigma$.
Then $\Sigma=\partial{C}$ for some contractible 4-manifold $C$.
Since $\nu$ is perfect $K$ has Alexander polynomial 1, 
and so it bounds a disc $D$ in $C$ \cite[Theorem 11.7B]{FQ}.
A mild variation on the 0-framed surgery construction shows that $M$ embeds in
$C\cup_\Sigma{C}$, which is a homotopy 4-sphere, and so is homeomorphic to $S^4$.

The elementary argument sketched above for the case $\rho\cong\mathbb{Z}$
may be extended to realize the other possibilities considered here by 
examples based on links obtained by tying Alexander polynomial 1 
knots along various components of links representing the model 3-manifolds.
We do not know whether there are other $\mathbb{Z}[\rho]$-homology equivalent 3-manifolds which cannot be handled in this way, but think it likely. 

There is a similar result for the other torsion-free solvable groups corresponding to the manifolds listed in Theorem \ref{CH-thm2.2}.

\section{Extensions of the argument}

We may ask for what other groups $\rho$ does the present approach work.
When $\rho=D_\infty$ no such 3-manifold embeds, by Lemmas \ref{Hcoblem} 
and \ref{Dinfty}, and so the situation is clear in this case.

We are left with finite groups with cohomological period dividing 4.
If such a group has abelianization a direct double then it is 1, 
a generalized quaternionic group $Q(8k)$ (with $k\geq1$),
the binary icosahedral group $I^*=SL(2,5)$, or one of the groups $Q(2^na,b,c)$ 
(where $a,b,c$ are odd and relatively prime, and either $n=3$ and at most one
of $a,b,c$ is 1, or $n>3$ and $bc>1$).
Closed 3-manifolds with finite fundamental group are Seifert fibred,
and the only finite groups realized by 3-manifolds which embed in $S^4$
are 1,  $Q(8)$ and $I^*$. 
However in so far as the argument rests upon the 
$\mathbb{Z}/2\mathbb{Z}$-Index Theorem (and thus on the geometry of 
$\mathbb{Z}/2\mathbb{Z}$-actions on 4-manifolds) and is not purely homological, 
it remains possible that some 3-manifold with fundamental group an extension of 
$\rho$ by a perfect normal subgroup might embed.

The spherical space form $S^3/Q(8)$ embeds smoothly in $S^4$ as the boundary 
of a regular neighbourhood of an embedding of $\mathbb{RP}^2$.
Every automorphism of $Q(8)$ is realizable by a self homeomorphism of $S^3/Q(8)$ \cite{Pr77}.
Let $M$ be a closed 3-manifold such that $\pi=\pi_1M$ is an extension of 
$Q(8)$ by a perfect normal subgroup $\nu$,
and let $C_*$ be the cellular chain complex of the universal cover $\widetilde{M}$.
Since $M/\nu$ is an homology 3-sphere and $S^3/Q(8)$ is the unique finite Swan complex for $Q(8)$, the complex $\mathbb{Z}[Q(8)]\otimes_{\mathbb{Z}[\pi]}C_*$
is $\mathbb{Z}[Q(8)]$-chain homotopy equivalent to the cellular chain complex for the
universal cover of $S^3/Q(8)$.
Any such chain homotopy equivalence may be realized by a $\mathbb{Z}[Q(8)]$-homology equivalence,
since $M$ and $S^3/Q(8)$ each have dimension $\leq3$.
However there are difficulties in carrying through our strategy 
(i.e., in extending Lemma \ref{Hi96-5})
for this case, as $L_4(Q(8))$ has rank 5.

Since $S/I^*$ is a homology 3-sphere it embeds in $S^4$.
Although $L_4(I^*)$ has rank 9 there are no obstructions to embedding 3-manifolds
with fundamental group an extension of $I^*$ by a perfect normal subgroup,
for this case may be subsumed into the case $\rho=1$ settled by Freedman.

Although the groups $Q(2^na,b,c)$ are not always 3-manifold groups 
many act freely on homology 3-spheres \cite{HM86}.
However it is not known which of the corresponding quotients embed in $S^4$.

\chapter{The complementary regions}
             
In this chapter we turn our attention to the variety of possible embeddings.
We consider here $\chi(W)$ and $\pi_1W$, 
for $W$  a complementary region of an embedding of $M$ in $S^4$.
Our examples mostly involve Seifert manifolds $M$,
and the obstructions to embeddings derive from the lower central series 
for $\pi$ and its dual manifestation in terms of (Massey) products of classes 
in $H^1(M;\mathbb{Q})$.

We begin with a proof of the Generalized Schoenflies Theorem.
This result is not specifically about embeddings in $S^4$, 
but the argument is so simple that it deserves a place here.
The original proof of Aitchison's Theorem on smooth embeddings 
of $S^2\times{S^1}$ used the Generalized Schoenflies Theorem.
We sketch this briefly,  and use surgery to give an argument
for locally flat embeddings.

In the next three sections we use the Massey product structure in $H^*(M)$ 
to show that if $M$ is a Seifert manifold with orientable base orbifold 
and non-zero Euler number then $\chi(X)=\chi(Y)=1$ is the only possibility.
On the other hand, 
all values except for $\chi(X)=1-\beta$ and $\chi(Y)=1+\beta$
are realized by embeddings of $T_g\times{S^1}$.

Sections  6--8 lead to a criterion for a complementary region to be aspherical 
and of cohomological dimension at most 2.
When $M=F\times{S^1}$ or when $M$ is the total space of an $S^1$-bundle
with non-orientable base the simplest embeddings of $M$ have 
one complementary component $X\simeq{F}$ 
and the other with cyclic fundamental group.
In \S9 we sketch how surgery may be used to identify such embeddings 
(up to $s$-cobordism).
(No such argument is yet available when $M$ fibres over an orientable 
base with Euler number 1.)

\section{The Generalized Schoenflies Theorem}

The arguments of Brown and Mazur for the Generalized Schoenflies Theorem
each involve limiting processes in an essential way.
Brown used ideas from decomposition theory (``Bing topology") involving
collapsing cellular sets.
The key idea in Mazur's argument is an ingenious regrouping of an infinite ``sum",
and we shall outline this version (as completed by M. Morse).
Both versions are presented in \cite{Pu24}, 
and our account is based on this.

If $M$ and $N$ are two $n$-manifolds with fixed homeomorphisms
$\partial{M}\cong\partial{I^n}$ and $\partial{N}\cong\partial{I^n}$,
we let $M+N$ be the boundary connected sum, 
whereby we identify $(1,x_2,\dots,x_n)\in\partial{M}$ with 
$(0,x_2,\dots,x_n)\in\partial{N}$, 
for all $0\leq{x_2,\dots,x_n}\leq1$.
Clearly $M+D^n\cong{M}$,
$M+N\cong{N+M}$, and stacking with respect to the first coordinate
gives a natural identification of $\partial(M+N)$ with $\partial{I^n}$.

Define $kM$ inductively by $1M=M$ and $(k+1)M=kM+M$, for $k\geq1$.
Then $kM$ is embedded in $(k+1)M$, 
and $LM=\cup_{k\geq1}kM$ is a (non-compact) $n$-manifold 
with boundary homeomorphic to $\mathbb{R}^{n-1}$.
In particular,  
$LD^n\cong[0,\infty)\times{I^{n-1}}\cong\mathbb{R}^n_+$.
We shall extend the definition of $+$ slightly by identifying
$\{0\}\times{I^{n-1}}\subset\partial(1M)\subset{LM}$ with
$\{1\}\times{I^{n-1}}\subset{M}$, and then we see that $M+LM\cong{LM}$.
We also see that $L(M+N)\cong{M}+L(N+M)$.

\begin{theorem}
[Brown-Mazur]
\label{GST}
Let $j$ be a locally flat embedding of $S^{n-1}$ into $S^n$.
Then there is a homeomorphism $h$ of $S^n$ such that $h\circ{j}$ is the equatorial embedding.
\end{theorem}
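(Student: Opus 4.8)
The plan is to follow Mazur's infinite-swindle argument as presented in \cite{Pu24}, using the combinatorial bookkeeping developed just above in this section. By Theorem \ref{Brown} the locally flat embedded sphere $j(S^{n-1})$ has a bicollar, so $S^n$ is cut by $j(S^{n-1})$ into two compact manifolds $A$ and $B$ with $A\cup B=S^n$, $A\cap B=j(S^{n-1})$, and each of $A,B$ carries a collar of its boundary. First I would fix a standard bicollared $(n-1)$-disc $\Delta\subset j(S^{n-1})$; since the embedding is locally flat, $\Delta$ together with a collar looks standard, and one can arrange identifications $\partial A\cong\partial I^n\cong\partial B$ compatibly with the decomposition of $\partial I^n=\partial(I^{n-1}\times I)$ into the ``bottom" disc and its complement, in such a way that cutting $S^n=A\cup B$ along $\Delta$ presents $S^n$, minus an open disc, as a boundary connected sum $A+B$ in the notation above, with $A+B\cong D^n$ being exactly the statement we must prove (equivalently, $h\circ j$ is equatorial iff $A+B\cong D^n$ rel the standard boundary).

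Next I would run the swindle. Consider the non-compact manifold $L(A+B)$. On one hand $L(A+B)\cong A+L(B+A)$; regrouping the infinite boundary-connected sum the other way gives $L(A+B)\cong L(A+B)$ with the first summand $A$ absorbed, and more importantly $A+L(A+B)\cong L(A+B)$ and $B+L(B+A)\cong L(B+A)$. Since $S^n$ is obtained from $A$ and $B$ by gluing along their whole boundaries (not just along $\Delta$), the complement of $\Delta$'s collar lets one show $A+B+\text{(disc)}\cong S^n$ minus an open ball, i.e. $A+B\cong D^n$ would already finish it — but that is circular, so instead one shows directly that $LA\cong\mathbb R^n_+$ and $LB\cong\mathbb R^n_+$. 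This is the heart of the matter: because $A$ sits inside $S^n$ with the sphere on the other side, the infinite sum $LA=\bigcup_k kA$ telescopes — one exhibits a homeomorphism $LA\cong L(A+B)$ by interleaving copies of $A$ and $B$ (using $A+B+A+B+\cdots\cong A+(B+A)+(B+A)+\cdots$ and the fact that $B+A$, being a collar-to-collar piece of $S^n$, is itself absorbed), and symmetrically $LB\cong L(A+B)$, hence $LA\cong LB\cong LD^n\cong\mathbb R^n_+$ after one checks $L(A+B)\cong LD^n$ from $S^n\setminus(\text{open ball})$ being a single collar. Then $A+B$ is recovered as a compact ``core": from $LA\cong\mathbb R^n_+$ one extracts that $A$ itself, glued to a half-space, is a half-space, and running Morse's completion of Mazur's argument one concludes $A\cong D^n$ rel boundary, so both complementary regions are standard discs and $h$ is assembled from the two collarings and these homeomorphisms.

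The remaining step is to convert ``$A\cong D^n$ and $B\cong D^n$, each rel the fixed boundary identification" into an ambient homeomorphism $h:S^n\to S^n$ with $h\circ j$ equatorial. This is routine: the equatorial $S^{n-1}\subset S^n$ also decomposes $S^n$ into two discs with a common collared boundary, and a homeomorphism of pairs is built by using the collar of $j(S^{n-1})$ to match the bicollars and then coning the two disc homeomorphisms; care is only needed to make the two halves agree on the separating sphere, which one arranges by absorbing any discrepancy into the collar via an isotopy (the Alexander trick on the collar $S^{n-1}\times[-1,1]$).

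\textbf{Main obstacle.} The delicate point is making the infinite regrouping rigorous as an honest homeomorphism of non-compact manifolds: one must check that the telescoping $LA\cong L(A+B)$ is not merely a proper homotopy equivalence but a homeomorphism, which requires the collar structure from Theorem \ref{Brown} at every stage and a careful choice of the identifications $\partial A\cong\partial I^n\cong\partial B$ so that all the $+$-operations are literally gluings of collars along a standard $I^{n-1}$. I expect this bookkeeping — rather than any single clever idea — to be where essentially all the work lies; once $LA\cong\mathbb R^n_+$ is established, the passage back to ``$A$ is a disc" and the assembly of $h$ are formal.
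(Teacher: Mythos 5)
Your overall route---Brown's collar, the boundary-connected-sum formalism $M+N$ and $LM$, Mazur's swindle with Morse's completion, and coning the two disc homeomorphisms to assemble $h$---is exactly the paper's route, but the logical structure of the swindle has gone wrong in two places. First, $A+B\cong D^n$ is not ``exactly the statement we must prove'', and establishing it is not circular: it is the easy input to the swindle. After the embedding has been adjusted (this is Morse's contribution, carried out in the paper via the auxiliary map $f$) so that there is a round open ball $U$ meeting $j(S^{n-1})$ standardly, $A+B$ is literally $\overline{S^n\setminus U}$, a disc because $U$ is round, and by the same token $B+A\cong D^n$. What must be proved is that $A$ and $B$ are \emph{individually} discs, which does not follow from $A+B\cong D^n$ without the swindle.

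Second, the swindle does not establish $LA\cong L(A+B)$, and your ``interleaving'' cannot produce such a homeomorphism without already knowing $A\cong D^n$: there is no mechanism for inserting a copy of $B$ between two consecutive copies of $A$ in $LA=A+A+\cdots$. The correct regrouping is the associativity identity $A+L(B+A)=L(A+B)$, which holds by construction of the infinite stacking, combined with $L(A+B)\cong L(B+A)\cong LD^n\cong\mathbb{R}^n_+$ from the first point. One-point compactifying at the far end then gives $A\cong A+D^n\cong A+(L(B+A)\cup\{\infty\})\cong L(A+B)\cup\{\infty\}\cong LD^n\cup\{\infty\}\cong D^n$, and symmetrically $B\cong D^n$. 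With the swindle rerouted this way, your final step of matching bicollars and coning to obtain the ambient homeomorphism $h$ is fine.
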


\begin{proof}
Since $j$ is locally flat it extends to an embedding $J:E=S^{n-1}\times[-1,1]\to{S^n}$
of a neighbourhood of the equator $S^{n-1}\times\{0\}\subset{S^n}$,
by Brown's Collaring Theorem \ref{Brown}.
Let $X$ be the complementary region containing $J(S^{n-1}\times\{1\})$
and fix homeomorphisms of $\partial{X}$ and $\partial{Y}$ with $\partial{I^n}$.
We shall show that $X$ and $Y$ are $n$-discs.

After composition with a rotation of $S^n$, if necessary, 
we may assume that $J(P)=P$, for some $P\in{S^{n-1}}\times\{\frac12\}$.
Let $U=B_r(P)$ be an open ball in $J(S^{n-1}\times(0,1))$,
with respect to the standard metric, 
and let $Q\in{S^n}\setminus{X}$ be another point.
Note that $X\cong{X}\setminus(J(S^{n-1}\times(0,\frac12))\cup{U})$
and $Y\cong{Y}\cup{J(S^{n-1}\times(0,\frac12}))\setminus{U}$,
and so $X+Y\cong{S^n}\setminus{U}\cong{D^n}$.

There is a homeomorphism $c$ from $S^n\setminus\{Q\}$ onto $U$ 
which is the identity on the closed ball $\overline{B_{r/2}(P)}$.
Let $V=\overline{B_s(P)}$ be another closed ball centred on $P$ 
such that $J(V)\subset\overline{B_{r/2}(P)}$.
Let $W=J^{-1}(U)$, and let $f:{S^n}\setminus\{Q\}\to{S^n}$ be
the composite of $(J|_W)^{-1}\circ{c}$ with the inclusion of $U$ into $S^n$.
The composite embedding $J'=f\circ{J}$ then has the property
that there is an $n$-disc $D\subset {S^{n-1}}\times(0,1)\subset{E}$ 
such that the closed complements $\overline{S^n\setminus{D}}$ and 
$\overline{S^n\setminus{J'(D)}}$ are each homeomorphic to $D^n$.
(Establishing this was the contribution of Morse.)
It shall suffice to show that $f(X)\cong{D^n}$, since $X\subset{{S^n}\setminus\{Q\}\ }$
and $f$ is a homeomorphism onto its image.
We shall assume henceforth that the original embedding $J$ has the above property.

Since $LD^n\cong\mathbb{R}^n_+$,
the 1-point compactification $LD^n\cup\{\infty\}$ is homeomorphic to $D^n$.
Now $X+L(Y+X)=L(X+Y)=LD^n$, 
and so  $X\cong{X+D^n}\cong{X+L(Y+X)}\cup\{\infty\}\cong{D^n}$.
A similar argument shows that $Y\cong{D^n}$.

Every self-homeomorphism $\phi$ of $S^{n-1} $ extends to 
a self-homeomorphism $\Phi$ of $D^n$ by setting $\Phi(r.s)=r\phi(s)$ for 
all $0\leq{r}\leq1$ and $s\in {S^{n-1}}$.
We may extend $j$ in this manner across each hemisphere to obtain a self-homeomorphism of $S^n$. 
The inverse homeomorphism $h$ is the equatorial embedding.
\end{proof}

In this case there is no advantage in considering only 3-spheres in $S^4$.
This result is also a consequence of TOP surgery, 
since the complementary regions must be contractible, 
by van Kampen's Theorem and Alexander duality.
However this is an unnecessarily blunt instrument here.

\section{$S^2\times{S^1}$ and Aitchison's Theorem}

Since $S^2\times{S^1}$ may be obtained by 0-framed surgery on the unknot, 
it has a standard abelian embedding
with $X\cong{S^1\times{D^3}}$ and $Y\cong{D^2}\times{S^2}$.
In fact $Y\cong{D^2}\times{S^2}$ whenever $M=S^2\times{S^1}$,
by a result of Aitchison.
This result predates topological surgery.
The proof given in \cite{Ru80} uses the ``Dehn's Lemma" of R. A. Norman
\cite{No69} together with the  Generalized Schoenflies Theorem 
to show that one complementary region of a {\it smooth\/} embedding 
of $S^2\times{S^1}$ in $S^4$ must be homeomorphic to $S^2\times{D^2}$.
We shall outline this proof and then give one which applies to all 
(TOP locally flat) embeddings.

\begin{lem}
[Norman]
Let $D$ be an $n$-point smoothly immersed $2$-disc in a $4$-manifold $P$.
Suppose that there is an embedded $2$-sphere $\sigma\subset{int\,P}$ 
with trivial normal bundle and such that $D\cap\sigma$ is a single point of transverse intersection.
Then there is an embedded $2$-disc $\Delta\subset{P}$ with
$\partial\Delta=\partial{P}$.
\qed
\end{lem}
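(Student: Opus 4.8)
The statement to prove is Norman's ``Dehn's Lemma'' for immersed discs: given an $n$-point immersed disc $D$ in a $4$-manifold $P$, together with an embedded $2$-sphere $\sigma$ of trivial normal bundle meeting $D$ transversely in one point, one can produce an honestly embedded disc $\Delta$ with $\partial\Delta=\partial D$. The plan is to remove the double points of $D$ one at a time by a finger-move/tubing argument that uses $\sigma$ as a ``reservoir'' of embedded spheres. First I would fix notation: let $p_1,\dots,p_n$ be the transverse double points of $D$, and let $q$ be the single transverse intersection point of $D$ with $\sigma$. Since $\sigma$ has trivial normal bundle, a tubular neighbourhood of $\sigma$ is $\sigma\times D^2$, and parallel copies $\sigma\times\{x\}$ give infinitely many disjoint embedded spheres each with trivial normal bundle, each meeting $D$ transversely in exactly one point near $q$.

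\textbf{The main step: eliminating one double point.} At a double point $p_i$, the disc $D$ has two local sheets. I would first tube one of the local sheets near $p_i$ to a parallel copy $\sigma_i=\sigma\times\{x_i\}$ of $\sigma$: choose an embedded arc in $D$ from (a point near) $p_i$ to (a point near) $q$, avoiding all other double points, and use it to guide an ambient connected sum of the local sheet at $p_i$ with $\sigma_i$. Because $\sigma_i$ meets $D$ in one point and $\sigma_i$ itself is embedded with trivial normal bundle, this tubing is an embedded modification of $D$; it changes one local sheet at $p_i$ into a sheet that now also passes once through where $\sigma_i$ sat, but — crucially — since $\sigma_i$ was embedded and disjoint from all the other $p_j$ and from $D$ away from its single intersection point, no new double points are created except possibly one controllable extra intersection with the old $q$-sheet, which can be arranged away. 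After this tubing, the geometric self-intersection at $p_i$ can be cancelled by a Whitney-type move: the two sheets through $p_i$ now cobound a small embedded Whitney disc (the tube provides the needed framing/algebraic cancellation), and pushing across it removes $p_i$. Iterating over $i=1,\dots,n$, using a fresh parallel copy $\sigma_i$ each time, removes all double points and yields an embedded disc $\Delta$ with the same boundary.

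\textbf{Bookkeeping and the obstacle.} The genuinely delicate points are: (i) arranging the guiding arcs for successive tubings to be mutually disjoint and disjoint from the double points not yet treated — this is a general-position argument in a $4$-manifold, so arcs and discs are generically embedded and disjoint, and one just has to order the moves carefully; (ii) checking that each tubing does not create new self-intersections of $D$ — this is where triviality of the normal bundle of $\sigma$ (hence of each $\sigma_i$) is essential, since it guarantees the parallel copies are disjoint embedded spheres; and (iii) verifying the framing condition so the Whitney move is available at each stage. I expect step (iii) — producing the correct Whitney disc with the right framing after tubing — to be the main obstacle; the standard resolution is that connect-summing a sheet with $\sigma_i$ adjusts the relative framing by the (trivial) self-intersection of $\sigma_i$, so the obstruction vanishes and the Whitney move goes through. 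Throughout one works in the smooth category so that transversality and tubular-neighbourhood theorems apply directly; since the ambient $P$ here will be a piece of $S^4$ and the data is smooth, this is no loss. Finally, because only embedded modifications supported near $D\cup\sigma$ (away from $\partial P$) are used, the boundary $\partial\Delta=\partial D$ is preserved, completing the argument.
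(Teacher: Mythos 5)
The paper states this lemma without proof (it is quoted directly from Norman's paper), so the comparison is with the standard argument, the ``Norman trick''. You have assembled the right ingredients --- disjoint parallel copies $\sigma_i=\sigma\times\{x_i\}$ of the transverse sphere, each meeting $D$ once near $q$, and disjoint embedded arcs $\gamma_i\subset D$ from $p_i$ to points near $q$ avoiding all double points --- but the mechanism you use to remove $p_i$ does not work. An ambient connected sum of one local sheet with $\sigma_i$, performed away from $p_i$, leaves the double point $p_i$ untouched; and a single transverse double point cannot then be ``cancelled by a Whitney-type move'': a Whitney disc has boundary two arcs joining a \emph{pair} of double points of opposite sign, so there is no Whitney circle, let alone a Whitney disc, attached to the lone point $p_i$. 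Nothing in your tubing creates an oppositely signed partner on the appropriate sheets, and even if it did, producing an embedded Whitney disc in an arbitrary $4$-manifold $P$ is precisely the step that fails in dimension $4$; the whole point of Norman's lemma is that it bypasses the Whitney move.

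The correct move removes $p_i$ outright by the tubing itself: delete a small normal disc of \emph{one} sheet of $D$ centred at $p_i$ (this kills the double point and leaves a boundary circle which is a meridian of the other sheet at $p_i$), then cap that meridian by the annulus $\partial\nu(D)|_{\gamma_i}$ (the normal circle bundle of $D$ over $\gamma_i$, which is disjoint from $D$ since $\gamma_i$ avoids the double points) followed by $\sigma_i$ punctured at its single intersection point with $D$. The triviality of the normal bundle of $\sigma$ is what makes the $\sigma_i$ pairwise disjoint and disjoint from $\sigma$; no framing condition is needed, since the lemma only asks for an embedded disc, not one with prescribed normal bundle. An Euler characteristic count shows the result is again a disc with the same boundary (the statement's ``$\partial\Delta=\partial{P}$'' is a typo for $\partial\Delta=\partial{D}$, as you correctly read it). With this correction your bookkeeping of arcs, parallel copies and tubes goes through and the induction on $n$ completes the proof.
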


If $j:S^2\times{S^1}\to{S^4}$ is an embedding then $\pi_Y=1$, 
by Lemma \ref{VKsplitmono}, and so $C=j(\{*\}\times{S^1})$
is null-homotopic in $Y$.
Hence $C$ bounds a smooth embedded disc $\Delta$ in $Y$, 
by Norman's Lemma.
Let $N(\Delta)$ be a tubular neighbourhood of $\Delta$ in $Y$.
Then $\overline{Y\setminus{N(\Delta)}}$ is homeomorphic to $D^4$, 
by the TOP Schoenflies Theorem, and so $Y\cong{D^2\times{S^2}}$.

While the extension of transversality to the 4-dimensional TOP setting 
probably allows for a corresponding extension of the applicability 
of Norman's argument,
we shall instead appeal to 1-connected TOP surgery.

\begin{theorem}[Aitchison]
\label{aitch}
If $M=S^2\times{S^1}$ is embedded in $S^4$ then one complementary region is homeomorphic to $S^2\times{D^2}$.
\end{theorem}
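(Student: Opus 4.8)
The plan is to use the pushout structure of the fundamental groups together with 4-dimensional TOP surgery. First I would observe that since $M = S^2 \times S^1$ embeds in $S^4$ with complementary regions $X$ and $Y$, the inclusion $j_{X*}\colon \pi \to \pi_X$ has a splitting: the loop $\{*\}\times S^1$ is carried into $X$ so that the composite $\mathbb{Z} = \pi_1 M \to \pi_X \to \pi_X^{ab}$ realizes $\mathbb{Z}$ as a direct summand, and in fact one of $j_{X*}, j_{Y*}$ must be a split monomorphism once one checks that $\pi = \pi_1(S^2\times S^1) \cong \mathbb{Z}$ maps isomorphically onto one side. Say this holds for $X$; then Lemma \ref{VKsplitmono} gives $\pi_Y = 1$. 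By Lemma \ref{Hi17-lem2.1}, $\chi(X) + \chi(Y) = 2$, and since $\beta_1(M) = 1$ we get $\chi(X) = 0$, $\chi(Y) = 2$; moreover $H_*(Y)$ has the homology of $S^2$ (it carries the $H_2$ of $M$), so $Y$ is a simply-connected $4$-manifold with boundary $S^2\times S^1$ and the homology of $S^2\times D^2$.

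Next I would identify the homotopy type of $Y$. Since $\pi_1 Y = 1$ and $H_*(Y;\mathbb{Z})$ agrees with that of $S^2 \times D^2$, with $H_2(Y) \cong \mathbb{Z}$ generated by a class whose self-intersection is $0$ (the intersection form on $H_2(Y)$ vanishes, as noted before Lemma \ref{Hi17-lem2.1}), one builds a degree-one normal map $f\colon Y \to S^2\times D^2$ restricting to a homeomorphism on the boundary $S^2\times S^1$. Here one should produce a map $Y \to S^2$ representing a generator of $H^2(Y)$ and combine it with the projection data; since $Y$ is simply connected and $2$-dimensional in the relevant range, standard obstruction theory makes $f$ a homology equivalence, hence (by the simply-connected Whitehead theorem for the pair) a homotopy equivalence of pairs.

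Then I would run the surgery exact sequence in dimension $4$ with trivial fundamental group. Since $\pi_1 = 1$ is good, we have the $s$-cobordism structure set $\mathcal{S}^s_{TOP}(S^2\times D^2, S^2\times S^1)$ sitting in a sequence with normal invariants $\mathcal{N}(S^2\times D^2, S^2\times S^1) \cong H^2(S^2\times D^2, S^2\times S^1;\mathbb{F}_2) \oplus \mathbb{Z}$ and $L_4(\mathbb{Z}) \cong \mathbb{Z}$. The surgery obstruction of our normal map is detected by a signature, which vanishes because the intersection form on $H_2(Y)$ is trivial; and the $H^2(\cdot;\mathbb{F}_2)$-component of the normal invariant can be normalized to be trivial by the choice of $f$. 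So $f$ is normally cobordant to a homotopy equivalence, and since all homotopy equivalences here are simple ($Wh(1) = 0$), surgery produces a homeomorphism (after passing through an $s$-cobordism, which is a product because $\pi_1 = 1$). Hence $Y \cong S^2 \times D^2$.

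The main obstacle I anticipate is the bookkeeping at the normal-invariant step: one must check that the $\mathbb{F}_2$-cohomology part of the normal invariant of the chosen $f$ can be killed, or equivalently that the different possible normal maps $Y \to S^2\times D^2$ rel boundary can be arranged so that the one at hand lies in the image of the structure set. This is where the precise identification $\mathcal{N} \cong H^2 \oplus \mathbb{Z}$ and the injectivity of $\sigma_4$ on the $\mathbb{Z}$-summand (as recorded in \S1.8) must be used carefully; the signature vanishing handles the $\mathbb{Z}$-summand, and the $H^2(S^2\times D^2, S^2\times S^1;\mathbb{F}_2)$-part must be shown to be absorbed, either by a direct computation that this group is small or by modifying $f$ by a self-equivalence of $S^2 \times D^2$. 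Everything else — the splitting argument, the Euler characteristic count, and the homotopy classification — is routine given the tools already assembled in Chapters 1 and 2.
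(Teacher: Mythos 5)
Your proposal is correct and follows essentially the same route as the paper: the split monomorphism $j_{X*}$ forces $\pi_Y=1$ via Lemma \ref{VKsplitmono}, one identifies $(Y,\partial Y)$ up to homotopy equivalence rel boundary with $(S^2\times D^2, S^2\times S^1)$, and 1-connected TOP surgery finishes the job. The paper simply compresses your final surgery-exact-sequence bookkeeping into a single citation of \cite[Theorem 11.6A]{FQ}, which directly converts such a homotopy equivalence of pairs (a homeomorphism on the boundary) into a homeomorphism.
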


\begin{proof}
Let $j:M\to{S^4}$ be an embedding.
Since $j_{X*}=\pi_1j_X$ is a split monomorphism, $\pi_Y=1$, 
by Lemma \ref{VKsplitmono}
and so $Y\simeq{S^2}$. 
Let $f:\partial{Y}\to{S^2}\times{S^1}$ be a homeomorphism.
Then $f$ extends to a map from $Y$ to $S^2\times{D^2}$.
Thus we obtain a map of pairs $F:(Y,\partial{Y})\to(S^2\times{D^2},S^2\times{S^1})$ 
which is a homotopy equivalence and a homeomorphism on the boundary.
Hence $Y$ is homeomorphic to $S^2\times{D^2}$, by 1-connected surgery
\cite[Theorem 11.6A]{FQ}.
\end{proof}

If $\pi_X\cong\mathbb{Z}$ then $X\cong{D^3\times{S^1}}$ 
and the embedding is equivalent to $j_U$,
by Freedman's Unknotting Theorem for 2-knots \cite[Theorem 11.7A]{FQ}.
(This is also a special case of Theorem 7.7 below.)

We shall give a partial extension of Aitchison's Theorem to embeddings of 
$\#^\beta(S^2\times{S^1})$ at the end of this chapter.

\section{Massey products}

Massey products provide further obstructions to finding embeddings 
with given $\chi(X)$. 
For instance, if $H^2(X;\mathbb{Q})\cong\mathbb{Q}$ or 0 
then all triple Massey products $\langle{a},b,c\rangle$ of elements 
$a,b,c\in{H^1}(X;\mathbb{Q})$ are proportional.
Stallings' Theorem can be refined to relate ``freeness" of quotients 
of the lower central series and its rational and $mod~(p)$ analogues
to the vanishing of higher Massey products \cite{Dw75, St65}.

The $\mathbb{N}il^3$-manifold $M=M(Wh)$ has fundamental group 
$\pi\cong{F(2)/\gamma_3F(2)}$,
with a presentation
\[
\pi=\langle{x,y,z}\mid {z=xyx^{-1}y^{-1}},~xz=zx,~yz=zy\rangle.
\]
Every element of $\pi$ has an unique normal form $x^my^nz^p$.
The images $X,Y$ of $x,y$ in 
$H_1(\pi)\cong H_1(T)$
form a (symplectic) basis.
Let $\xi,\eta$ be the Kronecker dual basis for $H^1(\pi)$.
Define functions $\phi_\xi,\phi_\eta$ and $\theta:\pi\to\mathbb{Z}$
by 
\[
\phi_\xi(x^my^nz^p)=\frac{m(1-m)}2,~
\phi_\eta(x^my^nz^p)=\frac{n(1-n)}2~\mathrm{and}~
\theta(x^my^nz^p)=-mn-p,
\]
for all $x^my^nz^p\in\pi$.
(We consider these as inhomogeneous 1-cochains with values 
in the trivial $\pi$-module $\mathbb{Z}$.)
Then 
\[
\delta\phi_\xi(g,h)=\xi(g)\xi(h),\quad 
\delta\phi_\eta(g,h)=\eta(g)\eta(h)\quad\mathrm{and}\quad
\delta\theta(g,h)=\xi(g)\eta(h),
\]
for all $g,h\in\pi$.
Thus $\xi^2=\eta^2=\xi\cup\eta=0$, and the Massey triple products 
$\langle\xi,\xi,\eta\rangle$ and $\langle\xi,\eta,\eta\rangle$ 
are represented by the 2-cocycles $\phi_\xi\eta+\xi\theta$ 
and $\theta\eta+\xi\phi_\eta$, respectively.
On restricting these to the subgroups generated by $\{x,z\}$ and $\{y,z\}$, 
we see that they are linearly independent.

In fact, $\langle\xi,\xi,\eta\rangle\cup\eta$ and
$\langle\xi,\eta,\eta\rangle\cup\xi$ each 
generate $H^3(\pi;\mathbb{Q})$.
This is best seen topologically.
Let $p:M\to{T}$ be the natural  fibration of $M$ over the torus,
and let $x$ and $y$ be simple closed curves in $T$ which represent a basis for 
$\pi_1Tcong\mathbb{Z}^2$.
The group $H_2(M)\cong\mathbb{Z}^2$ is generated by the images of
the fundamental classes of the tori $T_x=p^{-1}(x)$ and $T_y=p^{-1}(y)$.
If we fix sections in $M$ for the loops $x$ and $y$ we see that 
$[T_x]\bullet{x}=[T_y]\bullet{y}=0$ while $|[T_x\bullet{y}|=|T_y\bullet{x}|=1$.
Hence $[T_x]$ and $[T_y]$ are Poincar\'e dual to $\eta$ and $\xi$, respectively.
Since $\langle\xi,\xi,\eta\rangle$ restricts non-trivially to $T_x$ 
and trivially to $T_y$ we must have
$\langle\xi,\xi,\eta\rangle\cup\eta\not=0$, 
and similarly $\langle\xi,\eta,\eta\rangle\cup\xi\not=0$.
Similarly,
$\langle\xi,\xi,\eta\rangle\cup\xi=\langle\xi,\eta,\eta\rangle\cup\eta=0$.
Thus these Massey products are the Poincar\'e duals of $Y$ and $X$,
respectively.

Since the components of $Wh$ are unknotted, $M$ embeds in $S^4$, 
with $\chi(X)=\chi(Y)=1$, and $\mu_M=0$, since $\beta=2$.
On the other hand, $M$ has no embedding with $\chi(X)=-1$,
for otherwise $H^3(X)$ would contain $\langle\xi,\xi,\eta\rangle\cup\eta$,
and so be non-trivial. 

A similar strategy may be used for $M=M(g;(1,e))$ and $\pi=\pi_1M$, 
when $g>1$.

\begin{lemma}  
Let $p:M\to{T_g}$ be the projection of an $S^1$-bundle with non-zero Euler Number.
Then $H^1(p;\mathbb{Q})$ is an isomorphism and $H^2(p;\mathbb{Q})=0$,
and so $\mu_M=0$.
Let $\{\alpha_1,\beta_1,\dots,\alpha_g,\beta_g\}$ 
be the basis for $H=H^1(M;\mathbb{Q})$ which is Kronecker dual 
to a symplectic basis for $H_1(M;\mathbb{Q})\cong{H_1(T_g;\mathbb{Q})}$.
Then the Massey triple products $\langle\alpha_i,\alpha_i,\beta_i\rangle$
and $\langle\alpha_i,\beta_i,\beta_i\rangle$ (for $1\leq{i}\leq{g}$)
form a basis for $H^2(\pi;\mathbb{Q})$ which is Poincar\'e dual to the given basis for $H_1(\pi;\mathbb{Q})$.
\end{lemma}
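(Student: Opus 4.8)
The plan is to generalize the calculation just carried out for the Heisenberg manifold $M(Wh)$, using the fact that an $S^1$-bundle over $T_g$ with non-zero Euler number $e$ is, away from a disc, a product, and that its fundamental group is a central extension $1\to\mathbb{Z}\to\pi\to\pi_1T_g\to1$ with Euler class a non-zero multiple of the fundamental class. First I would establish the statements about $H^*(p;\mathbb{Q})$: the Gysin sequence for the bundle $S^1\to M\xrightarrow{p}T_g$ reads $H^0(T_g;\mathbb{Q})\xrightarrow{\cup e}H^2(T_g;\mathbb{Q})\to H^2(M;\mathbb{Q})\to H^1(T_g;\mathbb{Q})\xrightarrow{\cup e}H^3(T_g;\mathbb{Q})=0$, and since $e\neq0$ the map $\cup e:H^0\to H^2$ is an isomorphism (both are $\mathbb{Q}$), forcing $H^2(p;\mathbb{Q})=0$ and $H^1(p;\mathbb{Q})$ to be an isomorphism $\mathbb{Q}^{2g}\xrightarrow{\sim}\mathbb{Q}^{2g}$. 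Then $\mu_M=0$ follows from Lemma \ref{Hi17-lem2.2}: all cup products of classes in $H^1(M;\mathbb{Q})$ are pulled back from $T_g$ along $p^*$, hence land in the image of $H^2(p;\mathbb{Q})=0$ — more precisely, for $a,b\in H^1(M;\mathbb{Q})$ we have $a=p^*\bar a$, $b=p^*\bar b$ and $a\cup b=p^*(\bar a\cup\bar b)$, which vanishes in $H^2(M;\mathbb{Q})$ since $p^*=H^2(p;\mathbb{Q})=0$ there. In particular all $\alpha_i\cup\alpha_j$, $\alpha_i\cup\beta_j$, $\beta_i\cup\beta_j$ vanish, so every triple Massey product $\langle\alpha_i,\alpha_i,\beta_i\rangle$, $\langle\alpha_i,\beta_i,\beta_i\rangle$ is defined (and, since the relevant cup products are identically zero on the cochain level after a suitable choice of cocycles, has zero indeterminacy).

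Next I would compute these Massey products explicitly by a cochain argument on $\pi$ modelled on the $M(Wh)$ case. The subtlety is that $\pi$ is not free-nilpotent, but it is a central extension of the surface group with a single central generator $h$, and the commutator relations $[\alpha_i,\beta_i]=h^{\pm e_i}$ (with $\prod[\alpha_i,\beta_i]$ equal to a power of $h$ determined by $e$) together with $h$ central give enough structure. For each handle $i$, restricting to the subgroup generated by $\{\alpha_i,\beta_i,h\}$ gives a Heisenberg-type quotient, and the same explicit quadratic and bilinear cochains $\phi,\theta$ as in the $M(Wh)$ computation show that $\langle\alpha_i,\alpha_i,\beta_i\rangle$ and $\langle\alpha_i,\beta_i,\beta_i\rangle$ restrict non-trivially to the corresponding sub-handle and trivially to the others. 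Alternatively — and this is the cleaner route I would actually take — I would use the topological description: let $T_{x_i}=p^{-1}(x_i)$ and $T_{y_i}=p^{-1}(y_i)$ be the preimages of the $2g$ standard curves on $T_g$; these tori represent a basis of $H_2(M;\mathbb{Q})\cong\mathbb{Q}^{2g}$, and the intersection numbers $[T_{x_i}]\bullet\alpha_j,\ [T_{x_i}]\bullet\beta_j$ etc. are computed exactly as in the $g=1$ case (a section over $x_i$ meets $T_{y_i}$ once, $T_{x_i}$ not at all, and is disjoint from the other handles' tori). This identifies the Poincaré duals of the $T_{x_i},T_{y_i}$ with the $\beta_i,\alpha_i$ respectively, and then the computation of $\langle\alpha_i,\alpha_i,\beta_i\rangle\cup\beta_i$ etc. reduces, by restriction to a tubular neighbourhood $N(T_{x_i})\cong T^2\times D^2$ of a single torus fibre-preimage, to the $M(Wh)$ computation already done.

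I would then assemble: the $2g$ classes $\langle\alpha_i,\alpha_i,\beta_i\rangle,\langle\alpha_i,\beta_i,\beta_i\rangle$ lie in $H^2(\pi;\mathbb{Q})$, which has dimension $2g$ since $H^2(M;\mathbb{Q})\cong\mathbb{Q}^{2g}$ and $H^2(\pi;\mathbb{Q})\twoheadrightarrow H^2(M;\mathbb{Q})$ is in fact an isomorphism here (the $K(\pi,1)$ is a $3$-manifold with $H_3(\pi;\mathbb{Q})=\mathbb{Q}$, and the classifying map $M\to K(\pi,1)$ is $2$-connected — indeed an isomorphism on $H_i(-;\mathbb{Q})$ for $i\le2$ since $M$ is aspherical when $g\ge1$, as $\pi$ is then infinite with a $3$-manifold $K(\pi,1)$). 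Linear independence of the $2g$ Massey products follows from the non-degeneracy of the pairing $H^2(\pi;\mathbb{Q})\times H_1(\pi;\mathbb{Q})\to\mathbb{Q}$ given by $(\gamma,a)\mapsto (\gamma\cup \mathrm{PD}^{-1}a)([M])$ once we check $\langle\alpha_i,\alpha_i,\beta_i\rangle\cup\beta_i$ and $\langle\alpha_i,\beta_i,\beta_i\rangle\cup\alpha_i$ each generate $H^3(\pi;\mathbb{Q})$ while $\langle\alpha_i,\alpha_i,\beta_i\rangle\cup\alpha_i=\langle\alpha_i,\beta_i,\beta_i\rangle\cup\beta_i=0$ and all cross-handle products vanish; this is the content of the restriction computation above. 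Since a set of $2g$ vectors in a $2g$-dimensional space that is non-degenerately paired against a dual basis is itself a basis, we conclude the $\langle\alpha_i,\alpha_i,\beta_i\rangle,\langle\alpha_i,\beta_i,\beta_i\rangle$ form a basis of $H^2(\pi;\mathbb{Q})$, Poincaré dual to the given basis of $H_1(\pi;\mathbb{Q})$.

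The main obstacle I anticipate is the bookkeeping in the Massey product computation: making precise that the "diagonal" products $\langle\alpha_i,\alpha_i,\beta_i\rangle\cup\beta_i$ are non-zero while all the off-diagonal and "wrong-cup" combinations vanish, and handling the indeterminacy so that these products are genuinely well-defined cohomology classes and not just coset representatives. The honest way through is the localization-to-a-single-handle argument — restrict to $N(T_{x_i})$ or to the subgroup $\langle\alpha_i,\beta_i,h\rangle$ — which reduces everything to the already-completed $M(Wh)$ calculation; the product relation $\prod_{i}[\alpha_i,\beta_i]=h^{N}$ only affects the global class and not these handle-local restrictions, so it causes no trouble. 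Everything else (the Gysin sequence, asphericity for $g\ge1$, the non-degeneracy of Poincaré duality) is standard.
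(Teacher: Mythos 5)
Your proposal is correct and follows essentially the same route as the paper: the Gysin sequence for the first assertions, and then the generalization of the explicit cochain/preimage-torus computation already carried out for $M(Wh)=M(1;(1,-1))$, which is exactly what the paper's proof invokes. (Only a cosmetic slip: the tubular neighbourhood of $T_{x_i}$ in the $3$-manifold is $T^2\times[-1,1]$, not $T^2\times D^2$.)
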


\begin{proof}
The first assertion follows from the Gysin sequence for the bundle 
\cite[Theorem 5.7.11]{Span}.
We may assume that $\pi=\pi_1M$ has a presentation $\langle{x_1,y_1,\dots,x_g,y_g}\mid\Pi[x_i,y_i]=h^e,~h~central\rangle$.
Thus the images of the generators $\{x_1,y_1,\dots,x_g,y_g\}$ in $H_1(B)$,
represent a standard symplectic basis, and so determine a basis for $H_1(M;\mathbb{Q})\cong{H_1(B;\mathbb{Q})}$.
The argument then follows as in the case of $M(1,(1,-1))$ discussed above.
\end{proof}

We shall extend these results to the Seifert case in the next section.

\section{Seifert manifolds}

In this section we shall use cup products and Massey products to restrict
the possible values of $\chi(W)$ for $W$ a complementary region of an embedding of a Seifert manifold $M$ with orientable base orbifold.

\begin{lemma}
\cite{CT}
\label{Hi17-lem8.1}
Let $M$ be a Seifert manifold.
If the base $B$ is nonorientable or if  $\varepsilon(M)\not=0$
then $H^*(M;\mathbb{Q})\cong{H^*}(\#^\beta{S^2\times{S^1}};\mathbb{Q})$.
Otherwise, the image of $h$ in $H_1(M;\mathbb{Q})$ is nonzero, 
and $H^*(M;\mathbb{Q})\cong{H^*(|B|\times{S^1};\mathbb{Q})}$.
\end{lemma}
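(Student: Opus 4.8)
The plan is to compute $H^*(M;\mathbb{Q})$ from the Leray spectral sequence of the Seifert projection, regarded as a continuous map $q\colon M\to|B|$ onto the underlying surface of the base orbifold. All fibres of $q$ are circles, and $q$ restricts to an honest $S^1$-bundle over the complement of the finite set of cone points, so $q_*\mathbb{Q}\cong\mathbb{Q}$ and $R^iq_*\mathbb{Q}=0$ for $i\geq2$. The first step is to identify $R^1q_*\mathbb{Q}$. Over a disc neighbourhood $V$ of a cone point of order $\alpha$, $q^{-1}(V)$ is a fibred solid torus, so $H^1(q^{-1}(V);\mathbb{Q})\cong\mathbb{Q}$, and its restriction to the punctured neighbourhood is an isomorphism onto the fibrewise $H^1$, because the regular fibre represents $\alpha$ times the core. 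Thus $R^1q_*\mathbb{Q}$ is a rank-one local system on all of $|B|$; its monodromy is trivial around each cone point (the fibre orientation is preserved), and because $M$ is orientable the splitting $TM\cong q^*T|B|\oplus T^{\mathrm{fib}}$ shows that over the rest of $|B|$ it is the orientation character. Hence $R^1q_*\mathbb{Q}\cong\mathbb{Q}^w$, the trivial system when $|B|$ is orientable and the orientation system otherwise.

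The spectral sequence then has two nonzero rows, $E_2^{p,0}=H^p(|B|;\mathbb{Q})$ and $E_2^{p,1}=H^p(|B|;\mathbb{Q}^w)$, with only the differential $d_2\colon E_2^{p,1}\to E_2^{p+2,0}$. If $|B|=\#^c\mathbb{RP}^2$ is non-orientable then $H^2(|B|;\mathbb{Q})=0$, so $d_2=0$, the sequence collapses, $E_\infty^{0,1}=H^0(|B|;\mathbb{Q}^w)=0$, and $H^1(M;\mathbb{Q})=q^*H^1(|B|;\mathbb{Q})$; every cup product of degree-one classes is therefore pulled back from $H^2(|B|;\mathbb{Q})=0$, and with Poincar\'e duality this pins the ring down to that of $\#^\beta(S^2\times S^1)$ (with $\beta=c-1$). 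If $|B|=T_g$ is orientable the two rows are both $H^*(T_g;\mathbb{Q})$ and $d_2$ is cup product with the rational Euler class $e\in H^2(T_g;\mathbb{Q})\cong\mathbb{Q}$; I would identify $e$ with a nonzero multiple of $\varepsilon(M)$, or equivalently observe that $d_2\colon E_2^{0,1}\to E_2^{2,0}$ is nonzero exactly when the fibre class $h$ has torsion image in $H_1(M)$, which by Theorem \ref{Hi09-thm1} and the remark following it means exactly $\varepsilon(M)\neq0$. When $\varepsilon(M)\neq0$, $d_2$ is onto $E_2^{2,0}$, so $q^*$ kills $H^2(T_g;\mathbb{Q})$, $H^1(M;\mathbb{Q})=q^*H^1(T_g;\mathbb{Q})$, all products of one-classes vanish, and the first conclusion holds (with $\beta=2g$). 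When $\varepsilon(M)=0$, $d_2=0$, the sequence collapses with $E_\infty=E_2=H^*(T_g;\mathbb{Q})\otimes H^*(S^1;\mathbb{Q})$ as bigraded rings; in particular the edge map $H^1(M;\mathbb{Q})\to H^1(S^1;\mathbb{Q})$ is onto, so $h$ has nonzero image in $H_1(M;\mathbb{Q})$. To promote the collapse to a ring isomorphism $H^*(M;\mathbb{Q})\cong H^*(T_g\times S^1;\mathbb{Q})$ I would choose $\xi\in H^1(M;\mathbb{Q})$ restricting nontrivially to the fibre, note that $\xi^2=0$ holds automatically over $\mathbb{Q}$, and verify that $\omega\otimes1\mapsto q^*\omega$, $\omega\otimes\xi\mapsto q^*\omega\cup\xi$ is a ring homomorphism from $H^*(T_g;\mathbb{Q})\otimes\Lambda(\xi)$; the filtration coming from the spectral sequence shows it is onto, hence an isomorphism by counting dimensions.

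The hard part is the analysis at the exceptional fibres: one must be confident that rationally $q\colon M\to|B|$ behaves exactly like an $S^1$-bundle, so that $R^1q_*\mathbb{Q}$ is genuinely a local system and the differential $d_2$ is controlled by $\varepsilon(M)$ alone rather than by the individual Seifert pairs. An alternative that sidesteps the sheaf-theoretic bookkeeping is to pass to a finite regular cover $\widehat M\to M$ which is an honest principal $S^1$-bundle over a closed surface, apply the ordinary Gysin sequence there, and recover $H^*(M;\mathbb{Q})$ as the invariants under the deck group; this replaces the cone-point computation by the routine comparison of Euler numbers of $\widehat M$ and $M$. Either way the argument is uniform in the geometry of $M$, so the small Seifert manifolds with finite or virtually solvable $\pi_1$ require no separate treatment.
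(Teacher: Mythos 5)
Your argument is correct, but it takes a genuinely different route from the paper's. The paper passes to a finite regular covering $q:\widehat{M}\to M$ whose total space is an honest $S^1$-bundle over an orientable surface $\widehat{B}$, uses the transfer isomorphism $H^*(M;\mathbb{Q})\cong H^*(\widehat{M};\mathbb{Q})^G$, and in the first case deduces that all triple cup products of classes in $H^1(\widehat{M};\mathbb{Q})$ vanish (they factor through $H^3(\widehat{B};\mathbb{Q})=0$), whence all pairwise products vanish by the non-degeneracy of Poincar\'e duality; in the second case it observes $\widehat{M}\cong\widehat{B}\times S^1$ with $G$ acting orientably on each factor. You instead run the Leray spectral sequence of the orbifold projection $M\to|B|$ directly, after checking that $R^1q_*\mathbb{Q}$ is the rank-one local system $\mathbb{Q}^w$ (this is exactly where rational coefficients are essential: the restriction from the solid-torus neighbourhood of an exceptional fibre to a regular fibre is multiplication by the cone point order). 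Your route costs some sheaf-theoretic bookkeeping at the cone points but buys uniformity (no choice of covering, no transfer, and no separate treatment of bad base orbifolds), a transparent identification of when the sequence collapses ($d_2\neq0$ on $E_2^{0,1}$ precisely when $h$ is rationally trivial in $H_1$, i.e.\ $\varepsilon(M)\neq0$ for orientable base, by Theorem \ref{Hi09-thm1}), and an explicit multiplicative description of $E_\infty$ in the product case; the paper's route is shorter and replaces the case analysis of $d_2$ by the single observation that triple products into $H^3(\widehat{B};\mathbb{Q})=0$ must vanish. The covering-space alternative you sketch in your final paragraph is essentially the paper's proof.
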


\begin{proof}
There is a finite regular covering $q:\widehat{M}\to{M}$,
where $\widehat{M}$ is an $S^1$-bundle space with orientable 
base $\widehat{B}$, say.
Let $G=Aut(q)$.
Then $H^*(M;\mathbb{Q})\cong{H^*(\widehat{M};\mathbb{Q})^G}$.
If $B$ is nonorientable or if $\varepsilon(M)\not=0$
then the regular fibre has image 0 in $H_1(M;\mathbb{Q})$,
and so $H^*(\widehat{B};\mathbb{Q})$ maps onto $H^*(\widehat{M};\mathbb{Q})$.
Hence all triple cup products of classes in
$H^1(\widehat{M};\mathbb{Q})$ are 0.
Therefore all pairwise cup products of such classes are also 0, 
by the non-degeneracy of Poincar\'e duality,
and so $H^*(M;\mathbb{Q})\cong{H^*}(\#^\beta{S^2\times{S^1}};\mathbb{Q})$.
Otherwise, $\widehat{M}\cong\widehat{B}\times{S^1}$ 
and $G$ acts orientably on each of $S^1$ and $\widehat{B}$.
Hence the image of $h$ in $H_1(M;\mathbb{Q})$ is nonzero 
and $H^*(M;\mathbb{Q})\cong{H^*(|B|\times{S^1};\mathbb{Q})}$.
\end{proof}

We may use the observations on cup product from Lemma \ref{Hi17-lem8.1}
 to extract some information on the image of the regular fibre 
under the maps $H_1(j_X)$ and $H_1(j_Y)$, when $M$ is Seifert fibred.

\begin{theorem}
\label{Hi17-thm8.2}
Let $M=M(g;S)$ where $g\geq1$ and $\varepsilon(M)=0$.
If $M$ embeds in $S^4$ then $\chi(X)>1-\beta=-2g$ and $\chi(Y)<1+\beta=2g+2$.
If $\chi(X)<0$ then the image of $h$ in $H_1(Y;\mathbb{Q})$ is non-trivial.
\end{theorem}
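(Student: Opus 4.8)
The plan is to use the cup product and Massey product structure in $H^*(M;\mathbb{Q})$, together with the decomposition $H^1(M;\mathbb{Q})\cong H^1(X;\mathbb{Q})\oplus H^1(Y;\mathbb{Q})$ from Lemma \ref{Hi17-lem2.1} and the triviality of $\mu_M$ on each of $\wedge^3H^1(X)$ and $\wedge^3H^1(Y)$ (Lemma \ref{Hi17-lem2.2}), to obstruct embeddings in which one complementary region is ``too small''. Since $g\geq1$ and $\varepsilon(M)=0$, Lemma \ref{Hi17-lem8.1} tells us $H^*(M;\mathbb{Q})\cong H^*(|B|\times S^1;\mathbb{Q})$, so the image of $h$ in $H_1(M;\mathbb{Q})$ is nonzero and $\beta=\beta_1(M)=2g+1$; write $\eta\in H^1(M;\mathbb{Q})$ for the class dual to $h$ (the pullback of the generator of $H^1(S^1;\mathbb{Q})$). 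The key point is that $\eta\cup(-)$ has large rank: pairing with the genus-$g$ part of $H^1(|B|;\mathbb{Q})$, the products $\eta\cup\alpha_i$, $\eta\cup\beta_i$ together with the Massey products $\langle\alpha_i,\alpha_i,\beta_i\rangle$, $\langle\alpha_i,\beta_i,\beta_i\rangle$ from the previous section's lemma detect a $g$-dimensional family of nonzero classes in $H^2(M;\mathbb{Q})$ obtained by cupping $\eta$ into the aspherical part, and hence the subspace $\{c\in H^1(M;\mathbb{Q}):\eta\cup c=0\}$ has codimension $2g$ in $H^1(M;\mathbb{Q})$.

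First I would show that if $M$ embeds with $\chi(X)=1-\beta$, equivalently $\beta_1(X)=\beta$ and $\beta_2(X)=0$, then $H^1(X;\mathbb{Q})=H^1(M;\mathbb{Q})$ and $H^2(X;\mathbb{Q})=0$; but then $\mu_M=0$ forces all cup products in $H^1(M;\mathbb{Q})$ to vanish (Lemma \ref{Hi17-lem2.2}), contradicting $\eta\cup\alpha_1\neq0$ in $H^2(M;\mathbb{Q})\cong H^2(|B|\times S^1;\mathbb{Q})$, which is nonzero because $g\geq1$. This rules out $\chi(X)=1-\beta$, and dually $\chi(Y)=1+\beta$; by the parity and range constraints of Lemma \ref{Hi17-lem2.1}(5) we conclude $\chi(X)>1-\beta$ and $\chi(Y)<1+\beta$. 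For the second assertion, suppose $\chi(X)<0$, i.e. $\beta_1(X)>\beta_2(X)$ with $\beta_1(X)+\beta_2(X)=\beta$, so $\beta_1(X)>\beta/2=g+\tfrac12$, hence $\beta_1(X)\geq g+1$ and $\beta_2(X)\leq g$. Since $H^1(X;\mathbb{Q})$ is a subspace of $H^1(M;\mathbb{Q})$ of dimension $\geq g+1$ and the annihilator of $\eta$ under cup product has dimension exactly $\beta-2g=1$, the restriction of $\eta\cup(-)$ to $H^1(X;\mathbb{Q})$ has rank $\geq g+1-1=g$; its image lies in $H^2(X;\mathbb{Q})$, which has dimension $\beta_2(X)\leq g$. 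Equality is forced throughout, and in particular $\eta\cup(-)$ restricted to $H^1(X;\mathbb{Q})$ is surjective onto $H^2(X;\mathbb{Q})\neq0$, so $\eta$ itself must have nonzero restriction to $H^1(X;\mathbb{Q})$; but by duality (Poincar\'e--Lefschetz in $X$ and the Mayer--Vietoris splitting) nonvanishing of $\eta|_X\cup H^1(X;\mathbb{Q})$ forces the Poincar\'e dual of $\eta$, namely $h$, to have nonzero image in $H_1(X;\mathbb{Q})$; since $H_1(M;\mathbb{Q})=H_1(X;\mathbb{Q})\oplus H_1(Y;\mathbb{Q})$ carries $h$ to a nonzero element, and the analysis shows the $X$-component can be nonzero only in the complementary degenerate case, one instead argues directly: the image of $h$ in $H_1(Y;\mathbb{Q})$ is the obstruction, and I would phrase the final step as showing that if $h\mapsto 0$ in $H_1(Y;\mathbb{Q})$ then $\eta$ restricts to $0$ on $H^1(X;\mathbb{Q})^\perp=H^1(Y;\mathbb{Q})$-side, making $\eta\cup(-)$ on $H^1(X;\mathbb{Q})$ factor through too small a space, contradicting the rank count.

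Let me restate the last step more carefully, since that is where the real content lies. We have $h\in H_1(M;\mathbb{Q})$ nonzero; write $h=h_X+h_Y$ under the Mayer--Vietoris splitting, where $h_X$ is the image of $h$ under $H_1(j_X)$. I would show: if $h_Y=0$, then for all $a,b\in H^1(X;\mathbb{Q})\subseteq H^1(M;\mathbb{Q})$ and the class $\eta$ we have $\langle a\cup\eta, -\rangle$ controlled by $h_X$ only, and the pairing $H^1(X;\mathbb{Q})\times H^1(X;\mathbb{Q})\to H^2(X;\mathbb{Q})$ given by $a\mapsto a\cup\eta|_X$ (note $\eta|_X=$ Kronecker dual of $h_X$) then must have rank $\le\beta_2(X)$; combined with the general fact (from the Seifert/Massey structure lemma of the previous section applied to the genus-$g$ surface factor) that this rank is exactly $g$ whenever $h$ maps nontrivially, and that $\chi(X)<0$ forces $\beta_2(X)\le g-1$ after using $\chi(X)\ge 3-\beta$ together with oddness of $\beta$ --- wait, I should be careful: $\chi(X)\le 3-\beta$ is Theorem \ref{Hi17-thm6.2}, not a general constraint. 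Instead I will use only $\chi(X)\le -2$ (so $\beta_2(X)\le g-1$, since $\chi(X)=1+\beta-2\beta_1(X)=2g+2-2\beta_1(X)$ is even and negative gives $\beta_1(X)\ge g+2$, $\beta_2(X)\le g-1$), and then the rank-$g$ lower bound on $\eta|_X\cup(-)$ into a space of dimension $\le g-1$ is an outright contradiction. Hence $h_Y\neq0$, i.e. the image of $h$ in $H_1(Y;\mathbb{Q})$ is nontrivial. The main obstacle I anticipate is making the rank lower bound on $\eta\cup(-)$ restricted to $H^1(X;\mathbb{Q})$ genuinely rigorous: one must argue that cupping $\eta$ with the $2g$-dimensional ``surface part'' of $H^1(M;\mathbb{Q})$ produces a $2g$-dimensional image in $H^2(M;\mathbb{Q})$ (this follows from $H^*(M;\mathbb{Q})\cong H^*(|B|\times S^1;\mathbb{Q})$ and the K\"unneth formula), and that intersecting this $2g$-dimensional image condition with any codimension-$g$-or-less subspace $H^1(X;\mathbb{Q})$ still leaves rank $\ge g$ --- which is just the elementary linear-algebra fact that a rank-$2g$ linear map restricted to a subspace of codimension $c$ has rank $\ge 2g-c$, applied with $c=\beta_2(X)\le g-1$. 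I would close by invoking the previous section's lemma on Massey products for $S^1$-bundles to cover the minor bookkeeping of why the relevant classes in $H^2(M;\mathbb{Q})$ are the Poincar\'e duals of $H_1$-classes and hence survive.
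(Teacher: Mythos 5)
Your proof is correct, and for the main (second) assertion it takes a genuinely different route from the paper. The first assertion is handled essentially as in the paper: the isomorphism $H^*(M;\mathbb{Q})\cong H^*(|B|\times S^1;\mathbb{Q})$ forces $\mu_M\neq0$, and Lemma \ref{Hi17-lem2.2} then forbids the image of $H^1(X;\mathbb{Q})$ from being all of $H^1(M;\mathbb{Q})$. For the second assertion the paper argues in degree $3$: since $\chi(X)<0$ gives $\beta_1(X)\geq g+2$, the image of $H^1(X;\mathbb{Q})$ meets the $2g$-dimensional surface part in a subspace of dimension $>g$, hence contains $a,b$ with $a\cup b\neq0$; if it also contained a class $c$ with $c(h)\neq0$ then $\mu_M(a\wedge b\wedge c)\neq0$, contradicting the vanishing of $\mu_M$ on $\wedge^3H^1(X)$. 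You instead argue in degree $2$: assuming $j_{Y*}h=0$, the $H^1(X)$-component $\eta_X$ of the class dual to $h$ still satisfies $\eta_X(h)\neq0$, so $\eta_X\cup(-)$ has rank $2g$ on $H^1(M;\mathbb{Q})$, hence rank $\geq 2g-\beta_2(X)\geq g+1$ on the codimension-$\beta_2(X)$ subspace coming from $H^1(X;\mathbb{Q})$, while its image lies in the image of $H^2(X;\mathbb{Q})$, of dimension $\beta_2(X)\leq g-1$ --- a contradiction. Your version trades the triple-product constraint ($H^3(X)=0$) for the smallness of $\beta_2(X)$, and in fact subsumes the first assertion (take $\beta_2(X)=0$). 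Two points to tidy in a final write-up: the claim in your second paragraph that the image of $\eta\cup(-)|_{H^1(X)}$ lies in $H^2(X;\mathbb{Q})$ is false for $\eta$ itself (only a product of two classes \emph{both} pulled back from $X$ lands there), which is exactly why the replacement of $\eta$ by $\eta_X$ in your final paragraph is essential; and the rank-$2g$ statement must be verified for $\eta_X=\lambda\eta+s$ with $\lambda\neq0$ and $s$ a surface class, not just for $\eta$ --- a one-line computation in $H^*(|B|\times S^1;\mathbb{Q})$ shows that the kernel of $v\cup(-)$ is the line $\langle v\rangle$ for any $v$ with $v(h)\neq0$.
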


\begin{proof}
Let $\{a_i^*,b_i^*;1\leq{i}\leq{g}\}$ be the images in $H^1(M;\mathbb{Q})$ 
of a symplectic basis for $H^1(|B|;\mathbb{Q})$. 
Then $a_i^*(h)=b_i^*(h)=0$ for all $i$.
Let $\theta\in{H^1(M;\mathbb{Q})}$ be such that $\theta(h)\not=0$.
By Lemma \ref{Hi17-lem8.1} we have
\[
H^*(M;\mathbb{Q})\cong{H^*(|B|\times{S^1};\mathbb{Q})}\cong
\mathbb{Q}[\theta,a_i^*,b_i^*,~\forall~i\leq{g}]/I,
\]
where $I$ is the ideal $(\theta^2,a_i^{*2}, b_i^{*2},
\theta{a_i^*b_i^*}-\theta{a_j^*b_j^*},
a_i^*a_j^*,b_i^*b_j^*, ~\forall~1\leq{i}<j\leq{g})$.

Since $\theta{a_1^*b_1^*}\not=0$ the triple product $\mu_M\not=0$,
and so $M$ has no embedding with $\beta_2(Y)=0$, 
by Lemma \ref{Hi17-lem2.2}.
Hence $\chi(X)=1-\beta$ ($\Leftrightarrow\chi(Y)=1+\beta$)
is impossible.

If $\chi(X)<0$ then $\beta_1(X)>g+1$,
and so the image of $H^1(X;\mathbb{Q})$ in $H^1(M;\mathbb{Q})$ 
must contain some pair of classes from the image of $H^1(|B|;\mathbb{Q})$
with nonzero product.
But then it cannot also contain $\theta$, 
since all triple products of classes in $H^1(X;\mathbb{Q})$ are 0.
Thus the image of $H^1(Y;\mathbb{Q})$ must contain a class 
which is non-trivial on $h$, 
and so $j_Y(h)\not=0$ in $H_1(Y;\mathbb{Q})$.
\end{proof}

In particular, if $g=1$ then $\chi(X)=0$ and $\chi(Y)=2$.

Theorem \ref{Hi17-thm8.2} also follows from Lemma \ref{Hi17-lem4.1},
since the centre of $\pi$ is not contained in the commutator subgroup $\pi'$.

If the base orbifold $B$ is nonorientable or if $\varepsilon(M)\not=0$
then $\mu_M=0$, by Lemma \ref{Hi17-lem8.1},
and so the argument of Theorem  \ref{Hi17-thm8.2} 
does not extend to these cases.
However, Lemma \ref{Hi17-lem8.1}  also suggests that when 
$\varepsilon(M)\not=0$ we should be able to use Massey product arguments 
as in \S6.3 above (where we considered the case $S=\emptyset$).

\begin{theorem}
\label{Hi17-thm8.3}
Let $M=M(g;S)$, where $g\geq0$ and $\varepsilon(M)\not=0$.
If $M$ embeds in $S^4$ with complementary regions $X$ and $Y$ then $\chi(X)=\chi(Y)=1$.
\end{theorem}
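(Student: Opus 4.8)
The strategy is to adapt the Massey product computation carried out in \S6.3 and \S6.4 (for the $S^1$-bundle case) to a general Seifert manifold $M=M(g;S)$ with $\varepsilon(M)\neq0$. By Lemma \ref{Hi17-lem8.1}, since $\varepsilon(M)\neq0$ we have $H^*(M;\mathbb{Q})\cong H^*(\#^\beta(S^2\times S^1);\mathbb{Q})$, so in particular $\mu_M=0$ and all cup products of degree-1 rational classes vanish; this is why the cup-product argument of Theorem \ref{Hi17-thm8.2} gives nothing here, and we are forced to pass to triple products. First I would establish the analogue of the Lemma of \S6.4: work with the finite regular cover $q:\widehat M\to M$ with $\widehat M$ an $S^1$-bundle over an orientable surface $\widehat B$ and Euler number $\widehat e\neq0$, use the presentation of $\pi_1\widehat M$ and the explicit cochains $\phi_\xi,\phi_\eta,\theta$ (as in the $\mathbb{N}il^3$ computation) to produce, for a suitable basis $\{\alpha_i,\beta_i\}$ of $H^1(\widehat M;\mathbb{Q})$ coming from a symplectic basis of $H_1(\widehat B;\mathbb{Q})$, Massey triple products $\langle\alpha_i,\alpha_i,\beta_i\rangle$ and $\langle\alpha_i,\beta_i,\beta_i\rangle$ that are defined and span $H^2(\widehat M;\mathbb{Q})$, Poincar\'e dual to the given basis of $H_1(\widehat M;\mathbb{Q})$. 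Then I would take $G$-invariants: since $H^*(M;\mathbb{Q})\cong H^*(\widehat M;\mathbb{Q})^G$ and Massey products are natural, the upshot is that $H^2(M;\mathbb{Q})$ (equivalently $H^2(\pi;\mathbb{Q})$, as these rational classes are pulled back from the base orbifold group and its central extension) is spanned by triple Massey products of classes in $H^1(M;\mathbb{Q})$, and moreover each such triple product, cup the appropriate degree-1 class, generates $H^3(M;\mathbb{Q})$.

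Next I would feed this into the splitting $H^1(M;\mathbb{Q})\cong H^1(X;\mathbb{Q})\oplus H^1(Y;\mathbb{Q})$ of Lemma \ref{Hi17-lem2.1}. Suppose $\chi(X)\neq1$; then $\beta_1(X)=\beta_1(Y)$ is impossible, so without loss of generality $\chi(X)<1$, i.e.\ $\beta_1(X)>\beta_1(Y)$, hence $\beta_1(X)>\beta/2$. I want to produce a nonzero Massey triple product of three classes lying in $H^1(X;\mathbb{Q})$, which would then, paired suitably, give a nonzero class in $H^3(X;\mathbb{Q})=0$ — a contradiction. Since all triple Massey products of classes in $H^1(X;\mathbb{Q})$ do land in $H^2(X;\mathbb{Q})$ and $H^3(X;\mathbb{Q})=0$, what I actually need is: the image of $H^1(X;\mathbb{Q})$ in $H^1(M;\mathbb{Q})$ is large enough that some defined triple Massey product among its elements is nonzero in $H^2(M;\mathbb{Q})$ and has nonzero cup product with a fourth class of $H^1(M;\mathbb{Q})$ that also lies in $H^1(X;\mathbb{Q})$ — giving a nonzero element of $H^3(X;\mathbb{Q})$. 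The combinatorial heart is a pigeonhole argument on the symplectic basis $\{\alpha_i,\beta_i\}_{i\le g}$ plus possibly a ``vertical" class $\theta$ dual to $h$: once $\beta_1(X)$ exceeds $g+1$ (or the appropriate threshold), the image of $H^1(X;\mathbb{Q})$ must contain a full hyperbolic pair $\alpha_i,\beta_i$ together with a third class making the relevant triple product and its cup with a fourth survive. One must handle carefully that here $\beta=\beta_1(M;\mathbb{Q})$ equals $2g$ or $2g+1$ according to whether the regular fibre is rationally trivial — but that is exactly the case $\varepsilon(M)=0$, so with $\varepsilon(M)\neq0$ we have $\beta=2g$, the threshold $\beta_1(X)>g$ forces $\chi(X)\le 1-\beta+2g\cdot 0$... in fact $\chi(X)=1+\beta-2\beta_1(X)\le -1$, and then $\beta_1(X)\ge g+1$.

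I expect the main obstacle to be the pigeonhole/linear-algebra step: ensuring that when $\beta_1(X)\ge g+1$ the image of $H^1(X;\mathbb{Q})$ in the (rationally) $2g$-dimensional space $H^1(M;\mathbb{Q})$ necessarily contains three classes $a,b,c$ with $a\cup b=b\cup c=0$, $\langle a,b,c\rangle\neq 0$, \emph{and} a fourth class $d\in H^1(X;\mathbb{Q})$ with $\langle a,b,c\rangle\cup d\neq0$, so that we genuinely land in $H^3(X;\mathbb{Q})$ rather than merely $H^3(M;\mathbb{Q})$. The symplectic structure on $H^1(M;\mathbb{Q})$ (from the cup product on $|B|$, which is what governs which triple products vanish and which survive) should make this work: a $(g+1)$-dimensional subspace of a $2g$-dimensional symplectic space contains a nonzero vector together with a complementary symplectic pair, which after normalization is exactly the configuration appearing in the model computation $M(1;(1,-1))$ of \S6.3. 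I would organize the argument so that the $g=0$ case (base $S^2$, so $\beta=0$, $\chi(X)=\chi(Y)=1$ automatically) and small-$g$ cases are dispatched immediately, and the Massey product machinery is only invoked for $g\ge1$, $\beta_1(X)\ge g+1$. A secondary technical point is checking that the Massey products are honestly defined on $X$ (indeterminacy controlled because $H^1(X;\mathbb{Q})\cup H^1(X;\mathbb{Q})\to H^2(X;\mathbb{Q})$ is understood via $\mu_M|_X=0$ from Lemma \ref{Hi17-lem2.2}), which should follow formally since the restriction map $H^2(M;\mathbb{Q})\to H^2(X;\mathbb{Q})$ and naturality of Massey products let us do the computation upstairs in $M$ and restrict.
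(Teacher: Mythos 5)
Your proposal follows essentially the same route as the paper's proof: reduce to $g\geq1$, pass to the finite regular cover $\widehat{M}\to M$ which is an $S^1$-bundle over an orientable surface, note that $\beta=2g$ so a complementary region with $\chi\neq1$ forces $\beta_1(X)>g$, use the symplectic (non-isotropy) pigeonhole on $H^1(|B|;\mathbb{Q})$ to extract a hyperbolic pair in the image of $H^1(X;\mathbb{Q})$, and then show the resulting triple Massey product cupped with a degree-one class is a nonzero element of $H^3(X;\mathbb{Q})=0$, a contradiction. The argument is correct and matches the paper's, including the technical points you flag (naturality of the Massey products under the covering, and controlling the correction term $b'$ by a map to $S^1\vee S^1$ so that $\langle a,a,b\rangle\cup b=\langle\alpha_1,\alpha_1,\beta_1\rangle\cup\beta_1$).
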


\begin{proof}
The group $\pi=\pi_1M(g;S)$ has a presentation
\[
\langle{x_1,y_1,\dots,x_g,y_g, c_1,\dots,c_r,h}\mid
\Pi[a_i,b_i]\Pi{c_j}=1,~c_i^{\alpha_i}h^{\beta_i}=1,~h~central\rangle.
\]
We may assume that $g\geq1$, 
for if $g=0$ then $M$ is a $\mathbb{Q}$-homology 3-sphere 
and the result is clear.
As in Lemma \ref{Hi17-lem8.1}, there is a finite regular covering
$q:\widehat{M}\to{M}$,
where $\widehat{M}$ is an $S^1$-bundle space with orientable base 
$\widehat{B}$.
Let $G=Aut(q)$.
Then $H^*(M;\mathbb{Q})\cong{H^*(\widehat{M};\mathbb{Q})^G}$.
Let  $\{\alpha_2,\beta_2,\dots,\alpha_h,\beta_h\}$ be a basis for $H^1(\widehat{M};\mathbb{Q})$ which is Kronecker dual to 
the image of a symplectic basis for $H_1(\widehat{B})$ in $H_1(\widehat{M};\mathbb{Q})$.

The homomorphism $H^1(p;\mathbb{Q})$ from $H^1(|B|;\mathbb{Q})$
to $H^1(M;\mathbb{Q})$ induced by the Seifert fibration $p:M\to{B}$.
If $j:M\to{S^4}$ is an embedding then $H=H^1(M;\mathbb{Q})=A\oplus\Omega$,
where $A$ and $\Omega$ are self-annihilating with respect to cup product. 
If $L\leq{H}$ is a direct summand of rank $>g$ then there are $a\in{L\cap{A}}$ and $b\in{L\setminus{A}}$ such that $a\cup{b}\not=0$ in $H^2(|B|;\mathbb{Q})$.
We consider their images under $H^1(q;\mathbb{Q})$.
Since $\widehat{M}\cong\widehat{B}_o\times{S^1}\cup{D^2\times{S^1}}$,
it is easy to see that every self-homeomorphism of $\widehat{B}$ lifts
to a fibre-preserving self-homeomorphism of $\widehat{M}$.
Thus (after multiplication by factors in $\mathbb{Q}^\times$,
if necessary) we may assume that $a=\alpha_1$ and then $b=\beta_1+b'$, 
where $b'$ is in the span of $\{\alpha_2,\beta_2,\dots,\alpha_h,\beta_h\}$.
We now view these classes as classes in $H^1(\widehat{B};\mathbb{Q})$.
Since $a\cup{b'}=0$ there is a map $f:\widehat{B}\to{V=S^1\vee{S^1}}$,
such that $a$ and $b'$ are in the image of $H^1(f;\mathbb{Q})$.
Hence $\langle{a},{a},b'\rangle=0$.
The topological argument used in \S6.2 shows that
$\langle{a,a,\beta_1}\rangle\cup{b'}=0$ also.
Therefore $\langle{a},{a},b\rangle\cup{b}=
\langle\alpha_1,\alpha_1,\beta_1\rangle\cup\beta_1$ is non-zero.
But this contradicts  the fact that $H^3(X)=H^3(Y)=0$.
Therefore
$H^1(X)$ and $H^1(Y)$ each have rank at most $g$,
and so $\chi(X)=\chi(Y)=1$.
\end{proof}

If $\chi(X)=0$, all cone point orders are odd and $h$ has nonzero image in $H_1(X;\mathbb{Q})$ then $S$ is skew-symmetric, 
by Theorem \ref{g=0skewsym}.
(In particular, this must be the case if $g$ and $\varepsilon_S$ are 0.)
Conversely, if $S$ is skew-symmetric and all cone point orders 
$a_i$ are odd then $M(0;S)$ embeds smoothly.
Since $\beta=1$ we must have $\chi(X)=0$ and $H_1(Y;\mathbb{Q})=0$.
(In fact, for the embedding constructed on page 693 of \cite{CH98}
the component $X$ has a fixed point free $S^1$-action.)
Hence also $M(g;S)$ embeds smoothly, as in Lemma \ref{CH-lem3.2},
which gives embeddings with $\chi(X)=0$.
Is there a natural choice of 0-framed bipartedly slice link representing $M(0;S)$?
Are all values of $\chi(X)$ consistent with Theorem \ref{Hi17-thm8.2} 
possible for $M(g;S)$?

However, even if $\chi(X)=0$ the other hypothesis of Theorem \ref{g=0skewsym}
need not hold.
For instance, the standard 0-framed link representing $M=T_2\times{S^1}$ 
has two essentially different partitions
into 3- and 2-component trivial sublinks.
For one, 
$\pi_X\cong\mathbb{Z}\times{F(2)}$ and $\pi_Y\cong{F(2)}$,
while for the other $\pi_X\cong\mathbb{Z}*\mathbb{Z}^2$ and $\pi_Y\cong\mathbb{Z}^2$.

If $\ell_M$ is hyperbolic then all even cone point orders have 
the same 2-adic valuation,
by Theorem \ref{CH-thm3.7} (when $g<0$) and Lemma 6 of 
Appendix A (when $g\geq0$).

\section{$S^1$-bundle spaces}

The bundle space $E={M(g;(1,e))}$ can only embed in $S^4$ if $e=0$ or $\pm1$,
since $\tau_E=0$ if $e=0$ and is cyclic of order $e$ otherwise.
The 3-torus $M(1;(1,0))$ is also $M(Bo)$.
Since $M(g;(1,0))\cong{T_g\times{S^1}}$ is an iterated fibre sum 
of copies of $T\times{S^1}$,
it may be obtained by 0-framed surgery on the $(2g+1)$-component link 
of Figure 6.1,
which shares some of the Brunnian properties of $Bo$.

\setlength{\unitlength}{1mm}
\begin{picture}(90,52)(-12,-13)

\qbezier(15,0)(50,-5)(75,0)
\qbezier(15,0)(0,4)(10.8,12.7)
\qbezier(75,0)(90,4)(79.2,12.7)

\qbezier(14.3,23)(12.3,19)(11.7,16)
\qbezier(11.7,16)(11.5,14.7)(11.4,14.3)
\qbezier(11.4,14.3)(12.7,3)(19.6,14.3)

\qbezier(15.6,20.8)(13.8,17)(13.3,14)

\qbezier(19,20.5)(18.3,17)(17.8,15.5)
\qbezier(21,20.5)(20.2,18)(20,16.2)

\qbezier(14,13.7)(18,15)(28,16.5)

\qbezier(13.3,14)(13.7,8)(17.3,13.8)

\qbezier(14.7,22)(17,20.5)(20.2,21.2)

\qbezier(14.3,23)(16.3,25)(16.1,22.5)
\qbezier(19.5,22)(20.5,24)(21.2,22)
\qbezier(21.2,22)(21.1,21)(21,20.5)

\qbezier(13,23)(11,25.5)(13,28)
\qbezier(22,22)(24,25)(22,28)
\qbezier(13,28)(14.5,29.5)(17.5,30)
\qbezier(17.5,30)(20,30)(22,28)

\qbezier(75.7,23)(77.7,19)(78.3,16)
\qbezier(78.3,16)(78.5,14.7)(78.6,14.3)
\qbezier(78.6,14.3)(77.3,3)(70.4,14.3)

\qbezier(74.4,20.8)(76.2,17)(76.7,14)

\qbezier(71,20.5)(71.7,17)(72.2,15.5)
\qbezier(69,20.5)(69.8,18)(70,16.2)

\qbezier(76,13.7)(72,15)(62,16.5)

\qbezier(76.7,14)(76.3,8)(72.7,13.8)

\qbezier(75.3,22)(73,20.5)(69.8,21.2)

\qbezier(75.7,23)(73.7,25)(73.9,22.5)
\qbezier(70.5,22)(69.5,24)(68.8,22)
\qbezier(68.8,22)(68.9,21)(69,20.5)

\qbezier(77,23)(79,25.5)(77,28)
\qbezier(68,22)(66,25)(68,28)
\qbezier(77,28)(75.5,29.5)(72.5,30)
\qbezier(72.5,30)(70,30)(68,28)

\put(7,17){$L_1$}
\put(7,27){$L_2$}
\put(79,17){$L_{2g-1}$}
\put(79,27){$L_{2g}$}
\put(25,1){$L_0$}
\put(38,30){($g$ copies)}
\put(39,23){.\quad.\quad.\quad.}
\put(19,-10){Figure 6.1. A link giving $T_g\times{S^1}$}

\end{picture}

This 3-manifold has an embedding as the boundary of $T_g\times{D^2}$,
the regular neighbourhood of the unknotted embedding of $T_g$ in $S^4$, 
with the other complementary region having fundamental group $\mathbb{Z}$.
It is easy to see that if $g\geq1$ then $T_g\times{S^1}$ has 
other embeddings with $\chi(X)$ realizing each even value $>1-\beta$.
On the other hand,  $\mu_{T_g\times{S^1}}\not=0$,
and so no embedding has a complementary region $Y$ with $\beta_1(Y)=0$.

Changing the framing on one component of $Bo$ to 1,
and applying a Kirby move to isolate this component
gives the disjoint union of the Whitehead link $Wh$ and the unknot.
Since the linking numbers are 0 the framings are unchanged, 
and we may delete the isolated 1-framed unknot.
Thus $M(1;(1,1))$ may be obtained by 0-framed surgery on $Wh$.
The corresponding modification of the standard 0-framed $(2g+1)$-component
link $L$ representing $T_g\times{S^1}$ involves changing the framing of the component 
$L_{2g+1}$ whose meridian represents the central factor of $\pi$.
Performing a Kirby move and deleting an isolated 1-framed unknot gives
a 0-framed $2g$-component link representing $M(g;(1,1))$.

Since the original link had partitions into two trivial links 
with $g+1$ and $g$ components respectively, 
the new link has a partition into two trivial $g$-component links.
However this is the only partition into slice sublinks,
for as we shall see consideration of the Massey product structure 
shows that all embeddings of $M(g;(1,1))$ have $\chi(X)=\chi(Y)=1$.

Suppose now that $F$ is nonorientable.
Then $M(-c;(1,e))$ embeds if and only if it embeds as 
the boundary of a regular neighbourhood of an embedding 
of $\#^c\mathbb{RP}^2$ with normal Euler number $e$.
We must have $e\leq2c$ and $e\equiv2c$ {\it mod} (4),
by Corollary \ref{CH-prop1.3}.
The standard embedding of $\mathbb{RP}^2$ in $S^4$ is determined up to 
composition with a reflection of $S^4$.
The complementary regions are each homeomorphic to a disc bundle over $\mathbb{RP}^2$
with normal Euler number 2, and so have fundamental group $\mathbb{Z}/2\mathbb{Z}$.
The standard embeddings of $\#^c\mathbb{RP}^2$ are obtained by taking iterated
connected sums of these building blocks $\pm(S^4,\mathbb{RP}^2)$, 
and in each case the exterior has fundamental group $\mathbb{Z}/2\mathbb{Z}$.
The regular neighbourhoods of $\#^c\mathbb{RP}^2$ are disc bundles 
with boundary $M(-c;(1,e))$.
Thus $M(-c;(1,e))$ has an embedding with one complementary component 
$X_{c,e}$ a disc bundle over $\#^c\mathbb{RP}^2$ 
and the other component $Y_{c,e}$
having fundamental group $\mathbb{Z}/2\mathbb{Z}$.

This embedding arises from a 0-framed $(c+1)$-component link
assembled from copies of the $(2,4)$-torus link $4^2_1$ and its reflection.
This is the union of an unknot and a trivial $c$-component link, 
but has no other partitions into slice links.
However, we can do better if we recall that 
$\#^c\mathbb{RP}^2\cong(\#^{c-2g}\mathbb{RP}^2)\#T_g$ for any $g$ 
such that $2g<c$.
The 3-manifold obtained by 0-framed surgery on the $(a+b+2g+1)$-component link 
of Figure 6.2 is $M(-c;(1,e))$, where $c=a+b+2g$ and $e=\pm2(a-b)$, 
with the sign depending on the orientation chosen.
(See also \cite[Figure A.3 ]{CH98}.)

This link has partitions into trivial sublinks corresponding 
to all the values $2-c\leq\chi(X)\leq\min\{2-\frac{|e|}2,1\}$
such that $\chi(X)\equiv{c}$ {\it mod} (2).
Are any other values realized?
In particular, does $M(-3;(1,6))$ embed with $\chi(X)=\chi(Y)=1$?

\setlength{\unitlength}{1mm}
\begin{picture}(90,60)(-5,-27)

\qbezier(15,0)(0,4)(10.5,12.5)
\qbezier(88,0)(105,4)(94.5,12.5)
\qbezier(15,0)(40,-4)(54,-3.5)
\qbezier(58,-3.3)(62,-3.2)(63,-3)
\qbezier(75,-2)(79,-2)(83,-1)

\qbezier(56,-6)(53,1)(51, -2.5)
\qbezier(58,-4.8)(53,4.5)(48.7,-2.5)
\qbezier(48.6,-4.5)(48.6, -6.5)(49.2, -6.5)
\qbezier(50.8,-4.4)(51,-6.5)(51.2,-6.5)
\qbezier(51.15,-6.5)(51.1,-7.8)(50.6,-7.8)
\qbezier(50.5, -6.5)(49,-8)(50,-10)
\qbezier(52, -6.1)(54,-5)(55,-5.3)
\qbezier(56.3,-5)(58,-5)(59,-8)
\qbezier(56,-6)(56.3,-7)(57.5,-7)
\qbezier(50,-10)(51,-12)(53,-12)
\qbezier(57,-11.5)(59,-11)(59,-8)
\qbezier(53,-12)(56, -12)(57, -11.5)

\qbezier(86,-4)(83,3)(81, -0.5)
\qbezier(88,-2.8)(83,6.5)(78.7,-0.5)
\qbezier(78.6,-2.5)(78.6, -4.5)(79.2, -4.5)
\qbezier(80.8,-2.4)(81,-4.5)(81.2,-4.5)
\qbezier(81.15,-4.5)(81.1,-5.8)(80.6,-5.8)
\qbezier(80.5, -4.5)(79,-6)(80,-8)
\qbezier(82, -4.1)(84,-3)(85,-3.3)
\qbezier(86.3,-3)(88,-3)(89,-6)
\qbezier(86,-4)(86.3,-5)(87.5,-5)
\qbezier(80,-8)(81,-10)(83,-10)
\qbezier(87,-9.5)(89,-9)(89,-6)
\qbezier(83,-10)(86, -10)(87, -9.5)

\qbezier(10.5,12.5)(11.5, 13)(13.3, 13.4)
\qbezier(14.8,13.8)(16.5,14.2)(18.8,14.5)
\qbezier(20.5,14.7)(21,15)(23.5,15.3)
\qbezier(31,16.3)(33,16.7)(34.5,16.8)
\qbezier(36.5,17)(38, 17.2)(40,17.3)

\qbezier(94.5,12.5)(93.5, 13.3)(91.2, 14)
\qbezier(89.7,14.3)(88.3,14.5)(85.5,14.8)
\qbezier(83.6,15.1)(82.5,15.3)(81,15.4)
\qbezier(68,16.7)(66,16.9)(64,17)
\qbezier(70,16.5)(73,16.3)(74,16.2)

\qbezier(11,12)(13,10)(14.2,14)
\qbezier(14.2,14)(15.2,16)(16.6,14.7)
\qbezier(17,13.5)(19.3,10.5)(19.8,15)
\qbezier(11,14)(11.5, 17)(13,18)
\qbezier(19.8,15)(19.9,15.5)(19.5, 16.5)
\qbezier(13,18)(16,20)(18,18.5)
\qbezier(18,18.5)(19,17.5)(19.5,16.5)

\qbezier(32.5,16)(34.5,13)(35.7,18)
\qbezier(35.7,18)(37,19.2)(38.1,17.8)
\qbezier(38.5,16.5)(41,13.5)(41.2,18.5)
\qbezier(32.5,17.5)(33, 20.5)(34.5,21.5)
\qbezier(41.2,18.5)(41.3,19)(40.9, 20)
\qbezier(34.5,21.5)(37.5,23.5)(39.5,22)
\qbezier(39.5,22)(40.5,21)(40.9,20)

\qbezier(93.5,12.6)(91,10.6)(90.3,14.8)
\qbezier(90.3,14.8)(89.3,16.8)(87.9,15.3)
\qbezier(87.5,14.1)(85.2,11.1)(84.7,15.6)
\qbezier(94,14)(95,17)(93,18.6)
\qbezier(84.7,15.6)(84.8,16)(86,18)
\qbezier(86,18)(90,22)(93,18.6)


\qbezier(72,15.5)(69.5,13)(68.5,17.5)
\qbezier(68.5, 17.5)(67.7,19)(66.3,17.5)
\qbezier(66,16)(63.5,13.5)(62.5,18)
\qbezier(62.5,18)(62.2,19)(62.5,20)
\qbezier(62.5,20)(63,21)(64,22)
\qbezier(64,22)(65,23)(66,23)
\qbezier(66,23)(67,23.5)(68,23)
\qbezier(68,23)(69,23)(70,22.5)
\qbezier(70,22.5)(71,22)(71.5,21)
\qbezier(71.5,21)(72.2,20)(72.3,19)
\qbezier(72.3,19)(72.5,18)(72.4,17)

\qbezier(42.3,17.5)(52,18)(61.2,17.2)

\put(17,24.5){($a$ copies)}
\put(72,24.5){($b$ copies)}
\put(62,-10){($g$ copies)}
\put(65,-3.1){.}
\put(68.5,-2.9){.}
\put(72,-2.5){.}
\put(24.5,15.3){.}
\put(26.5,15.6){.}
\put(28.6,15.9){.}
\put(75,15.9){.}
\put(77,15.7){.}
\put(79,15.5){.}

\put(4,17){$L_1$}
\put(90,-10){$L_c$}
\put(25,1){$L_0$}

\put(21.7,-20){Figure 6.2.  A link giving $M(-c;(1,e))$.}

\end{picture}
 
If we move beyond the class of $S^1$-bundle spaces, 
we may give an example of ``intermediate" behaviour.
It is not hard to show that if $H\cong\mathbb{Z}^\beta$ with $\beta\leq5$
then for every $\mu:\wedge^3H\to\mathbb{Z}$ there is an epimorphism 
$\lambda:H\to\mathbb{Z}$ such that $\mu$ is 0 on the image of 
$\wedge^3\mathrm{Ker}(\lambda)$.
Hence there are splittings $H\cong{A}\oplus{B}$ with $A$ of rank 3 or 4
such that $\mu$ restricts to 0 on each of $\wedge^3A$ and $\wedge^3B$.
However if $\beta =6$ this fails for 
\[
\mu=
e_1\wedge{e_2}\wedge{e_3}+e_1\wedge{e_5}\wedge{e_6}+e_2\wedge{e_4}\wedge{e_5}.
\]
(Here $\{e_i\}$ is the basis for $Hom(H,\mathbb{Z})$ 
which is Kronecker dual to the standard basis 
of $H\cong\mathbb{Z}^6$.)
For every epimorphism $\lambda:\mathbb{Z}^6\to\mathbb{Z}$ 
there is a rank 3 direct summand $A$ of $\mathrm{Ker}(\lambda)$ 
such that $\mu$ is non-trivial on $\wedge^3A$.
[This requires a little calculation.
Suppose that $\lambda=\Sigma\lambda_ie_i^*$.
If $\lambda_6\not=0$ then we may take $A$ to be the direct summand 
containing $\langle{f_1,f_2,f_3}\rangle$,
where $f_j=\lambda_6e_j-\lambda_je_6$, for $1\leq{j}\leq3$,
for then $\mu(f_1\wedge{f_2}\wedge{f_3})=\lambda_6^3\not=0$.
Similarly if $\lambda_3$ or $\lambda_4$ is nonzero.
If $\lambda_3=\lambda_4=\lambda_6=0$ but $\lambda_1\not=0$
then we may take $A$ to be the direct summand
containing $\langle{g_2,e_4,g_5}\rangle$,
where $g_2=\lambda_1e_2-\lambda_2e_1$ and $g_5=\lambda_1e_5-\lambda_5e_1$.
Similarly if $\lambda_2$ or $\lambda_5$ is nonzero.]

This example arose in a somewhat different context.
It is the cup product 3-form of the 3-manifold $M$ given by 0-framed
surgery on the 6-component link of  \cite[Figure 6.1]{DH17}.
This link has certain ``Brunnian" properties.
All the 2-component sublinks, all but three of the 3-component sublinks
and six of the 4-component sublinks are trivial.
Thus $M$ has embeddings in $S^4$ with $\chi(X)=-1$ or 1,
corresponding to partitions of $L$ into a pair of trivial sublinks,
but there are no embeddings with $\chi(X)=-5$ or $-3$,
since $\mu_M$ does not satisfy the second assertion of Lemma 2.

\section{Homotopy types of pairs}

We shall next give some lemmas on recognizing 
the homotopy types of certain spaces and pairs of spaces arising later.

\begin{theorem}
\label{2con}
Let $U$ and $V$ be connected finite cell complexes such that $c.d.U\leq2$ and $c.d.V\leq2$.
If $f:U\to{V}$ is a $2$-connected map then $\chi(U)\geq\chi(V)$,
with equality if and only if $f$ is a homotopy equivalence.
\end{theorem}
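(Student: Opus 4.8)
The plan is to compare $\chi(U)$ and $\chi(V)$ through the relative homology of the pair with coefficients in the group ring $\Gamma=\mathbb{Z}[\pi]$, where $\pi=\pi_1V$ is identified with $\pi_1U$ via the isomorphism $\pi_1f$. Replacing $f$ by the inclusion of $U$ into its mapping cylinder $M_f\simeq V$ and passing to universal covers, I would first extract the homological force of the two hypotheses. Since $f$ is $2$-connected and $\pi_1f$ is an isomorphism, the pair $(\widetilde{M_f},\widetilde U)$ is $2$-connected, so the relative Hurewicz theorem gives $H_i(V,U;\Gamma)=0$ for $i\leq 2$. On the other side, the hypothesis $c.d.\,U\leq 2$ (equivalently, $C_*(\widetilde U)$ is $\Gamma$-chain homotopy equivalent to a finitely generated projective complex of length $2$) and the analogous statement for $V$ give $H_i(U;\Gamma)=H_i(V;\Gamma)=0$ for $i\geq 3$; the long exact homology sequence of the pair then forces $H_i(V,U;\Gamma)=0$ for $i\geq 4$. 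Thus the only possibly non-zero relative module is $H_3(V,U;\Gamma)$, sitting in \emph{odd} degree.

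Next I would promote this vanishing to a structural statement. Writing $C_*(\widetilde U)\simeq P_*$ and $C_*(\widetilde V)\simeq Q_*$ with $P_*,Q_*$ finitely generated projective $\Gamma$-complexes of length $2$, the algebraic mapping cone $\mathcal{C}_*$ of the induced chain map is a projective $\Gamma$-complex concentrated in degrees $0,\dots,3$, with $H_*(\mathcal{C}_*)\cong H_*(V,U;\Gamma)$ concentrated in the top degree $3$. A bounded complex of projectives whose homology is concentrated at the top splits: exactness in degrees $0,1,2$ makes the successive boundary and cycle submodules projective direct summands, so $M:=H_3(V,U;\Gamma)=\ker(\partial_3)$ is a finitely generated projective $\Gamma$-module and $\mathcal{C}_*$ is $\Gamma$-chain homotopy equivalent to $M$ placed in degree $3$. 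This is the step where $c.d.\leq 2$ is indispensable: without it the relative homology need not be projective (e.g. a module with a non-split resolution), and the argument collapses.

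Now I would invoke the $L^2$-Euler formula of \S1.10. Applying $\ell^2(\pi)\otimes_\Gamma(-)$ to the decomposition $\mathcal{C}_*\simeq M[3]$ shows that the reduced $L^2$-homology $H_i^{(2)}(\widetilde V,\widetilde U)$ vanishes except in degree $3$, where it is $\ell^2(\pi)\otimes_\Gamma M$. Since $\chi=\chi^{(2)}$ for finite complexes and $\chi(V)-\chi(U)=\chi^{(2)}(V,U)=\sum_i(-1)^i\beta_i^{(2)}(V,U)$ by additivity of von Neumann dimension along the weakly exact $L^2$-sequence of the pair, I obtain
\[
\chi(V)-\chi(U)=-\beta_3^{(2)}(V,U)=-\dim_{\mathcal{N}(\pi)}\bigl(\ell^2(\pi)\otimes_\Gamma M\bigr)\leq 0,
\]
which is exactly the asserted inequality $\chi(U)\geq\chi(V)$.

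For the equality clause, equality holds if and only if $\dim_{\mathcal{N}(\pi)}(\ell^2(\pi)\otimes_\Gamma M)=0$, and the hard part is precisely to deduce $M=0$ from this. A finitely generated projective $\Gamma$-module can in principle have vanishing ordinary rational rank, so faithfulness cannot come from tensoring with $\mathbb{Q}$ alone; it must come from the von Neumann dimension. This is supplied by Kaplansky's positivity of the trace of a non-zero idempotent, which gives the faithfulness $\dim_{\mathcal{N}(\pi)}(\ell^2(\pi)\otimes_\Gamma M)=0\iff M=0$. Finally $M=0$ is equivalent to $H_*(V,U;\Gamma)=0$, hence to $\widetilde f$ being an integral homology isomorphism between simply connected complexes, hence by Whitehead's theorem to $\widetilde f$ being a homotopy equivalence, and therefore (as $\pi_1f$ is already an isomorphism) to $f$ being a homotopy equivalence. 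This establishes both directions of the equivalence and completes the proof.
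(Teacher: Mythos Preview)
Your proof is correct and follows the same skeleton as the paper's: both reduce to showing that the relative $\Gamma$-homology is concentrated in degree $3$, that $M=H_3(V,U;\Gamma)$ is projective, and then invoke a Kaplansky-type positivity result to conclude. The genuine difference lies in how the rank of $M$ is extracted. The paper works directly with the \emph{free} cellular chain complex of the pair (after arranging $f$ to be a cellular inclusion with $\dim V\leq 3$) and applies Schanuel's Lemma to obtain $H_3(C_*)\oplus C_2\oplus C_0\cong C_3\oplus C_1$; this immediately gives $M$ \emph{stably free} of rank $\chi(U)-\chi(V)$, and then Kaplansky's weak finiteness of group rings yields both the inequality and the equality clause in one stroke. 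You instead pass to projective resolutions of length $2$, so your mapping cone has only projective terms and you recover merely that $M$ is projective; to assign it a rank you then bring in the $L^2$-Euler formula and von Neumann dimension, with Kaplansky's trace positivity for the equality case. Your route is perfectly valid and perhaps more conceptual (it makes explicit why a notion of dimension for projectives is what is needed), but it imports heavier machinery than necessary: the paper's Schanuel argument stays entirely within elementary module theory and avoids $L^2$-invariants altogether.
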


\begin{proof}
Up to homotopy, we may assume that $f$ is a cellular inclusion,
and that $V$ has dimension $\leq3$.
Let $\pi=\pi_1U$ and let $C_*=C_*(\widetilde{V},\widetilde{U})$. 
Then $H_q(C_*)=0$ if $q\leq2$, since $f$ is 2-connected,
and $H_q(C_*)=0$ if $q>3$, since $c.d.U$ and $c.d.V\leq2$.
Hence $H_3(C_*)\oplus{C_2}\oplus{C_0}\cong{C_3}\oplus{C_1}$,
by Schanuel's Lemma,
and so $H_3(C_*)$ is a stably free $\mathbb{Z}[\pi]$-module 
of rank $-\chi(C_*)=\chi(U)-\chi(V)$.
Hence $\chi(U)\geq\chi(V)$, with equality if and only if
$H_3(C_*)=0$, since group rings are weakly finite,
by a theorem of Kaplansky.
(See \cite{Ros84} for a proof of Kaplansky's result.)
The result follows from the long exact sequence of the pair
$(\widetilde{Y},\widetilde{X})$ and the theorems of Hurewicz and Whitehead.
\end{proof}

If $c.d.X\leq2$ then $C_*(\widetilde{X})$ is chain homotopy equivalent 
to a finite projective complex of length 2,
which is a partial resolution of the augmentation module $\mathbb{Z}$.
Chain homotopy classes of such partial resolutions 
are classified by $Ext^3_{\mathbb{Z}[\pi]}(\mathbb{Z},\Pi)=H^3(\pi;\Pi)$,
where $\Pi$ is the module of 2-cycles.

\begin{cor}
\label{2concor}
If $U$ is a connected finite complex such that $c.d.U\leq2$ and 
$\pi_1U\cong\mathbb{Z}$
then $U\simeq{S^1}\vee\bigvee^{\chi(U)}S^2$.
\end{cor}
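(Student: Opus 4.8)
The plan is to realize $U$, up to homotopy equivalence, as the target of a $2$-connected map from the model space $V=S^1\vee\bigvee^{\chi(U)}S^2$, and then to invoke Theorem \ref{2con}.

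First I would analyze $\pi_2U$ as a module over $\Lambda=\mathbb{Z}[\pi_1U]=\mathbb{Z}[\mathbb{Z}]$. Since $c.d.U\leq2$, the complex $C_*(\widetilde U)$ is chain homotopy equivalent to a finite projective $\Lambda$-complex of length $2$, which is a partial resolution of the augmentation module $\mathbb{Z}$; thus there is an exact sequence $0\to\Pi\to{P_2}\xrightarrow{\partial_2}{P_1}\xrightarrow{\partial_1}{P_0}\to\mathbb{Z}\to0$ of finitely generated $\Lambda$-modules with $\Pi\cong{H_2(\widetilde U)}\cong\pi_2U$ and $P_0,P_1,P_2$ free. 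Because $\mathbb{Z}$ has projective dimension $1$ over $\Lambda$ (the resolution $0\to\Lambda\xrightarrow{t-1}\Lambda\to\mathbb{Z}\to0$), the augmentation ideal $I=\mathrm{Ker}(P_0\to\mathbb{Z})$ is projective; then $0\to\mathrm{Ker}(\partial_1)\to{P_1}\to{I}\to0$ splits, so $\mathrm{Ker}(\partial_1)=\mathrm{Im}(\partial_2)$ is projective, and $0\to\Pi\to{P_2}\to\mathrm{Ker}(\partial_1)\to0$ splits, so $\Pi$ is finitely generated projective. As $\widetilde{K}_0(\Lambda)=0$ and finitely generated projective $\Lambda$-modules are free, $\Pi$ is free, and comparing $\Lambda$-ranks along the exact sequence (noting $\mathbb{Z}$ has rank $0$) gives $\mathrm{rk}_\Lambda\Pi=\mathrm{rk}\,P_2-\mathrm{rk}\,P_1+\mathrm{rk}\,P_0=\chi(U)$. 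Hence $\pi_2U\cong\Lambda^{\chi(U)}$; in particular $\chi(U)\geq0$.

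Next I would build the map. Choose $f_1:S^1\to{U}$ sending a generator to a generator of $\pi_1U$, and choose based maps $S^2\to{U}$ representing a $\Lambda$-basis $x_1,\dots,x_{\chi(U)}$ of $\pi_2U$; together these give $g:V=S^1\vee\bigvee^{\chi(U)}S^2\to{U}$. Then $\pi_1g$ is an isomorphism, and $\pi_2g$ carries the free $\Lambda$-basis of $\pi_2V$ given by the lifts of the sphere summands to the basis $x_1,\dots,x_{\chi(U)}$, hence is also an isomorphism; in particular $g$ is $2$-connected. Since $V$ is a $2$-complex, $c.d.V\leq2$, so Theorem \ref{2con} applies to $g$ and yields $\chi(V)\geq\chi(U)$, with equality if and only if $g$ is a homotopy equivalence. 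But $\chi(V)=\chi(S^1)+\chi(U)(\chi(S^2)-1)=\chi(U)$, so equality holds and $g$ is a homotopy equivalence, as required.

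The only point of real substance is the identification $\pi_2U\cong\Lambda^{\chi(U)}$ (equivalently, that the module of $2$-cycles is free); the rest is formal once Theorem \ref{2con} is available. One could instead deduce freeness from the fact that a rank-$n$ finitely generated module over the Noetherian domain $\Lambda$ generated by $n$ elements is free, but obtaining it from $\mathrm{pd}_\Lambda\mathbb{Z}=1$ as above seems cleanest and makes the rank count transparent.
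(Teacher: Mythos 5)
Your proof is correct and follows essentially the same route as the paper: identify $\pi_2U$ as a free $\mathbb{Z}[\mathbb{Z}]$-module of rank $\chi(U)$, use a generator of $\pi_1U$ and a basis of $\pi_2U$ to build a $2$-connected map from $S^1\vee\bigvee^{\chi(U)}S^2$, and conclude by Theorem \ref{2con}. The only difference is that the paper simply asserts the freeness and rank of the module of $2$-cycles, whereas you justify it via $\mathrm{pd}_\Lambda\mathbb{Z}=1$ and the triviality of $\widetilde{K}_0(\mathbb{Z}[\mathbb{Z}])$, which is a welcome elaboration rather than a departure.
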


\begin{proof}
Since $c.d.U\leq2$ and projective $\mathbb{Z}[\pi_1U]$-modules are free,
$C_*(\widetilde{U})$ is chain homotopy equivalent to 
a finite free $\mathbb{Z}[\pi_1U]$-complex $P_*$ of length $\leq2$,
and $\chi(U)=\Sigma(-1)^irank(P_i)$.
Since $\pi_2U\cong{H_2(U;\mathbb{Z}[\pi_1U])}$
is the module of 2-cycles in  $C_*(\widetilde{U})$, it is free of rank $\chi(U)$.
Let $f:{S^1\vee\bigvee^{\chi(U)}S^2\to{U}}$ be the map determined by a
generator for $\pi_1U$ and representatives of a basis for $\pi_2U$.
Then $f$ is a homotopy equivalence, by the theorem.
\end{proof}

Theorem 3.2 of \cite{FMGK} gives an analogue of Theorem \ref{2con} for maps between closed 4-manifolds.
The argument extends to the following relative version.

\begin{lemma}
\label{4man}
Let $f:(X_1,A_1)\to(X_2,A_2)$ be a map of orientable $PD_4$-pairs such that
$f|_{A_1}:A_1\to{A_2}$ is a homotopy equivalence. Then $f$ is a homotopy
equivalence of pairs if and only if $\pi_1f$ is an isomorphism and $\chi(X_1)=\chi(X_2)$.
\end{lemma}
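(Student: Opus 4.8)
The plan is to adapt the proof of \cite[Theorem 3.2]{FMGK} (the closed-manifold analogue of Theorem \ref{2con}), carrying the boundary data along. The ``only if'' implication is immediate, so I shall concentrate on the converse. Replacing $f$ by an inclusion of pairs via the double mapping cylinder, I may assume that $A_1\subseteq{A_2}$ and $X_1\subseteq{X_2}$ are finite subcomplexes with $A_1\hookrightarrow{A_2}$ a homotopy equivalence; since $\pi_1f$ is an isomorphism I may write $\pi=\pi_1X_1\cong\pi_1X_2$ and $\Lambda=\mathbb{Z}[\pi]$, and the lift $\widetilde{A_1}\hookrightarrow\widetilde{A_2}$ of the boundary inclusion to the universal covers is again a homotopy equivalence. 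Let $C_*=C_*(\widetilde{X_2},\widetilde{X_1})$ be the relative cellular chain complex, a finite complex of finitely generated free $\Lambda$-modules with $H_*(C_*)=H_*(\widetilde{X_2},\widetilde{X_1})$.

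First I would fix orientations compatibly. As $X_2$ is a connected $PD_4$-pair with non-empty boundary, $H_4(X_2)\cong{H^0(X_2,A_2)}=0$, so the connecting homomorphism $H_4(X_2,A_2)\to{H_3(A_2)}$ is injective; since $f|_{A_1}$ carries $\partial[X_1,A_1]$ to a generator of $H_3$ of each component of $A_2$, which must be $\pm\partial[X_2,A_2]$, we may orient $(X_2,A_2)$ so that $f_*[X_1,A_1]=[X_2,A_2]$. This degree-one property, together with equivariant Poincar\'e--Lefschetz duality for both pairs, yields an Umkehr chain map $f^{!}\colon{C_*(\widetilde{X_2})}\to{C_*(\widetilde{X_1})}$ with $f_*f^{!}\simeq\mathrm{id}$, hence a chain homotopy splitting $C_*(\widetilde{X_1})\simeq_\Lambda{C_*(\widetilde{X_2})}\oplus\mathcal{K}_*$ in which $\mathcal{K}_*$ is a finite free $\Lambda$-complex with homology the surgery kernel $K_j=\ker(H_j(\widetilde{X_1})\to{H_j(\widetilde{X_2})})$. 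Because $f|_{A_1}$ is a homotopy equivalence, the relative surgery kernels agree with the absolute ones and the duality of $(X_1,A_1)$ descends to a chain homotopy self-duality $\mathcal{K}_*\simeq\overline{\mathcal{K}^{4-*}}$.

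Next I would show that $\mathcal{K}_*$ is concentrated in the middle degree. The universal covers $\widetilde{X_i}$ are simply connected, so $K_0=K_1=0$, and $K_j=0$ for $j\geq4$ for dimension reasons; the self-duality then forces $H^j(\mathcal{K}_*)\cong\overline{K_{4-j}}=0$ for $j\neq2$ as well. Hence, by the standard lemma on a finite free complex whose homology and cohomology are concentrated in a single degree, $\mathcal{K}_*$ is chain homotopy equivalent to $K_2$ placed in degree $2$, with $K_2$ a finitely generated projective $\Lambda$-module; comparing Euler characteristics in $K_0(\Lambda)$ shows $K_2$ is stably free of rank $\chi(X_1)-\chi(X_2)$. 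Since group rings are weakly finite (Kaplansky), this rank is non-negative and vanishes precisely when $K_2=0$. So if $\chi(X_1)=\chi(X_2)$ then $K_2=0$, whence $f$ induces a homology isomorphism of the simply connected covers $\widetilde{X_1}\to\widetilde{X_2}$, hence a homotopy equivalence $X_1\simeq{X_2}$ by the theorems of Hurewicz and Whitehead, and therefore a homotopy equivalence of pairs since $f|_{A_1}$ is already one.

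I expect the main obstacle to be the construction in the second paragraph of the self-dual surgery-kernel complex $\mathcal{K}_*$ in the relative setting. In the closed case this is routine, but here one must check carefully that the boundary contributes nothing: that the Umkehr map is assembled consistently from the fundamental classes $[X_i,A_i]$, and that the hypothesis ``$f|_{A_1}$ is a homotopy equivalence'' is exactly what identifies the relative and absolute surgery kernels, so that the induced duality on $\mathcal{K}_*$ is nonsingular. Once this bookkeeping is in place, the remaining algebra is the same weak-finiteness argument already used for Theorem \ref{2con}.
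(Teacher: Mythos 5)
Your proposal is correct and takes essentially the same route as the paper: the paper's proof simply observes that the boundary homotopy equivalence makes $f$ a degree-one, hence 2-connected, map and then defers to the closed-manifold argument of \cite[Theorem 3.2]{FMGK}. That argument is precisely the one you spell out — splitting off a self-dual surgery-kernel complex concentrated in the middle degree, identifying it with a stably free module of rank $\chi(X_1)-\chi(X_2)$, and concluding via weak finiteness (Kaplansky) and the Hurewicz--Whitehead theorems.
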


\begin{proof}
Since $f|_{A_1}:A_1\to{A_2}$ is a homotopy equivalence,
$f$ has degree 1, and hence is 2-connected as a map from $X_1$ to $X_2$.
The rest  of the argument is as in \cite[Theorem 2]{FMGK}.
\end{proof} 

In certain cases we can identify the homotopy type of a pair.

\begin{lemma}
\label{asph}
Let $(X,A)$ and $(X',A')$ be pairs such that the inclusions 
${\iota_A:A\to{X}}$ and $\iota_{A'}:A'\to{X'}$ induce epimorphisms 
on fundamental groups.
If $X$ and $X'$ are aspherical and $f:A\to{A'}$ 
is a homotopy equivalence such that
$\pi_1f(\mathrm{Ker}(\pi_1\iota_A))=\mathrm{Ker}(\pi_1\iota_{A'})$
then $f$ extends to a homotopy equivalence of pairs $(X,A)\simeq(X',A')$.
\end{lemma}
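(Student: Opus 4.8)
The plan is to build the homotopy equivalence of pairs by the usual obstruction-theoretic mechanism, exploiting asphericity to kill all obstructions. First I would set $\pi = \pi_1X$ and $\pi' = \pi_1X'$, and write $\kappa = \mathrm{Ker}(\pi_1\iota_A)$, $\kappa' = \mathrm{Ker}(\pi_1\iota_{A'})$. Since $\iota_A$ and $\iota_{A'}$ are $\pi_1$-epimorphisms, we have $\pi \cong \pi_1A/\langle\langle\kappa\rangle\rangle$ and $\pi' \cong \pi_1A'/\langle\langle\kappa'\rangle\rangle$, so the hypothesis $\pi_1f(\kappa) = \kappa'$ (as \emph{normal} generators, which is what we need) shows that $\pi_1f$ descends to an isomorphism $\bar f_* : \pi \xrightarrow{\cong} \pi'$. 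Because $X$ and $X'$ are aspherical, $X \simeq K(\pi,1)$ and $X' \simeq K(\pi',1)$, so $\bar f_*$ is realized by a map $g : X \to X'$, unique up to homotopy, and $g$ is a homotopy equivalence.

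The remaining work is to homotope things so that $g$ restricts to $f$ on $A$. Consider the composite $g \circ \iota_A : A \to X'$ and compare it with $\iota_{A'} \circ f : A \to X'$. These two maps induce the same homomorphism $\pi_1A \to \pi'$ by construction (both equal $\bar f_* \circ \pi_1\iota_A = \pi_1\iota_{A'} \circ \pi_1 f$). Since the target $X'$ is aspherical, a map out of any complex is determined up to homotopy by its effect on $\pi_1$; hence $g \circ \iota_A \simeq \iota_{A'} \circ f$. Choosing such a homotopy $H : A \times I \to X'$ and using the homotopy extension property for the cofibration $A \hookrightarrow X$, I can adjust $g$ within its homotopy class to a map $g' : X \to X'$ that \emph{strictly} satisfies $g' \circ \iota_A = \iota_{A'} \circ f$. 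Then $g'$ is still a homotopy equivalence, and it is a genuine map of pairs $(X,A) \to (X',A')$ whose restriction to $A$ is $f$, hence a homotopy equivalence on $A$.

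Finally, to conclude that $g'$ is a homotopy equivalence \emph{of pairs}, I would invoke the relative Whitehead theorem: a map of pairs which is a weak equivalence on each of $A$, $A'$ and on $X$, $X'$ is a homotopy equivalence of pairs, provided the pairs have the homotopy type of CW-pairs (which may be arranged, or assumed, in the setting where this lemma is applied). Concretely, the five-lemma applied to the long exact sequences of homotopy groups of the pairs shows $\pi_*(X,A) \to \pi_*(X',A')$ is an isomorphism, so $g'$ induces isomorphisms on the relative homotopy groups and is therefore a homotopy equivalence of pairs.

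The main obstacle, and the only place any care is genuinely needed, is the bookkeeping around normal closures in the first paragraph: the hypothesis is phrased in terms of kernels of $\pi_1\iota_A$, and one must check that $\pi_1f$ carrying $\mathrm{Ker}(\pi_1\iota_A)$ onto $\mathrm{Ker}(\pi_1\iota_{A'})$ does indeed force the induced map on the quotients $\pi \to \pi'$ to be an isomorphism — this uses that $f$ is already a homotopy equivalence (so $\pi_1f$ is an isomorphism) together with the fact that an isomorphism carrying one normal subgroup onto another descends to an isomorphism of quotients. Everything after that is a formal consequence of asphericity and standard obstruction theory, requiring no computation.
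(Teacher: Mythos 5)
Your proof is correct and follows essentially the same route as the paper's: both arguments rest on the vanishing of all obstructions to mapping into the aspherical target $X'$, together with the observation that the hypothesis on kernels forces the induced map $\pi_1X\to\pi_1X'$ to be an isomorphism. The only cosmetic difference is that the paper extends $\iota_{A'}\circ f$ directly over the relative skeleta of $(X,A)$ (the obstructions lie in $H^{q+1}(X,A;\pi_q(X'))=0$ for $q\geq2$), whereas you first realize the quotient isomorphism by a map $X\to{X'}$ and then use the homotopy extension property to make it restrict to $f$ on $A$; your explicit appeal to the relative Whitehead theorem at the end supplies a detail the paper leaves implicit.
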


\begin{proof}
The fundamental group conditions imply that $g=\iota_{A'}f$ 
extends to a map from the relative 2-skeleton $X^{[2]}\cup{A}$.
The further obstructions to extending $g$ to a map from $X$ to $X'$ lie in $H^{q+1}(X,A;\pi_q(X'))$,
for $q\geq2$. Since $X'$ is aspherical these groups are 0.
The other hypotheses imply that any extension $h:X\to{X'}$ induces 
an isomorphism on fundamental groups,
and hence is a homotopy equivalence.
\end{proof}

\section{Cohomological dimension and fundamental group} 

Since the complementary regions are 4-manifolds with non-empty boundary
they are homotopy equivalent to 3-dimensional complexes.
However, when such a space is homotopically 2-dimensional remains 
an open question, in general.
We shall say that $c.d.W\leq{n}$ if the equivariant chain complex 
of the universal cover $\widetilde{W}$ is chain homotopy equivalent 
to a complex of projective $\mathbb{Z}[\pi_1W]$-modules 
of length $\leq{n}$.

In the next theorem we refer to the Bass Conjectures \cite{Ba76},
which we outline very briefly.
If $P$ is a finitely generated projective $\mathbb{Z}[\pi]$-module then
it is the image of an idempotent $n\times{n}$-matrix $A$ with entries in $\mathbb{Z}[\pi]$,
for some $n\geq0$.
The Kaplansky rank $\kappa(P)$ is the coefficient of 1 in the trace of $A$. 
The weak Bass Conjecture is the assertion that $\kappa(P)$ equals the 
``naive" rank  $\dim_\mathbb{Q}\mathbb{Q}\otimes_{\mathbb{Z}[\pi]}P$.
An analytic argument originally due to Kaplansky shows
that $\kappa(P)>0$ if $P\not=0$.

\begin{theorem}
\label{Hi17-thm5.1}
Let $W$ be a complementary region of an embedding of $M$ in $S^4$.
Then $c.d.W\leq2$ if and only if $j_{W*}=\pi_1j_W$ is an epimorphism.
If so, then $W$ is aspherical if and only if $c.d.\pi_1W\leq2$ and $\chi(W)=\chi(\pi_1W)$.  
\end{theorem}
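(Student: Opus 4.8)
The plan is to treat the two assertions in turn, working throughout with the equivariant chain complex of the universal cover $\widetilde W$ and $\Gamma=\mathbb{Z}[\pi_1W]$; recall that, as a compact $4$-manifold with non-empty boundary $M$, $W$ is homotopy equivalent to a finite complex of dimension $\leq 3$, so $C_*(\widetilde W)$ is chain homotopy equivalent to a finite free $\Gamma$-complex $F_*$ of length $3$.

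For the first equivalence I would first reduce $c.d.\,W\leq 2$ to the vanishing of $H^3(W;\Gamma)$. One direction is clear; conversely, if $H^3(W;\Gamma)=0$ then $\mathrm{Hom}_\Gamma(F_2,\Gamma)\to\mathrm{Hom}_\Gamma(F_3,\Gamma)$ is onto, hence split since $\mathrm{Hom}_\Gamma(F_3,\Gamma)$ is finitely generated free, and dualizing again shows that $\partial_3\colon F_3\to F_2$ is a split monomorphism; the contractible summand then splits off and $C_*(\widetilde W)$ becomes chain homotopy equivalent to a length-$2$ projective complex. Next, equivariant Poincar\'e--Lefschetz duality gives $H^3(W;\Gamma)\cong H_1(W,M;\Gamma)$, and I would read the latter off the homology exact sequence of the pair: $\widetilde W$ is simply connected, so $H_1(W;\Gamma)=H_1(\widetilde W)=0$, and $H_0(W,M;\Gamma)=0$, leaving $0\to H_1(W,M;\Gamma)\to H_0(M;\Gamma)\to H_0(W;\Gamma)\to 0$. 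Here $H_0(W;\Gamma)=\mathbb{Z}$, while the pullback to $M$ of the universal cover of $W$ is the covering of $M$ associated to $\mathrm{Ker}(j_{W*})$, with component set the coset space $\pi_1W/j_{W*}(\pi_1M)$, so $H_0(M;\Gamma)=\mathbb{Z}[\pi_1W/j_{W*}(\pi_1M)]$ and the displayed map is the augmentation; its kernel vanishes precisely when $j_{W*}$ is an epimorphism. Combining these equivalences proves the first assertion.

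Now suppose $c.d.\,W\leq 2$, so that $C_*(\widetilde W)$ is chain homotopy equivalent to a complex $P_*$ of finitely generated projective $\Gamma$-modules of length $2$, with $H_0(P_*)=\mathbb{Z}$, $H_1(P_*)=0$, and $Z:=\ker(\partial_2\colon P_2\to P_1)=H_2(P_*)\cong\pi_2W$; since $c.d.\,W\leq 2$ also forces $H_i(\widetilde W)=0$ for $i\geq 3$, asphericity of $W$ is equivalent to $Z=0$. If $W$ is aspherical then $P_*$ is a length-$2$ finitely generated projective resolution of $\mathbb{Z}$, whence $c.d.\,\pi_1W\leq 2$ and $\chi(W)=\chi(\pi_1W)$; this gives the ``only if'' direction. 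For the converse, note first that $\pi_1W$ is finitely presented (a quotient of a $3$-manifold group), hence $FP_2$, so that with $c.d.\,\pi_1W\leq 2$ the second syzygy of $\mathbb{Z}$ is a finitely generated projective module and $\mathbb{Z}$ has a length-$2$ finitely generated projective resolution $Q_*$; in particular $\chi(\pi_1W)$ is defined, as $\Sigma(-1)^i\kappa(Q_i)$ with $\kappa$ the Hattori--Stallings (Kaplansky) rank. Comparing the partial resolution $0\to Z\to P_2\to P_1\to P_0\to\mathbb{Z}\to 0$ with $Q_*$ via Schanuel's Lemma shows that $Z$ is finitely generated projective, and additivity of $\kappa$ gives $\kappa(Z)=\chi(W)-\chi(\pi_1W)$ (using that $\Sigma(-1)^i\kappa(P_i)$ is the chain homotopy invariant $\chi(W)$). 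Under the hypothesis $\chi(W)=\chi(\pi_1W)$ this is $0$, and Kaplansky's theorem that $\kappa(P)>0$ for $P\neq 0$ forces $Z=\pi_2W=0$, so $W$ is aspherical. This last step is, in effect, Theorem \ref{2con} applied at the chain level to a $2$-connected classifying map $W\to K(\pi_1W,1)$.

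The bookkeeping with the homology exact sequence and the Schanuel/$\kappa$-rank computation is routine, and the analytic input ($\kappa(P)>0$ for $P\neq 0$) is already available from the discussion preceding the theorem. The points requiring the most care are the two finiteness inputs: establishing the chain-level equivalence $c.d.\,W\leq 2\iff H^3(W;\Gamma)=0$ cleanly over the (generally non-Noetherian) ring $\Gamma$, and verifying that $c.d.\,\pi_1W\leq 2$ together with finite presentability really produces a \emph{finitely generated} length-$2$ projective resolution of $\mathbb{Z}$, so that $\chi(\pi_1W)$ and the $\kappa$-rank identity make sense. I expect the latter --- pinning down that $\pi_1W$ is of type $FP$ here --- to be the main technical obstacle.
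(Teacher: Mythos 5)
Your proposal is correct and follows essentially the same route as the paper: Poincar\'e--Lefschetz duality plus the homology sequence of $(\widetilde W,\partial\widetilde W)$ to identify the obstruction to $c.d.W\leq2$ with the connectedness of $\partial\widetilde W$ (equivalently surjectivity of $j_{W*}$), the Wall-finiteness splitting to get a length-$2$ projective complex, and Schanuel's Lemma together with the positivity of the Kaplansky rank for the asphericity criterion. The only cosmetic difference is that you compute $\kappa(\pi_2W)$ directly from the alternating sums of ranks, whereas the paper computes the naive rank of $\mathbb{Z}\otimes_\Gamma\pi_2W\cong H_2(W)$ and then invokes the weak Bass Conjecture for groups of cohomological dimension $\leq2$ \cite{Ec}; both reduce to the same analytic input.
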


\begin{proof}
Let $\Gamma=\mathbb{Z}[\pi_1W]$.
Then there are Poincar\'e-Lefshetz duality isomorphisms
$H_i(W;\Gamma)\cong{H^{4-i}}(W,\partial{W};\Gamma)$
and $H^j(W;\Gamma)\cong{H_{4-j}}(W,\partial{W};\Gamma)$,
for all $i,j\leq4$.

If $c.d.W\leq2$ then $H_i(\widetilde{W},\partial\widetilde{W})=H_i(W,\partial{W};\Gamma)=0$ for $i\leq1$,
and so $\partial\widetilde{W}$ is connected.
Therefore $j_{W*}$ must be surjective.
Conversely, if $j_{W*}$ is an epimorphism then we may assume that 
$W$ may be obtained from $M$ (up to homotopy)
by adjoining cells of dimension $\geq2$.
Hence $H_i(W,\partial{W};\Gamma)$ 
and $H^j(W,\partial{W};\Gamma)$ are 0 for $i,j\leq1$.
Therefore $H_q(W;\Gamma)=H^q(W;\Gamma)=0$ for all $q>2$,
and so $C_*=C_*(W;\Gamma)$ is chain homotopy equivalent to a complex $P_*$
of finitely generated projective $\Gamma$-modules of length at most 2,
by Wall's finiteness criteria \cite{Wl66}.

If $W$ is aspherical then $c.d.\pi_1W\leq2$,
and we must have $\chi(W)=\chi(\pi_1W)$.
Conversely, if $j_{W*}$ is onto then $\Pi=H_2(P_*)\cong\pi_2W$ 
is the only obstruction to asphericity.
If, moreover, $c.d.\pi_1W\leq2$ we may apply Schanuel's Lemma, 
to see that $P_*$ splits as 
\[
P_*=\Pi\oplus(Z_1\to{P_1}\to{P_0}),
\]
where $\pi$ is concentrated in degree 2,
$Z_1$ is the submodule of 1-cycles and $Z_1\to{P_1}\to{P_0}$ 
is a resolution of the augmentation module $\mathbb{Z}=H_0(P_*)$.
Now $\mathbb{Z}\otimes_\Gamma\Pi\cong{H_2(W)}$ 
is a free abelian group of rank $\chi(W)-\chi(\pi_1W)$.
If, moreover, $\chi(W)=\chi(\pi_1W)$ then $\Pi=0$, and so $W$ is aspherical,
since the weak Bass Conjecture holds for groups of cohomological dimension $\leq2$ \cite{Ec}.
\end{proof}

\begin{cor}
The augmentation ideal of the group ring $\mathbb{Z}[\pi_X]$ 
has a square presentation matrix.
\end{cor}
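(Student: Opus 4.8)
The statement is a corollary of Theorem~\ref{Hi17-thm5.1}, so the plan is to extract the claim about the augmentation ideal from the cohomological and structural facts already established there. The starting point is that $\pi_X$ is homologically balanced (Lemma~\ref{Hi17-lem2.1}(6)) and that $X$ is a compact $4$-manifold with non-empty boundary, hence homotopy equivalent to a finite $3$-complex. The key observation to exploit is the first assertion of Theorem~\ref{Hi17-thm5.1}: whether or not $j_{X*}$ is an epimorphism, we always have the partial projective resolution argument available once we pass to $X$ with a suitable CW structure. But for the \emph{augmentation ideal} statement we only need information at the level of $\pi_X$ itself, so the right move is to realise $\pi_X$ by a presentation coming from a handle (or CW) structure on $X$ and read off a presentation of the augmentation ideal $I(\mathbb{Z}[\pi_X])$ as a $\mathbb{Z}[\pi_X]$-module.

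\textbf{Key steps.} First I would choose a finite presentation $\mathcal{P}=\langle x_1,\dots,x_g \mid r_1,\dots,r_n\rangle$ of $\pi_X$ arising from a $2$-complex to which $X$ collapses; since $\pi_X$ is homologically balanced one can arrange (after stabilising the presentation by trivial generators/relators if necessary) that $n=g$, i.e.\ a \emph{balanced} presentation — here I would invoke the discussion in \S1.2 that a homologically balanced finitely presentable group need not literally have a balanced presentation, so instead I would argue directly from the chain complex: the cellular chain complex $C_*(\widetilde X)$ is a finite free $\mathbb{Z}[\pi_X]$-complex, and by Theorem~\ref{Hi17-thm5.1} (in the case $j_{X*}$ epic) it is chain homotopy equivalent to a length-$2$ projective complex $P_2 \to P_1 \to P_0$. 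Second, the augmentation ideal $I = I(\mathbb{Z}[\pi_X]) = \ker(\varepsilon\colon \mathbb{Z}[\pi_X]\to\mathbb{Z})$ is exactly the image of $P_1 \to P_0 = \mathbb{Z}[\pi_X]$, so from the resolution it has a presentation $P_2 \to P_1 \to I \to 0$ with $P_1, P_2$ stably free; since projectives over $\mathbb{Z}[\pi_X]$ may not be free, I would pass to $\mathbb{Z}[\pi_X]^a \to \mathbb{Z}[\pi_X]^b$ after adding free complements, getting a finite \emph{free} presentation of $I\oplus(\text{free})$. Third — and this is where the homological balance does its work — counting ranks via the Euler characteristic: $\chi(X) = \mathrm{rank}(P_0) - \mathrm{rank}(P_1) + \mathrm{rank}(P_2)$ (Kaplansky ranks, all equal to naive ranks since the weak Bass Conjecture holds for $c.d.\le 2$ groups, as used in the proof of Theorem~\ref{Hi17-thm5.1}), together with $\chi(X)=1$ when $\beta_1(M)$ forces it, or more robustly $\beta_1(X)=\beta_2(X)$ coming from homological balance, yields that after stabilisation the presentation matrix of $I$ is square.

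\textbf{Main obstacle.} The delicate point is that $c.d.\,W\le 2$ in Theorem~\ref{Hi17-thm5.1} requires $j_{W*}$ to be an epimorphism, which is \emph{not} automatic for an arbitrary embedding; so strictly the corollary as stated needs either the hypothesis that $j_{X*}$ is onto, or an argument that works regardless. I expect the cleanest resolution is to observe that the augmentation ideal statement is in fact purely about $\pi_X$ as a finitely presentable group, and follows already from the fact that $\pi_X$ is homologically balanced together with the standard fact (from \S1.2, the remark on the $2$-complex of a balanced presentation) that a homologically balanced group has a (possibly stabilised) presentation whose associated Fox-derivative matrix is square; the augmentation ideal of $\mathbb{Z}[G]$ for $G=\langle x_1,\dots,x_g\mid r_1,\dots,r_g\rangle$ with $g$ generators and $g$ relators is presented by the $g\times g$ Jacobian $(\partial r_i/\partial x_j)$. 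So the real content is reconciling ``homologically balanced'' with ``has a square presentation'' — this requires the observation that one may enlarge any finite presentation by adding, for each superfluous relator, a matching new generator together with the relator equating it to the identity, and that the resulting Jacobian can be made square precisely because $\beta_2(\pi_X;R)\le\beta_1(\pi_X;R)$ for all coefficient rings $R$ forces the deficiency to be $\ge 0$ (indeed $=0$ after stabilisation for this class of groups). I would present this last reconciliation carefully, as it is the one genuinely non-formal step; everything else is bookkeeping with Fox calculus and Schanuel's Lemma.
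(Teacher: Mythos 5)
Your main argument (the first two paragraphs) is essentially the paper's proof: since $c.d.X\le 2$, the chain complex $C_*(\widetilde X)$ is chain homotopy equivalent to a finite free $\mathbb{Z}[\pi_X]$-complex of length $2$, the augmentation ideal is presented by $C_2\to C_1$, and the rank count $\chi(X)\le 1$ (which holds by the labelling convention $\chi(X)\le\chi(Y)$, $\chi(X)+\chi(Y)=2$ --- you do not need $\beta_1(X)=\beta_2(X)$, which is false in general) shows the number of relators is at most the number of generators, so the matrix can be padded to a square one.

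The flaw is in your ``main obstacle'' paragraph. You are right that the corollary implicitly sits in the situation where $j_{X*}$ is an epimorphism (the paper's own proof begins ``If $c.d.X\le 2$\dots''), but your proposed fallback --- deriving the square presentation of the augmentation ideal from homological balance of $\pi_X$ alone --- cannot work. As the paper itself remarks immediately after this corollary, the property that the augmentation ideal has a square presentation matrix \emph{interpolates strictly between} having a balanced presentation and being homologically balanced; homological balance is the weaker condition and does not by itself yield a deficiency-$0$ module presentation of the augmentation ideal (this is exactly the ``gap between homological necessary conditions and combinatorial sufficient conditions'' flagged in \S 1.2). So the corollary genuinely uses the topology of $X$ (the length-$2$ free complex coming from $c.d.X\le2$ and $\chi(X)\le1$), not just the homology of $\pi_X$, and the correct reading is that the statement carries the standing hypothesis of Theorem \ref{Hi17-thm5.1} that $j_{X*}$ is onto.
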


\begin{proof}
If $c.d.X\leq2$ then $C_*(X;\mathbb{Z}[\pi])$ 
is chain homotopy equivalent to a finite free $\mathbb{Z}[\pi_X]$-complex
of length 2.
Hence the augmentation ideal of the group ring $\mathbb{Z}[\pi_X]$ 
has a square presentation matrix, since $\chi(X)\leq1$.
\end{proof}

The property that the augmentation ideal has a square presentation matrix
interpolates between $\pi_X$ having a balanced presentation
and being homologically balanced.
The stronger condition (having a balanced presentation) would hold if $X$ 
were homotopy equivalent to a finite 2-dimensional cell complex.

In our applications of Theorem \ref{Hi17-thm5.1} below,
$\pi_1W$ is either free, free abelian or the fundamental group 
of an aspherical surface. 
Hence all projective $\Gamma$-modules are stably free.
A stably free $\Gamma$-module $P$ is trivial if and only if
$\mathbb{Z}\otimes_\Gamma{P}=0$,  by an old result of Kaplansky
(see \cite{Ros84} for a proof), 
and we could use this instead of invoking \cite{Ec}.
A similar argument may be used to show that, in general,
$W$ is aspherical if and only if $c.d.\pi_1W\leq3$, 
$\pi_1W$ is of type $FF$, $\chi(W)=\chi(\pi_1W)$ and $\pi_2W=0$.

Let $K$ be the Artin spin of a non-trivial classical knot, 
and let $X=X(K)$ be the exterior of 
a tubular neighbourhood of $K$ in $S^4$.
Then $\pi_1X\cong\pi{K}$, the knot group, 
and $M=\partial{X}\cong{S^2}\times{S^1}$.
In this case $c.d.\pi{K}=2$ and $\chi(X)=\chi(\pi{K})=0$,
but $j_{X*}$ is not onto, and $X$ is not aspherical.
(Thus $c.d.X=3$.)

There are two essentially different partitions of the standard link 
representing $T_g\times{S^1}$ into moieties with $g+1$ and $g$ components.
For one, $X\cong{S^1}\times(\natural^g(D^2\times{S^1})$,
which is aspherical (as to be expected from Theorem 4);
for the other, $\pi_X\cong\mathbb{Z}^2*F(g-1)$, and $X$ is not aspherical.
(In neither case is $Y$ aspherical.)

\section{Aspherical embeddings}

Suppose that a complementary region $W$ is aspherical.
Then $c.d.\pi_W\leq3$, and $\pi_W$ cannot be a
$PD_3$-group since $H^3(W)=0$.
If the embedding is bi-epic then $c.d.\pi_W\leq2$, by Theorem \ref{Hi17-thm5.1}.
Conversely, if $W$ is aspherical and $c.d.\pi_W\leq2$ then $j_{W*}$ is an epimorphism, 
by equivariant Poincar\'e duality \cite{DH23}.
Thus if $X$ and $Y$ are each aspherical then
$c.d.\pi_X, c.d.\pi_Y\leq2\Leftrightarrow{j}$ {\it is bi-epic}.

Every homology sphere has an aspherical embedding, 
since it bounds a contractible 4-manifold.
Sums of aspherical embeddings are again aspherical.

If $W$ aspherical and $\pi_W$ is elementary amenable (but is not a $PD_3$-group)
then either $W\simeq*$ or $\pi_W\cong\mathbb{Z}$ or $BS(1,m)$ \cite{DH23}.
There are no other known (finitely presentable) restrained groups $G$ with $c.d.G=2$,
and certainly no others which are almost coherent 
and have infinite abelianization \cite[Theorem 2.6]{FMGK}.

If $G$ and $H$ are two groups of cohomological dimension 2 with
balanced presentations and isomorphic abelianizations then 
there is an embedding of a 3-manifold such that the complementary regions
$X$ and $Y$ are each homotopy equivalent to finite 2-complexes and
$\pi_X\cong{G}$ and $\pi_Y\cong{H}$ \cite{Li04}.  
Moreover $\chi(X)=\chi(\pi_X)=1$ and $\chi(Y)=\chi(\pi_Y)=1$.
If, moreover, $\beta_2(G)=\beta_1(G)$
and $\beta_2(H)=\beta_1(H)$ then $X$ and $Y$ are aspherical
\cite[Theorem 2.8]{FMGK}.

The simplest example of this type that we know of are based 
on the groups with presentations of the form
\[
\langle{a,b,c,d}\mid
~a^m=bab^{-1},~b^m=cbc^{-1},~c^m=dcd^{-1},~d^m=ada^{-1}\rangle.
\]
We may construct such groups by assembking copies of $BS(1,m)$ over
free subgroups.
(See \cite[Exercise 6.4.15]{Rob}.)
Hence they all have cohomological dimension 2.
In particular, $m=1$ gives the product $F(2)\times{F(2)}$,
while $m=2$ gives the Higman group $Hig$, with $H_1(Hig)=Hig^{ab}=1$.
Since the presentations are balanced, 
it follows that $H_2(Hig)=0$,
and so $Hig$ is superperfect.

Figure 6.3 is symmetric under quarter-turn rotations
around the axis through the central point.

\setlength{\unitlength}{1mm}
\begin{picture}(95,102)(-15.6,-12)

\put(40,40){$.$}
\put(10,79.1){$\vartriangleleft$}
\put(8,81){$x$}
\put(70,79.1){$\vartriangleleft$}
\put(68,81){$w$}
\put(10,-0.8){$\vartriangleright$}
\put(8,2){$y$}
\put(70,-0.8){$\vartriangleright$}
\put(68,2){$z$}

\put(4,40){$\bullet$}
\put(2,42){$b$}
\put(40.5,75){$\bullet$}
\put(38,77){$a$}
\put(40.5,3){$\bullet$}
\put(38,1){$c$}
\put(76.2,40){$\bullet$}
\put(78,38){$d$}

\put(40.5,40){\circle{10}}
\put(39.2,44){$\vartriangleleft$}
\put(46,35){$\mathbb{Z}/4\mathbb{Z}$}

\linethickness{1pt}
\put(4,80){\line(1,0){14}}
\put(1,63){\line(0,1){14}}
\put(4,60){\line(1,0){2}}
\put(8,60){\line(1,0){10}}
\qbezier(1,77)(1,80)(4,80)
\qbezier(1,63)(1,60)(4,60)
\qbezier(18,80)(21,80)(21,77)
\qbezier(18,60)(21,60)(21,63)

\put(21,64){\line(0,1){2}}
\put(21,67.1){\line(0,1){3.2}}
\put(21,71.5){\line(0,1){4}}
\put(21,7){\line(0,1){10}}
\put(21,3){\line(0,1){2}}

\put(5.5,20){\line(1,0){4}}
\put(10.7,20){\line(1,0){3.2}}
\put(15,20){\line(1,0){2}}

\put(61,14){\line(0,1){2}}
\put(61, 9.7){\line(0,1){3.2}}
\put(61,4.5){\line(0,1){4}}

\put(65,60){\line(1,0){2}}
\put(68.1,60){\line(1,0){3.2}}
\put(72.5,60){\line(1,0){4}}

\put(64,80){\line(1,0){14}}
\put(81,63){\line(0,1){14}}
\qbezier(78,80)(81,80)(81,77)
\qbezier(78,60)(81,60)(81,63)
\qbezier(61,77)(61,80)(64,80)

\put(4,0){\line(1,0){14}}
\put(1,3){\line(0,1){14}}
\qbezier(1,17)(1,20)(4,20)
\qbezier(1,3)(1,0)(4,0)
\qbezier(18,0)(21,0)(21,3)
\qbezier(18,20)(21,20)(21,17)

\put(64,20){\line(1,0){10}}
\put(76,20){\line(1,0){2}}
\put(81,3){\line(0,1){14}}
\put(64,0){\line(1,0){14}}
\qbezier(78,20)(81,20)(81,17)
\qbezier(78,0)(81,0)(81,3)
\qbezier(61,3)(61,0)(64,0)
\qbezier(61,17)(61,20)(64,20)

\put(61,75){\line(0,1){2}}
\qbezier(61,63)(61,60)(64,60)
\put(61,63){\line(0,1){10}}

\thinlines
\put(5,19){\line(0,1){40}}
\qbezier(5,19)(5,18)(6,18)
\qbezier(6,18)(7,18)(7,19)
\put(7,21){\line(0,1){16}}
\qbezier(7,37)(7,39)(9,39)
\qbezier(9,39)(11,39)(11,41)
\put(11,41){\line(0,1){9}}
\qbezier(11,50)(11,53)(14,53)
\put(14,53){\line(1,0){13}}
\qbezier(27,53)(30,53)(30,56)
\put(30,56){\line(0,1){11.5}}

\put(20,76){\line(1,0){40}}
\qbezier(20,76)(19,76)(19,75)
\qbezier(19,75)(19,74)(20,74)
\put(22,74){\line(1,0){16}}
\qbezier(38,74)(40,74)(40,72)
\qbezier(40,72)(40,70)(42,70)
\put(42,70){\line(1,0){9}}
\qbezier(51,70)(54,70)(54,67)
\put(54,54){\line(0,1){13}}
\qbezier(54,54)(54,51)(57,51)
\put(57,51){\line(1,0){11.5}}

\put(77,21){\line(0,1){40}}
\qbezier(77,61)(77,62)(76,62)
\qbezier(76,62)(75,62)(75,61)
\put(75,43){\line(0,1){16}}
\qbezier(75,43)(75,41)(73,41)
\qbezier(73,41)(71,41)(71,39)
\put(71,30){\line(0,1){9}}
\qbezier(71,30)(71,27)(68,27)
\put(55,27){\line(1,0){13}}
\qbezier(55,27)(52,27)(52,24)
\put(52,12.5){\line(0,1){11.5}}

\put(22,4){\line(1,0){40}}
\qbezier(62,4)(63,4)(63,5)
\qbezier(63,5)(63,6)(62,6)
\put(44,6){\line(1,0){16}}
\qbezier(44,6)(42,6)(42,8)
\qbezier(42,8)(42,10)(40,10)
\put(31,10){\line(1,0){9}}
\qbezier(31,10)(28,10)(28,13)
\put(28,13){\line(0,1){13}}
\qbezier(28,26)(28,29)(25,29)
\put(13.5,29){\line(1,0){11.5}}

\put(33,68){\line(1,0){17}}
\qbezier(50,68)(52,68)(52,66)
\put(52,52){\line(0,1){14}}
\qbezier(52,52)(52,49)(55,49)
\put(55,49){\line(1,0){14}}

\put(69,31){\line(0,1){17}}
\qbezier(69,31)(69,29)(67,29)
\put(53,29){\line(1,0){14}}
\qbezier(53,29)(50,29)(50,26)
\put(50,12){\line(0,1){14}}

\put(32,12){\line(1,0){17}}
\qbezier(32,12)(30,12)(30,14)
\put(30,14){\line(0,1){14}}
\qbezier(30,28)(30,31)(27,31)
\put(13,31){\line(1,0){14}}

\put(13,32){\line(0,1){17}}
\qbezier(13,49)(13,51)(15,51)
\put(15,51){\line(1,0){14}}
\qbezier(29,51)(32,51)(32,54)
\put(32,54){\line(0,1){14}}

\put(20,71){\line(1,0){9}}
\qbezier(20,71)(19.25,71)(19.25,70.25)
\qbezier(20,69.5)(19.25,69.5)(19.25,70.25)
\put(22,69.5){\line(1,0){6}}
\qbezier(28,69.5)(30,69.5)(30,67.5)

\put(72,52){\line(0,1){9}}
\qbezier(72,61)(72,61.75)(71.25,61.75)
\qbezier(70.5,61)(70.5,61.75)(71.25,61.75)
\put(70.5,53){\line(0,1){6}}
\qbezier(70.5,53)(70.5,51)(68.5,51)

\put(53,9){\line(1,0){9}}
\qbezier(62,9)(62.75,9)(62.75,9.75)
\qbezier(62,10.5)(62.75,10.5)(62.75,9.75)
\put(54,10.5){\line(1,0){6}}
\qbezier(54,10.5)(52,10.5)(52,12.5)

\put(10,19){\line(0,1){9}}
\qbezier(10,19)(10,18.25)(10.75,18.25)
\qbezier(11.5,19)(11.5,18.25)(10.75,18.25)
\put(11.5,21){\line(0,1){6}}
\qbezier(11.5,27)(11.5,29)(13.5,29)

\qbezier(10,28)(10,31)(13,31)
\qbezier(29,71)(32,71)(32,68)
\qbezier(72,52)(72,49)(69,49)
\qbezier(53,9)(50,9)(50,12)

\qbezier(5,61)(5,62)(6,62)
\qbezier(6,62)(7,62)(7,61)
\put(7,48){\line(0,1){13}}
\qbezier(7,48)(7,46)(9,46)
\put(9,46){\line(1,0){1}}

\qbezier(62,76)(63,76)(63,75)
\qbezier(63,75)(63,74)(62,74)
\put(49,74){\line(1,0){13}}
\qbezier(49,74)(47,74)(47,72)
\put(47,71){\line(0,1){1}}

\qbezier(77,19)(77,18)(76,18)
\qbezier(76,18)(75,18)(75,19)
\put(75,19){\line(0,1){13}}
\qbezier(75,32)(75,34)(73,34)
\put(72,34){\line(1,0){1}}

\qbezier(20,4)(19,4)(19,5)
\qbezier(19,5)(19,6)(20,6)
\put(20,6){\line(1,0){13}}
\qbezier(33,6)(35,6)(35,8)
\put(35,8){\line(0,1){1}}

\qbezier(13,19)(13, 18.25)(13.75,18.25)
\qbezier(13.75,18.25)(14.5,18.25)(14.5,19)
\put(14.5,19){\line(0,1){2}}
\qbezier(14.5,21)(14.5, 21.75)(15.25,21.75)
\qbezier(15.25,21.75)(16, 21.75)(16,21)

\qbezier(20,68)(19.25,68)(19.25,67.25)
\qbezier(19.25,67.25)(19.25,66.5)(20,66.5)
\put(20,66.5){\line(1,0){2}}
\qbezier(22,66.5)(22.75, 66.5)(22.75,65.75)
\qbezier(22.75,65.25)(22.75,65)(22,65)

\qbezier(69,61)(69,61.75)(68.25,61.75)
\qbezier(68.25,61.75)(67.5,61.75)(67.5,61)
\put(67.5,59){\line(0,1){2}}
\qbezier(67.5,59)(67.5,58.25)(66.75,58.25)
\qbezier(66.75,58.25)(66,58.25)(66,59)

\qbezier(62,12)(62.75,12)(62.75,12.75)
\qbezier(62.75,12.75)(62.75,13.5)(62,13.5)
\put(60,13.5){\line(1,0){2}}
\qbezier(60,13.5)(59.25,13.5)(59.25,14.25)
\qbezier(59.25,14.25)(59.25,15)(60,15)

\qbezier(16,19)(16,18.25)(16.75,18.25)
\qbezier(16.75,18.25)(17.5,18.25)(17.5,19)
\qbezier(20,65)(19.25,65)(19.25,64.25)
\qbezier(19.25,64.25)(19.25,63.5)(20,63.5)
\qbezier(66,61)(66,61.75)(65.25,61.75)
\qbezier(65.25,61.75)(64.5,61.75)(64.5,61)
\qbezier(62,15)(62.75,15)(62.75,15.75)
\qbezier(62.75,15.75)(62.75,16.5)(62,16.5)

\put(20,63.5){\line(1,0){8.5}}
\put(64.5,52.5){\line(0,1){8.5}}
\put(53.5,16.5){\line(1,0){8.5}}
\put(17.5,19){\line(0,1){8.5}}

\put(22,68){\line(1,0){6.5}}
\put(69,52.5){\line(0,1){6.5}}
\put(53.5,12){\line(1,0){6.5}}
\put(13,21){\line(0,1){6.5}}

\put(17.5,32){\line(0,1){10.5}}
\qbezier(17.5,42.5)(17.5, 46)(14,46)
\put(33,63.5){\line(1,0){10.5}}
\qbezier(43.5,63.5)(47,63.5)(47,67)
\put(64.5,37.5){\line(0,1){10.5}}
\qbezier(64.5,37.5)(64.5,34)(68,34)
\put(38.5,16.5){\line(1,0){10.5}}
\qbezier(38.5,16.5)(35,16.5)(35,13)

\put(17.1,-8){Figure 6.3. $X\cong{Y}\simeq{K(Hig,1)}$}

\end{picture}

Consider the embedding corresponding to the link in Figure 6.4, 
in which the strands in the box have $m$ full twists, 
and the central component represents the word $A=uvu^{-1}v^{-m}$ 
in the meridians $u,v$ for the other components.
When $m=0$ the link is the split union of an unknot and the Hopf link,
$M\cong{S^2}\times{S^1}$,
$X\cong{D^3}\times{S^1}$ and $Y\cong{S^2}\times{D^2}$.
When $m=1$ the link is the Borromean rings $Bo$,
and $X$ is a regular neighbourhood of the unknotted embedding of the torus
$T$ in $S^4$.
When $m=-1$, the link is $8^3_9$,
and $X$ is a regular neighbourhood of the unknotted embeding of the 
Klein bottle $Kb$ in $S^4$ with normal Euler number 0.
In general, $X$ is aspherical, $\pi_X\cong{BS(1,m)}$ and $\pi_Y\cong\mathbb{Z}/(m-1)\mathbb{Z}$.
(Note however that the boundary of a regular neighbourhood of the Fox 2-knot 
with group $BS(1,2)$ gives an embedding of $S^2\times{S^1}$ with
$\pi_X\cong{BS(1,2)}$ and $\chi(X)=0$, 
but this embedding is not bi-epic and $X$ is not aspherical.)

\setlength{\unitlength}{1mm}
\begin{picture}(90,45)(-26.2,8.4)

\put(-2,19.1){$\vartriangleright$}
\put(-4,22.1){$u$}
\put(63,19.1){$\vartriangleright$}
\put(61,22.1){$v$}
\put(29,46.2){$\vartriangleright$}
\put(27,48.2){$a$}
\put(48,33.4){$m$}
\put(17,33.3){$A=uvu^{-1}v^{-m}$}
\put(-15.9,34){$\bullet$}
\put(75.1,34){$\bullet$}

\thinlines
\put(-10,20){\line(1,0){18}}
\put(-15,25){\line(0,1){20}}
\put(-10,50){\line(1,0){15}}
\qbezier(-15,25)(-15,20)(-10,20)
\qbezier(-15,45)(-15,50)(-10,50)
\qbezier(5,50)(10,50)(10,45)
\qbezier(8,20)(10,20)(10,22)
\put(10,24){\line(0,1){18}}
\put(10,44){\line(0,1){1}}

\put(51,39){\line(0,1){7}}
\put(51,25){\line(0,1){4}}
\qbezier(51,25)(51,20)(56,20)
\put(56,20){\line(1,0){15}}
\qbezier(71,20)(76,20)(76,25)
\put(76,25){\line(0,1){20}}
\put(53,50){\line(1,0){18}}
\qbezier(51,48)(51,50)(53,50)
\qbezier(71,50)(76,50)(76,45)

\linethickness{1pt}
\qbezier(5,43.5)(5,47)(8.5,47)
\put(10,23){\line(1,0){33}}
\qbezier(5,28)(5,23)(10,23)
\put(5,28){\line(0,1){15.5}}

\put(9,43){\line(1,0){35}}
\qbezier(9,43)(8,43)(8,44)
\qbezier(8,44)(8,45)(9,45)

\put(11,47){\line(1,0){41}}
\qbezier(52,47)(53,47)(53,46)
\qbezier(52,45)(53,45)(53,46)
\put(11,45){\line(1,0){39}}

\put(48,28){\line(0,1){1}}
\qbezier(43,23)(48,23)(48,28)

\qbezier(44,43)(48,43)(48,39)
\qbezier(47,39)(49,39)(52,39)
\qbezier(47,29)(49,29)(52,29)
\qbezier(47,39)(47,34)(47,29)
\qbezier(52,39)(52,34)(52,29)

\put(9, 12){Figure 6.4.\quad $\pi_X\cong{BS(1,m)}$}

\end{picture}

If $W$ is aspherical and $\pi_W$ is a non-trivial amenable group then $\chi(W)=0$,
while if $\pi_W=F(r)$ for some $r>0$ then $\chi(W)\leq0$.
In either of these cases $W=X$,  by our convention on labelling the regions.
If the complementary region $Y$ is aspherical then $\chi(\pi_Y)>0$,
and so $\pi_Y$ is neither amenable nor free.

If $X$ is aspherical and $\pi_X\cong{F(r)}$ is free then $H_1(X)$ is torsion-free
and $H_2(X)=0$, and so $H_1(Y)=0$ and $H_2(Y)\cong\mathbb{Z}^r$.
This situation is realized by the standard embedding of $\#^r(S^2\times{S^1})$.
Are there examples with $Y$ also aspherical?

\section{Recognizing the simplest embeddings}

The simplest 3-manifolds to consider in the present context are perhaps 
the total spaces of $S^1$-bundles over orientable surfaces.
Most of those which embed have canonical ``simplest" embeddings.
We give some evidence that these may be characterized 
up to $s$-concordance by the conditions $\pi_X\cong\pi_1F$, 
where $F$ is the base, and $\pi_Y$ is abelian.

Suppose first that $M\cong{T_g}\times{S^1}$.
There is a canonical embedding $j_g:M\to{S^4}$, 
as the boundary of a regular neighbourhood of the standard 
smooth embedding $T_g\subset{S^3}\subset{S^4}$.
Let $X_g$ and $Y_g$ be the complementary components.
Then $X_g\cong{T_g}\times{D^2}$ and $Y_g\simeq{S^1}\vee\bigvee^{2g}{S^2}$,
and so $\pi_{Y_g}\cong\mathbb{Z}$.

We shall assume henceforth that $g\geq1$,
since embeddings of $S^2\times{S^1}$ and $S^3=M(0;(1,1))$ 
may be considered well understood.
Let $h$ be the image of the fibre in $\pi$.

\begin{lemma}
\label{Hi17-lem9.1}
Let $j:T_g\times{S^1}\to{S^4}$ be an embedding such that $\pi_X\cong\pi_1T_g$.
Then $X$ is $s$-cobordant rel $\partial$ to $X_g=T_g\times{D^2}$.
\end{lemma}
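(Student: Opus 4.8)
The strategy is to apply the surgery machinery of \S1.8 to the pair $(X,M)$, using $\pi_X\cong\pi_1T_g$ as the fundamental group of reference. Since $\pi_1T_g$ is a surface group it is good (it is coherent and restrained with a finite $K(\pi,1)$), so 4-dimensional TOP surgery is available over $\mathbb{Z}[\pi_X]$ and the $s$-cobordism theorem applies. First I would check that $X$ is aspherical and $c.d.\,\pi_X\le 2$ forces $X\simeq K(\pi_1T_g,1)\simeq T_g$: by Lemma \ref{Hi17-lem2.1} we have $\chi(X)=\chi(Y)=1$ since $\beta=\beta_1(M)=1$, and the inclusion $j_X$ is $\pi_1$-surjective (because $H_1(j_X)$ is onto, $H_1(M)\cong\mathbb{Z}$, and the surface group is Hopfian of deficiency $1$ so any epimorphism with $\pi_X\cong\pi_1T_g$ is forced), whence $c.d.\,X\le 2$ by Theorem \ref{Hi17-thm5.1}; then $\chi(X)=1=\chi(\pi_1T_g)$ gives $X$ aspherical, again by Theorem \ref{Hi17-thm5.1}. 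In particular $X\simeq T_g$, and the boundary inclusion $M\hookrightarrow X$ corresponds up to homotopy to the bundle projection $T_g\times S^1\to T_g$.

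Next I would identify the homotopy type of the pair. The standard model $X_g=T_g\times D^2$ has $\partial X_g=T_g\times S^1=M$, and the inclusion $M\hookrightarrow X_g$ is the same bundle projection up to homotopy. So there is a homotopy equivalence of pairs $(X,M)\to(X_g,M)$ restricting to the identity on $M$: this follows from Lemma \ref{asph}, taking $A=A'=M$, $X'=X_g$, both aspherical, with the kernel of $\pi_1M\to\pi_X$ equal to $\langle h\rangle$ equal to the kernel of $\pi_1M\to\pi_1X_g$, and $f=\mathrm{id}_M$. This gives a simple homotopy equivalence $f\colon(X,M)\to(X_g,M)$ with $f|_M=\mathrm{id}$ (simplicity is automatic since $Wh(\pi_1T_g)=0$ by Farrell–Jones for poly-surface, or by Waldhausen for the $\mathbb{Z}\times\pi_1T_g$ calculation — but really we only need $Wh$ of the surface group, which vanishes).

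Now I would run the surgery exact sequence for $(X_g,\partial X_g)$. By \S1.8, since $\pi_1X_g\cong\pi_1T_g$ is good, the $s$-cobordism structure set $\mathcal{S}^s_{TOP}(X_g,\partial X_g)$ equals $\mathcal{S}_{TOP}(X_g,\partial X_g)$, and $f$ defines a class in it. I want to show this class is the basepoint $\mathrm{id}_{X_g}$; that is exactly the assertion that $X$ is $s$-cobordant rel $\partial$ to $X_g$. For this I would use that $\mathcal{N}(X_g,\partial X_g)=[X_g/\partial X_g; G/TOP]$, and that the normal invariant $\nu(f)$ vanishes: since $X_g\simeq T_g$ and $X_g/\partial X_g$ has the homotopy type of a complex built from $T_g$ by collapsing the $S^1$-bundle boundary, one computes $H^2(X_g,\partial X_g;\mathbb{F}_2)$ and $H^4(X_g,\partial X_g;\mathbb{Z})$ and checks via the Sullivan–Wall characteristic-class formulae that the surgery-theoretic normal invariant of $f$ vanishes — here the key point is that $f$ restricted to the boundary is the identity, so all boundary contributions die, and $X_g$ carries no interesting middle cohomology (the intersection form on $H_2(X_g;\mathbb{Z})$ is zero by the observation preceding Lemma \ref{Hi17-lem2.1} applied to $M\subset S^4$, but more directly $H_2(X_g,\partial X_g)$ is concentrated away from where obstructions live). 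Once $\nu(f)=0$, exactness of the surgery sequence puts $[f]$ in the image of the $L_5(\mathbb{Z}[\pi_1T_g])$-action $\omega$; and I would conclude $[f]=[\mathrm{id}]$ either by computing that $L_5(\mathbb{Z}[\pi_1T_g])$ acts trivially on this structure set (the group $L_5$ of a surface group is well understood via Shaneson splitting, $L_5(\mathbb{Z}[\pi_1T_g])\cong L_5(\mathbb{Z})\oplus(\text{lower terms})$, and the relevant part acts through the boundary, which is fixed) or, more robustly, by the argument that any element of the structure set realized by a homotopy equivalence that is a homeomorphism on the boundary and has vanishing normal invariant is, after possibly crossing with an interval, trivial — giving the $s$-cobordism rel $\partial$ directly.

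\textbf{Main obstacle.} The delicate point is the last step: controlling the $L_5$-action on $\mathcal{S}^s_{TOP}(X_g,\partial X_g)$, equivalently showing $\nu_W$ is injective here. The cleanest route is probably to avoid computing $L_5$ entirely and instead argue that since $f|_M$ is the identity, $f$ extends to a normal map $(W;X,X_g)\to(X_g\times I;X_g\times\{0\},X_g\times\{1\})$ rel $\partial$ with trivial surgery obstruction in $L_5(\mathbb{Z}[\pi_1T_g])$ — the obstruction lies in the image of $L_5(\mathbb{Z})\to L_5(\mathbb{Z}[\pi_1T_g])$ (the higher pieces being detected on codimension-$\ge1$ strata which are already homeomorphisms), and $L_5(\mathbb{Z})=0$ — and then complete surgery by \cite[Theorem 11.3A]{FQ} to get the desired $s$-cobordism $W$ rel $\partial$. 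I expect that making precise the claim ``the surgery obstruction lands in the image of $L_5(\mathbb{Z})$'' via a transfer/splitting argument along the $T_g$-direction is where the real work lies.
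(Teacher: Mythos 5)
Your overall architecture is the same as the paper's (asphericity of $X$ via Theorem \ref{Hi17-thm5.1}, a homotopy equivalence of pairs rel $M$ via Lemma \ref{asph}, then surgery with trivial $L_5$-action and vanishing normal invariant), but two of your key steps rest on claims that are false or unjustified. First, for $M=T_g\times S^1$ we have $H_1(M)\cong\mathbb{Z}^{2g+1}$, so $\beta=2g+1$, not $1$, and $\chi(X)=1+\beta-2\beta_1(X)=2-2g$, not $1$. (The asphericity conclusion survives, since $2-2g=\chi(T_g)=\chi(\pi_1T_g)$, but your stated computation is wrong.) More seriously, your justification that $j_{X*}$ is onto does not work: $H_1(j_X)$ being onto does not force $\pi_1 j_X$ to be onto when $\pi_X$ is a surface group of genus $g>1$ (these groups are not nilpotent), and Hopficity is irrelevant here since the issue is whether a \emph{subgroup} whose image generates the abelianization must be the whole group. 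The paper's argument is genuinely needed: since $H^2(X)\cong H_1(Y)\cong\mathbb{Z}$ is generated by cup products of classes from $H^1(X)$, the image of $j_{X*}$ cannot be a free (infinite-index) subgroup of $\pi_1T_g$, hence has some finite index $d$, and then comparing the number of generators of a genus-$(d(g-1)+1)$ surface group with the $2g+1$ generators of $\pi$ forces $d=1$.

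Second, your claim that the normal invariant of $f$ vanishes because $X_g$ ``carries no interesting middle cohomology'' is not correct: $H^2(X_g,\partial X_g;\mathbb{F}_2)\cong H_2(X_g;\mathbb{F}_2)\cong H_2(T_g;\mathbb{F}_2)\cong\mathbb{F}_2$, so the normal invariant set is nontrivial and a direct characteristic-class computation does not obviously kill it. The paper instead invokes the integral Novikov conjecture for $\pi_1T_g$ (valid since $T_g$ carries a metric of non-positive curvature) together with asphericity of $X$ to conclude that $\sigma_4(X,\partial X)$ is injective; since $f$ is a homotopy equivalence its surgery obstruction vanishes, whence its normal invariant is trivial. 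Your alternative suggestion of a transfer/splitting argument to show the $L_5$-obstruction lands in $L_5(\mathbb{Z})=0$ is plausible in spirit but is exactly the part you leave unproved; the paper disposes of the $L_5$-action by citing \cite[Theorem 6.7 and Lemma 6.9]{FMGK}. So the route is the same, but the two load-bearing steps ($\pi_1$-surjectivity and vanishing of the normal invariant) are gaps as written.
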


\begin{proof}
Since $H^2(X)\cong\mathbb{Z}$ is a direct summand of $H^2(M)$ 
and is generated by cup products of classes from $H^1(X)$,
the image of $j_{X*}$ cannot be a free group.
Therefore it has finite index, $d$ say, 
and so $\chi(\mathrm{Im}(j_{X*}))=d\chi(F)$.
Since $\mathrm{Im}(j_{X*})$ is an orientable surface group,
it requires at least $2-d\chi(F)=2(gd-d+1)$ generators.
On the other hand, $\pi$ needs just $2g+1$ generators.
Thus if $g>1$ we must have $d=1$, and so $j_{X*}$ is onto.
This is also clear if $g=1$, 
for then $\pi_X\cong{H_1(X)}$ 
is a direct summand of $H_1(M)$.
In all cases, we may apply Theorem \ref{Hi17-thm5.1} to conclude that $X$ is aspherical.

Any homeomorphism from $\partial{X}$ to $\partial{X_g}$ which 
preserves the product structure extends to a homotopy equivalence of pairs 
$(X,\partial{X})\simeq(X_g,\partial{X_g})$.
Now $L_5(\pi_1T_g)$ acts trivially on the $s$-cobordism 
structure set $\mathcal{S}_{TOP}^s(X_g,\partial{X_g})$,
by Theorem 6.7 and Lemma 6.9 of \cite{FMGK}.
Since $T_g$ has a metric of constant non-positive curvature the integral Novikov
conjecture holds for $\pi_X\cong\pi_1T_g$, 
and since $X$ is aspherical it follows that $\sigma_4(X,\partial{X})$ is an isomorphism.
(See \cite{KL}, especially \SS 9.2, 20.2 and 24.1.)
Therefore $X$ and $X_g$ are TOP $s$-cobordant (rel $\partial$).
\end{proof}

If $\pi_Y\cong\mathbb{Z}$ then $\Sigma=Y\cup(T_g\times{D^2})$ is 1-connected,
since $\pi_Y$ is generated by the image of $h$,
and $\chi(\Sigma)=2$.
Hence $\Sigma$ is a homotopy 4-sphere, 
containing a locally flat copy of $T_g$ with exterior $Y$.

\begin{lemma}
\label{Hi17-lem9.2}
If there is a map $f:Y\to{Y_g}$ which extends a homeomorphism 
of the boundaries then $Y$ is homeomorphic to $Y_g$.
\end{lemma}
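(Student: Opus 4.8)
The plan is to apply the relative homotopy-recognition results of \S6.7 together with 4-dimensional surgery, following the same pattern as the proof of Lemma \ref{Hi17-lem9.1}. First I would set up the homotopy-theoretic picture. We have $Y_g\simeq{S^1}\vee\bigvee^{2g}S^2$, so $\pi_{Y_g}\cong\mathbb{Z}$ and $c.d.Y_g=2$ (indeed $Y_g$ is homotopy equivalent to a finite $2$-complex). The hypothesis is that $f:Y\to{Y_g}$ extends a homeomorphism $\partial{Y}\cong\partial{Y_g}\cong{T_g\times{S^1}}$. Since the boundary map is a homeomorphism, $f$ has degree $1$, hence is $2$-connected as a map of the underlying spaces; in particular $\pi_1f$ is onto. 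I claim $\pi_1f$ is an isomorphism: the only candidate kernel would die in $\pi_{Y_g}\cong\mathbb{Z}$, but $H_1(Y)\cong{H^2(X)}$ and $H_1(f)$ is an isomorphism by the long exact sequences of the pairs together with Poincar\'e--Lefschetz duality (or directly: $f$ being degree $1$ and the boundary inclusion surjective on $\pi_1$ forces $j_{Y*}$ onto, and then $\pi_1Y$ is a quotient of $\pi_1(T_g\times{S^1})$ with $H_1$ free of rank $1$, and any further relation would have to be detected on $H_1$). So $\pi_1f$ is an isomorphism and $\pi_Y\cong\mathbb{Z}$.

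Next I would promote $f$ to a homotopy equivalence of pairs. Since $\pi_Y\cong\mathbb{Z}$ and $j_{Y*}$ is onto, Theorem \ref{Hi17-thm5.1} gives $c.d.Y\leq2$; since $\chi(Y)=\chi(\pi_Y)=\chi(\mathbb{Z})=0$ would be needed for asphericity, but here we instead note $\chi(Y)=2g$ (from $\chi(X)+\chi(Y)=2$ and $\chi(X_g)=\chi(T_g)=2-2g$, using Lemma \ref{Hi17-lem9.1} to identify $\chi(X)$, or just from $\beta=\beta_1(M)=2g+1$ and the values forced by Lemma \ref{Hi17-lem2.1} once $\pi_X\cong\pi_1T_g$). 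By Corollary \ref{2concor}, a finite complex of cohomological dimension $\leq2$ with fundamental group $\mathbb{Z}$ and Euler characteristic $2g$ is homotopy equivalent to ${S^1}\vee\bigvee^{2g}S^2\simeq{Y_g}$. So $Y\simeq{Y_g}$ abstractly, and $f$, being $2$-connected between spaces homotopy equivalent to $2$-complexes of equal Euler characteristic, is itself a homotopy equivalence, by Theorem \ref{2con}. Thus $f:(Y,\partial{Y})\to(Y_g,\partial{Y_g})$ is a homotopy equivalence of pairs restricting to a homeomorphism on the boundary; this is a class in $\mathcal{S}^s_{TOP}(Y_g,\partial{Y_g})$.

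Finally I would run the surgery argument. The fundamental group is $\mathbb{Z}$, which is ``good'', poly-$\mathbb{Z}$, and torsion-free, so $Wh(\mathbb{Z})=0$, the $s$-cobordism theorem is available, and $\mathcal{S}^s_{TOP}=\mathcal{S}_{TOP}$. One wants $\mathcal{S}_{TOP}(Y_g,\partial{Y_g})$ to be a single point. The normal invariants $\mathcal{N}(Y_g,\partial{Y_g})=[Y_g,\partial{Y_g};G/TOP]$ are controlled by $H^2(Y_g,\partial{Y_g};\mathbb{F}_2)\oplus\mathbb{Z}$, and $L_*(\mathbb{Z}[\mathbb{Z}])$ is known (Shaneson splitting: $L_n(\mathbb{Z}[\mathbb{Z}])\cong{L_n(\mathbb{Z})\oplus{L_{n-1}(\mathbb{Z})}}$). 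As in Lemma \ref{Hi17-lem9.1}, what is actually needed is that $L_5(\mathbb{Z})$ (hence the relevant $L_5(\mathbb{Z}[\mathbb{Z}])$ piece) acts trivially on the structure set and that $\sigma_4$ is injective on the image of $\nu$; together with the identification $Y\simeq{Y_g}$ already in hand, this forces $Y$ to be homeomorphic to $Y_g$ (after the $s$-cobordism theorem collapses the $s$-cobordism produced by surgery to a product). The main obstacle I expect is the surgery-exact-sequence bookkeeping in dimension $4$: one must check that the relevant action $\omega$ is defined and trivial here (the \emph{ad hoc} situation flagged in \S1.8), and that the normal map provided by $f$ can be completed to an $s$-cobordism rel $\partial$ — this is exactly where one invokes ``goodness'' of $\mathbb{Z}$ and \cite[Theorem 11.3A]{FQ}, mirroring the proof of Lemma \ref{Hi96-5}. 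Everything upstream (the homotopy classification of $Y$) is elementary given \S6.7; the delicate point is purely the $4$-dimensional surgery input, which is, however, entirely standard for $\pi_1\cong\mathbb{Z}$.
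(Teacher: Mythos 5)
Your homotopy-theoretic preamble is essentially sound and runs parallel to the paper's: the paper computes $\pi_2Y\cong\Lambda^{2g}$ directly from the length-$2$ chain complex and deduces that $\pi_2f$ is an epimorphism (hence isomorphism) from degree $1$ and Poincar\'e--Lefschetz duality, whereas you route the same conclusion through Theorem \ref{2con} and Corollary \ref{2concor}; either way one gets that $f$ is a homotopy equivalence rel $\partial$ and so defines an element of $\mathcal{S}_{TOP}(Y_g,\partial{Y_g})$.

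The surgery step, however, has a genuine gap. You write that ``one wants $\mathcal{S}_{TOP}(Y_g,\partial{Y_g})$ to be a single point'' and expect the remaining work to be bookkeeping. It is not a single point: since $L_5(\mathbb{Z})$ acts trivially, the normal invariant gives a \emph{bijection} $\mathcal{S}_{TOP}(Y_g,\partial{Y_g})\cong{H^2(Y_g,\partial{Y_g};\mathbb{F}_2)}\cong{H_2(Y_g;\mathbb{F}_2)}\cong(\mathbb{F}_2)^{2g}$, which is nontrivial for $g\geq1$. So knowing that $f$ lies in the structure set only tells you that $Y$ is one of finitely many candidate manifolds, not that $Y\cong{Y_g}$. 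The missing idea --- and the actual content of the paper's proof --- is that every normal invariant is realized by a \emph{self}-homotopy equivalence of $(Y_g,\partial{Y_g})$: since $H_2(\mathbb{Z};\mathbb{F}_2)=0$, the Hurewicz map sends $\pi_2Y_g$ onto $H_2(Y_g;\mathbb{F}_2)$, so one can choose $\alpha\in\pi_2Y_g$ Poincar\'e dual to the normal invariant of $f$ and form the pinch map $f_\alpha=(id_{Y_g}\vee\alpha\eta^2)\circ{c}$ using the generator $\eta^2$ of $\pi_4(S^2)$; by \cite{CH90} this self-equivalence has the same normal invariant as $f$, whence $f$ itself is homotopic to a homeomorphism. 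Without this inertia argument the conclusion does not follow. (A minor additional slip: you say $\sigma_4$ should be ``injective on the image of $\nu$''; by exactness $\sigma_4$ \emph{vanishes} on the image of $\nu$ --- injectivity there is neither true nor what is needed.)
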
 

\begin{proof}
Let $t$ be a generator $t$ for $\pi_Y$.
Then $\mathbb{Z}[\pi_Y]\cong\Lambda=\mathbb{Z}[t,t^{-1}]$.
Let $\Pi=\pi_2Y$.
As in Theorem \ref{Hi17-thm5.1}, $H_q(Y;\Lambda)=H^q(Y;\Lambda)=0$ for $q>2$,
and the equivariant chain complex for $\widetilde{Y}$
is chain homotopy equivalent to a finite projective $\Lambda$-complex
\[
Q_*=\Pi\oplus(Z_1\to{Q_1}\to{Q_0})
\]
of length 2, with $Z_1\to{Q_1}\to{Q_0}$ a resolution of $\mathbb{Z}$.
The alternating sum of the ranks of the modules $Q_i$ is $\chi(Y)=2g$.
Hence $\Pi\cong\Lambda^{2g}$,
since projective $\Lambda$-modules are free. 
In particular, this holds also for $Y_g$.

If $f:Y\to{Y_g}$ restricts to a homeomorphism 
of the boundaries then $\pi_1f$ is an isomorphism.
Comparison of the long exact sequences of the pairs shows that 
$f$ induces an isomorphism $H_4(Y,\partial{Y})\cong
{H_4(Y,\partial{Y})}$, and so has degree 1.
Therefore $\pi_2f=H_2(f;\Lambda)$ is onto, by Poincar\'e-Lefshetz duality.
Since $\pi_2Y$ and $\pi_2Y_g$ are each free of rank $2g$,
it follows that $\pi_2f$ is an isomorphism,
and so $f$ is a homotopy equivalence, by the Whitehead and Hurewicz Theorems.

Thus $f$ is a homotopy equivalence {\it rel} $\partial$, by the HEP,
and so it determines an element of the structure set $\mathcal{S}_{TOP}(Y_g,\partial{Y_g})$.
The group $L_5(\mathbb{Z})$ acts trivially on the structure set, 
as in Lemma \ref{Hi17-lem9.1},
and so the normal invariant gives a bjection
$\mathcal{S}_{TOP}(Y_g,\partial{Y_g})\cong{H^2(Y_g,\partial{Y_g};\mathbb{F}_2)}
\cong{H_2(Y_g;\mathbb{F}_2)}$.
Since $H_2(\mathbb{Z};\mathbb{F}_2)=0$ the Hurewicz homomorphism maps $\pi_2Y_g$ onto $H_2(Y_g;\mathbb{F}_2)$.
Therefore there is an $\alpha\in\pi_2Y_g$ whose image in $H_2(Y_g;\mathbb{F}_2)$ 
is the Poincar\'e dual of the normal invariant of $f$.
Let $f_\alpha$ be the composite of the map from $Y_g$ to $Y_g\vee{S^4}$
which collapses the boundary of a 4-disc in the interior of $Y_g$ with 
$id_{Y_g}\vee\alpha\eta^2$, where $\eta^2$ is the generator of $\pi_4(S^2)$.
Then $f_\alpha$ is a self homotopy equivalence of $(Y_g,\partial{Y_g})$
whose normal invariant agrees with that of $f$ \cite{CH90}.
Therefore $f$ is homotopic to a homeomorphism $Y\cong{Y_g}$.
\end{proof}

However, finding such a map $f$ to begin with seems difficult.
Can we somehow use the fact that $Y$ and $Y_g$ are subsets of $S^4$?

Suppose now that $W$ is an $s$-cobordism {\it rel} $\partial$ 
from $X$ to $X_g$, and that $Y\cong{Y_g}$.
Since $g\geq1$ the 3-manifold $T_g\times{S^1}$ is irreducible 
and sufficiently large.
Therefore $\pi_0(Homeo(T_g\times{S^1}))\cong{Out}(\pi)$ \cite{Wd}.
If $g>1$ then $\pi_1T_g$ has trivial centre,
and so $Out(\pi)\cong
\left(\begin{smallmatrix}
Out(\pi_1T_g)&0\\ \mathbb{Z}^{2g}&\mathbb{Z}^\times
\end{smallmatrix}\right)$. 
It follows easily that every self homeomorphism of $T_g\times{S^1}$ 
extends to a self homeomorphism of $T_g\times{D^2}$.
Attaching $Y\times[0,1]\cong{Y_g\times[0,1]}$ to $W$ 
along $T_g\times{S^1}\times[0,1]$ gives an $s$-concordance from $j$ to $j_g$.

If $g=1$ then $X\cong{T\times{D^2}}$ and $Out(\pi)\cong{GL(3,\mathbb{Z})}$.
Automorphisms of $\pi$ are generated by those which may be realized 
by homeomorphisms of $T\times{D^2}$ together with those 
that may be realized by homeomorphisms of $Y_1$ \cite{Mo83}.
Thus if embeddings of $T$ with group $\mathbb{Z}$ are standard
so are embeddings of $S^1\times{S^1}\times{S^1}$ with both complementary components having abelian fundamental groups.

The situation is less clear for bundles over $T_g$ with Euler 
number $\pm1$.
We may construct embeddings of such manifolds
by fibre sum of an embedding of $T_g\times{S^1}$
with the Hopf bundle $\eta:S^3\to{S^2}$.
However, it is not clear how the complements change under this operation.
There are natural 0-framed links representing such bundle spaces.
Since the Whitehead link $Wh$ is an interchangeable 2-component link,
$M(1;(1,1))=M(Wh)$ has an embedding with $X\cong{Y}\simeq{S^1}\vee{S^2}$ 
and $\pi_X\cong\pi_Y\cong\mathbb{Z}$.
Is this embedding characterized by these conditions?
(Once again, it is enough to find a map which restricts 
to a homeomorphism on boundaries.)

The 3-manifold $\#^\beta(S^2\times{S^1})$ is the result of 
0-framed surgery on the $\beta$-component trivial link,
and so has embeddings realizing all the possibilities 
for Euler characteristics allowed by Lemma \ref{Hi17-lem2.1}.
In particular, it has an embedding with complementary regions
$X\cong\natural^\beta(D^3\times{S^1})$ and 
$Y\cong\natural^\beta(S^2\times{D^2})$. 
(In this case $\mu_M=0$.)

\begin{theorem}
Let $M=\#^\beta(S^2\times{S^1})$ and let $j:M\to{S^4}$ be an embedding such that $\chi(X)=1-\beta$ and $j_{X*}$ is an epimorphism.
Then $X\simeq\vee^\beta{S^1}$ and $Y\cong\natural^\beta(S^2\times{D^2})$.
\end{theorem}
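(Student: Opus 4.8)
The plan is to first pin down the homotopy type of $X$, then realize $Y$ as a specific bounded handlebody. Since $j_{X*}$ is an epimorphism, Theorem \ref{Hi17-thm5.1} applies: $c.d.X\leq2$, and by Lemma \ref{Hi17-lem2.1}(4) we have $\chi(X)=1-\beta$ exactly when $\beta_1(X)=\beta$ and $\beta_2(X)=0$; since $j_{X*}$ is onto $\pi=\pi_1M\cong F(\beta)$ (by Kneser/the standard computation $\pi_1(\#^\beta(S^2\times S^1))\cong F(\beta)$, recorded implicitly throughout the chapter), the target $\pi_X$ is a quotient of $F(\beta)$ with $\beta_1(\pi_X)=\beta$. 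A free group of rank $\beta$ has no proper quotient with first Betti number $\beta$, so $j_{X*}$ is an isomorphism and $\pi_X\cong F(\beta)$. Now $c.d.\pi_X=1\leq2$ and $\chi(X)=1-\beta=\chi(F(\beta))=\chi(\pi_X)$, so the second assertion of Theorem \ref{Hi17-thm5.1} gives that $X$ is aspherical, i.e. $X\simeq K(F(\beta),1)\simeq\bigvee^\beta S^1$.

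Next I would identify $Y$. By Lemma \ref{Hi17-lem2.1}(1), $\chi(Y)=2-\chi(X)=1+\beta$, and by Lemma \ref{Hi17-lem2.1}(3), $H_1(Y;R)\cong H^2(X;R)=0$ and $H_2(Y;R)\cong H^1(X;R)\cong R^\beta$ for all simple coefficients $R$; in particular $H_1(Y)=0$ and $H_2(Y)\cong\mathbb{Z}^\beta$. Since $j_{X*}$ is an isomorphism it is certainly a split monomorphism, so Lemma \ref{VKsplitmono} gives $\pi_Y=1$. Thus $Y$ is a simply-connected compact $4$-manifold with boundary $M=\#^\beta(S^2\times S^1)$, with $H_2(Y)\cong\mathbb{Z}^\beta$ free, and with trivial intersection pairing on $H_2(Y)$ (as noted before Lemma \ref{Hi17-lem2.1}, the map $H_2(Y)\to H_2(Y,M)$ is zero). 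Let $Y_0=\natural^\beta(S^2\times D^2)$; its boundary is also $M$, it is simply connected, $H_2(Y_0)\cong\mathbb{Z}^\beta$ with zero intersection form, and $\chi(Y_0)=1+\beta$. Choose a homeomorphism of the boundaries (any orientation-preserving self-homeomorphism of $M$ works as the identification, and the standard one suffices). Build a degree-one map of pairs $F\colon(Y,M)\to(Y_0,M)$ extending this boundary homeomorphism: collapse $Y$ onto a $2$-complex, map the $1$-skeleton trivially (as $\pi_Y=1$), and map $2$-cells via generators of $\pi_2Y_0\cong H_2(Y_0)$ matched with a basis of $\pi_2Y\cong H_2(Y)$ compatibly with the boundary identification — the zero intersection forms make the homological matching consistent. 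Then $\pi_1F$ is an isomorphism, $\chi(Y)=\chi(Y_0)$, and $F|_M$ is a homeomorphism, so by Lemma \ref{4man} $F$ is a homotopy equivalence of pairs.

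Finally I would upgrade the homotopy equivalence of pairs $(Y,M)\simeq(Y_0,M)$ to a homeomorphism by $1$-connected TOP surgery, exactly as in the proof of Aitchison's Theorem (Theorem \ref{aitch}): $F$ is a homotopy equivalence which restricts to a homeomorphism on the boundary, so it defines an element of $\mathcal{S}^s_{TOP}(Y_0,M)$, and by \cite[Theorem 11.6A]{FQ} (1-connected surgery in dimension 4) together with the triviality of the relevant normal invariant and $L$-group contribution — $\pi_Y=1$ so $L_5(\mathbb{Z})$ acts and the normal invariant in $H^2(Y_0,M;\mathbb{F}_2)\cong H_2(Y_0;\mathbb{F}_2)$ is killed by a self-equivalence realized by adding copies of $\eta^2$ as in the proof of Lemma \ref{Hi17-lem9.2} — we conclude $Y\cong Y_0=\natural^\beta(S^2\times D^2)$.

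The main obstacle I anticipate is the construction and verification of the degree-one normal map $F\colon(Y,M)\to(Y_0,M)$ extending a prescribed boundary homeomorphism: one must check that the homotopy-theoretic data (the isomorphism on $\pi_1$, the matching of $\pi_2$'s as modules over the trivial group, compatibility with the boundary and with the vanishing intersection forms) genuinely assemble into a well-defined map of pairs, and then that its surgery obstruction vanishes. Once $F$ is in hand the remaining steps are routine applications of Lemma \ref{4man}, the surgery exact sequence, and the normal-invariant argument already used for Theorem \ref{aitch} and Lemma \ref{Hi17-lem9.2}; the identification of $X$ is immediate from Theorem \ref{Hi17-thm5.1} and the rigidity of free groups under Betti-number-preserving quotients.
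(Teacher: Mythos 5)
The identification of $Y$ in your proposal follows essentially the paper's own route (produce a map of pairs $(Y,M)\to(\natural^\beta(S^2\times D^2),M)$ restricting to a homeomorphism of the boundaries, show it is a homotopy equivalence, and upgrade it to a homeomorphism by $1$-connected TOP surgery as in Theorem \ref{aitch}), and that half is sound modulo the construction of the map, which the paper also treats only briefly. The genuine gap is in your identification of $X$, specifically the sentence ``a free group of rank $\beta$ has no proper quotient with first Betti number $\beta$.'' This is false for $\beta\geq2$: the free nilpotent group $F(2)/\gamma_3F(2)$ (the group of $M(Wh)$, used repeatedly in Chapter 6) is a proper quotient of $F(2)$ with $\beta_1=2$, as is the free metabelian group $F(\beta)/F(\beta)''$ for any $\beta\geq 2$. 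Knowing that $j_{X*}$ is onto and that $\beta_1(\pi_X)=\beta$ therefore does not force $j_{X*}$ to be injective, and everything downstream of that sentence --- $\pi_X\cong F(\beta)$, $\chi(\pi_X)=1-\beta$, the asphericity of $X$, and $\pi_Y=1$ via Lemma \ref{VKsplitmono} --- currently rests on a false step. (Hopficity of free groups, which may be what you had in mind, only applies once you already know $\pi_X\cong F(\beta)$.)

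The injectivity of $j_{X*}$ is the real content of the first half and needs the residual nilpotence of free groups rather than a Betti-number count. Since $\chi(X)=1-\beta$ forces $\beta_1(X)=\beta$ and $\beta_2(X)=0$, and $H_2(X)$ is a direct summand of $H_2(M)\cong\mathbb{Z}^\beta$, we get $H_2(X)=0$. Stallings' theorem, as packaged in Lemma \ref{Hi17-lem4.1}, then shows that $j_{X*}$ induces isomorphisms $\pi/\gamma_k\pi\cong\pi_X/\gamma_k\pi_X$ for all $k$, so the composite $\pi\to\pi_X/\cap_{n\geq1}\gamma_n\pi_X$ is a monomorphism because $F(\beta)$ is residually nilpotent \cite[6.1.10]{Rob}. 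Hence $j_{X*}$ is injective, and together with your surjectivity hypothesis it is an isomorphism; from that point the remainder of your argument goes through.
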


\begin{proof}
Since $\pi\cong{F(\beta)}$ is a free group and $H_2(X)=0$, 
the  homomorphism from $\pi$ to $\pi_X/\cap_{n\geq1}\gamma_n\pi_X$ is a monomorphism, by Lemma  \ref{Hi17-lem4.1}
and the residual nilpotence of free groups \cite[6.10.1]{Rob}.
Hence  $j_{X*}$ is an isomorphism,  
so $\pi_Y=1$,  by Lemma \ref{VKsplitmono}, and $j$ is biepic.
Therefore $c.d.X\leq2$ and $c.d.Y\leq2$, by Theorem \ref{Hi17-thm5.1}.
Since $\chi(X)=1-\beta$ it follows easily that $X$ is aspherical,
and hence that $X\simeq\vee^\beta{S^1}$.

The inclusion of  $M=\#^\beta(S^2\times{S^1})$ into
$\natural^\beta(S^2\times{D^2})\simeq\vee^\beta{S^2}$ induces an
isomorphism $H^2(\vee^\beta{S^2})\cong{H^2(M)}$,
and we see easily that it extends to a map $g:(Y,M)\to
(\natural^\beta(S^2\times{D^2}),\#^\beta(S^2\times{S^1})$.
Since $Y$ is 1-connected, $c.d.Y\leq2$ and $H^2(g)$ is an isomorphism,
$g$ is a homotopy equivalence.
Moreover, $w_2(Y)=0$ and $Y$ has signature 0, since it is a subset of $S^4$.
As in Theorem \ref{aitch},
any such map is homotopic {\it rel} $ \partial$ to a homeomorphism, 
by 1-connected surgery, and so  $Y\cong\natural^\beta(S^2\times{D^2})$.
\end{proof}

The conclusion may be strengthened slightly, 
to show that $(X,M)$ is $s$-cobordant {\it rel} $\partial$ to 
$(\natural^\beta(D^3\times{S^1}),\#^\beta(S^2\times{S^1}))$.
(See \cite[Theorem 11.6A]{FQ}.)
Every self-homeomorphism of $\#^\beta(S^2\times{S^1})$ extends across
$\natural^\beta(D^3\times{S^1})$, 
and so $j$ is $s$-concordant to the standard embedding $j_{\beta{U}}$,
where $\beta{U}$ is the trivial $\beta$-component link. 

We remark that if $M=M_1\#{M_2}$ is a proper connected sum 
of 3-manifolds which embed in $S^4$ and one of the
summands has embeddings with differing values of $\chi(X)$
then so does $M$.

\chapter{Abelian embeddings}

We begin this chapter with some observations on restrained embeddings, 
and then narrow our focus to abelian embeddings.
Homology 3-spheres have essentially unique abelian embeddings
(although they may have other embeddings).
This is also known for $S^2\times{S^1}$ and $S^3/Q(8)$,
by results of Aitchison \cite{Ru80} and Lawson \cite{La84},
respectively.
In Theorems \ref{neutral} and \ref{homhandle} below we show that 
if $M$ is an orientable homology handle
(i.e., such that $H_1(M)\cong\mathbb{Z}$)
then it has an abelian embedding if and only if $\pi_1M$ 
has perfect commutator subgroup, 
and then the abelian embedding is essentially unique.
(There are homology handles which do not embed in $S^4$ at all!)
The 3-manifolds obtained by 0-framed surgery 
on 2-component links with unknotted components
always have abelian embeddings, 
and the complementary regions for such embeddings 
are homotopy equivalent to standard 2-complexes.
These shall be our main source of examples.
In particular,
we shall give an example in which $X\simeq{Y}\simeq{S^1\vee{S^2}}$,
but the pairs $(X,M)$ and $(Y,M)$ are not homotopy equivalent.
We do not yet have examples of a 3-manifold with several 
inequivalent abelian embeddings.

\section{Restrained embeddings}

With our present understanding, the application of surgery in dimension 4 
is limited to situations where the relevant fundamental group 
is in the class $SA$ \cite{FT95}.
In particular, all such groups are restrained.

\begin{theorem}
\label{biepic restrained}
Let $L$ be a $0$-framed bipartedly trivial link and let
$j:M\to{S^4}$ be the associated bi-epic embedding.
Suppose that $\pi_X$ and $\pi_Y$ are restrained.

If $\beta=\beta_1(M;\mathbb{Q})$ is odd then $\chi(X)=0$ and $\chi(Y)=2$,
and $X$ is aspherical.
If, moreover,  $\pi_X$ is almost coherent or elementary amenable then
$\pi_X\cong\mathbb{Z}$ or $BS(1,m)$, for some $m\not=0$,
and $\beta=1$ or $3$.

If $\beta$ is even then $\chi(X)=1$, and so $\pi_X$ has a balanced presentation, since $X$ is homotopy equivalent to a finite $2$-complex.
Similarly for $\pi_Y$.
\end{theorem}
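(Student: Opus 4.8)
The plan is to exploit the bipartedly trivial structure together with the constraints from Lemma \ref{Hi17-lem2.1} and the restrained hypothesis. First I would recall that because $L$ is $0$-framed and bipartedly trivial, each complementary region $W$ ($=X$ or $Y$) has a handle decomposition with one $0$-handle, some number of $1$-handles (the dotted components of one moiety) and some number of $2$-handles (the remaining components, all $0$-framed), and no handles of index $\geq3$. Hence $W$ is homotopy equivalent to a finite $2$-complex, and $\pi_W$ has the corresponding finite presentation $\mathcal{P}_W$ with $g_W$ generators and $r_W$ relators, so $\chi(W)=1-g_W+r_W$. Combining this with $\chi(X)\leq\chi(Y)$ and the parity/bounds of Lemma \ref{Hi17-lem2.1}(5), namely $1-\beta\leq\chi(X)\leq\chi(Y)\leq1+\beta$ and $\chi(X)\equiv\chi(Y)\equiv1+\beta\pmod 2$, the only remaining work is to eliminate the extreme value $\chi(X)=1-\beta$ (equivalently $\chi(Y)=1+\beta$) in the case $\beta$ even, and to pin down $\chi(X)=0$, $\chi(Y)=2$ in the odd case.

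For $\beta$ even: if $\chi(X)=1-\beta$ then $\beta_1(X)=\beta$ and $\beta_2(X)=0$ by Lemma \ref{Hi17-lem2.1}(4), so $H_1(X)$ is free of rank $\beta$ and $H_2(X)=0$; since $j_{X*}$ is onto (bi-epicness) and $H_2(X)=0$, Lemma \ref{Hi17-lem4.1} (together with Stallings' theorem) forces $\pi/\gamma_k\pi\cong F(\beta)/\gamma_kF(\beta)$ for all $k$, and dually $\mu_M=0$ and the lower central series quotients of $\pi_X$ are those of $F(\beta)$. But a nontrivial restrained group $\pi_X$ surjecting onto $F(\beta)$ with $\beta\geq1$ would contain a noncyclic free subgroup as soon as $\beta\geq2$, and $\beta$ even nonzero means $\beta\geq2$; the only escape is $\beta=0$, contradicting $\beta$ even and the assumption $\chi(X)=1-\beta$ being extreme and distinct from the generic value. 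When $\beta=0$ there is nothing to prove since then $\chi(X)=\chi(Y)=1$ automatically by Lemma \ref{Hi17-lem2.1}. Thus for $\beta$ even we must have $\chi(X)\geq 3-\beta$, and by symmetry $\chi(Y)\leq \beta-1$; the parity condition and $\chi(X)\leq\chi(Y)$ then squeeze both to $\chi(X)=\chi(Y)=1$. Hence $g_X=r_X$ and $g_Y=r_Y$, so the presentations $\mathcal{P}_X$ and $\mathcal{P}_Y$ read off the handle decompositions are balanced, and since $X$ and $Y$ are homotopy equivalent to finite $2$-complexes the groups $\pi_X$, $\pi_Y$ are finitely presentable with balanced presentations, as claimed.

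I expect the main obstacle to be the careful exclusion of the extreme Euler characteristic value using restrainedness: one must combine the nilpotent-quotient rigidity (Lemma \ref{Hi17-lem4.1}) with the structure of restrained groups — specifically, that a restrained group cannot surject onto a free group of rank $\geq2$, and that a restrained group of deficiency $1$ is an ascending HNN extension (the facts quoted in \S1.2 and used in the surgery-free part of the theorem). Everything else (the handle/$2$-complex identification, $\chi$ arithmetic, balancedness) is bookkeeping once that step is in hand. For the odd case I would additionally note that $\chi(X)\equiv 1+\beta\equiv 0\pmod 2$ already forces $\chi(X)$ even, and the same exclusion of $\chi(X)=1-\beta$ (which would again make $\pi_X$ a nontrivial restrained group surjecting onto $F(\beta)$ with $\beta\geq1$ odd, hence $\beta=1$ only, but then $\chi(X)=0$ anyway) leaves $\chi(X)=0$, $\chi(Y)=2$; asphericity of $X$ then follows from Theorem \ref{Hi17-thm5.1} since $j_{X*}$ is onto and $\chi(X)=0=\chi(\pi_X)$ when $\pi_X$ is restrained of cohomological dimension $\leq 2$, with the further identification $\pi_X\cong\mathbb{Z}$ or $BS(1,m)$ coming from the classification of almost coherent or elementary amenable restrained groups of cohomological dimension $2$ cited in the text.
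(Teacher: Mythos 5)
There is a genuine gap in your exclusion of negative Euler characteristics. You only rule out the single extreme value $\chi(X)=1-\beta$ and then claim that "$\chi(X)\geq3-\beta$ and $\chi(Y)\leq\beta-1$" together with parity and $\chi(X)\leq\chi(Y)$ squeeze everything to $\chi(X)=\chi(Y)=1$ (resp.\ $0$ and $2$). This squeeze fails as soon as $\beta\geq4$: for $\beta=4$ the values $\chi(X)=-1$, $\chi(Y)=3$ satisfy $\chi(X)\geq3-\beta$, $\chi(Y)\leq\beta-1$, $\chi(X)+\chi(Y)=2$, $\chi(X)\leq\chi(Y)$ and the parity condition, so they are not excluded by anything you have said. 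The paper closes this in one line using the fact (quoted in \S1.2, from \cite{BP78}) that restrained groups have deficiency $\leq1$: since each complementary region $W$ of a bipartedly trivial $0$-framed embedding is homotopy equivalent to a finite $2$-complex, $\chi(W)\geq1-def(\pi_W)\geq0$, and then $0\leq\chi(X)\leq\chi(Y)$, $\chi(X)+\chi(Y)=2$ and the parity of $1+\beta$ immediately give $\chi(X)=0,\chi(Y)=2$ or $\chi(X)=\chi(Y)=1$. You cite the deficiency facts in passing but never actually deploy the bound $def\leq1$, which is the one step that does the work.

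Your mechanism for excluding even the extreme value is also unsound. Lemma \ref{Hi17-lem4.1} (with Stallings' theorem) yields isomorphisms of the lower central series quotients $\pi/\gamma_k\pi\cong F(\beta)/\gamma_kF(\beta)$; it does \emph{not} produce an epimorphism $\pi_X\to F(\beta)$, and the paper explicitly remarks after that lemma that there are such examples for which $\pi$ admits no epimorphism onto $F(\beta)$. So the claim that a restrained $\pi_X$ "surjecting onto $F(\beta)$" would contain a non-cyclic free subgroup rests on a surjection you have not established. The remaining pieces of your argument (the handle/$2$-complex identification, the balancedness of the presentations when $\chi=1$, and the appeal to \cite[Theorem 2.5]{FMGK} and \cite[Corollary 2.6.1]{FMGK} for asphericity and the identification of $\pi_X$ as $\mathbb{Z}$ or $BS(1,m)$, from which $\beta=\beta_1(X)+\beta_2(X)=1$ or $3$) are in order once the Euler characteristic computation is repaired.
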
  

\begin{proof}
Since the embedding derives from a bipartedly trivial
link the complementary regions are each homotopy equivalent to finite 2-complexes.
Therefore $\chi(X),\chi(Y)\geq0$, since $\pi_X$ and $\pi_Y$ are each restrained
\cite{BP78}.
Hence $\chi(X)$ and $\chi(Y)$ are determined by the parity of $\beta$,
since $0\leq\chi(X)\leq\chi(Y)\leq2$ and $\chi(X)\equiv\chi(Y)\equiv1+\beta$ 
{\it mod} (2).

If $\chi(X)=0$ and $\pi_X$ is restrained then $X$ is aspherical,
by \cite[Theorem 2.5]{FMGK}.
If, moreover, $\pi_X$ is elementary amenable or almost coherent 
then $\pi_X\cong\mathbb{Z}$ or $BS(1,m)$ for some $m\not=0$, 
by \cite[Corollary 2.6.1]{FMGK}.
Hence $\beta=\beta_1(X;\mathbb{Q})+\beta_2(X;\mathbb{Q})=1$,
if $\pi_X\not\cong{BS(1,1)}=\mathbb{Z}^2$,
and $\beta=3$ if $\pi_X\cong\mathbb{Z}^2$.

If $\beta$ is even then $\chi(X)=1$, 
and so  $\pi_X$ has a balanced presentation.
Similarly for $\pi_Y$.
\end{proof}

Does the conclusion still hold for any bi-epic restrained embedding?

There are examples of each type. (See below).
There is also a partial converse. 
If $\chi(X)=0$ and $\pi_X\cong{BS(1,m)}$ for some $m\not=0$ then 
$X$ is aspherical and $j_{X*}$ is an epimorphism, by Theorem \ref{Hi17-thm5.1}.

The fact that the higher $L^2$ Betti numbers of amenable groups vanish
give a simple but useful consequence.
(See \cite{Lue} for details of the von Neumann algebra 
$\mathcal{N}(\pi_X)$ invoked below.)

\begin{lemma}
\label{L2}
If $c.d.X\leq2$ and $\beta^{(2)}_1(\pi_X)=0$ then 
either $\chi(X)=0$ and $X$ is aspherical or $\chi(X)>0$.
\end{lemma}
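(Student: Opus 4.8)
The statement to prove is Lemma \ref{L2}: if $c.d.X\leq2$ and $\beta^{(2)}_1(\pi_X)=0$, then either $\chi(X)=0$ and $X$ is aspherical, or $\chi(X)>0$. The plan is to compute $\chi(X)$ via $L^2$-Betti numbers and exploit the fact that $c.d.X\leq2$ forces the equivariant chain complex of $\widetilde X$ to have a short projective resolution, so that $\pi_2 X$ is the only obstruction to asphericity (exactly as in the proof of Theorem \ref{Hi17-thm5.1}). First I would record the $L^2$-Euler formula $\chi(X)=\sum_q(-1)^q\beta_q^{(2)}(X)=\beta_0^{(2)}(X)-\beta_1^{(2)}(X)+\beta_2^{(2)}(X)$, valid since $X$ is a finite complex (it is homotopy equivalent to a finite complex as a compact manifold, and $c.d.X\le2$ means the relevant chain complex is finite projective of length $2$, so $\beta_q^{(2)}(X)=0$ for $q>2$).

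Next I would identify the three terms. Since $\pi_X$ is infinite (if $\pi_X$ were finite then, being a $3$-manifold group quotient appearing as a complementary region group, one checks $c.d.X\le2$ forces $\pi_X=1$ and $\chi(X)\ge1>0$, handled separately — or more simply, if $\pi_X=1$ then $X$ is $1$-connected with $c.d.X\le2$, hence $X\simeq\vee^{\chi(X)-1+1}S^2$ wait: $X\simeq\bigvee S^2$ and $\chi(X)=1+\mathrm{rk}\,\pi_2X\ge1$, so the conclusion $\chi(X)>0$ holds unless $X\simeq *$, when $\chi(X)=1>0$ still; in all these cases $\chi(X)>0$), we have $\beta_0^{(2)}(X)=0$. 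By hypothesis $\beta_1^{(2)}(\pi_X)=0$, and since $c.d.X\le2$ the natural map $\pi_1X\to\pi_1$ realizing $X$ over its fundamental group gives $\beta_1^{(2)}(X)=\beta_1^{(2)}(\pi_X)=0$ (the first $L^2$-Betti number of a space with $\pi_1=G$ and $c.d.\le2$ agrees with that of $G$, because the $2$-skeleton inclusion $X^{[2]}\hookrightarrow K(\pi_X,1)$ is $2$-connected up to adding $2$-cells and $L^2$-homology in degree $\le1$ only sees the $2$-skeleton data — or cite \cite{Lue}). Therefore $\chi(X)=\beta_2^{(2)}(X)\ge0$. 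If $\chi(X)>0$ we are done, so assume $\chi(X)=0$; then $\beta_2^{(2)}(X)=0$.

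Now I would invoke the structure from $c.d.X\le2$: the equivariant chain complex $C_*(\widetilde X)$ is chain homotopy equivalent to a finite projective $\mathbb{Z}[\pi_X]$-complex $P_*$ of length $2$, which by Schanuel's lemma splits as $P_*=\Pi\oplus(Z_1\to P_1\to P_0)$ with $\Pi=\pi_2X$ concentrated in degree $2$ and $Z_1\to P_1\to P_0$ a resolution of $\mathbb Z$. The key point is that $\Pi$ is a projective (hence, in the cases of interest, stably free) $\mathbb{Z}[\pi_X]$-module, and $\beta_2^{(2)}(X)=\dim_{\mathcal N(\pi_X)}(\mathcal N(\pi_X)\otimes_{\mathbb Z[\pi_X]}\Pi)$ equals the von Neumann rank of $\Pi$. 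A nonzero projective module over a group ring has strictly positive von Neumann dimension (the von Neumann rank of a projective module given by an idempotent matrix $A$ is the real number $\mathrm{tr}_{\mathcal N}(A)$, which is positive when $A$ is a nonzero idempotent — this is the analytic positivity statement, essentially Kaplansky's, cited in the proof of Theorem \ref{Hi17-thm5.1}). Hence $\beta_2^{(2)}(X)=0$ forces $\Pi=0$, i.e. $\pi_2X=0$, and combined with $\pi_qX=0$ for $q\ge3$ (from $c.d.X\le2$), $X$ is aspherical.

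\textbf{Main obstacle.} The delicate point is the identification $\beta_1^{(2)}(X)=\beta_1^{(2)}(\pi_X)$ and, dually, the claim that $\beta_2^{(2)}(X)$ equals the von Neumann rank of the projective module $\pi_2X$; both rest on comparing the $L^2$-chain complex of $\widetilde X$ with a length-$2$ projective resolution of $\mathbb Z$ over $\mathbb Z[\pi_X]$, which is exactly what $c.d.X\le2$ provides. I expect the cleanest route is to carry the von Neumann dimension through the split decomposition $P_*=\pi_2X\oplus(\text{resolution of }\mathbb Z)$: the resolution part contributes $\beta_0^{(2)}-\beta_1^{(2)}$ to $\chi$, which is $0-\beta_1^{(2)}(\pi_X)=0$ by hypothesis (since $\pi_X$ infinite gives $b_0^{(2)}=0$), so $\chi(X)=\dim_{\mathcal N(\pi_X)}(\pi_2X)\ge0$ with equality iff $\pi_2X=0$. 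This makes the whole argument a one-line dimension count once the decomposition is in hand, and the only genuinely analytic input is positivity of von Neumann dimension on nonzero projectives, which the paper already uses.
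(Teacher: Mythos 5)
Your proof is correct and follows essentially the same route as the paper's: both compute $\chi(X)$ as the $L^2$-Euler characteristic of the length-$2$ projective complex $P_*$, use $\beta_0^{(2)}=\beta_1^{(2)}=0$ to see $\chi(X)=\beta_2^{(2)}(X)\geq0$, and invoke positivity of von Neumann dimension on nonzero projectives to force $\pi_2X=0$ when $\chi(X)=0$. The only wrinkle is that $\beta_2^{(2)}(X)$ is the sum of $\dim_{\mathcal{N}(\pi_X)}(\mathcal{N}(\pi_X)\otimes\pi_2X)$ and the (a priori possibly nonzero) term $\beta_2^{(2)}(\pi_X)$ coming from the resolution summand, but since both are nonnegative the inequality $\beta_2^{(2)}(X)\geq\dim_{\mathcal{N}(\pi_X)}(\mathcal{N}(\pi_X)\otimes\pi_2X)$ is all you need and your argument goes through unchanged.
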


\begin{proof}
This follows from a mild extension of \cite[Theorem 2.5]{FMGK}.
Since $c.d.X\leq2$ and $X$ is homotopy equivalent to a finite 3-complex,
$C_*(\widetilde{X})$ is chain homotopy equivalent to 
a finite free $\mathbb{Z}[\pi_X]$-complex $D_*$ of length at most 2.
If $\beta_1^{(2)}(\pi_X)=0$ then $\chi(X)=\chi(D_*)=
dim_{\mathcal{N}(\pi_X)}
H_2(\mathcal{N}(\pi_X)\otimes_{\mathbb{Z}[\pi]}D_*)\geq0$,
with equality only if $D_*$ is acyclic,
in which case $X$ is aspherical.
\end{proof}

In particular, if $\pi_X$ is elementary amenable and $\chi(X)=0$
then $\pi_X\cong\mathbb{Z}$ or $BS(1,m)$, for some $m\not=0$.
(See \cite[Corollary 2.6.1]{FMGK}.)

\begin{ex}
If $M=M(-2;(1,0))$ or $M(-2;(1,4))$ and $j$ is bi-epic then $X\simeq{Kb}$.
\end{ex}
In each case $\pi$ is polycyclic and $\pi/\pi'\cong\mathbb{Z}\oplus(\mathbb{Z}/2\mathbb{Z})^2$.
Hence $\chi(X)=0$, and so $c.d.\pi_X\leq2$.
Since $\pi_X$ is a quotient of $\pi$ and $\pi_X/\pi_X'\cong\mathbb{Z}\oplus\mathbb{Z}/2\mathbb{Z}$
we must have $\pi_X\cong\mathbb{Z}\rtimes_{-1}\!\mathbb{Z}$.
Since $c.d.X\leq2$ and $\chi(X)=0$ the classifying map $c_X:X\to{Kb}=K(\mathbb{Z}\rtimes_{-1}\!\mathbb{Z},1)$
is a homotopy equivalence.

\section{Constraints on the invariants} 

In this section we shall show that manifolds with embeddings for which $\pi_X$ is abelian are severely constrained.


\begin{theorem}
\label{Hi17-thm7.1}
Suppose $M$ has an embedding in $S^4$ for which one complementary region $X$
has $\chi(X)\leq1$ and $\gamma_2\pi_X=\gamma_3\pi_X$.
Then either $\beta\leq4$ or $\beta=6$.
If $\beta=0$ or $2$ then $\pi_X\cong\mathbb{Z}/n\mathbb{Z}$
or $\mathbb{Z}\oplus\mathbb{Z}/n\mathbb{Z}$, respectively, for some $n\geq1$,
while if $\beta=1$, $3$, $4$ or $6$ then $\pi_X\cong\mathbb{Z}^\gamma$,
where $\gamma=\lfloor\frac{\beta+1}2\rfloor$. 
If $\pi_X$ is abelian and $\beta=1$ or $3$ then $X$ is aspherical.
\end{theorem}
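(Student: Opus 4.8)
The plan is to exploit the homological constraints from Lemma \ref{Hi17-lem2.1} together with the relationship between the rank of $\gamma_2^{\mathbb{Q}}G/\gamma_3^{\mathbb{Q}}G$ and cup products recorded in the proof of Theorem \ref{Hi17-thm6.2}. First I would observe that since $\chi(X)\leq1$ and $X$ is a complementary region, Lemma \ref{Hi17-lem2.1}(6) gives that $\pi_X$ is homologically balanced, and part (4) gives $\beta=\beta_1(X)+\beta_2(X)$ with $\gamma:=\beta_1(X)\geq\frac{\beta}{2}$ (because $\chi(X)=1+\beta-2\gamma\leq1$ forces $\gamma\geq\frac{\beta}{2}$, while also $\gamma\leq\beta$). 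Writing $\gamma=\beta_1(\pi_X;\mathbb{Q})$ (equality since $H_1(X)=\pi_X^{ab}$), the hypothesis $\gamma_2\pi_X=\gamma_3\pi_X$ forces $\gamma_2^{\mathbb{Q}}\pi_X=\gamma_3^{\mathbb{Q}}\pi_X$, so the kernel of cup product $\wedge^2H^1(\pi_X;\mathbb{Q})\to H^2(\pi_X;\mathbb{Q})$ is trivial; hence $\binom{\gamma}{2}\leq\beta_2(\pi_X;\mathbb{Q})\leq\beta_2(X;\mathbb{Q})=\beta-\gamma$.

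Next I would solve the inequality $\binom{\gamma}{2}\leq\beta-\gamma$ with $\frac{\beta}{2}\leq\gamma\leq\beta$. Substituting $\gamma=\lceil\frac{\beta}{2}\rceil$ (the smallest allowed value, which also gives the weakest constraint) yields a bound on $\beta$; checking small cases shows this forces $\beta\leq6$, and when $\beta=5$ the inequality $\binom{3}{2}=3\leq5-3=2$ fails, so $\beta\in\{0,1,2,3,4,6\}$, and in each surviving case $\gamma$ is pinned to $\lfloor\frac{\beta+1}{2}\rfloor$ (for $\beta=0,2$ one gets $\gamma=0,1$; for $\beta=1,3,4,6$ one gets $\gamma=1,2,2,3$, and the inequality is tight or nearly tight, forcing $\beta_1(X)$ to equal exactly this value). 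For the structure of $\pi_X$: since $\gamma_2\pi_X=\gamma_3\pi_X$ and $\pi_X$ is finitely generated, $\pi_X/\gamma_2\pi_X$ is the maximal nilpotent quotient in the relevant sense; combined with $c.d.$ considerations and the fact that $\gamma_2^{\mathbb{Q}}\pi_X=\gamma_3^{\mathbb{Q}}\pi_X$ forces $\gamma_2\pi_X$ to be torsion (rationally perfect), one argues that when $\beta_2(X;\mathbb{Q})=\binom{\gamma}{2}$ (the tight cases $\beta=1,3,4,6$) the whole cup product map is injective on $\wedge^2$, which for a $2$-complex forces $\pi_X$ to be free abelian of rank $\gamma$; when $\beta=0$ or $2$, the analysis of finite/virtually-$\mathbb{Z}$ homologically balanced groups with $\gamma_2=\gamma_3$ gives $\mathbb{Z}/n\mathbb{Z}$ or $\mathbb{Z}\oplus\mathbb{Z}/n\mathbb{Z}$ respectively (here I would use that a homologically balanced finite abelian group is cyclic, from \S1.2).

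Finally, for the asphericity claim when $\pi_X$ is abelian and $\beta=1$ or $3$: in these cases $\pi_X\cong\mathbb{Z}^\gamma$ with $\gamma=1$ or $2$, so $c.d.\pi_X=\gamma\leq2$, and $\chi(\pi_X)=\chi(\mathbb{Z}^\gamma)=0$ equals $\chi(X)=1+\beta-2\gamma=0$. By Theorem \ref{Hi17-thm5.1} it suffices to check that $j_{X*}=\pi_1j_X$ is an epimorphism. For $\gamma=1$ this is immediate from Lemma \ref{Hi17-lem2.1} since $H_1(M)\to H_1(X)$ is onto and $\pi_X\cong\mathbb{Z}$ is generated by its abelianization. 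For $\gamma=2$ with $\beta=3$, one has $\chi(X)=0$ and must rule out $j_{X*}$ having image of infinite index; here I would invoke that $\mathrm{Im}(j_{X*})$ is a subgroup of $\mathbb{Z}^2$ through which the map $H_1(M)\cong\mathbb{Z}^3\to H_1(X)\cong\mathbb{Z}^2$ factors onto, forcing $\mathrm{Im}(j_{X*})$ to surject onto $\mathbb{Z}^2$, hence (being a subgroup of $\mathbb{Z}^2$) to equal $\mathbb{Z}^2$. Then Theorem \ref{Hi17-thm5.1} applies: $c.d.\pi_X\leq2$ and $\chi(X)=\chi(\pi_X)$ give $X$ aspherical. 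The main obstacle I anticipate is the structural determination of $\pi_X$ in the tight cases $\beta=4$ and $\beta=6$ — pinning down that the cup product being injective on $\wedge^2H^1$ for a homologically balanced group of cohomological type $\leq2$ forces free abelianness requires care, and one likely needs the observation (as in Theorem \ref{Hi17-thm6.2}, latter case) that the cup product $3$-form $\mu$ obstructs certain configurations, combined with the classification of such groups; this is where the bulk of the genuine work lies rather than in the numerical inequality.
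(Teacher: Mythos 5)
Your numerical core is the paper's argument in dual form: where you run cup products $\wedge^2H^1(\pi_X;\mathbb{Q})\to H^2(\pi_X;\mathbb{Q})$ (injective because $\gamma_2^{\mathbb{Q}}\pi_X=\gamma_3^{\mathbb{Q}}\pi_X$) against $\beta_2(X;\mathbb{Q})=\beta-\gamma$, the paper runs the five-term exact sequence for $1\to\gamma_2\pi_X\to\pi_X\to A\to1$ to see that $H_2(A)=A\wedge A\cong\mathbb{Z}^{\binom{\gamma}{2}}\oplus(\tau_X)^\gamma\oplus(\tau_X\wedge\tau_X)$ is a quotient of $H_2(\pi_X)$, hence of $H_2(X)\cong\mathbb{Z}^{\beta-\gamma}$. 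Both yield $\binom{\gamma}{2}\leq\beta-\gamma\leq\gamma$ and the same case analysis. The integral version has the small advantage of delivering the torsion conclusions ($\tau_X$ cyclic when $\beta=0,2$, and $\tau_X=0$ otherwise) in the same stroke, whereas you need a separate mod-$p$ homological-balance argument; your sketch of that step is workable but you should note that for $\beta=2$ you are applying it to $\mathbb{Z}\oplus\tau_X$ rather than to a finite group, so the clause from \S1.2 needs the easy extension $\binom{1+d}{2}+d\leq 1+d$. Your treatment of the asphericity clause matches the paper's (surjectivity of $j_{X*}$ for abelian $\pi_X$ plus Theorem \ref{Hi17-thm5.1}).

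The one place you go astray is precisely where you locate ``the bulk of the genuine work'': upgrading $H_1(X)\cong\mathbb{Z}^\gamma$ to $\pi_X\cong\mathbb{Z}^\gamma$ in the tight cases. Injectivity of cup product on $\wedge^2H^1$ of a $2$-dimensional complex does not force the fundamental group to be free abelian, and no argument of that shape can work: the exterior $X$ of a spun knot satisfies every hypothesis of the theorem with $\beta=1$, $\chi(X)=0$, $\gamma_2\pi_X=\gamma_3\pi_X$ (Stallings), and $H_1(X)\cong\mathbb{Z}$, yet $\pi_X$ is an arbitrary classical knot group --- an example the paper itself records after Theorem \ref{Hi17-thm5.1}. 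The paper's own proof does not attempt this upgrade; it only determines $A=H_1(X)$, and the structural conclusions about $\pi_X$ should be read as conclusions about $\pi_X^{ab}$, or as holding under the standing abelian hypothesis of the chapter (which is how the theorem is applied in Corollary 7.3.1 and the final asphericity clause). So you should drop that step rather than try to repair it; everything the proof actually needs is already contained in your inequality and case analysis.
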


\begin{proof}
Let $\gamma=\beta_1(X)$ and $A=H_1(X)$.
Then $2\gamma\geq\beta$ and $A\cong\mathbb{Z}^\gamma\oplus\tau_X$.
Since $A$ is abelian,  $H_2(A)=
A\wedge{A}\cong\mathbb{Z}^{\binom\gamma2}\oplus(\tau_X)^\gamma
\oplus(\tau_X\wedge\tau_X)$.

If $\gamma_2\pi_X=\gamma_3\pi_X$ then $H_2(A)$ 
is a quotient of $H_2(\pi_X)$,
by the 5-term exact sequence of low degree for $\pi_X$ as an extension of $A$. 
This in turn is a quotient of $H_2(X)\cong\mathbb{Z}^{\beta-\gamma}$,
by Hopf's Theorem.
Hence $\binom\gamma2\leq\beta-\gamma\leq\gamma$, 
and so $\gamma\leq3$.
If $\tau_X\not=0$ then either $\gamma=\beta=0$ and $\tau_X\wedge\tau_X=0$, 
or $\gamma=1$, $\beta=2$ and $\tau_X\wedge\tau_X=0$.
In either case, $\tau_X$ is (finite) cyclic.
If $\beta\not=0$ or 2 then $\tau_X=0$ and either $\gamma=\beta=1$,
or $\gamma=2$ and $\beta=3$ or 4, or $\gamma=3$ and $\beta=6$.
The final assertion follows immediately from Theorem \ref{Hi17-thm5.1}, 
since $cd\pi_X=\gamma\leq2$ and $\chi(X)$ must be 0.
\end{proof}

If $\pi_X$ is abelian, $\gamma=\beta=0$ and $\tau_M=0$ then $X$ is contractible.
In the remaining cases $X$ cannot be aspherical,
since either $\pi_X$ has non-trivial torsion (if $\beta=0$),
or $H_2(X)$ is too big (if $\beta=2$ or 4),
or $H_3(X)$ is too small (if $\beta=6$).

If we assume merely that $\gamma_2\pi_X/\gamma_3\pi_X$ is finite 
(i.e., that the rational lower central series stabilizes after one step) 
then $\cup_X:\wedge_2H^1(X;\mathbb{Q})\to{H^2(X;\mathbb{Q})}$ is injective
\cite{Su75}, 
and a similar calculation gives the same restrictions on $\beta$.

\begin{cor}
Let $M$ be a $3$-manifold with an abelian embedding in $S^4$.
Then
\begin{enumerate}
\item{if} $\beta=3$ then there is a $\mathbb{Z}$-homology isomorphism $h:M\to{T^3}$;
\item{if} $\beta=4$ or $6$ then $\mu_M\not=0$.
\end{enumerate}
\end{cor}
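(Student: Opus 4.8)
The plan is to derive both statements from Theorem~\ref{Hi17-thm7.1} together with a Poincar\'e-duality analysis of the cup-product structure of $H^*(M)$. Since $\chi(X)\leq\chi(Y)$ and $\chi(X)+\chi(Y)=2$ we have $\chi(X)\leq1$, while $\pi_X$ abelian gives $\gamma_2\pi_X=\gamma_3\pi_X=1$, so Theorem~\ref{Hi17-thm7.1} applies and, for $\beta\in\{3,4,6\}$, yields $\pi_X\cong\mathbb{Z}^\gamma$ with $\gamma=\lfloor\tfrac{\beta+1}2\rfloor\geq2$. In particular $\tau_X=0$, hence $\tau_Y\cong{Ext(\tau_X,\mathbb{Z})}=0$ (Hantzsche's observation, \S2.3), so $\tau_M=0$ and $H_1(M)\cong\mathbb{Z}^\beta$; by Poincar\'e duality $H^1(M)$, $H^2(M)$ and every $H_i(M)$ are then finitely generated and torsion-free.

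The bookkeeping step is the heart of the argument. By Lemma~\ref{Hi17-lem2.1}(2), restriction gives direct sum decompositions $H^1(M)=j_X^*H^1(X)\oplus j_Y^*H^1(Y)$ and $H^2(M)=j_X^*H^2(X)\oplus j_Y^*H^2(Y)$ with $j_X^*,j_Y^*$ injective; write $V_1,V_2,W_1,W_2$ for these summands, so that $\mathrm{rk}\,V_1=\mathrm{rk}\,W_2$ and $\mathrm{rk}\,V_2=\mathrm{rk}\,W_1$ by Lemma~\ref{Hi17-lem2.1}(3). Since $H^3(X)=H^3(Y)=0$ (Lemma~\ref{Hi17-lem2.1}(2)), the cup product annihilates $V_1\times W_1$ and $V_2\times W_2$, so in this block form the pairing $H^1(M)\times H^2(M)\to H^3(M)\cong\mathbb{Z}$, which is unimodular by Poincar\'e duality and torsion-freeness, is block anti-diagonal; a determinant calculation then forces the block $Q\colon V_2\times W_1\to\mathbb{Z}$ to be unimodular, hence non-degenerate. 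Finally, fix a basis $\bar u_1,\bar u_2,\dots$ of $H^1(X)\cong H^1(\pi_X)$; since the classifying map $c_X$ induces an epimorphism $H_2(X)\to H_2(\pi_X)$ (Hopf's Theorem), $c_X^*\colon H^2(\pi_X)\to H^2(X)$ is injective, and as $H^*(\mathbb{Z}^\gamma)$ is an exterior algebra we get $\bar u_1\cup\bar u_2\neq0$ in $H^2(X)$, so $w:=j_X^*(\bar u_1\cup\bar u_2)=u_1\cup u_2$ is a non-zero element of $W_1$.

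For part~(2) ($\beta=4$ or $6$) non-degeneracy of $Q$ supplies $v\in V_2=j_Y^*H^1(Y)$ with $w\cup v\neq0$, so $\mu_M(u_1\wedge u_2\wedge v)\neq0$ and $\mu_M\neq0$. For part~(1) ($\beta=3$, so $\gamma=2$) the computation is sharper: here $H_2(X)$ and $H_2(\pi_X)$ are free of rank $\beta_2(X)=1$, so the Hopf epimorphism between them is an isomorphism and $c_X^*$ is an isomorphism too; hence $W_1=\mathbb{Z}w$ with $w$ a generator, while $V_2=j_Y^*H^1(Y)=\mathbb{Z}v$ has rank $\beta_1(Y)=1$, and unimodularity of $Q$ gives $u_1\cup u_2\cup v=\pm1\in H^3(M)$. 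As $\{u_1,u_2,v\}$ is a $\mathbb{Z}$-basis of $H^1(M)$, this says precisely that $\mu_M\colon\wedge^3H^1(M)\to H^3(M)$ is an isomorphism. Choosing $h=(\phi_1,\phi_2,\phi_3)\colon M\to(S^1)^3=T^3$ with $\phi_i\colon M\to S^1=K(\mathbb{Z},1)$ classifying $u_1,u_2,v$, the ring map $h^*$ is then an isomorphism on $H^1$ and (by the identity above) on $H^3$, so $h$ has degree $\pm1$; hence $h$ induces epimorphisms on integral homology in every degree, and since $H_i(M)$ and $H_i(T^3)$ are finitely generated free abelian of the same rank for each $i$, those epimorphisms are isomorphisms and $h$ is a $\mathbb{Z}$-homology isomorphism.

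I expect the main obstacle to be exactly the bookkeeping of the middle paragraph: checking that the four restriction maps are injective, that the ranks match so the anti-diagonal pairing is a genuine square block matrix, and---crucially for part~(1)---that $Q$ is unimodular over $\mathbb{Z}$, not merely non-degenerate over $\mathbb{Q}$. That integrality is where the vanishing of $\tau_X$, hence of $\tau_M$, from Theorem~\ref{Hi17-thm7.1} is used; everything else is routine naturality of cup products, Hopf's Theorem, and the standard fact that a degree-one map of closed oriented $3$-manifolds is surjective on homology.
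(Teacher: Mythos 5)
Your proof is correct and follows essentially the same route as the paper: Theorem \ref{Hi17-thm7.1} identifies $\pi_X\cong\mathbb{Z}^\gamma$, the abelianness of $\pi_X$ forces a non-zero product $u_1\cup u_2$ in $H^2(M)$, and Poincar\'e duality on $M$ supplies the third class. You derive the injectivity of $\wedge^2H^1(X)\to{H^2(X)}$ from Hopf's Theorem and make the block-unimodularity of the duality pairing explicit where the paper cites Sullivan and simply invokes Poincar\'e duality, but the underlying argument is the same.
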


\begin{proof}
The homomorphism induced by cup product from $\wedge_2H^1(X)$ to $H^2(X)$
is injective, since $\pi_X$ is abelian \cite{Su75}.

If $\beta=3$ then $\pi_X\cong\mathbb{Z}^2$ and $X$ is aspherical, 
by the theorem.
Hence $X\simeq{T^2}$, and so $H^1(X)$ has a basis $a,b$ such that
$a\cup{b}$ generates a free summand of $H^2(M)$.
We may then choose $c\in{H^1(M)}$ so that $(a\cup{b}\cup{c})[M]=1$.
If we identify $H^1(M)$ with the homotopy set $[M;S^1]$
we see that $a,b,c$ together determine a map $h$ from $M$ to $T^3$ which is an isomorphism on $H^1(M)$ and on $H^3(M)$.
This is clearly a $\mathbb{Z}$-homology isomorphism.

If $\beta\geq3$ then $\beta_1(X)\geq2$, 
and so there are $a, b\in{H^1(X)}$ 
with $a\cup{b}\not=0$ in $H^2(M)$.
Hence there is a $c\in{H^1(M)}$ such that $a\cup{b}\cup{c}\not=0$,
by Poincar\'e duality for $M$.
\end{proof}

In particular, 
$\#^\beta(S^2\times{S^1})$ has an abelian embedding if and only if 
$\beta\leq2$.

Embeddings with $\pi_X$ abelian and $\beta\leq4$ realizing these possibilities 
may be easily found.
(If $\pi_X\not=1$ then 2-knot surgery gives further examples with 
$\pi_X$ nonabelian and $\gamma_2\pi_X=\gamma_3\pi_X$.)
The simplest examples are for $\beta=0,1$ or 3,
with $M\cong{S^3}$, $M=S^2\times{S^1}$ or 
$S^1\times{S^1}\times{S^1}$ 
the boundary of a regular neighbourhood of a point or of
the standard unknotted embedding of $S^2$ or $T$ in $S^4$,
respectively.

Other examples may be given in terms of representative links.
When $\beta=0$ the $(2,2n)$ torus link gives examples with $X\cong{Y}$ and 
$\pi_X\cong\mathbb{Z}/n\mathbb{Z}$.
When $\beta=1$ we may use any knot which bounds a slice disc $D\subset{D^4}$
such that $\pi_1(D^4\setminus{D})\cong\mathbb{Z}$, 
such as the unknot or the Kinoshita-Terasaka knot $11_{n42}$.
(All such knots have Alexander polynomial 1.
Conversely every Alexander polynomial 1 knot bounds a TOP locally flat
slice disc with group $\mathbb{Z}$, by a striking result of Freedman.)
The links $8^3_5$ and $8^3_6$ give further simple examples. 
(These each have a trivial 2-component sublink and an unknotted third
component which represents a meridian of the first component or the
product of meridians of the first two components, respectively.)
When $\beta=2$ any 2-component link with unknotted components 
and linking number 0, such as the trivial 2-component link or $Wh$,
gives examples with $\pi_X\cong\mathbb{Z}$.
We may construct examples realizing $\mathbb{Z}\oplus\mathbb{Z}/n\mathbb{Z}$ from 
the 4-component link obtained from the Borromean rings $Bo$ by 
replacing one component by its $(2,2n)$ cable.
When $\beta=3$ we may use the links $Bo$, $9^3_9$ or $9^3_{18}$.
(These each have a trivial 2-component sublink and an unknotted third
component which represents the commutator of the meridians
of the first two components. 
However neither of the latter two links is Brunnian.)
 
Let $L$ be the 4-component link obtained from  $Bo$
by adjoining a parallel to the third component,
and let $M$ be the 3-manifold $M$ obtained by 0-framed surgery on $L$.
Then the meridians of $L$ represent a basis $\{e_i\}$ for
$H_1(M)\cong\mathbb{Z}^4$, and 
$\mu_M=e_1\wedge{e_2}\wedge{e_3}+e_1\wedge{e_2}\wedge{e_4}$.
This link may be partitioned into the union of two trivial 2-component 
links in two essentially different ways,
and ambient surgery gives two essentially different embeddings of $M$.
If the sublinks are $\{L_1,L_2\}$ and $\{L_3,L_4\}$ then the complementary components 
have fundamental groups $\mathbb{Z}^2$ and $F(2)$.
Otherwise, 
the complementary components are homeomorphic 
and have fundamental group $\mathbb{Z}^2$.

The case $\beta=6$ follows from Theorem \ref{LQ}.
We may construct an explicit example as follows.
The  link diagram in Figure 7.1 is a projection of $Wh$.
The pre-images of this link in the 2- and 3-fold branched cyclic covers of $S^3$,
branched over the trivial knot represented in cross-section by $\bullet$,
are 4- and 6-component bipartedly trivial links in which each component represents the commutator of meridians of the adjacent component (as in $Bo$).
The corresponding 3-manifolds have $\beta=4$ and 6, respectively, 
and the associated embeddings are abelian.
These 3-manifolds are branched cyclic covers of $M(Wh)$,
but we are not aware of any other features which are of particular interest.

\setlength{\unitlength}{1mm}
\begin{picture}(95,65)(-21,5)

\linethickness{1pt}
\put(4,65){\line(1,0){14}}
\put(1,23){\line(0,1){39}}
\put(4,20){\line(1,0){14}}
\qbezier(1,62)(1,65)(4,65)
\qbezier(1,23)(1,20)(4,20)
\qbezier(18,65)(21,65)(21,62)
\qbezier(18,20)(21,20)(21,23)
\put(21,25){\line(0,1){18}}
\put(21,45){\line(0,1){5}}
\put(21,52){\line(0,1){7}}
\put(21,61){\line(0,1){1}}

\thinlines
\put(19,24){\line(1,0){45}}
\put(19,27){\line(1,0){1}}
\put(22,27){\line(1,0){28}}
\qbezier(19,24)(17.5,24)(17.5,25.5)
\qbezier(19,27)(17.5,27)(17.5,25.5)

\put(19,41){\line(1,0){1}}
\put(19,44){\line(1,0){18}}
\put(22,41){\line(1,0){15}}
\put(39,41){\line(1,0){2}}
\put(43,41){\line(1,0){7}}
\qbezier(19,41)(17.5,41)(17.5,42.5)
\qbezier(19,44)(17.5,44)(17.5,42.5)

\put(19,54){\line(1,0){1}}
\put(22,54){\line(1,0){16}}
\put(19,51){\line(1,0){16}}
\qbezier(19,51)(17.5,51)(17.5,52.5)
\qbezier(19,54)(17.5,54)(17.5,52.5)

\qbezier(37,54)(42,54)(42,49)

\put(18,60){\line(1,0){46}}
\qbezier(10,52)(10,60)(18,60)

\put(18,38){\line(1,0){2}}
\put(22,38){\line(1,0){15}}
\put(39,38){\line(1,0){2}}
\put(43,38){\line(1,0){7}}

\qbezier(10,46)(10,38)(18,38)
\put(10,46){\line(0,1){6}}

\qbezier(62,24)(72,24)(72,34)
\qbezier(62,60)(72,60)(72,50)
\put(72,34){\line(0,1){16}}

\put(39,44){\line(1,0){2}}
\put(43,44){\line(1,0){7}}
\put(38,36){\line(0,1){12}}
\qbezier(35,51)(38,51)(38,48)

\put(42,34){\line(0,1){15}}
\qbezier(42,34)(42,30)(46,30)

\put(48,33){\line(1,0){2}}
\qbezier(50,33)(52.5,33)(52.5,35.5)
\qbezier(50,38)(52.5,38)(52.5,35.5)

\qbezier (38,36)(38,33)(41,33)
\put(43,33){\line(1,0){5}}
\put(46,30){\line(1,0){4}}

\qbezier(50,30)(55.5,30)(55.5,35.5)
\qbezier(50,41)(55.5,41)(55.5,35.5)

\qbezier(50,44)(58.5,44)(58.5,35.5)
\qbezier(50,27)(58.5,27)(58.5,35.5)

\put(48,34.4){$\bullet$}

\put(-10,10){Figure 7.1.  A version of $Wh$ with branching axis through $\bullet$}

\end{picture}

In all of the above examples except for one $\pi_Y$ is also abelian. 
Note that Theorem \ref{Hi17-thm7.1} does {\it not\/} apply to $\pi_Y$,
as it uses the hypothesis $\beta_1(X)\geq\frac12\beta$.

\begin{lemma}
\label{finite}
If $\pi_X$ is abelian of rank at most $1$ 
then $X$ is homotopy equivalent to a finite $2$-complex.
If $\pi_X$ is cyclic then $X\simeq{S^1}$, 
$S^1\vee{S^2}$ or $P_\ell=S^1\vee_\ell{e^2}$, 
for some $\ell\not=0$.
\end{lemma}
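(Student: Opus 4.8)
The plan is to combine Theorem \ref{Hi17-thm5.1}, which pins down the cohomological dimension of $X$, with the structure theory of $2$-complexes and finiteness obstructions over the very restricted group rings that can occur here.

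First observe that an abelian $\pi_X$ forces $j_{X*}\colon\pi\to\pi_X$ to be onto, since it factors through the surjection $H_1(M)\twoheadrightarrow H_1(X)=\pi_X$ supplied by Lemma \ref{Hi17-lem2.1}(2). Hence $c.d.X\le2$ by Theorem \ref{Hi17-thm5.1}, so $C_*(X;\mathbb{Z}[\pi_X])$ is chain homotopy equivalent to a finite complex $P_*$ of projective $\mathbb{Z}[\pi_X]$-modules of length $\le2$, and $\chi(X)\le1$ by Lemma \ref{Hi17-lem2.1}(5). Since $X$ is a compact manifold it is homotopy equivalent to a finite complex, so its Wall finiteness obstruction $\sum(-1)^i[P_i]\in\widetilde K_0(\mathbb{Z}[\pi_X])$ vanishes; by standard finiteness theory we may then take $P_*$ to be a finite \emph{free} complex of length $\le2$. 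By Theorem \ref{Hi17-thm7.1} the only abelian groups of rank $\le1$ occurring here are $\mathbb{Z}$, a finite cyclic group, and the product of $\mathbb{Z}$ with a finite cyclic group; each of these satisfies Wall's D(2)-property ($\mathbb{Z}[\mathbb{Z}]$-modules are well behaved, and for finite cyclic groups and their products with $\mathbb{Z}$ this is a theorem of Johnson), so the finite complex $X$, having $c.d.X\le2$, is homotopy equivalent to a finite $2$-complex.

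For the refined statement, suppose $\pi_X$ is cyclic. If $\pi_X\cong\mathbb{Z}$ then $\beta_1(X)=1\ge\beta/2$ (since $\chi(X)\le\chi(Y)$) forces $\beta\le2$, and as $\beta=\beta_1(X)+\beta_2(X)\ge1$ we get $\chi(X)=1+\beta-2\beta_1(X)=\beta-1\in\{0,1\}$; Corollary \ref{2concor} then yields $X\simeq S^1$ or $X\simeq S^1\vee S^2$. If $\pi_X\cong\mathbb{Z}/\ell\mathbb{Z}$ with $\ell\ge1$ then $\beta=0$, so $M$ is a $\mathbb{Q}$-homology sphere; Poincar\'e duality gives $H_2(M)\cong H^1(M)=0$, whence $H_2(X)=0$ by Lemma \ref{Hi17-lem2.1}(2), and $\chi(X)=1$, so $X$ has the integral homology of $P_\ell$. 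A Schanuel-lemma comparison of the length-$\le2$ free complex $C_*(\widetilde X)$ with the truncated periodic resolution of $\mathbb{Z}$ over $\mathbb{Z}[\mathbb{Z}/\ell\mathbb{Z}]$ shows that $\pi_2X$ is stably isomorphic to the augmentation ideal $I$, and cancellation for this syzygy module upgrades this to $\pi_2X\cong I$. Consequently $C_*(\widetilde X)$ and $C_*(\widetilde{P_\ell})$ become chain homotopy equivalent after stabilising by elementary complexes $(\mathbb{Z}[\mathbb{Z}/\ell\mathbb{Z}]\xrightarrow{1}\mathbb{Z}[\mathbb{Z}/\ell\mathbb{Z}])$; as such stabilisation is a $3$-deformation at the level of $2$-complexes and $X$ is already homotopy equivalent to a finite $2$-complex, we conclude $X\simeq P_\ell=S^1\cup_\ell e^2$ (with $\ell\ne0$).

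The genuine obstacle is the torsion case: the D(2)-problem is open in general, so the argument leans essentially on its resolution for finite cyclic groups (and on cancellation for the associated syzygy modules over those group rings), while everything else is routine once $c.d.X\le2$ has been established.
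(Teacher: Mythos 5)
Your treatment of the first assertion is essentially the paper's: surjectivity of $j_{X*}$ (automatic for abelian $\pi_X$), $c.d.X\leq2$ from Theorem \ref{Hi17-thm5.1}, and then the $\mathcal{D}(2)$ property for cyclic groups and for $\mathbb{Z}\oplus\mathbb{Z}/\ell\mathbb{Z}$; your appeal to Theorem \ref{Hi17-thm7.1} to see that the torsion of $\pi_X$ must be cyclic is a point the paper leaves implicit and is worth making explicit, since the $\mathcal{D}(2)$ citations only cover those groups. (The digression through the Wall finiteness obstruction and a free length-$2$ complex is harmless but unnecessary: $\mathcal{D}(2)$ applies directly to the finite complex $X$ once $c.d.X\leq2$ is known. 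Also, ``stably free implies free'' over $\mathbb{Z}[\mathbb{Z}/\ell\mathbb{Z}]$ is true but is not ``standard finiteness theory''.)

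For the second assertion you diverge from the paper, which simply quotes the Dyer--Sieradski classification of homotopy types of finite $2$-complexes with cyclic fundamental group \cite{DS}; your derivation of the $\pi_X\cong\mathbb{Z}$ case from Corollary \ref{2concor} is clean and complete (given the standing convention $\chi(X)\leq\chi(Y)$, which you correctly use to force $\beta\leq2$, resp.\ $\beta=0$). In the finite cyclic case, however, your argument has a real gap at its last two steps. ``Cancellation for this syzygy module'' is a genuine theorem (it follows from Jacobinski's cancellation theorem, since $\mathbb{Q}[\mathbb{Z}/\ell\mathbb{Z}]$ satisfies the Eichler condition), not something that can be asserted in passing; and the passage from $\pi_2X\cong I$ to $X\simeq P_\ell$ is not justified by the remark about stabilisation being a ``$3$-deformation''. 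Having isomorphic $\pi_2$ and equal $\chi$ does not by itself produce a homotopy equivalence of $2$-complexes: one must realize a suitable chain homotopy equivalence by a map, i.e.\ choose the null-homotopy of the degree-$\ell$ attaching map in $X$ so that the induced map on $\pi_2$ is onto. That uniqueness of the minimal $2$-complex for $\mathbb{Z}/\ell\mathbb{Z}$ is precisely the content of \cite{DS}, so as written you are assuming what you set out to reprove. Either cite \cite{DS} at that point (as the paper does) or supply the realization argument in full.
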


\begin{proof}
The first assertion follows from the facts that $c.d.X\leq2$,
by Theorem \ref{Hi17-thm5.1}, 
and that the $\mathcal{D}(2)$ property holds for
cyclic groups (see \cite[page 235]{Jo}) 
and for the groups $\mathbb{Z}\oplus\mathbb{Z}/\ell\mathbb{Z}$ \cite{Ed}. 
If $\pi_X\cong\mathbb{Z}/\ell\mathbb{Z}$ is cyclic then $X\simeq{S^1}$ or $S^1\vee{S^2}$,
if $\ell=0$, or $P_\ell=S^1\vee_\ell{e^2}$, if $\ell\not=0$ \cite{DS}.
\end{proof}

We shall show later that a similar result holds when $\pi_X\cong\mathbb{Z}^2$.

Ten of the thirteen 3-manifolds with elementary amenable 
fundamental groups and which embed in $S^4$ have abelian embeddings.
In at least four cases ($S^3$, $S^3/Q(8)$, $S^3/I^*$ and $S^2\times{S^1}$)
the abelian embedding is essentially unique.
The  flat 3-manifold $M(-2;(1,0))$ 
and the $\mathbb{N}il^3$-manifold $M(-2;(1,4))$ 
bound regular neighbourhoods of embeddings 
of the Klein bottle $Kb$ in $S^4$,
but have no abelian embeddings.
The status of one $\mathbb{S}ol^3$-manifold is not yet known.

When $\beta\leq1$ we must have $\chi(X)=1-\beta$.
If $L$ is a 2-component slice link with unknotted components 
(such as the trivial 2-component link, or the Milnor boundary link)
and $M=M(L)$ then $\beta=2$ and $M$ has an abelian embedding 
(with $\chi(X)=\chi(Y)=1$), 
and also an embedding with $\chi(X)=1-\beta=-1$ and $\pi_Y=1$.
However it shall follow from the next lemma that if $\beta>2$ then $M$ 
cannot have both an abelian embedding and also one with $\chi(X)=1-\beta$.

\begin{lemma}
 \label{completion}
Let $j:M\to{S^4}$ be an embedding $j$ such that $H_1(Y)=0$,
and let $S\subset\Lambda_\beta=\mathbb{Z}[\pi/\pi']$ be the multiplicative
system consisting of all elements $s$ with augmentation $\varepsilon(s)=1$.
If the augmentation homomorphism $\varepsilon:\Lambda_{\beta{S}}\to\mathbb{Z}$
factors through an integral domain $R\not=\mathbb{Z}$ then $H_1(M;R)$ has rank $\beta-1$ as an $R$-module.
\end{lemma}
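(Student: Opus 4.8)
The plan is to exploit the hypothesis $H_1(Y)=0$ to pin down the $\Lambda_\beta$-module structure of the relevant chain complex, and then localize. Since $H_1(Y)=0$, Lemma \ref{Hi17-lem2.1} gives $H_2(X)=0$ and $\tau_M=0$, so $H_1(M)\cong H_1(X)\cong\mathbb{Z}^\beta$ and $\pi/\pi'\cong\mathbb{Z}^\beta$, whence $\Lambda_\beta$ is a Noetherian domain and $\Lambda_{\beta S}$ is the localization inverting all augmentation-$1$ elements. By Lemma \ref{Hi17-lem4.1}, $j_X$ and the map $\vee^\beta S^1\to X$ induce isomorphisms on all lower central series quotients, and in fact since $H_2(X)=0$ the complement $X$ has the $\Lambda_\beta$-homology of $\vee^\beta S^1$ after suitable completion. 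The key point I would establish is that the infinite cyclic-type cover $\widetilde{M}$ corresponding to $\pi\to\pi/\pi'=\mathbb{Z}^\beta$ has $H_1(\widetilde M;\mathbb{Q})$, or rather $H_1(M;\Lambda_\beta)$ after localizing at $S$, presented by a square matrix over $\Lambda_{\beta S}$ whose determinant has augmentation a unit.

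First I would set up the Mayer--Vietoris / equivariant homology computation over $\Lambda_\beta$ (equivalently, pass to the maximal free abelian cover $M_{ab}$). Using $S^4=X\cup_M Y$ together with $H_*(Y;\Lambda_\beta)$: because $H_1(Y)=0$, a standard argument (as in the proof of Lemma \ref{Hi17-lem4.1} and the duality in Lemma \ref{Hi17-lem2.1}) shows $Y$ is $\Lambda_\beta$-acyclic in the relevant range after localization, so the inclusion $M\hookrightarrow X$ becomes a $\Lambda_{\beta S}$-homology equivalence in low degrees. Then I would use that $C_*(X;\Lambda_\beta)$ is chain homotopy equivalent to a complex with the homology of $\vee^\beta S^1$ in the $S$-localized sense, i.e. $H_1(M;\Lambda_{\beta S})\cong H_1(\vee^\beta S^1;\Lambda_{\beta S})\cong (I_{\beta})_S$ where $I_\beta$ is the augmentation ideal of $\Lambda_\beta$. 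The augmentation ideal of $\mathbb{Z}[\mathbb{Z}^\beta]$ is generated by $t_1-1,\dots,t_\beta-1$ with the one Koszul-type relation pattern, so its localization has a square presentation matrix (a truncated Koszul complex resolves $\mathbb{Z}$, and after inverting $S$ the higher terms collapse, leaving $\Lambda_{\beta S}^{\beta}\to\Lambda_{\beta S}^\beta\to (I_\beta)_S\to 0$).

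Given the factorization $\varepsilon:\Lambda_{\beta S}\to R\to\mathbb{Z}$ through an integral domain $R$, I would base-change the square presentation of $H_1(M;\Lambda_{\beta S})$ along $\Lambda_{\beta S}\to R$ to get a square presentation matrix $P$ over $R$ for $H_1(M;R)$. The determinant $\det P$ is the image of the order of $(I_\beta)_S$; but $(I_\beta)_S$ localized further to $\mathbb{Z}$ (via $\varepsilon$) must give $H_1(\mathrm{pt};\mathbb{Z})$-type information — more precisely $I_\beta/I_\beta^2\cong\mathbb{Z}^\beta$ and $\varepsilon$ kills $I_\beta$, so over $R$ the module $R\otimes_{\Lambda_{\beta S}}I_\beta$ has an explicit presentation with $\det P$ equal (up to units) to a product of $(t_i-1)$'s mapped into $R$. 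The rank of $H_1(M;R)$ as an $R$-module equals $\beta-\mathrm{rank}(P)$, and I would show $\mathrm{rank}(P)=1$: the Koszul-type relation matrix has exactly a one-dimensional cokernel generically because $R\neq\mathbb{Z}$ means not all $t_i-1$ are forced to be $0$ in $R$ (the kernel of $R\to\mathbb{Z}$ is a nonzero ideal containing the images of the $t_i-1$, but $R$ being a domain distinct from $\mathbb{Z}$ forces at least one image to be a nonzerodivisor), so $P$ has rank $1$ over the fraction field of $R$, giving $H_1(M;R)$ rank $\beta-1$.

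The main obstacle will be making precise the claim that $H_1(M;\Lambda_{\beta S})$ has a \emph{square} presentation matrix whose relevant determinant behaves like $\prod(t_i-1)$ — that is, controlling $C_*(X;\Lambda_\beta)$ up to $S$-localization. I expect this to come down to: (i) showing $H_2(X;\Lambda_\beta)$ is $S$-torsion (so dies after localization), which follows from $H_2(X)=0$ plus a Wang/transfer argument as in \S1.5, and (ii) showing the complex is chain homotopy equivalent over $\Lambda_{\beta S}$ to the truncated Koszul complex of $\vee^\beta S^1$, using $c.d.\, X\le 2$ from Theorem \ref{Hi17-thm5.1} once we know $j_{X*}$ is onto (which holds since $H_1(Y)=0\Rightarrow\pi_Y$ has trivial abelianization, and combined with Lemma \ref{Hi17-lem4.1} forcing freeness of the pro-nilpotent quotient). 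Assembling (i), (ii), and the rank count over the domain $R$ completes the argument.
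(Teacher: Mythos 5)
Your final rank count over the domain $R$ is the right idea and matches the paper's, but the route you take to the module you base-change is where the argument breaks. The central identification you propose, $H_1(M;\Lambda_{\beta S})\cong H_1(\vee^\beta S^1;\Lambda_{\beta S})\cong (I_\beta)_S$, is false at the second step: $H_1(\vee^\beta S^1;\Lambda_\beta)$ is the module of $1$-cycles in the maximal free abelian cover of the wedge, i.e.\ the kernel of $\Lambda_\beta^\beta\to I_\beta$ (for $\beta=1$ it is $0$, while $I_1\cong\Lambda_1$), not the augmentation ideal. The augmentation ideal sits in the Crowell sequence $0\to H_1(M;\Lambda_\beta)\to H_1(M,*;\Lambda_\beta)\to I_\beta\to 0$, and what Stallings' theorem (Lemma \ref{Hi17-lem4.1}) together with the localization lemma \cite[Lemma 4.9]{AIL} actually delivers is a statement about the \emph{relative} group: the localized Alexander module $A(\pi)_S=H_1(M,*;\Lambda_\beta)_S$ is free of rank $\beta$. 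The paper's proof uses exactly this, plus the long exact sequence of the pair $(M,*)$ with coefficients $R$, to get $0\to H_1(M;R)\to R^\beta\to R\to\mathbb{Z}\to 0$ and then reads off the rank; no control of $H_1(M;\Lambda_{\beta S})$ itself, of the chain homotopy type of $X$, or of a square presentation matrix is needed.

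Two further steps in your outline would not survive scrutiny. First, there is no Mayer--Vietoris sequence with $\Lambda_\beta$ coefficients for $S^4=X\cup_M Y$: the coefficient system on $M$ induced by $\pi\to\pi/\pi'\cong\mathbb{Z}^\beta$ does not extend over $Y$ (since $H_1(Y)=0$, any homomorphism $\pi_1Y\to\mathbb{Z}^\beta$ is trivial) nor over $S^4$, so the claim that ``$Y$ is $\Lambda_\beta$-acyclic, hence $M\hookrightarrow X$ is a $\Lambda_{\beta S}$-homology equivalence'' has no starting point. Second, the assertion that the Koszul resolution of $I_\beta$ collapses after inverting $S$ to a square presentation $\Lambda_{\beta S}^\beta\to\Lambda_{\beta S}^\beta\to(I_\beta)_S\to0$ is unjustified ($\wedge^2\Lambda_\beta^\beta$ has rank $\binom{\beta}{2}$, and $\Lambda_{\beta S}$ is not local, so no Nakayama-type reduction is available). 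If you replace the middle of your argument with the Crowell/Alexander-module device --- which is precisely where the hypothesis $H_1(Y)=0$ enters, via $H_2(X)=0$ and Lemma \ref{Hi17-lem4.1} --- the endgame you describe (a nonzero ideal in the domain $R\neq\mathbb{Z}$ has rank $1$, so the kernel of $R^\beta\to R$ has rank $\beta-1$) goes through as written.
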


\begin{proof}
Let  $*$ be a basepoint for $M$ and $A(\pi)=H_1(M,*;\Lambda_\beta)$ 
be the Alexander module of $\pi$.
(See  \cite[Chapter 4]{AIL}.)
Since $H_2(X)=0$,
the inclusion of representatives for a basis of $H_1(X)\cong\mathbb{Z}^\beta$
induces isomorphisms $F(\beta)/\gamma_nF(\beta)\cong\pi/\gamma_n\pi$, 
for all $n\geq1$,
by Lemma \ref{Hi17-lem4.1}.
Hence $A(\pi)_S\cong(\Lambda_{\beta{S}})^\beta$,
by \cite[Lemma 4.9]{AIL}.
Since $\varepsilon$ factors through $R$, 
the exact sequence of the pair $(M,*)$
with coefficients $R$ gives an exact sequence
\[
0\to{H_1(M;R)}\to{R}\otimes_{\Lambda_\beta}{A(\pi)}\cong{R^\beta}\to{R}\to{R}\otimes_{\Lambda_\beta}\mathbb{Z}=
\mathbb{Z}\to0,
\]
from which the lemma follows.
(Note that the hypotheses on $R$ imply that $\mathbb{Z}$ is an $R$-torsion module.)
\end{proof}

\section{Homology spheres}

If $M$ is a homology 3-sphere then it bounds a contractible 4-manifold,
and so has an abelian embedding with $X$ and $Y$ each contractible. 
Moreover, the contractible complementary regions are determined up to homeomorphism by their boundaries, and so are homeomorphic,
by 1-connected surgery \cite{FQ}.
Thus the abelian embedding is unique.
When $M=S^3$, the result goes back to the Generalized Schoenflies Theorem,
which does not use surgery. 
In this special case the embedding is essentially unique!

It is not clear whether non-simply connected homology spheres must have
embeddings with one or both of $\pi_X$ and $\pi_Y$ non-trivial. 
Figure 2.2 gives an example with $\pi_X\cong\pi_Y\cong{I^*}$,
the binary icosahedral group.
In this case the homology sphere is the result of surgery on a complicated
4-component bipartedly trivial link, and probably has no simpler description.
The Poincar\'e homology sphere $S^3/I^*$ is not the result 
of 0-framed surgery on any bipartedly slice link,
since it does not embed smoothly.

We mention here several results from \cite{Liv05} on smooth embeddings
of homology spheres.
There is a homology sphere which has at least two smooth embeddings; one with 
$X$ contractible  and one with $\pi_X\not=1$.
There is a superperfect group which is not the fundamental group of a homology 4-ball.
There is a contractible codimension-0 submanifold $W\subset{S^4}$ such that $def(\pi_W)<-n$, for every $n\geq0$.
Moreover $DW$ is diffeomorphic to $S^4$.

If $S^4=DW$ is the double of a 4-manifold $W$ with connected boundary $\partial{W}=M$ then $W$ is 1-connected, 
by the Van Kampen Theorem, and so contractible.
Thus $M$ must be a homology sphere.
Conversely, if $M$ is a homology sphere and $X$ and $Y$ are contractible
then there is a homeomorphism $h:X\cong{Y}$ such that $j_Y=h\circ{j_X}$,
and so $S^4\cong{DX}$.

In general one may construct examples with $Y\cong{X}$ and $\beta$ any positive even number.
Let $p:{S^3}\to{S^3}$ be a $2d$-fold branched cyclic covering, 
with branch set the first component of the Whitehead link.
The preimage of the second component is a $2d$-component link $L$ which
is the union of two trivial $d$-component links, 
and any generator of the covering group carries one sublink onto the other.
(When $d=1$ the preimage of the whole Whitehead link is the  3-component link $8^3_9$,
which is the union of a copy of the
the $(2,4)$-torus link $4^2_1$ and an unknot.)
Thus we obtain an example with $\beta=2d$ and $X\cong{Y}$.
In such cases $S^4$ is a twisted double $X\cup_\psi{X}$, for some self-homeomorphism of $M=\partial{X}$.
See \cite{Ya97} for other representations of $S^4$ as a twisted double.

If $j:M\to{S^4}$ is an embedding such that one of the complementary regions 
is 1-connected then $H_2(X)=0$, 
and so $\pi/\gamma_n\pi\cong{F(\beta)/\gamma_nF(\beta)}$, 
for all $n$, by Lemma 2.4.
The latter condition holds if $M=M(L)$ for some slice link,
and clearly the emmbedding deriving from a partition of 
such a link $L$ as $L_+\cup{L_-}$ with $L_-$ empty has $\pi_Y=1$.

\section{Homology  handles}

If $M$ is a homology handle then there is a
$\mathbb{Z}$-homology isomorphism $f:M\to{S^2\times{S^1}}$,
by Lemma\ref{homhandle}, 
and $\pi'/\pi''$ is a finitely generated torsion module over 
$\mathbb{Z}[\pi/\pi']\cong\Lambda=\mathbb{Z}[t,t^{-1}]$.

\begin{theorem}
\label{neutral}
Let $M$ be  an orientable homology handle.
If $M$ embeds in $S^4$ then the Blanchfield pairing on 
$\pi'/\pi''=H_1(M;\mathbb{Z}[\pi/\pi'])$ is neutral.
There is an abelian embedding $j:M\to{S^4}$
if and only if $\pi'$ is perfect,
and then $X\simeq{S^1}$ and $Y\simeq{S^2}$.
\end{theorem}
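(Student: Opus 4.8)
The plan is to prove Theorem~\ref{neutral} in three parts, reusing machinery already established. First, the claim that the Blanchfield pairing on $H_1(M;\mathbb{Z}[\pi/\pi'])$ is neutral: since $M$ embeds in $S^4$ with complementary regions $X$ and $Y$ and $\beta=\beta_1(M)=1$, Lemma~\ref{Hi17-lem2.1}(4) forces $\beta_1(X)+\beta_2(X)=1$, so exactly one of $X,Y$ has $H^1\cong\mathbb{Z}$; say $H^1(X)\cong{H^1(M)}\cong\mathbb{Z}$ and (by Lemma~\ref{Hi17-lem2.1}(3)) $H_2(X)\cong{H^1(Y)}=0$. Then Theorem~\ref{Kawabeta1} applies verbatim to $W=X$, giving that the Blanchfield pairing on $H_1(M;\mathbb{Q}\Lambda)$ is neutral. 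I would note that the rational statement is what Theorem~\ref{Kawabeta1} gives directly; the integral statement should be deduced either by a standard lattice refinement (the integral pairing on $\pi'/\pi''=H_1(M;\Lambda)$ is neutral once the rational one is, because a self-annihilating summand lifts) or by re-running the argument of Theorem~\ref{Kawabeta1} over $\Lambda$ rather than $\mathbb{Q}\Lambda$ --- the only place rationality was used is that $\mathbb{Q}\Lambda$ is a PID, but the order ideal computation and $P\le P^\perp$ go through with $\Lambda$ and one concludes $P=P^\perp$ after tensoring with $\mathbb{Q}$, which suffices for ``neutral.''

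Next, the forward direction of the abelian-embedding statement. Suppose $j:M\to S^4$ is abelian, i.e. $\pi_X$ and $\pi_Y$ are abelian. Since $H_1(j_X)\colon H_1(M)\to H_1(X)$ and $H_1(j_Y)\colon H_1(M)\to H_1(Y)$ are epimorphisms with $H_1(M)\cong\mathbb{Z}$, each of $\pi_X,\pi_Y$ is a cyclic group (being an abelian quotient... more precisely abelian with cyclic abelianization, hence cyclic). By Lemma~\ref{Hi17-lem2.1}(4) and the parity/inequality constraints, $\{\chi(X),\chi(Y)\}=\{0,2\}$; say $\chi(X)=0$. Then $\pi_X$ is infinite cyclic (a finite cyclic $\pi_X$ would force $\beta_1(X)=0$, $\beta_2(X)=0$, hence $\beta_2(Y)=0$ and $\chi(Y)$ wrong; more cleanly $H_1(X)$ must have rank $\beta_1(X)$ with $\beta_1(X)+\beta_2(X)=1$, $\chi(X)=1-2\beta_1(X)=0$ is impossible for integers, so in fact $\chi(X)=1$ is forced when... ) --- here I must be careful: $\chi(X)=1+\beta-2\beta_1(X)=2-2\beta_1(X)$, which is even, so $\chi(X)\in\{0,2\}$ indeed corresponds to $\beta_1(X)\in\{1,0\}$. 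The region with $\beta_1=1$ has $\pi$ mapping onto $\mathbb{Z}$, so its cyclic fundamental group is $\mathbb{Z}$; call it $X$, so $\pi_X\cong\mathbb{Z}$, $\chi(X)=0$, and $\pi_Y$ is finite cyclic with $\beta_1(Y)=0$, hence $H_1(Y)$ finite; but $H_1(M)\cong H_1(X)\oplus H_1(Y)\cong\mathbb{Z}$ forces $H_1(Y)=0$, so $\pi_Y$ is a finite perfect cyclic group, i.e. $\pi_Y=1$. By Lemma~\ref{VKsplitmono} (the inclusion $j_{X*}$ is a split mono since $\pi_X\cong\mathbb{Z}$ and $H_1(j_X)$ splits) we recover $\pi_Y=1$ consistently, and $Y$ is $1$-connected. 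Then $H_2(X)=H^1(Y)=0$ and Lemma~\ref{Hi17-lem4.1} gives $\pi/\gamma_k\pi\cong F(1)/\gamma_kF(1)=\mathbb{Z}$ for all $k$; since $\pi/\pi'\cong\mathbb{Z}$ this says $\gamma_2\pi=\gamma_3\pi=\cdots$, i.e. $\pi'$ is perfect. Finally $X$: by Theorem~\ref{Hi17-thm5.1}, $c.d.X\le2$ (as $j_{X*}$ is onto $\mathbb{Z}$), $c.d.\pi_X=c.d.\mathbb{Z}=1\le2$, and $\chi(X)=0=\chi(\mathbb{Z})$, so $X$ is aspherical, hence $X\simeq S^1$. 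Then $Y\simeq S^2$: $Y$ is $1$-connected, $c.d.Y\le2$, and $H_*(Y)\cong H_*(S^2)$ by Lemma~\ref{Hi17-lem2.1}(2)--(3) ($H_1(Y)=0$, $H_2(Y)\cong H^1(X)\cong\mathbb{Z}$), so $Y\simeq S^2$ by Hurewicz and Whitehead.

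For the converse, suppose $\pi'$ is perfect. Then $\pi$ is an extension of $\rho=\mathbb{Z}$ by the perfect normal subgroup $\nu=\pi'$, so Lemma~\ref{Hi96-3} gives a $\Lambda$-homology isomorphism $h:M\to P=S^2\times S^1$ (with $\mathbb{Z}[\rho]\cong\Lambda$). The model $P=S^2\times S^1$ embeds in $S^4$ (standardly, with $X_P\cong D^3\times S^1$, $Y_P\cong S^2\times D^2$), and every automorphism of $\pi_1P\cong\mathbb{Z}$ is realized by a self-homeomorphism of $P$. By Lemmas~\ref{Hi96-4} and~\ref{Hi96-5} there is an $H$-cobordism $W$ over $\Lambda$ from $M$ to $P$ with $\pi_1W\cong\mathbb{Z}$, and then Theorem~\ref{Hi96-thm1} produces an embedding of $M$ in $S^4$ whose complementary regions are obtained from $X_P\cong D^3\times S^1$ and $Y_P\cong S^2\times D^2$ by gluing in $H$-cobordisms; in particular the resulting embedding is abelian (both fundamental groups are $\mathbb{Z}$ and $1$). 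The shapes $X\simeq S^1$, $Y\simeq S^2$ then follow as in the forward direction (or directly, since the new $X$ is homotopy equivalent to $X_P\simeq S^1$ via the $H$-cobordism and $Y$ to $Y_P\simeq S^2$).

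The main obstacle I anticipate is the careful bookkeeping in the converse: Theorem~\ref{Hi96-thm1} as stated yields \emph{an} embedding of $M$, but one must check its complementary regions really are $S^1$-like and $S^2$-like and that the embedding is abelian --- this is where I would lean on Lemma~\ref{VKsplitmono} and Theorem~\ref{Hi17-thm5.1} again rather than trying to track the $H$-cobordism geometrically. A secondary subtlety is the integral-versus-rational form of the Blanchfield statement; if a clean lattice argument is not immediately available I would simply state and use the $\mathbb{Q}\Lambda$-version from Theorem~\ref{Kawabeta1}, which is what the later applications need, and remark that the integral refinement follows because $\Lambda$ is a PID so a self-annihilating $\mathbb{Q}\Lambda$-summand meets $\pi'/\pi''$ in a self-annihilating $\Lambda$-summand.
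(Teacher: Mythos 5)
Your converse direction and your identification of $X\simeq S^1$, $Y\simeq S^2$ match the paper's argument, but the forward direction of the abelian-embedding claim has a genuine gap at the last step. From $H_2(X)=0$ and Lemma \ref{Hi17-lem4.1} you correctly get $\pi/\gamma_k\pi\cong\mathbb{Z}$ for all $k$, i.e.\ $\gamma_k\pi=\pi'$ for all $k\geq2$. But this only says $[\pi,\pi']=\pi'$, not $[\pi',\pi']=\pi'$: stabilization of the lower central series at $\pi'$ is strictly weaker than perfectness of $\pi'$. (Every knot group $\pi{K}$ has $\gamma_2\pi{K}=\gamma_3\pi{K}$, yet $\pi{K}'/\pi{K}''$ is the Alexander module, which is nonzero whenever $\Delta_K\neq1$.) The paper closes this gap homologically rather than via the lower central series: since $X\simeq S^1$, equivariant Poincar\'e--Lefschetz duality gives $H_2(X,M;\mathbb{Z}[\pi_X])\cong\overline{H^2(X;\mathbb{Z}[\pi_X])}=0$, and the exact sequence of the pair $(X,M)$ with coefficients $\mathbb{Z}[\pi_X]$ then forces $\pi'/\pi''=H_1(M;\mathbb{Z}[\pi_X])=0$. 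You already have all the ingredients for this (you establish $X\simeq S^1$ before needing perfectness), so the fix is to reorder and replace the Stallings step by this duality computation.

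A secondary problem is your treatment of the first assertion. Routing it through Theorem \ref{Kawabeta1} gives neutrality of the pairing on $H_1(M;\mathbb{Q}\Lambda)$, whereas the statement concerns the integral module $\pi'/\pi''=H_1(M;\mathbb{Z}[\pi/\pi'])$, and your proposed upgrade rests on the claim that $\Lambda=\mathbb{Z}[t,t^{-1}]$ is a PID, which is false (only $\mathbb{Q}\Lambda$ is); a self-annihilating $\mathbb{Q}\Lambda$-submodule need not intersect the integral lattice in a self-annihilating direct summand. The paper avoids this by running the duality argument for the infinite cyclic cover of the pair $(X,M)$ directly over $\mathbb{Z}[\pi/\pi']$ (as in the proof of Theorem 2.4 of \cite{AIL}), which yields the integral statement outright.
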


\begin{proof}
The first assertion follows on applying equivariant Poincar\'e-Lefshetz 
duality to the infinite cyclic cover of the pair $(X,M)$.
(See the proof of  \cite[Theorem 2.4]{AIL}.)

If $j$ is abelian then $\pi_X\cong\mathbb{Z}$ and $\pi_Y=1$,
while $H_2(X)=0$  and $H_2(Y)\cong\mathbb{Z}$.
Since $c.d.X\leq2$ and $\pi_X\cong\mathbb{Z}$,
it follows that $\pi_2X=H_2(X;\mathbb{Z}[\pi_X])$ 
is a free $\mathbb{Z}[\pi_X]$-module of rank $\chi(X)=0$.
Hence $\pi_2X=0$, and so maps $f:S^1\to{X}$ and $g:S^2\to{Y}$ 
representing generators for $\pi_X$ and $\pi_2Y$ are homotopy equivalences.
Since $H_2(X,M;\mathbb{Z}[\pi_X])\cong\overline{H^2(X;\mathbb{Z}[\pi_X])}=0$, 
by equivariant Poincar\'e-Lefshetz duality,
$\pi'/\pi''=H_1(M;\mathbb{Z}[\pi_X])=0$, by the homology exact sequence 
for the infinite cyclic cover of the pair $(X,M)$.
Hence $\pi'$ is perfect.

Suppose, conversely, that $\pi'$ is perfect.
Then $M$ embeds in $S^4$,  
by Corollary \ref{solvmodel},
and examination of the proof shows that the embedding constructed 
in Theorem \ref{Hi96-thm1} is abelian.
\end{proof}

If a Seifert manifold $M$ is a  homology handle other than $S^2\times{S^1}$ 
then there is no $\Lambda$-homology isomorphism from $M$ to $S^2\times{S^1}$,
by Corollary \ref{seifert homhandle}.
It then follows from Theorem \ref{neutral} that if $M$ 
has an abelian embedding then $M\cong{S^2\times{S^1}}$.

If $K$ is an Alexander polynomial 1 knot then 
$M(K)$ has an abelian embedding, 
and if $K$ is a knot such that $M(K)$ embeds in $S^4$ then
$K$ is algebraically slice, by Theorem \ref{neutral}.
However if $K$ is a slice knot with non-trivial Alexander polynomial 
then $M(K)$ embeds in $S^4$ but no embedding is abelian.
There are obstructions beyond neutrality of the Blanchfield pairing
to slicing a knot, 
which probably also obstruct embeddings of homology handles.

\begin{theorem}
\label{homhandle}
Let $M$ be an orientable homology handle.
Then $M$ has at most one abelian embedding, up to equivalence.
\end{theorem}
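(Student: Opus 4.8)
Looking at this, I need to prove that an orientable homology handle $M$ has at most one abelian embedding up to equivalence.

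Let me think about what structure is available. From Theorem \ref{neutral}, if $M$ has an abelian embedding $j$, then $\pi'$ is perfect, $\pi_X \cong \mathbb{Z}$, $\pi_Y = 1$, and $X \simeq S^1$, $Y \simeq S^2$. So any two abelian embeddings $j_0, j_1: M \to S^4$ have complementary regions $X_i \simeq S^1$ and $Y_i \simeq S^2$. The goal is to produce self-homeomorphisms $\phi$ of $M$ and $\psi$ of $S^4$ with $\psi j_0 = j_1 \phi$.

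The plan is to show that the pieces are determined up to homeomorphism rel boundary (or rel boundary after a controlled boundary identification), and then glue. First I would establish that $X_i$ is homeomorphic to $D^3 \times S^1$: since $c.d.\,X_i \le 2$, $\pi_{X_i} \cong \mathbb{Z}$, and $\chi(X_i) = 0$, Corollary \ref{2concor} gives $X_i \simeq S^1$; then $X_i$ is an $s$-cobordism-trivial $h$-cobordism-type situation — more precisely, $(X_i, M)$ is homotopy equivalent to $(D^3 \times S^1, S^2 \times S^1)$ rel nothing, and one checks that $X_i$ is a homotopy $D^3 \times S^1$ with the correct boundary; by Freedman's unknotting results (the argument of Theorem \ref{aitch} and \cite[Theorem 11.7A]{FQ}), $X_i \cong D^3 \times S^1$ via a homeomorphism restricting to a given identification of boundaries up to isotopy. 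Similarly $(Y_i, M)$ is homotopy equivalent to $(S^2 \times D^2, S^2 \times S^1)$, and the argument in the proof of Theorem \ref{aitch} (1-connected surgery, using $w_2 = 0$ and signature $0$ since $Y_i \subset S^4$) gives $Y_i \cong S^2 \times D^2$.

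Next I would assemble the equivalence. Fix $i = 0, 1$. Choose homeomorphisms $g_i^X: X_i \to D^3 \times S^1$ and $g_i^Y: Y_i \to S^2 \times D^2$. These restrict to self-homeomorphisms of $M \cong \partial(D^3\times S^1) = S^2 \times S^1$. The composite $\phi = (g_0^X|_M)^{-1} \circ (g_1^X|_M)$ is a self-homeomorphism of $M$. The key point is then to arrange that $\phi$ also carries the boundary identification on the $Y$ side correctly: one uses that every self-homeomorphism of $S^2 \times S^1 = \partial(S^2 \times D^2)$ extends over $S^2 \times D^2$ (since $\pi_0(\mathrm{Homeo}(S^2 \times S^1))$ is generated by the evident pieces, and the relevant mapping-class computation is standard — compare the $\#^\beta(S^2\times S^1)$ discussion at the end of Chapter 6, and Aitchison's uniqueness for $S^2 \times S^1$ itself). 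So after modifying $g_1^Y$ by such an extension we get homeomorphisms that glue to a homeomorphism $\Psi: S^4 = X_0 \cup_M Y_0 \to X_1 \cup_M Y_1 = S^4$ (using that the union of two standard pieces is $S^4$, by the Schoenflies/$1$-connected surgery already invoked) with $\Psi \circ j_0 = j_1 \circ \phi$. That is exactly the equivalence of embeddings.

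The main obstacle will be the boundary-control in the homeomorphism $X_i \cong D^3 \times S^1$: one needs that the homeomorphism can be taken to restrict to a prescribed homeomorphism of the boundary (up to isotopy), which is where Freedman's unknotting theorem for classical-type $2$-knot exteriors with group $\mathbb{Z}$ is doing the real work, together with the fact that self-homeomorphisms of $S^2 \times S^1$ realize all of $\mathrm{Out}(\mathbb{Z}) = \mathbb{Z}/2$ and extend over the two model pieces. Provided that mapping-class bookkeeping is handled — and it is essentially the same bookkeeping used in the uniqueness statement for embeddings of $S^2 \times S^1$ cited from \cite{Ru80} — the gluing goes through and uniqueness follows. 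I would remark that this also shows the abelian embedding, when it exists, is essentially unique in the stronger sense, recovering the $S^2 \times S^1$ case.
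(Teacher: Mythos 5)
There is a fundamental gap: you have conflated ``homology handle'' with $S^2\times{S^1}$. A homology handle is only required to satisfy $H_1(M)\cong\mathbb{Z}$; it need not be homeomorphic to $S^2\times{S^1}$ (for instance $M=M(11_{n42})$, $0$-surgery on the Kinoshita--Terasaka knot, is aspherical and has an abelian embedding). Since $\partial{X_i}=\partial{Y_i}=M$, the regions $X_i$ and $Y_i$ cannot be homeomorphic to $D^3\times{S^1}$ and $S^2\times{D^2}$ unless $M\cong{S^2\times{S^1}}$: the boundaries do not match. Theorem \ref{neutral} gives only homotopy equivalences $X\simeq{S^1}$ and $Y\simeq{S^2}$. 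Consequently the entire strategy --- identify each piece with a standard model, then do mapping-class bookkeeping for self-homeomorphisms of $S^2\times{S^1}=\partial(D^3\times{S^1})$ and appeal to Freedman's unknotting theorem for $2$-knots --- is only available in the special case $M=S^2\times{S^1}$ (which is Aitchison's theorem plus unknotting, already treated in \S6.2), and says nothing about a general homology handle.

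The correct move is to compare the two embeddings directly to each other rather than to an external model. Since $X$ is aspherical with $\pi_X\cong\mathbb{Z}$ and $j_{X*}$, $j_{X_1*}$ are epimorphisms with the same kernel $\pi'$, Lemma \ref{asph} produces a homotopy equivalence of pairs $f:(X_1,M)\simeq(X,M)$ extending $id_M$; this is then upgraded to a homeomorphism \emph{rel} $M$ by surgery, using that the normal invariant set $H^2(X,\partial{X};\mathbb{F}_2)\cong{H_2(X;\mathbb{F}_2)}=0$ is trivial and that $L_5(\mathbb{Z})$ acts trivially on the structure set. For the simply connected regions $Y$ and $Y_1$ (homotopy equivalent to $S^2$, with trivial intersection pairing and $w_2=0$ since they sit in $S^4$), one invokes Boyer's classification of $1$-connected $4$-manifolds with prescribed boundary to get a homeomorphism $Y_1\cong{Y}$ compatible with the boundary inclusions. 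Neither of these steps appears in your argument, and without them the proof does not go through for a general homology handle.
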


\begin{proof}
Assume that $j$ and $j_1$ are abelian embeddings of $M$.
There is a homotopy equivalence of pairs 
$f:(X_1,M)\simeq(X,M)$ which extends $id_M$, 
by Lemma \ref{asph}.
The normal invariant map $\nu$ from $\mathcal{S}_{TOP}(X,\partial{X})$ to
$\mathcal{N}(X,\partial{X})$ is constant, 
since $H_2(X;\mathbb{F}_2)=0$, and 
the surgery obstruction group $L_5(\mathbb{Z})$ acts trivially on 
the structure set $\mathcal{S}_{TOP}(X,\partial{X})$.
(This follows from the Wall-Shaneson theorem and the existence of the
$E_8$-manifold \cite[Theorem 6.7]{FMGK}.)
Hence $f$ is homotopic {\it rel} $M$ to a homeomorphism $F$.

The other complementary components $Y$ and $Y_1$ are homotopy equivalent to $S^2$.
Since they are codimension-0 submanifolds of $S^4$ 
the intersection pairings on $H_2(Y)$ and $H_2(Y_1)$ are trivial, 
and $w_2(Y_1)=w_2(Y)=0$.
It then follows from \cite[Proposition 0.5]{Bo86} that there is a homeomorphism
$G:Y_1\cong{Y}$ such that $G\circ{j_{Y_1}}=j_Y$.
The map $h=F\cup{G}$ is a homeomorphism of $S^4$ such that $hj_1=j_2$. 
\end{proof}

An alternative approach to identifying $Y$ would be to use the fact that $w_2(Y)=0$ and 
the pinch construction based on the generator of $\pi_4(S^2)$ to show that there is a self-homotopy equivalence {\it rel} 
$\partial$ of the pair $(Y,M)$ with non-trivial normal invariant, as in \cite[\S3]{La84}.

\begin{ex}
The manifold $M=M(11_{n42})$ has an essentially unique abelian embedding,
although $M=M(K)$ for infinitely many distinct knots $K$.
\end{ex}

The knot $11_{n42}$ is the Kinoshita-Terasaka knot,
which is the simplest non-trivial knot with Alexander polynomial 1.
This bounds a smoothly embedded disc $D$ in $D^4$,
such that $\pi_1(D^4\setminus{D})\cong\mathbb{Z}$,
obtained by desingularizing a ribbon disc.
(See Figure 1.4 of \cite{AIL}.)
Hence $M$ has a smooth abelian embedding.
Since $11_{n42}$ has unkotting number 1, 
it has an annulus presentation, 
and so there are infinitely many knots $K_n$ such that $M(K_n)\cong{M}$ \cite{AJOT}. 
These knots must all have Alexander polynomial 1,
and so each determines an abelian embedding.
Are all of these embeddings smoothable,
and if so, are they smoothly equivalent?

\begin{theorem}
Let $K$ be a $1$-knot. 
Then $M=M(K)$ has a bi-epic embedding if and only if $K$ is homotopically ribbon.
\end{theorem}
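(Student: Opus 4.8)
The plan is to prove both implications by relating the fundamental group of a complementary region of an embedding $j_K:M(K)\to S^4$ to a slice disc for $K$, and conversely.

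First I would prove the ``only if'' direction. Suppose $M=M(K)$ has a bi-epic embedding $j:M\to S^4$ with complementary regions $X$ and $Y$. Since $\beta_1(M)=1$, we have $\chi(X)=0$ and $\chi(Y)=2$, and $H_1(X)\oplus H_1(Y)\cong H_1(M)\cong\mathbb{Z}$, with $H_2(X)\cong H^1(Y)$ and $H_2(Y)\cong H^1(X)$ by Lemma \ref{Hi17-lem2.1}. Bi-epicity forces $H_1(X)$ and $H_1(Y)$ to each be quotients of $\mathbb{Z}$; since their direct sum is $\mathbb{Z}$, one of them is $\mathbb{Z}$ and the other is trivial, so after relabelling $H_1(X)\cong\mathbb{Z}$ and $H_1(Y)=0$. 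Then $\pi_Y$ is perfect (as $\pi_1 j_Y$ is onto and $H_1(Y)=0$), and since $K$ is a knot in $S^3$ one knows $M(K)$ contains a standard genus-one Seifert-surface picture: removing a neighbourhood of a meridian-bounded $2$-disc surgery curve realizes $M(K)$ as $0$-framed surgery, so there is a properly embedded disc-like region. The key geometric move: let $N_\mu$ be a regular neighbourhood in $Y$ of a meridian $\mu$ of $K$ (which lies in $M\subset S^4$ and generates $H_1$ of the solid torus glued in during surgery). Undoing the $0$-framed surgery inside $\overline{S^4\setminus N_\mu}$ recovers $S^3$ with $K$ sitting in it, and the region on the $Y$-side becomes the exterior $D^4\setminus n(D)$ of a $2$-disc $D\subset D^4$ bounded by $K$; the condition that $\pi_1 j_Y$ is an epimorphism translates exactly into $\pi_1 M(K)\to\pi_1(D^4\setminus n(D))$ being onto, which is the definition of $K$ being homotopically ribbon. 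I would carry this out carefully using the description of $0$-framed surgery and $2$-knot surgery from \S2.9, since the homotopically ribbon condition in \S1.7 is phrased precisely as surjectivity of $\pi_1 M(K)\to\pi_1(D^4\setminus n(D))$.

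Next I would prove the ``if'' direction. Suppose $K$ bounds a disc $D\subset D^4$ with product neighbourhood $n(D)$ such that $\pi_1 M(K)\to\pi_1(D^4\setminus n(D))$ is an epimorphism. Take two copies of $D^4$, in the first of which $D$ is unknotted (the trivial slice disc $D_0$ exists since $K$ is in particular slice—being homotopically ribbon implies slice), and glue $D^4\setminus n(D)$ to $D^4\setminus n(D_0)$ along $M(K)=\partial(D^4\setminus n(D))$. A Mayer--Vietoris computation shows the result is a homology $4$-sphere, in fact $S^4$ by the usual argument (both pieces are homology $S^1\times D^3$, and since one is $S^1\times D^3$ itself the union is $1$-connected with $\chi=2$). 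The region coming from $D^4\setminus n(D_0)$ is $S^1\times D^3$, hence has abelian—indeed infinite cyclic—fundamental group, so $\pi_1 j$ restricted to that side is onto $\mathbb{Z}\cong H_1(M)$, and the region coming from $D^4\setminus n(D)$ has $\pi_1 j$ onto by hypothesis. Thus the embedding is bi-epic.

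The main obstacle I expect is the ``only if'' direction: specifically, verifying that the $Y$-side of a bi-epic embedding, after undoing the $0$-framed surgery, is genuinely the exterior of an embedded $2$-disc in $D^4$ bounded by $K$ (as opposed to merely a homology $S^1\times D^3$ containing such a disc only after further modification). This requires using that $M(K)$ is surgery on a knot in $S^3$, not just any homology handle, so that the surgery curve is an actual meridian bounding a disc in $S^3$; I would extract the disc by capping off the $Y$-side surgery solid torus and identifying the complement of its core with $D^4\setminus n(D)$ via the uniqueness of the $4$-ball neighbourhood (Brown--Mazur, Theorem \ref{GST}). Care is also needed to ensure the group-theoretic epimorphism survives this geometric identification, which is where the explicit form of $2$-knot surgery and the Van Kampen pushout diagram in \S2.2 will be used.
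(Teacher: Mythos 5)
Both directions of your proposal contain genuine errors, even though the high-level idea (identify one complementary region with a slice-disc exterior) is the right one.

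In the ``if'' direction, your construction does not produce $S^4$. Gluing $D^4\setminus{n(D)}$ to $D^4\setminus{n(D_0)}$ along $M(K)$ yields a closed $4$-manifold with Euler characteristic $0+0-\chi(M(K))=0$, not $2$, so it cannot be simply connected, let alone a homotopy $4$-sphere (if it were $1$-connected we would need $\chi=2+b_2\geq2$). Moreover the ``trivial slice disc'' $D_0$ with exterior $S^1\times{D^3}$ that you invoke exists only for knots with Alexander polynomial $1$ (by Freedman's theorem quoted in Chapter 7), not for every slice knot: the square knot is slice but bounds no disc whose complement has fundamental group $\mathbb{Z}$. The correct second piece is not another disc exterior but $Y=D^4\cup{h^2}$, the $4$-ball with a $0$-framed $2$-handle attached along $K$ and realized ambiently by the disc $D$ as in \S2.5; then $\pi_1Y=1$, so $j_{Y*}$ is trivially onto, $X=D^4\setminus{n(D)}$, and $j_{X*}$ is onto exactly because $D$ is a homotopy ribbon disc.

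In the ``only if'' direction you have the two regions interchanged, and the key geometric step is never actually carried out. A slice-disc exterior has $H_1\cong\mathbb{Z}$ and $\chi=0$, so it can only correspond to the region $X$ with $\chi(X)=0$; your claim that the $Y$-side (which has $\chi(Y)=2$ and $H_1(Y)=0$) ``becomes the exterior $D^4\setminus{n(D)}$'' is homologically impossible, and deleting a neighbourhood $N_\mu$ of a meridian does not change this ($\chi(Y\setminus{N_\mu})$ is still $2$). You flag the identification as the main obstacle, but it is precisely the content of the proof. The clean way to finish is the one the paper uses: let $W$ be the trace of the $0$-framed surgery on $K$, so that $W$ is $1$-connected, $\chi(W)=1$ and $\partial{W}=S^3\sqcup{M}$, and set $P=X\cup_MW$. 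Since $j_{X*}$ is an epimorphism, Van Kampen gives $\pi_1P=1$; also $\chi(P)=\chi(X)+1=1$ and $\partial{P}=S^3$, so $P$ is contractible and hence homeomorphic to $D^4$ by Freedman. The disc formed by $K\times[0,1]$ together with the core of the $2$-handle is properly embedded in $P$ with boundary $K$, and its exterior deformation retracts onto $X$, so surjectivity of $j_{X*}$ is exactly the homotopy-ribbon condition. (Note that only the epimorphism on the $\chi=0$ side is actually needed here.)
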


\begin{proof}
If $K$ is homotopically ribbon then the embedding corresponding to
the slice disc demonstrating this property is clearly bi-epic.

Suppose that $M$ has a bi-epic embedding.
Let $W$ be the trace of 0-framed surgery on $K$.
Then $W$ is 1-connected, $\chi(W)=1$ and $\partial{W}=S^3\amalg{M}$.
Let $P=X\cup_MW$. 
Then $P$ is 1-connected, since $\pi(j_X)$ is an epimorphism,
$\chi(P)=1$, and $\partial{P}=S^3$, and so $P\cong{D^4}$.
Clearly $K$ is homotopy ribbon in $P$.
\end{proof}

If $K$ is a fibred homotopically ribbon knot then the monodromy 
for the fibration extends over a handlebody \cite{CG}.
Hence $M$ bounds a mapping torus $X$ such that 
the inclusion $M\subset{X}$ induces an epimorphism from $\pi$ to $\pi_1X$
and an isomorphism on the abelianizations.
Let $Y$ be the 4-manifold obtained by adjoining a 2-handle to $D^4$ along $K$.
Then $\Sigma=X\cup_MY$ is a homotopy 4-sphere,
and the inclusion of $M$ into $\Sigma$ is bi-epic.

For example, 
if $k$ is a fibred 1-knot with exterior $E(k)$ and genus $g$, 
then $K=k\#\!-\!k$ is a fibred ribbon knot,
and $M(K)$ bounds a thickening $X$ of $E(k)\subset{S^3}\subset{S^4}$,
which fibres over $S^1$, with fibre $\natural^g(S^1\times{D^2})$.

In the next theorem we do not assume that $M$ is a homology handle.

\begin{theorem}
\label{mtor}
Let $M$ be a $3$-manifold and  $j:M\to{S^4}$ an embedding.
If $X$ fibres over $S^1$ and $j_{X*}$ is an epimorphism 
then $M$ is a mapping torus, 
the projection $p:M\to{S^1}$ extends to a map from $X$ to $S^1$,
$\chi(X)=0$, $X$ is aspherical and $\pi_X\cong{F(r)\rtimes\mathbb{Z}}$ 
for some $r\geq0$.
Conversely, if these conditions hold and the integral Novikov conjecture holds for $\pi_X$
then there is an embedding $\widehat{j}:M\to{S^4}$ which is  $s$-concordant to $j$ 
and such that $\widehat{X}$ fibres over $S^1$.
\end{theorem}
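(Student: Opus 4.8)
The plan is to prove the two implications separately, exploiting the structural results already established in this chapter, especially Theorem \ref{Hi17-thm5.1} on cohomological dimension and asphericity of complementary regions.

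First I would treat the forward direction. Suppose $X$ fibres over $S^1$, say with fibre $F$ a compact surface with boundary (since $\partial X = M$ is connected and $X$ is a compact $4$-manifold with connected boundary, the fibre is a compact $3$-manifold with connected boundary, homotopy equivalent to a $2$-complex; as $\pi_1X$ is finitely generated and $X$ is aspherical-candidate, $F$ will be a $K(F(r),1)$). Since $j_{X*}$ is an epimorphism, Theorem \ref{Hi17-thm5.1} gives $c.d.X \leq 2$, hence $X$ is homotopy equivalent to a $2$-complex, and so the fibre $F$ has $c.d. \leq 1$, i.e. $\pi_1F$ is free of some rank $r$ and $F \simeq \vee^r S^1$. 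Thus $\pi_X \cong F(r) \rtimes \mathbb{Z}$, and $\chi(X) = \chi(F(r))\cdot 0 = 0$ wait — more carefully $\chi(X) = \chi(F)\cdot\chi(S^1) = 0$. Since $c.d.\pi_X \leq 2$ and $\chi(X) = 0 = \chi(\pi_X)$, Theorem \ref{Hi17-thm5.1} shows $X$ is aspherical. Now restricting the fibration $X \to S^1$ to the boundary gives a fibration $M = \partial X \to S^1$ whose fibre is $\partial(\text{mapping torus fibre})$; equivalently, the projection $p: M \to S^1$ extends over $X$, and $M$ is a mapping torus. The one point needing care here is identifying the boundary of the fibre bundle structure: the restriction of a bundle projection $X \to S^1$ to $\partial X$ is again a bundle projection (the fibre restricting to its boundary piece), so $M$ fibres over $S^1$ as claimed, and the extension statement is immediate.

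For the converse, suppose $M$ is a mapping torus with $p: M \to S^1$, and $X$ is to be found fibring over $S^1$; the hypotheses record that $\chi(X) = 0$, $X$ aspherical, $\pi_X \cong F(r)\rtimes\mathbb{Z}$. The strategy mirrors \S9.1 and the surgery arguments of \S5.6--5.7. I would build a model fibred $4$-manifold $X_0 \to S^1$ with fibre $\natural^r(S^1\times D^2)$ whose monodromy on the boundary $\partial X_0 = \partial(\natural^r(S^1\times D^2))\text{-bundle}$ realizes the given monodromy of $M$ over $S^1$ — such a monodromy homeomorphism of the handlebody exists because the relevant automorphism of $F(r) = \pi_1F$ is realized by a self-homeomorphism of the handlebody (automorphisms of free groups are realized by homeomorphisms of handlebodies). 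Then $X_0$ is aspherical with $\pi_1X_0 \cong \pi_X$, $\chi(X_0) = 0$, and $\partial X_0 \cong M$. Using Lemma \ref{asph} (both $X$ and $X_0$ aspherical, boundary inclusions $\pi_1$-epic with matching kernels), the identity of $M$ extends to a homotopy equivalence of pairs $f: (X,M) \to (X_0,M)$. Since $\chi(X) = \chi(X_0)$ and $\pi_1f$ is an isomorphism, $f$ is a simple homotopy equivalence rel $\partial$ (using $Wh(\pi_X) = 0$, which holds for $F(r)\rtimes\mathbb{Z}$ by Waldhausen/Farrell-Jones for such groups, or by the Bass-Heller-Swan formula). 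Thus $f$ determines an element of the structure set $\mathcal{S}^s_{TOP}(X_0,\partial X_0)$.

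The main obstacle — and the step requiring the Novikov hypothesis — is showing this structure set element is the base point, i.e. that $(X,M)$ is $s$-cobordant rel $\partial$ to $(X_0,M)$, so that gluing $Y\times[0,1]$ (the other complementary region crossed with an interval) onto the $s$-cobordism produces the required $s$-concordance $\widehat{j}$ of $j$ to an embedding with fibred $\widehat{X} = X_0$. Here I would argue as in Lemma \ref{Hi17-lem9.1}: the normal invariant map $\nu: \mathcal{S}^s_{TOP}(X_0,\partial X_0) \to \mathcal{N}(X_0,\partial X_0) = H^2(X_0,\partial X_0;\mathbb{F}_2)\oplus\mathbb{Z}$ has image determined by $f$, and since $f$ is a homotopy equivalence its normal invariant vanishes; then $L_5(\mathbb{Z}[\pi_X])$ acts trivially on the structure set (by the Shaneson-Wall splitting $L_5(\mathbb{Z}[F(r)\rtimes\mathbb{Z}]) \cong L_5(\mathbb{Z}[F(r)])\oplus L_4(\mathbb{Z}[F(r)])$ combined with the $E_8$-manifold argument of \cite[Theorem 6.7]{FMGK}), and the assumed integral Novikov conjecture for $\pi_X$ makes $\sigma_5(X_0\times[0,1],\partial)$ injective (or, better, gives injectivity of the assembly map so that the structure set is a point since $X_0$ is aspherical), so $\mathcal{S}^s_{TOP}(X_0,\partial X_0)$ is trivial. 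Hence $X$ is $s$-cobordant rel $\partial$ to $X_0$, and the $s$-concordance is obtained by stacking this $s$-cobordism with $Y\times[0,1]$ inside $S^4\times[0,1]$, noting that the resulting ambient manifold is an $s$-cobordism of $S^4$ to itself. The delicate point to get right is precisely the interplay between the Novikov/Farrell-Jones input and the non-triviality of $L_5$; I would phrase it so that it is the $L$-theory assembly being split injective (valid for $F(r)\rtimes\mathbb{Z}$) that does the work, exactly parallel to the treatment of surface-times-circle complements in \S9.1.
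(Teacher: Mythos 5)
Your proposal follows essentially the same route as the paper in both directions: cohomological dimension $\leq 2$ from Theorem \ref{Hi17-thm5.1}, asphericity, freeness of the fibre group; then, for the converse, a fibred model with handlebody fibre, identification of the homotopy type of the pair, and the surgery argument (trivial $L_5$-action plus split injectivity of assembly from the Novikov hypothesis) to get the $s$-cobordism rel $\partial$, followed by regluing with $Y$. Two steps, however, are under-justified as written.

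First, in the forward direction you deduce that the fibre group is free from ``$X$ is homotopy equivalent to a $2$-complex, so the fibre $F$ has $c.d.\leq 1$.'' That implication is false in general: a subgroup of a group of cohomological dimension $2$ need only have cohomological dimension $\leq 2$ (consider $\mathbb{Z}^2\leq\mathbb{Z}^2$). What you actually need is Bieri's theorem that a finitely presentable normal subgroup of infinite index in a group of cohomological dimension $2$ is free, which is exactly what the paper cites; alternatively the paper first establishes asphericity via the $L^2$-Euler characteristic (Lemma \ref{L2}, using $\beta_1^{(2)}(\pi_X)=0$ for the extension $\pi_1F\to\pi_X\to\mathbb{Z}$) and only then invokes Bieri. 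Since your order of deduction makes freeness the load-bearing step for $\chi(\pi_X)=0$ and hence for asphericity via Theorem \ref{Hi17-thm5.1}, this gap propagates. Second, in the converse you take for granted that the fibred model $X_0$ can be built with $\partial X_0\cong M$ on the nose; realizing the monodromy automorphism of $F(r)$ by a homeomorphism of the handlebody only gives a model whose boundary is homotopy equivalent to $M$, and the paper has to work (via the Stallings--Zieschang result and Waldhausen rigidity for the Haken surface bundle $M$) to upgrade this to a homeomorphism on the boundary before the rel-$\partial$ surgery argument can start. Also, your first justification that the normal invariant of $f$ vanishes ``because $f$ is a homotopy equivalence'' is not an argument (every structure-set element is one); the correct point, which your parenthetical correction reaches, is that injectivity of $\sigma_4$ forces the image of the normal invariant map to be trivial.
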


\begin{proof}
If $X$ fibres over $S^1$, with fibre $F$, 
then $M=\partial{X}$ is the mapping torus
of a self-homeomorphism of $\partial{F}$
 and the projection $p:M\to{S^1}$ extends to a map from $X$ to $S^1$.
Moreover, $\chi(X)=0$ and $\pi_X$ is an extension of $\mathbb{Z}$
by the finitely presentable normal subgroup $\pi_1F$.
Hence $\beta_1^{(2)}(\pi_X)=0$,
by  \cite[Theorem 7.2.6]{Lue}.
If $j_{X*}$ is surjective then $c.d.X\leq2$,  and conversely,
by Theorem \ref{Hi17-thm5.1}.
Hence $X$ is aspherical, by Lemma \ref{L2}.
Conversely, if $X$ is aspherical and $c.d.X\leq2$
then $H_1(X,M:\mathbb{Z}[\pi_X])\cong{H^3(X;\mathbb{Z}[\pi_X])}=0$,
by Poincar\'e duality with coefficients $\mathbb{Z}[\pi_X]$.
Hence the preimage of $M$ in $\widetilde{X}$ is connected, 
and so $j_{X*}$ is surjective,
Hence $\pi_1F$ is free, by  \cite[Corollary 6.6]{Bie},
and so $\pi_X\cong{F(r)\rtimes\mathbb{Z}}$ for some $r\geq0$.

Suppose now that the conditions in the second sentence of the enunciation hold.
Let $X^\infty$ be the covering space associated to the subgroup $F(r)$, 
and  let $j_{X^\infty}$ be the inclusion of $M^\infty=\partial{X^\infty}$ into $X^\infty$.
Let $\tau$ be a generator of the covering group $\mathbb{Z}$.
Since $X$ is aspherical and $\pi_1X^\infty\cong{F(r)}$
there is a homotopy equivalence $h:X^\infty\to{N}=\natural^r(S^1\times{D^2})$.
Then there is a self-homeomorphism $t_N$ of $N$ such that $t_Nh\sim{h}\tau$.
Let $\theta:\partial{N}\to{N}$ be the inclusion,
and let $\widehat{X}=M(t_N)$ be the mapping torus of $t_N$.
Then there is a homotopy equivalence $\alpha:M^\infty\to\partial{N}$ 
such that $\theta\alpha\sim{hj_{X^\infty}}$,
by a result of Stallings and Zieschang. (See \cite[Theorem 2]{GK}.)
We may modify $h$ on a collar neighbourhood of $\partial{X^\infty}$ so that 
$h|_{\partial{X^\infty}}=\alpha$.
Hence $h$ determines a homotopy equivalence of pairs 
$(X,M)\simeq(\widehat{X},\partial\widehat{X})$.
Since $M$ and $\partial\widehat{X}$ are orientable (Haken) manifolds
we may further arrange that $h|_M:M\to\partial\widehat{X}$ is a homeomorphism.
The group $L_5(\mathbb{Z}[\pi_X])$ acts trivially on 
the $s$-cobordism structure set 
$\mathcal{S}^s_{TOP}(\widehat{X},\partial\widehat{X})$,
by \cite[Theorem 6.7]{FMGK}, and $\sigma_4(X,M)$ is an isomorphism
(as in Lemma \ref{Hi17-lem9.1}), 
since $X$ is aspherical and the integral Novikov conjecture holds for $\pi_X$.
Hence $X$ and $\widehat{X}$ are $s$-cobordant {\it rel} $\partial$,
by hypothesis.
The union $\Sigma=\widehat{X}\cup_MY$ is an homotopy 4-sphere, 
and so is homeomorphic to $S^4$.
The composite $\widehat{j}:M\subset\widehat{X}\subset\Sigma\cong{S^4}$
is clearly $s$-concordant to $j$ and $X(\widehat{j})=\widehat{X}$ fibres over $S^1$.
\end{proof}

In particular, if $\beta=1$ then $\chi(X)=0$ and $M$ is a rational homology handle.
In this case $\widehat{j}$ is equivalent to $j$, 
since the $s$-cobordism theorem holds over $\mathbb{Z}$.

\section{$\pi/\pi'\cong\mathbb{Z}^2$}

When  $\pi/\pi'\cong\mathbb{Z}^2$ there is again a simple necessary 
condition for $M$ to have an abelian embedding.

\begin{theorem}
\label{Z^2}
Let $M$ be a $3$-manifold with fundamental group $\pi$ 
such that $\pi/\pi'\cong\mathbb{Z}^2$.
If $j:M\to{S^4}$ is an abelian embedding then 
$X\simeq{Y}\simeq{S^1\vee{S^2}}$, and
$H_1(M;\mathbb{Z}[\pi_X])$ and $H_1(M;\mathbb{Z}[\pi_Y])$ 
are cyclic $\mathbb{Z}[\pi_X]$- and $\mathbb{Z}[\pi_Y]$-modules
(respectively), of projective dimension $\leq1$.
\end{theorem}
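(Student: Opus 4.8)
The plan is to exploit the constraints from Lemma \ref{Hi17-lem2.1} together with the hypothesis that $\pi_X$ and $\pi_Y$ are abelian. First I would observe that since $\pi/\pi'\cong\mathbb{Z}^2$ we have $\beta=\beta_1(M)=2$ and $\tau_M=0$, so by Lemma \ref{Hi17-lem2.1}(5) $\chi(X)=\chi(Y)=1$. Since the embedding is abelian, it is bi-epic (as noted after the definition of bi-epic, $H_1(j_X)$ and $H_1(j_Y)$ are always epimorphisms, and for abelian groups being epic on abelianizations is being epic), so $j_{X*}$ and $j_{Y*}$ are surjective. In particular $\pi_X$ and $\pi_Y$ are abelian quotients of $\pi$, hence quotients of $\mathbb{Z}^2$. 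By Lemma \ref{Hi17-lem2.1}(4), $\beta=\beta_1(X)+\beta_2(X)$, so $\beta_1(X)\in\{0,1,2\}$; but $\pi_X$ is a quotient of $\mathbb{Z}^2$ with $\chi(X)=1$, and Theorem \ref{Hi17-thm7.1} (applied with $\gamma_2\pi_X=\gamma_3\pi_X=1$, valid since $\pi_X$ is abelian) forces $\beta_1(X)=1$ once we rule out the exceptional torsion cases; since $\tau_M=0$ gives $\tau_X=0$, we get $\pi_X\cong\mathbb{Z}$, and symmetrically $\pi_Y\cong\mathbb{Z}$.

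Next I would use Theorem \ref{Hi17-thm5.1}: since $j_{X*}$ is an epimorphism, $c.d.X\leq2$; since $c.d.\pi_X=c.d.\mathbb{Z}=1\leq2$ but $\chi(X)=1\neq0=\chi(\pi_X)=\chi(S^1)$, the space $X$ is \emph{not} aspherical. By Lemma \ref{finite}, since $\pi_X\cong\mathbb{Z}$ is cyclic, $X\simeq S^1$, $S^1\vee S^2$, or $P_\ell$ with $\ell\neq0$; the torsion-free condition on $\pi_X$ and $H_1(X)\cong\mathbb{Z}$ rules out $P_\ell$, and $\chi(X)=1>0=\chi(S^1)$ rules out $S^1$, leaving $X\simeq S^1\vee S^2$. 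The same argument gives $Y\simeq S^1\vee S^2$.

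For the module statements, I would run the argument of Theorem \ref{neutral} (and of \S1.7 on Blanchfield pairings) for the infinite cyclic covering of the pair $(X,M)$. Write $\Gamma=\mathbb{Z}[\pi_X]\cong\Lambda$. Since $c.d.X\leq2$ and $\pi_X\cong\mathbb{Z}$, the equivariant chain complex $C_*(\widetilde X)$ is chain homotopy equivalent to a finite free $\Lambda$-complex of length $\leq2$; then $H_1(M;\Gamma)=\pi'/\pi''$ fits into the homology exact sequence of the pair $(\widetilde X,\partial\widetilde X)$, and equivariant Poincar\'e-Lefshetz duality identifies $H_2(X,M;\Gamma)\cong\overline{H^2(X;\Gamma)}$ and so on. A presentation-matrix count using $\chi(X)=1$ shows the relevant relative homology module, and hence $H_1(M;\Gamma)$, admits a presentation with one more generator than relator, i.e.\ after the Wang/duality analysis it is generated by a single element; the projective (in fact free, since $\Lambda$-projectives are free) dimension bound $\leq1$ comes from $c.d.X\leq2$ together with the fact that $H_1(M;\Gamma)$ is $\Lambda$-torsion (as $\beta_1(M)=2$ gives that the infinite cyclic cover has finite first rational Betti number after the first $\Lambda$-generator is accounted for). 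The symmetric statement holds for $\pi_Y$.

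The main obstacle I expect is pinning down precisely that $H_1(M;\mathbb{Z}[\pi_X])$ is \emph{cyclic} as a module (not merely finitely generated of small rank) and that its projective dimension is exactly $\leq1$ rather than $\leq2$; this requires carefully tracking the ranks in the finite free resolution coming from $c.d.X\leq2$ and $\chi(X)=1$, and using that the 2-cycle module $\pi_2X$ is free of rank $\chi(X)-\chi(\pi_X)$ (as in the proof of Theorem \ref{Hi17-thm5.1}) to split off the free part, leaving a length-1 resolution of $H_1(M;\Gamma)$. The homotopy-type conclusion $X\simeq Y\simeq S^1\vee S^2$, by contrast, is a fairly direct consequence of the results already in hand (Theorems \ref{Hi17-thm5.1}, \ref{Hi17-thm7.1} and Lemma \ref{finite}).
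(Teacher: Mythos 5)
Your proposal is correct and follows essentially the same route as the paper: abelianness plus Theorem \ref{Hi17-thm7.1} pins down $\pi_X\cong\pi_Y\cong\mathbb{Z}$ and $\chi(X)=\chi(Y)=1$, Theorem \ref{Hi17-thm5.1} gives $c.d.\leq2$ and hence the homotopy type (the paper quotes Corollary \ref{2concor} where you use Lemma \ref{finite}, but this is the same content), and the module statements come from the long exact sequence of $(X,M)$ with coefficients $\mathbb{Z}[\pi_X]$ together with equivariant Poincar\'e--Lefshetz duality, yielding $\mathbb{Z}[\pi_X]\cong H_2(X,M;\mathbb{Z}[\pi_X])\twoheadrightarrow H_1(M;\mathbb{Z}[\pi_X])$ with kernel the image of $H_2(X;\mathbb{Z}[\pi_X])\cong\mathbb{Z}[\pi_X]$. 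Two small slips to repair: Lemma \ref{Hi17-lem2.1}(5) alone does not give $\chi(X)=\chi(Y)=1$ for $\beta=2$ (it allows $\chi(X)=-1$); that case is excluded only by the abelianness argument via Theorem \ref{Hi17-thm7.1} that you invoke immediately afterwards. And $H_1(M;\mathbb{Z}[\pi_X])$ need not be a torsion module: in the exact sequence $\mathbb{Z}[\pi_X]\to\mathbb{Z}[\pi_X]\to H_1(M;\mathbb{Z}[\pi_X])\to0$ the first map may be zero, in which case $H_1(M;\mathbb{Z}[\pi_X])\cong\mathbb{Z}[\pi_X]$ is free; the bound $pd\leq1$ holds in either case (free, or cokernel of an injection of $\mathbb{Z}[\pi_X]$ into itself), so the conclusion survives but your stated reason for it does not.
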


\begin{proof}
Since $j$ is abelian $\pi_X\cong\pi_Y\cong\mathbb{Z}$ and $\chi(X)=\chi(Y)=1$.
Moreover,
since $j_{X*}$ and $j_{Y*}$ are epimorphisms $c.d.X\leq2$ and $c.d.Y\leq2$,
by  Theorem \ref{Hi17-thm5.1}.
Hence $X\simeq{Y}\simeq{S^1\vee{S^2}}$, by Corollary \ref{2concor}.

We again consider the homology exact sequences
of the infinite cyclic covers of the pairs $(X,M)$ and $(Y,M)$,
in conjunction with equivariant Poincar\'e-Lefshetz duality.
Since $H_i(X;\mathbb{Z}[\pi_X])=0$  for $i\not=0$ or 2
and $H_2(X;\mathbb{Z}[\pi_X])\cong\mathbb{Z}[\pi_X]$,
we have $H_2(X,M;\mathbb{Z}[\pi_X])\cong
\overline{H^2(X;\mathbb{Z}[\pi_X])}\cong\mathbb{Z}[\pi_X]$ also.
Hence there is an exact sequence 
\[
0\to{H_2(M;\mathbb{Z}[\pi_X])}\to\mathbb{Z}[\pi_X]\to
\mathbb{Z}[\pi_X]\to{H_1(M;\mathbb{Z}[\pi_X])}\to0.
\]
Therefore either $H_1(M;\mathbb{Z}[\pi_X])\cong{H_2(M;\mathbb{Z}[\pi_X])}\cong\mathbb{Z}[\pi_X]$
or $H_2(M;\mathbb{Z}[\pi_X])$ is a cyclic torsion module with a short free resolution, 
and $H_2(M;\mathbb{Z}[\pi_X])=0$.
In either case $H_1(M;\mathbb{Z}[\pi_X])$ is a cyclic module of projective dimension $\leq1$.

A similar argument applies for the pair $(Y,M)$.
\end{proof}

To use Theorem \ref{Z^2} to show that some $M$ has no abelian embedding
we must consider all possible bases for $Hom(\pi,\mathbb{Z})$,
or, equivalently, for $\pi/\pi'$.

\begin{ex}
Let $L$ be the link obtained from the Whitehead link $Wh=5^2_1$ 
by tying a reef knot ($3_1\#\!-\!3_1$) 
in one component. Then no embedding of $M=M(L)$ is abelian.
\end{ex}

The link group $\pi{L}$ has the presentation
\[
\langle{a,b,c,r,s,t,u,v,w}\mid
{as^{-1}vsa^{-1}=w=brb^{-1}},~cac^{-1}=b,~rcr^{-1}=a,~wcw^{-1}=b,
\]
\[
rvr^{-1}=tut^{-1},~sts^{-1}=u,~usu^{-1}=t,~vsv^{-1}=r\rangle,
\]
and $\pi=\pi_1M\cong\pi{L}/\langle\langle\lambda_a,\lambda_r\rangle\rangle$,
where $\lambda_a=c^{-1}wr^{-1}a$ and $\lambda_r=vu^{-1}s^{-1}t^{-1}rsa^{-1}b$
are the longitudes of $L$.
Let $b=\beta{a}$, $c=\gamma{a}$ and $t=r\tau$. 
Then $w=\gamma{r}$ in $\pi$, and so  $\pi$ has the presentation
\[
\langle{a,\beta,\gamma,r,s,\tau,v}\mid
[r,a]=\gamma^{-1}\beta{r}\beta^{-1}r^{-1}=r\gamma^{-1}r^{-1},~
\gamma{a}\gamma^{-1}a^{-1}=\beta,~sr\tau{s}=r\tau{sr\tau},
\]
\[
as^{-1}vsa^{-1}=\gamma{r},~vs=rv,~
v=\tau{s}r\tau{s^{-1}}\tau^{-1}=\beta^{-1}s^{-1}\tau{s^2}r\tau{s^{-1}}\rangle.
\]
Now let $s=\sigma{r}$ and $v=\xi{r}$.
Then $\pi/\pi''$ has the metabelian presentation
\[
\langle{a,\beta,\gamma,r,\sigma,\tau,\xi}\mid
[r,a]=\gamma^{-1}\beta.{r}\beta^{-1}r^{-1}=r\gamma^{-1}r^{-1},~\gamma.{a}\gamma^{-1}a^{-1}=\beta,~
\]
\[
r^{-1}\sigma{r}.r\tau\sigma{r^{-1}}=\tau\sigma.{r^2}\tau{r^{-2}},
~ar^{-1}\sigma^{-1}\xi{ra^{-1}}.a\sigma{a^{-1}}=\gamma.r\gamma^{-1}r^{-1},~\xi={r}\xi\sigma^{-1}r^{-1},
\]
\[
\xi=\tau\sigma.{r^2}\tau{r^{-2}}.r\sigma^{-1}\tau^{-1}r^{-1}=
\beta^{-1}.r^{-1}\tau{r}.\sigma.{r^2}\tau{r^{-2}}.r\sigma^{-1}r^{-1},~[[\,,\,],[\,,\,]]=1\rangle,
\]
in which $\beta,\gamma,\sigma,\tau$ and $\xi$ represent elements of $\pi'$, 
which is the normal closure of the images of these generators.
The first relation expresses the commutator $[r,a]$ 
as a product of conjugates of these generators.
Using the third relation to eliminate $\beta$, 
we see that $\pi'/\pi''$ is generated as a module over 
$\mathbb{Z}[\pi/\pi']=\mathbb{Z}[a^\pm,r^\pm]$ by the images of 
$\gamma,\sigma,\tau$ and $\xi$, with the relations
\[
(1-r)[\gamma]=0,
\]
\[
(r^2-r+1)[\sigma]=r(r^2-r+1)[\tau]=0,
\]
\[
[\xi]=(1-r)[\sigma],
\]
and 
\[
2[\sigma]+2[\tau]=(a-1)[\gamma].
\]
If we extend coefficients to the rationals to simplify the analysis, 
we see that $P=H_1(M;\mathbb{Q}[\pi/\pi'])=\mathbb{Q}\otimes\pi'/\pi''$
is generated by $[\gamma]$ and $[\tau]$,
with the relations 
\[(1-r)[\gamma]=(r^2-r+1)[\tau]=0.
\]
Let $\{x,y\}$ be a basis for $\pi/\pi'$. Then $x=a^mr^n$ and $y=a^pr^q$,
where $|mq-np|=1$.
Let $\{x^*,y^*\}$ be the Kronecker dual basis for $Hom(\pi,\mathbb{Z})$,
and let $M_x$ and $M_y$ be the infinite cyclic covering spaces corresponding 
to $\mathrm{Ker}(x^*)$ and $\mathrm{Ker}(y^*)$, respectively.
Then $H_1(M_x;\mathbb{Q})\cong{(P/(y-1)P\oplus\langle{y}\rangle)/(x.y=y+[x,y])}$.
If this module is cyclic as a module over the PID $\mathbb{Q}[x,x^{-1}]$ then so is 
the submodule 
\[
P/(y-1)P\cong\mathbb{Q}[\pi/\pi']/(r^2-r+1,y-1)\oplus\mathbb{Q}[\pi/\pi']/(r-1,y-1).
\]
On substituting $y=a^pr^q$ we find that this is so if and only if $p=0$ and $q=\pm1$.
But then $x=a^{\pm1}$, and a similar calculation show that 
$H_1(M_y;\mathbb{Q})$ is not cyclic  as a $\mathbb{Q}[y,y^{-1}]$-module.
Thus no basis for $\pi/\pi'$ satisfies the criterion of Theorem \ref{Z^2}, 
and $M$ has no abelian embedding.

We shall assume henceforth that $M=M(L)$, 
where $L$ is a 2-component link with
components slice knots and linking number $\ell=0$.
Let $x$ and $y$ be the images of the meridians of $L$ in $\pi$, 
and let $D_x$ and $D_y$ be slice discs for the components of $L$, 
embedded on opposite sides of the equator $S^3\subset{S^4}$.
Then the complementary regions for the embedding $j_L$ determined by $L$ are
$X_L=(D^4\setminus{N(D_x)})\cup{D_y\times{D^2}}$ and
$Y_L=(D^4\setminus{N(D_y)})\cup{D_x\times{D^2}}$.
The kernels of the natural homomorphisms from $\pi$ to $\pi_{X_L}$ 
and $\pi_{Y_L}$ are the normal closures of $y$ and $x$, respectively.
If one of the components of $L$ is unknotted then 
the corresponding complementary region is a handlebody of the form $S^1\times{D^3}\cup{h^2}$.
Inverting the handle structure gives a handlebody structure
$M\times[0,1]\cup{h^2}\cup{h^3}\cup{h^4}$.

If the components of $L$ are unknotted then $j_L$ is abelian,
and $\pi_X\cong\pi_Y\cong\mathbb{Z}$.

If $L$ is interchangeable there is a self-homeomorphism of $M(L)$ 
which swaps the meridians.
Hence $X_L$ is homeomorphic to $Y_L$, and $S^4$ is a twisted double.

To find examples where the complementary regions are {\it not\/} 
homeomorphic we should start with a link $L$ which is not interchangeable.
The simplest condition that ensures that a link with unknotted components 
is not interchangeable is asymmetry of the Alexander polynomial,
and the smallest such link with linking number 0 is $8^2_{13}$.
Since $\pi=\pi_1M$ is a quotient of $\pi{L}$, 
there remains something to be checked.

\begin{ex}
The complementary regions of the embedding of  $M=M(8^2_{13})$
determined by  the link $L=8^2_{13}$ are not homeomorphic
(although they are homotopy equivalent).
\end{ex}

The link group $\pi{L}=\pi8^2_{13}$ has the presentation
\[
\langle{s,t,u,v,w,x,y,z}\mid{yv=wy}, ~zx=wz,~ty=zt,~uy=zu,~sv=us,~vs=xv,
\]
\[
wu=tw,~xs=tx\rangle
\]
and the longitudes are $u^{-1}t$ and $x^2z^{-1}ys^{-1}w^{-1}xv^{-1}$.
Hence $\pi=\pi_1M$ has the presentation
\[
\langle{s,t,v,w,x,y}\mid{yv=wy}, ~tyt^{-1}x=wtyt^{-1}\!,~x^2ty^{-1}t^{-1}ys^{-1}w^{-1}xv^{-1}=1,
\]
\[sv=ts,~vs=xv,~wt=tw,~xs=tx\rangle.
\]
Setting $s=x\alpha$, $t=x\beta$, $v=x\gamma$ and $w=x\delta$, we obtain the presentation
\[
\langle{\alpha,\beta,\gamma,\delta,x,y}\mid
{[x,y]=xy\gamma(xy)^{-1}}\!.x\delta{x^{-1}},~
\beta.{y}\beta{y^{-1}}=\delta.{x}\beta{x}^{-1}\!.xy\beta^{-1}(xy)^{-1}\!.[x,y],
\]
\[
x^2\beta{x^{-2}}\!.x^2y^{-1}\beta^{-1}yx^{-2}=\gamma\delta.x\alpha{x^{-1}}.xy^{-1}[x,y]^{-1}yx^{-1}~
\]
\[
\delta{x}\beta=\beta{x}\delta,~\alpha{x}\gamma=\beta{x}\alpha,~\gamma{x}\alpha=x\gamma,~x\alpha=\beta{x}
\rangle
\]
in which $\alpha,\beta,\gamma$ and $\delta$ represent elements of $\pi'$, 
which is the normal closure of the images of these generators.
The subquotient $\pi'/\pi''$ is generated as a module over 
$\mathbb{Z}[\pi/\pi']\cong\Lambda_2=\mathbb{Z}[x^\pm,y^\pm]$
by the images of $\gamma$ and $\delta$,
with the relations 
\[
(x+1)(y-1)(x-1)[\gamma]=xy[\gamma]-x[\delta],
\]
\[(x-1)^2[\gamma]=(x-1)[\delta],
\]
and
\[
(x^2-x+1)[\gamma]=0,
\]
since $[\alpha]=x^{-1}(x-1)[\gamma]$ and $[\beta]=(x-1)[\gamma]$.
Adding the first two relations and rearranging gives
\[
[\delta]=-((x^2-x+1)y+2-2x) [\gamma]=2(x-1)[\gamma].
\]
Hence $\pi'/\pi''\cong\Lambda_2/( x^2-x+1,3(x-1)^2)=\Lambda_2/(x^2-x+1,3)$.
As a module over the subring $\mathbb{Z}[x,x^{-1}]$,
this is infinitely generated, but as a module over 
$\mathbb{Z}[y,y^{-1}]$ it has two generators.
Therefore there is no automorphism of $\pi$ which  induces an isomorphism 
$\mathrm{Ker}(j_{X*})=\pi'\rtimes\langle{x}\rangle
\cong\mathrm{Ker}(j_{Y*})=\pi'\rtimes\langle{y}\rangle$.
Hence $(X,M)$ and $(Y,M)$ are not homotopy equivalent as pairs, 
although $X\simeq{Y}$.

Does $M$ have any other abelian embeddings 
with neither complementary component homeomorphic to $X$,
perhaps corresponding to distinct link presentations?
Is this 3-manifold homeomorphic to a 3-manifold $M(\tilde{L})$ 
via a homeomorphism which does not preserve the meridians?

There is just one 3-manifold with $\pi$ elementary amenable and
$\beta=2$ which embeds in $S^4$.
This is the $\mathbb{N}il^3$-manifold $M=M(Wh)$ and $\pi\cong{F(2)}/\gamma_3F(2)$.
The embedding $j_{Wh}$ is abelian, 
since the components of $Wh$ are unknotted, 
but the complementary regions are not aspherical and so
Lemma \ref{asph} does not apply.
All epimorphisms from $\pi$ to $\mathbb{Z}$ are equivalent 
under composition with automorphisms,
and each automorphism of $\pi$ is induced by a self-diffeomorphism of $M$, 
since $M$ is Haken.
Thus if $j:M\to{S^4}$ is another abelian embedding and we fix a homotopy equivalence
$h:X\to{X(j_{Wh})}$ then we may assume that $\pi_1(h\circ{j_X})=j_{X*}$.
Can we choose $h$ to be a homotopy equivalence of pairs, {\it rel} $M$?

We have a more general result, albeit rather weak.

\begin{theorem}
\label{Z^2rel}
Let $M$ be a $3$-manifold with fundamental group $\pi$
such that ${\pi/\pi'\cong\mathbb{Z}^2}$.
Then there are at most $4$ abelian embeddings of $M$ with given homotopy types for the pairs $(X,M)$ and $(Y,M)$.
\end{theorem}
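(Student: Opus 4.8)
The plan is to attach to each abelian embedding $j$ in the given class a pair of elements of two structure sets, each of which has at most two elements, and to show that this assignment is injective on equivalence classes; the bound $4=2\cdot2$ then follows. By Theorem~\ref{Z^2}, every abelian embedding $j$ of $M$ is bi-epic, with $\pi_X\cong\pi_Y\cong\mathbb{Z}$, $\chi(X)=\chi(Y)=1$, $c.d.X\le2$, $c.d.Y\le2$ and $X\simeq Y\simeq S^1\vee S^2$. Fix one such embedding $j_0$, with complementary regions $X_0$ and $Y_0$, and let $j$ be any abelian embedding of $M$ for which $(X(j),M)$ and $(Y(j),M)$ are homotopy equivalent, as pairs, to $(X_0,M)$ and $(Y_0,M)$ respectively. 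Choosing homotopy equivalences of pairs $f\colon (X(j),M)\to(X_0,M)$ and $g\colon(Y(j),M)\to(Y_0,M)$ (and replacing $j$ by an equivalent embedding so that $f$ and $g$ may be taken to restrict to a common homeomorphism of $M$), and using $Wh(\mathbb{Z})=0$ to see that these are simple, we obtain classes $\xi_X(j)\in\mathcal{S}^s_{TOP}(X_0,M)$ and $\xi_Y(j)\in\mathcal{S}^s_{TOP}(Y_0,M)$.

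Next I would bound these structure sets. Since $\pi_{X_0}\cong\mathbb{Z}$ is good, $\mathcal{S}^s_{TOP}(X_0,M)=\mathcal{S}_{TOP}(X_0,M)$ and the four-term surgery sequence of \S1.8 applies. The group $L_5(\mathbb{Z})$ acts trivially on the structure set, by the Wall--Shaneson theorem together with the existence of the $E_8$-manifold (\cite[Theorem 6.7]{FMGK}), so the normal invariant map $\nu_{X_0}$ is injective. Its image lies in $\ker\sigma_4$; and since $\sigma_4$ is injective on the $\mathbb{Z}$-summand of $\mathcal{N}(X_0,M)\cong H^2(X_0,M;\mathbb{F}_2)\oplus\mathbb{Z}$, projection to the first factor embeds the subgroup $\ker\sigma_4$ into $H^2(X_0,M;\mathbb{F}_2)\cong H_2(X_0;\mathbb{F}_2)\cong\mathbb{F}_2$. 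Hence $|\mathcal{S}^s_{TOP}(X_0,M)|\le2$, and the same argument gives $|\mathcal{S}^s_{TOP}(Y_0,M)|\le2$.

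Finally I would check that $j\mapsto(\xi_X(j),\xi_Y(j))$ is injective on equivalence classes (up to the harmless possibility, when $(X_0,M)\simeq(Y_0,M)$, of swapping the two coordinates, which only lowers the count). If $\xi_X(j)=\xi_X(j')$ then $X(j)$ and $X(j')$ are $s$-cobordant rel $\partial$, and, $\pi_1\cong\mathbb{Z}$ being good, the $5$-dimensional $s$-cobordism theorem produces a homeomorphism $F\colon X(j)\to X(j')$; likewise $\xi_Y(j)=\xi_Y(j')$ yields $G\colon Y(j)\to Y(j')$, and these may be chosen to restrict to one and the same self-homeomorphism of $M$. Then $h=F\cup_M G$ is a self-homeomorphism of $S^4=X(j)\cup_M Y(j)=X(j')\cup_M Y(j')$ with $h\circ j=j'$, so $j$ and $j'$ are equivalent. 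Therefore the number of equivalence classes is at most $|\mathcal{S}^s_{TOP}(X_0,M)|\cdot|\mathcal{S}^s_{TOP}(Y_0,M)|\le4$.

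The main obstacle will be the bookkeeping of boundary identifications in the first and last steps. Unlike the homology-handle case of Theorem~\ref{homhandle}, here $X\simeq S^1\vee S^2$ is not aspherical, so Lemma~\ref{asph} is unavailable; one must take the homotopy types of the two pairs as given data and, moreover, arrange the homotopy equivalences $f,g$ (respectively the homeomorphisms $F,G$) of the two sides to agree on $M$. Controlling this — by showing the relevant self-homeomorphisms of $M$ are realised over a complementary region, or that any discrepancy is absorbed by a self-equivalence of $X_0$ or $Y_0$ — is the crux. The remaining inputs (additivity of $\sigma_4$, so that $\ker\sigma_4$ is a subgroup; triviality of the $L_5(\mathbb{Z})$-action; goodness of $\mathbb{Z}$ and $Wh(\mathbb{Z})=0$) are standard.
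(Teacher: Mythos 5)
Your proposal is correct and takes essentially the same route as the paper: the paper's (two-line) proof likewise bounds $\mathcal{S}_{TOP}(W,M)$ for each complementary region $W\simeq{S^1}\vee{S^2}$ by $|H_2(W;\mathbb{F}_2)|=2$, using the triviality of the $L_5(\Lambda)$-action from \cite[Theorem 6.7]{FMGK}, and multiplies the two bounds. Your additional care over matching the boundary identifications when gluing the two $s$-cobordism/homeomorphism comparisons along $M$ addresses a point the paper's proof passes over in silence.
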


\begin{proof}
Let $W$ be a $4$-manifold with connected boundary $M$ 
and such that $W\simeq{S^1}\vee{S^2}$.
Then $\mathbb{Z}[\pi_1W]\cong\Lambda$, 
and $L_5(\Lambda)$ acts trivially on $\mathcal{S}_{TOP}(W,M)$,
by \cite[Theorem 6.7]{FMGK}.
Since $H_2(W;\mathbb{F}_2)=\mathbb{Z}/2\mathbb{Z}$,
there are at most 2 possibilities for each complementary region
realizing the given homotopy types.
\end{proof}

\section{The higher rank cases}

Theorems \ref{neutral} and \ref{Z^2} have analogues when $\beta=3$, 4 or 6.

\begin{theorem}
\label{b=3}
Let $M$ be a $3$-manifold with fundamental group $\pi$ 
such that $\pi/\pi'\cong\mathbb{Z}^3$.
If $j:M\to{S^4}$ is an abelian embedding then $X\simeq{T}$ 
and ${Y}\simeq{S^1\vee2{S^2}}$,
while  $H_1(M;\mathbb{Z}[\pi_X])\cong\mathbb{Z}$ and 
$H_1(M;\mathbb{Z}[\pi_Y])$ is a torsion $\mathbb{Z}[\pi_Y]$-module
of projective dimension $1$ and which can be generated by two elements.
The component $X$ is determined up to homeomorphism by its boundary $M$
and the homomorphism $j_{X*}$.
\end{theorem}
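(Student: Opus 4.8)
The plan is to combine the structural facts already available for abelian embeddings with a surgery-theoretic uniqueness argument, exactly parallel to the cases $\beta=1$ (Theorem \ref{homhandle}) and $\beta=2$ (Theorems \ref{Z^2}, \ref{Z^2rel}). First I would establish the homotopy type. Since $j$ is abelian, $\pi_X\cong\mathbb{Z}^2$, $\pi_Y\cong\mathbb{Z}$ (or conversely, but $\chi(X)\le\chi(Y)$ forces $\pi_X\cong\mathbb{Z}^2$ here, as $\chi(X)=1-\beta=-2$ is impossible for abelian $\pi_X$ by Theorem \ref{Hi17-thm7.1} only if $\beta=6$; in fact by Lemma \ref{Hi17-lem2.1}(5) with $\beta=3$ we get $\chi(X)\in\{-2,0\}$, and abelianness with the cup-product corollary pins $\pi_X\cong\mathbb{Z}^2$, $\chi(X)=0$, $\chi(Y)=2$, $\pi_Y\cong\mathbb{Z}$). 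Then $j_{X*}$ and $j_{Y*}$ are epimorphisms since $H_1$ of each inclusion is onto and the groups are abelian, so $c.d.X\le2$ and $c.d.Y\le2$ by Theorem \ref{Hi17-thm5.1}. Since $\chi(X)=0=\chi(\mathbb{Z}^2)$ and $c.d.\mathbb{Z}^2=2$, Theorem \ref{Hi17-thm5.1} gives $X$ aspherical, hence $X\simeq T$. For $Y$, since $\pi_Y\cong\mathbb{Z}$ and $\chi(Y)=2$, Corollary \ref{2concor} gives $Y\simeq S^1\vee 2S^2$.

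Next I would extract the module statements from equivariant Poincaré–Lefschetz duality applied to the infinite cyclic (resp. $\mathbb{Z}^2$) covers of the pairs $(X,M)$ and $(Y,M)$, just as in the proof of Theorem \ref{Z^2}. For the pair $(X,M)$ with $\Lambda_X=\mathbb{Z}[\pi_X]=\mathbb{Z}[\mathbb{Z}^2]$: since $X$ is aspherical with $c.d.\,X\le2$ we have $H_i(X;\Lambda_X)=0$ for $i\ge1$ except nothing survives beyond $H_0$, actually $H_2(X;\Lambda_X)=0$ because $X\simeq T$; then $H_j(X,M;\Lambda_X)\cong\overline{H^{4-j}(X;\Lambda_X)}$ vanishes for $j\ge3$, and the exact sequence of the pair, together with $H_3(X,M;\Lambda_X)\cong\overline{H^1(X;\Lambda_X)}$ and $H_2(X,M;\Lambda_X)\cong\overline{H^2(X;\Lambda_X)}\cong\mathbb{Z}$ (dual to $H_2(X)\cong\mathbb{Z}$), yields that $H_1(M;\Lambda_X)\cong\mathbb{Z}$ as claimed. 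For $(Y,M)$ with $\Lambda_Y=\mathbb{Z}[\mathbb{Z}]$: $H_2(Y;\Lambda_Y)\cong\Lambda_Y^2$ (free of rank $\chi(Y)=2$ since $\pi_2Y$ is the module of $2$-cycles), $H_2(Y,M;\Lambda_Y)\cong\overline{H^2(Y;\Lambda_Y)}\cong\Lambda_Y^2$, and the sequence $0\to H_2(M;\Lambda_Y)\to\Lambda_Y^2\to\Lambda_Y^2\to H_1(M;\Lambda_Y)\to 0$ (using $H_1(Y;\Lambda_Y)=0$) exhibits $H_1(M;\Lambda_Y)$ as a $2$-generator module of projective dimension $\le1$; torsion-ness follows because $H_1(M;\mathbb{Q}(t))=0$ (as $H_1(M;\mathbb{Q})\to H_1(Y;\mathbb{Q})$ is onto with $H_1(Y;\mathbb{Q})\cong\mathbb{Q}$ accounting for all of $\beta_1$... wait, $\beta=3$ but $\beta_1(Y)=1$; the torsion-ness of $H_1(M;\Lambda_Y)$ comes from the Wang sequence argument of Theorem \ref{Kawabeta1} applied to $Y$, since $t-1$ acts invertibly — I would spell this out).

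Finally, the uniqueness of $X$ up to homeomorphism given $M=\partial X$ and $j_{X*}$. This is the analogue of the $X\simeq S^1$ part of Theorem \ref{homhandle}, now with $X\simeq T=K(\mathbb{Z}^2,1)$. Given two abelian embeddings with complementary regions $X,X'$ both carrying the same epimorphism $\pi\twoheadrightarrow\mathbb{Z}^2$, Lemma \ref{asph} produces a homotopy equivalence of pairs $(X',M)\simeq(X,M)$ extending $id_M$, because $X$ and $X'$ are aspherical, $j_{X*}=j_{X'*}$, and hence $\pi_1 j_{X'}$ and $\pi_1 j_X$ have the same kernel. Then I invoke surgery: $\mathbb{Z}^2$ has a metric of non-positive curvature so the integral Novikov conjecture holds, $X$ is aspherical, $Wh(\mathbb{Z}^2)=0$, so $\sigma_4(X,\partial X)$ is an isomorphism and $L_5(\mathbb{Z}[\mathbb{Z}^2])$ acts trivially on $\mathcal{S}_{TOP}^s(X,\partial X)$ — this is exactly the machinery used in Lemma \ref{Hi17-lem9.1}. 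Therefore the homotopy equivalence of pairs rel $M$ is homotopic to a homeomorphism $X'\cong X$ commuting with the inclusions, as desired. The main obstacle is the torsion/projective-dimension bookkeeping in the middle paragraph — in particular confirming $H_1(M;\Lambda_Y)$ is genuinely a torsion module and that its projective dimension is exactly $1$ (not $0$), which requires checking that the map $\Lambda_Y^2\to\Lambda_Y^2$ above is injective with non-free cokernel, i.e. that $H_2(M;\Lambda_Y)=0$; this follows from duality ($H_2(M;\Lambda_Y)\cong\overline{H^1(M;\Lambda_Y)}$ and the latter injects into $H^1(Y;\Lambda_Y)\oplus H^1(X;\Lambda_Y)$-type considerations) but needs care. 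The homotopy-type and surgery steps are then routine given the cited results.
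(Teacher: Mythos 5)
Your first paragraph (homotopy types via Theorems \ref{Hi17-thm7.1}, \ref{Hi17-thm5.1} and Corollary \ref{2concor}), the computation $H_1(M;\mathbb{Z}[\pi_X])\cong\overline{H^2(X;\mathbb{Z}[\pi_X])}\cong\mathbb{Z}$, and the final uniqueness step (Lemma \ref{asph} plus triviality of the $L_5(\mathbb{Z}[\mathbb{Z}^2])$-action and Novikov for $\mathbb{Z}^2$) all match the paper's argument. The genuine gap is the assertion that $H_1(M;\mathbb{Z}[\pi_Y])$ is a \emph{torsion} $\mathbb{Z}[\pi_Y]$-module. Neither of your two proposed routes works. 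The Wang-sequence argument of Theorem \ref{Kawabeta1} needs $\beta_1(M)=1$ so that $t-1$ acts invertibly on $H_1(M_\phi;\mathbb{Q})$; here $\beta=3$ and the Wang sequence shows $\mathrm{Cok}(t-1)\cong\ker(H_1(M;\mathbb{Q})\to H_1(Y;\mathbb{Q}))\cong\mathbb{Q}^2$, so $t-1$ is very far from invertible, and the Wang sequence by itself says nothing about finite-dimensionality of $H_1(M_\phi;\mathbb{Q})$ (for $M=T_g\times S^1$ and a suitable $\phi$ this module has positive $\Lambda$-rank). Your fallback — showing $H_2(M;\Lambda_Y)=0$ by duality and a splitting of $H^1(M;\Lambda_Y)$ into contributions from $X$ and $Y$ — is also not viable: there is no Mayer--Vietoris sequence with $\Lambda_Y$-coefficients over $S^4=X\cup_M Y$ (the coefficients do not extend over $X$), the target claim is false anyway ($H_2(M;\Lambda_Y)\cong\overline{H^1(M;\Lambda_Y)}\cong\mathbb{Z}\neq0$; correspondingly your exact sequence is missing the $\mathbb{Z}\cong H_3(Y,M;\Lambda_Y)$ term on the left), and the injectivity of $\Lambda_Y^2\to\Lambda_Y^2$ you want to check is \emph{equivalent} to the torsion-ness you are trying to prove, so this route is circular.

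The missing idea is group-theoretic and uses the $X$-side computation as input. Let $A=\pi'/\pi''$ as a module over $\mathbb{Z}[\pi/\pi']$ and choose a basis $\{x,y,z\}$ of $\pi/\pi'$ with $j_{X*}(y)=0$ and $j_{Y*}(x)=j_{Y*}(z)=0$. Then $H_1(M;\mathbb{Z}[\pi_X])$ is an extension of $\mathbb{Z}$ by $A/(y-1)A$, so the isomorphism $H_1(M;\mathbb{Z}[\pi_X])\cong\mathbb{Z}$ forces $A=(y-1)A$. On the other side, $H_1(M;\mathbb{Z}[\pi_Y])$ is an extension of $\mathbb{Z}^2$ (annihilated by $t-1$, where $t$ is the image of $y$) by $A/(x-1,z-1)A$, and on the latter $t-1$ acts surjectively; a finitely generated $\Lambda$-module $B$ with $(t-1)B=B$ is a torsion module (any presentation matrix becomes surjective at $t=1$, so some maximal minor is nonzero). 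Hence $H_1(M;\mathbb{Z}[\pi_Y])$ is torsion, the middle map $\Lambda_Y^2\to\Lambda_Y^2$ is then a monomorphism, and the projective dimension and two-generator statements follow from your exact sequence as you describe. Without this transfer of information from the $X$-cover to the $Y$-cover, the torsion claim — which is precisely the part of the theorem used later to rule out abelian embeddings — is unproved.
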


\begin{proof}
The classifying map $c_X:X\to{K(\pi_X,1)}\simeq{T}$ is a homotopy equivalence,
by Theorem \ref{2con},  since $c.d.X=c.d.T=2$ and $\chi(X)=\chi(T)=0$.
The equivalence ${Y}\simeq{S^1\vee2{S^2}}$ follows from Corollary \ref{2concor},
since $\pi_Y\cong\mathbb{Z}$ and $\chi(Y)=2$.

Since $H_2(X;\mathbb{Z}[\pi_X])=0$,
the exact sequence of homology for the pair $(X,M)$ 
with coefficients $\mathbb{Z}[\pi_X]$ reduces to an isomorphism
$H_2(X,M;\mathbb{Z}[\pi_X])\cong{H_1(M;\mathbb{Z}[\pi_X])}$, 
and so $H_1(M;\mathbb{Z}[\pi_X])\cong
\overline{H^2(X;\mathbb{Z}[\pi_X])}\cong\mathbb{Z}$,
by Poincar\'e duality.

Similarly, there is an exact sequence
\[
0\to\mathbb{Z}\to{H_2(M;\mathbb{Z}[\pi_Y])}\to\mathbb{Z}[\pi_Y]^2\to\mathbb{Z}[\pi_Y]^2\to
{H_1(M;\mathbb{Z}[\pi_Y])}\to0,
\]
since $H_2(Y;\mathbb{Z}[\pi_Y])\cong\mathbb{Z}[\pi_Y]^2$ and 
$H_2(Y,M;\mathbb{Z}[\pi_Y])\cong\overline{H^2(Y;\mathbb{Z}[\pi_Y]) }$.
Let $A=\pi'/\pi''$, considered as a $\mathbb{Z}[\pi/\pi']$-module.
Then $A$ is finitely generated as a module, 
since $\mathbb{Z}[\pi/\pi']$ is a noetherian ring.
Let $\{x,y,z\}$ be a basis for $\pi/\pi'$ such that $j_{X*}(y)=0$ and 
$j_{Y*}(x)=j_{Y*}(z)=0$.
Then $H_1(M;\mathbb{Z}[\pi_X])\cong\mathbb{Z}$ is an extension of $\mathbb{Z}$ 
by $A/(y-1)A$,
and so $A=(y-1)A$.
Similarly, $H_1(M;\mathbb{Z}[\pi_Y])$ is an extension of $\mathbb{Z}^2$ by $A/(x-1,z-1)A$.
Together these observations imply that $H_1(M;\mathbb{Z}[\pi_Y])$ is a torsion $\mathbb{Z}[\pi_Y]$-module,
and so the fourth homomorphism in the above sequence is a monomorphism.
Thus $H_1(M;\mathbb{Z}[\pi_Y])$ is a torsion $\mathbb{Z}[\pi_Y]$-module
with projective dimension $\leq1$,
and is clearly generated by two elements.
(Note also that a torsion $\mathbb{Z}[\pi_Y]$-module of projective dimension 0 is 0.)

Since $X$ is aspherical, Lemma \ref{asph} applies, 
and so the homotopy type of the pair $(X,M)$ is determined 
by $M$ and the homomorphism $j_{X*}$. 
The final assertion follows (as in Lemma 6.10), 
since $L_5(\mathbb{Z}[\pi_X])$ acts trivially on $\mathcal{S}_{TOP}(X,M)$, by \cite[Theorem 6.7]{FMGK}, 
and the integral Novikov Conjecture holds for $\pi_X\cong\mathbb{Z}^2$.
\end{proof}

Since $L_5(\mathbb{Z}[\pi_Y])$ also acts trivially on 
$\mathcal{S}_{TOP}(Y,M)$, by \cite[Theorem 6.7]{FMGK}, 
and $H_2(Y;\mathbb{F}_2)\cong(\mathbb{Z}/2\mathbb{Z})^2$,
there are at most four possibilities for $Y$, 
given the homotopy type of $(Y,M)$.

The link $L=9^3_{21}$ has an unique partition as a bipartedly slice link,
and for the corresponding embedding $\pi_{X_L}\cong{F(2)}$
and $\pi_{Y_L}\cong\mathbb{Z}$.
Then $M=M(9^3_{21})\cong(S^2\times{S^1})\#M(5^2_1)$,
so $\pi\cong\mathbb{Z}*F(2)/\gamma_3F(2)$,
with presentation $\langle{x,y,z}\mid[x,y]\leftrightharpoons{x,y}\rangle$.
It is not hard to show that the kernel of any epimorphism
$\phi:\pi\to\langle{t}\rangle\cong\mathbb{Z}$ 
has rank $\geq1$ as a $\mathbb{Z}[t,t^{-1}]$-module.
Hence $M$ has no abelian embedding, by Theorem \ref{b=3}.

The 3-torus $T^3=\mathbb{R}^3/\mathbb{Z}^3$ has an abelian embedding,
as the boundary of a regular neighbourhood of an unknotted embedding of $T$ in $S^4$.
This manifold may be obtained by 0-framed surgery on the Borromean rings $Bo$,
and also  on $9^3_{18}$. 
The three bipartite partitions of $Bo$ lead to equivalent embeddings.
(However these are clearly not isotopic!)
The link $9^3_{18}$ has two bipartedly slice partitions (both bipartedly trivial).
Any such embedding of $T^3$ has 
$X\cong{T}\times{D^2}$ and $Y\simeq{S^1}\vee2S^2$.
Does $T^3$ have an essentially unique abelian embedding?

If $M$ is Seifert fibred and $\pi/\pi'\cong\mathbb{Z}^3$
then it has generalized Euler invariant $\varepsilon=0$, 
and so is a mapping torus $T_g\rtimes_\theta{S^1}$, 
with orientable base orbifold and monodromy $\theta$ of finite order.
Are there any such manifolds other than the 3-torus 
which have abelian embeddings?

Suppose that $\beta=3$ and $M$ has an embedding $j$ 
such that $H_1(Y;\mathbb{Z})=0$.
If $f:\pi\to\mathbb{Z}^2$ is an epimorphism  with kernel $\kappa$ 
and $R=\mathbb{Z}[\pi/\kappa]_{f(S)}$ then $H_1(M;R)$ has rank 2, 
by Lemma \ref{completion},
and so the condition of Theorem \ref{b=3} does not hold.
Therefore no such 3-manifold can also have an abelian embedding. 

\begin{theorem}
\label{b=4}
Let $M$ be a $3$-manifold with fundamental group $\pi$ 
such that $\pi/\pi'\cong\mathbb{Z}^4$.
If $j:M\to{S^4}$ is an abelian embedding
then $X\simeq{Y}\simeq{T\vee{S^2}}$.
Hence  $H_1(M;\mathbb{Z}[\pi_X])$ is a quotient of
$\mathbb{Z}[\pi_X]\oplus\mathbb{Z}$  by a cyclic submodule
(and similarly for $H_1(M;\mathbb{Z}[\pi_Y])$).
\end{theorem}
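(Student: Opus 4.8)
The plan is to mimic the proofs of Theorems \ref{Z^2} and \ref{b=3}. Since $j$ is abelian, $\pi_X\cong\pi_Y\cong\mathbb{Z}^2$ and $\chi(X)=\chi(Y)=1$ by Lemma \ref{Hi17-lem2.1} (recall $\beta=4$ forces $\chi(X)=\chi(Y)=1$), and $j_{X*},j_{Y*}$ are epimorphisms because the embedding is abelian (abelian embeddings are bi-epic since $H_1(j_X)$ and $H_1(j_Y)$ are always surjective, and a group with cyclic-by-$\pi_X$... more simply: $\pi_X$ abelian and the image of $\pi$ normally generates $\pi_X$, and $\pi_X^{ab}=\pi_X$ is a quotient of $H_1(M)$ via $H_1(j_X)$ which is onto). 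Then Theorem \ref{Hi17-thm5.1} gives $c.d.X\leq2$ and $c.d.Y\leq2$. Since $c.d.\pi_X=c.d.\mathbb{Z}^2=2$ and $\chi(X)=1=\chi(\mathbb{Z}^2)+1$, the module $\pi_2X\cong{H_2(X;\mathbb{Z}[\pi_X])}$ is stably free of rank $\chi(X)-\chi(\pi_X)=1$, hence free of rank $1$ over $\mathbb{Z}[\pi_X]$ (projective $\mathbb{Z}[\mathbb{Z}^2]$-modules are free, by Quillen--Suslin, or stably free suffices by Kaplansky's weak finiteness as in the proof of Corollary \ref{2concor}).

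First I would establish $X\simeq T\vee S^2$: the classifying map $c_X:X\to K(\pi_X,1)\simeq T$ is $2$-connected (it is $\pi_1$-iso and $X$, $T$ are homotopy $2$-complexes with $c.d.\leq2$), so by Theorem \ref{2con} applied after coning off to realize the class of $\pi_2X$, one builds a map $T\vee\bigvee^{\chi(X)-\chi(T)}S^2=T\vee S^2\to X$ inducing $\pi_1$-iso and $\pi_2$-iso, hence a homotopy equivalence by the Hurewicz and Whitehead theorems. Explicitly: take a generator of $\pi_1X$ realized by $T\hookrightarrow X$ via a section of $c_X$ up to homotopy, and a map $S^2\to X$ representing a $\mathbb{Z}[\pi_X]$-basis element of $\pi_2X\cong\mathbb{Z}[\pi_X]$; the resulting $f:T\vee S^2\to X$ is then a homotopy equivalence by the argument of Corollary \ref{2concor} adapted to base group $\mathbb{Z}^2$. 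The same applies to $Y$, giving $Y\simeq T\vee S^2$.

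Next I would deduce the module statement. Since $H_i(X;\mathbb{Z}[\pi_X])=0$ for $i\neq0,2$ and $H_2(X;\mathbb{Z}[\pi_X])\cong\mathbb{Z}[\pi_X]$, equivariant Poincar\'e--Lefschetz duality and the universal coefficient spectral sequence give $H_2(X,M;\mathbb{Z}[\pi_X])\cong\overline{H^2(X;\mathbb{Z}[\pi_X])}\cong\mathbb{Z}[\pi_X]$, while $H_3(X,M;\mathbb{Z}[\pi_X])\cong\overline{H^1(X;\mathbb{Z}[\pi_X])}\cong\overline{\mathrm{Ext}^1_{\mathbb{Z}[\pi_X]}(\mathbb{Z},\mathbb{Z}[\pi_X])}\cong\mathbb{Z}$ (using $H^*(\mathbb{Z}^2;\mathbb{Z}[\mathbb{Z}^2])$ concentrated in degree $2$ — wait, for $\mathbb{Z}^2$ one has $H^2(\mathbb{Z}^2;\mathbb{Z}[\mathbb{Z}^2])\cong\mathbb{Z}$ and $H^i=0$ otherwise; so in fact $H_3(X,M;\mathbb{Z}[\pi_X])\cong\overline{H^1(X;\mathbb{Z}[\pi_X])}=0$ and $H_2(X,M;\mathbb{Z}[\pi_X])\cong\overline{H^2(X;\mathbb{Z}[\pi_X])}$, which combines $\mathbb{Z}[\pi_X]$ from $\pi_2X$ with a $\mathbb{Z}$ coming from the fundamental class of $K(\pi_X,1)=T$). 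Threading this through the homology exact sequence of the infinite-index pair $(X,M)$ with coefficients $\mathbb{Z}[\pi_X]$, one gets an exact sequence
\[
H_3(X,M;\mathbb{Z}[\pi_X])\to{H_2(M;\mathbb{Z}[\pi_X])}\to{H_2(X;\mathbb{Z}[\pi_X])}\to{H_2(X,M;\mathbb{Z}[\pi_X])}\to{H_1(M;\mathbb{Z}[\pi_X])}\to0,
\]
which exhibits $H_1(M;\mathbb{Z}[\pi_X])$ as a quotient of $H_2(X,M;\mathbb{Z}[\pi_X])\cong\mathbb{Z}[\pi_X]\oplus\mathbb{Z}$ by the image of $H_2(X;\mathbb{Z}[\pi_X])\cong\mathbb{Z}[\pi_X]$, which is a cyclic submodule; the symmetric argument handles $Y$.

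The main obstacle I anticipate is pinning down the precise structure of $H_2(X,M;\mathbb{Z}[\pi_X])$ as $\mathbb{Z}[\pi_X]\oplus\mathbb{Z}$ rather than merely ``something of rank one over $\mathbb{Z}[\pi_X]$''. This requires being careful about which cohomology of $\mathbb{Z}^2$ enters: one must use that $X\simeq T\vee S^2$ so that $C_*(\widetilde X)$ splits, up to chain homotopy, as $C_*(\widetilde T)\oplus(\mathbb{Z}[\pi_X]$ in degree $2)$, and then $H^2(X;\mathbb{Z}[\pi_X])\cong H^2(T;\mathbb{Z}[\pi_X])\oplus\mathbb{Z}[\pi_X]\cong\mathbb{Z}\oplus\mathbb{Z}[\pi_X]$. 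The conjugation/bar involution is harmless here. Once the splitting $X\simeq T\vee S^2$ is in hand, everything else is a routine diagram chase in the long exact sequence, exactly parallel to Theorems \ref{Z^2} and \ref{b=3}; I would present it in that compressed style, citing Theorem \ref{2con}, Corollary \ref{2concor}, Theorem \ref{Hi17-thm5.1}, and equivariant duality, without re-deriving the cohomology of $\mathbb{Z}^2$ from scratch.
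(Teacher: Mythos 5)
Your proposal is correct and follows essentially the same route as the paper: build a $2$-connected map $T\vee S^2\to X$ from generators of $\pi_1X\cong\mathbb{Z}^2$ and $\pi_2X\cong\mathbb{Z}[\pi_X]$, conclude it is a homotopy equivalence via Theorem \ref{2con}, and then read off the module statement from the homology exact sequence of $(X,M)$ with coefficients $\mathbb{Z}[\pi_X]$, using $H^2(X;\mathbb{Z}[\pi_X])\cong\mathbb{Z}[\pi_X]\oplus\mathbb{Z}$. The paper's proof is just a compressed version of exactly this argument, and your self-correction about $H^1(X;\mathbb{Z}[\pi_X])=0$ lands on the right computation.
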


\begin{proof}
As in Corollary \ref{2concor}, generators
for $\pi_X\cong\mathbb{Z}^2$ and $\pi_2X\cong\mathbb{Z}[\pi_X]$ 
determine a map from $T^{[1]}\vee{S^2}$ to $X$.
This extends to a 2-connected map from $T\vee{S^2}$ to $X$,
which is a homotopy equivalence by Theorem \ref{2con}.
Hence $X\simeq{T\vee{S^2}}$.

The second assertion follows from the exact sequence 
of homology for $(X,M)$ with coefficients $\mathbb{Z}[\pi_X]$,
since $H^2(X;\mathbb{Z}[\pi_X])\cong{\mathbb{Z}[\pi_X]\oplus\mathbb{Z}}$.
Parallel arguments apply for $Y$ and $H_1(M;\mathbb{Z}[\pi_Y])$.
\end{proof}

An orientation for $V=\mathbb{Z}^4$ determines an isomorphism
$\wedge^3V\cong{Hom(V,\mathbb{Z})}$,
and it follows easily that all alternating 3-forms $\mu:V\to\mathbb{Z}$ 
with the same set of values are equivalent under the action of $Aut(V)\cong{GL(4;\mathbb{Z})}$.
If $M$ has an abelian embedding and $\beta=4$, 
we may refine Corollary 7.3.1(2)  to show that $\mu_M$ is surjective.
(This uses Poincar\'e duality and the fact that $X\simeq{Y}\simeq{T\vee{S^2}}$.)

The argument below for the final case ($\beta=6$) is adapted from Wall's proof 
that the $(n-1)$-skeleton of a $PD_n$-complex is essentially unique 
 \cite[Theorem 2.4]{Wa67}.

\begin{theorem}
\label{b=6}
Let $M$ be a $3$-manifold with fundamental group $\pi$ 
such that $\pi/\pi'\cong\mathbb{Z}^6$.
If $j:M\to{S^4}$ is an abelian embedding
then $X\simeq{Y}\simeq{T^{3[2]}}$, 
the $2$-skeleton of the $3$-torus $T^3$,
while  $H_1(M;\mathbb{Z}[\pi_X])$ and $H_1(M;\mathbb{Z}[\pi_Y])$ 
are cyclic $\mathbb{Z}[\pi_X]$- and $\mathbb{Z}[\pi_Y]$-modules
(respectively), of projective dimension $\leq1$.
\end{theorem}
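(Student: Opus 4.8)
The plan is to follow the template established for the lower-rank cases (Theorems \ref{Z^2}, \ref{b=3} and \ref{b=4}), combined with Wall's skeleton argument as cited. First I would record the basic numerology: since $j$ is abelian, $\pi_X\cong\pi_Y\cong\mathbb{Z}^3$, and by Lemma \ref{Hi17-lem2.1}(4) together with $\chi(X)+\chi(Y)=2$ we get $\chi(X)=\chi(Y)=1$. Since $j_{X*}$ and $j_{Y*}$ are epimorphisms (automatic because $H_1(j_W)$ is always onto and here $H_1(W)$ is free abelian of the full rank forced by abelianness), Theorem \ref{Hi17-thm5.1} gives $c.d.X\leq2$ and $c.d.Y\leq2$. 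Now $c.d.\pi_X=c.d.\mathbb{Z}^3=3>2$, so $X$ cannot be aspherical; rather $\pi_2X\cong{H_2(X;\mathbb{Z}[\pi_X])}$ is a nonzero stably free (hence free) $\mathbb{Z}[\pi_X]$-module, and one computes its rank from $\chi(X)=\chi(\pi_X)+(\text{rank of }\pi_2)$, i.e.\ the rank is $\chi(X)-\chi(T^3)=1-0=1$. Wait — that would make $\pi_2$ free of rank $1$; but $T^{3[2]}$ has $\chi=1$ and $\pi_2(T^{3[2]})$ free of rank $1$, so this matches: the target should be the $2$-skeleton $T^{3[2]}$ with its cell structure having one $0$-cell, three $1$-cells and three $2$-cells, whence $\chi=1$.

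Next I would identify the homotopy type. Choosing a generator of $\pi_1X\cong\mathbb{Z}^3$ and representatives of a $\mathbb{Z}[\pi_X]$-basis of $\pi_2X$ gives a map $f:T^{3[2]}\to{X}$ which is $2$-connected (it is an isomorphism on $\pi_1$, and surjective on $\pi_2$ by construction). Since both $T^{3[2]}$ and $X$ have cohomological dimension $\leq2$, Theorem \ref{2con} applies: $\chi(T^{3[2]})=1=\chi(X)$ forces $f$ to be a homotopy equivalence. The same argument gives $Y\simeq{T^{3[2]}}$. The subtle point to get right here is that $C_*(\widetilde{T^{3[2]}})$ really is chain homotopy equivalent to a finite projective complex of length $\leq2$ over $\mathbb{Z}[\mathbb{Z}^3]$ with the module of $2$-cycles free of rank $1$; this is the standard partial resolution of $\mathbb{Z}$ coming from the Koszul complex truncated at degree $2$, and I would cite it rather than compute it.

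For the statement about $H_1(M;\mathbb{Z}[\pi_X])$, I would run the homology exact sequence of the pair $(X,M)$ with coefficients $\mathbb{Z}[\pi_X]=:\Gamma$, using equivariant Poincar\'e--Lefschetz duality exactly as in the proofs of Theorems \ref{Z^2} and \ref{b=3}. We have $H_i(X;\Gamma)=0$ for $i\neq0,2$, $H_2(X;\Gamma)\cong\Gamma$, and by duality $H_2(X,M;\Gamma)\cong\overline{H^2(X;\Gamma)}$; since $c.d.\pi_X=3$, $H^2(X;\Gamma)$ need not be free, but it is computed from the truncated Koszul complex and is again cyclic (indeed $\cong\Gamma$ for the genuine $2$-skeleton, but after the UCT one only needs that it is a cyclic $\Gamma$-module of projective dimension $\leq1$). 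The exact sequence
\[
H_2(M;\Gamma)\to{H_2(X;\Gamma)}\to{H_2(X,M;\Gamma)}\to{H_1(M;\Gamma)}\to{H_1(X;\Gamma)}=0
\]
then presents $H_1(M;\Gamma)$ as a quotient of a cyclic module by the image of a map from $\Gamma$, so $H_1(M;\Gamma)$ is cyclic; and chasing the projective dimensions through this four-term sequence (as in Theorem \ref{Z^2}, using that $\Gamma$ is a regular ring of global dimension $3$ so syzygies behave well) shows it has projective dimension $\leq1$. The parallel argument handles $Y$ and $\pi_Y$.

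The main obstacle I anticipate is pinning down the precise homological nature of $H^2(X;\mathbb{Z}[\pi_X])$ and hence of $H_2(X,M;\mathbb{Z}[\pi_X])$: over $\mathbb{Z}[\mathbb{Z}^3]$ this cohomology module is \emph{not} free (unlike the $\mathbb{Z}[\mathbb{Z}]$ and $\mathbb{Z}[\mathbb{Z}^2]$ cases of Theorems \ref{neutral} and \ref{Z^2}), so the clean ``short free resolution'' bookkeeping used there must be replaced by a projective-dimension count valid over a regular coherent ring of global dimension $3$. Once that module is correctly described — essentially as the cokernel term of the truncated Koszul complex — the cyclicity of $H_1(M;\mathbb{Z}[\pi_X])$ and the bound on its projective dimension follow formally, and the identification $X\simeq{Y}\simeq{T^{3[2]}}$ is then a direct application of Theorem \ref{2con} as above.
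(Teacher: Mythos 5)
Your numerology ($\chi(X)=\chi(Y)=1$, $c.d.X\leq2$, $\pi_2X$ stably free hence free of rank $1$) is fine and matches the paper, but there is a genuine gap at the step ``gives a map $f:T^{3[2]}\to{X}$ which is $2$-connected \dots surjective on $\pi_2$ by construction.'' Unlike the targets in Corollary \ref{2concor} and Theorem \ref{b=4}, $T^{3[2]}$ is not a wedge of a $K(\pi,1)$-skeleton with spheres: its three $2$-cells are attached along the commutators $[x_i,x_j]$. So a map out of $T^{3[2]}$ is built by sending the $1$-skeleton to generators of $\pi_X$ and then choosing null-homotopies of the commutators in $X$; you never get to ``choose representatives of a basis of $\pi_2X$.'' The only freedom is to alter the three null-homotopies by elements $\alpha_i\in\pi_2X$, and since the generator $t$ of $\pi_2(T^{3[2]})=\mathrm{Ker}(\partial_2)\cong\Lambda_3$ is $\partial_3$ of the missing $3$-cell, i.e.\ $t=\Sigma\pm(x_i-1)u_i$, this changes $\pi_2f(t)$ only by $\Sigma\pm(x_i-1)\alpha_i\in{I}\cdot\pi_2X$, where $I$ is the augmentation ideal. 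Thus if $\pi_2f(t)=\lambda{x}$ for a generator $x$ of $\pi_2X\cong\Lambda_3$, the class of $\lambda$ mod $I$ is an invariant of the $\pi_1$-identification; one must first show $\varepsilon(\lambda)=\pm1$ (e.g.\ by comparing the isomorphisms $H_3(\mathbb{Z}^3)\to\pi_2/I\pi_2$ coming from the Hopf--Whitehead sequence, using $H_3(X)=H_3(T^{3[2]})=0$) and then correct $f$ on the $2$-cells to force $\lambda=1$. This correction is exactly the content of the paper's proof: Wall's modification $f_2'(u)=f_2(u)-z^*(u)x$ of the chain map, adapted from \cite[Theorem 2.4]{Wa67}. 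Without it $f$ need not be $2$-connected and Theorem \ref{2con} cannot be invoked.

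A second, smaller problem occurs where you assert that $H^2(X;\mathbb{Z}[\pi_X])$ is ``again cyclic (indeed $\cong\Gamma$ for the genuine $2$-skeleton).'' It is not: for $X\simeq{T^{3[2]}}$ the module $H^2(X;\Lambda_3)$ is the cokernel of $\partial_2^*$ in the dual of the truncated Koszul complex, which (by exactness of the full dual Koszul complex away from degree $3$) is isomorphic to $\mathrm{Im}(\partial_3^*)=I$, the augmentation ideal of $\Lambda_3$ --- a non-projective module requiring three generators. So the sequence
\[
H_2(X;\Gamma)\to{H_2(X,M;\Gamma)}\cong\overline{H^2(X;\Gamma)}\to{H_1(M;\Gamma)}\to0
\]
does not by itself exhibit $H_1(M;\Gamma)$ as a quotient of a cyclic module, and your projective-dimension bookkeeping for the second assertion does not go through as written. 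You correctly anticipated that the failure of $H^2(X;\Gamma)$ to be free is the obstacle here, but the resolution is not that it is cyclic; this part needs a genuinely different argument from the ones used for $\beta\leq4$.
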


\begin{proof}
Since $\beta=6$ and $j$ is abelian we may identify $\pi_X$ with $\mathbb{Z}^3$.
The first part of the second paragraph of Theorem \ref{neutral} applies to show that
$\pi_2X$ is isomorphic to $\Lambda_3=\mathbb{Z}[\mathbb{Z}^3]$.
Let $C_*$ and $D_*$ be the equivariant chain complexes
of the universal covers of $T^{3[2]}$ and $X$, respectively.
Since these are partial resolutions of $\mathbb{Z}$ there is
a chain map $f_*:C_*\to{D_*}$ such that $H_0(f)$ is an isomorphism.
Clearly $H_1(f)$ is also an isomorphism.
We shall modify our choice of $f_*$ so that it is a chain homotopy equivalence.

The $\Lambda_3$-modules $H_2(C_*)<{C_2}$ and $H_2(D_*)<D_2$ are free of rank 1.
Let $t\in{C_2}$ and $x\in{D_2}$ represent generators for these submodules,
and let $t^*$ and $x^*$ be the Kronecker dual generators of the cohomology modules 
$H^2(C^*)=Hom(H_2(C_*),\Lambda_3)\cong\Lambda_3$  and 
$H^2(D^*)=Hom(H_2(D_*),\Lambda_3)\cong\Lambda_3$, respectively.
Let $f'_i=f_i$ for $i=0,1$, and  let $f_2'(u)=f_2(u)-z^*(u)x$ for all $u\in{C_2}$,
where $z^*=H^2(f^*)(x^*)-t^*\in{Hom(H_2(C_*),\Lambda_3)}$.
Then $f'_*$ is again a chain homomorphism, and $H_2(f'_*)$ is an isomorphism.
Hence $f_*$ is a chain homotopy equivalence. 
This may be realized by a map from $T^{3[2]}$ to the 2-skeleton $X^{[2]}$,
and the composite with the inclusion $X^{[2]}\subseteq{X}$ is then
a homotopy equivalence.

A similar argument applies for $Y$.
The second assertion follows as before.
\end{proof}

We may also refine Corollary 7.3.1(2) to show that if $M$ has an abelian embedding 
and $\beta=6$ then $\mu_M$ is surjective.
In this case there may be a further constraint,
since even after extending coefficients to $\mathbb{R}$
there are 5 orbits of non-zero 3-forms on $\mathbb{R}^6$ 
under the action of $GL(6,\mathbb{R})$ \cite{Bry06}.

Lemma \ref{completion} and Theorems \ref{b=4} and \ref{b=6} again imply that 
when $\beta=4$ or 6 no 3-manifold which has an embedding $j$ such that 
$H_1(Y)=0$ can also have an abelian embedding.
However, 
if $L$ is the 4-component link obtained from $Bo$ by adjoining a parallel copy
of one component,
then $M(L)$ has an abelian embedding with $X\cong{Y}$ and $\chi(X)=1$, 
and also has an embedding with $\chi(X)=-1$.
We shall not give more details,
as no natural examples demand our attention in these cases.

\section{2-component links with $\ell\not=0$}

If $M$ is a rational homology sphere with an abelian embedding then
$\pi/\pi'\cong(\mathbb{Z}/\ell\mathbb{Z})^2$ and $\pi_X\cong\pi_Y\cong\mathbb{Z}/\ell\mathbb{Z}$, 
for some $\ell>0$.
In particular, if $L$ is a 2-component link with linking number
$\ell\not=0$ then $M(L)$ is a rational homology sphere.
If the components of $L$ are unknotted then $L$ determines an embedding 
$j_L$, which is clearly abelian.
There is again a necessary condition for the existence of an 
abelian embedding.

\begin{lemma}
\label{torsion}
Let $M$ be a $3$-manifold with fundamental group $\pi$ 
such that $\pi/\pi'\cong(\mathbb{Z}/\ell\mathbb{Z})^2$, for some $\ell>0$.
If $j:M\to{S^4}$ is an abelian embedding then $X\simeq{Y}\simeq{P_\ell}$,
and $H_1(M;\mathbb{Z}[\pi_X])$ and $H_1(M;\mathbb{Z}[\pi_Y])$ 
are cyclic $\mathbb{Z}[\pi_X]$- and $\mathbb{Z}[\pi_Y]$-modules (respectively), 
and are quotients of $\mathbb{Z}^{\ell-1}$, as abelian groups.
\end{lemma}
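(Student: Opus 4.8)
The plan is to mimic the arguments of Theorems \ref{Z^2}, \ref{b=3}, \ref{b=4} and \ref{b=6}, adapting them to a finite abelianization. First I would observe that since $j$ is abelian, $\pi_X\cong\pi_Y\cong\mathbb{Z}/\ell\mathbb{Z}$ (both being abelian quotients of $\pi$ with abelianization a summand of $(\mathbb{Z}/\ell\mathbb{Z})^2$, and $\pi_X\oplus\pi_Y\cong H_1(M)$ forces each to be $\mathbb{Z}/\ell\mathbb{Z}$), and $\chi(X)=\chi(Y)=1$ since $\beta=0$, by Lemma \ref{Hi17-lem2.1}. Since $j$ is abelian it is bi-epic, so $c.d.X\leq2$ and $c.d.Y\leq2$ by Theorem \ref{Hi17-thm5.1}. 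Then Lemma \ref{finite} applies directly: $\pi_X\cong\mathbb{Z}/\ell\mathbb{Z}$ is cyclic and $X$ has $c.d.\leq2$, so $X\simeq S^1$, $S^1\vee S^2$ or $P_\ell$; since $\chi(X)=1$ and $\chi(S^1)=0$, $\chi(S^1\vee S^2)=2$, we must have $X\simeq P_\ell$. The same argument gives $Y\simeq P_\ell$.

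Next I would establish the module-theoretic statement. Writing $\Gamma_X=\mathbb{Z}[\pi_X]\cong\mathbb{Z}[\mathbb{Z}/\ell\mathbb{Z}]$, equivariant Poincar\'e--Lefshetz duality and the universal coefficient spectral sequence give, as in the proofs of Theorems \ref{neutral} and \ref{Z^2}, that $H_i(X;\Gamma_X)=0$ for $i>2$ and $H^i(X;\Gamma_X)\cong H_{4-i}(X,M;\Gamma_X)$. Since $X\simeq P_\ell$, the equivariant chain complex of $\widetilde X$ is chain homotopy equivalent to the standard length-$2$ complex $\Gamma_X\xrightarrow{t-1}\Gamma_X\xrightarrow{N}\Gamma_X$ (where $N=1+t+\dots+t^{\ell-1}$), so $H_2(X;\Gamma_X)$ is cyclic (a quotient of $\Gamma_X$) and $H_2(X,M;\Gamma_X)\cong\overline{H^2(X;\Gamma_X)}$ is also cyclic. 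The homology exact sequence of the pair $(X,M)$ then exhibits $H_1(M;\Gamma_X)=H_1(M;\mathbb{Z}[\pi_X])$ as a cyclic $\Gamma_X$-module (sitting in a sequence with two cyclic terms on either side), and symmetrically for $\pi_Y$.

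For the last clause --- that $H_1(M;\mathbb{Z}[\pi_X])$ is a quotient of $\mathbb{Z}^{\ell-1}$ as an abelian group --- I would argue as follows. The covering space $M_X$ of $M$ associated to $\mathrm{Ker}(j_{X*})$ is a $\mathbb{Z}/\ell\mathbb{Z}$-cover of the rational homology sphere $M$, so $M_X$ is itself a $\mathbb{Q}$-homology sphere, hence $H_1(M_X;\mathbb{Z})=H_1(M;\mathbb{Z}[\pi_X])$ is a finite abelian group; a transfer argument (or the fact that it is a cyclic $\Gamma_X$-module on which $N$ acts as the transfer, which is closely tied to $H_1(M;\mathbb{Z})$ being annihilated by $\ell$) shows it is generated by the $t$-translates of a single element subject to $N$ acting through $H_1(M)$, so it is a quotient of $\Gamma_X/(N)\cong\mathbb{Z}^{\ell-1}$ as an abelian group. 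I expect the main obstacle to be pinning down precisely \emph{this} abelian-group bound: the cyclic $\Gamma_X$-module structure gives a surjection from $\Gamma_X\cong\mathbb{Z}^\ell$, and one must use the vanishing of $H_1(M;\mathbb{Q})$ together with the boundary map from $H_2(X,M;\Gamma_X)$ to cut this down to $\mathbb{Z}^{\ell-1}$ --- essentially because the image of the generator under $N$ already lies in the torsion and the rational rank of $H_1(M;\mathbb{Q}[\pi_X])$ is $0$, forcing the relation coming from $N$ to be present. I would phrase this carefully via the exact sequence $0\to H_2(M;\Gamma_X)\to\Gamma_X\xrightarrow{\partial}\Gamma_X\to H_1(M;\Gamma_X)\to 0$ obtained above, noting that the composite $\Gamma_X\to\Gamma_X$ has augmentation-localization an isomorphism (since $H_1(M;\mathbb{Q})=0$), so its cokernel is $\mathbb{Z}[\pi_X]$-torsion and a short computation identifies the abelian-group rank of the cokernel as at most $\ell-1$.
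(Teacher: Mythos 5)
Your first two claims follow the paper's route exactly: Lemma \ref{finite} together with $\chi(X)=\chi(Y)=1$ gives $X\simeq{Y}\simeq{P_\ell}$, and cyclicity of $H_1(M;\mathbb{Z}[\pi_X])$ comes from the homology sequence of the pair $(X,M)$ plus equivariant duality. That part is fine.

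The last clause is where your argument goes wrong, and the error is traceable to a single misidentification. You transplant the four-term sequence
$0\to{H_2(M;\Gamma_X)}\to\Gamma_X\to\Gamma_X\to{H_1(M;\Gamma_X)}\to0$
from Theorem \ref{Z^2}, but that template is valid only when $\pi_X$ is infinite cyclic, so that $\pi_2X=H_2(X;\Gamma_X)$ is a \emph{free} $\Gamma_X$-module of rank $\chi(X)$. Here $\pi_X\cong\mathbb{Z}/\ell\mathbb{Z}$ is finite, $\widetilde{X}\simeq\vee^{\ell-1}S^2$, and $H_2(X;\Gamma_X)=\ker(N)$ and $H^2(X;\Gamma_X)=\Gamma_X/N\Gamma_X$ (with $N=1+t+\dots+t^{\ell-1}$) are each isomorphic to the augmentation ideal: cyclic as modules but free of rank $\ell-1$, not $\ell$, as abelian groups. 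Once you put these correct terms into the pair sequence, $H_1(M;\Gamma_X)$ is a quotient of $H_2(X,M;\Gamma_X)\cong\overline{H^2(X;\Gamma_X)}\cong\mathbb{Z}^{\ell-1}$ and the last clause is immediate --- this is precisely the paper's one-line argument. Your attempted patch does not repair the gap: a finite cover of a $\mathbb{Q}$-homology sphere need not be a $\mathbb{Q}$-homology sphere, so $H_1(M_X)$ need not be finite (nor does the statement require it --- a quotient of $\mathbb{Z}^{\ell-1}$ may be infinite); and even granting finiteness, a finite cokernel of a map $\mathbb{Z}^\ell\to\mathbb{Z}^\ell$ can require $\ell$ generators (e.g.\ $(\mathbb{Z}/2\mathbb{Z})^\ell$), so ``torsion cokernel of an augmentation-localized isomorphism'' does not yield a quotient of $\mathbb{Z}^{\ell-1}$. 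The norm relation $Ng=0$ you gesture at is also not automatic from the transfer; it is exactly what the identification $H^2(X;\Gamma_X)\cong\Gamma_X/(N)$ supplies.
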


\begin{proof}
The first assertion holds by Lemma \ref{finite}.
The second part then follows from the exact sequences of homology for the 
universal covering spaces of the pairs $(X,M)$ and $(Y,M)$,
since $\widetilde{X}\simeq\widetilde{Y}\simeq\vee^{\ell-1}S^2$.
(Note that $H_2(\widetilde{X})$ and $H^2(\widetilde{X})$
are each isomorphic to the augmentation ideal of $\mathbb{Z}[\pi_X]$,
which is cyclic as a module and free of rank $\ell-1$ as an abelian group.)
\end{proof}

To use Theorem \ref{torsion} to show that some $M$ has no abelian embedding
we must consider all possible bases for $Hom(\pi,\mathbb{Z}/\ell\mathbb{Z})$,
or, equivalently, for $\pi/\pi'$.

Six of the eight rational homology 3-spheres with elementary amenable groups 
and which embed in $S^4$ have such link presentations,
with $\ell\leq4$.
(In particular, $S^3=M(2^2_1)$, and also $M(\emptyset)$!)
The simplest such link is $L=(2\ell)^2_1$, the $(2,2\ell)$-torus link,
for which $M(L)\cong{M(0;(\ell,1),(\ell,1),(\ell,-1))}$. 
Since $L$ is interchangeable, $X_L\cong{Y_L}$.

Since $M$ is an integral homology 3-sphere if $\ell=1$,
we may assume that $\ell>1$.

When $\ell=2$ we have $X\simeq{Y}\simeq\mathbb{RP}^2$,
and the composite $\partial\widetilde{X}\subset\widetilde{X}\simeq{S^2}$ 
induces an isomorphism on $H_2$.

The quaternion manifold $M=S^3/Q(8)=M(4^2_1)$
has an essentially unique abelian embedding.
The complementary regions are homeomorphic to the total space $N$ 
of the disc bundle over $\mathbb{RP}^2$ with Euler number 2 \cite{La84}.
(Lawson constructed a self-homotopy equivalence of $N$ 
which is the identity outside a regular neighbourhood of an essential $S^1$,
and which has non-trivial normal invariant.
Thus every element of $\mathcal{S}_{TOP}(N,\partial{N})$ is represented by a homeomorphism.
His  construction extends to all $X\simeq\mathbb{RP}^2$.
Do all the resulting self-homotopy equivalences have non-trivial normal invariant?

The links $9^2_{38}$, $9^2_{57}$ and $9^2_{58}$
each have unknotted components, 
asymmetric Alexander polynomial and linking number 2.

Let $L$ be the link obtained by tying a slice knot 
with non-trivial Alexander polynomial 
(such as the stevedore's knot $6_1$) in one component of $4^2_1$.
Then $M(L)$ embeds in $S^4$, but does not satisfy Lemma \ref{torsion},
since for two of the three 2-fold covers of $M(L)$ the first homology 
is not cyclic as an abelian group.
Hence $M(L)$ has no abelian embedding.

Suppose next that $\ell=3$.
The manifold $M(6^2_1)$ is a $\mathbb{N}il^3$-manifold
with Seifert base the flat orbifold $S(3,3,3)$,
and $X_L\cong{Y_L}$.

The most interesting example with $\ell=4$ is perhaps $M(8^2_2)$, 
which is the $\mathbb{N}il^3$-manifold $M(-1;(2,1),(2,3))$.
The link $8^2_2$ is interchangeable, 
and so $X_L\cong{Y_L}$. 
Each of the links $9^2_{53}$ and $9^2_{61}$ has  
unknotted components and $\ell=4$,
and gives a $\mathbb{S}ol^3$-manifold with an abelian embedding. 
Is either of these links interchangeable?

In the range $\ell\leq4$ the groups $Wh(\mathbb{Z}/\ell\mathbb{Z})$ and 
$L_5(\mathbb{Z}[\mathbb{Z}/\ell\mathbb{Z}])$ are each 0 \cite{Coh, Wa76},
while $H_2(X;\mathbb{F}_2)=H_2(Y;\mathbb{F}_2)=0$ if $\ell$ is odd.
Thus the main difficulty in identifying abelian embeddings of these manifolds lies in identifying the homotopy types of the pairs $(X,M)$ and $(Y,M)$.

There remains one more $\mathbb{S}ol^3$-manifold which embeds in $S^4$.
This is $M_{2,4}$, 
which arises from surgery on the link $L=(8_{20},{U})$ of Figure 5.1.
The knot $8_{20}$ bounds a slice disc $D\subset{D^4}$ obtained by desingularizing the obvious ribbon disc of the figure.
Then $M(L)$ has an embedding with one complementary region
$X$ obtained from $D^4\subset{S^4}$ by deleting a regular neighbourhood of $D$ and adding a 2-handle along the unknotted component $U$, 
and the other being $Y_L=\overline{S^4\setminus{X_L}}$.
The ``ribbon group" $\pi_1(D^4\setminus{D})$ has a presentation obtained by adjoining the relation $z=x$ to the Wirtinger presentation for $\pi8_{20}$ 
\cite[Theorem 1.15]{AIL}.
It is easily seen that this presentation reduces to $\langle{y,z}\mid{yzy=zyz}\rangle$.
Adding a 2-handle along the unknotted component kills
the $a$-longitude $\ell_a=zyzyzy^{-1}=zyyzyy^{-1}=zy^2z$,
and so we obtain the presentation $\langle{y,z}\mid{yzy=zyz},~y^2z^2=1\rangle$. 
The corresponding group is the semidirect product 
$\mathbb{Z}/3\mathbb{Z}\rtimes_{-1}\mathbb{Z}/4\mathbb{Z}$.
On the other hand, $\pi_{Y_L}\cong\mathbb{Z}/4\mathbb{Z}$.

The group $\pi=\pi_1M(L)$ has the presentation
\[
\langle{x,y,u}\mid{xyx^{-1}=y^{-1}},~ux^2y^{-4}u^{-1}=x^{-2}y^4,~
u^2=x^4y^{-9}\rangle.
\]
(See S\S2 of Chapter 5.)
Thus $\pi^{ab}$ is generated by the images of $x$ and $u$.
Let $\lambda_{i,j}:\pi\to\mathbb{Z}/4\mathbb{Z}$ be the epimorphism
sending $x, u$ to $i,j\in\mathbb{Z}/4\mathbb{Z}$, respectively.
It can be shown that the abelianization of $\mathrm{Ker}(\lambda_{i,j})$ 
is a quotient of the augmentation ideal in $\mathbb{Z}[\mathbb{Z}/4\mathbb{Z}]$, for 
$(i,j)=(1,0)$ or (2,1). Since these epimorphisms form a basis 
for $Hom(\pi,\mathbb{Z}/4\mathbb{Z})$, 
we cannot use Lemma \ref{torsion} to rule out an abelian embedding for $M_{2,4}$.
Is there a 2-component link with unknotted components which gives rise to this manifold?

\section{Some remarks on the mixed cases}

If $M$ has an abelian embedding such that 
$\pi_X\cong{G_k}=\mathbb{Z}\oplus(\mathbb{Z}/k\mathbb{Z})$,
for some $k>1$, then $\chi(X)=1$, by Lemma \ref{L2}.
Hence $\chi(Y)=1$ and  so $\pi_Y\cong{G_k}$ also.
Therefore $H_1(M)\cong\mathbb{Z}^2\oplus(\mathbb{Z}/k\mathbb{Z})^2$,
which requires four generators. 
The simplest examples may be constructed from 4-component links
obtained by replacing one component of the Borromean rings $Bo$ by its $(2k,2)$ cable.

In this case even the determination of the homotopy types of the complements is not clear.
The group $G_k$ has minimal presentations
\[
\mathcal{P}_{k,n}=\langle{a,t}\mid{a^k,~ta^n=a^nt}\rangle,
\]
where $0<n<k$ and $(n,k)=1$.
The 2-complexes $S_{k,n}=S^1\vee{P_k}\cup_{[t,a^n]}e^2$ associated to these presentations have Euler characteristic 1,
and it is easy to see that there are maps between them which induce isomorphisms on fundamental groups.
We may identify $S_{k,n}$ with $T\cup{MC}\cup{P_k}$, 
where $MC$ is the mapping cylinder of the degree-$n$ map 
$z\mapsto{z^n}$ from $\{1\}\times{S^1}\subset{T}$ to the 1-skeleton $S^1\subset{P_k}$.
In particular,  $S_k=S_{k,1}=T\cup_{a^k}e^2$ is  the 2-skeleton of $S^1\times{P_k}$.
From these descriptions it is easy to see that 
(1) automorphisms of $G_k$ which fix the torsion subgroup $A=\langle{a}\rangle$ may be realized 
by self homeomorphisms  of $S_{k,n}$ which act by reflections and Dehn twists on $T$, and fix the second 2-cell;
and (2) the automorphism which fixes $t$ and inverts $a$ is induced by an involution of $S_{k,n}$.

Let $C(k,n)_*$ be the cellular chain complex of the universal cover of $S_{k,n}$.
A choice of basepoint for $S_{n,k}$ determines lifts of the cells of $S_{k,n}$,
and hence isomorphisms $C(k,n)_0\cong\Gamma$, $C(k,n)_1\cong\Gamma^2$
and $C(k,n)_2\cong\Gamma^2$. 
The differentials are given by $\partial_1=(a-1,t-1)$ and 
$\partial_2^n=\left(\begin{smallmatrix}
(t-1)\nu_n&\rho\\ 1-a&0
\end{smallmatrix}\right)$,
where $ \nu_n=\Sigma_{0\leq{i}<n}a^i$  and $\rho=\nu_k$.
Let $\{e_1,e_2\}$ be the standard basis for $C(k,n)_2$.
Then $\Pi_{k,n}=\pi_2S_{k,n}=\mathrm{Ker}(\partial_2^n)$
is generated by $g=\rho{e_1}-n(t-1)e_2$ and $h=(a-1)e_2$,
with relations $(a-1)g=n(t-1)h$ and $\rho{h}=0$.
It can be shown that $\Pi_{k,n}\cong\alpha^*\Pi_{k,m}$,
where $\alpha$ is the automorphism of $G_k$ such that $\alpha(t)=t$
and $\alpha(a)=a^r$, where $n\equiv{rm}$ {\it mod} $k$.
Is there a chain homotopy equivalence $C(k,n)_*\simeq\alpha^*C(k,m)_*$?

Is every finite 2-complex $S$ with $\pi_1S\cong{G_k}$ and $\chi(S)=1$ 
homotopy equivalent to $S_{k,n}$, for some $n$?
The key invariants are the $\Gamma$-module $\pi_2S$ and the $k$-invariant in 
$H^3(G_k;\pi_2S)$.
Let $S_{\langle{t}\rangle}$ be the finite covering space with fundamental group
${\langle{t}\rangle}\cong\mathbb{Z}$.
If $M$ is a finitely generated submodule of a free $\Gamma$-module
then $H^i(\langle{t}\rangle;M)=0$ for $i\not=1$, 
while $H^1(\langle{t}\rangle;M)=M_t=M/(t-1)M$.
Hence the spectral sequence
\[
H^p(A;(H^q(\langle{t}\rangle;M))\Rightarrow{H^{p+q}(G_k;M)}
\] 
collapses,
to give $H^{p+1}(G_k;M)\cong{H^p(A;M_t)}$.
If $M=\pi_2S$ then $M_t\cong{H_2(S_{\langle{t}\rangle}};\mathbb{Z})$,
as a $\mathbb{Z}[A]$-module. 
When $M=\Pi_{k,n}$ it is easy to see that $M_t\cong\mathbb{Z}\oplus{I_A}$,
where $I_A$ is the augmentation ideal of $\mathbb{Z}[A]$,
and so $H^2(A;M_t)\cong{H^2(A;\mathbb{Z})}\cong\mathbb{Z}/k\mathbb{Z}$.

Let $V$ and $W$ be finite 2-complexes with $\pi_1V\cong\pi_1W\cong{G_k}$,
and let $\Gamma=\mathbb{Z}[G_k]$.
Then $\chi(V)\geq1$ and $\chi(W)\geq1$, and an application of Schanuel's Lemma
to the chain complexes of the universal covers gives
\[
\pi_2V\oplus\Gamma^{\chi(W)+n}\cong\pi_2W\oplus\Gamma^{\chi(V)+n},
\]
for $n>>0$.
Taking $W=S_{k,1}$, we see that $H^3(G;\pi_2V)\cong\mathbb{Z}/k\mathbb{Z}$,
for all such $V$.

Even if we can determine the homotopy types of the 2-complexes $S$ with 
$\pi_1S$ and $\chi(S)=1$, and the homotopy types of the pairs $(X,M)$
for a given $M$, 
the groups $L_5^s(\mathbb{Z}[G])$ are commensurable 
with $L_4(\mathbb{Z}[\mathbb{Z}/k\mathbb{Z}])$, 
which has rank $\lfloor\frac{k+1}2\rfloor$,
and so characterizing such abelian embeddings up to isotopy may be difficult.

The $S^1$-bundle spaces $M(-2;(1,0))$ (the half-turn flat 3-manifold $G_2$), 
and $M(-2;(1,4))$ (a $\mathbb{N}il^3$-manifold) are the total spaces of $S^1$-bundles over $Kb$.
They do not have abelian embeddings, 
since $\beta=1$ but $\pi/\pi'$ has non-trivial torsion.
In each case $\pi$ requires 3 generators, and so
they cannot be obtained by surgery on a 2-component link.
However, they may be obtained by 0-framed surgery on the links 
$8^3_9$ and $9^3_{19}$, respectively.
For the embeddings defined by these links $X\simeq{Kb}$ and $\pi_Y=\mathbb{Z}/2\mathbb{Z}$.
Since $X$ is aspherical we may apply Lemma \ref{asph} to show that
in each case the pair $(X,M)$ is homotopy equivalent {\it rel} $\partial$ 
to the corresponding disc bundle space.
The group $L_5(\mathbb{Z}[\mathbb{Z}\rtimes_{-1}\!\mathbb{Z}])$ 
acts trivially on the structure set $\mathcal{S}_{TOP}(X,\partial{X})$,
by \cite[Theorem 6.7]{FMGK}.
Thus there are only finitely many possible homeomorphism types for $X$.
As in Lemma \ref{finite}, 
$Y$ is homotopy equivalent to a finite 2-complex, 
and hence $Y\simeq\mathbb{RP}^2\vee{S^2}$.
However we do not yet know how to identify the pair $(Y,M)$.


It is easy to find 3-component bipartedly trivial links $L$ such that $X_L$ 
is aspherical and $\pi_{X_L}\cong{BS(1,m)}$, for $m\not=0$. 
(If $m\not=1$ then $H_2(X)=0$, and if $m=2$ then $M$ is a homology handle.)
The group $L_5(\mathbb{Z}[\pi_X])$ acts trivially on $\mathcal{S}_{TOP}(X,M)$
\cite[Lemma 6.9]{FMGK}.
If $m$ is even then $H_2(X;\mathbb{F}_2)=0$, 
and so $X$ is determined up to homeomorphism
by $M$ and the homomorphism $j_{X*}$.
(See also \cite{DH23}.)
In this case $Y$ is homotopy equivalent to a finite 2-complex, 
by Lemma \ref{finite},
since $\pi_Y\cong\mathbb{Z}/(m-1)\mathbb{Z}$,
$\chi(Y)=2$ and $c.d.Y\leq2$.
Hence $Y\simeq{P_{m-1}}\vee{S^2}$ \cite{DS}.
However we do not yet know how to identify the pair $(Y,M)$.

\chapter{Nilpotent embeddings}

The broader class of nilpotent groups is of particular interest.  
If $\pi_X$ is nilpotent then $j_{X*}=\pi_1j_X$ is onto, 
since $H_1(j_X)$ is onto,
and any subset of a nilpotent group $G$ whose image generates
the abelianization $G/G'$ generates $G$.
Since $j_{X*}$ is onto, $c.d.X\leq2$, by Theorem \ref{Hi17-thm5.1}.
(Similarly, if $\pi_Y$ is nilpotent then $j_{Y*}$ is onto and $c.d.Y\leq2$.)
If $\pi_X$ and $\pi_Y$ are each nilpotent then $j$ is bi-epic,
by Lemma \ref{minimal} above.

There are also purely algebraic reasons why nilpotent groups 
should be of interest. 
Firstly, there is the well-known connection between homology,
lower central series and (Massey) products \cite{Dw75, St65}.
Secondly, if a group $G$ is finite or solvable and every homomorphism
$f:H\to{G}$ which  induces an epimorphism on abelianization is an epimorphism
then $G$ must be nilpotent. 
(See pages 132 and 460 of \cite{Rob}.)
However even the class of 2-generator nilpotent groups with balanced 
presentations is not known.
(We expect that nilpotent groups of large Hirsch length should have negative 
deficiency, and so should not arise in this context.)

We begin by showing that if $G$ is a finitely generated nilpotent group 
other than $\mathbb{Z}$ or $\mathbb{Z}^2$ then $\beta_2(G;F)\geq\beta_1(G;F)$ for some prime field $F$, and we summarize what is presently known about
(infinite) homologically balanced nilpotent groups.
(See Appendix B for more details.)
We then give strong constraints on nilpotent groups arising in our context,
parallel to Theorem \ref{Hi17-thm7.1}, and conclude with several examples of abelian and nilpotent embeddings.

\section{Wang sequence estimates}

An automorphism $\alpha$ of an abelian group $A$ is {\it unipotent\/} 
if $\alpha-id_A$ is nilpotent.
The following lemma is a particular case of a result of P. Hall \cite[5.2.1]{Rob}.

\begin{lem}
[Hall]
Let $\psi$ be an automorphism of a finitely generated nilpotent group $N$.
Then $G=N\rtimes_\psi\mathbb{Z}$ is nilpotent if and only if $\psi^{ab}$ is unipotent.
\qed
\end{lem}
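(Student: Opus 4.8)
The plan is to prove both implications by reducing to the unipotence of $\psi$ on the successive quotients of the lower central series of $N$, and then building $G=N\rtimes_\psi\mathbb{Z}$ up from abelian pieces; this is exactly the special case of P. Hall's theorem \cite[5.2.1]{Rob} in which the nilpotent normal subgroup is taken to be $N$ itself, so an acceptable shortcut is simply to quote that result, but the self-contained argument below is short. Throughout, let $t$ generate the $\mathbb{Z}$ factor, so $tnt^{-1}=\psi(n)$ for $n\in N$, and for an automorphism $\alpha$ of an abelian group $A$ write $\nu_\alpha=\alpha-\mathrm{id}_A$, so that $\alpha$ is unipotent exactly when $\nu_\alpha^{\,k}=0$ for some $k$.

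For the forward implication, suppose $G$ is nilpotent, of class $c$. Since $N'$ is characteristic in $N$ it is normal in $G$, and $\overline G=G/N'\cong N^{ab}\rtimes_{\psi^{ab}}\mathbb{Z}$ is nilpotent of class $\le c$. Working in additive notation on $N^{ab}$ and using $txt^{-1}=\psi^{ab}(x)$, one has $[t,x]=txt^{-1}x^{-1}=\nu_{\psi^{ab}}(x)$, and hence the $k$-fold commutator $[t,[t,\dots,[t,a]\dots]]$ (with $t$ occurring $k$ times) equals $\nu_{\psi^{ab}}^{\,k}(a)$. This element lies in $\gamma_{k+1}\overline G$, which is trivial for $k=c$, so $\nu_{\psi^{ab}}^{\,c}=0$ and $\psi^{ab}$ is unipotent.

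For the converse, assume $\psi^{ab}$ is unipotent. The first step is to show that $\psi$ acts unipotently on each factor $\gamma_iN/\gamma_{i+1}N$, by induction on $i$ (the case $i=1$ being the hypothesis): the commutator map induces a $\psi$-equivariant surjection $N^{ab}\otimes_{\mathbb Z}\bigl(\gamma_{i-1}N/\gamma_iN\bigr)\twoheadrightarrow\gamma_iN/\gamma_{i+1}N$, and I would check the two elementary facts that a tensor product of unipotent automorphisms of finitely generated abelian groups is unipotent — if the nilpotent parts are $\nu_1,\nu_2$, then $(\mathrm{id}+\nu_1)\otimes(\mathrm{id}+\nu_2)$ is a product of commuting unipotents, with nilpotent part $\nu_1\otimes\mathrm{id}+\mathrm{id}\otimes\nu_2+\nu_1\otimes\nu_2$ — and that a unipotent operator induces a unipotent operator on any invariant quotient. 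The second step is induction on the nilpotency class $c$ of $N$. If $c=1$, so $N$ is abelian and $\psi$ unipotent with $\nu=\nu_\psi$, a direct calculation (using that $\psi^n-\mathrm{id}$ is a left multiple of $\nu$ for every $n\in\mathbb Z$) gives $\gamma_{j+1}G=\nu^{\,j}(N)$, which vanishes once $j$ exceeds the nilpotency index of $\nu$. If $c\ge2$, then $A=\gamma_cN$ is finitely generated abelian, central in $N$, and $\psi$-invariant; the induced automorphism $\overline\psi$ of $\overline N=N/\gamma_cN$ still has $\overline\psi{}^{ab}$ unipotent (it is a quotient of $\psi^{ab}$), so $\overline G=\overline N\rtimes_{\overline\psi}\mathbb Z$ is nilpotent by induction, and $G$ is an extension of $\overline G$ by $A$. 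The conjugation action of $G$ on $A$ kills $N$ (since $\gamma_cN\subseteq\zeta N$) and therefore factors through $G/N\cong\mathbb Z$ acting by the unipotent $\psi|_A$; hence $A$ has the finite $G$-invariant filtration $A\supseteq\nu_{\psi|_A}(A)\supseteq\nu_{\psi|_A}^2(A)\supseteq\dots\supseteq0$ with trivial $G$-action on each graded piece. Refining this to a central series of $G$ and invoking the standard fact that a central extension of a nilpotent group is nilpotent (again \cite{Rob}) shows that $G$ is nilpotent.

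The point needing the most care is the bookkeeping in the integral rather than rational setting: "unipotent" must be handled for endomorphisms of finitely generated abelian groups that may have torsion, and one must confirm that unipotence is inherited by tensor products, by invariant submodules, and by invariant quotients before feeding it into the two inductions; none of these is difficult, but they are what make the argument go through verbatim over $\mathbb Z$. The only genuinely structural ingredient beyond these observations is the extension-theoretic step in the last paragraph, which is where invoking Hall's theorem directly would save the most work.
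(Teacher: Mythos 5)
Your proof is correct. It necessarily differs in route from the paper's, because the paper gives no argument at all: the lemma is stated as ``a particular case of a result of P.~Hall'' with a citation to \cite[5.2.1]{Rob} and a bare \qed, and your own remark that quoting Hall is an acceptable shortcut is exactly what the paper does. Everything in your self-contained version checks out: the forward implication via the iterated commutators $[t,[t,\dots,[t,a]\dots]]=\nu_{\psi^{ab}}^{\,k}(a)$ computed in $G/N'$ (legitimate since $N'$ is characteristic in $N$, hence normal in $G$); the abelian base case $\gamma_{j+1}G=\nu^{\,j}(N)$, which follows from the explicit commutator formula $[(a,m),(a',m')]=\bigl((1-\psi^{m'})a+(\psi^{m}-1)a'\bigr)$ together with the fact that $\psi^{n}-\mathrm{id}$ is a multiple of $\nu$ for all $n\in\mathbb{Z}$; the propagation of unipotence down the lower central factors via the $\psi$-equivariant surjection $N^{ab}\otimes(\gamma_{i-1}N/\gamma_iN)\twoheadrightarrow\gamma_iN/\gamma_{i+1}N$ and the closure of unipotence under tensor products and invariant quotients; and the final induction on class, where $G/\gamma_cN\rtimes$-piece is nilpotent by induction, $G$ acts on the central-in-$N$ subgroup $A=\gamma_cN$ through the unipotent $\psi|_A$, and the filtration $A\supseteq\nu(A)\supseteq\cdots\supseteq0$ exhibits $G$ as an iterated central extension of a nilpotent group. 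What the citation buys the paper is brevity; what your argument buys is a verification that the statement really does hold integrally, with torsion permitted in $N$ and in $N^{ab}$ --- which matters, since the appendix immediately applies ``unipotent'' to automorphisms of finite abelian $p$-groups in Lemmas B1--B3, and your filtration of $A$ by the images $\nu^{\,i}(A)$ is essentially the same device the paper reproves separately as Lemma B1.
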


We shall extend the term ``unipotent",
to say that an automorphism $\psi$ of a finitely generated 
nilpotent group is unipotent if $\psi^{ab}$ is unipotent.
Our next lemma is probably known,
but we have not found a published proof.

\begin{lemma}
\label{unipotent}
Let $\psi$ be a unipotent automorphism of a 
finitely generated nilpotent group $N$.
Then $H_i(\psi;R)$ and $H^i(\psi;R)$ are unipotent, 
for all simple coefficients $R$ and $i\geq0$.
\end{lemma}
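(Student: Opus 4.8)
The plan is to argue by induction on the nilpotency class $c$ of $N$, using the functoriality of the Lyndon--Hochschild--Serre spectral sequence of a central extension. First some preliminaries. Since a finitely generated nilpotent group is of type $FP_\infty$, for $R=\mathbb{Z}$ or any field $R$ each of $H_i(N;R)$, $H^i(N;R)$ is a finitely generated $R$-module; hence every module-with-automorphism $(M,\alpha)$ occurring below is finitely generated, and unipotence of $\alpha$ amounts to $(\alpha-\mathrm{id})^k=0$ for some $k$, equivalently to the existence of a finite $\alpha$-invariant filtration of $M$ on whose successive quotients $\alpha$ acts as the identity. From the latter description one checks the following closure properties of the class $\mathcal{U}$ of unipotent pairs: it is closed under passage to $\alpha$-invariant submodules and to quotients; under extensions (given an $\alpha$-equivariant $0\to M'\to M\to M''\to 0$ with $M',M''$ unipotent, if $(\alpha-\mathrm{id})^a$ kills $M'$ and $(\alpha-\mathrm{id})^bM\subseteq M'$ then $(\alpha-\mathrm{id})^{a+b}$ kills $M$); and, acting diagonally, under $\otimes_R$, $\mathrm{Tor}^R_*(-,-)$, $\mathrm{Ext}_R^*(-,-)$ and finite direct sums. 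For tensor products this is immediate from
\[
\alpha\otimes\alpha'-\mathrm{id}=(\alpha-\mathrm{id})\otimes\alpha'+\mathrm{id}\otimes(\alpha'-\mathrm{id}),
\]
a sum of two commuting nilpotent operators; $\mathrm{Tor}$ and $\mathrm{Ext}$ follow by resolving, the cohomological assertion over a field is dual to the homological one, and the remaining coefficient rings reduce to $\mathbb{Z}$ via the Universal Coefficient Theorem (whose terms involve only $-\otimes R$ and $\mathrm{Tor}/\mathrm{Ext}$ against $R$).

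For the base case $c\leq1$ the group $N$ is finitely generated abelian and $\psi$ is unipotent. Pick a finite $\psi$-invariant filtration of $N$ on whose quotients $\psi$ acts trivially, and refine it so that all successive quotients are cyclic (this is possible because, once $\psi$ is trivial modulo a given term, every intermediate subgroup containing that term is automatically $\psi$-invariant, with $\psi$ still trivial on the new quotients). Now induct on the length of this filtration: write $C$ for the bottom term, a cyclic group on which $\psi$ acts as the identity, so that $H_*(\psi|_C;R)=\mathrm{id}$; the quotient $N/C$ is covered by the inductive hypothesis. The spectral sequence of the (central, because abelian) extension $1\to C\to N\to N/C\to1$ has $E^2_{pq}=H_p(N/C;H_q(C;R))$ with trivial coefficient action, so by the Universal Coefficient Theorem each $E^2_{pq}$ is assembled from $H_p(N/C;R)$ and $H_q(C;R)$ by $\otimes_R$ and $\mathrm{Tor}^R_1$, whence $\psi$ acts on the $E^2$-page through $\mathcal{U}$. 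Since $\psi$ acts on the whole spectral sequence compatibly with the differentials, every later page is a subquotient of its predecessor and so remains in $\mathcal{U}$; thus $E^\infty\in\mathcal{U}$, and as $H_n(N;R)$ carries a finite filtration with associated graded $\bigoplus_{p+q=n}E^\infty_{pq}$, closure of $\mathcal{U}$ under extensions makes $H_n(\psi;R)$ unipotent. The cohomology statement follows identically from the cohomology spectral sequence.

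For the inductive step ($c\geq2$) put $Z=\gamma_cN$, the last nontrivial term of the lower central series: it is central in $N$, characteristic (hence $\psi$-invariant) and finitely generated abelian. Standard commutator calculus exhibits $Z$ as a $\psi$-equivariant quotient of $(N^{ab})^{\otimes c}$ on which $\psi$ acts as $(\psi^{ab})^{\otimes c}$; since $\psi^{ab}$ is unipotent, so is this tensor power, and hence so is $\psi|_Z$. On the other hand $N/Z$ has nilpotency class $<c$, and as $Z\subseteq N'$ we have $(N/Z)^{ab}=N^{ab}$ with induced automorphism equal to $\psi^{ab}$, so $\psi$ is unipotent on $N/Z$ as well. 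By the base case $H_*(\psi|_Z;R)$ is unipotent, and by the induction hypothesis $H_*(\bar\psi;R)$ is unipotent, so running the spectral sequence of the central extension $1\to Z\to N\to N/Z\to1$ exactly as in the previous paragraph gives that $H_n(\psi;R)$, and dually $H^n(\psi;R)$, is unipotent; this completes the induction.

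The main obstacle I anticipate is bookkeeping rather than mathematics: making the closure properties of $\mathcal{U}$ and the naturality statements precise enough — that $\psi$ really acts on the spectral sequence compatibly with all the differentials, and that the $E^2$-term is built functorially (through the Universal Coefficient Theorem) from the homologies of $Z$ and of $N/Z$ — so that the two nested inductions go through without circularity. Everything else is formal.
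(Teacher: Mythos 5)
Your proof is correct, and its engine is the same as the paper's: establish closure of unipotence under subquotients, extensions, $\otimes$, $\mathrm{Tor}$ and $\mathrm{Ext}$, and feed this into the Lyndon--Hochschild--Serre spectral sequence of a $\psi$-invariant extension. But the decomposition of $N$ is genuinely different. The paper inducts on the length of a composition series with cyclic quotients: since $\psi$ acts unipotently on $\mathrm{Hom}(N,\mathbb{Z})$ (resp.\ $\mathrm{Hom}(N,\mathbb{F}_p)$) it fixes an epimorphism onto $\mathbb{Z}$ or $\mathbb{Z}/p\mathbb{Z}$, and one restricts to the kernel $K$; this needs Hall's lemma to see that $\psi|_K$ is again unipotent, and the extension $1\to K\to N\to N/K\to1$ is not central, so the $E^2$-term carries a possibly nontrivial $N/K$-action on $H_*(K;R)$, which the paper handles via the diagonal action on $C_*\otimes B$. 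You instead induct on nilpotency class using $Z=\gamma_cN$, with a sub-induction over a bottom-up cyclic refinement in the abelian case. This buys two simplifications: every extension you use is central, so the coefficient modules are trivial and the Universal Coefficient Theorem applies directly to the $E^2$-term; and the unipotence of $\psi|_Z$ comes for free from the natural surjection $(N^{ab})^{\otimes c}\twoheadrightarrow\gamma_cN$ rather than from Hall's lemma. The cost is a longer, two-layer induction. Both arguments rest on the same final points, which you state correctly: $\psi$ acts on the whole spectral sequence, unipotence passes to subquotients of each page, and $H_n(N;R)$ is a finite extension of the $E^\infty$-terms.
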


\begin{proof}
If $N$ is cyclic then the result is clear.
In general,  $N$ has a composition series with cyclic
subquotients $\mathbb{Z}/p\mathbb{Z}$, where $p=0$ or is prime.
We shall induct on the number of terms in such a composition series.
If $N$ is infinite then $\psi$ acts unipotently on $Hom(N,\mathbb{Z})$
and so fixes an epimorphism to $\mathbb{Z}$;
if $N$ is finite then $\psi$ fixes an epimorphism to $\mathbb{Z}/p\mathbb{Z}$,
for any $p$ dividing the order of $N$.

Let $K$ be the kernel of such an epimorphism.
Then $\psi(K)=K$, by the choice of $\psi$;  let $\psi_K=\psi|_K$.
This is a unipotent automorphism of $K$, 
by Hall's Lemma \cite[5.2.10]{Rob}.
Hence the induced action of $\psi$ on $H_i(K;R)$ is unipotent,
for all $i$, by the inductive hypothesis.
Let $\Lambda=\mathbb{Z}[N/K]$ and let $B$ be a $\Lambda$-module.
Then $H_i(N/K;B)=Tor_i^\Lambda(\mathbb{Z},B)$ may be computed
from the tensor product $C_*\otimes_\mathbb{Z}B$,
where $C_*$ is a resolution of the augmentation $\Lambda$-module 
$\mathbb{Z}$.
If $B=H_i(K;R)$ then the diagonal action of $\psi$ on
each term of $C_*\otimes_\mathbb{Z}B$ is unipotent.
The result is now a straightforward consequence of the  
Lyndon-Hochschild-Serre spectral sequence for
$N$ as an extension of $N/K$ by $K$.

The argument for cohomology is similar.
\end{proof}

In fact we only need this lemma in degrees $\leq2$.
We shall usually assume that the coefficient ring is a field,
and then homology and cohomology are linear duals of each other.
Homology has an advantage deriving from the isomorphism
$G^{ab}\cong{H_1(G)}$, 
but it is often more convenient to use cohomology instead.

If $G$ is a finitely generated infinite nilpotent group then 
there is an epimorphism $f:G\to\mathbb{Z}$, 
and so $G\cong{K}\rtimes_\psi\mathbb{Z}$,
where $\psi$ is an automorphism of $K=\mathrm{Ker}(f)$ 
determined by conjugation in $G$.
The homology groups $H_i(K;R)=H_i(G;R[G/K])$ are $R[G/K]$-modules, 
with a generator $t$ of $G/K\cong\mathbb{Z}$ acting via $H_i(\psi;R)$.
The (homology) Wang sequence for $G$ has an extension of $\mathbb{Z}$ by $K$ has the form
\[
H_2(K;R)\xrightarrow{H_2(\psi;R)-I}{H_2(K;R)}\to{H_2(G;R)}\to
\]
\[
\to{H_1(K;R)}\xrightarrow{H_1(\psi;R)-I}{H_1(K;R)}\to{H_1(G;R)}\to{R}\to0.
\]
There is a similar Wang sequence for cohomology.
(These are special cases of the  Lyndon-Hochschild-Serre spectral sequences 
for the homology and cohomology with coefficients $R$ of $G$ 
as an extension of $\mathbb{Z}$ by $K$.)

\begin{lemma}
\label{wang app}
Let $G\cong{K}\rtimes_\psi\mathbb{Z}$ be a finitely generated nilpotent group,
and let $F$ be a field.
Then  
\begin{enumerate}
\item$\dim_F\mathrm{Cok}(H_2(\psi;F)-I)=
\dim_F\mathrm{Ker}(H^2(\psi;F)-I)=\beta_2(G)-\beta_1(G)+1$,
and so $\beta_2(G;F)\geq\beta_1(G;F)-1$,
with equality if and only if $\beta_2(K;F)=0$;
\item{}if $\beta_2(G;F)=\beta_1(G;F)$ then $H_2(K;F)$ is cyclic 
as a $F[G/K]$-module;
\item{}$\beta_1(G;F)=1\Leftrightarrow\beta_2(G;F)=0$, 
and then $K$ is finite,  $\beta_1(K;F)=0$, and $h(G)=1$;
\item{}if $H_2(G;\mathbb{Z})=0$ then $G\cong\mathbb{Z}$.
\end{enumerate}
\end{lemma}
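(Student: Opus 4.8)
\textbf{Proof proposal for Lemma~\ref{wang app}(4).}
The plan is to deduce part~(4) from parts~(1)--(3) of the same lemma, applied over a carefully chosen coefficient field, combined with a downward induction on the Hirsch length $h(G)$. First I would observe that if $H_2(G;\mathbb{Z})=0$, then in particular $\beta_2(G;F)=0$ for every prime field $F$ (since $H_2(G;F)$ receives a surjection from $F\otimes H_2(G;\mathbb{Z})$ by the Universal Coefficient Theorem quoted in \S1.2, or equivalently $H_2(G;F)$ vanishes when $H_2(G;\mathbb{Z})=0$). Then part~(3) of the present lemma forces $\beta_1(G;F)=1$ for every $F$; taking $F=\mathbb{Q}$ gives $\beta_1(G;\mathbb{Q})=1$, so $G^{ab}$ has rank exactly $1$, and taking $F=\mathbb{F}_p$ for all primes $p$ shows $G^{ab}\otimes\mathbb{F}_p$ is $1$-dimensional for all $p$, hence $G^{ab}$ has no torsion and $G^{ab}\cong\mathbb{Z}$. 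In particular $G$ is infinite, so we may write $G\cong K\rtimes_\psi\mathbb{Z}$ with $K=\mathrm{Ker}(G\to G^{ab})=G'$ a finitely generated nilpotent group (finitely generated because $G$ is, and $G^{ab}\cong\mathbb{Z}$ is finitely presentable, or because $G$ is polycyclic).

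Next I would use part~(3) again, which tells us $K$ is \emph{finite} and $h(G)=1$. Since $G$ is nilpotent with $G^{ab}\cong\mathbb{Z}$, the torsion subgroup $T$ of $G$ is exactly $K$, and $G/T\cong\mathbb{Z}$. The key remaining point is to show $T=1$. Here I would invoke the Wang sequence in the form of part~(1): with $F=\mathbb{F}_p$ for $p$ dividing $|T|$, the equality $\beta_2(G;F)=0=\beta_1(G;F)-1$ gives $\beta_2(K;F)=0$ by the ``equality if and only if'' clause of~(1). But $K=T$ is a finite nilpotent group, and a nonzero finite nilpotent group $T$ has $H_2(T;\mathbb{F}_p)\neq 0$ for any prime $p$ dividing $|T|$: indeed its Sylow $p$-subgroup $T_p$ is a nontrivial finite $p$-group, which is never homologically balanced over $\mathbb{F}_p$ unless cyclic (by the discussion in \S1.2, $\beta_2(T_p;\mathbb{F}_p)\ge\beta_1(T_p;\mathbb{F}_p)\ge 1$), and moreover $H_2(T;\mathbb{Z})=0$ would force $T$ trivial by the same paragraph of \S1.2 (a finite group with a balanced presentation and trivial multiplicator can still be nontrivial, but here we are going the other way: we only need that $H_2(T;\mathbb{F}_p)\ne 0$). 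Since $H_2(T;\mathbb{F}_p)$ is a direct summand contribution that injects into $H_2(K;\mathbb{F}_p)$ (as $K=T$), we get $\beta_2(K;\mathbb{F}_p)>0$, contradicting $\beta_2(K;F)=0$. Hence $T=1$ and $G\cong\mathbb{Z}$.

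The main obstacle I anticipate is making the last step completely clean: one must be sure that the ``equality iff $\beta_2(K;F)=0$'' clause of part~(1) is available (it is, as stated), and that a nontrivial finite nilpotent group genuinely has nonvanishing $H_2$ with $\mathbb{F}_p$ coefficients for a suitable $p$. The safest route is to reduce to a nontrivial finite abelian group: since $T$ is nilpotent, $T^{ab}=T/T'$ is a nontrivial finite abelian group, and $H_2(T;\mathbb{F}_p)$ surjects onto nothing directly useful, so instead one uses that the five-term exact sequence plus $H_2(T^{ab};\mathbb{F}_p)=\Lambda^2(T^{ab})\otimes\mathbb{F}_p\oplus\cdots$ is nonzero when $T^{ab}\otimes\mathbb{F}_p$ is not cyclic, and handles the cyclic case by noting a nontrivial cyclic group $\mathbb{Z}/p^k\mathbb{Z}$ has $H_2(\mathbb{Z}/p^k\mathbb{Z};\mathbb{Z})=0$ \emph{but} $\beta_1(\mathbb{Z}/p^k\mathbb{Z};\mathbb{F}_p)=1$, so part~(3) already fails there --- meaning the cyclic-torsion case is actually excluded earlier by $\beta_1(G;\mathbb{F}_p)=1$ for \emph{all} $p$. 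Concretely: if $T\neq 1$, pick $p\mid|T|$; then $\beta_1(G;\mathbb{F}_p)=\dim_{\mathbb{F}_p}(G^{ab}\otimes\mathbb{F}_p) + \dim_{\mathbb{F}_p}\mathrm{Tor}(\mathbb{F}_p,G^{ab})$ contributions are not the issue since $G^{ab}\cong\mathbb{Z}$; rather the Wang sequence's $H_1$ portion shows $\beta_1(G;\mathbb{F}_p)=1+\dim_{\mathbb{F}_p}\mathrm{Cok}(H_1(\psi;\mathbb{F}_p)-I)$, and since $\psi$ is unipotent (Hall's Lemma) acting on $H_1(K;\mathbb{F}_p)=K^{ab}\otimes\mathbb{F}_p\neq 0$, the cokernel is nonzero, giving $\beta_1(G;\mathbb{F}_p)\ge 2$ and hence by part~(3) $\beta_2(G;\mathbb{F}_p)\ge 1$, contradicting $H_2(G;\mathbb{Z})=0$. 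This last version is the cleanest and avoids any delicate computation of $H_2$ of finite groups, so that is the route I would write up.
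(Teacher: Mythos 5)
Your reduction of part (4) to field coefficients rests on a reversed use of the Universal Coefficient Theorem, and this is a genuine gap rather than a presentational one. The sequence quoted in \S1.2 is
\[
0\to F\otimes H_2(G;\mathbb{Z})\to H_2(G;F)\to Tor(F,G^{ab})\to 0,
\]
so $F\otimes H_2(G;\mathbb{Z})$ \emph{injects into} $H_2(G;F)$; it does not surject onto it. Consequently $H_2(G;\mathbb{Z})=0$ gives $H_2(G;F)\cong Tor(F,G^{ab})$, which is nonzero for $F=\mathbb{F}_p$ whenever $G^{ab}$ has $p$-torsion. Your opening claim that $\beta_2(G;F)=0$ for every prime field is therefore unavailable, and both places where you rely on it collapse: the deduction that $\beta_1(G;F)=1$ for all $F$ (hence that $G^{ab}\cong\mathbb{Z}$), and the final step ``$\beta_2(G;\mathbb{F}_p)\ge1$, contradicting $H_2(G;\mathbb{Z})=0$''. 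In the scenario of your concluding paragraph one has $G^{ab}\cong\mathbb{Z}\oplus C$ with $C\neq0$ and $p$ dividing $|C|$, so $\beta_2(G;\mathbb{F}_p)=\dim_{\mathbb{F}_p}Tor(\mathbb{F}_p,G^{ab})\ge1$ is perfectly compatible with $H_2(G;\mathbb{Z})=0$, and no contradiction is reached. The missing input --- that $G^{ab}$ is torsion-free --- is essentially equivalent to the conclusion $K=1$ that you are trying to prove, so the argument is circular at exactly the decisive point.

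The repair is to stay with integral coefficients, which is what the paper does: the integral Wang sequence gives exactness of $H_2(G)\to H_1(K)\xrightarrow{\psi^{ab}-I}H_1(K)$, so $H_2(G;\mathbb{Z})=0$ forces $\psi^{ab}-I$ to be injective on $K^{ab}$. Since $G$ is nilpotent, $\psi^{ab}$ is unipotent by Hall's Lemma, so $\psi^{ab}-I$ is a nilpotent endomorphism of $K^{ab}$; a nilpotent injective endomorphism of an abelian group exists only when the group is trivial, whence $K^{ab}=0$, so $K=1$ because $K$ is nilpotent, and $G\cong\mathbb{Z}$. Your instinct to exploit the unipotence of $\psi$ on $H_1(K)$ through the Wang sequence is exactly the right one; it simply has to be applied over $\mathbb{Z}$, where the hypothesis $H_2(G;\mathbb{Z})=0$ actually bites, rather than over $\mathbb{F}_p$, where it does not.
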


\begin{proof}
Part (1) follows from the Wang sequences for the homology and cohomology of $G$ as an extension of $\mathbb{Z}$ by $K$.
The endomorphisms $H_i(\psi;F)-I$ have non-trivial kernel and cokernel if $H_i(K;F)\not=0$,  
since they are nilpotent.

The $F[G/K]$-module $H=H_2(K;F)$ is finitely generated 
and is annihilated by a power of $t-1$, 
since $H_2(\psi;F)$ is unipotent.
If $\beta_2(G;F)=\beta_1(G;F)$ then $\dim_FH/(t-1)H=1$,
by the exactness of the Wang sequence.
Since $F[G/K]\cong{F}[t,t^{-1}]$ is a PID,
it follows that $H$ is cyclic as an $F[G/K]$-module.

Let $t\in{G}$ represent a generator of $G/K$.
Then $F[G/K]\cong{F}[t,t^{-1}]$ is a PID
and $H=H_2(K;F)=H_2(G;F[G/K])$ is a finitely generated 
$F[t,t^{-1}]$-module, with $t$ acting via $H_2(\psi;F)$.
This module is annihilated by a power of $t-1$, 
since $H_2(\psi;F)$ is unipotent.
If $\beta_2(G;F)=\beta_1(G;F)$ then $\dim_FH/(t-1)H=1$,
by exactness of the Wang sequence.
Since $F[G/K]\cong{F}[t,t^{-1}]$ is a PID,
it follows that $H$ is cyclic as an $F[G/K]$-module.

If $\beta_1(G;F)=1$ then $H_1(K;F)=0$, and so $K$ is finite and $h(G)=1$.
Since $K$ is finite it is the direct product of its Sylow subgroups,
and the Sylow $p$-subgroup carries the $p$-primary homology of $K$.
Hence  if $F$ has characteristic $p>1$ and $H_1(K;F)=0$
then the Sylow $p$-subgroup is trivial and $H_i(K;F)=0$, for all $i\geq1$.
If $F$ has characteristic 0 then $H_i(K;F)=0$ for all $i\geq1$ also.
In each case, $H_i(G;F)=0$, for all $i>1$, and so $\beta_2(G;F)=0$.
Conversely, if $\beta_2(G;F)=0$ then $H_1(\psi;F)-I$ is a monomorphism.
Since $H_1(\psi;F)-I$ is nilpotent,  $H_1(K;F)=0$.
Hence $K$ is finite, so $h(G)=1$, and $\beta_1(G;F)=1$.

Part (4) is similar.
If $H_2(G)=0$ then $\psi^{ab}-I$ is a monomorphism,
and so $K^{ab}=0$.
Hence $K=1$ and $G\cong\mathbb{Z}$.
\end{proof}

In particular, if $h(G)=1$ and $T$ is the torsion subgroup of $G$ then 
$\beta_1(T;\mathbb{F}_p)>0$ if and only if $\beta_1(G;\mathbb{F}_p)>1$. 
The fact that the torsion subgroup has non-trivial image in the abelianization
does not extend to nilpotent groups $G$ with $h(G)>1$,
as may be seen from the groups
with presentation 
\[
\langle{x,y}\mid[x,[x,y]]=[y,[x,y]]=[x,y]^p=1\rangle.
\]

\begin{cor}
\label{wang cor}
Let $G$ be a finitely generated nilpotent group.
Then
\begin{enumerate}
\item{} $\beta_2(G;\mathbb{Q})<\beta_1(G;\mathbb{Q})$ if and only if
$h(G)=1$ or $2$;
\item{}
if $\beta_2(G;\mathbb{F}_p)<\beta_1(G;\mathbb{F}_p)$ for some prime $p$ then $G$ is infinite, $G$ has no $p$-torsion and
$h(G)=\beta_1(G;\mathbb{F}_p)=1$ or $2$. 
\end{enumerate}
\end{cor}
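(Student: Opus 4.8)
The plan is to derive both statements from Lemma \ref{wang app} by writing an infinite finitely generated nilpotent group in the form $G\cong K\rtimes_\psi\mathbb{Z}$, with $K$ the kernel of an epimorphism onto $\mathbb{Z}$, and then doing a short induction on Hirsch length that branches according to whether the successive kernels are finite. First I would dispose of the finite case: if $G$ is finite then $\beta_i(G;\mathbb{Q})=0$ for $i\ge1$, while $\beta_2(G;\mathbb{F}_p)\ge\beta_1(G;\mathbb{F}_p)$ for every prime $p$ by the Universal Coefficient estimate recalled in Chapter 1; so in either part the hypothesised strict inequality forces $G$ to be infinite, hence $G\cong K\rtimes_\psi\mathbb{Z}$ with $h(K)=h(G)-1$, and $\beta_1(G;\mathbb{Q})=h(G^{ab})\ge1$. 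Throughout I would use that a nilpotent group with finite abelianisation is finite, so that $h(K)=1$ entails $K'$ finite and $\beta_1(K;\mathbb{Q})=1$.

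For part (1) take $F=\mathbb{Q}$. Lemma \ref{wang app}(1) turns $\beta_2(G;\mathbb{Q})<\beta_1(G;\mathbb{Q})$ into the conjunction $\beta_2(G;\mathbb{Q})=\beta_1(G;\mathbb{Q})-1$ and $\beta_2(K;\mathbb{Q})=0$. If $K$ is finite then $h(G)=1$; if $K$ is infinite, Lemma \ref{wang app}(3) applied to $K$ gives $\beta_1(K;\mathbb{Q})=1$ and $h(K)=1$, so $h(G)=2$. Conversely, if $h(G)\in\{1,2\}$ then $h(K)\in\{0,1\}$; when $K$ is finite, $\beta_2(K;\mathbb{Q})=0$ directly, and when $h(K)=1$ the group $K$ has finite commutator subgroup, so $\beta_1(K;\mathbb{Q})=1$ and Lemma \ref{wang app}(3) yields $\beta_2(K;\mathbb{Q})=0$; in either case Lemma \ref{wang app}(1) gives $\beta_2(G;\mathbb{Q})=\beta_1(G;\mathbb{Q})-1<\beta_1(G;\mathbb{Q})$.

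For part (2) take $F=\mathbb{F}_p$; again Lemma \ref{wang app}(1) gives $\beta_2(K;\mathbb{F}_p)=0$. If $K$ is finite, the Chapter 1 estimate forces $\beta_1(K;\mathbb{F}_p)=0$, so the Sylow $p$-subgroup of $K$ is trivial, $H_i(K;\mathbb{F}_p)=0$ for $i\ge1$, the Wang sequence gives $\beta_1(G;\mathbb{F}_p)=1$ and $\beta_2(G;\mathbb{F}_p)=0$, and $h(G)=1$. If $K$ is infinite, Lemma \ref{wang app}(3) applied to $K$ produces a further decomposition $K\cong L\rtimes\mathbb{Z}$ with $L$ finite, $\beta_1(L;\mathbb{F}_p)=0$, $\beta_1(K;\mathbb{F}_p)=1$ and $h(K)=1$; since $H_1(\psi;\mathbb{F}_p)$ is unipotent (by the lemma of P. Hall together with Lemma \ref{unipotent}) it acts trivially on the one-dimensional $H_1(K;\mathbb{F}_p)$, so the Wang sequence for $G$ gives $\beta_1(G;\mathbb{F}_p)=2=h(G)$. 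In both subcases $h(G)=\beta_1(G;\mathbb{F}_p)\in\{1,2\}$; and the bottom finite kernel ($K$, resp.\ $L$) has trivial $p$-Sylow, so, torsion of $G$ lying in $K$ and torsion of $K$ lying in $L$, the group $G$ has no $p$-torsion.

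Most of this is bookkeeping, and the one place needing genuine care is the repeated appeal to Lemma \ref{wang app}(3): that lemma is only valid for infinite nilpotent groups, since for a finite group the implication $\beta_2=0\Rightarrow\beta_1=1$ can fail over $\mathbb{F}_p$, so the argument must branch on the finiteness of each successive kernel rather than running a uniform induction. The secondary subtlety is extracting the ``no $p$-torsion'' conclusion in part (2), which reduces to the fact that a finite nilpotent group with $\beta_1(\,\cdot\,;\mathbb{F}_p)=0$ has trivial Sylow $p$-subgroup, pushed back up through the two semidirect-product extensions.
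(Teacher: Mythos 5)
Your proof is correct and follows essentially the same route as the paper: dispose of the finite case via the Universal Coefficient estimate, write $G\cong K\rtimes_\psi\mathbb{Z}$, use Lemma \ref{wang app}(1) to force $\beta_2(K;F)=0$, and then branch on the finiteness of $K$, invoking Lemma \ref{wang app}(3) in the infinite case. You merely spell out more explicitly the details the paper compresses into ``we may use Lemma \ref{wang app} to show first that $\beta_2(K;F)=0$ and then that $\beta_1(K;F)\leq1$'', including the unipotence argument and the tracing of $p$-torsion through the successive kernels.
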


\begin{proof}
If $G$ is finite then $\beta_i(G;\mathbb{Q})=0$ for all $i>0$,
and if $p$ divides the order of $G$ then it follows from the 
Universal Coefficient Theorem that
$\beta_2(G;\mathbb{F}_p)\geq\beta_1(G;\mathbb{F}_p)$,
since $_pG^{ab}$ and $G^{ab}/pG^{ab}$ have the same dimension.

Hence we may assume that $G$ is infinite,
and so $G\cong{K}\rtimes_\psi\mathbb{Z}$,
where $K$ is a finitely generated nilpotent group
and $\psi$ is a unipotent automorphism.
Let $F$ be a field.
We may use Lemma \ref{wang app} to show first that
$\beta_2(K;F)=0$ and then that $\beta_1(K;F)\leq1$.
Hence $\beta_1(G;F)\leq2$.

If $F=\mathbb{Q}$ then either $K$ is finite and $h(G)=1$, 
or $h(K)=1$ and $h(G)=2$.
The converse is clear, since $G$ is then a finite extension of 
$\mathbb{Z}^{h(G)}$.

Suppose that $F=\mathbb{F}_p$ for some prime $p$.
If $\beta_1(G;\mathbb{F}_p)=1$ then $K$ is finite,
so $h(G)=1$, and $\beta_2(K;\mathbb{F}_p)=0$, 
so $\beta_1(K;\mathbb{F}_p)=0$ and $K$ has no $p$-torsion.
If $\beta_1(G;\mathbb{F}_p)=2$ then $\beta_1(K;\mathbb{F}_p)=1$
and $\beta_2(K;\mathbb{F}_p)=0$, 
so $h(K)=1$ and $K$ has no $p$-torsion.
Hence $h(G)=2$ and $G$ has no $p$-torsion.
\end{proof}

It follows immediately that if $\beta_2(G;F)<\beta_1(G;F)$ for
all prime fields $F$ then $G\cong\mathbb{Z}$ or $\mathbb{Z}^2$.

The next result is a corrected version of Lemma 1 of \cite{Hi22}
(in which it was inadvertently assumed that $\beta_2(G)=\beta_1(G)$
if $G$ is homologically balanced).

\begin{lemma}
\label{h=2}
Let $G$ be a finitely generated nilpotent group 
and let $\beta=\beta_1(G;\mathbb{Q})$.
Then $G$ is homologically balanced if and only if 
$H_2(G)$ is a quotient of $\mathbb{Z}^\beta$;
if $h(G)>2$ then $G$ is homologically balanced if and only if 
$H_2(G)\cong\mathbb{Z}^\beta$.
\end{lemma}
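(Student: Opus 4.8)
\textbf{Proof strategy for Lemma \ref{h=2}.}
The plan is to reduce everything to the Wang-sequence analysis of \S8.1, applied to a decomposition $G\cong K\rtimes_\psi\mathbb{Z}$ when $G$ is infinite, together with the elementary observation already recorded before Lemma \ref{h=2} that a \emph{finite} nilpotent group is homologically balanced if and only if $H_2(G;\mathbb{Z})=0$, i.e. if and only if $H_2(G)$ is a quotient of $\mathbb{Z}^\beta=\mathbb{Z}^0$. So first I would dispose of the finite case: there $\beta=0$, $h(G)=0\le 2$, and the two stated equivalences both read ``$G$ homologically balanced $\iff H_2(G)=0$'', which is exactly the fact quoted in \S1.2. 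From now on assume $G$ is infinite, pick an epimorphism $f:G\to\mathbb{Z}$ with kernel $K$, and write $G\cong K\rtimes_\psi\mathbb{Z}$ with $\psi$ unipotent (Hall's Lemma, and Lemma \ref{unipotent}).

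Next I would recall that ``homologically balanced'' means $\beta_2(G;R)\le\beta_1(G;R)$ for every coefficient ring $R$, and that by the discussion in \S1.2 it suffices to test $R=\mathbb{Q}$ and $R=\mathbb{F}_p$ for all primes $p$ (since $\beta_i(G;\mathbb{Q})=\beta_i(G;\mathbb{F}_p)$ for almost all $p$, and for the finitely many bad primes one checks $\mathbb{F}_p$ directly). The engine is Lemma \ref{wang app}(1): for each field $F$ one has $\beta_2(G;F)\ge\beta_1(G;F)-1$, with equality if and only if $\beta_2(K;F)=0$. Thus over any $F$ we always have $\beta_2(G;F)\in\{\beta_1(G;F)-1,\ \text{something}\ge\beta_1(G;F)\}$, where the lower value occurs precisely when $H_2(K;F)=0$. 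Consequently $G$ is homologically balanced if and only if $\beta_2(K;F)=0$ for \emph{every} prime field $F$; by the universal coefficient sequences and the finite generation of $H_2(K;\mathbb{Z})$ (Lemma \ref{unipotent} gives $K$ finitely generated nilpotent, hence $FP$), this holds for all $F$ if and only if $H_2(K;\mathbb{Z})=0$. So: \emph{$G$ is homologically balanced $\iff H_2(K;\mathbb{Z})=0$}. The first sentence of the lemma then follows by feeding $H_2(K)=0$ into the \emph{integral} Wang sequence
\[
H_2(K)\xrightarrow{\ \psi_*-I\ }H_2(K)\to H_2(G)\to H_1(K)\xrightarrow{\ \psi_*^{ab}-I\ }H_1(K):
\]
when $H_2(K)=0$ this exhibits $H_2(G)$ as $\ker(\psi_*^{ab}-I)\le H_1(K)$, a subgroup of a finitely generated abelian group, and comparison of $\mathbb{Q}$-ranks via the surjection $H_1(K)\to H_1(G)\to\mathbb{Z}$ shows $\operatorname{rk}_{\mathbb{Q}}H_2(G)=\beta_1(G;\mathbb{Q})-1-\operatorname{rk}_{\mathbb{Q}}\operatorname{im}(\psi_*^{ab}-I)\le\beta$, while over $\mathbb{F}_p$ the same sequence with $H_2(K;\mathbb{F}_p)=0$ gives $\beta_2(G;\mathbb{F}_p)\le\beta$; hence $H_2(G)$ is generated by at most $\beta$ elements, i.e. is a quotient of $\mathbb{Z}^\beta$. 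Conversely, if $H_2(G)$ is a quotient of $\mathbb{Z}^\beta$ then $\beta_2(G;F)\le\beta_1(G;F)$ for all $F$ by right-exactness of $F\otimes-$ together with $\beta\le\beta_1(G;F)$, so $G$ is homologically balanced.

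Finally, the sharpening when $h(G)>2$: by Corollary \ref{wang cor} the strict inequality $\beta_2(G;\mathbb{Q})<\beta_1(G;\mathbb{Q})$ forces $h(G)\le 2$, so when $h(G)>2$ any homologically balanced $G$ must satisfy $\beta_2(G;\mathbb{Q})=\beta_1(G;\mathbb{Q})=\beta$, i.e. $\operatorname{rk}_{\mathbb{Q}}H_2(G)=\beta$; combined with ``$H_2(G)$ is a quotient of $\mathbb{Z}^\beta$'' from the first part, a surjection $\mathbb{Z}^\beta\twoheadrightarrow H_2(G)$ between groups of equal rank $\beta$ with free source is an isomorphism (the kernel is a rank-$0$ subgroup of $\mathbb{Z}^\beta$, hence trivial), so $H_2(G)\cong\mathbb{Z}^\beta$. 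The reverse implication is immediate. I expect the main obstacle to be bookkeeping rather than depth: namely getting the reduction ``test only prime fields, and $\mathbb{F}_p$ only at the finitely many bad primes'' cleanly stated, and making sure the passage from ``$\beta_2(K;F)=0$ for all prime fields $F$'' to ``$H_2(K;\mathbb{Z})=0$'' is airtight (using that $H_2(K;\mathbb{Z})$ is finitely generated, so vanishing of its rank and of every $p$-torsion part is enough). Everything else is a direct application of Lemma \ref{wang app} and Corollary \ref{wang cor}.
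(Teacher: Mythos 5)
Your reduction to the Wang sequence contains a genuine error at the pivotal step. You assert that ``$G$ is homologically balanced if and only if $\beta_2(K;F)=0$ for every prime field $F$,'' reading this off from Lemma \ref{wang app}(1). But that lemma characterizes when $\beta_2(G;F)$ attains its \emph{minimum} value $\beta_1(G;F)-1$; being homologically balanced only requires $\beta_2(G;F)\le\beta_1(G;F)$, i.e.\ $\dim_F\mathrm{Cok}(H_2(\psi;F)-I)\le1$, which by Lemma \ref{wang app}(2) says that $H_2(K;F)$ is a cyclic $F[G/K]$-module, not that it vanishes. The group $G=\mathbb{Z}^3$ is a counterexample to your claimed equivalence: it is homologically balanced (indeed $H_2(\mathbb{Z}^3)\cong\mathbb{Z}^3$, consistent with the lemma), yet for any splitting $K\cong\mathbb{Z}^2$ one has $H_2(K)\cong\mathbb{Z}\neq0$. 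Everything you build on this equivalence --- in particular the integral Wang sequence argument that exhibits $H_2(G)$ inside $H_1(K)$ --- therefore only covers the special case $H_2(K)=0$ and does not prove the forward implication of the first assertion.

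The intended argument is much more direct and needs no splitting of $G$ at all. Since $G^{ab}\cong\mathbb{Z}^\beta\oplus B$ with $B$ finite, the Universal Coefficient sequence of \S1.2 gives, for each prime field $F$,
\[
\beta_2(G;F)=\dim_F\bigl(F\otimes H_2(G)\bigr)+\dim_F Tor(F,G^{ab}),
\qquad
\beta_1(G;F)=\beta+\dim_F\bigl(F\otimes B\bigr),
\]
and $\dim_F Tor(F,B)=\dim_F(F\otimes B)$ because $B$ is finite. Hence $\beta_2(G;F)\le\beta_1(G;F)$ for all $F$ if and only if $\dim_F(F\otimes H_2(G))\le\beta$ for all prime fields $F$, which for the finitely generated group $H_2(G)$ is exactly the condition that it be generated by at most $\beta$ elements, i.e.\ a quotient of $\mathbb{Z}^\beta$. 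Your treatment of the case $h(G)>2$ via Corollary \ref{wang cor} is correct and agrees with the paper, as does your converse direction (which is really the same UCT computation). I recommend replacing the Wang-sequence reduction by the displayed count above.
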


\begin{proof}
Since $G^{ab}\cong\mathbb{Z}^\beta\oplus{B}$, where $B$ is finite,
the first assertion follows from the Universal Coefficient exact sequences of \S1.
If $h(G)>2$ then $\beta_2(G;\mathbb{Q})\geq\beta$, 
by Corollary \ref{wang cor}, 
and so  $H_2(G)$ is a quotient of $\mathbb{Z}^\beta$  
if and only if $H_2(G)\cong\mathbb{Z}^\beta$.
\end{proof}

In Chapter 1 we observed that  a finite group $T$ is homologically balanced if and only if $H_2(T)=0$.
It is not known whether every homologically balanced finite nilpotent group 
has a balanced presentation.
The finite nilpotent 3-manifold groups
$Q(8k)\times\mathbb{Z}/a\mathbb{Z}$ (with  $(a,2k)=1$)
have the balanced presentations 
\[
\langle{x,y}\mid {x^{2ka}=y^2},~yxy^{-1}=x^s\rangle,
\]
where $s\equiv1~mod~(a)$ and $s\equiv-1~mod~(2k)$.
The other finite nilpotent groups $F$ with 4-periodic cohomology
(the generalized quaternionic groups 
$Q(2^na,b,c)\times\mathbb{Z}/d\mathbb{Z}$,
with $a,b,c,d$ odd and pairwise relatively prime) have $H_2(F;\mathbb{Z})=0$, 
but we do not know whether they all have balanced presentations.

We shall summarize here some of the results of Appendix B
on homologically balanced infinite nilpotent groups.

\begin{thm}
\label{h=1}
{\bf(B.6)}
Let $G\cong{T}\rtimes_\psi\mathbb{Z}$, 
where $T$ is a finite nilpotent group and $\psi$ is 
a unipotent automorphism of $T$.
If $G$ is homologically balanced then  
\begin{enumerate}
\item{}$G$ can be generated by $2$ elements;
\item{} if the Sylow $p$-subgroup of $T$ is abelian then it is cyclic;
\item{}if $T$ is abelian then 
$G\cong\mathbb{Z}/m\mathbb{Z}\rtimes_n\mathbb{Z}$, 
for some $m,n\not=0$ such that $m$ divides a power of $n-1$. \qed
\end{enumerate}
\end{thm}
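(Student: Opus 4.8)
The plan is to reduce the hypothesis to a prime‑by‑prime statement about the action of $\psi$ on the cohomology of the Sylow subgroups of $T$, and then exploit the cup‑product and Bockstein structure of the cohomology of a finite $p$‑group. Since $T$ is finite nilpotent it splits as the direct product $T=\prod_pT_p$ of its Sylow subgroups, each of which is characteristic, hence $\psi$‑invariant; write $\psi_p=\psi|_{T_p}$ and $G_p=T_p\rtimes_{\psi_p}\mathbb{Z}$. Then $H_i(T;\mathbb{F}_p)\cong H_i(T_p;\mathbb{F}_p)$ for all $i$, so $\beta_i(G;\mathbb{F}_p)=\beta_i(G_p;\mathbb{F}_p)$ for $i\le2$, while $\beta_1(G;\mathbb{Q})=1$ and $\beta_2(G;\mathbb{Q})=0$ because $H_*(T;\mathbb{Q})$ vanishes in positive degrees. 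Hence $G$ is homologically balanced if and only if $\beta_2(G_p;\mathbb{F}_p)\le\beta_1(G_p;\mathbb{F}_p)$ for every prime $p$. By Lemma~\ref{unipotent} the induced automorphism of $H_2(T_p;\mathbb{F}_p)$ is unipotent, and Lemma~\ref{wang app}(1) identifies $\beta_2(G_p;\mathbb{F}_p)-\beta_1(G_p;\mathbb{F}_p)+1$ with $\dim_{\mathbb{F}_p}\mathrm{Cok}(H_2(\psi_p;\mathbb{F}_p)-I)$; since $\mathbb{F}_p[G_p/T_p]\cong\mathbb{F}_p[t,t^{-1}]$ is a PID and $\psi_p$ acts unipotently, this cokernel has dimension $1$ (when $T_p\ne1$) exactly when $H_2(T_p;\mathbb{F}_p)$, equivalently $H^2(T_p;\mathbb{F}_p)$, is a cyclic $\mathbb{F}_p[t,t^{-1}]$‑module, i.e. $\cong\mathbb{F}_p[t]/(t-1)^r$. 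So the standing hypothesis says precisely that, for every $p$, $H^2(T_p;\mathbb{F}_p)$ is a cyclic $\psi_p$‑module; in particular its $\psi_p$‑fixed subspace is one‑dimensional.

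For part~(2), suppose $T_p$ is abelian. The universal coefficient theorem gives a natural isomorphism $H^2(T_p;\mathbb{Z})\cong T_p^{\vee}$, so the subspace $P^2\subseteq H^2(T_p;\mathbb{F}_p)$ spanned by the reductions of integral classes is a $\psi_p$‑submodule, naturally isomorphic to $V_p^{*}$ (the $\mathbb{F}_p$‑dual of $V_p=T_p/\Phi(T_p)$) with $\dim P^2=d(T_p)$. For $p$ odd one has the $\mathrm{Aut}(T_p)$‑equivariant direct sum $H^2(T_p;\mathbb{F}_p)=P^2\oplus\mathrm{dec}^2$, where $\mathrm{dec}^2=\Lambda^2H^1(T_p;\mathbb{F}_p)$ is the cup‑product image (injective for abelian groups); if $T_p$ were not cyclic then $d(T_p)\ge2$, so both summands would be non‑zero and annihilated by a power of $t-1$, and $H^2(T_p;\mathbb{F}_p)$ could not be cyclic over $\mathbb{F}_p[t,t^{-1}]$ — contradicting the reduction above. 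For $p=2$ the $\mathrm{Sq}^1$‑classes $P^2$ are again a $\psi_2$‑submodule with non‑zero $\psi_2$‑quotient $H^2(T_2;\mathbb{F}_2)/P^2$ of dimension $\binom{d(T_2)}{2}$, and a short computation with the ring structure of $\mathbb{F}_2[y_1,\dots,y_k]$ shows cyclicity again fails once $d(T_2)\ge2$. Hence $T_p$ is cyclic, which is~(2); and part~(3) follows at once, since if $T$ is abelian every Sylow subgroup is cyclic, so $T\cong\mathbb{Z}/m\mathbb{Z}$ and $\psi$ is multiplication by some $n$ prime to $m$, while Hall's Lemma (applied to the standing hypothesis that $G$ is nilpotent) forces $\psi=\psi^{ab}$ to be unipotent, i.e. $(n-1)^k\equiv0\pmod m$ for some $k$, which is exactly the assertion that $m$ divides a power of $n-1$.

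For part~(1), write $G^{ab}\cong\mathbb{Z}\oplus B$ with $B$ finite; it suffices to show each $p$‑primary part $B_p$ is cyclic, equivalently that the unipotent operator $\bar\psi_p$ induced by $\psi_p$ on $V_p=H_1(T_p;\mathbb{F}_p)$ has fixed subspace of dimension $\le1$ (indeed $\beta_1(G;\mathbb{F}_p)=1+\dim_{\mathbb{F}_p}V_p^{\bar\psi_p}$, so $d(G)=1+\max_p\dim V_p^{\bar\psi_p}$). When $T_p$ is abelian this is immediate from the $p$‑odd/$p=2$ analysis above: $\dim(P^2)^{\psi_p}=\dim V_p^{\bar\psi_p}$ since $P^2\cong V_p^{*}$, and cyclicity of $H^2(T_p;\mathbb{F}_p)$ forces its fixed subspace — hence $\dim V_p^{\bar\psi_p}$ — to be $\le1$. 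For non‑abelian $T_p$ I would work with a minimal presentation $T_p=F/R$ (so $R\le\Phi(F)$ and $H_2(T_p;\mathbb{F}_p)\cong R/[F,R]R^p$): comparison with the degree‑two part of the Jennings series of $F$ yields a $\psi_p$‑equivariant ``leading‑term'' map of $H_2(T_p;\mathbb{F}_p)$ onto a submodule of $\Lambda^2V_p\oplus V_p$ — on which $\psi_p$ acts through its action on $V_p$ — and a $\psi_p$‑fixed subspace of $V_p$ of dimension $\ge2$ is then incompatible with $H_2(T_p;\mathbb{F}_p)$ being cyclic; alternatively one passes to $\bar G=T^{ab}\rtimes_{\bar\psi}\mathbb{Z}$, where $H_1(\bar G)=H_1(G)$, and controls how much of $H_2(\bar G;\mathbb{F}_p)$ survives in $H_2(G;\mathbb{F}_p)$ under inflation from $T_p/\Phi(T_p)$. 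I expect this last point — making the (Golod--Shafarevich‑flavoured) comparison between the $\psi_p$‑module structures of $V_p$ and of $H_2(T_p;\mathbb{F}_p)$ precise enough to rule out the low‑rank borderline cases, notably $\dim V_p^{\bar\psi_p}=2$ with $T_p$ of mixed exponent — to be the genuine obstacle; the Wang‑sequence translation, the abelian case, and parts~(2) and~(3) are comparatively routine once it is in place.
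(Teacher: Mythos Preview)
Your treatment of parts~(2) and~(3) is essentially the paper's: the canonical subsplit of $H^2(A;\mathbb{F}_p)$ into a Bockstein/Ext piece and a cup-product piece (Lemmas~B2 and~B3 in the appendix) is exactly what you are using, and the conclusion that an abelian Sylow $p$-subgroup must be cyclic then drops out.

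The gap is in part~(1), where you flag the non-abelian case as the ``genuine obstacle'' and propose a Jennings-series or inflation argument. In fact no such analysis is needed: the paper's argument is a one-line application of the Universal Coefficient Theorem that works uniformly for any finite $T$. The natural short exact sequence
\[
0\to\mathbb{F}_p\otimes H_2(T;\mathbb{Z})\to H_2(T;\mathbb{F}_p)\to Tor(\mathbb{F}_p,T^{ab})\to0
\]
is $\psi$-equivariant, so $\mathrm{Cok}(H_2(\psi;\mathbb{F}_p)-I)$ surjects onto $\mathrm{Cok}\bigl(Tor(\psi^{ab},\mathbb{F}_p)-I\bigr)$. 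Now $Tor(\mathbb{F}_p,T^{ab})={}_pT^{ab}$ and $H_1(T;\mathbb{F}_p)=T^{ab}/pT^{ab}$ are Pontryagin-dual $\mathbb{F}_p$-spaces of the same dimension, and the induced maps of $\psi^{ab}-I$ on them have the same rank; hence both cokernels have dimension $\beta_1(G;\mathbb{F}_p)-1$. Feeding this into your own Wang-sequence identity $\beta_2(G;\mathbb{F}_p)=(\beta_1(G;\mathbb{F}_p)-1)+\dim\mathrm{Cok}(H_2(\psi;\mathbb{F}_p)-I)$ gives $\beta_2(G;\mathbb{F}_p)\geq2(\beta_1(G;\mathbb{F}_p)-1)$, and homological balance forces $\beta_1(G;\mathbb{F}_p)\leq2$ for every $p$. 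Thus $G^{ab}\cong\mathbb{Z}\oplus C$ with $C$ cyclic, and since $G$ is nilpotent it is $2$-generated. No structure theory of $T_p$, no Golod--Shafarevich comparison, is required.
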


Every semidirect product $\mathbb{Z}/m\mathbb{Z}\rtimes_n\mathbb{Z}$ 
has a balanced presentation
\[
\langle{a,t}\mid{a^m=1},~tat^{-1}=a^n\rangle.
\]
The simplest examples with $T$ non-abelian are the groups 
$Q(8k)\rtimes\mathbb{Z}$,
with the balanced presentations 
$\langle{t,x,y}\mid{x^{2k}=y^2},~tx=xt,~tyt^{-1}=xy\rangle$,
which simplify to
\[
\langle{t,y}\mid[t,y]^{2k}=y^2,~[t,[t,y]]=1\rangle.
\]
Let $m=p^s$ , where $p$ is a prime and $s\geq1$, and let $G$ be the group with presentation
\[
\langle{t,x,y}\mid{txt^{-1}=y},~tyt^{-1}=x^{-1}y^2,~yxy^{-1}=x^{m+1}\rangle.
\]
If we conjugate the final relation with $t$ to get the relation 
$x^{-1}yx=y^{m+1}$ then we see that the torsion subgroup $T$
has presentation $\langle{x,y}\mid{x^m=y^m},~yxy^{-1}=x^{m+1}\rangle$.
Moreover, $G$ is nilpotent, 
$\zeta{G}=\langle{x^m}\rangle$ and $G'=\langle{x^m,x^{-1}y}\rangle$ is abelian.
Hence $G$ is metabelian.
Each of the groups that we have described here
is 2-generated and its torsion subgroup is homologically balanced. 

It is not known whether there are infinitely many 
finitely generated, homologically balanced nilpotent groups $G$ with $\beta_1(G)=2$ and Hirsch length $>3$.
All known examples of homologically balanced nilpotent groups $G$ 
with $h(G)>1$ are torsion-free. 
If $G$ is such a group then either $G\cong\mathbb{Z}^3$ 
or $\beta_1(G;\mathbb{Q})\leq2$ \cite{Hi20a}.
If, moreover, $h(G)\leq5$ then $G$ is metabelian, 
and is either free abelian of rank $\leq3$ or is a $\mathbb{N}il^3$-group $\Gamma_q$ with presentation
\[
\langle{x,y,z}\mid[x,y]=z^q,~xz=zx,~yz=zy\rangle,
\]
for some $q\geq1$,
or is the $\mathbb{N}il^4$-group $\Omega$ with  presentation
\[
\langle{t,u}\mid[t,[t,[t,u]]]=[u,[t,u]]=1\rangle.
\]

\section{Constraints on the invariants}

Nilpotent embeddings are always bi-epic, 
since homomorphisms to a nilpotent group which induce epimorphisms on abelianization are epimorphisms.

\begin{theorem}
\label{nilp emb}
Let $j:M\to{S^4}$ be an embedding such that $\pi_X$ and $\pi_Y$ are nilpotent.
Then
\begin{enumerate}
\item{}
if $\beta=\beta_1(M)$ is odd and $\pi_X$ and $\pi_Y$ are nilpotent then 
$X$ is aspherical, $\chi(X)=0$
and either $\pi_X\cong\mathbb{Z}$ and $\pi_Y=1$ or
$\pi_X\cong\mathbb{Z}^2$ and $\pi_Y\cong\mathbb{Z}$;
\item{}if $\beta$ is even then $\beta=0,2,4$ or $6$,
$\chi(X)=\chi(Y)=1$ and $\pi_X$ and $\pi_Y$ can each be generated 
by $3$ elements and are homologically balanced;
\item{}if $\beta=6$ then $\pi_X\cong\pi_Y\cong\mathbb{Z}^3$.
\end{enumerate}
\end{theorem}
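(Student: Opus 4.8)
The plan is to peel off the consequences of bi-epicity together with an $L^2$-Euler-characteristic argument, dispose of the odd case completely, and then reduce the even case to the structure theory of homologically balanced nilpotent groups developed in \S8.1 and Appendix B.

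Since $\pi_X$ and $\pi_Y$ are nilpotent the embedding is bi-epic, so $j_{X*}$ and $j_{Y*}$ are epimorphisms and hence $c.d.X\leq2$ and $c.d.Y\leq2$ by Theorem \ref{Hi17-thm5.1}. Nilpotent groups are amenable, so $\beta_1^{(2)}(\pi_X)=\beta_1^{(2)}(\pi_Y)=0$, and Lemma \ref{L2} applies to each region: the Euler characteristic is either $0$ with the region aspherical, or positive. Feeding this into $\chi(X)+\chi(Y)=2$, $\chi(X)\leq\chi(Y)$ and $\chi(X)\equiv\chi(Y)\equiv1+\beta\pmod2$ (Lemma \ref{Hi17-lem2.1}) determines the Euler characteristics: if $\beta$ is odd then $\chi(X)$ is even and $\leq0$, so $\chi(X)=0$, $X$ is aspherical and $\chi(Y)=2$; if $\beta$ is even then $\chi(X)$ is odd, hence nonzero, hence positive, forcing $\chi(X)=\chi(Y)=1$. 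Lemma \ref{Hi17-lem2.1}(6) already records that $\pi_X$ and $\pi_Y$ are homologically balanced, so the bulk of (2) will follow once $\beta$ is bounded.

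For (1), $X$ aspherical together with $c.d.X\leq2$ gives, via Theorem \ref{Hi17-thm5.1}, that $c.d.\pi_X\leq2$ and $\chi(\pi_X)=\chi(X)=0$. A finitely generated nilpotent group of finite cohomological dimension is torsion-free of Hirsch length equal to its cohomological dimension, so $\pi_X\in\{1,\mathbb{Z},\mathbb{Z}^2\}$, and $\chi(\pi_X)=0$ rules out the trivial group. Then $\beta=\beta_1(X)+\beta_2(X)$ (Lemma \ref{Hi17-lem2.1}(4)) forces $\beta=1$ if $\pi_X\cong\mathbb{Z}$ and $\beta=3$ if $\pi_X\cong\mathbb{Z}^2$; in both cases $\tau_X=0$, so the torsion group $\tau_{Y,M}$ in the proof of Lemma \ref{hyperbolic lp} is trivial, and hyperbolicity of $\ell_M$ forces $\tau_M=0$. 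Hence $H_1(Y)$ is free of rank $\beta-\beta_1(X)\in\{0,1\}$; as a nilpotent group with trivial (resp. infinite cyclic) abelianization is trivial (resp. infinite cyclic), we get $\pi_Y=1$ when $\beta=1$ and $\pi_Y\cong\mathbb{Z}$ when $\beta=3$.

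For (2) and (3) we are in the even case, where $\beta_1(\pi_X;\mathbb{Q})=\beta_2(X;\mathbb{Q})=\beta/2$. I would prove that a finitely generated homologically balanced nilpotent group $G$ has $\beta_1(G;\mathbb{Q})\leq3$, with $\beta_1(G;\mathbb{Q})=3$ only for $G\cong\mathbb{Z}^3$: for torsion-free $G$ this is the structure theorem quoted in \S8.1 (Appendix B and \cite{Hi20a}), and one reduces to that case by passing to $G/T$ with $T$ the finite torsion subgroup, using that the Lyndon--Hochschild--Serre spectral sequence with rational coefficients collapses (so $H_i(G;\mathbb{Q})\cong H_i(G/T;\mathbb{Q})$ for $i\leq2$), that $h(G/T)=h(G)\geq\beta_1(G;\mathbb{Q})$, and Corollary \ref{wang cor}. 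Applying this to $\pi_X$ and, symmetrically, to $\pi_Y$ gives $\beta\leq6$, hence $\beta\in\{0,2,4,6\}$; and the groups that survive the classification — finite nilpotent groups with trivial multiplicator, the cyclic-by-$\mathbb{Z}$ groups and $\mathbb{Z}^2$, the $\mathbb{N}il^3$-groups $\Gamma_q$, the $\mathbb{N}il^4$-group $\Omega$, and $\mathbb{Z}^3$ — are each generated by at most three elements, giving the generation statement in (2). For (3), $\beta=6$ forces $\beta_1(\pi_X;\mathbb{Q})=3$; by Lemma \ref{h=2} (with $h(\pi_X)\geq3$) we get $H_2(\pi_X)\cong\mathbb{Z}^3$, and since $H_1(X,M)=0$ (so $H^2(X,M)\cong\mathrm{Hom}(H_2(X,M),\mathbb{Z})$) the group $H_2(X)$, hence its quotient $H_2(\pi_X)$, is torsion-free; analysing the integral Lyndon--Hochschild--Serre spectral sequence of $1\to T\to\pi_X\to\pi_X/T\to1$ with this constraint (together, where needed, with the profinite-completion argument flagged in \S1.10) should then force $T=1$, so $\pi_X\cong\pi_X/T\cong\mathbb{Z}^3$, and likewise $\pi_Y\cong\mathbb{Z}^3$.

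The hard part will be precisely this last step: eliminating torsion in $\pi_X$ when $\beta=6$, equivalently verifying that $\pi_X$ is one of the \emph{torsion-free} groups in the classification so that the $\mathbb{Z}^3$-identification applies. The rational and $\mathbb{F}_p$ Betti-number inequalities above, and the torsion-freeness of $H_2(X)$, constrain but do not by themselves determine the integral group $\pi_X$, and I expect to need either a careful bookkeeping of the differentials in the integral Lyndon--Hochschild--Serre spectral sequence of $1\to T\to\pi_X\to\pi_X/T\to1$ or the profinite-completion input to close the gap; the rest of the argument is bookkeeping with the results of \S1 and \S8.1.
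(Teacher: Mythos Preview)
Your argument for (1) is correct and essentially the same as the paper's, just with the steps reordered (you deduce asphericity first via Lemma~\ref{L2} and then identify $\pi_X$ via $c.d.\pi_X\le 2$; the paper uses Corollary~\ref{wang cor} on the Betti-number inequality first).

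There is a genuine gap in your approach to (2). You want to pass from $\pi_X$ to $\pi_X/T$ and invoke the torsion-free classification, but the rational spectral-sequence collapse only gives $\beta_2(\pi_X/T;\mathbb{Q})\le\beta_1(\pi_X/T;\mathbb{Q})$; it says nothing about $\mathbb{F}_p$-coefficients, so $\pi_X/T$ need not be homologically balanced and the structure theorem you cite does not apply. (Even if it did, that theorem itself ultimately rests on the same ingredient you are missing.) The paper's route is different and avoids $G/T$ entirely: since $\pi_X$ is finitely generated nilpotent, there is a prime $p$ with $\pi_X$ generated by $d=\beta_1(\pi_X;\mathbb{F}_p)$ elements; the pro-$p$ completion $\widehat{\pi_X}$ is $p$-analytic with a minimal pro-$p$ presentation having $d$ generators and $r=\beta_2(\widehat{\pi_X};\mathbb{F}_p)=\beta_2(\pi_X;\mathbb{F}_p)$ relators (nilpotent groups are $p$-good). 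Lubotzky's Golod--Shafarevich-type inequality \cite[Theorem~2.7]{Lub83} gives $r>d^2/4$ once $\widehat{\pi_X}\ne\widehat{\mathbb{Z}}_p$, and combined with $r\le d$ this forces $d\le 3$; then $\beta\le 2d\le 6$. This is exactly the ``profinite-completion argument flagged in \S1.10'' that you mention---but you placed it in (3), whereas it is the key to (2). The $3$-generation statement then comes for free from $d\le 3$, rather than from a case-by-case survey of the classification.

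For (3) you are working too hard. The paper does not eliminate torsion by hand; it simply applies the main result of \cite{Hi22}, which says that a homologically balanced nilpotent group $G$ with $\beta_1(G;\mathbb{Q})=3$ is isomorphic to $\mathbb{Z}^3$. Since $\chi(X)=1$ gives $\beta_2(\pi_X;F)\le\beta_1(\pi_X;F)$ for every field $F$, this applies directly to $\pi_X$ (and to $\pi_Y$). Your proposed spectral-sequence analysis of $1\to T\to\pi_X\to\pi_X/T\to 1$ is essentially what \cite{Hi22} does, so you are reinventing a cited black box.
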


\begin{proof}
If $\beta$ is odd then $\chi(X)\leq0$, by Lemma 2.2.
Conversely, if $\chi(X)\leq0$ then $\beta_2(\pi_X;F)<\beta_1(\pi;F)$ for all fields $F$, 
and so $\pi\cong\mathbb{Z}$ or $\mathbb{Z}^2$,
by Corollary \ref{wang cor}
Hence $\chi(X)=0$ and $X$ is aspherical, by Lemma \ref{L2}.
Since $\pi_Y$ is nilpotent and $H_1(Y)\cong{H^2(X)}=0$
or $\mathbb{Z}$, $\pi_Y=1$ or $\mathbb{Z}$.
In particular, $\beta=1$ or 3.

Suppose now that $\beta$ is even.
Then $0<\chi(X)\leq\chi(Y)$ and $\chi(X)+\chi(Y)=2$,
so $\chi(X)=\chi(Y)=1$.
Hence $\pi_X$ and $\pi_Y$ are homologically balanced, by Lemma \ref{Hi17-lem2.1}.
Since $H_i(X;R)=0$ for $i>2$,
we have $\beta_2(X;R)=\beta_1(X;R)$,
and so $\beta_2(\pi_X;R)\leq\beta_1(\pi_X;R)$,
for any coefficient ring $R$.
Since $\pi_X$ is finitely generated and nilpotent,
there is a prime $p$ such that $\pi_X$ can be generated by $d=\beta_1(\pi_X;\mathbb{F}_p)$ elements.
Let $\widehat{\pi_X}$ be the pro-$p$ completion of $\pi_X$.
Since $\pi_X$ is nilpotent, it is $p$-good,
and so $\beta_i(\widehat{\pi_X};\mathbb{F}_p)=
\beta_i(\pi_X;\mathbb{F}_p)$, for all $i$.
The group $\widehat{\pi_X}$ is a pro-$p$ analytic group, 
and so has a minimal presentation with 
$d=\beta_1(\widehat{\pi_X};\mathbb{F}_p)$ generators and
$r=\beta_2(\widehat{\pi_X};\mathbb{F}_p)$ relators.
Since $\beta>2$, $\widehat{\pi_X}\not\cong\widehat{\mathbb{Z}}_p$,
and so $r>\frac{d^2}4$, by \cite[Theorem 2.7]{Lub83}.
(Similarly for $\pi_Y$.)
Therefore $d\leq3$ and $\beta\leq2d\leq6$.

If $\chi(X)=\chi(Y)=1$ then $\beta_2(\pi_X;F)\leq\beta_1(\pi_X;F)$ 
for all field coefficients $F$.
Hence if $\beta=6$ then $\pi_X\cong\pi_Y\cong\mathbb{Z}^3$,
by the main result of \cite{Hi22}.
\end{proof}

\begin{cor}
\label{Z1Z2Z}
If $\pi_X$ and $\pi_Y$ are nilpotent and $\chi(X)<\chi(Y)$
then either $X\simeq{S^1}$ and $Y\simeq{S^2}$ or $X\simeq{T}$ 
and $Y\simeq{S^1\vee{S^2}\vee{S^2}}$.
\end{cor}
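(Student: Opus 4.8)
The plan is to obtain this as an immediate consequence of Theorem~\ref{nilp emb}, supplemented by the recognition of aspherical complexes and of low-dimensional complexes with cyclic or trivial fundamental group. First I would record the elementary bookkeeping: by Lemma~\ref{Hi17-lem2.1} we have $\chi(X)+\chi(Y)=2$ with $\chi(X)\le\chi(Y)$, so the hypothesis $\chi(X)<\chi(Y)$ forces $\chi(X)\le0$. In particular case~(2) of Theorem~\ref{nilp emb} (where $\beta$ is even and $\chi(X)=\chi(Y)=1$) is excluded, so $\beta=\beta_1(M)$ must be odd. Case~(1) of that theorem then applies: $X$ is aspherical, $\chi(X)=0$ (hence $\chi(Y)=2$), and either $\pi_X\cong\mathbb{Z}$ with $\pi_Y=1$, or $\pi_X\cong\mathbb{Z}^2$ with $\pi_Y\cong\mathbb{Z}$.

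Next I would identify the two complementary regions in each of these cases. Since $X$ is aspherical it is homotopy equivalent to $K(\pi_X,1)$; thus $\pi_X\cong\mathbb{Z}$ gives $X\simeq S^1$, while $\pi_X\cong\mathbb{Z}^2$ gives $X\simeq K(\mathbb{Z}^2,1)\simeq T$ (one may also see this directly: the classifying map $X\to T$ is $2$-connected and $\chi(X)=\chi(T)=0$, so it is a homotopy equivalence by Theorem~\ref{2con}). For $Y$ I would use that a nilpotent embedding is bi-epic, so $c.d.\,Y\le2$ by Theorem~\ref{Hi17-thm5.1}. When $\pi_Y\cong\mathbb{Z}$ and $\chi(Y)=2$, Corollary~\ref{2concor} gives directly $Y\simeq S^1\vee S^2\vee S^2$. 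When $\pi_Y=1$ and $\chi(Y)=2$, the equivariant chain complex of $\widetilde{Y}$ is chain homotopy equivalent to a finite free $\mathbb{Z}$-complex of length $\le2$ resolving $\mathbb{Z}$, whence $\pi_2Y\cong H_2(Y)\cong\mathbb{Z}$; a generator of $\pi_2Y$ represents a map $S^2\to Y$ which is a homology isomorphism, hence a homotopy equivalence $Y\simeq S^2$ by the Hurewicz and Whitehead theorems.

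There is no serious obstacle here; the real work has already been done in Theorem~\ref{nilp emb}. The only point needing a little care is that Corollary~\ref{2concor} is stated for $\pi_1\cong\mathbb{Z}$ and so cannot be quoted verbatim in the simply-connected subcase $\pi_Y=1$ — there one runs the (easier) analogous argument over $\mathbb{Z}$ sketched above to conclude $Y\simeq S^2$ rather than $S^1\vee S^2$. Assembling the two cases gives the two alternatives in the statement.
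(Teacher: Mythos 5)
Your proof is correct and follows essentially the same route as the paper: Theorem~\ref{nilp emb} reduces to the two abelian cases $(\pi_X,\pi_Y)\cong(\mathbb{Z},1)$ or $(\mathbb{Z}^2,\mathbb{Z})$ with $X$ aspherical, after which the homotopy types are identified via $c.d.\le 2$ and the $K(\pi,1)$/wedge-of-spheres recognition. The paper simply delegates those final identifications to Theorems~\ref{neutral} and~\ref{b=3}, whose proofs are exactly the arguments you spell out inline (including the separate treatment of the simply connected case $Y\simeq S^2$, where Corollary~\ref{2concor} does not apply verbatim).
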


\begin{proof}
By the theorem, $X$ is aspherical, and the embedding is abelian.
The further details are given in Theorems \ref{neutral} and \ref{b=3}.
\end{proof}

If $\pi_X$ is nilpotent and $\beta_1(X)=0$ then $\pi_X$ is finite,
while if $\beta_1(X)=1$ then $\pi_X\cong{F\rtimes\mathbb{Z}}$,
where $F$ is finite.
Thus if $\pi_X$ and $\pi_Y$ are torsion-free nilpotent and $\beta\leq3$
then $\pi_X$ and $\pi_Y$ are abelian.

\begin{theorem}
If $M$ has a nilpotent embedding and $\beta\geq3$ then there is a non-zero Massey product of length at most $4$ in $H^2(M;\mathbb{Q})$.
\end{theorem}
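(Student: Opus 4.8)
The plan is to show that a nilpotent embedding forces one of the complementary regions to have rich enough cohomology in low degrees that a nonvanishing Massey product of bounded length must appear. By Theorem \ref{nilp emb}, if $\beta\geq3$ is odd then $\beta=3$ and we are in the abelian case with $\pi_X\cong\mathbb{Z}^2$, $X\simeq T$; if $\beta$ is even then $\beta=4$ or $6$ and $\pi_X$, $\pi_Y$ are homologically balanced nilpotent groups generated by at most $3$ elements with $\chi(X)=\chi(Y)=1$. In every case one of the complementary regions $W$ has $\beta_1(W)=\beta_1(\pi_W)\geq2$ and $\pi_W$ nilpotent of class $\leq 4$ (indeed, by the structure results quoted from Appendix B and \cite{Hi20a}, $\pi_W$ is either $\mathbb{Z}^2$, $\mathbb{Z}^3$, a $\mathbb{N}il^3$-group $\Gamma_q$, or the $\mathbb{N}il^4$-group $\Omega$, all of nilpotency class $\leq 3$). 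First I would invoke the refinement of Stallings' theorem relating the lower central series to Massey products (\cite{Dw75}, \cite[\S12.2]{AIL}): since $\pi_W$ is nilpotent of class $c\leq 4$ and $\beta_1(\pi_W)\geq 2$, the $\mathbb{Q}$-lower central series of $\pi_W$ does not coincide with that of a free group, and the first obstruction to freeness — detected by the nonvanishing of some iterated Massey product of length $\leq c\leq 4$ of classes in $H^1(\pi_W;\mathbb{Q})$ — lies in $H^2(\pi_W;\mathbb{Q})$.

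Next I would push this class forward to $M$. The inclusion $j_W:M\to W$ induces $j_W^*:H^1(W;\mathbb{Q})\hookrightarrow H^1(M;\mathbb{Q})$, which is injective with image a self-annihilating subspace under cup product (Lemma \ref{Hi17-lem2.2}), and $j_W^*$ is compatible with Massey products since it is induced by a map of spaces. So a nonzero Massey product $\langle a_1,\dots,a_k\rangle\in H^2(\pi_W;\mathbb{Q})$ with $k\leq4$ pulls back to a Massey product $\langle j_W^*a_1,\dots,j_W^*a_k\rangle$ in $H^2(M;\mathbb{Q})$, provided it is \emph{defined} there — and it is, since the relevant cup products already vanish in $H^2(W;\mathbb{Q})$ hence a fortiori their images vanish in $H^2(M;\mathbb{Q})$. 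The content is then to show this pulled-back product is nonzero in $H^2(M;\mathbb{Q})\cong H^2(X;\mathbb{Q})\oplus H^2(Y;\mathbb{Q})$. This I would do by pairing against $H^1(M;\mathbb{Q})$ via Poincar\'e duality for $M$: it suffices to exhibit a class $c\in H^1(M;\mathbb{Q})$ with $\langle a_1,\dots,a_k\rangle\cup c\neq 0$ in $H^3(M;\mathbb{Q})$. Following the topological argument of \S6.2 and \S6.4 (the computations for $M(Wh)$ and for Seifert manifolds), the dual of $\langle a_1,\dots,a_k\rangle$ in $W$ sits in the image of $H_2(j_W)$ and can be detected by cupping with a class coming from the \emph{other} region, exactly as in the proof of Theorem \ref{Hi17-thm8.3}.

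The main obstacle I expect is verifying that the Massey product survives the passage from $\pi_W$ to $W$ to $M$, i.e. that it does not become zero because of the extra relations imposed when $W$ (rather than $K(\pi_W,1)$) is used and then when one restricts to $M=\partial W$. When $W$ is aspherical (the cases $X\simeq T$ with $\pi_X\cong\mathbb{Z}^2$, or more generally $c.d.\,\pi_W\leq 2$ and $\chi(W)=\chi(\pi_W)$, by Theorem \ref{Hi17-thm5.1}) there is no difference between $W$ and $K(\pi_W,1)$, so only the restriction to $M$ is at issue, and this is handled by the duality pairing argument above. When $\beta=4$ or $6$ and $\pi_W$ is a $\mathbb{N}il^3$- or $\mathbb{N}il^4$-group, $W$ need not be aspherical (it may have $\pi_2W\neq0$), but $W$ is still homotopy equivalent to a finite $2$-complex with $\chi(W)=1$, so $c_W:W\to K(\pi_W,1)$ is $2$-connected and hence an isomorphism on $H^1$ and $H^2$; thus the Massey products agree. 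The remaining subtlety — that the explicit $\mathbb{N}il^4$ Massey product $\langle t^*,t^*,t^*,u^*\rangle$ for $\Omega$ genuinely has length $4$ and is nonzero in $H^2(\Omega;\mathbb{Q})$ — I would settle by the cochain formulae of \S6.3 generalized to $\Omega$, or by citing the fact (Appendix B / \cite{Hi20a}) that $\Omega/\gamma_4^{\mathbb{Q}}\Omega\not\cong F(2)/\gamma_4^{\mathbb{Q}}F(2)$ while $\Omega/\gamma_3^{\mathbb{Q}}\Omega$ is free nilpotent, which by Dwyer's theorem exactly means the defined length-$3$ Massey products vanish and some length-$4$ product does not.
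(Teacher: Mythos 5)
Your overall architecture matches the paper's: reduce to the case of a non-abelian nilpotent $\pi_X$ (the abelian cases being handled by non-zero cup products, which inject from $H^2(X;\mathbb{Q})$ into the direct summand of $H^2(M;\mathbb{Q})$), detect a short non-vanishing Massey product in $H^2(\pi_X;\mathbb{Q})$ via Dwyer's theorem, and push it into $H^2(M;\mathbb{Q})$ by injectivity of $H^2(\pi_X;\mathbb{Q})\to H^2(X;\mathbb{Q})\to H^2(M;\mathbb{Q})$. (Your closing worry about ``surviving the restriction to $M$'' is already settled by Lemma \ref{Hi17-lem2.1}(2): restriction identifies $H^2(X;\mathbb{Q})$ with a direct summand of $H^2(M;\mathbb{Q})$, so no duality pairing against the other region is needed.)

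However, there is a genuine gap at the crucial step where you bound the \emph{length} of the non-vanishing Massey product. You bound it by asserting that $\pi_W$ has nilpotency class at most $3$, ``by the structure results quoted from Appendix B and \cite{Hi20a}: $\pi_W$ is either $\mathbb{Z}^2$, $\mathbb{Z}^3$, a $\Gamma_q$, or $\Omega$.'' That classification is not a theorem of this book: the results of Appendix B carry extra hypotheses (metabelian, torsion-free, bounded Hirsch length, or specific conditions on the outer action), and the text explicitly states that the class of $2$-generator nilpotent groups with balanced presentations is not known, and that all \emph{known} homologically balanced examples of Hirsch length $>1$ are torsion-free. A priori, a nilpotent embedding with $\beta=4$ could have $\pi_X$ a homologically balanced $2$-generator nilpotent group of arbitrarily large class, and your argument would then only produce a non-zero Massey product of unbounded length. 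The paper closes exactly this gap with the Betti number estimate of Freedman--Hain--Teichner \cite[Theorem 2]{FHT97}: any $2$-generator homologically balanced nilpotent non-abelian group $G$ has $G/\gamma_5^{\mathbb{Q}}G$ a \emph{proper} quotient of $F(2)/\gamma_5^{\mathbb{Q}}F(2)$, whence by \cite[Proposition 4.3]{Dw75} some Massey product of length at most $4$ is non-zero in $H^2(G;\mathbb{Q})$ --- no classification of the possible groups is required. Replacing your appeal to Appendix B by this citation repairs the proof; as written, the step does not go through.
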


\begin{proof}
We may assume that $\pi_X$ is non-abelian, and so $\beta=4$,
for if $\beta\geq3$ and $\pi_X$ is abelian there are non-zero cup products in 
$H^2(X;\mathbb{Q})$ and hence in $H^2(M;\mathbb{Q})$.

Hence $\beta_1(\pi_X)=2$. 
Since $\pi_X$ is nilpotent and homologically balanced
$\pi_X/\gamma_5^\mathbb{Q}\pi_X$ is a proper quotient of
$F(2)/\gamma_5^\mathbb{Q}F(2)$ \cite[Theorem 2]{FHT97},
and so there is a non-zero Massey product of length at most 4 in
$H^2(\pi_X;\mathbb{Q})$ \cite[Proposition 4.3]{Dw75}.
Since $H^2(\pi_X;\mathbb{Q})$ maps injectively to
$H^2(X;\mathbb{Q})$ and $H^2(M;\mathbb{Q})$,
the theorem follows.
\end{proof}

In particular,  $\#^\beta(S^2\times{S^1})$ has a nilpotent embedding 
if and only if $\beta\leq2$.

\begin{theorem}
\label{finiteY}
If $\pi_X$ is nilpotent and $H_1(Y)$ is a non-trivial finite group
then $\pi_X$ is finite and $\chi(X)=\chi(Y)=1$.
If $H_1(Y)=0$ then we may also have $\pi_X\cong\mathbb{Z}$ and $\chi(X)=0$.
\end{theorem}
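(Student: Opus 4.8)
The plan is to read off the possible values of $\chi(X)$ from the $L^2$-Euler characteristic dichotomy of Lemma \ref{L2}, and then determine $\pi_X$ from the Wang-sequence analysis of nilpotent groups in Lemma \ref{wang app}.

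First I would record the preliminary reductions. Since $\pi_X$ is nilpotent and $H_1(j_X)$ is onto, any generating set of $\pi_X^{ab}$ generates $\pi_X$, so $j_{X*}=\pi_1j_X$ is surjective; hence $c.d.X\leq2$ by Theorem \ref{Hi17-thm5.1}, and $\pi_X$ is finitely generated because $X$ is compact. Since $H_1(Y)$ is finite, $\beta_1(Y)=0$, and then Lemma \ref{Hi17-lem2.1}(3) gives $H^2(X;\mathbb{Q})\cong{H_1(Y;\mathbb{Q})}=0$, so $\beta_2(X;\mathbb{Q})=0$ and $\chi(X)=1-\beta_1(\pi_X;\mathbb{Q})$.

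Next I would invoke Lemma \ref{L2}. As $\pi_X$ is nilpotent, hence virtually solvable, $\beta^{(2)}_1(\pi_X)=0$, so either $\chi(X)>0$, or $\chi(X)=0$ and $X$ is aspherical. In the first case, since $\chi(X)=1-\beta_1(\pi_X;\mathbb{Q})$ is an integer $\leq1$, we must have $\beta_1(\pi_X;\mathbb{Q})=0$ and $\chi(X)=1$; a finitely generated infinite nilpotent group has infinite abelianization, so $\beta_1(\pi_X;\mathbb{Q})=0$ forces $\pi_X$ to be finite, and $\chi(Y)=2-\chi(X)=1$. This is exactly the first assertion, and it imposes no condition on $H_1(Y)$ being non-trivial. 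In the second case $\chi(X)=0$, so $\beta_1(\pi_X;\mathbb{Q})=1$ and $X$ is aspherical; then $c.d.\pi_X<\infty$, so $\pi_X$ is torsion-free, and writing $\pi_X\cong{K}\rtimes_\psi\mathbb{Z}$ with $K=\mathrm{Ker}(\pi_X\to\mathbb{Z})$, Lemma \ref{wang app}(3) (applied with $F=\mathbb{Q}$) shows $K$ is finite, hence trivial, so $\pi_X\cong\mathbb{Z}$ and $\chi(Y)=2$. Finally, $\pi_X\cong\mathbb{Z}$ gives $\tau_X=0$, whence $\tau_Y\cong{Hom}(\tau_X,\mathbb{Q}/\mathbb{Z})=0$ by the Hantzsche duality of \S2.3; together with $\beta_1(Y)=0$ this yields $H_1(Y)=0$. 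So this case can occur only when $H_1(Y)=0$, which is the second sentence of the statement — and when $H_1(Y)$ is non-trivial finite only the first case survives, giving $\pi_X$ finite and $\chi(X)=\chi(Y)=1$.

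No single step is a serious obstacle; the one to treat with care is the application of Lemma \ref{L2}, since it is precisely what excludes the configurations with $\chi(X)\leq0$ and $X$ not aspherical, and this is where the amenability (nilpotence) of $\pi_X$ enters essentially. The rest is bookkeeping with the homological relations of Lemma \ref{Hi17-lem2.1}, the torsion-linking duality, and the structure of finitely generated nilpotent groups.
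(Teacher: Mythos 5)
Your proof is correct and follows essentially the same route as the paper's: surjectivity of $j_{X*}$ plus Theorem \ref{Hi17-thm5.1} to get $c.d.X\leq2$, the dichotomy of Lemma \ref{L2}, and then elementary structure theory of finitely generated nilpotent groups to conclude $\pi_X$ finite (when $\chi(X)=1$) or $\pi_X\cong\mathbb{Z}$ with $\tau_X=\tau_Y=0$ (when $\chi(X)=0$). The only cosmetic difference is that you pin down $\pi_X\cong\mathbb{Z}$ via Lemma \ref{wang app}(3) and torsion-freeness, where the paper lists the torsion-free nilpotent groups of cohomological dimension $\leq2$ directly.
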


\begin{proof}
Since $\pi_X$ is nilpotent, $j_{X*}$ is an epimorphism,
and so $c.d.X\leq2$, by Theorem \ref{Hi17-thm5.1}.
Moreover, $\beta_1^{(2)}(\pi_X)=0$ and so either $\chi(X)=0$ and $X$ is aspherical, 
or $\chi(X)=1$, by Lemma \ref{L2}.
If $\chi(X)=0$ then $\pi_X=1$, 
$\mathbb{Z}$ or $\mathbb{Z}^2$, and $H_1(X)$ is torsion-free.
Therefore $H_1(Y)=0$ (since $H_1(Y)=\tau_Y\cong\tau_X$), and so $H_2(X)=0$.

If $\chi(X)=1$ then $\chi(Y)=1$, 
and so  $H^1(X)\cong{H_2(Y)}=0$ and $H_2(X)\cong{H^1(Y)}=0$.
Therefore $H_1(X)$ is finite.
Since $\pi_X$ is nilpotent and has finite abelianization, it is finite.
Moreover,  $H_2(\pi_X)=H_2(\pi_Y)=0$, 
since these groups are quotients of $H_2(X)$
and $H_2(Y)$, respectively.
\end{proof}

The complementary regions of the standard embedding of $S^2\times{S^1}$ 
in $S^4$ are $X=D^3\times{S^1}$ and $Y=S^2\times{D^2}$,
with fundamental groups $\mathbb{Z}$ and 1,  respectively.

It is reasonable to restrict consideration further to torsion-free nilpotent groups, as such groups satisfy the Novikov conjecture, and the surgery obstructions are maniable.

If $G$ is torsion-free nilpotent of Hirsch length $h$ then $c.d.G=h$.
The first non-abelian examples are the $\mathbb{N}il^3$-groups $\Gamma_q$.
Some of the argument of Theorem \ref{b=6} for the group $\mathbb{Z}^3$
extends to the groups $\Gamma_q$.
The homology of the pair $(X,M)$ with coefficients $\mathbb{Z}[\pi_X]$
gives an exact sequence
\[
H_2(X;\mathbb{Z}[\pi_X])\to{H^2(X;\mathbb{Z}[\pi_X])}\to
{H_1(M;\mathbb{Z}[\pi_X])}\to0.
\]
Let $K_X=\mathrm{Ker}(j_{X*})={H_1(M;\mathbb{Z}[\pi_X])}$ 
and $P=H_2(X;\mathbb{Z}[\pi_X])$.
Since $c.d.X\leq2$ and $c.d.\Gamma_q=3$,
an application of Schanuel's Lemma shows that $P$ 
is a projective $\mathbb{Z}[\Gamma_q]$-module of rank 1.
It is stably free since $\widetilde{K}_0(\mathbb{Z}[G])=0$ for
torsion-free poly-Z groups $G$, 
and $P$ has rank 1 since $\chi(X)=1$.
Since $Ext^i_{\mathbb{Z}[\pi_X])}(\mathbb{Z},\mathbb{Z}[\pi_X])=0$ 
for $i\leq2$
we then see that $H^2(X;\mathbb{Z}[\pi_X])\cong{P^\dagger}=
\overline{Hom_{\mathbb{Z}[\pi_X])}(P,\mathbb{Z}[\pi_X])}$,
and so is also stably free of rank 1.
We thus have an exact sequence 
\[
P\to{P^\dagger}\to{K_X}\to0.
\]
However it is not clear that this is as potentially useful as the analogous conditions 
on abelian embeddings given above.
Moreover, if $G$ is a nonabelian poly-$\mathbb{Z}$ group 
then there are infinitely many isomorphism classes of 
stably free $\mathbb{Z}[G]$-modules $P$ such that
${P\oplus\mathbb{Z}[G]}\cong\mathbb{Z}[G]^2$ \cite{Ar81}.
(This contrasts strongly with the case $\pi_X\cong\mathbb{Z}^3$,
for then $P$ must be a free module.)
At the end of the chapter we mention briefly an example with $\pi_X\cong\pi_1Kb$ 
and for which $\pi_2X$ is stably free but not free.

\section{Examples}

If $(n-1,\ell)=(n-1,m)$ then $\mathbb{Z}/\ell\mathbb{Z}\rtimes_n\mathbb{Z}$
and $\mathbb{Z}/m\mathbb{Z}\rtimes_n\mathbb{Z}$ 
have isomorphic abelianizations.
Since they have balanced presentations, 
every such pair of groups can be realized by an embedding,
by Theorem \ref{LQ}.

The simplest non-abelian nilpotent example corresponds to the choice
$\ell=2, m=4$ and $n=-1$.
One group is $\mathbb{Z}/4\mathbb{Z}\rtimes_{-1}\mathbb{Z}$, 
and the other is its abelianization $\mathbb{Z}\oplus\mathbb{Z}/2\mathbb{Z}$.
We shall give an explicit construction of an embedding
realizing this pair of groups.
Let $M=M(L)$, where $L$ is the 4-component bipartedly trivial link 
depicted in Figure 8.1.
If $X$ and $Y$ are the complementary regions for $j_L$ then
$\pi_X$ and $\pi_Y$ have presentations $\langle{a,b}\mid{U=V=1}\rangle$
and $\langle{u,v}\mid{A=B=1}\rangle$, respectively,
where the words $A=u^4v^2$, $B= vuv^{-1}u^{-1}$,
$U=a^4$ and $V=b^{-1}aba$, are easily read from the diagram.
Thus the embedding $j_L$ is nilpotent, 
with $\pi_X\cong\mathbb{Z}/4\mathbb{Z}\rtimes_{-1}\mathbb{Z}$,
and $\pi_Y\cong\mathbb{Z}\oplus\mathbb{Z}/2\mathbb{Z}$.

\setlength{\unitlength}{1mm}
\begin{picture}(95,75)(-32,-2.5)

\put(-5,69.05){$\vartriangleright$}
\put(-8,67){$a$}
\put(54,67){$b$}
\put(57,69.05){$\vartriangleright$}
\put(25,57.8){$\vartriangleright$}
\put(22,60){$u$}
\put(22,39){$v$}
\put(25,36.8){$\vartriangleright$}
\put(-3.9,32){$\bullet$}
\put(44.9,32){$\bullet$}

\linethickness{1pt}
\put(-15,70){\line(1,0){20}}
\put(-20,15){\line(0,1){50}}
\put(-15,10){\line(1,0){20}}
\qbezier(-20,65)(-20,70)(-15,70)
\qbezier(-20,15)(-20,10)(-15,10)

\put(10,60){\line(0,1){5}}
\qbezier(5,70)(10,70)(10,65)
\put(10,15){\line(0,1){15}}
\qbezier(5,10)(10,10)(10,15)

\put(10,32){\line(0,1){4.5}}
\put(10,38.5){\line(0,1){4}}
\put(10,44.5){\line(0,1){3}}
\put(10,49.5){\line(0,1){3}}
\put(10,54.5){\line(0,1){3}}

\put(45,70){\line(1,0){20}}
\qbezier(65,70)(70,70)(70,65)
\put(70,15){\line(0,1){50}}

\put(45,10){\line(1,0){20}}
\qbezier(65,10)(70,10)(70,15)
\put(40,15){\line(0,1){9}}
\qbezier(40,15)(40,10)(45,10)

\put(40,26){\line(0,1){4}}
\put(40,32){\line(0,1){4.5}}
\put(40,39){\line(0,1){18}}
\put(40,60){\line(0,1){5}}
\qbezier(40,65)(40,70)(45,70)

\thinlines
\put(2,18.5){\line(1,0){7}}
\put(11.5,18.5){\line(1,0){13.5}}
\qbezier(25,18.5)(30,18.5)(30,23.5)
\put(30,23.5){\line(0,1){2.5}}
\qbezier(30,26)(30,31)(35,31)
\put(35,31){\line(1,0){6}}
\qbezier(41,31)(42.5,31)(42.5,32.5)
\qbezier(41,34)(42.5,34)(42.5,32.5)

\put(9,37.5){\line(1,0){25.5}}
\put(37.5,37.5){\line(1,0){3.5}}

\put(-3,23.5){\line(0,1){15}}
\qbezier(-3,23.5)(-3,18.5)(2,18.5)
\qbezier(-3,38.5)(-3,43.5)(2,43.5)

\qbezier(41,28)(45.75,28)(45.75,32.75)
\qbezier(41,37.5)(45.75,37.5)(45.75,32.75)
\qbezier(36,25)(36,28)(39,28)
\qbezier(36,25)(36,22)(39,22)
\qbezier(42,22)(43.5,22)(43.5,23.5)
\qbezier(42,25)(43.5,25)(43.5,23.5)
\put(41,22){\line(1,0){1}}

\put(8.5,58.5){\line(1,0){33.5}}
\qbezier(7.25,57.25)(7.25,58.5)(8.5,58.5)
\qbezier(7.25,57.25)(7.25,56)(8.5,56)

\qbezier(9,34)(7.25,34)(7.25,35.75)
\qbezier(9,37.5)(7.25,37.5)(7.25,35.75)

\put(9,31){\line(1,0){2}}
\qbezier(9,25)(6,25)(6,28)
\qbezier(6,28)(6,31)(9,31)
\qbezier(11,31)(12.5,31)(12.5,32.5)
\qbezier(11,34)(12.5,34)(12.5,32.5)

\put(11.5,25){\line(1,0){17.5}}
\put(31,25){\line(1,0){4}}
\put(37,25){\line(1,0){5}}

\put(8.5,48.5){\line(1,0){3}}
\put(8.5,53.5){\line(1,0){3}}
\qbezier(8.5,46)(7.25,46)(7.25,47.25)
\qbezier(7.25,47.25)(7.25,48.5)(8.5,48.5)
\qbezier(8.5,51)(7.25,51)(7.25,52.25)
\qbezier(7.25,52.25)(7.25,53.5)(8.5,53.5)

\put(2,43.5){\line(1,0){9.5}}
\qbezier(11.5,43.5)(12.75,43.5)(12.75,44.75)
\qbezier(11.5,46)(12.75,46)(12.75,44.75)
\qbezier(11.5,48.5)(12.75,48.5)(12.75,49.75)
\qbezier(11.5,51)(12.75,51)(12.75,49.75)
\qbezier(11.5,53.5)(12.75,53.5)(12.75,54.75)
\qbezier(11.5,56)(12.75,56)(12.75,54.75)

\put(41,56){\line(1,0){1}}
\qbezier(42,56)(43.25,56)(43.25,57.25)
\qbezier(42,58.5)(43.25,58.5)(43.25,57.25)

\put(36, 37){\line(0,1){16}}
\qbezier(36,53)(36,56)(39,56)
\qbezier(36,37)(36,34)(39,34)

\put(17.3,2.5){Figure 8.1}

\end{picture}

It is easy to find a 4-component link $L=L_a\cup{L_b}\cup{L_u}\cup{L_v}$ 
with each 2-component sublink trivial, 
and such that $L_a$ and $L_b$ represent 
(the conjugacy classes of) $A=[u,[u,v]]$ and $B=[v,[u,v]]$ in $F(u,v)$,
respectively, while $L_u$ and $L_v$ have image 1 in $F(a,b)$.
Arrange the link diagram so that $L_u$ is on the left, 
$L_v$ on the right, $L_a$ at the top and $L_b$ at the bottom. 
We may pass one bight of $L_a$ which loops around $L_u$ under a 
similar bight of $L_b$, so that $U$ now represents $[a,b]$ in $F(a,b)$.
Finally we use claspers to modify $L_u$ and $L_v$ so 
that they represent $[b,v]$ in $F(b,v)$ and $[a,u]$ in $F(a,u)$.
We obtain the link of Figure 8.2.

\setlength{\unitlength}{1mm}
\begin{picture}(90,85)(-32,-5)

\put(-6.5,9.05){$\vartriangleright$}
\put(-9,11.3){$u$}
\put(59,8){$v$}
\put(61,6.05){$\vartriangleright$}
\put(39,56){$\vartriangleright$}
\put(36.5,58.3){$a$}
\put(20.4,15){$\vartriangleright$}
\put(18,12.5){$b$}
\put(25.4,38.1){$\bullet$}

\linethickness{1pt}
\put(-15,71){\line(1,0){13}}
\qbezier(-20,66)(-20,71)(-15,71)
\put(-20,15){\line(0,1){51}}
\qbezier(-20,15)(-20,10)(-15,10)
\put(-15,10){\line(1,0){67}}
\qbezier(-2,71)(0,71)(0,69)

\put(-3,68){\line(1,0){68}}
\qbezier(-6,65)(-6,68)(-3,68)
\qbezier(-6,65)(-6,62)(-3,62)
\qbezier(65,68)(70,68)(70,63)
\put(-3,62){\line(1,0){2}}
\put(-2,66){\line(1,0){47}}
\qbezier(45,66)(50,66)(50,61)
\put(-2,64){\line(1,0){1}}

\qbezier(-3,65)(-3,64)(-2,64)
\qbezier(-3,65)(-3,66)(-2,66)
\put(1,64){\line(1,0){6}}
\put(1,62){\line(1,0){5.7}}

\put(0,61){\line(0,1){4}}
\qbezier(6.7,62)(8,62)(8,60.7)
\put(10,59){\line(0,1){2}}

\qbezier(7,64)(10,64)(10,61)
\qbezier(8,59)(8,58)(9,58)
\qbezier(9,58)(10,58)(10,59)

\put(52,7){\line(1,0){13}}
\qbezier(65,7)(70,7)(70,12)
\put(70,12){\line(0,1){51}}
\put(50,58){\line(0,1){3}}
\put(0,53){\line(0,1){6}}
\put(0,49){\line(0,1){2}}
\put(50,51){\line(0,1){5}}

\put(50,48.5){\line(0,1){1}}

\put(0,46.5){\line(0,1){1}}
\put(0,40){\line(0,1){5}}
\put(0,31){\line(0,1){7}}
\put(0,28.5){\line(0,1){1}}
\put(50,33){\line(0,1){14}}
\put(50,26.5){\line(0,1){1}}
\put(50,29){\line(0,1){2}}
\put(43,12){\line(1,0){6}}
\put(51,12){\line(1,0){1}}
\qbezier(52,12)(53,12)(53,11)
\qbezier(52,10)(53,10)(53,11)

\qbezier(40,15)(40,12)(43,12)
\qbezier(40,17)(40,18.5)(41.5,18.5)
\qbezier(41.5,18.5)(43,18.5)(43,17)
\qbezier(43,15.5)(43,14)(44.5,14)
\put(43,15.5){\line(0,1){1.5}}
\put(44.5,14){\line(1,0){4.5}}
\put(51,14){\line(1,0){4}}

\put(5,20){\line(1,0){50}}
\put(0,25){\line(0,1){2}}
\qbezier(0,25)(0,20)(5,20)
\qbezier(55,20)(58,20)(58,17)
\qbezier(55,14)(58,14)(58,17)

\put(50,21){\line(0,1){4}}
\put(50,11){\line(0,1){4}}
\put(50,17){\line(0,1){2}}
\qbezier(50,9)(50,7)(52,7)

\thinlines
\put(-1.5,60){\line(1,0){10.5}}
\put(1,57){\line(1,0){50}}
\put(11,60){\line(1,0){17}}
\qbezier(28,60)(30,60)(30,58)
\qbezier(-3,58.5)(-3,57)(-1.5,57)
\qbezier(-3,58.5)(-3,60)(-1.5,60)

\put(1,54){\line(1,0){48}}
\qbezier(-2,53)(-2,54)(-1,54)
\qbezier(-2,53)(-2,52)(-1,52)
\put(-1,52){\line(1,0){50}}

\put(1,50){\line(1,0){50}}
\qbezier(-2,49)(-2,50)(-1,50)
\qbezier(-2,49)(-2,48)(-1,48)
\put(-1,48){\line(1,0){52}}

\qbezier(51,54)(52.5,54)(52.5,55.5)
\qbezier(51,57)(52.5,57)(52.5,55.5)
\qbezier(51,50)(52,50)(52,51)
\qbezier(51,52)(52,52)(52,51)

\put(-1,46){\line(1,0){7}}
\qbezier(-2,45)(-2,46)(-1,46)
\qbezier(-2,45)(-2,44)(-1,44)
\put(1,44){\line(1,0){5}}

\qbezier(-2,40)(-2,41)(-1,41)
\qbezier(-2,40)(-2,39)(-1,39)
\put(1,41){\line(1,0){25}}
\put(-1,39){\line(1,0){50}}
\qbezier(51,39)(53,39)(53,41)
\put(53,41){\line(0,1){5}}
\qbezier(51,48)(53,48)(53,46)

\put(30,45){\line(0,1){2}}
\qbezier(26,41)(30,41)(30,45)
\qbezier(6,46)(10,46)(10,42)
\qbezier(6,44)(8,44)(8,42)
\qbezier(10,38)(10,36)(12,36)
\qbezier(8,38)(8,34)(12,34)

\put(12,36){\line(1,0){37}}
\put(12,34){\line(1,0){37}}

\qbezier(51,36)(54,36)(54,33)
\put(1,32){\line(1,0){50}}

\qbezier(-2,31)(-2,32)(-1,32)
\qbezier(-2,31)(-2,30)(-1,30)
\put(-1,30){\line(1,0){50}}

\qbezier(51,32)(52,32)(52,33)
\qbezier(51,34)(52,34)(52,33)

\qbezier(51,28)(52,28)(52,29)
\qbezier(51,30)(52,30)(52,29)

\put(-1,28){\line(1,0){52}}
\qbezier(-2,27)(-2,28)(-1,28)
\qbezier(-2,27)(-2,26)(-1,26)

\put(1,26){\line(1,0){50}}

\qbezier(51,26)(52,26)(52,25)
\qbezier(51,24)(52,24)(52,25)

\put(1,24){\line(1,0){48}}

\put(-1,16){\line(1,0){43}}
\qbezier(-5,20)(-5,24)(-1,24)
\qbezier(-5,20)(-5,16)(-1,16)
\put(44,16){\line(1,0){7}}
\qbezier(51,16)(54,16)(54,19)
\put(54,21){\line(0,1){12}}

\put(17,0){Figure 8.2}

\end{picture}

This link may be partitioned into two trivial links in three distinct ways,
giving three embeddings of $M(L)$.
If the two sublinks are $L_a\cup{L_b}$ and $L_u\cup{L_v}$ 
then $A=vu^{-1}v^{-1}u^{-1}vuv^{-1}u$,
$B=vuv^{-1}u^{-1}v^{-1}uvu^{-1}$,
$U=b^{-1}aba^{-1}$ and $V=1$.
Hence $\pi_X\cong\Gamma_1$ and $\pi_Y\cong\mathbb{Z}^2$.

Each of the other partitions determine abelian embeddings, with
$\pi_X\cong\pi_Y\cong\mathbb{Z}^2$ and $\chi(X)=\chi(Y)=1$.

With a little more effort, instead of passing just one bight of $L_a$ 
under $L_b$ (as above), we may interlace the loops of $L_a$ and $L_b$ 
around each of $L_u$ and $L_v$ so that $u$ and $V$ represent $[a,[a,b]]$ 
and $[b,[a,b]]$, respectively, 
and so that each 2-component sublink of $L$ is still trivial.
If we then use claspers again we may arrange that $u$ 
represents $[a,v]$ and $v$ represents $[b,u]$,
so that we obtain a 3-manifold which has one embedding with $\pi_X\cong\pi_Y\cong\Gamma_1$ and another with
$\pi_X\cong\pi_Y\cong\mathbb{Z}^2$.
Can we refine this construction so that the third embedding has
$\pi_X\cong\Gamma_1$ and $\pi_Y\cong\mathbb{Z}^2$?

We conclude with an example which just fails to be nilpotent, 
but which illustrates the issue raised at the end of \S8.2.
This is based on the recent result of W. H. Mannan that 
there is a finite 2-complex $C$ with $\pi_1C\cong\pi_1Kb$ 
and $\chi(C)=1$ which is not homotopy equivalent to $Kb\vee{S^2}$, 
since $\pi_2C$ is not a free $\mathbb{Z}[G]$-module \cite{Ma24}.
This complex corresponds to the presentation
\[
\langle{x,y}\mid{y^{-2}xy^2x^{-1}}=x^{-3}y^{-1}xyx^2y^{-1}x^{-2}y=1\rangle.
\]
The 4-component link in Figure 8.3 determines an embedding of a 3-manifold with
complementary regions $X_L\simeq{C}$ and $Y_L\simeq\mathbb{RP}^2\vee{S^2}$.
(This embedding is virtually abelian, but not nilpotent.)

\newpage

\setlength{\unitlength}{1mm}
\begin{picture}(120,92)(-10,-5)

\put(15,79){$\vartriangleleft$}
\put(14,76){$x$}
\put(83,79){$\vartriangleleft$}
\put(80,76){$y$}
\put(50,41){$\bullet$}
\put(48,43){$s$}
\put(50,13){$\bullet$}
\put(48,15){$r$}
\put(58,13){$\vartriangleright$}
\put(42,41){$\vartriangleleft$}

\linethickness{1pt}

\put(0,13){\line(0,1){62}}
\qbezier(0,13)(0,8)(5,8)
\qbezier(0,75)(0,80)(5,80)
\put(5,8){\line(1,0){20}}
\put(5,80){\line(1,0){20}}
\qbezier(25,8)(30,8)(30,13)
\qbezier(25,80)(30,80)(30,75)
\put(30,15){\line(0,1){13}}
\put(30,30){\line(0,1){3}}

\put(30,34.7){\line(0,1){2.6}}

\put(30,38.7){\line(0,1){2.6}}
\put(30,43){\line(0,1){6}}
\put(30,51){\line(0,1){7}}
\put(30,59.7){\line(0,1){2.6}}
\put(30,64){\line(0,1){3}}
\put(30,68.7){\line(0,1){2.6}}

\put(30,73){\line(0,1){2}}

\qbezier(70,13)(70,8)(75,8)
\put(75,8){\line(1,0){20}}
\qbezier(95,8)(100,8)(100,13)
\put(100,13){\line(0,1){62}}
\qbezier(70,75)(70,80)(75,80)
\put(75,80){\line(1,0){20}}
\qbezier(95,80)(100,80)(100,75)

\put(70,13){\line(0,1){2}}
\put(70,17){\line(0,1){5}}
\put(70,24){\line(0,1){2}}
\put(70,27.7){\line(0,1){1.6}}
\put(70,31){\line(0,1){12}}
\put(70,44.7){\line(0,1){2.6}}

\put(70,48.7){\line(0,1){2.6}}

\put(70,53){\line(0,1){7}}
\put(70,62){\line(0,1){5}}
\put(70,69){\line(0,1){4}}

\thinlines
\put(31,74){\line(1,0){48}}
\qbezier(79,74)(82,74)(82,71)
\put(82,35){\line(0,1){36}}

\put(28,74){\line(1,0){1}}
\qbezier(27,73)(27,74)(28,74)
\qbezier(27,73)(27,72)(28,72)
\put(28,72){\line(1,0){4}}
\qbezier(32,72)(33,72)(33,71)
\qbezier(32,70)(33,70)(33,71)
\put(31,70){\line(1,0){1}}
\put(28,70){\line(1,0){1}}
\qbezier(27,69)(27,70)(28,70)
\qbezier(27,69)(27,68)(28,68)
\put(28,68){\line(1,0){43}}
\qbezier(71,68)(72.5,68)(72.5,66.5)
\qbezier(71,65)(72.5,65)(72.5,66.5)

\put(31,65){\line(1,0){38}}
\put(28,65){\line(1,0){1}}
\qbezier(27,64)(27,65)(28,65)
\qbezier(27,64)(27,63)(28,63)
\put(28,63){\line(1,0){4}}
\qbezier(32,63)(33,63)(33,62)
\qbezier(32,61)(33,61)(33,62)
\put(31,61){\line(1,0){1}}
\put(28,61){\line(1,0){1}}
\qbezier(27,60)(27,61)(28,61)
\qbezier(27,60)(27,59)(28,59)
\put(28,59){\line(1,0){20}}
\qbezier(48,59)(50.5,59)(50.5,56.5)
\qbezier(50.5,56.5)(50.5,54)(53,54)
\put(53,54){\line(1,0){5}}

\put(28,52){\line(1,0){1}}
\qbezier(27,51)(27,52)(28,52)
\qbezier(27,51)(27,50)(28,50)
\put(28,50){\line(1,0){20}}

\qbezier(48,50)(50.5,50)(50.5,47.5)
\put(50.5,46.5){\line(0,1){1}}
\qbezier(50.5,46.5)(50.5,44)(53,44)
\put(53,44){\line(1,0){5}}

\put(31,52){\line(1,0){27}}

\put(60,52){\line(1,0){5}}
\put(67,52){\line(1,0){5}}
\put(67,54){\line(1,0){2}}
\put(60,54){\line(1,0){5}}
\qbezier(72,52)(73,52)(73,53)
\qbezier(72,54)(73,54)(73,53)
\put(71,54){\line(1,0){1}}

\qbezier(69,50)(66,50)(66,53)
\qbezier(69,56)(66,56)(66,53)
\put(71,50){\line(1,0){1}}
\qbezier(72,50)(73,50)(73,49)
\qbezier(72,48)(73,48)(73,49)
\put(66,48){\line(1,0){6}}
\qbezier(63,45)(63,48)(66,48)

\qbezier(71,56)(73.5,56)(73.5,58.5)
\qbezier(71,61)(73.5,61)(73.5,58.5)
\put(63,61){\line(1,0){8}}
\qbezier(63,61)(59,61)(59,57)
\put(59,32){\line(0,1){25}}
\qbezier(59,32)(59,29)(56,29)
\put(28,29){\line(1,0){28}}
\qbezier(27,28)(27,29)(28,29)
\qbezier(27,28)(27,27)(28,27)
\put(28,27){\line(1,0){1}}
\put(31,27){\line(1,0){41}}

\put(60,44){\line(1,0){11}}
\qbezier(71,44)(72,44)(72,43)
\qbezier(71,42)(72,42)(72,43)
\put(60,42){\line(1,0){9}}

\put(28,42){\line(1,0){30}}
\qbezier(27,41)(27,42)(28,42)
\qbezier(27,41)(27,40)(28,40)
\put(28,40){\line(1,0){1}}
\put(31,40){\line(1,0){1}}
\qbezier(32,40)(33,40)(33,39)
\qbezier(32,38)(33,38)(33,39)
\put(28,38){\line(1,0){4}}

\qbezier(27,37)(27,38)(28,38)
\qbezier(27,37)(27,36)(28,36)

\put(28,36){\line(1,0){1}}
\put(31,36){\line(1,0){1}}
\qbezier(32,36)(33,36)(33,35)
\qbezier(32,34)(33,34)(33,35)
\put(28,34){\line(1,0){4}}
\qbezier(27,33)(27,34)(28,34)
\qbezier(27,33)(27,32)(28,32)
\put(28,32){\line(1,0){1}}
\put(31,32){\line(1,0){27}}
\put(60,32){\line(1,0){2}}
\put(63,33){\line(0,1){8}}
\qbezier(63,33)(63,30)(66,30)
\put(66,30){\line(1,0){9}}

\put(64,32){\line(1,0){5}}
\put(71,32){\line(1,0){8}}
\qbezier(79,32)(82,32)(82,35)

\qbezier(72,27)(73,27)(73,26)
\qbezier(72,25)(73,25)(73,26)
\put(68,25){\line(1,0){1}}
\qbezier(67,24)(67,25)(68,25)
\qbezier(67,24)(67,23)(68,23)
\put(68,23){\line(1,0){4}}
\qbezier(72,23)(73,23)(73,22)
\qbezier(72,21)(73,21)(73,22)
\put(71,21){\line(1,0){1}}
\put(31,21){\line(1,0){38}}

\put(27,14){\line(1,0){42}}
\put(71,14){\line(1,0){1}}
\qbezier(72,14)(73,14)(73,15)
\qbezier(72,16)(73,16)(73,15)
\put(68,16){\line(1,0){4}}
\qbezier(68,16)(67,16)(67,17)
\qbezier(67,17)(67,18)(68,18)
\put(68,18){\line(1,0){1}}
\put(71,18){\line(1,0){4}}
\qbezier(75,18)(77,18)(77,20)
\put(77,20){\line(0,1){8}}
\qbezier(75,30)(77,30)(77,28)

\put(27,21){\line(1,0){2}}
\qbezier(27,21)(25,21)(25,19)
\qbezier(27,14)(25,14)(25,16)
\put(25,16){\line(0,1){3}}

\put(0,0){Figure 8.3.\quad{$r=y^{-2}yy^{-1}xy^2x^{-1}$, 
$s=x^{-3}y^{-1}xyx^2y^{-1}x^2y$
}}

\end{picture}

\backmatter

\chapter*{Appendix A. The linking pairings of orientable Seifert manifolds}

The linking pairings of oriented 3-manifolds which are Seifert fibred 
over non-orientable base orbifolds were computed in Chapter 3.
Here we shall consider the remaining case, 
when the base orbifold is also orientable.
Thus the Seifert fibration is induced by a fixed-point free 
$S^1$-action on the manifold.

Bryden and Deloup have used the cohomological formulation of
the linking pairing to show that every linking pairing 
on a finite abelian group of odd order 
is realised by some Seifert fibred $\mathbb{Q}$-homology sphere \cite{BD04}.
We shall work directly with the geometric definition,
giving a new proof of this result,
and shall show that there are pairings on 2-primary groups
which are not realized by any orientable Seifert fibred 3-manifold at all
(i.e., even if we allow non-orientable base orbifolds).
Our strategy shall be to localize at a prime $p$.

\section*{The torsion linking pairing}

Assume now that $M=M(g;S)$ is a Seifert manifold with Seifert data 
$S=((\alpha_1,\beta_1),\dots,(\alpha_r,\beta_r))$,
where $g\geq0$, $r\geq1$ and $\alpha_i>1$ for all $i\leq{r}$. 
Then $H_1(M;\mathbb{Z})\cong\mathbb{Z}^{2g}\oplus{H}$, 
where $H$ has a presentation
\[\langle
{q_1,\dots,q_r,h}\mid\Sigma{q_i}=0,\,\alpha_iq_i+\beta_ih=0,\,
\forall{i\geq1}\rangle.\]
The torsion subgroup $\tau_M$ is a subgroup of $H$.

We shall modify this presentation to obtain one 
with more convenient generators.
Our approach involves localizing at a prime $p$.
After reordering the Seifert data, if necessary, 
we may assume that $\alpha_{i+1}$ divides $\alpha_i$
in $\mathbb{Z}_{(p)}$, for all $i\geq1$.
(Note that $\nu=\alpha_1\varepsilon(M)$ is then in $\mathbb{Z}_{(p)}$,
while if $\varepsilon(M)=0$ then $\frac{\alpha_1}{\alpha_2}$
is invertible in $\mathbb{Z}_{(p)}$.)
Localization loses nothing, since $\ell_M$ is uniquely 
the orthogonal sum of pairings on the $p$-primary summands of $\tau_M$.
(We shall often write $\ell_M$ rather than $\mathbb{Z}_{(p)}\otimes\ell_M$,
for simplicity of notation.)

Using the relation $\Sigma{q_i}=0$ to eliminate the generator $q_1$,
we see that $\mathbb{Z}_{(p)}\otimes{H}$ has the equivalent presentation
\[\langle
{q_2,\dots,q_r,h}\mid\alpha_1\varepsilon(M){h}=0,\,\alpha_iq_i+\beta_ih=0,\,
\forall{i\geq2}\rangle.\]
If $r=1$ this group is cyclic, generated by the image of $h$.
We shall assume henceforth that $r\geq2$, since all pairings on 
finite cyclic groups are realizable by lens spaces.
Then there are integers
$m,n$ such that $m\alpha_2+n\beta_2=1$, since $(\alpha_2,\beta_2)=1$.
Let $\gamma_i=\frac{\alpha_2}{\alpha_i}\beta_i$ and
$q_i'=\gamma_2q_i-\gamma_iq_2$, for all $i$.
(Then $q_2'=0$.)
Let $s=-mh+nq_2$ and $t=\alpha_2q_2+\beta_2h$. 
Then $h=-\alpha_2s+nt$ and $q_2=\beta_2s+mt$.
Since $t=0$ in $H$ this simplifies to
\[\langle
{q_3',\dots,q_r',s}\mid\alpha_1\alpha_2\varepsilon(M)s=0,
\,\alpha_iq_i'=0,\,\forall{i\geq3}\rangle.\]
In particular, $M(0;S)$ is a $\mathbb{Q}$-homology sphere (i.e., $H=\tau_M$)
if and only if $\varepsilon(M)\not=0$. 

If exactly $r_p$ of the cone point orders $\alpha_i$ 
are divisible by $p$ and $\varepsilon(M)=0$ 
then $\tau_M$ has non-trivial $p$-torsion if and only if $r_p\geq3$, 
in which case $\mathbb{Z}_{(p)}\otimes\tau_M$ is the direct sum 
of $r_p-2$ cyclic submodules, while if $\varepsilon(M)\not=0$ then 
$\tau_M$ has non-trivial $p$-torsion if and only if $r_p\geq2$
and then $\mathbb{Z}_{(p)}\otimes\tau_M$ is the direct sum 
of $r_p-1$ cyclic submodules.
(Note however that if $r_p\leq1$ and $\varepsilon(M)\not=0$ then 
$\mathbb{Z}_{(p)}\otimes\tau_M\cong
\mathbb{Z}_{(p)}/\alpha_1\varepsilon_S\mathbb{Z}_{(p)}$, and may be non-trivial.)

The Seifert structure gives natural 2-chains relating the 1-cycles
representing the generators of $H$.

[For let $N_i$ be a torus neighborhood of the $i^{th}$ exceptional fibre,
and let $B_o$ be a section of the restriction of the
Seifert fibration to $M*=M\setminus\cup{intN_i}$.
Let $\xi_i$ and $\theta_i$ be simple closed curves 
in $\partial{N_i}$ which represent $q_i$ and $h$, respectively.
Then $\partial B_o=\Sigma\xi_i$, and there are singular 2-chains
$D_i$ in $N_i$ such that $\partial D_i=\alpha_i \xi_i+\beta_i\theta_i$,
since $\alpha_iq_i+\beta_ih=0$ in $H_1(N_i;\mathbb{Z})$.]

We may choose disjoint annuli $A_i$ in $M^*$
with $\partial{A_i}=\theta_2-\theta_i$, for $i\not=2$.
For convenience in our formulae, we shall also let $A_2=0$.
Then $C_i=\beta_2D_i-\beta_iD_2+\beta_2\beta_i{A_i}$
is a singular 2-chain with 
$\partial{C_i}=\alpha_i\beta_2\xi_i-\alpha_2\beta_i\xi_2$.

Let $\xi_i'=\gamma_2\xi_i-\gamma_i\xi_2$, for $i\geq3$,
$\sigma=-m\theta_2+n\xi_2$ and 
\[
U=\alpha_1B_o+\alpha_1\varepsilon_SnD_2
-\Sigma\frac{\alpha_1}{\alpha_i}(D_i+\beta_iA_i).
\]
Then $\xi_i'$ is a singular 1-chain 
representing $q_i'$ and $\partial{C_i}=\alpha_i\xi_i'$, for all $i\geq3$,
$\sigma$ is a singular 1-chain representing $s$ 
and $U$ is a singular 2-chain with 
$\partial{U}=\alpha_1\alpha_2\varepsilon_S\sigma$.

We may assume that $\xi_i\bullet\theta_i=1$ in $\partial{N_i}$.
In order to calculate intersections and self-intersections 
of the 1-cycles $\xi_i$ with the 2-chains $C_i$ in $M$, 
we may push each $\xi_i$ off $N_i$.
Then $\xi_i$ and $D_j$ are disjoint, for all $i,j$,
while $\xi_2\bullet{A_i}=1$, $\xi_i\bullet{A_i}=-1$
and $\xi_j\bullet{A_i}=0$, if $i,j\not=2$ and $j\not=i$.
Similarly, we may assume that $\theta_2$ is disjoint from the discs $D_j$
(for all $j$) and the annuli $A_k$ (for all $k\not=2$).
Since $B_o$ is oriented so that $\partial{B_o}=\Sigma\xi_i$,
we must have $\theta_2\bullet{B_o}=-1$.
Hence
\[\xi_i'\bullet{C_i}=-\beta_2\beta_i(\gamma_2+\gamma_i),\]
\[\xi_i'\bullet{C_j}=-\beta_2\beta_j\gamma_i,\]
and
\[\xi_i'\bullet{U}=\alpha_1\varepsilon_S\gamma_i\]
for all $i,j\geq3$ with $j\not=i$, while
\[\sigma\bullet{U}=-\frac{\alpha_1}{\alpha_2}-n\alpha_1\varepsilon_S
\]
and
\[\sigma\bullet{C_i}=n\beta_2\beta_i.\]
Then
\[\ell_M(q_i',q_i')=
[-\beta_2\beta_i\frac{\alpha_i\beta_2+\alpha_2\beta_i}{\alpha_i^2}]
\in\mathbb{Q}/\mathbb{Z}\]
and
\[\ell_M(q_i',q_j')=[-\beta_2\beta_i\beta_j\frac{\alpha_2}{\alpha_i\alpha_j}]
\in\mathbb{Q}/\mathbb{Z}.\]
If $\varepsilon(M)\not=0$ then the above calculations of $\xi_i'\bullet{U}$ and $\sigma\bullet{C_i}$ each give
\[\ell_M(s,q_i')=[\frac{\beta_i}{\alpha_i}]\in\mathbb{Q}/\mathbb{Z}\]
and
\[\ell_M(s,s)=[-\frac{\alpha_1+n\alpha_1\alpha_2\varepsilon(M)}
{\alpha_1\alpha_2^2\varepsilon(M)}]
\in\mathbb{Q}/\mathbb{Z}.
\]
In particular, the linking pairings depend only on $S$ and not on $g$.
(We could arrange that the deminators are powers of $p$,
after further rescaling the basis elements.
However that would tend to obscure the dependence on the Seifert data.)

Let $S$ and $S'$ be two systems of Seifert data, with
concatenation $S''$, and let $M''=M\#_fM'=M(0;S'')$
be the fibre-sum of $M=M(0;S)$ and $M'=M(0;S')$.
Then $\varepsilon(M'')=\varepsilon(M)+\varepsilon(M')$.
The next result is clear. 

\begin{lem}
{\bf{A1}.}
Let $M=M(g;S)$ and $M'=M(g';S')$ be Seifert manifolds such that
all the cone point orders of $S'$ are relatively prime to all the cone point orders of $S$ 
and $\varepsilon(M')=\varepsilon(M)=0$, and let $M''=M(g+g';S'')$.
Then $\varepsilon(M'')=0$ and $\ell_{M''}=\ell_M\perp\ell_{M'}$.
\qed
\end{lem}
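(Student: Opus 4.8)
The plan is to reduce the assertion to the explicit $p$-local computations of $\ell_{M(g;S)}$ carried out above, which express the restriction of the pairing to the $p$-primary subgroup of $\tau_M$ in terms of the data of the cone points whose order is divisible by $p$ together with $\varepsilon(M)$. First I would note that $M''=M(g+g';S'')$ is exactly the fibre sum $M\sharp_fM'$: since $g,g'\geq0$ the fibre-sum formula of Chapter~3 gives $k_\#=g+g'$. Consequently $\varepsilon(M'')=\varepsilon(M)+\varepsilon(M')=0$, which settles the first assertion; moreover this means that in every localized presentation obtained by the elimination procedure of this section the relation $\alpha_1\alpha_2\varepsilon(M'')s=0$ is vacuous, so no ``$s$-type'' generator survives in $\mathbb{Z}_{(p)}\otimes\tau_{M''}$ for any prime $p$.

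Next I would argue prime by prime. Fix a prime $p$. Coprimality of the cone point orders of $S$ and $S'$ implies that $p$ can divide a cone point order of at most one of $S$ and $S'$. If $p$ divides no cone point order of $S''$, then, since $\varepsilon(M'')=0$, the $p$-primary part of $\tau_{M''}$ is trivial, and so are those of $\tau_M$ and $\tau_{M'}$. If $p$ divides some $\alpha_i\in S$ (the case $p\mid S'$ being symmetric), reorder $S''$ so that these $p$-divisible cone points --- which are precisely the $p$-divisible cone points of $S$ --- come first, in decreasing $p$-adic valuation, and carry out the elimination. Every cone point of $S'$ has order a $p$-adic unit, so each corresponding generator is eliminated in $\mathbb{Z}_{(p)}\otimes H''$ in favour of a multiple of $h$; after this step the localized presentation, the chosen $1$-cycles, and the intersection numbers feeding the formulas for $\ell_{M''}$ involve only the data $(\alpha_i,\beta_i)$ with $p\mid\alpha_i$ and the number $\varepsilon(M'')=0$. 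These are exactly the data that produced $\ell_M\otimes\mathbb{Z}_{(p)}$, whence $\ell_{M''}\otimes\mathbb{Z}_{(p)}\cong\ell_M\otimes\mathbb{Z}_{(p)}$ (and $\cong\ell_{M'}\otimes\mathbb{Z}_{(p)}$ when $p\mid S'$). In particular $(\tau_M)_p=0$ when $p\mid S'$ and $(\tau_{M'})_p=0$ when $p\mid S$, so these $p$-primary identifications fit together without overlap.

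Finally I would assemble the pieces. By the splitting of a linking pairing into the orthogonal sum over primes of its $p$-primary restrictions (\S1.4), $\ell_{M''}=\perp_p\bigl(\ell_{M''}\otimes\mathbb{Z}_{(p)}\bigr)$; the previous paragraph identifies the summands for $p\mid S$ with those of $\ell_M$, the summands for $p\mid S'$ with those of $\ell_{M'}$, and all remaining summands with $0$. Hence $\ell_{M''}\cong\ell_M\perp\ell_{M'}$.

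The one point needing care is the bookkeeping in the elimination step: one must check that removing the $S'$-generators from the relation $\sum q_i+\sum q'_j=0$ contributes precisely $\alpha_1\bigl(\varepsilon(M)+\varepsilon(M')\bigr)$ to the surviving relation, so that the reduced presentation for $M''$ at $p$ differs from that for $M$ only in replacing $\varepsilon(M)$ by $\varepsilon(M'')$. Since $\varepsilon(M'')=0$ kills this coefficient outright, one can in fact bypass the precise computation --- the relation simply disappears and what remains is manifestly the $S$-part. Alternatively one may avoid the localized bookkeeping altogether: from the presentation of $H''$ one sees directly that $\tau_{M''}\cong\tau_M\oplus\tau_{M'}$ with summands of coprime exponent, so $\ell_{M''}$ vanishes on $\tau_M\times\tau_{M'}$ for order reasons, and it then remains only to check that $\ell_{M''}$ restricts on each summand to the expected pairing, which is again the prime-by-prime statement above. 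I expect this bookkeeping, rather than any genuine difficulty, to be the main thing to get right.
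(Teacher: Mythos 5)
Your argument is correct and is precisely the reasoning the paper leaves implicit: the lemma is stated without proof ("The next result is clear"), and the intended justification is exactly your combination of the additivity of $\varepsilon$ under concatenation, the $p$-local presentations and pairing formulas developed immediately before the lemma (which, for $\varepsilon=0$, involve only the cone points of $p$-divisible order once the generators of unit order are killed by localization), and the orthogonal primary decomposition of linking pairings. Your closing observation that $\tau_{M''}\cong\tau_M\oplus\tau_{M'}$ with coprime exponents, so the cross terms vanish for order reasons, is a clean way to finish and matches the paper's framework.
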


Thus if every $p$-primary summand of a linking pairing $\ell$
can be realized by some $M(0;S)$ with all cone point orders powers of $p$
and $\varepsilon(M)=0$ then $\ell $ can also be realized by a Seifert manifold.
If one of the hypotheses fails, 
it is not clear how the linking pairings of $M,M'$ and $M''$ are related.
In order to realize pairings by Seifert manifolds with $\varepsilon(M)\not=0$
we shall need another approach.

\section*{The homogeneous case: $p$ odd}

In this section we shall show that when $p$ is odd and 
the $p$-primary component of 
$\tau_M$ is homogeneous the structure of the $p$-primary component of $\ell_M$ 
may be read off the Seifert data.
Our results shall be extended to the inhomogeneous cases in later sections.
Let  $u_i=\frac{\alpha_i}{p^k}$, for $1\leq{k}\leq{r_p}$,
and $v=\alpha_1\varepsilon(M)$.

\begin{lem}
{\bf{A2}.}
Let $M=M(g;S)$ be a Seifert manifold and $p$ a prime. 
Then $\mathbb{Z}_{(p)}\otimes\tau_M$ is homogeneous of exponent $p^k$ if and only if either
\begin{enumerate}
\item$\varepsilon(M)=0$ and $u_i$ is invertible in $\mathbb{Z}_{(p)}$, 
for $1\leq{k}\leq{r_p}$; or
\item$p^{-k}\alpha_1\varepsilon(M)$ and $u_i$ are invertible in $\mathbb{Z}_{(p)}$, 
for $2\leq{k}\leq{r_p}$; or
\item$r_p\leq2$ and $p^{-k}\alpha_1\alpha_2\varepsilon(M)$ is  invertible in 
$\mathbb{Z}_{(p)}$.
\end{enumerate}
\end{lem}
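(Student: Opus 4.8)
The plan is to read off the structure of $\mathbb{Z}_{(p)}\otimes\tau_M$ directly from the modified presentation
\[
\langle{q_3',\dots,q_r',s}\mid\alpha_1\alpha_2\varepsilon(M)s=0,
\,\alpha_iq_i'=0,\,\forall{i\geq3}\rangle
\]
derived in the section ``The torsion linking pairing'' above, recalling that the indexing is chosen so that $\alpha_{i+1}$ divides $\alpha_i$ in $\mathbb{Z}_{(p)}$ for all $i\geq1$, and that exactly $r_p$ of the cone point orders are divisible by $p$. Thus in $\mathbb{Z}_{(p)}$ the generators $q_i'$ for $i>r_p$ are trivial (their annihilators $\alpha_i$ are units), and the nontrivial part of the localized presentation has at most $r_p-2$ relators $\alpha_iq_i'$ (for $3\le i\le r_p$) together with the one relator $\alpha_1\alpha_2\varepsilon(M)s$. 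So $\mathbb{Z}_{(p)}\otimes\tau_M$ splits as a direct sum of cyclic modules $\mathbb{Z}_{(p)}/\alpha_i\mathbb{Z}_{(p)}$ for $3\le i\le r_p$ together with $\mathbb{Z}_{(p)}/\alpha_1\alpha_2\varepsilon(M)\mathbb{Z}_{(p)}$; here the order of the summand $\mathbb{Z}_{(p)}/\alpha_i\mathbb{Z}_{(p)}$ is $p^{v_i}$ where $v_i=val_p(\alpha_i)$, and the order of the last summand is $p^{v_1+v_2+val_p(\varepsilon(M))}$ when that quantity is $\ge 0$ (and the summand is trivial otherwise). Since $v_1\ge v_2\ge\cdots\ge v_{r_p}\ge 1>0=v_{r_p+1}=\cdots$, the exponents of the cyclic summands coming from the $q_i'$ are decreasing, so homogeneity forces these exponents all to equal $k$, i.e. $v_i=k$ for $3\le i\le r_p$, equivalently $u_i=\alpha_i/p^k$ is a unit in $\mathbb{Z}_{(p)}$ for $3\le i\le r_p$.

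The argument then divides into the three cases according to whether $\varepsilon(M)$ vanishes and how many cone point orders are divisible by $p$. First, if $\varepsilon(M)=0$, the last summand drops out, $\mathbb{Z}_{(p)}\otimes\tau_M\cong\bigoplus_{i=3}^{r_p}\mathbb{Z}_{(p)}/\alpha_i\mathbb{Z}_{(p)}$, and homogeneity of exponent $p^k$ is equivalent to $v_i=k$ for $3\le i\le r_p$ — but since the list $v_1\ge v_2\ge\cdots\ge v_{r_p}$ is decreasing and we only have relations involving $q_3',\dots,q_{r_p}'$, I should check that the case $\varepsilon(M)=0$ forces $u_1$ and $u_2$ into play as well. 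Indeed, when $\varepsilon(M)=0$ the quantity $\frac{\alpha_1}{\alpha_2}$ is a unit in $\mathbb{Z}_{(p)}$ (noted in the excerpt), so $v_1=v_2$; moreover the presentation's relation $\Sigma q_i=0$ combined with $\alpha_iq_i+\beta_ih=0$ shows $h$ has exponent dividing $\mathrm{lcm}$, and a short bookkeeping argument (or direct inspection of the original presentation of $H$ before elimination) shows that $q_1$ and $q_2$ in fact contribute a summand of exponent $p^{v_1}=p^{v_2}$; so homogeneity of exponent $p^k$ forces $v_i=k$ for \emph{all} $1\le i\le r_p$, i.e. $u_i$ a unit for $1\le i\le r_p$, which is conclusion (1). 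Second, if $\varepsilon(M)\neq 0$ and $r_p\ge 3$: the last summand $\mathbb{Z}_{(p)}/\alpha_1\alpha_2\varepsilon(M)\mathbb{Z}_{(p)}$ is present with exponent $p^{v_1+v_2+val_p(\varepsilon(M))}$, and since $v_1\ge v_2\ge v_3=\cdots$, homogeneity of exponent $p^k$ forces $v_i=k$ for $3\le i\le r_p$ (so $u_i$ a unit for $i\le r_p$ — note $v_1,v_2\ge v_3=k$) together with $v_1+v_2+val_p(\varepsilon(M))=k$, i.e. $p^{-k}\alpha_1\varepsilon(M)$ has valuation $v_2-k\ge 0$... at which point I realize the clean statement is that $p^{-k}\alpha_1\varepsilon(M)$ is a unit exactly when $v_1+val_p(\varepsilon(M))=k$; reconciling this with $v_1+v_2+val_p(\varepsilon(M))=k$ requires $v_2=0$, contradicting $r_p\ge 3$ unless we are more careful. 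This tension is exactly the subtle point below.

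The main obstacle will be getting the exponents of the last cyclic summand right in the mixed cases and matching them to the three clauses as stated. The resolution is to observe that after the rescalings used to derive the modified presentation one may choose the generator $s$ so that its annihilator is $p^{-\min(k,v_1+v_2+\cdots)}$-adjusted; more honestly, I expect the correct reading is: homogeneity forces $v_3=\cdots=v_{r_p}=k$ and, since the $\mathbb{Z}_{(p)}$-module generated by $s$ must also have exponent dividing $p^k$ while the whole module has exponent exactly $p^k$, one gets the stated invertibility of $p^{-k}\alpha_1\varepsilon(M)$ (case 2, when $r_p\ge 3$, where the $q_i'$ already guarantee exponent $p^k$ is attained so $s$ need only have exponent $\le p^k$, forcing $val_p(\alpha_1\varepsilon(M))\le k$, i.e. after noting $v_2\ge k$ gives $val_p(\alpha_1\varepsilon(M))= k$ — here one uses $v_1\geq v_2\geq k$ and the relation to pin it down) and of $p^{-k}\alpha_1\alpha_2\varepsilon(M)$ (case 3, when $r_p\le 2$, where $s$ alone must generate a module of exponent exactly $p^k$, so $val_p(\alpha_1\alpha_2\varepsilon(M))=k$). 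So the plan is: (i) write down the localized direct-sum decomposition from the modified presentation; (ii) use the decreasing valuation ordering to translate homogeneity into equalities among the $v_i$; (iii) carefully track the exponent of the $s$-summand in the three regimes $\varepsilon(M)=0$, ($\varepsilon(M)\neq0$, $r_p\ge3$), and ($r_p\le2$), being especially attentive to whether the $s$-summand is redundant or not; and (iv) reconcile the resulting equalities with the unit conditions in clauses (1)–(3). Conversely — which also needs checking for the ``if'' direction — each of the three conditions plainly produces a homogeneous localized module of exponent $p^k$ by running the same decomposition backwards. I would carry out (iii) first on scratch paper since it is where the bookkeeping can go wrong, then assemble the rest.
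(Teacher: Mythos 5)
Your overall strategy --- reading the cyclic decomposition of $\mathbb{Z}_{(p)}\otimes\tau_M$ off the localized presentation $\langle q_3',\dots,q_r',s\mid \alpha_1\alpha_2\varepsilon(M)s=0,\ \alpha_iq_i'=0\rangle$ and translating homogeneity into equalities among the valuations $v_i=val_p(\alpha_i)$ --- is exactly the paper's, as is the key observation that $\varepsilon(M)=0$ forces $v_1=v_2$. But what you have submitted is a plan with the decisive bookkeeping (your step (iii)) explicitly deferred to ``scratch paper,'' and the two places where you try to force agreement with the printed clauses both fail.

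In case (1) you assert that ``$q_1$ and $q_2$ in fact contribute a summand of exponent $p^{v_1}=p^{v_2}$,'' so that homogeneity would force $v_1=v_2=k$. This is false: in the localized presentation $q_1$ has been eliminated and $q_2=\beta_2 s$ lies in the summand generated by $s$, which is \emph{free} when $\varepsilon(M)=0$, so the torsion is exactly $\bigoplus_{i=3}^{r_p}\mathbb{Z}_{(p)}/(\alpha_i)$ (compare Theorem 3.5, where the elementary divisors are $\lambda_i$ with $val_p(\lambda_i)=v_{i+2}$). Concretely, for $S=((9,1),(9,-7),(3,1),(3,1))$ and $p=3$ one has $\varepsilon(M)=0$ and $\tau_M\cong(\mathbb{Z}/3\mathbb{Z})^2$, homogeneous of exponent $3$, yet $u_1=u_2=3$ are not units; so no bookkeeping argument of the kind you invoke can exist, and only the invertibility of $u_1/u_2$ (equivalently $v_1=v_2$) is actually forced. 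As for the ``tension'' you correctly detect in case (2): it is genuine and should not be argued around. Homogeneity with $r_p\ge3$ gives $v_3=\dots=v_{r_p}=k$, hence $v_2\ge k$, and the $s$-summand has order $p^{val_p(\alpha_1\alpha_2\varepsilon(M))}=p^{k+val_p(u_2v)}$, which dominates every $q_i'$-summand since $val_p(\alpha_1\alpha_2\varepsilon(M))\ge v_2$. So homogeneity forces $u_2v=p^{-k}\alpha_1\alpha_2\varepsilon(M)$ to be a unit, and since $u_2$ and $v=\alpha_1\varepsilon(M)$ each lie in $\mathbb{Z}_{(p)}$ (the former because $v_2\ge k$, the latter because $\alpha_1\varepsilon(M)=-\sum(\alpha_1/\alpha_i)\beta_i$), each factor is a unit. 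That is, the condition equivalent to homogeneity is that $\alpha_1\varepsilon(M)$ itself is invertible --- which is exactly what the paper's proof derives and what the proof of Theorem A3(2) uses --- and the printed ``$p^{-k}\alpha_1\varepsilon(M)$'' cannot be taken at face value, as your own valuation count ($v_2=0$) shows. When $r_p\le2$ the bound $v_2\ge k$ is unavailable, which is why clause (3) retains only the product. Until you commit to this computation of the order of the $s$-summand, decide which reading of clauses (1) and (2) you are proving, and check the converse direction against it, the argument is incomplete.
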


\begin{proof}
This follows immediately from the calculations in \S1, with the following observations.
If $\varepsilon(M)=0$ then $\alpha_1$ and $\alpha_2$ must have 
the same $p$-adic valuation.
If $r_p\geq2$ then $u_2=p^{-k}\alpha_2$ and $v=\alpha_1\varepsilon(M)$ 
are in $\mathbb{Z}_{(p)}$.
Hence if $u_2v=p^{-k}\alpha_1\alpha_2\varepsilon(M)$ is invertible in
$\mathbb{Z}_{(p)}$ then $u_2$ and $v$ are also invertible in $\mathbb{Z}_{(p)}$.
\end{proof}

Note that if $\mathbb{Z}_{(p)}\otimes\tau_M$ is homogeneous of exponent $p^k$
and $\varepsilon(M)\not=0$ then $u_1$ may be divisible by $p$.

\begin{thm}
{\bf{A3}.}
Let $M=M(g;S)$ be a Seifert manifold and $p$ an odd prime such that
$\mathbb{Z}_{(p)}\otimes\tau_M$ is homogeneous of exponent $p^k$.
Then
\begin{enumerate}
\item{}if $\varepsilon(M)=0$ then $d(\ell_M)=
[(-1)^{r_p-1}\frac{\alpha_1}{\alpha_2}(\Pi_{i=1}^r\beta_i)(\Pi_{j=3}^{r_p}u_j)]$;
\item{}if $\varepsilon(M)\not=0$ then $d(\ell_M)=
[(-1)^{r_p-1}\frac{\alpha_1}{\alpha_2}(\Pi_{i=1}^r\beta_i)(\Pi_{j=2}^{r_p}u_j)\nu]$.
\end{enumerate}
\end{thm}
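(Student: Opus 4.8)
The plan is to read $d(\ell_M)$ off the matrix $L$ whose $(i,j)$ entry is $p^k\ell_M(-,-)$ evaluated on the explicit generators of the $p$-primary summand $\mathbb{Z}_{(p)}\otimes\tau_M$ constructed in the first section of the appendix, and then to simplify $[\det L]$ in $\mathbb{F}_p^\times/(\mathbb{F}_p^\times)^2$. Since for odd $p$ a linking pairing on a free $\mathbb{Z}/p^k\mathbb{Z}$-module is determined up to isomorphism by its rank and by $d(\ell)$, and $d(\ell)$ is basis-independent, it suffices to use the convenient generators $q_3',\dots,q_{r_p}'$ (when $\varepsilon(M)=0$) or $s,q_3',\dots,q_{r_p}'$ (when $\varepsilon(M)\neq0$) and to record $[\det L]$; there is no need to pin down the whole matrix. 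I will reduce modulo $p$ at the outset: this is legitimate because the homogeneity hypotheses together with Lemma A2 guarantee that each $u_i$, and the quantity $\nu=\alpha_1\varepsilon(M)$ in case (2), is a $p$-adic unit, so every entry of $L$ is a well-defined element of $\mathbb{F}_p$, and the chosen generators genuinely give a basis of a free $\mathbb{Z}/p^k\mathbb{Z}$-module (one must check here that the orders of $s$ and of the $q_i'$ are all exactly $p^k$, so that $d(\ell_M)$ is defined and computed by $L$).

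The core computation is an application of the matrix determinant lemma. Substituting $\alpha_i=p^ku_i$ into the formulas $\ell_M(q_i',q_j')=[-\beta_2\beta_i\beta_j\alpha_2/(\alpha_i\alpha_j)]$ and $\ell_M(q_i',q_i')=[-\beta_2\beta_i(\alpha_i\beta_2+\alpha_2\beta_i)/\alpha_i^2]$, one finds $L=\widetilde D\bigl(-\beta_2u_2\,\mathbf{1}\mathbf{1}^{\mathsf T}+\mathrm{diag}(-\beta_2^2u_i/\beta_i)\bigr)\widetilde D$ with $\widetilde D=\mathrm{diag}(\beta_i/u_i)$, so $\det L$ is a nonzero square times $\det(\mathrm{diag}(-\beta_2^2u_i/\beta_i))\bigl(1+\tfrac{u_2}{\beta_2}\sum_{i=3}^{r_p}\beta_i/u_i\bigr)$. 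In case (1) the relation $\varepsilon(M)=0$ forces $\sum_{i=1}^{r_p}\beta_i/u_i\equiv 0\pmod p$, so the scalar correction collapses to $-u_2\beta_1/(\beta_2u_1)$; collecting terms, discarding squares, and using that $x$ and $x^{-1}$ lie in the same class of $\mathbb{F}_p^\times/(\mathbb{F}_p^\times)^2$ (whence $[u_2/u_1]=[\alpha_1/\alpha_2]$) yields the stated value, with sign $(-1)^{r_p-1}$. Case (2) is the same computation with one extra row and column, that of $s$, whose entries $p^k\ell_M(s,s)$ and $p^k\ell_M(s,q_i')$ involve $\nu$; expanding along this row (again a rank-one perturbation of a diagonal matrix, now using $\ell_M(s,q_i')=[\beta_i/\alpha_i]$ and $\ell_M(s,s)$) produces the extra factor $[\nu]$ and raises the summand count from $r_p-2$ to $r_p-1$, giving the second formula.

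The main obstacle I expect is purely organizational: carrying the two cases, together with the cone points $\alpha_i$ of order prime to $p$ whose $\beta_i$ appear in the stated product $\prod_{i=1}^r\beta_i$, cleanly through the determinant, and checking that the scalar corrections simplify exactly as claimed (in particular that the prime-to-$p$ data enters only up to squares). A secondary point is to verify, once more via Lemma A2, that in each homogeneous case the presentation of $\mathbb{Z}_{(p)}\otimes\tau_M$ really is that of $(\mathbb{Z}/p^k\mathbb{Z})^{r_p-2}$ or $(\mathbb{Z}/p^k\mathbb{Z})^{r_p-1}$, since the structure of $\ell_M$ then follows from its rank and $d(\ell_M)$ by the classification of odd-$p$ linking pairings quoted in \S1.4.
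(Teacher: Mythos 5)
Your proposal is correct and follows essentially the same route as the paper: both compute $d(\ell_M)$ as the class of $\det L$ for the Gram matrix in the basis $q_3',\dots,q_{r_p}'$ (resp.\ with $s$ adjoined), the only difference being that you package the triangularizing row/column operations as the matrix determinant lemma applied to the rank-one-perturbation structure $L=\widetilde D(-\beta_2u_2\mathbf{1}\mathbf{1}^{\mathsf T}+\mathrm{diag})\widetilde D$. The one place to be careful is case (2): the factor $[\nu]$ does not drop out of a bare cofactor expansion but emerges only after substituting $\nu=-\beta_1-\beta_1u_1/u_2-u_1\Sigma_{i\ge3}\beta_i/u_i$ and $n\beta_2\equiv1\pmod p$ into the Schur-complement term, exactly as in the paper's computation of the bordered determinant.
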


\begin{proof}
Let $L\in\mathrm{GL}(\rho,\mathbb{Z}/p^k\mathbb{Z})$ be the matrix 
with $(i,j)$ entry $p^k\ell(e_i,e_j)$, 
where $e_1,\dots,e_\rho$ is some basis for $\mathbb{Z}_{(p)}\otimes\tau_M\cong(\mathbb{Z}/p^k\mathbb{Z})^\rho$.

Suppose first that $\varepsilon(M)=0$.
Then ${\mathbb{Z}_{(p)}\otimes\tau_M}\cong(\mathbb{Z}/p^k\mathbb{Z})^{r_p-2}$,
with basis $e_i=q_{i+2}'$, for $1\leq{i}\leq{r_p-2}$,
and $\frac{\alpha_1}{\alpha_2}=\frac{u_1}{u_2}$ 
is invertible in $\mathbb{Z}_{(p)}$.
We  apply row operations 
\[row_i\mapsto{row_i-\frac{\alpha_3\beta_{i+2}}{\alpha_{i+2}\beta_3}row_1}\] 
for $2\leq{i}\leq{r_p-2}$ and then 
\[row_1\mapsto
{row_1-\frac{\alpha_2\beta_3}{\alpha_3\beta_2}\Sigma_{i=2}^{r_p-2}row_i}\]
to $L$.
This gives a lower triangular matrix with diagonal
\[
[-\beta_2\frac{\beta_3}{u_3}(\beta_2+u_2\Sigma_{i=3}^{r_p}\frac{\beta_i}{u_i}),
-\beta_2^2\frac{\beta_4}{u_4},\dots,-\beta_2^2\frac{\beta_{r_p-2}}{u_{r_p-2}}].
\]
Therefore
\[
\det(L)=(-1)^{r_p}\beta_2^{2r_p-3}(\beta_2+u_2\Sigma_{i=3}^{r_p}\frac{\beta_i}{u_i})
\Pi_{i=3}^{r_p} \frac{\beta_j}{u_j},
\]
and so
\[
d(\ell_M)=[(-1)^{r_p-1}\frac{\alpha_2}{\alpha_1}(\pi_{i=1}^{r_p}\beta_i)\Pi_{j=3}^{r_p}u_j)].
\]

A similar argument applies if $\varepsilon(M)\not=0$.
In this case $u_2$ and $v=\alpha_1\varepsilon(M)$ are also invertible in 
$\mathbb{Z}_{(p)}$,
and $\mathbb{Z}_{(p)}\otimes\tau_M\cong(\mathbb{Z}/p^k\mathbb{Z})^{r_p-1}$,
with basis $e_i=q_{i+2}'$, for $1\leq{i}\leq{r_p-2}$, and $e_{r_p-1}=s$.
If we perform the same row operations on rows 2 to $r_p-2$, 
and then the column operation
$col_1\mapsto{col_1+\Sigma_{i=2}^{r_p-2}col_i}$ we obtain a bordered matrix
\[ 
\left(
\begin{matrix}
-\beta_2\frac{\beta_3}{u_3}(\beta_2+u_2\Sigma^*)&0&\dots&0&\frac{\beta_3}{u_3}\\
0&-\beta_2^2\frac{\beta_4}{u_4}&\dots&0&0\\
\vdots&0&\ddots&\vdots&\vdots\\
0&\dots&0&-\beta_2^2\frac{\beta_{r_p}}{u_{r_p}}&0\\
\Sigma^*&\frac{\beta_4}{u_4}&\dots&\frac{\beta_{r_p}}{u_{r_p}}^{d^*}
\end{matrix}
\right).
\]
where $\Sigma^*=\Sigma_{i=3}^{r_p}\frac{\beta_i}{u_i}$ and $d^*=-\frac{u_1+u_2v}{u_2^2v}$.
Hence
\[
\det(L)=-(\beta_2\frac{\beta_3}{u_3}(\beta_2+u_2\Sigma^*)d^*+\frac{\beta_3}{u_3}\Sigma^*)(-1)^{r_p-3}\beta_2^{2(r_p-3)}\Pi_{i=4}^{r_p}\frac{\beta_i}{u_i}
\]
\[
=(-\beta_dd^*(\beta_2+u_2\Sigma^*)+\Sigma^*)
(-1)^{r_p-2}\beta_2^{2(r_p-3)}\Pi_{i=3}^{r_p}\frac{\beta_i}{u_i}.
\]
Now
\[
(-\beta_2d^*(\beta_2+u_2\Sigma^*)+\Sigma^*)=
-(\beta_2(u_1+u_2v)(-\beta_2+u_2\Sigma^*)-u_2^2\nu\Sigma^*)/u_2^2\nu
\]
\[
=(\beta_2(u_1\beta_2+u_1u_2\Sigma^*+n\beta_2u_2v)+(n\beta_2-1)u_2^2v\Sigma^*)/
u_2^2v
\]
\[
\equiv\beta_2(u_1\beta_2+u_1u_2\Sigma^*+u_2v)\equiv-\beta_1\beta_2u_2\quad{mod}~(p),
\]
since $n\beta_2\equiv1$ {\it mod} $(p)$ and $v=-\beta_1-\frac{\beta_1u_1}{u_2}-u_1\Sigma^*$.
Therefore
\[
\det(L)\equiv(-1)^{r_p-1}\beta_2^{2(r_p-3)}(\Pi_{i=1}^{r_p}\beta_i)/
\Pi_{j=2}^{r_p}u_jv
\quad{mod}~(p),
\]
and so we now have
\[
[d(\ell_M)=
[(-1)^{r_p-1}(\Pi_{1\leq{i}\leq{r_p}}\beta_i)(\Pi_{2\leq{j}\leq{r_p}}u_j)v].
\]
\end{proof}

When all the cone point orders have the same $p$-adic valuation
(i.e.,  $u_1$ and $u_2$ are also invertible in $\mathbb{Z}_{(p)}$) 
then these formulae for $d(\ell_M)$ are  invariant under permutation 
of the indices.
For if $\varepsilon(M)=0$ then 
$[\frac{\alpha_1}{\alpha_2}]=[u_1u_2]$
in $\mathbb{F}_p^\times/(\mathbb{F}_p^\times)^2$, 
while if  $\varepsilon(M)\not=0$ then $\nu=u_1p^k\varepsilon(M)$ 
(and $p^k\varepsilon(M)$ is also invertible).

A linking pairing $\ell$ on a free $\mathbb{Z}/p^k\mathbb{Z}$-module $N$ 
is hyperbolic if and only if $\rho=rk(\ell)$ is even and $d(\ell)=(-1)^{\frac\rho2}$.
Thus  $\mathbb{Z}_{(p)}\otimes\ell_M$ 
is hyperbolic if and only if either $\varepsilon(M)=0$, 
$r_p=\rho+2$ is even and 
$[\frac{\alpha_1}{\alpha_2}
(\Pi_{1\leq{i}\leq{r_p}}\beta_i)(\Pi_{3\leq{j}\leq{r_p}}u_j)]
=[(-1)^{\frac{r_p}2-1}]$
or $\varepsilon(M)\not=0$, $r_p=\rho+1$ is odd and
$[(\Pi_{1\leq{i}\leq{r_p}}\beta_i)(\Pi_{2\leq{j}\leq{r_p}}u_j)\nu]=
[(-1)^{\frac{r_p-1}2}].$

\section*{Realization of pairings on groups of odd order}

In this section we shall show that every linking pairing on a finite group of odd order may be realized by a Seifert manifold.

Suppose first that we localize $\ell_M$ at a prime $p$.
Let $p^k$ be the exponent of $\mathbb{Z}_{(p)}\otimes\tau_M$,
and let $L$ be the matrix with entries $p^k\ell(q_i',q_j')$.
(If $\varepsilon(M)\not=0$ we need also a row and column corresponding to the generator $s$, which has the maximal order $p^k$.)
Then
\[ 
L=
\left(
\begin{matrix}
D_1&{p^{\kappa_2}B_2}&\dots&{p^{\kappa_t}B_t}\\
p^{\kappa_2}B_2^{tr}&{p^{\kappa_2}D_2}&\dots&\vdots\\
\vdots&\dots&\ddots&\vdots\\
p^{\kappa_t}B_t^{tr}&\dots&\dots&{p^{\kappa_t}D_t}
\end{matrix}
\right).
\]
where $D_i$ is a $\rho_i\times\rho_i$ block with $\det(D_i)\not\equiv0$ {\it mod} $(p)$,
for $1\leq{i}\leq{t}$,
and $0<\kappa_2<\dots<\kappa_t<k$.
We may partition $L$ more coarsely as 
$L=\left(\smallmatrix{A}&{B}\\{B^{tr}}&{D}\endsmallmatrix\right)$,
where $A=D_1$ and $B=[B_2\dots{B_t}]$.
Let $Q=\left(\smallmatrix {I-d_1^{-1}}&{p^{\kappa_2}B}\\0&{I}\endsmallmatrix\right)$
and $D'=D-p^{\kappa_2}B^{tr}A^{-1}B$.
Then $Q^trLQ=\left(\smallmatrix{A}&0\\0&{p^{\kappa_2}D'}\endsmallmatrix\right)$.
Block-diagonalizing $L$ in this fashion does not change the residues {\it mod} $(p)$ of the diagonal blocks $D_i$ or decrease the divisibility of the off-diagonal blocks.
These matrix manipulations correspond to replacing the generators $q_i'$ with $i>\rho_1$ by $\widetilde{q}_i=q_i'-p^{\kappa_2}\Sigma_{j=1}^{\rho_1}[B^{tr}A]_{ji}q_j'$.

We may iterate this process, 
and we find that $\ell_M$ is an orthogonal sum of pairings 
on homogeneous groups $(\mathbb{Z}/p^k\mathbb{Z})^{\rho_1}$,
$(\mathbb{Z}/p^{k-\kappa_2}\mathbb{Z})^{\rho_2},\dots$,
$(\mathbb{Z}/p^{k-\kappa_t}\mathbb{Z})^{\rho_t}$.
If $\varepsilon(M)=0$ or if $\alpha_1\varepsilon(M)$ is invertible in $\mathbb{Z}_{(p)}$ then
the determinantal invariants of the first summand (with the maximal exponent $p^k$)
may be computed from the block $A$ as in the previous section,
while we may read off the determinantal invariants of the other summands from the corresponding diagonal elements of the original matrix $L$.
(We shall not need to consider the possibility that $p$ divides the numerator of $\varepsilon_S$ in justifying our constructions below.)

With these reductions in mind, we may now construct Seifert manifolds realizing given pairings.

\begin{thm}
{\bf{A4}.}
Let $p$ be an odd prime, and let $\ell$ be a linking pairing 
on a finite abelian $p$-group.
Then there is a Seifert manifold $M=M(0;S)$ such that all 
the cone point orders $\alpha_i$ are powers of $p$, 
$\varepsilon(M)=0$ and $\ell_M\cong\ell$.
\end{thm}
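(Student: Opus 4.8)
The plan is to realise $\ell$ by a single Seifert manifold, building $S$ one $p$-power level at a time according to the primary decomposition of $\ell$. Since $p$ is odd, $\ell$ splits as an orthogonal sum $\ell\cong\perp_{j=1}^{t}\ell_j$, where $\ell_j$ lives on a homogeneous group $(\mathbb{Z}/p^{k_j}\mathbb{Z})^{\rho_j}$ with $k_1>k_2>\dots>k_t\geq1$, and each $\ell_j$ is determined up to isomorphism by the pair $(\rho_j,d(\ell_j))$ with $d(\ell_j)\in\mathbb{F}_p^\times/(\mathbb{F}_p^\times)^2\cong\mathbb{Z}/2\mathbb{Z}$. (The trivial pairing is realised by $S^3=M(0;(1,1))$, so we may assume $\ell\not=0$, and hence $\sum_j\rho_j\geq1$.) I would then fix the shape of $S$: take $\rho_1+2$ cone points of order $p^{k_1}$ and, for each $2\leq j\leq t$, exactly $\rho_j$ cone points of order $p^{k_j}$, listed in order of decreasing $p$-adic valuation, so that $r=r_p=2+\sum_j\rho_j\geq3$ and all cone point orders are powers of $p$ greater than $1$. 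The free parameters are the numerators $\beta_i$, which will be chosen last.

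With this shape, the presentation of $H_1(M)$ derived at the start of this appendix (eliminating $q_1$, using $q_2$ as reference, and the generators $q_3',\dots,q_{r_p}'$) shows that, once $\varepsilon(M)=0$ is imposed, $\mathbb{Z}_{(p)}\otimes\tau_M\cong\bigoplus_{j}(\mathbb{Z}/p^{k_j}\mathbb{Z})^{\rho_j}$, the generator $q_i'$ having order equal to that of the $i$-th cone point; in particular $\varepsilon(M)=0$ forces $\alpha_1$ and $\alpha_2$ to have $p$-adic valuation $k_1$, so there are at least three cone points at the top level and the exponent of $\tau_M$ is $p^{k_1}$, with ranks $\rho_1,\dots,\rho_t$ as required. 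The block--diagonalisation procedure described immediately before this theorem then exhibits $\ell_M$ as an orthogonal sum of homogeneous pairings of ranks $\rho_1,\dots,\rho_t$. The determinantal invariant of the top summand is computed from the unchanged block $A$ of the matrix $L$: substituting $\alpha_i=p^{k_1}$ into the formulas of the first section gives $A_{ii}=-\beta_2\beta_i(\beta_2+\beta_i)$ and $A_{i\ell}=-\beta_2\beta_i\beta_\ell$ for $i\not=\ell$, whence, by the standard rank-one update formula for determinants, $d(\ell_1^{M})=[(-1)^{\rho_1}\beta_2\,(\prod_{i}\beta_i)\,(\beta_2+\sum_{i}\beta_i)]$, the index $i$ ranging over the top-level generators; this is a local version of the computation in the proof of Theorem A3. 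For $j\geq2$ the determinantal invariant of the $j$-th summand is read off the corresponding diagonal entries $\ell_M(q_i',q_i')=[-\beta_2\beta_i(\alpha_i\beta_2+\alpha_2\beta_i)\alpha_i^{-2}]$ of $L$, which modulo squares depend only on the residues of $\beta_2$ and of the $\beta_i$ in that block.

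It then remains to choose the $\beta_i$. Rescaling a generator $q_i'$ by a unit changes the isomorphism type of neither $\ell_M$ nor $\tau_M$, so it suffices to arrange the residue classes. I would fix $\beta_2=1$; then each $d(\ell_j^{M})$ is a monomial (modulo squares) in the residues of the $\beta_i$ belonging to block $j$ alone, since distinct blocks decouple, so the blocks can be treated independently, and a short verification shows that letting all but one $\beta_i$ in a block equal $1$ and letting the last range over the units realises both square classes. This realises $(\rho_j,d(\ell_j))$ for every $j$. Finally $\beta_1$, which enters none of the matrices $A$ or $D_j$, is still free, and the single equation $\varepsilon(M)=-\sum_i\beta_i/\alpha_i=0$ determines it as an integer; its residue modulo $p$ is forced by the non-singularity of $\ell$ to be a unit, so this $\beta_1$ is admissible. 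This produces $S$ with all cone point orders powers of $p$, $g=0$, $\varepsilon(M)=0$, and $\ell_M\cong\ell$, which proves the theorem (and incidentally reproves, by purely geometric means, the realisation result of Bryden and Deloup for groups of odd order).

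The main obstacle is the inhomogeneous case: one must check carefully that after the block--diagonalisation the lower-exponent blocks genuinely split off and retain their diagonal residues modulo squares --- this is precisely where the oddness of $p$ is essential, since only then is a homogeneous pairing determined by its rank and determinant --- and that the constraints fixing the $d(\ell_j)$ can be met simultaneously with $\varepsilon(M)=0$. Once the correct shape of $S$ is in hand the remaining computations are routine linear algebra over $\mathbb{Z}_{(p)}$; the delicate point is simply the bookkeeping of which cone points feed which homogeneous summand and of the shared role of the reference index $2$.
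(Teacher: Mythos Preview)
Your overall strategy coincides with the paper's: decompose $\ell=\perp_j\ell_j$ into homogeneous pieces, take $\rho_1+2$ cone points of order $p^{k_1}$ and $\rho_j$ cone points of order $p^{k_j}$ for $j\geq2$, use the block-diagonalisation of the appendix to split $\ell_M$ accordingly, and adjust the numerators $\beta_i$ to match the determinantal invariants. The treatment of the lower blocks ($j\geq2$) is correct: there $d(\ell_j^M)\equiv[(-1)^{\rho_j}\prod_{\text{level }j}\beta_i]$ really is a monomial modulo squares, and varying one $\beta_i$ over units suffices.

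The gap is in the top block. Your own formula
\[
d(\ell_1^M)=\Bigl[(-1)^{\rho_1}\,\beta_2\Bigl(\beta_2+\!\!\sum_{i=3}^{\rho_1+2}\beta_i\Bigr)\prod_{i=3}^{\rho_1+2}\beta_i\Bigr]
\]
is \emph{not} a monomial in the $\beta_i$: the sum factor spoils this, so the later sentence ``each $d(\ell_j^M)$ is a monomial \dots\ a short verification shows \dots'' does not apply to $j=1$. Concretely, with $\beta_2=1$ fixed and $p=3$ (so the only units are $\pm1$), a direct check gives: for $\rho_1=1$ the only admissible choice is $\beta_3=1$, yielding $d=[-2]=[1]$; for $\rho_1=2$ every admissible $(\beta_3,\beta_4)$ gives $d=[-1]$. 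So with your setup neither the pairing of rank~$1$ with $d=[-1]$ nor the pairing of rank~$2$ with $d=[1]$ on a $3$-group can be realised. The paper meets this obstruction in two ways: it does \emph{not} fix $\beta_2$, so for $m_1=\rho_1+2$ odd it can rescale all top-level $\beta_i$ by a non-square $\xi$ (changing $[\prod\beta_i]$ by $[\xi]$ while preserving $\sum\beta_i=0$); and for the one case this still misses ($p=3$, $\rho_1=2$, $d=[1]$) it abandons the uniform shape and takes $\alpha_1=\alpha_2=3^{k_1+1}$ instead of $3^{k_1}$. Your ``short verification'' therefore hides a genuine case analysis, and your fixed choice $\beta_2=1$ discards exactly the degree of freedom needed to complete it.
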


\begin{proof}
The pairing $\ell_M$ is the orthogonal sum $\perp_{j=1}^t\ell_j$, 
where $\ell_j$ is a pairing on $(\mathbb{Z}/p^{k_j}\mathbb{Z})^{\rho_j}$,
with $\rho_j>0$ for $1\leq{j}\leq{t}$ and $0<k_k<k_{j-1}$ for $2\leq{j}\leq{t}$.
Let $d(\ell_j)=[w_j]$ for $1\leq{j}\leq{t}$ , and let $k=k_1$.

If $p\geq5$ we let $\alpha_i=p^k$ for $1\leq{i}\leq{m_1=\rho_1+2}$,
$\alpha_i=p^{k_2}$ for $m_1<i\leq{m_2=m_1+\rho_2},\dots,$ and
$\alpha_i=p^{k_t}$ for $m_{t-1}<i\leq{r}=(\Sigma\rho_j)+2$.
For each $1\leq{j}\leq{t}$ we let
$\beta_i=1$ for $m_j<i<m_{j+1}$ and $\beta_{m_{j+1}}=w_j$.
We must then choose $\beta_i$ for $1\leq{i}\leq{m_1}$ so that
$[\Pi_{i=1}^{m_1}\beta_i]=[w_1]$ and 
$\Sigma_{i=1}^{m_1}\beta_i=-\Sigma_{j=m_1+1}^rp^k\frac{\beta_j}{\alpha_j}$.
It is in fact sufficient to solve the equations $[\Pi_{i=1}^{m_1}\beta_i]=[w_1]$ and 
$\Sigma_{i=1}^{m_1}\beta_i=0$ with all
$\beta_i\in\mathbb{Z}/p^k\mathbb{Z}^\times$,
for subtracting $\Sigma_{j=m_1+1}^rp^k\frac{\beta_j}{\alpha_j}$ from $\beta_1$ will not change its residue {\it mod} $(p)$.

If $m_1$ is odd the equation $\Sigma\beta_i=0$
always has solutions with all $\beta_i\in\mathbb{Z}/p^k\mathbb{Z}^\times$.
If $\xi$ is a nonsquare in $\mathbb{Z}/p^k\mathbb{Z}^\times$
setting $\beta_i'=\xi\beta_i$ for all $i$ gives another solution,
and $[\Pi\beta_i']=[\xi][\Pi\beta_i]$.
(If $p\equiv3$ {\it mod} (4) we may take $\xi=-1$, 
which corresponds to a change of orientation of the 3-manifold.)

If $m_1=4t$ and $w\not\equiv1$ {\it mod} $(p)$ then there is an integer $x$
such that $2x\equiv{w-1}$ {\it mod} $(p)$.
The images of $x$ and $w-1-x$ are invertible in $\mathbb{Z}/p^k\mathbb{Z}$.
Let $\beta_1=1$, $\beta_2=-w$, $\beta_3=x$ and $\beta_4=w-1-x$,
and  $\beta_{2i+1}=1$ and $\beta_{2k+2}=-1$ for $2\leq{i}\leq2t$.
Then $\Sigma\beta_i=0$,
$\beta_4\equiv\beta_3$ {\it mod} $(p)$ and $[(-1)^{r-1}\Pi\beta_i]=[w]$.

If $\rho=4t+2$ and $w\not\equiv1$ {\it mod} $(p)$
let $y$ be an integer such that $4y\equiv{w+1}$ {\it mod} $(p)$.
Let $\beta_1=1$, $\beta_2=-w$, $\beta_3=\beta_4=\beta_5=y$ and 
$\beta_6=w-1-3y$,
and $\beta_{2i+1}=1$ and $\beta_{2i+2}=-1$ for $3\leq{i}<2t$.
Then $\Sigma\beta_i=0$,
$\beta_6\equiv\beta_3$ {\it mod} $(p)$ and $[(-1)^{r-1}\Pi\beta_i]=[w]$.

These choices work equally well for all $p\geq3$, if $[w]\not=1$.
If $\rho$ is even, $w\equiv1$ {\it mod} $(p)$ and $p>3$ 
there is an integer $n$ such that $n^2\not=0$ or 1 {\it mod} $(p)$,
and we solve as before, after replacing $w$ by $\widehat{w}=n^2w$.

However if $p=3$ and $[w]=1$ we must vary our choices.
If $\rho=4t-2$ with $t>1$ 
let $\beta_1=\beta_2=\beta_3=\beta_4=1$, $\beta_5=\beta_6=-2$
and $\beta_{2i+1}=1$ and $\beta_{2i+2}=-1$ for $3\leq{i}<2t$.
If $\rho=4t$ let
$\beta_{2i-1}=1$ and $\beta_{2i}=-1$ for $1\leq{i}\leq2t+1$.
In the remaining case (when $\rho=2$)
we find that if $\Sigma_{1\leq{i}\leq4}\beta_i=0$ then $[-\Pi\beta_i]=[-1]$.
In this case we must use instead
$S=((3^{k+1},1),(3^{k+1},5),(3^k,-1),(3^k,-1))$
to realize the pairing with ${[w]=[1]}$.
\end{proof}

The manifolds with Seifert data as above are
$\mathbb{H}^2\times\mathbb{E}^1$-manifolds,
except when $\rho=1$ and $p=3$, in which case 
they are the flat manifold $G_3$ (with its two possible orientations).

It follows immediately from Theorem {A4} and Lemma {A1} that 
every linking pairing on a finite abelian group of odd order is realized by 
some Seifert manifold $M(0;S)$ with $\varepsilon(M)=0$.

All such pairings may also be realized by Seifert manifolds
which are $\mathbb{Q}$-homology spheres.
However we must be careful to ensure that the {\it numerator\/} of $\varepsilon(M)$ does not provide unexpected torsion.

\begin{thm}
{\bf{A5}.}
Let $\ell$ be a linking pairing on a finite abelian group $A$ of odd order.
Then there is a Seifert manifold $M=M(0;S)$ such that $\varepsilon(M)\not=0$ 
and $\ell_M\cong\ell$.
\end{thm}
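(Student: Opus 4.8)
The plan is to adapt the construction in the proof of Theorem~A4, using the discriminant formula of Theorem~A3(2) (the case $\varepsilon(M)\neq0$) in place of that of Theorem~A3(1). If $|A|=1$ then $\ell$ is trivial and one may take $M=M(0;(1,-1))\cong S^3$, so assume $|A|>1$. Write $\ell$ as the orthogonal sum $\perp_{p\mid|A|}\ell_p$ of its $p$-primary parts, and each $\ell_p$ as the orthogonal sum $\perp_j\ell_p^{(j)}$ of homogeneous pairings on $(\mathbb{Z}/p^{k_j}\mathbb{Z})^{\rho_j}$ with $k_1>k_2>\dots$ and each $\rho_j\geq1$, as in the reductions preceding Theorem~A4. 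I would build a single Seifert manifold $M=M(0;S)$, with $S$ a concatenation of data for the various primes, such that $\varepsilon(M)\neq0$ and the localization of $\ell_M$ at each prime is the corresponding $\ell_p$ (trivial when $p\nmid|A|$); by the uniqueness of the primary decomposition of $\ell_M$ this forces $\ell_M\cong\ell$.

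The key point is that in the $\varepsilon\neq0$ regime the generator $s$ of \S1 does the work of one cone point: for each $p\mid|A|$ I would use $\rho_1(p)+1$ cone points of order $p^{k_1(p)}$ (rather than $\rho_1(p)+2$), followed by $\rho_j(p)$ cone points of order $p^{k_j(p)}$ for $j\geq2$, exactly as for the lower blocks of Theorem~A4. From the presentation of \S1, the $p$-part of $\tau_M$ then has the desired shape $(\mathbb{Z}/p^{k_1}\mathbb{Z})^{\rho_1}\oplus(\mathbb{Z}/p^{k_2}\mathbb{Z})^{\rho_2}\oplus\dots$ precisely when $s$ has order $p^{k_1(p)}$, i.e. when $\varepsilon(M)$ has $p$-adic valuation exactly $-k_1(p)$; this is arranged by taking the $\beta$-components of the $p^{k_1(p)}$-cone points to have sum prime to $p$. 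A lowest-terms computation then shows that, to keep $\tau_M$ free of $q$-torsion for all $q\nmid|A|$, one is forced to have $\varepsilon(M)=\pm1/\prod_{p\mid|A|}p^{k_1(p)}$ — in particular $\varepsilon(M)\neq0$. For the top $p$-block one then has $\nu=\alpha_1\varepsilon(M)=\pm\bigl(\prod_{p'\neq p}(p')^{k_1(p')}\bigr)^{-1}$, a $p$-adic unit, and since that block is homogeneous Theorem~A3(2) (with $\alpha_1/\alpha_2=1$ and all $u_j=1$) expresses the discriminant of the top $p$-block of $\ell_M$ as a fixed square-class multiple of $[\prod_i\beta_i]$ over the $p^{k_1(p)}$-cone points; the discriminants of the lower $p$-blocks are read off the diagonal entries $\ell_M(q_i',q_i')$ after the block-diagonalization of the reductions paragraph, exactly as in Theorem~A4.

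The hard part will be the number-theoretic coordination of a single global Euler number with the per-prime discriminant conditions, a coupling absent from Theorem~A4. One must choose the $\beta$-components so that simultaneously: (i) the numerator of $-\sum_i\beta_i/\alpha_i$ is exactly $\pm1$; (ii) for each $p\mid|A|$ the product of the top-block $\beta$'s lies in the square class forced by Theorem~A3(2) and the target discriminant $d(\ell_p^{(1)})$; and (iii) for each $j\geq2$ one lower-block $\beta$ realizes $d(\ell_p^{(j)})$, the others being $1$. Condition (iii) is free, since those $\beta$'s are unconstrained modulo $p$ and contribute trivially to the numerator modulo $p$. For (i): the numerator modulo each $p\mid|A|$ is determined by the sum of the top-block $\beta$'s, so after fixing a global sign these sums are pinned modulo $p$; the Chinese Remainder Theorem then fixes the numerator modulo $\prod_{p\mid|A|}p^{k_1(p)}$, and one adjusts a single $\beta_i$ by a multiple of its cone-point order $\alpha_i$ — which changes the numerator by the common denominator while preserving all residues and the coprimality $\gcd(\beta_i,\alpha_i)=1$ — to make it equal $\pm1$. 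Finally, (ii) asks for $\rho_1(p)+1$ units modulo $p$ with prescribed sum and prescribed product square class, which is possible for $p$ odd and $\rho_1(p)+1\geq2$ except in a few small cases (notably $p=3$ with a rank-$1$ or rank-$2$ top block and an adverse target), handled by replacing two $p^{k_1}$-cone points by cone points of order $p^{k_1+1}$, as in the final paragraph of the proof of Theorem~A4. Checking that the block-diagonalization bookkeeping is unaffected by the extra $s$-row and column, and disposing of these exceptional cases, is where the genuine labour lies; the overall strategy is that of Theorem~A4.
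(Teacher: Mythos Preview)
Your strategy is reasonable but takes a harder road than the paper's. Instead of adding one extra cone point of order $p^{k_1(p)}$ for each prime separately, the paper introduces a \emph{single} cone point of composite order $\widetilde\alpha=e\prod_{p\in P}p$ (with $e$ the exponent of $A$), together with just $\rho(p)$ further cone points $(\alpha_i^{(p)},\beta_i^{(p)})=(a_i,\pm b_i)$ read straight from the rank-$1$ decomposition $\ell^{(p)}\cong\perp_i\ell_{b_i/a_i}$. Setting $\widetilde\beta=-1-\widetilde\alpha\sum_{p,i}\beta_i^{(p)}/\alpha_i^{(p)}$ gives $\varepsilon(M)=1/\widetilde\alpha$ and $\widetilde\beta\equiv-1\pmod p$ for every $p$; at each localization one then has $\alpha_1=\widetilde\alpha$ and $\nu=\alpha_1\varepsilon(M)=1$, so Theorem~A3(2) yields $d=[\prod_{i\leq\rho_1}b_i]$ with no constraint-solving whatsoever.

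Your route buys cone points of pure prime-power order, but at the price of coupling a global sign to per-prime sum and product conditions simultaneously. The exceptional-case fix you borrow from Theorem~A4 does not carry over unchanged: there the top block had $\rho_1+2$ cone points subject only to sum zero, whereas here you have $\rho_1+1$ with a prescribed nonzero sum and a product class depending on $\nu$. Concretely, for $A=(\mathbb{Z}/3^k\mathbb{Z})^2$ with $d(\ell)=[2]$, three units in $\mathbb{F}_3^\times$ with sum $2$ force product class $[1]$ and with sum $1$ force product class $[2]$; under either sign one computes $d(\ell_M)=[1]$, so your base construction misses $[2]$. Raising two cone-point orders to $3^{k_1+1}$ then shifts the required $3$-valuation of $\varepsilon(M)$ and needs its own verification rather than the argument of A4. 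None of this is fatal, but the paper's device of one mixed-order cone point sidesteps it entirely.
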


\begin{proof}
Let $P$ be the finite set of primes for which the $p$-primary summand of $A$ 
is non-trivial, 
and let $\ell=\perp_{p\in{P}}\ell^{(p)}$ be the primary decomposition of $\ell$.
We shall define Seifert data $S(p)$ for each $p\in{P}$ as follows.
Suppose that $\ell^{(p)}$ is a pairing on $\oplus_{j=1}^{t(p)}(\mathbb{Z}/p^{k_j}\mathbb{Z})^{\rho_j}$, with $\rho_j>0$ for $1\leq{j}\leq{t(p)}$
and $0<k_j<k_{j-1}$ for $2\leq{j}\leq{t(p)}$.
Then $\ell^{(p)}\cong\perp\ell_{b_i/a_i}$, where the $a_i$ are powers of $p$,
for $1\leq{i}\leq\rho(p)=\Sigma_{j=1}^{t(p)}\rho_j$ and such that
$a_i\geq{a_{i+1}}$ for $i<\rho(p)$.
Let 
$S(p)=((\alpha_1^{(p)},\beta_1^{(p)}),\dots,(\alpha_{t(p)}^{(p)}, \beta_{t(p)}^{(p)}))$,
where $(\alpha_i^{(p)}=a_i$, for all $i$, 
$\beta_1^{(p)}=(-1)^{\rho_1+1}b_1$ and $\beta_i^{(p)}=b_i$, 
for $2\leq{i}\leq\rho(p)$.
Finally let $\widetilde\alpha=e\Pi_{p\in{P}}p$, 
where $e$ is the exponent of $A$,
and let $\widetilde\beta=-1-\widetilde\alpha\Sigma_{i=1}^{(t(p)}\frac{\beta_i^{(p)}}{\alpha_i^{(p)}}$.

Let $S$ be the concatenation of $(\widetilde\alpha,\widetilde\beta)$ and the
$S(p)$s for $p\in{P}$, and let $M=M(0;S)$.
Then $\varepsilon(M)=\frac1{\widetilde\alpha}$.
For each $p\in{P}$ there are $\rho(p)+1$ cone points with order 
divisible by $p$, 
and $\mathbb{Z}_{(p)}\otimes\tau_M\cong
\oplus_{j=1}^{t(p)}\mathbb{Z}/p^{k_j}\mathbb{Z})^{\rho_j}$.
Since $\widetilde\beta\equiv-1$ {\it mod} $(p)$, for all $p\in{P}$,
the determinantal invariant of the component of 
$\mathbb{Z}_{(p)}\otimes\ell_M$ of maximal exponent $a_1=p^{k_1}$ is $[\Pi_{i=1}^{\rho_1}b_i]$.
Therefore $\mathbb{Z}_{(p)}\otimes\ell_M\cong\ell^{(p)}$, for each $p\in{P}$, 
and so $\ell_M\cong\ell$.
\end{proof}

The manifolds with Seifert data as above are $\widetilde{\mathbb{SL}}$-manifolds,
except when $A\cong\mathbb{Z}/p^k\mathbb{Z}$ and so there are just two cone points,
in which case they are lens spaces ($\mathbb{S}^3$-manifolds).

In the homogeneous $p$-primary case we may arrange that all cone points 
have order $p^k$, except when $p=3$, $\rho=2$ and $d(\ell)=[1]$. 
This case is realized by $M(0;(3^{k+1},7),(3^k,-1),(3^k,-1))$.

\section*{Realization of homogeneous $2$-primary pairings}

The situation is more complicated when $p=2$.
A linking pairing $\ell$ on $(Z/2^kZ)^\rho$ is determined by its rank $\rho$
and certain invariants $\sigma_{j}(\ell)\in{Z/8Z}\cup\{\infty\}$, 
for $\rho-2\leq{j}\leq\rho$.
(See \S3 of \cite{KK80}, and \cite{De05}.)
We shall not calculate these invariants here.
Instead, we shall take advantage of the particular form 
of the pairings given in \S2. 

If a linking pairing $\ell$ on $(\mathbb{Z}/2^k\mathbb{Z})^\rho$ is even then $\rho$ is also even. When $k=1$ all even pairings are hyperbolic.
If $k>1$ then $\ell$ is determined by the image of the matrix $L$ in 
$GL(\rho,\mathbb{Z}/4\mathbb{Z})$, 
and is either  hyperbolic (and is the orthogonal sum of $\frac\rho2$ copies
of the pairing $E_0^k$) or is the orthogonal direct sum of a hyperbolic pairing of rank 
$\rho-2$ with the pairing $E_1^k$ \cite{KK80,Wa64}.

We shall say that an element $\frac{p}q$ is {\it even} or {\it odd} if $p$ is even or odd,
respectively.
Thus $\frac{p}q$ is odd if and only if it is invertible in $\mathbb{Z}_{(2)}$.

The following result complements the criterion for homogeneity given in Lemma  {A2}.

\begin{lem}
{\bf{A6}.}
Let $M=M(g;S)$ be a Seifert manifold and assume that the Seifert data are ordered 
so that $\alpha_{i+1}$ divides $\alpha_i$ in $\mathbb{Z}_{(2)}$. 
Then
\begin{enumerate}
\item$\ell=\mathbb{Z}_{(2)}\otimes\ell_M$ is even if and only if 
$\frac{\alpha_1}{\alpha_i}$ is odd for $1\leq{i}\leq{r_2}$
and either $\varepsilon(M)=0$ or $\alpha_1\varepsilon(M)$ is odd;
\item{}if $\frac{\alpha_1}{\alpha_i}$ is odd for $1\leq{i}\leq{r_2}$ then 
$\alpha_1\varepsilon(M)\equiv{r_2}$ {\it mod} $(2)$.
\end{enumerate}
\end{lem}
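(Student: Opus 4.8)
The plan is to compute directly with the explicit formulae of Section 1 for the self-linkings $\ell_M(q_i',q_i')$ ($3\le i\le r_2$) and $\ell_M(s,s)$ on the direct-summand generators of $\mathbb{Z}_{(2)}\otimes\tau_M$, using two elementary inputs throughout. First, for $1\le i\le r_2$ the order $\alpha_i$ is even, so $\beta_i$ is odd, and the integer $n$ with $m\alpha_2+n\beta_2=1$ is odd. Second, a nonsingular linking pairing on a finite abelian $2$-group is the orthogonal sum of its homogeneous components and is even iff each component is; and a homogeneous pairing on $(\mathbb{Z}/2^k\mathbb{Z})^\rho$ is even iff every diagonal entry of its matrix (with respect to a basis) is even, since $x^{\mathrm{tr}}Lx\equiv\sum_ic_iL_{ii}\pmod2$ (the off-diagonal contributions being automatically even). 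In particular a nonsingular pairing on a nontrivial cyclic $2$-group is odd.

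For the forward implication of (1): assume $\alpha_1/\alpha_i$ is odd for $1\le i\le r_2$ and that $\varepsilon(M)=0$ or $\nu=\alpha_1\varepsilon(M)$ is odd. Together with the ordering convention this forces $\alpha_1,\dots,\alpha_{r_2}$ to share a common $2$-adic valuation $k$, and then (cf.\ Lemma A2) $\mathbb{Z}_{(2)}\otimes\tau_M$ is homogeneous of exponent $2^k$, with basis $\{q_i':3\le i\le r_2\}$ and, when $\varepsilon(M)\ne0$, also $s$. Writing $\alpha_j=2^ku_j$ with $u_j$ odd, the numerator of $\ell_M(q_i',q_i')$ carries the factor $u_i\beta_2+u_2\beta_i$, a sum of two odd numbers, hence even; likewise $\ell_M(s,s)$ has numerator carrying $u_1+nu_2\nu$, again a sum of two odd numbers. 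So every diagonal entry of the linking matrix has order dividing $2^{k-1}$, i.e.\ is even, and $\ell$ is even.

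For the converse I argue by contraposition via the orthogonal decomposition into homogeneous summands. If $\varepsilon(M)\ne0$ and $\nu$ is even, then $\mathrm{val}_2(\alpha_1\alpha_2\varepsilon(M))=\mathrm{val}_2(\alpha_2)+\mathrm{val}_2(\nu)$ strictly exceeds $\mathrm{val}_2(\alpha_i)$ for every $i\ge3$, so $s$ is the unique direct-summand generator of maximal $2$-order; the top homogeneous summand is then cyclic and nonsingular, hence odd. If instead some $\alpha_i$ with $1\le i\le r_2$ has $\mathrm{val}_2(\alpha_i)<\mathrm{val}_2(\alpha_1)$ — note $\varepsilon(M)=0$ would force equality, by Section 1 — let $j$ be largest with $\mathrm{val}_2(\alpha_j)=\mathrm{val}_2(\alpha_1)$, so $j<r_2$ and $\alpha_{j+1}$ is even; in $\ell_M(q_{j+1}',q_{j+1}')$ the numerator term $\alpha_{j+1}\beta_2+\alpha_2\beta_{j+1}$ has $2$-adic valuation exactly $\mathrm{val}_2(\alpha_{j+1})$ (its first summand has that valuation, its second strictly more), so $\ell_M(q_{j+1}',q_{j+1}')$ has order equal to that of $q_{j+1}'$, witnessing oddness. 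The main obstacle is the residual bookkeeping when $\alpha_1$ alone attains the maximal $2$-valuation yet $r_2\ge2$, so that $q_2'=0$ is unavailable as a witness: the odd summand must then be extracted from $s$ together with the $q_i'$ of largest valuation, tracking $2$-adic valuations by hand.

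Finally, for (2) assume $r_2\ge1$, so that $\alpha_1$ is even and the hypothesis is non-vacuous, and write $\alpha_1\varepsilon(M)=-\sum_{i=1}^r(\alpha_1/\alpha_i)\beta_i$ in $\mathbb{Z}_{(2)}$. For $i>r_2$ the order $\alpha_i$ is odd, so $\alpha_1/\alpha_i$ has positive $2$-adic valuation and its term vanishes mod $2$; for $1\le i\le r_2$ both $\alpha_1/\alpha_i$ (by hypothesis) and $\beta_i$ are odd, so each of these $r_2$ terms is odd. Hence $\alpha_1\varepsilon(M)\equiv-r_2\equiv r_2\pmod2$.
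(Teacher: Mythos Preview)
Your argument for the ``if'' direction of (1) and for (2) is correct and matches the paper's reasoning (the paper dismisses the ``if'' direction as clear, but your diagonal-entry computation is exactly what is needed). The paper, however, handles the ``only if'' direction more directly than your contrapositive: assuming $\ell$ even, it reads off from each diagonal self-linking that the relevant quantity must be even. From $\ell_M(q_i',q_i')$ this gives $\beta_2+\tfrac{\alpha_2}{\alpha_i}\beta_i$ even, hence $\tfrac{\alpha_2}{\alpha_i}$ odd for $3\le i\le r_2$; and when $\varepsilon(M)\ne0$, from $\ell_M(s,s)$ one gets $\tfrac{\alpha_1}{\alpha_2}+n\nu$ even. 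Since $\nu\equiv\beta_1\pmod2$ whenever $\tfrac{\alpha_1}{\alpha_2}$ is even (all the remaining terms in $\nu=-\sum(\alpha_1/\alpha_i)\beta_i$ then being even), and $\beta_1$ is odd, the possibility that both $\tfrac{\alpha_1}{\alpha_2}$ and $\nu$ are even is excluded; so both are odd. This single direct pass replaces your Case A/Case B split and avoids the residual case you flag.

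That residual case is a genuine gap in your write-up, but it closes more simply than you suggest: $s$ alone is the witness, not $s$ together with some $q_i'$. In your Case B with $j=1$ you have $\varepsilon(M)\ne0$ (as you note), and Case A already handles $\nu$ even, so assume $\nu$ odd. Write $b=\mathrm{val}_2(\alpha_2)$; then $s$ has $2$-order $2^b$ (from $\alpha_1\alpha_2\varepsilon(M)=\alpha_2\nu$). In
\[
\ell_M(s,s)=-\frac{\alpha_1+n\alpha_2\nu}{\alpha_2^2\nu}
\]
the numerator has $2$-valuation exactly $b$ (since $\mathrm{val}_2(n\alpha_2\nu)=b$ while $\mathrm{val}_2(\alpha_1)>b$) and the denominator has $2$-valuation $2b$, so $2^{b-1}\ell_M(s,s)$ has $2$-valuation $-1$ and $\ell$ is odd. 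Two minor points: your parenthetical ``$\varepsilon(M)=0$ would force equality'' only gives $\mathrm{val}_2(\alpha_1)=\mathrm{val}_2(\alpha_2)$, i.e.\ $j\ge2$, not equality for all $i\le r_2$; and your claim that $n$ is odd uses $\alpha_2$ even, so implicitly $r_2\ge2$ --- the cases $r_2\le1$ are trivial (the $2$-primary torsion is then cyclic, generated by $s$, and the conditions force it to vanish) but should be noted.
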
 

\begin{proof}
If $\ell=\mathbb{Z}_{(2)}\otimes\ell_M$ is even then $\beta_2+\frac{\alpha_2}{\alpha_i}\beta_i$ is even for all $3\leq{i}\leq{r_2}$.
Hence $\frac{\alpha_2}{\alpha_i}$ is odd, since the $\beta_i$ are all odd.
If moreover $\varepsilon(M)=0$ then $\frac{\alpha_1}{\alpha_2}$ is odd.
If $\varepsilon(M)\not=0$ then 
$\frac{\alpha_1}{\alpha_2}+n\alpha_1\varepsilon(M)$ is even.
Hence $\frac{\alpha_1}{\alpha_2}$ is again odd, 
and so $\alpha_1\varepsilon(M)$ is also odd.
In each case the converse is clear.

The second assertion holds since $\beta_i$ is odd for $1\leq{i}\leq{r_2}$ 
and $\frac{\alpha_1}{\alpha_i}$ is even for all $i>r_2$.
\end{proof}

We shall suppose for the remainder of this section that $\mathbb{Z}_{(2)}\otimes\tau_M$
is homogeneous of exponent $2^k>1$.

Suppose first that  $\mathbb{Z}_{(2)}\otimes\ell_M$ is even.
Then it is homogeneous and of even rank $\rho=2s$.
The diagonal entries of $L$ are all even and the off-diagonal entries 
are all odd.
If $k=1$ then $\mathbb{Z}_{(2)}\otimes\ell_M$ is hyperbolic, 
so we may assume that $k>1$ in the next theorem.

\begin{thm}
{\bf{A7}.}
Let $M=M(g;S)$ be a Seifert manifold such that the even cone point orders $\alpha_i$ all have the same $2$-adic valuation $k>1$.
Assume that either $\varepsilon(M)=0$ or $\alpha_1\varepsilon(M)$ is odd.
Let $t$ be the number of diagonal entries of $L$ which are divisible by $4$.
Then whether $\mathbb{Z}_{(2)}\otimes\ell_M$ is hyperbolic or not depends only on the images of $t$ and $\rho$ in $\mathbb{Z}/4\mathbb{Z}$.
\end{thm}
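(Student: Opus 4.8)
The plan is to work throughout with the localization $\ell:=\mathbb{Z}_{(2)}\otimes\ell_M$. By the preceding lemma and the hypotheses, $\ell$ is an even linking pairing on a free $\mathbb{Z}/2^k\mathbb{Z}$-module of even rank $\rho$, and since $k>1$ it is determined by the image of its matrix $L$ in $\mathrm{GL}(\rho,\mathbb{Z}/4\mathbb{Z})$; by the classification recalled at the beginning of this section $\ell$ is then either hyperbolic or the orthogonal sum of a hyperbolic pairing with one copy of $E_1^k$. So the whole problem is to extract from $L$ the single bit distinguishing these two cases. From the formulas of \S1, specialized to the even homogeneous situation (all the relevant $\alpha_i$ have $2$-adic valuation $k$, all $\beta_i$ are odd, and when $\varepsilon(M)\neq 0$ the generator $s$ of maximal order $2^k$ contributes an extra row and column), the matrix $L$ is symmetric over $\mathbb{Z}/4\mathbb{Z}$ with every off-diagonal entry a unit and every diagonal entry even, exactly $t$ of the diagonal entries being $\equiv 0\pmod 4$ and the remaining $\rho-t$ being $\equiv 2\pmod 4$.

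First I would prove a purely linear-algebraic reduction lemma for matrices of this shape: any $2\times 2$ principal block on basis vectors $e_i,e_j$ has determinant $L_{ii}L_{jj}-L_{ij}^2\equiv -1\pmod 4$, hence is nonsingular and splits off an orthogonal summand; completing the square over $\mathbb{Z}/4\mathbb{Z}$ shows this summand is isomorphic to $E_0^k$ when at least one of $L_{ii},L_{jj}$ is $\equiv 0\pmod 4$ and to $E_1^k$ when both are $\equiv 2\pmod 4$, and that the orthogonal complement is again a matrix of the same shape whose diagonal entry at each remaining vector is shifted by $L_{ii}+L_{jj}+2\pmod 4$ (so the parities $0\leftrightarrow 2$ of the diagonal are interchanged precisely when $L_{ii}+L_{jj}\equiv 0\pmod 4$). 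Iterating this peels $\ell$ into an orthogonal sum of copies of $E_0^k$ and $E_1^k$, and since $E_1^k\oplus E_1^k\cong E_0^k\oplus E_0^k$ (immediate from the classification above), $\ell$ is hyperbolic if and only if an even number of $E_1^k$ summands is produced.

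The next step is to check that this parity is independent of the peeling choices, so that it is a function $N(t,\rho)$ of $(t,\rho)$ alone, and then to compute $N$. Comparing the three possible moves — peeling off a block on two diagonal-$\equiv 0$ vectors, on two diagonal-$\equiv 2$ vectors, or on one of each — the reduction lemma gives the recursions $N(t,\rho)=N(t-1,\rho-2)$ for $1\leq t\leq\rho-1$, $N(t,\rho)=N(\rho-t,\rho-2)$ for $t\geq 2$, and $N(t,\rho)=1+N(\rho-t-2,\rho-2)$ for $t\leq\rho-2$, with base cases $N(0,2)=1$ and $N(1,2)=N(2,2)=0$; one verifies these are mutually consistent and solves them. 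Finally I would use the hypothesis that $\varepsilon(M)=0$ or $\alpha_1\varepsilon(M)$ is odd: via the calculations of \S1 together with the $2$-adic analysis of $\Sigma_{i=1}^{r_2}\beta_i/\alpha_i$ (in the same spirit as part (2) of the preceding lemma) this forces a congruence modulo $4$ between $t$ and $\rho$ for any matrix $L$ that actually arises from such Seifert data, and on the sublattice of pairs $(t,\rho)$ cut out by that congruence the value $N(t,\rho)$ depends only on the residues of $t$ and $\rho$ modulo $4$.

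The hard part will be this last bookkeeping. For an arbitrary matrix of the given shape $N(t,\rho)$ does \emph{not} factor through $(t,\rho)\bmod 4$ (for instance the behaviour at $(t,\rho)=(0,2)$ differs from that at $(0,6)$), so the argument genuinely needs the restriction on which $(t,\rho)$ occur for honest Seifert data; identifying that restriction and, in the case $\varepsilon(M)\neq 0$, correctly folding the class modulo $4$ of the extra $s$-diagonal entry (which must itself be computed from the Seifert invariants) into the count $t$, is where the care lies. Verifying the mutual consistency of the three recursions, hence well-definedness of $N$, is routine but must be carried out.
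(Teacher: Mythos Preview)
Your peeling strategy and the recursions for $N(t,\rho)$ are correct and match the paper's method (which simply peels in a fixed order, always the top-left $2\times2$ block, obtaining $\ell\cong(a+b)(E_0^k\perp E_1^k)\perp\ell'$ with $t=4a+x$, $\rho-t=4b+y$, $0\le x,y\le3$). Your observation that $N(t,\rho)$ does not factor through $(t,\rho)\bmod4$ is also correct. But your proposed cure---a congruence between $t$ and $\rho$ modulo $4$ forced by the Seifert data---cannot work. Take all $\alpha_i=2^k$ and $\varepsilon(M)=0$: with $r_2=6$, $(\beta_1,\dots,\beta_6)=(-5,1,1,1,1,1)$ one gets $(t,\rho)=(0,4)$ and $\ell_M\cong E_0^k\perp E_1^k$ (non-hyperbolic); with $r_2=10$, $\beta_i=(-1)^i$ one gets $(t,\rho)=(4,8)$ and $\ell_M\cong(E_0^k)^4$ (hyperbolic). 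Both are lifted straight from the paper's own Theorem~A9, both satisfy every hypothesis of the present theorem, and both have $(t,\rho)\equiv(0,0)\pmod4$. No Seifert-side constraint can separate them.

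The paper's proof in fact uses no Seifert-specific input beyond the shape of $L$; it is purely linear-algebraic, and its actual output is the explicit criterion at the end: $\ell$ is hyperbolic iff either $a+b$ is even and $\{x,y\}\notin\{\{0,2\},\{1,3\}\}$, or $a+b$ is odd and $\{x,y\}\in\{\{0,2\},\{1,3\}\}$. This depends on $t\bmod4$ and $\rho\bmod8$ (equivalently on $x$, $y$, and the parity of $a+b$), not merely on $\rho\bmod4$; the ``$\mathbb{Z}/4\mathbb{Z}$'' in the statement appears to be a slip. Drop your step~4: your recursions already deliver the correct invariant, and that explicit criterion is the real content of the result.
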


\begin{proof}
The linking pairing is even, by Lemma 6, and so $\rho$ is even.
We may reorder the basis of $\tau_M$ so that $L_{ii}\equiv0$ {\it mod} $(4)$,
for all $i\leq{t}$ and $L_{ii}\equiv2$ {\it mod} $(4)$ for $t<i\leq\rho$.
Let $t=4a+x$ and $\rho-t=4b+y$, where $0\leq{x,y}\leq3$.
Then
$E=\left(\smallmatrix{L_{11}}&{L_{12}}\\L+{21}&{L_{22}}\endsmallmatrix\right)$
is invertible.
We may partition $L$ as
$L=\left(\smallmatrix{E}&{F}\\F^{tr}&{G}\endsmallmatrix\right)$,
where $G$ is a $(\rho-2)\times(\rho-2)$-submatrix and $F$ is a 
$2\times(\rho-2)$-submatrix.
If we use
$J=\left(\smallmatrix{I_2}&{-E^{-1}F}\\0&{I_{\rho-2}}\endsmallmatrix\right)$
to obtain
$J^{tr}LJ=\left(\smallmatrix{E}&{0}\\0&{G'}\endsmallmatrix\right)$,
then $G'=G-F^{tr}E^{-1}F$.
The entries of $F$ are all odd and so the entries of $F^{tr}E^{-1}F$ are
all congruent to 2 {\it mod} $(4)$.
Therefore $G'\equiv{G}$ {\it mod} $(2)$, 
and so the off-diagonal entries of $G'$ are still odd, 
but the residues {\it mod} $(4)$ of the diagonal entries are changed.
An application of this process to $G'$ then restores the residue classes of the diagonal entries of the corresponding $(\rho-4)\times(\rho-4)$-submatrix.
Iterating this process, we find that
$\ell_M\cong(a+b)(E_0^k\perp{E_1^k})\perp\ell'$,
where $\ell'$ has rank $x+y$ and the off-diagonal entries for $\ell'$ are odd.
We also find that $\ell'\cong(x+y)E_0^k$, unless $\{x,y\}=\{1,3\}$ or $\{0,2\}$,
in which case $\ell'\cong{E_0^k}\perp{E_1^k}$ or $E_1^k$, respectively.
Since $E_0^k$ is hyperbolic, $2E_1^k\cong2E_0^k$ \cite {Wa64} and 
$E_1^k$ is not hyperbolic,
it follows that $\ell$ is hyperbolic if and only if either $a+b$ is even and 
$\{x,y\}\not=\{1,3\}$ or $\{0,2\}$, or if $a+b$ is odd and $\{x,y\}=\{1,3\}$ or $\{0,2\}$.
\end{proof}

It follows immediately from the calculations in the first section  that
\[
t=\#\{i\geq3\mid\frac{\alpha_2\beta_i+\alpha_i\beta_2}{2^k}\equiv0~mod~(4)\}+\delta,
\]
where $\delta=1$ if $\varepsilon(M)\not=0$ and $\beta_2+\alpha_2\varepsilon(M)\equiv0$
{\it mod} $(4)$, and $\delta=0$ otherwise.

In particular, if $t$ and $\rho$ are divisible by 4 then $\ell_M$ is hyperbolic.

Lemmas {A2} and {A6} also imply that if $\mathbb{Z}_{(2)}\otimes\tau_M$
is homogeneous of exponent $2^k$ then $\mathbb{Z}_{(2)}\otimes\ell_M$ is odd
if and only if either
\begin{enumerate}
\item$\varepsilon(M)=0$, $2^{-k}\alpha_i$ is even for $i=1$ and 2, 
and is odd for $2<i\leq{r_2}$; or
\item$\alpha_1\varepsilon(M)$ and $2^{-k}\alpha_i$ are odd for $1<i\leq{r_2}$,
and either $2^{-k}\alpha_1$ or $r_2$ is even.
\end{enumerate}
Odd forms on homogeneous 2-groups can be diagonalized.
In the present situation, this follows easily from the next lemma.

\begin{lem}
{\bf{A8}.}
Let $\ell$ be an odd linking pairing on $N=(\mathbb{Z}/2^k\mathbb{Z})^2$.
Then $\ell$ is diagonalizable.
\end{lem}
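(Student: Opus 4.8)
The plan is to argue directly with the matrix description of linking pairings on homogeneous groups recalled in \S1.4. First I would fix a basis $e_1,e_2$ of $N=(\mathbb{Z}/2^k\mathbb{Z})^2$ and let $L\in GL(2,\mathbb{Z}/2^k\mathbb{Z})$ be the symmetric matrix with $(i,j)$ entry $2^k\ell(e_i,e_j)$. The decisive preliminary step is to translate the hypothesis that $\ell$ is odd into a statement about $L$. For $x=a_1e_1+a_2e_2$ one computes that $2^{k-1}\ell(x,x)$ is represented by $\tfrac12(a^{tr}La)$; taking the exponent $k$ in the definition of evenness (which is allowed, since $2^kx=0$ for every $x\in N$) and using $a^{tr}La\equiv a_1L_{11}+a_2L_{22}\pmod 2$, we see that $\ell$ is even precisely when both $L_{11}$ and $L_{22}$ are even in $\mathbb{Z}/2^k\mathbb{Z}$. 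Hence oddness of $\ell$ forces some diagonal entry to be odd, and therefore a unit of $\mathbb{Z}/2^k\mathbb{Z}$; interchanging $e_1$ and $e_2$ if necessary, I may assume $L_{11}$ is a unit.

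The second step is the classical completing-the-square manoeuvre. Set $c=L_{11}^{-1}L_{12}$ and $e_2'=e_2-c\,e_1$. The transition matrix $\left(\smallmatrix1&-c\\0&1\endsmallmatrix\right)$ lies in $GL(2,\mathbb{Z}/2^k\mathbb{Z})$, so $\{e_1,e_2'\}$ is again a basis of $N$, and a one-line computation gives $\ell(e_1,e_2')=0$ together with $\ell(e_2',e_2')=2^{-k}(L_{22}-L_{11}^{-1}L_{12}^2)$. Thus the matrix of $\ell$ in the basis $\{e_1,e_2'\}$ is $\mathrm{diag}(L_{11},\,L_{22}-L_{11}^{-1}L_{12}^2)$, and nonsingularity of $\ell$ forces the second diagonal entry to be a unit as well. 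Writing $w_1=L_{11}/2^k$ and $w_2=(L_{22}-L_{11}^{-1}L_{12}^2)/2^k$, both with odd numerator, this exhibits $\ell\cong\ell_{w_1}\perp\ell_{w_2}$ on $\mathbb{Z}/2^k\mathbb{Z}\oplus\mathbb{Z}/2^k\mathbb{Z}$, which is the required diagonalization. The argument is uniform in $k\geq1$; the case $k=1$ is no exception, as the odd residues are exactly the units modulo $2$.

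I do not expect a genuine obstacle: the whole statement is the standard diagonalization argument over $\mathbb{Z}/2^k\mathbb{Z}$. The one point that merits care is the first step, where the parity hypothesis on $\ell$ must be converted into the assertion that a diagonal entry of $L$ is a unit; after that everything is routine linear algebra. I would still write the proof out in full, since the surrounding discussion of odd homogeneous $2$-primary pairings uses precisely this reduction to orthogonal sums of cyclic pairings $\ell_w$.
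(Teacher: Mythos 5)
Your proof is correct and takes essentially the same route as the paper: note that oddness of $\ell$ forces a diagonal entry $\ell(e_i,e_i)=[2^{-k}a]$ with $a$ odd (the paper asserts this directly; you supply the short parity computation justifying it), then complete the square via $f'=f-a^{-1}b\,e$ to obtain $\ell\cong\ell_{a/2^k}\perp\ell_{d'/2^k}$. The only difference is cosmetic — you work with the Gram matrix $L$ throughout — and your appeal to nonsingularity to see that the second diagonal entry is a unit matches the paper's parenthetical remark.
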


\begin{proof}
Let $e,f$ be the standard basis for $N$.
Since $\ell$ is odd we may assume that $\ell(e,e)=[2^{-k}a]$,
where $a$ is odd.
Let $\ell(e,f)=[2^{-k}b]$ and $\ell(f,f)=[2^{-k}d]$.
(Then $b$ is even and $d$ is odd, or vice versa, by nonsingularity of the pairing.)
Let $f'=-a^{-1}be+f$. Then $\ell(e,f')=0$ and $\ell(f',f')=[2^{-k}d']$,
where $d'\equiv{d-a^{-1}b^2}$ {\it mod} $(2^k)$.
Therefore $\ell\cong
\ell_{\frac{a}{2^k}}\perp\ell_{\frac{d'}{2^k}}$.
\end{proof}

Note that if $b\equiv0$ {\it mod} (4) then
$\ell\cong\ell_{\frac{a}{2^k}}\perp\ell_{\frac{d}{2^k}}$.

Suppose now that $\mathbb{Z}_{(2)}\otimes\ell_M$ is odd and $\varepsilon(M)=0$.
Then the diagonal entries of $L$ are odd and the off-diagonal elements are 
odd multiples of $2^{-k}\alpha_1$.
We may assume also that $r_2\geq4$, for otherwise
$\mathbb{Z}_{(2)}\otimes\tau_M$ is cyclic.
We may partition $L$ as
$L=\left(\smallmatrix{E}&{F}\\F^{tr}&{G}\endsmallmatrix\right)$,
where $E\in{GL(2,\mathbb{Z}/2^k\mathbb{Z})}$, 
$F$ is a $2\times(r_1-4)$-submatrix with even entries
and $G$ is a $(r_2-4)\times(r_2-4)$-submatrix.
Let $J=\left(\smallmatrix{I_2}&{-E^{-1}F}\\0&{I_{r_2-4}}\endsmallmatrix\right)$.
Then $\det(J)=1$ and 
$J^{tr}LJ=\left(\smallmatrix{E}&{0}\\0&{G'}\endsmallmatrix\right)$,
where $G'=G-F^{tr}E^{-1}F$.
The columns of $F$ are proportional, 
and the ratio $\frac{u_3}{u_4}$ is odd.
Since the entries of $F$ are odd multiples of $2^{-k}\alpha_1$ 
and since $E-I_2$ has even entries, $G'\equiv{G}$ {\it mod} $(8)$.
Iterating this process, we may replace $L$ by a block-diagonal matrix, 
where the blocks are all $2\times2$ or $1\times1$, 
and are congruent {\it mod} $(8)$ to the corresponding blocks of $L$.
Each such $2\times2$ block is diagonalizable, by Lemma {A8}, 
and so we may easily represent $\ell_M$ as an orthogonal sum 
of pairings of rank 1.

If $\varepsilon(M)\not=0$ then
$\ell_M(q_i',s)=[2^{-k}\frac{\beta_i}{u_i}]$ and $\ell_M(s,s)=[2^{-k}z]$,
where $\beta_i$, $u_i$ and $z$ are odd, and we first replace each $q_i'$ by $\widetilde{Q}_i=q_i'-z^{-1}u^{-1}\beta_is$.
We then see that $\mathbb{Z}_{(2)}\otimes\ell_M\cong\ell_{2^{-k}z}\perp\widetilde{\ell}$,
where the matrix for $\widetilde{\ell}$ has odd diagonal entries and even off-diagonal entries,
and we may continue as before.

\begin{thm}
{\bf{A9}.}
Let $\ell$ be a linking pairing on $(\mathbb{Z}/2^k\mathbb{Z})^\rho$.
Then there is a Seifert manifold $M=M(0;S)$ such that the cone point orders 
are all powers of $2$ and $\ell_M\cong\ell$.
We may have either $\varepsilon(M)=0$ or $\varepsilon(M)\not=0$.
\end{thm}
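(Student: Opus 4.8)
The plan is to follow the pattern of Theorems A4 and A5, working $2$-locally and exploiting the finer classification of linking pairings on homogeneous $2$-groups together with Theorem A7. By the classification of Wall and Kawauchi--Kojima, a linking pairing $\ell$ on $(\mathbb{Z}/2^k\mathbb{Z})^\rho$ is either even or odd: if it is even then $\rho$ is even and, when $k\geq2$, $\ell$ is either the hyperbolic pairing $\frac\rho2E_0^k$ or an orthogonal sum of a hyperbolic pairing of rank $\rho-2$ with $E_1^k$ (while for $k=1$ every even pairing is hyperbolic); if it is odd then, by Lemma A8 and an iterated block-diagonalization, $\ell$ diagonalizes as $\perp_{i=1}^\rho\ell_{a_i/2^k}$ with each $a_i$ odd, and $\ell_{a_i/2^k}\cong\ell_{a_i'/2^k}$ exactly when $a_i\equiv a_i'$ mod $(2^k,8)$. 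So in either case it suffices to realize a short list of discrete invariants, and I would treat the even and odd cases separately, producing within each one family of Seifert data with $\varepsilon(M)=0$ and another with $\varepsilon(M)\neq0$.

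For the odd case I would take all cone point orders equal to $2^k$ except that $\alpha_1$ (for $\varepsilon(M)\neq0$), or both $\alpha_1$ and $\alpha_2$ (for $\varepsilon(M)=0$), are raised to $2^{k+1}$, as demanded by the criterion for oddness of $\mathbb{Z}_{(2)}\otimes\ell_M$ recorded after Lemma A8. With $r_2=\rho+2$ cone points (case $\varepsilon=0$) or $r_2=\rho+1$ (case $\varepsilon\neq0$) one checks from the formulas of the first section that $\mathbb{Z}_{(2)}\otimes\tau_M\cong(\mathbb{Z}/2^k\mathbb{Z})^\rho$ and that, after the block-diagonalization sketched there, the diagonal entries of $L$ have the form $[-\beta_2\beta_i(\beta_2+2\beta_i)2^{-k}]\in\mathbb{Q}/\mathbb{Z}$ (and similar), with odd numerator. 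Fixing $\beta_2=1$ and letting each $\beta_i$ ($i\geq3$) range over the odd residues mod $8$ already exhausts the residues of that numerator mod $8$, so any prescribed multiset $\{a_i\}$ can be realized; a final odd $\beta_1$ is then chosen to satisfy $\sum\beta_i/\alpha_i=0$ in the case $\varepsilon=0$, a condition that only constrains $\beta_1$ mod $2^{k+1}$ while leaving it odd.

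For the even case Theorem A7 does the work. Taking all cone point orders equal to $2^k$, with $\varepsilon(M)=0$ (equivalently $2^k\mid\sum\beta_i$) or with $\varepsilon(M)\neq0$ (in which case $r_2$ must be odd, by Lemma A6), the pairing $\mathbb{Z}_{(2)}\otimes\ell_M$ is even, $\mathbb{Z}_{(2)}\otimes\tau_M$ is homogeneous of exponent $2^k$ and rank $\rho$ (with $\rho=r_2-2$ or $r_2-1$), and by Theorem A7 whether $\ell_M$ is hyperbolic depends only on $\rho$ and on $t\bmod4$, where $t=\#\{i\geq3:\beta_i+\beta_2\equiv0\bmod4\}+\delta$. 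Since $t$ can be made to take any value in $\{0,\dots,\rho\}$ by a suitable choice of odd $\beta_i$ subject only to the sum condition, one can force $\ell_M$ to be hyperbolic or hyperbolic-plus-$E_1^k$ as required; the proof of Theorem A7 in fact pins down the isomorphism type $\ell_M\cong(a+b)(E_0^k\perp E_1^k)\perp\ell'$ precisely, which is exactly what is needed to hit the $E_1^k$ case.

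The main obstacle I expect is the simultaneous bookkeeping of the constraints: the $2$-adic valuations of $\alpha_1$ and $\alpha_2$; the parities and mod-$4$, mod-$8$ residues of the $\beta_i$; the sum condition $\sum\beta_i/\alpha_i\in\{0\}$ or $\neq0$ that governs $\varepsilon(M)$; and the parity of $r_2$, which is coupled both to the ``either $2^{-k}\alpha_1$ or $r_2$ is even'' clause of the odd criterion and to whether $\varepsilon(M)$ is forced to be a $2$-adic unit over $2^k$ or zero. I anticipate that $k=1$ and $k=2$ will have to be split off (for $k=1$ all even pairings collapse to the hyperbolic one and $\ell_{a/2}$ is unique; for $k=2$ the relevant residues are read mod $4$ rather than mod $8$), and that one or two small-rank or small-$k$ exceptional pairings will need an \emph{ad hoc} choice of Seifert data, just as the case $p=3$, $\rho=2$, $d(\ell)=[1]$ did in the proof of Theorem A4. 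Checking that these compatibility conditions never obstruct one another, and that the block-diagonalization reductions of the first section really do carry $L$ to the stated orthogonal sum in each configuration, will be the delicate part.
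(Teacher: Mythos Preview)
Your plan is essentially the paper's own proof: split into even and odd pairings, realize the even ones by taking all cone point orders equal to $2^k$ and invoking Theorem~A7 with $\beta_i$ chosen to hit the required value of $t\bmod4$, and realize the odd ones by raising $\alpha_1$ (and, when $\varepsilon(M)=0$, also $\alpha_2$) above $2^k$ so that the diagonal of $L$ is odd and then diagonalizing via Lemma~A8. The one tactical refinement in the paper is to raise these to $2^{k+2}$ rather than your $2^{k+1}$, which forces the off-diagonal entries of $L$ to be divisible by $4$; by the remark immediately following Lemma~A8 the diagonalization then leaves the diagonal residues mod~$8$ untouched, eliminating precisely the mod-$8$ bookkeeping you flagged as the delicate part, and your anticipated ad~hoc small-rank exceptions do indeed arise in the odd $\varepsilon(M)\neq0$ case.
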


\begin{proof}
Suppose first that $\ell$ is even.
Then $\rho$ is also even, and $\ell\cong(E_0^k)^{\frac\rho2}$ or 
$(E_0^k)^{\frac\rho2-1}\perp{E_1^k}$.
We shall chose $r$ to be either $\rho+2$ or $\rho+1$, 
in order to construct examples with $\varepsilon(M)=0$ or $\not=0$.

Let $S=((2^k,\beta_1),\dots,(2^k,\beta_r))$ with $\beta_i=(-1)^i$ for $1\leq{i}\leq{r}$.
Then $\varepsilon(M)=-\frac1{2^k}$ if $r$ is odd and
$\varepsilon(M)=0$ if $r$ is even, and $\ell_M\cong(E_0^k)^r$.

If $\rho\equiv2$ {\it mod} $(4)$ let $\beta_1=-3$,
$\beta_2=\beta_3=1$ and $\beta_i=(-1)^i$ for $4\leq{i}\leq{r}$.
If $\rho\equiv0$ {\it mod} $(4)$ let $\beta_1=-5$, $\beta_2=\dots=\beta_5=1$
and $\beta_i=(-1)^i$ for $6\leq{i}\leq{r}$.
In each case, 
$\varepsilon(M)=-\frac1{2^k}$ if $r$ is odd and $\varepsilon(M)=0$ 
if $r$ is even, 
and $\ell_M\cong(E_0^k)^{\frac\rho2-1}\perp{E_1^k}$.

Now suppose that $\ell$ is odd.
Then $\ell\cong\perp_{i=1}^\rho\ell_{w_i}$ where $w_i=2^{-k}b_i$ for $1\leq{i}\leq\rho$.
To construct an example with $\varepsilon(M)=0$ we set $r=\rho+2$ and 
$S=((\alpha_1,\beta_1),\dots,(\alpha_r,\beta_r))$, where $\alpha_1=\alpha_2=2^{k+2}$, $\alpha_i=2^k$ for $3\leq{i}\leq\rho$,
$\beta_2=1,$ $\beta_i=3b_i$ for $3\leq{i}\leq{r}$ and 
$\beta_1=-1-4\Sigma_{i=3}^r\beta_i$.
Then $\varepsilon(M)=0$ and $\ell_M\cong\ell$.
(Here we may use the observation following Lemma {A8}.)

To construct an example with $\varepsilon_S\not=0$ we set $r=\rho+1$.
Here we must take into account the change of basis suggested in the paragraph before the theorem.
Suppose first that some $b_i\equiv\pm3$ {\it mod} $(8)$.
We may then arrange that $z=3$ and the matrix for $\widetilde\ell$ 
is congruent {\it mod} $(4)$ to a diagonal matrix.
After reordering the summands, and allowing for a change of orientation, 
we may assume that $b_1\equiv3$ {\it mod} $(8)$.
The we let
$S=((\alpha_1,\beta_1),\dots,(\alpha_r,\beta_r))$, where $\alpha_1=2^{k+2}$, 
$\alpha_i=2^k$ for $2\leq{i}\leq{r}$, $\beta_2=1$, $\beta_i=4-b_i$ for $3\leq{i}\leq{r}$ 
and $\beta_1=1-4\Sigma_{i=2}^r\beta_i$.
Then $\varepsilon(M)=-\frac1{2^{k+2}}$ and $\ell_M\cong\ell$.

Finally, suppose that $b_1\equiv\pm1$ {\it mod} $(8)$ for all $i$.
If $\rho=1$ let $S=((2^{k+1},1),(2^k,1))$.
If $\rho=2$ and $b_1\equiv-b_2\equiv1$ {\it mod} $(8)$,
let $S=((2^{k+1},1),(2^k,1, (2^k,-1))$.
Otherwise we may assume that $b_1\equiv{b_2}$ {\it mod} $(8)$.
But then $\ell_{w_1}\perp\ell_{w_2}\cong2\ell_{w'}$,
where $w'=2^{-k}3$, and so we may use the construction in the preceding paragraph.
\end{proof}

The manifolds constructed in this section are either
$\mathbb{H}^2\times\mathbb{E}^1$-manifolds (if $\varepsilon(M)=0$),
or $\widetilde{\mathbb{SL}}$-manifolds (if $\varepsilon(M)\not=0$),
excepting only the flat manifold $M(0;(2,-1),(2,1),(2,-1),(2,1))$
and the $\mathbb{S}^3$-manifolds $M(0;(2,1),(2,1),(2,\beta))$ 
and $M(0;(4,1),(2,1),(2,\beta))$.

\section*{The general case}

Every linking pairing $\ell$ on a finite abelian group is realized by some oriented 3-manifold \cite{KK80}.
The next theorem suffices to show that there are pairings which cannot be realized by Seifert manifolds.
We shall then show that there are no further constraints.

\begin{thm}
{\bf{A10}.}
Let $M=M(g;S)$ be a Seifert manifold, and let 
$S(2)=((\alpha_1,\beta_1),\dots, (\alpha_t,\beta_t))$ be the terms of the Seifert data
with even cone point orders $\alpha_i$.
Then $\mathbb{Z}_{(2)}\otimes\ell_M$ has a non-trivial even component
if and only if $e=\max\{i\leq{t}\mid\frac{\alpha_1}{\alpha_i}~is~odd\}\geq3$.
The even component has exponent $\alpha_1$, 
and all the other components are diagonalizable.
If $e$ is even and $\varepsilon(M)\not=0$ then the image of $s$ in 
$\mathbb{Z}_{(2)}\otimes\tau_M$ generates a cyclic component of maximal exponent.
If $e=2$ then $\alpha_1\alpha_2\varepsilon_S$ is divisible by $4\alpha_3$ in 
$\mathbb{Z}_{(2)}$.
\end{thm}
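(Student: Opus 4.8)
The plan is to localise at $p=2$ and read everything off the matrix of $\ell_M$ computed in the first section of this appendix. Order the Seifert data so that $\alpha_{i+1}$ divides $\alpha_i$ in $\mathbb{Z}_{(2)}$; then $\{i\le t:\alpha_1/\alpha_i\ \text{odd}\}=\{1,\dots,e\}$, so $\mathrm{val}_2(\alpha_1)=\cdots=\mathrm{val}_2(\alpha_e)=k$ while $\mathrm{val}_2(\alpha_{e+1})<k$. Thus $q_3',\dots,q_e'$ are exactly the generators $q_i'$ of maximal $2$-order $2^k$, and (when $\varepsilon(M)\neq0$) $s$ has $2$-order $2^{v}$ with $v=\mathrm{val}_2(\alpha_1\alpha_2\varepsilon(M))$. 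The cases $t\le2$ are degenerate (then $\mathbb{Z}_{(2)}\otimes\tau_M$ is cyclic or has no $2$-torsion, $e<3$, and there is visibly no even component), so I would assume $t\ge3$; note also that $\varepsilon(M)=0$ already forces $e\ge2$, since otherwise $\beta_1/\alpha_1$ would be the unique term of smallest valuation in $\sum\beta_i/\alpha_i$.

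The argument rests on two arithmetic observations. \emph{(a)} For $i,j\in\{3,\dots,e\}$ the entry $\ell_M(q_i',q_j')$ has $2$-adic valuation $\mathrm{val}_2(\alpha_2)-\mathrm{val}_2(\alpha_i)-\mathrm{val}_2(\alpha_j)=-k$, so it is a unit multiple of $2^{-k}$; while $\ell_M(q_i',q_i')=[-\beta_2\beta_i(\alpha_i\beta_2+\alpha_2\beta_i)/\alpha_i^2]$ has valuation $\ge(k{+}1)-2k=1-k$, because $\alpha_i\beta_2+\alpha_2\beta_i$ is $2^k$ times a sum of two odd numbers, hence divisible by $2^{k+1}$. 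So the sub-pairing on $\langle q_3',\dots,q_e'\rangle$ has odd off-diagonal and even diagonal entries in the $2^k$-normalised matrix. By contrast, for $i>e$ the quantity $\alpha_i\beta_2+\alpha_2\beta_i$ has valuation $\mathrm{val}_2(\alpha_i)<k$, so $\ell_M(q_i',q_i')$ has valuation exactly $-\mathrm{val}_2(\alpha_i)$ and is \emph{odd}; and couplings to lower-order generators have strictly smaller denominator. \emph{(b)} If $\varepsilon(M)=0$ then $\sum\beta_i/\alpha_i=0$: the terms of smallest valuation are those with $i\le e$, and their sum is $2^{-k}$ times a sum of $e$ odd numbers, which for the total to vanish must be divisible by $2^{-k+1}$ — so $e$ is even. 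Running the same bookkeeping on $\varepsilon(M)=-\sum\beta_i/\alpha_i$ when $\varepsilon(M)\neq0$ shows $v=k$ when $e$ is odd and $v>k$ when $e$ is even. (Here $n$ is automatically odd, since $m\alpha_2+n\beta_2=1$ with $\alpha_2$ even; this is what makes $\ell_M(s,s)$ come out even when $v=k$ and odd when $v>k$, on inspecting $\ell_M(s,s)=[-(\alpha_1+n\alpha_1\alpha_2\varepsilon(M))/\alpha_1\alpha_2^2\varepsilon(M)]$.)

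Now I would run the orthogonal decomposition of the localised matrix exactly as in the reduction described before Theorem A9, which peels $\ell_M$ into homogeneous pieces of decreasing exponent without changing the residues mod $2$ of the diagonal blocks. The piece of maximal exponent is the one built from $\{q_3',\dots,q_e'\}$, joined by $s$ precisely when $\varepsilon(M)\neq0$ and $v=k$, i.e.\ when $e$ is odd. By (b) this index set has even cardinality ($e-2$ when $\varepsilon(M)=0$ or $e$ even, $e-1$ when $e$ odd and $\varepsilon(M)\neq0$), so the corresponding matrix reduces mod $2$ to $J-I$, which is invertible in even size; the peeling is therefore legitimate and this piece splits off as a genuine orthogonal summand. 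Being nonsingular with even diagonal and odd off-diagonal entries it is an even pairing on a homogeneous group of exponent $2^k$, so by Theorem A7 (equivalently \cite{KK80,Wa64}) it is an orthogonal sum of copies of $E_0^k$ and at most one $E_1^k$, hence carries an even indecomposable component of exponent $2^k$ (the $2$-part of $\alpha_1$) exactly when it is nonempty, i.e.\ exactly when $e\ge3$. Every other homogeneous piece has, by (a), an odd diagonal entry, so can be diagonalised by iterating Lemma A8, contributing only cyclic indecomposables $\ell_w$; these are odd, so no even component hides there. This proves the first two sentences. When $e$ is even and $\varepsilon(M)\neq0$, part (b) gives $v>k$, and (a) shows $\ell_M(s,s)$ then has order exactly $2^v$, which is the exponent of $\mathbb{Z}_{(2)}\otimes\tau_M$; hence $\ell_M$ restricted to $\langle s\rangle$ is nonsingular and $\langle s\rangle$ splits off, giving the third sentence. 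Finally, if $e=2$ then (when $\varepsilon(M)\neq0$) the $i\le2$ part of $\sum\beta_i/\alpha_i$ has valuation $\ge1-k$ and the $i\ge3$ part has valuation $\ge-\mathrm{val}_2(\alpha_3)\ge1-k$, so $\mathrm{val}_2(\varepsilon(M))\ge1-k$ and $\mathrm{val}_2(\alpha_1\alpha_2\varepsilon(M))=2k+\mathrm{val}_2(\varepsilon(M))\ge k+1\ge2+\mathrm{val}_2(\alpha_3)=\mathrm{val}_2(4\alpha_3)$; for $\varepsilon(M)=0$ the divisibility is trivial.

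I expect the main obstacle to be the bookkeeping in (b) together with the interaction of $s$ with the maximal-exponent block: one must keep straight the three regimes ($\varepsilon(M)=0$; $\varepsilon(M)\neq0$ with $e$ odd; $\varepsilon(M)\neq0$ with $e$ even) and, in particular, verify that $s$ joins the even block precisely when nonsingularity forces it (odd $e$), and otherwise splits off as an odd cyclic summand of larger exponent. Everything else is the mechanical $2$-adic block-diagonalisation already used for Theorem A9 and the structure theory of linking pairings on $2$-groups.
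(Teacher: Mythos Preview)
Your approach is essentially the paper's: both use the block-diagonalisation of \S3 (which preserves parities of diagonal blocks) together with the explicit linking values from \S1, then run a case analysis on the parity of $e$ and whether $\varepsilon(M)=0$. The paper's proof is extremely terse and your observations (a) and (b) are exactly the arithmetic it leaves implicit.

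There is one small gap. Your claim in (b) that ``$v=k$ when $e$ is odd'' tacitly assumes $\mathrm{val}_2(\alpha_2)=k$, i.e.\ $e\ge2$. When $e=1$ (necessarily $\varepsilon(M)\neq0$) one has $\mathrm{val}_2(\alpha_2)<k$ and in fact $v=\mathrm{val}_2(\alpha_2)$, so your dichotomy ``$v=k$ versus $v>k$'' for the parity of $\ell_M(s,s)$ does not apply. The conclusion still holds: with $e=1$ one finds $\mathrm{val}_2(\alpha_2\varepsilon(M))=\mathrm{val}_2(\alpha_2)-k<0$, so $1+n\alpha_2\varepsilon(M)$ has the same (negative) valuation and $2^v\ell_M(s,s)$ is odd; thus $s$ contributes an odd diagonal entry and, since the $q_i'$ with $i\ge3>e$ also have odd diagonals by your (a), there is no even component. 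This is precisely the paper's subcase ``if $e=1$ then $\ell_M(s,s)$ is odd'', which your three-regime list at the end omits. Once you add this fourth regime the argument is complete and matches the paper's.
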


\begin{proof}
The block-diagonalization process of \S3 does not change the parity of the entries in the diagonal blocks $D_i$.
The first assertion then follows easily from the calculations of \S1.
(More precisely, if $e$ is odd then $\alpha_1\varepsilon(M)$ is odd and
$\mathbb{Z}_{(2)}\otimes\tau_M$ has exponent $\alpha_2$
If moreover $e=1$ then $\ell_M(s,s)$ is odd and (so) the homogeneous components 
are all odd.
If $e>1$ and is odd then $\ell_M(s,s)$ is even and the component of maximal exponent is even.
If $e$ is even and $\varepsilon(M)=0$ then the componet of maximal exponent is non-tricial (and even) if and only if $e>2$.
If $e$ is even and $\varepsilon(M)\not=0$ then $s$ has order $>\alpha_1$, 
and $\ell_M(s,s)$ is odd.
If moreover $e>2$ then the component of exponent $\alpha_1$ is even and has
rank $e-2$.
In each case, all other components are odd.)

The final assertion is clear if $\varepsilon(M)=0$ and follows by elementary arithmetic otherwise, for then $\alpha_1\varepsilon(M)$ is even.
\end{proof}

This provides  a criterion for recognizing
pairings which are not realizable by Seifert manifolds 
that is independent of the choice of generators.
Let $N\cong{N'}\oplus{N''}$ be a finite abelian group,
where the homogeneous summands of $N'$ have exponent $\geq4$ and $2N''=0$,
and let $\ell=\ell'\perp\ell''$,
where $\ell'$ is a pairing on $N'$ and 
and $\ell''$ is a hyperbolic pairing on $N''$.
Then $\ell(x,x)=0$ for all $x\in{N}$ such that $2x=0$.
In particular, if $N'$ is not cyclic then $\ell$ is 
not realizable by a Seifert manifold.

For example, if $M$ is the
$\mathbb{N}il^3$-manifold $M(0;(2,1),(2,1),(2,1),(2,-1))$
then $\ell_M\cong\ell_{\frac14}\perp{E_0^1}$,
but this pairing is not realizable by a Seifert manifold 
with $\varepsilon(M)=0$.
The pairing $E_0^2\perp{E_0^1}$ is not realized by any Seifert manifold 
with orientable base orbifold.
It is however the linking pairing of $M(-2;((2,1),(2,1),(2,1),(2,1)))$.

If $\mathbb{Z}_{(2)}\otimes\tau_M$ has exponent divisible by 16 
and a direct summand of order $2$ then
there are cone point orders $\alpha_1$ and $\alpha_m$ such that
$\alpha_1$ is divisible by $4$ and $\alpha_m=2u_m$ with $u_m$ odd,
by Lemma \ref{CH-lem3.4}.
It then follows from Theorem \ref{CH-thm3.7} that
$\mathbb{Z}_{(2)}\otimes\ell_M\cong\ell'\perp\ell_{\frac12}$,
for some pairing $\ell'$.
In particular, 
$E_0^4\perp{E_0^1}$ is not realized by any orientable Seifert fibred 3-manifold
at all.

The condition of Theorem {A10} are the only constraints on the class 
of linking pairings realized by Seifert manifolds with $\varepsilon(M)=0$.

\begin{thm}
{\bf{A11}.}
Let $\ell$ be a linking pairing on a finite abelian group $A$.
Then there is a Seifert manifold $M=M(0;S)$ with $\varepsilon(M)=0$ 
and such that $\ell_M\cong\ell$ if and only if the components of the 
$2$-primary summand of $\ell$ other than the component of 
maximal exponent are all odd.
\end{thm}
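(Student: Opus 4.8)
The plan is to prove both directions of Theorem A11, relying heavily on the localization machinery and the explicit Seifert-data constructions already set up in this appendix.

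For the \emph{necessity} direction, suppose $M=M(0;S)$ with $\varepsilon(M)=0$. First I would observe that since $\varepsilon(M)=0$, Theorem A10 forces the even component of $\mathbb{Z}_{(2)}\otimes\ell_M$ (if non-trivial) to have exponent $\alpha_1$, i.e.\ to be the component of maximal exponent, and that all the other $2$-primary homogeneous components are odd (this is exactly the content of the parenthetical analysis in the proof of Theorem A10 applied to the case $\varepsilon(M)=0$: the components of exponent below $\alpha_1$ are all odd, and when $e$ is even and $\varepsilon(M)=0$ the maximal-exponent component is non-trivial and even precisely when $e>2$). The odd-primary summands carry no parity constraint since $2$-divisibility is not an issue there, so nothing further is needed at odd primes. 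This gives the stated necessary condition.

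For the \emph{sufficiency} direction, suppose $\ell$ is a linking pairing on a finite abelian group $A$ whose $2$-primary summand $\ell^{(2)}$ decomposes (via the block-diagonalization of \S3, which respects parity) as an orthogonal sum $\ell_{max}\perp(\perp_j \ell_j)$ where $\ell_{max}$ is the component of maximal exponent $2^k$ and each $\ell_j$ is odd of exponent $2^{k_j}<2^k$. The plan is to build $S$ as a concatenation of primary pieces, using Lemma A1 to glue, so I must arrange that at each prime $p$ the relevant cone-point orders are powers of $p$ and coprime to all the others, and that $\varepsilon=0$ on each piece. For odd primes I would invoke Theorem A4 directly: each $p$-primary summand is realized by some $M(0;S(p))$ with all cone point orders powers of $p$ and $\varepsilon=0$. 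For $p=2$ the work is to handle $\ell^{(2)}$: realize the odd part $\perp_j\ell_j$ as a diagonalizable pairing using the construction in the proof of Theorem A9 (the $\varepsilon(M)=0$ case with $\alpha_1=\alpha_2=2^{k'+2}$ and the trick after Lemma A8), and realize $\ell_{max}$ — which is either even (some $(E_0^k)^{\,s}$ or $(E_0^k)^{\,s-1}\perp E_1^k$) or odd (diagonalizable) of exponent exactly $2^k$ — again by the $\varepsilon(M)=0$ recipes of Theorem A9. The one subtlety is ensuring these two $2$-primary pieces can be assembled into a single Seifert manifold with $\varepsilon=0$ and without the maximal exponent of $\mathbb{Z}_{(2)}\otimes\tau_M$ being disturbed; since all the cone point orders here are powers of $2$, Lemma A1 does not directly apply between the two $2$-pieces, so I would instead merge them into one block-matrix argument as in \S3, choosing the $\alpha_i$ for the odd part larger than $2^k$ (say $2^{k+2}$) so that the odd part's homogeneous components sit \emph{above} $\ell_{max}$ only if we want them to — but actually we want $\ell_{max}$ on top, so I would instead take all odd-part cone orders to be $2^{k_j}<2^k$ and arrange the maximal-exponent even or odd component with cone orders $2^k$ (or $2^{k+2}$ for the odd-diagonalizable case, noting the exponent of $\mathbb{Z}_{(2)}\otimes\tau_M$ is still $2^k$ after the change of basis, exactly as in the $\varepsilon(M)=0$ branch of Theorem A9's proof). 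Then block-diagonalizing recovers $\ell_{max}$ from the top block and the $\ell_j$ from the lower diagonal blocks modulo $8$, which is enough by the classification.

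The main obstacle I anticipate is the bookkeeping in this last assembly: verifying that, after concatenating an odd-part recipe and a maximal-exponent recipe into one Seifert datum with $\varepsilon(M)=0$, the block-diagonalization of \S3 genuinely separates $\ell$ into the prescribed summands without creating an unwanted even component of non-maximal exponent (which Theorem A10 would anyway forbid, but which I need to see does not happen by accident) and without the cross terms between the two blocks shifting residues mod $8$. This amounts to re-running the estimates in the proof of Theorem A9 with the two recipes placed side by side and checking the off-diagonal entries are sufficiently $2$-divisible; I expect it to work exactly as there, but it is the step that needs care rather than citation.
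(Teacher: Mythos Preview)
Your necessity argument and the odd-prime part of sufficiency are correct and match the paper. The confusion is in the $2$-primary construction.

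The plan to invoke Theorem~A9 separately for $\ell_{max}$ and for the odd part $\perp_j\ell_j$, and then concatenate, does not work as stated. Theorem~A9 treats only \emph{homogeneous} pairings, so it cannot be applied to the inhomogeneous odd part as a black box; more seriously, each $\varepsilon=0$ recipe from A9 carries its own pair of ``extra'' top cone points (the ones that get eliminated in the presentation of $\tau_M$), so naively concatenating two such recipes produces a torsion group with the wrong shape: the extra cone points from the lower recipe become genuine summands of exponent strictly between $2^{k_2}$ and $2^{k_1}$. Your back-and-forth on whether the odd-part cone orders should exceed or fall below $2^k$ is a symptom of this.

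The paper's fix, which your final paragraph gropes toward but does not pin down, is to build a \emph{single} Seifert datum for the whole $2$-primary part: exactly two cone points of top order (namely $2^{k_1}$ if $\ell_1$ is even, or $2^{k_1+2}$ if $\ell_1$ is odd), then $\rho_j$ further cone points of order $2^{k_j}$ for each $1\le j\le t$, with $\beta_1$ determined last to force $\varepsilon=0$. The block-diagonalization check you correctly flag is then the substance of the proof: the paper shows that since the top block $D_1$ satisfies $D_1^{-1}\equiv D_1\bmod 2$ (and one computes $D_1^{-1}\bmod 4$ explicitly), the first reduction step leaves the lower blocks unchanged $\bmod 4$, and the later steps are controlled similarly, so one can choose the numerators $\beta_i$ for the lower cone points to hit any prescribed odd $\ell_j$.
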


\begin{proof}
The condition is necessary, by Theorem {A10}.
By Lemma {A1} and Theorem {A4} it shall suffice to assume that $A$ is 2-primary,
and to realize $\ell$ by a Seifert manifold $M(0;S)$ with $\varepsilon(M)=0$ 
and such that all the cone point orders are powers of 2.

Suppose that $\ell=\perp_{j=1}^t\ell_j$, where $\ell_j$ is a pairing on
$(\mathbb{Z}/2^{k_j}\mathbb{Z})^{\rho_j}$,
for $1\leq{j}\leq{t}$,
$k_1>\dots>k_t>0$ and $\ell_j$ is odd for  $2\leq{j}\leq{t}$.
We may assume that $t>1$ and $\rho_j>0$ for $1\leq{j}\leq{t}$,
since the homogeneous case is covered by Theorem {A9}.

We must have two cone points of order at least $2^{k_1}$,
and $\rho_j$ further cone points of order $2^{k_j}$, for $1\leq{j}\leq{t}$.
We shall set $\beta_2=1$ and choose the $\beta_i$ with $i>2$ compatibly with $\ell$,
essentially by induction on $t$.
If we make thse choices then we must have $\beta_1=-\alpha_1\Sigma_{i\geq2}\frac{\beta_i}{\alpha_i}$.
(Note that the presentation of $\ell_M$ given in \S1 above does not invoke $\beta_1$ when $\varepsilon(M)=0$.)

Suppose first that $\ell_1$ is even.
Then $\rho_1$ is even and we must have $\alpha_1=\alpha_2=2^{k_1}$ also.
If $\ell_1$ is hyperbolic let $\beta_i=(-1)^i$ for $3\leq{i}\leq\rho_1+2$; 
if $\ell_1$ is even but not hyperbolic and $\rho_1\equiv2$ {\it mod} $(4)$ let $\beta_3=1$ and $\beta_i=(-1)^i$ for $4\leq{i}\leq\rho_1+2$;
and if $\ell_1$ is even but not hyperbolic and $\rho_1\equiv0$ {\it mod} $(4)$ let $\beta_3=\beta_4=\beta_5=1$ and $\beta_i=(-1)^i$ for $6\leq{i}\leq\rho_1+2$.

When $p=2$ the block-diagonalization process of \S3 does not change the parity of the entries of the blocks $B_m$ and $D_n$.
In our situation this process allows a more refined reduction.
The block $DA=D_1$ has diagonal entries $-\beta_i(1+\beta_i)$ and off-diagonal entries $-\beta_i\beta_j$, for $3\leq{i,j}\leq\rho_1+2$.
Let $\Delta=-\mathrm{diag}[\beta_3,dots,\beta_{\rho_1+2}]$ and $N=D-\Delta$.
Then $\Delta\equiv{I}$ {\it mod} $(2)$, 
since the $\beta_i$s are all odd, so $\Delta^2\equiv{I}$ {\it mod} $(4)$, and
\[
N^2\equiv-(\Sigma_{i=3}^{\rho_1+2}\beta_i^2)N\equiv\rho_1N\quad{mod}~(4).
\]
In particular $N^2\equiv0$ {\it mod} $(2)$, and so $D^{-1}\equiv{D}$  {\it mod} $(2)$,.
Since
\[
[B_m^{tr}DB_n]_{pq}=-\beta_p\beta_q((\Sigma_{i=3}^{\rho_1+2}\beta_i^2)^2+
\Sigma_{i=3}^{\rho_1+2}\beta_i^3)
\]
is even, for $3\leq{p,q}\leq2+\Sigma_{j=1}^t\rho_j$,
the first step of the reduction does not change the images of the complementary blocks 
{\it mod} $(4)$.
Truncating  a geometric series gives $D^{-1}\equiv{D}+(\rho_1-2)N$ {\it mod} $(4)$.
Hence the change {it mod} $(8)$ depends only on the residues {\it mod} $(4)$ of the $\beta_i$s, for $3\leq{i}\leq\rho_1+2$, since $-4\equiv4$ {\it mod} $(8)$.
If $k_1-k_2\geq2$ this step does not change the images {\it mod} $(8)$ at all.

The subsequent blocks have odd diagonal matrices and even off-diagonal elements.
Nevertheless a similar reduction applies,
and further changes depend only on the $\beta_i$s already determined.
(If $k_j-k_{j+1}\geq2$ for all $j$ then they depend only on the ranks 
of the homogeneous terms.)
It is clear that we may then choose the numerators $\beta_i$ to realize
all the pairings $\ell_j$.

If $\ell_1$ is odd then $\ell_1=\perp_{i=1}^{\rho_1}\ell_{w_i}$,
where $w_i=2^{-k+1}b_i$ for $1\leq{i}\leq\rho$.
Let $\alpha_1=\alpha_2=2^{k_1+2}$,
and $\beta_i=3b_i$ for $3\leq{i}\leq\rho_1+2$.
Then we may continue as before.
\end{proof}

A similar argument applies when $\varepsilon(M)\not=0$.
However we shall only sketch the argument in this case.

\begin{thm}
{\bf{A12}.}
Let $\ell$ be a linking pairing on a finite abelian group $A$.
Then there is a Seifert manifold $M=M(0;S)$ with $\varepsilon(M)\not=0$ 
and such that $\ell_M\cong\ell$ if and only if the $2$-primary
components of $\ell$ satisfy the conditions of Theorem {A10}.
\end{thm}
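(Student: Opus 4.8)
The necessity of the stated condition is contained in Theorem {A10}, whose statement and proof make no use of the sign of $\varepsilon(M)$: for any Seifert manifold with orientable base the only homogeneous component of $\mathbb{Z}_{(2)}\otimes\ell_M$ which can be even is the one of maximal exponent, and when $\varepsilon(M)\not=0$ and the parameter $e$ of Theorem {A10} is even the image of $s$ must generate a cyclic summand of that maximal exponent. So the content of the theorem is the converse, and the plan is to retrace the proofs of Theorems {A5}, {A9} and {A11}, keeping the generator $s$ in play throughout.

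For the converse I would first reduce to a prescribed $2$-primary pairing. Decompose $\ell=\perp_{p\in{P}}\ell^{(p)}$ over the primes dividing $|A|$. When $A$ has odd order this is Theorem {A5}, so assume $2\in{P}$. For each odd $p$ I would take Seifert data $S(p)$ with all cone point orders powers of $p$, one pair for each cyclic summand of $\ell^{(p)}$, as in the construction of Theorem {A5}. For $p=2$ I would take (by the $2$-primary step below) Seifert data $S(2)$ with all cone point orders powers of $2$ which realises $\ell^{(2)}$ and has generalised Euler number with odd numerator and denominator $2^{k_1}$, where $2^{k_1}$ is the exponent of the top component of $\ell^{(2)}$; the shape of this Euler number ensures, via Lemma {A6}, that $\alpha_1\varepsilon(M)$ stays odd after the remaining data are appended, which is exactly what allows the top component of $\mathbb{Z}_{(2)}\otimes\ell_M$ to be even. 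Finally I would append one pair $(\widetilde\alpha,\widetilde\beta)$ with $\widetilde\alpha=\prod_{p\in{P},\,p\,\mathrm{odd}}p$ (so $\widetilde\alpha$ is odd and leaves the $2$-primary data untouched, while supplying the extra $p$-power cone point needed to make the number of cone points divisible by $p$ equal to $\rho(p)+1$ at each odd $p$) and $\widetilde\beta$ chosen so that $\varepsilon(M)\not=0$ and $\widetilde\beta\equiv-1$ {\it mod} $(p)$ for every odd $p\in{P}$. As in Theorem {A5} this congruence forces the determinantal invariants of the top $p$-primary components to be the prescribed ones, and since the cone point orders of the several pieces are coprime apart from the shared prime factors at the $\widetilde\alpha$-cone point, the presentation of \S1 for the concatenated data splits as the orthogonal sum of the pieces; thus $\ell_M\cong\ell$ and $\varepsilon(M)\not=0$.

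It then remains to carry out the $2$-primary step: given a $2$-primary $\ell$ satisfying the condition of Theorem {A10}, produce $M(0;S)$ with cone point orders powers of $2$, with $\varepsilon(M)$ of odd numerator and denominator $2^{k_1}$, and $\ell_M\cong\ell$. Here I would imitate the proof of Theorem {A11} almost verbatim, taking the $\varepsilon\not=0$ homogeneous realisations of Theorem {A9} as building blocks and using the block-diagonalisation of \S3 to see that, on concatenating them, the matrix of $\ell_M$ reduces modulo successively higher powers of $2$ to the block sum of the matrices of the homogeneous constituents $\ell_j$, each reduction step depending only on the numerators $\beta_i$ already chosen. One writes $\ell=\perp_{j=1}^t\ell_j$ with $k_1>\dots>k_t$, $\ell_1$ even or odd and $\ell_2,\dots,\ell_t$ odd; sets $\beta_2=1$; chooses the numerators of the top block together with the row and column attached to $s$ so that the maximal-exponent component has the prescribed rank and invariants $\sigma_j$; chooses the numerators of the lower (odd) blocks by diagonalisation as in Lemma {A8} and Theorem {A9}; and finally takes $\beta_1$ so that $\varepsilon(M)$ has the required value.

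The main obstacle, I expect, is precisely the behaviour of the extra cyclic summand generated by $s$. In the $\varepsilon=0$ case of Theorem {A11} the torsion module is presented with no reference to $s$, whereas here $s$ carries the maximal exponent and the entries $\ell_M(s,s)$ and $\ell_M(s,q_i')$ involve the denominator of $\varepsilon$; when $e$ is even this summand must be absorbed into the even top component, and one has to verify, using Theorem {A10} together with Lemma {A6}, that the invariants of the resulting even component can be matched to those of the prescribed $\ell_1$. One must also track carefully, through the block-diagonalisation, how the numerator of the appended $(\widetilde\alpha,\widetilde\beta)$-cone point and, at odd primes, the $p$-adic valuation of $\varepsilon(M)$ propagate. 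Once these compatibilities are checked the argument closes up exactly as in Theorems {A5}, {A9} and {A11}.
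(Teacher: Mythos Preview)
Your strategy is the same as the paper's --- retrace Theorems {A5}, {A9}, {A11}, keeping track of $s$ --- and your sketch of the $2$-primary step is essentially right. But there are two concrete gaps.

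First, your $\widetilde\alpha = \prod_{p\in P,\, p\text{ odd}} p$ is too small. At an odd prime $p$ it has $p$-adic valuation $1$, so (unless every odd-primary summand has exponent $p$) it does \emph{not} play the r\^ole of $\alpha_1$ in the localized presentation of \S1; the cone points in $S(p)$ do. Then $v_p(\varepsilon(M))$ is governed by $\varepsilon(S(p))$, which is generically $-k_1^{(p)}$, and the $p$-primary order of $\tau_M$ comes out as $p^{\,1+a_2+\dots+a_{\rho(p)}}$ rather than the required $p^{\,a_1+\dots+a_{\rho(p)}}$. The paper avoids this not by appending a new odd cone point but by \emph{replacing} the first $2$-primary cone point order $\alpha_1$ with $\widetilde\alpha = \alpha_1 E\Pi$, where $E$ is the product of the exponents of the odd-primary summands; then $\widetilde\alpha$ has maximal $p$-valuation at every prime simultaneously, and one can set $\varepsilon(M)=1/(dE\Pi)$ so that $v_p(\varepsilon(M))=-(k_1^{(p)}+1)$ and $v_2(\varepsilon(M))=-v_2(d)$ as required. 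Your claim that ``the presentation of \S1 for the concatenated data splits as the orthogonal sum of the pieces'' hides exactly this difficulty; the paper warns explicitly that Lemma {A1} does not apply when $\varepsilon\ne0$.

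Second, your $2$-primary step assumes $\ell_2,\dots,\ell_t$ are all odd, but the conditions of Theorem {A10} for $\varepsilon\ne0$ also allow $\rho_1=1$ with $\ell_2$ even (this is the case $e$ even, $e>2$ in the notation of {A10}: $s$ generates a rank-one odd top component and the even component sits one exponent down). Here the denominator of $\varepsilon(M)$ must be $d=2^{2k_2-k_1}$, not $2^{k_1}$, and the cone-point counts change accordingly; the paper treats this case separately. Your last paragraph signals awareness of some subtlety here, but the phrase ``absorbed into the even top component'' has the structure backwards.
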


\begin{proof}
Suppose first that $A$ is 2-primary, and that $\ell=\perp_{j=1}^t\ell_j$,
where $\ell_j$ is a pairing on $(\mathbb{Z}/2^{k_j}\mathbb{Z})^{\rho_j}$,
for $1\leq{j}\leq{t}$.
Suppose also that $k_1>\dots>k_t>0$ and $\ell_j$ is odd, for $3\leq{j}\leq{t}$,
and that either $\ell_2$ is odd or $\rho_1=1$ and $\ell_2$ is even.
We may again assume that $t>1$ and $\rho_j>0$, for $1\leq{j}\leq{t}$,
since the homogeneous case is covered by Theorem {A9}.

The cone point orders are essentially determined by Theorem {A10}.
If $\ell_1$ is even we must have $\rho_1+1$ cone points of order $2^{k_1}$ 
and $\rho_j$ cone points of order $2^{k_j}$, for $2\leq{j}\leq{t}$.
If $\ell_2$ is even then $\rho_1=1$ and we must have $\rho_2+2$ cone points of order $2^{k_2}$ and $\rho_j$ cone points of order $2^{k_j}$, for $3\leq{j}\leq{t}$.
If the components $\ell_j$ are all odd then we may choose one cone point of order $\alpha_1>2^{k_1}$ and $\rho_j$ cone points of order $2^{k_j}$, for $1\leq{j}\leq{t}$.
(If $\rho_1=1$ and $k_1-k_2\geq2$ we could instead choose two cone points of order $2^{k_0}$, where $k_1>k_0>k_2$, and $\rho_j$ cone points of order $2^{k_j}$,
for  $2\leq{j}\leq{t}$.
However, we shall not use this option.)

We then have $\varepsilon(M)\frac{b}d$, 
where $b$ is odd and $d=\alpha_1$ if $\ell_2$ is odd and $d=2^{2k_2-k_1}$ 
if $\ell_2$ is even.
Let $\beta_2=1$ and choose the $\beta_i$s with $i>2$ compatibly with $\ell$ 
and our choice for $\varepsilon(M)$.
Let $\Sigma'=\Sigma_{i\geq2}\frac{\beta_i}{\alpha_i}$.
Then we must have $\beta_1=-\alpha_1(\varepsilon(M)+\Sigma')$.
(In each case this is odd.)

From here the strategy is as in Theorem {A11}, 
and we shall not give further details for this part of the construction.

In order to construct $M$ we may assume that $b_1=1$, 
and hence that $\varepsilon(M)=\frac1{2^k}$.
However, when $A$ also has odd primary summands we cannot use
Lemma {A1} to reduce to the 2-primary case.
In order to extend the argument of Theorem {A5} to pairings with 2-primary summands, 
it is convenient to allow $b$ to be the {\it inverse\/} of an odd integer.

Let $P$ be the finite set of odd primes dividing the order of $A$.
Let $\Pi=\Pi_{p\in{P}}p$ and $E$ be the product of the exponents of the odd primary summands of $A$,
and let $\varepsilon(M)=\frac1{dE\Pi}$.

If $\ell_2$ is odd we may define Seifert data $S(p)$ as in Theorem {A5}, 
for each $p\in {P}$.
We then construct Seifert data 
$S(2)=((\alpha_2,\beta_2),\dots,(\alpha_{\rho(2)},\beta_{\rho(2)}))$
with $\alpha_i$ a power of 2, for $i>1$, as above.
However we now replace $\alpha_1$ by $\widetilde\alpha=\alpha_1E\Pi$.

If $\ell_2$ is even then for each $p\in{P}$ we have $\widetilde\alpha\varepsilon(M)\equiv\frac{\alpha_1}d$ {\it mod} $(p)$,
and we must modify the choices of some of the $\beta_i^{(p)}$s for the
Seifert data $S(p)$ slightly.

Finally, let $\widetilde\beta=
-\widetilde\alpha(\varepsilon(M)+\Sigma'+
\Sigma_{p\in{P}}\Sigma_{i=1}^{t(p)}\frac{\beta_i^{(p)}}{\alpha_i^{(p)}})$, 
where $d$ and $\Sigma'$ are defined as before, 
in terms of the Seifert data of the cone points of even order.
(Note that $\widetilde\beta$ is odd and $\widetilde\beta\equiv-1$ {\it mod} $(p)$, 
for $p\in{P}$, and $\varepsilon(M)=\frac{2^m}{\widetilde\alpha}$, for some $m\geq0$.)
\end{proof}

\chapter*{Appendix B.  Homologically balanced nilpotent groups}

In this appendix we shall present what we know about homologically balanced nilpotent groups.
If $G$ is such a group it can be generated by 3 elements 
\cite[Theorem 2.7]{Lub83}, 
and if $\beta_1(G;\mathbb{Q})=3$ then $G\cong\mathbb{Z}^3$ \cite{Hi22}.

\section*{Unipotent actions}

We shall extend the term ``unipotent",
to say that an action $\alpha:G\to{Aut}(A)$ is unipotent if
$\alpha(g)$ is unipotent for all $g\in{G}$.

\begin{lem}
{\bf B1.}
\label{filter}
Let $N$ be a finitely generated nilpotent group which acts unipotently
on a finitely generated abelian group $A$,
and let $\mathfrak{n}$ be the augmentation ideal of $\mathbb{Z}[N]$.
Then $A$ has a finite filtration $A=A_1>\dots>A_k=A^N>A_{k+1}=0$
by $\mathbb{Z}[N]$-submodules,
where $A^N$ is the fixed subgroup and $\mathfrak{n}A_i\leq{A_{i+1}}$, 
for $i\leq{k}$.
\end{lem}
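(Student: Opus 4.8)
The plan is to reduce the lemma to the single assertion that the augmentation ideal acts nilpotently on $A$, that is, that $\mathfrak{n}^m A = 0$ for some $m\geq1$, and to prove this by induction on a length invariant of $N$.

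First I would fix the inductive measure. Since $N$ is finitely generated and nilpotent, its Hirsch length $h(N)$ is finite and its torsion subgroup $T(N)$ is finite; write $t(N)$ for the number of prime factors of $|T(N)|$, counted with multiplicity, and induct on the pair $(h(N),t(N))$ ordered lexicographically. The case $N=1$ is immediate, since then $\mathfrak{n}=0$. For the inductive step, choose $z\in\zeta{N}\setminus\{1\}$ generating a subgroup $\langle z\rangle$ isomorphic to $\mathbb{Z}$ or to $\mathbb{Z}/p\mathbb{Z}$ for some prime $p$; this is possible because $\zeta{N}$ is a nontrivial finitely generated abelian group. Passing to $\overline N=N/\langle z\rangle$ strictly decreases $(h,t)$: if $z$ has infinite order then $h$ drops, and otherwise $T(\overline N)=T(N)/\langle z\rangle$ and $t$ drops. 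Now $\overline N$ acts unipotently on $\overline A:=A/(z-1)A$, since the action of $N$ on this quotient is unipotent and factors through $\overline N$. By the inductive hypothesis $\mathfrak{n}_{\overline N}^{\,s}\overline A=0$ for some $s$, and since $\mathfrak{n}_{\overline N}$ is the image of $\mathfrak{n}$ under $\mathbb{Z}[N]\to\mathbb{Z}[\overline N]$ this says $\mathfrak{n}^s A\subseteq(z-1)A$.

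The crucial point is then that $z-1$ is central in $\mathbb{Z}[N]$, because $z\in\zeta{N}$. Hence $\mathfrak{n}^s\bigl((z-1)A\bigr)=(z-1)\,\mathfrak{n}^sA\subseteq(z-1)^2A$, and iterating gives $\mathfrak{n}^{sj}A\subseteq(z-1)^jA$ for all $j\geq0$. Since $A$ is finitely generated and $z$ acts unipotently, $(z-1)^rA=0$ for some $r$, so $\mathfrak{n}^{sr}A=0$ and the assertion holds with $m=sr$.

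To finish, apply the assertion to the unipotent action of $N$ on $B:=A/A^N$: there is an $m$ with $\mathfrak{n}^m B=0$. Let $A_i\subseteq A$ be the preimage of $\mathfrak{n}^{i-1}B$ for $1\leq i\leq m+1$, and set $A_{m+2}=0$; then $A_1=A$, $A_{m+1}=A^N$ (the preimage of $0$ in $B$), each $A_i$ is a $\mathbb{Z}[N]$-submodule, and $\mathfrak{n}A_i$ maps into $\mathfrak{n}\cdot\mathfrak{n}^{i-1}B=\mathfrak{n}^iB$, so $\mathfrak{n}A_i\subseteq A_{i+1}$ (with $\mathfrak{n}A^N=0=A_{m+2}$). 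Deleting repeated terms produces the required strictly decreasing chain $A=A_1>\cdots>A_k=A^N>A_{k+1}=0$. The main obstacle is the central step $\mathfrak{n}^mA=0$; once the centrality of $z-1$ is in hand the remaining steps are bookkeeping, and the only mild care needed elsewhere is the choice of a well-founded induction measure and the device of first quotienting by $A^N$ so that the chain terminates exactly at the fixed subgroup.
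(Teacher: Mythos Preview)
Your proof is correct and takes a genuinely different route from the paper's. The paper inducts on the nilpotency class of $N$: since $\zeta N$ acts unipotently, the fixed subgroup $A^{\zeta N}$ is nontrivial, $N/\zeta N$ acts on it, and induction furnishes a filtration there; the paper then asserts that $N/\zeta N$ also acts on $\overline A=A/A^{\zeta N}$ and invokes induction again, splicing the two filtrations. (As literally written this step slips, since $\zeta N$ need not act trivially on $\overline A$; one really wants a secondary induction on the length of $A$ to handle $\overline A$, after which the splice goes through.) You instead reduce everything to the single assertion $\mathfrak n^{m}A=0$, peel off one cyclic central subgroup $\langle z\rangle$ at a time, and exploit the centrality of $z-1$ in $\mathbb Z[N]$ to bootstrap from $\mathfrak n^{s}A\subseteq(z-1)A$ to $\mathfrak n^{sr}A=0$. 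Your version needs a slightly heavier induction scaffold but is fully self-contained and yields the reusable form $\mathfrak n^{m}A=0$ directly; the paper's version is quicker and more structural once the point about $\overline A$ is patched.
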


\begin{proof}
We induct on the length of the upper central series of $N$.
The centre $\zeta{N}$ is a nontrivial abelian group which acts unipotently
on $A$, and it is easy to see that $A^{\zeta{N}}\not=0$.
The quotient $N/\zeta{N}$ acts unipotently on each of $A^{\zeta{N}}$ and 
$\overline{A}=A/A^{\zeta{N}}$, and so these each have such filtrations,
by the inductive hypothesis. The preimages of the filtration
of $\overline{A}$ in $A$ combine with the filtration of $A^{\zeta{N}}$ 
to give the required filtration.
\end{proof}

It is easy to see that the product of commuting unipotent automorphisms
is unipotent.
This observation extends to show that an action of a nilpotent group 
$N$ is unipotent if $N$  is generated by elements which act unipotently.

We shall find the following notion useful in many of our arguments.
Let  $G$ be a  group and $F$ a field.
Then an $F[G]$-module  $V$ is {\it canonically subsplit\/} if it contains a nontrivial direct sum of $F[G]$-submodules.
If $G$ acts unipotently on $V$ and $V$ is canonically subsplit then 
the subspaces of the summands fixed by $G/K$ are non-trivial,  
by Lemma B1,
and so the subspace $V^G$ fixed by $G$ has dimension $>1$.

\begin{lem}
{\bf B2.}
\label{cycad}
Let $A$ be a finitely generated abelian group and $p$ a prime
such that $A$ has non-trivial $p$-torsion and $\dim_{\mathbb{F}_p}A/pA>1$.
If $p$ is odd or if $p=2$ and $A$ has no $\mathbb{Z}/2\mathbb{Z}$ 
summand then $H_2(A;\mathbb{F}_p)$ and 
$H^2(A;\mathbb{F}_p)$ are each canonically subsplit with respect to the natural action of (subgroups of) $\mathrm{Aut}(A)$.
\end{lem}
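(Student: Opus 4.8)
The plan is to exhibit, functorially in $A$, a direct‑sum decomposition $H^2(A;\mathbb{F}_p)=U(A)\oplus V(A)$ into $\mathrm{Aut}(A)$‑submodules with $U(A)$ and $V(A)$ both nonzero under the stated hypotheses; restricting the $\mathrm{Aut}(A)$‑action to any subgroup $G\leq\mathrm{Aut}(A)$ then shows $H^2(A;\mathbb{F}_p)$ is canonically subsplit. The case of $H_2$ follows by dualising: over the field $\mathbb{F}_p$ the Universal Coefficient Theorem gives a natural (hence $\mathrm{Aut}(A)$‑equivariant) isomorphism $H_2(A;\mathbb{F}_p)\cong\mathrm{Hom}_{\mathbb{F}_p}(H^2(A;\mathbb{F}_p),\mathbb{F}_p)$, under which the annihilators of $U(A)$ and $V(A)$ give the required splitting of $H_2$, with both summands nonzero because $U(A)$ and $V(A)$ are proper.

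For $U(A)$ I would take the image of the cup product $H^1(A;\mathbb{F}_p)\otimes H^1(A;\mathbb{F}_p)\to H^2(A;\mathbb{F}_p)$, and for $V(A)$ the image of the composite
\[
\mathrm{Ext}_{\mathbb{Z}}(A,\mathbb{Z})\;\cong\;\mathrm{tors}\,H^2(A;\mathbb{Z})\;\hookrightarrow\;H^2(A;\mathbb{Z})\;\xrightarrow{\ \bmod p\ }\;H^2(A;\mathbb{F}_p),
\]
the first isomorphism being the natural one from the integral Universal Coefficient sequence (whose quotient $\mathrm{Hom}(\Lambda^2A,\mathbb{Z})$ is free, so $\mathrm{Ext}_{\mathbb{Z}}(A,\mathbb{Z})$ is exactly the torsion subgroup of $H^2(A;\mathbb{Z})$). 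Both subspaces are visibly natural in $A$. To analyse them I would fix a decomposition $A=\bigoplus_i C_i$ into cyclic groups — the conclusions are independent of the choice since $U(A)$ and $V(A)$ are intrinsic — and combine the Künneth theorem with the standard computation of the cohomology rings of cyclic groups: $H^*(\mathbb{Z};\mathbb{F}_p)=\Lambda_{\mathbb{F}_p}(x_1)$ and $H^*(\mathbb{Z}/p^k\mathbb{Z};\mathbb{F}_p)=\Lambda_{\mathbb{F}_p}(x_1)\otimes\mathbb{F}_p[x_2]$ with $\deg x_i=i$, the latter being valid precisely because the hypothesis excludes the case $p=2$, $C_i=\mathbb{Z}/2\mathbb{Z}$ (for which $H^*(\mathbb{Z}/2\mathbb{Z};\mathbb{F}_2)=\mathbb{F}_2[x_1]$ instead). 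This yields three facts. First, every degree‑one class squares to zero — here the no‑$\mathbb{Z}/2\mathbb{Z}$‑summand hypothesis is used — so cup product factors through $\Lambda^2H^1(A;\mathbb{F}_p)$ and is injective there; hence $\dim_{\mathbb{F}_p}U(A)=\binom d2$, where $d=\dim_{\mathbb{F}_p}A/pA$. Secondly, $\dim_{\mathbb{F}_p}H^2(A;\mathbb{F}_p)=\binom d2+s$, where $s=\dim_{\mathbb{F}_p}{}_pA$. Thirdly, in Künneth coordinates $U(A)=\bigoplus_{i<j}H^1(C_i)\otimes H^1(C_j)$, whereas $V(A)$ lands in the complementary sum $\bigoplus_iH^2(C_i;\mathbb{F}_p)$ and has dimension $\dim_{\mathbb{F}_p}(T/pT)=s$, where $T$ is the torsion subgroup of $A$ (the kernel of the mod‑$p$ reduction on $\mathrm{Ext}_{\mathbb{Z}}(A,\mathbb{Z})$ being $pT$, since the integral UCT splits). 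In particular $U(A)\cap V(A)=0$. Combining, $\dim U(A)+\dim V(A)=\dim H^2(A;\mathbb{F}_p)$ together with $U(A)\cap V(A)=0$ forces $H^2(A;\mathbb{F}_p)=U(A)\oplus V(A)$.

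To finish: the hypotheses make both summands nonzero, since $\dim_{\mathbb{F}_p}A/pA>1$ gives $\binom d2\geq1$ so $U(A)\neq0$, and nontrivial $p$‑torsion gives $s\geq1$ so $V(A)\neq0$. The decomposition is natural in $A$, hence $\mathrm{Aut}(A)$‑equivariant, and restricts to a nontrivial direct sum of $G$‑submodules for every $G\leq\mathrm{Aut}(A)$; the same then holds for $H_2(A;\mathbb{F}_p)$ by duality.

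The step I expect to be the main obstacle is keeping $U(A)$ and $V(A)$ genuinely disjoint, which is exactly where the hypothesis on $\mathbb{Z}/2\mathbb{Z}$‑summands bites: when $p=2$ and $A$ has a $\mathbb{Z}/2\mathbb{Z}$ summand, squares of degree‑one classes are nonzero, cup product no longer factors through $\Lambda^2$, the dimension count fails, and the two subspaces overlap — indeed $H^2((\mathbb{Z}/2\mathbb{Z})^2;\mathbb{F}_2)$ is a non‑split extension of $\mathrm{Aut}$‑modules and is genuinely not subsplit, so the hypothesis is sharp. A secondary, routine point to pin down is the $\mathrm{Aut}(A)$‑equivariance of the Ext‑to‑mod‑$p$ map and the identification of its image with $T/pT$, both of which follow from naturality of the integral UCT and the fact that its free quotient splits the sequence.
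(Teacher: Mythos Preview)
Your proof is correct and follows essentially the same line as the paper's: both split $H^2(A;\mathbb{F}_p)$ as the image of cup product $\Lambda^2(A^*)$ plus $\mathrm{Ext}(A,\mathbb{F}_p)$, using the no--$\mathbb{Z}/2\mathbb{Z}$--summand hypothesis to ensure squares of degree--one classes vanish so that cup product factors injectively through the exterior square. The only cosmetic differences are that you reach the $\mathrm{Ext}$ summand via reduction of integral cohomology (your $V(A)$ coincides with the UCT inclusion of $\mathrm{Ext}(A,\mathbb{F}_p)$, since the induced map $\mathrm{Ext}(A,\mathbb{Z})\to\mathrm{Ext}(A,\mathbb{F}_p)$ is the surjection with kernel $p\,\mathrm{Ext}(A,\mathbb{Z})$), and that you obtain the $H_2$ statement by dualising whereas the paper quotes the known natural splitting $H_2(A;\mathbb{F}_p)\cong(A/pA)\wedge(A/pA)\oplus\mathrm{Tor}(A,\mathbb{F}_p)$ directly.
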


\begin{proof} 
Let $W=(A/pA)\wedge(A/pA)$ and 
$A^*=Hom (A;\mathbb{F}_p)=H^1(A;\mathbb{F}_p)$.

Then there is a natural splitting
$H_2(A;\mathbb{F}_p)=W\oplus{Tor(A,\mathbb{F}_p)}$
if $p$ is odd \cite[Chapter V.6]{Br},
or if $p=2$ and $A$ has no $\mathbb{Z}/2\mathbb{Z}$ summand \cite{IZ18}.
There is also a natural epimorphism
$\theta:H^2(A;\mathbb{F}_p)\to{Hom(W,\mathbb{F}_p)}$,
with kernel isomorphic to $Ext(A;\mathbb{F}_p)$ 
\cite[Exercises IV.3.8 and V.6.5]{Br}.

If $p$ is odd then cup product induces a monomorphism 
$c_A:A^*\wedge{A^*}\to{H^2(A;\mathbb{F}_p)}$, 
since $A$ is abelian.
If $p=2$ then cup product defines a homomorphism 
from $A^*\odot{A^*}$ to $H^2(A;\mathbb{F}_2)$.
Since $A$ has no $\mathbb{Z}/2\mathbb{Z}$ summand, 
$Sq(a)=a\cup{a}=0$ for all $a\in{A^*}$, 
and so cup product again induces a monomorphism 
$c_A:A^*\wedge{A^*}\to{H^2(A;\mathbb{F}_p)}$ \cite{Hi87}.
It is easily seen from the formulae in \cite{Br}
that $\theta\circ{c_A}$ is an isomorphism,
and so $H^2(A;\mathbb{F}_p)$ is naturally isomorphic to
$(A^*\wedge{A^*})\oplus{Ext(A;\mathbb{F}_p)}$.

The summands are all non-trivial,
since $A$ has nontrivial $p$-torsion  and $A/pA$ is not cyclic.
Thus $H_2(A;\mathbb{F}_p)$ and 
$H^2(A;\mathbb{F}_p)$ are each canonically subsplit.
\end{proof}

The case when $A$ has a summand of exponent 2 seems more complicated, 
and we consider only the cohomology.

\begin{lem}
{\bf B3.}
\label{cycad2}
Let $A$ be a finitely generated abelian group with a nontrivial summand of exponent $2$ and such that $\dim_{\mathbb{F}_2}A/2A>1$.
Suppose that a finitely generated nilpotent group $N$ acts unipotently on $A$.
Then $\dim_{\mathbb{F}_2}H^2(A;\mathbb{F}_2)^N>1$.
\end{lem}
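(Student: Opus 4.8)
The plan is to reduce the statement, via Lemma B1, to exhibiting inside $H^2(A;\mathbb{F}_2)$ a nontrivial direct sum of $\mathbb{F}_2[N]$-submodules, and then to build such a decomposition from the Bockstein and cup products. First I would record the preliminaries. Writing $V=H^1(A;\mathbb{F}_2)=(A/2A)^{*}$, the given unipotent action of $N$ on $A$ induces a unipotent action on $A/2A$, hence on $V$, and hence — by the Künneth filtration of $H^{*}(A;\mathbb{F}_2)$ (the decomposables form a quotient of $S^2V$, and the indecomposable quotient is a natural functor of the characteristic subgroup $\tau_A$), exactly the bookkeeping used in Lemma \ref{unipotent} — a unipotent action on $H^2(A;\mathbb{F}_2)$. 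By the consequence of Lemma B1 noted just before Lemma B2 (a nonzero module with unipotent action has a nonzero fixed vector, and a module containing a nontrivial direct sum of submodules has $\dim$ of its fixed subspace $>1$), it then suffices to produce nonzero $\mathbb{F}_2[N]$-submodules $P,Q\subseteq H^2(A;\mathbb{F}_2)$ with $P\cap Q=0$.

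Second, I would take $P=\operatorname{im}\bigl(\beta\colon H^1(A;\mathbb{F}_2)\to H^2(A;\mathbb{F}_2)\bigr)$, where $\beta$ is the Bockstein of $0\to\mathbb{Z}/2\mathbb{Z}\to\mathbb{Z}/4\mathbb{Z}\to\mathbb{Z}/2\mathbb{Z}\to0$, which on $H^1$ is the cup-square $a\mapsto a\cup a$. This is natural, hence an $\mathbb{F}_2[N]$-submodule, and it is nonzero: since $A$ has a nontrivial summand of exponent $2$ there is a retraction $A\twoheadrightarrow\mathbb{Z}/2\mathbb{Z}$, giving a class $\xi\in V$ not lifting to $\operatorname{Hom}(A,\mathbb{Z}/4\mathbb{Z})$, so $\beta\xi=\xi\cup\xi\neq0$. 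Set $W_0=\ker(\beta|_V)$, an $\mathbb{F}_2[N]$-submodule of $V$ of codimension $\dim P$, and note that $\dim W_0=\operatorname{rank}(A)+\#\{\text{cyclic torsion summands of order}\equiv0\ (4)\}$.

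Third, the construction of $Q$ splits according to $\dim W_0$. If $\dim W_0\ge2$: for $w,w'\in W_0$ one has $w\cup w=\beta w=0$, so cup product defines a natural map $\sigma\colon\Lambda^2W_0\to H^2(A;\mathbb{F}_2)$, $w\wedge w'\mapsto w\cup w'$; a short Künneth computation shows $\sigma$ is injective and that $\operatorname{im}\sigma$ meets $P$ only in $0$, because $P$ is the image of the Frobenius subspace $V^{(1)}\subseteq S^2V$ while $\operatorname{im}\sigma$ maps isomorphically onto its image in $S^2V/V^{(1)}\cong\Lambda^2V$; so $Q=\operatorname{im}\sigma\cong\Lambda^2W_0\neq0$ works. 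If $\dim W_0=0$: then every $v\in V$ has $v\cup v\neq0$, which forces $A$ to have no free part and every even‑order torsion summand to have order $\equiv2\ (4)$, so the $2$‑Sylow of $A$ is elementary abelian, $H^{*}(A;\mathbb{F}_2)=H^{*}((\mathbb{Z}/2\mathbb{Z})^{d};\mathbb{F}_2)$ is polynomial, and $H^2(A;\mathbb{F}_2)=S^2V$; restricting the unipotent action to a $2$‑dimensional $\mathbb{F}_2[N]$‑submodule $V_2\subseteq V$ (which exists since $\dim V\ge2$) gives an $\mathbb{F}_2[N]$‑submodule $S^2V_2\hookrightarrow H^2(A;\mathbb{F}_2)$ with $\dim(S^2V_2)^N\ge2$ — a one‑line check, since over $\mathbb{F}_2$ a $2$‑dimensional space admits essentially one nontrivial unipotent action and $S^2$ of it has a $2$‑dimensional fixed subspace. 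If $\dim W_0=1$, say $W_0=\langle w_0\rangle$: then $w_0$ is $N$‑fixed and $w_0$ is dual to the unique $\mathbb{Z}$ or order‑$2^{k}$ ($k\ge2$) summand; I would take $Q=w_0\cup H^1(A;\mathbb{F}_2)$, which is $N$‑invariant, nonzero (the kernel of $w_0\cup(-)$ on $H^1$ is exactly $\langle w_0\rangle$, by Künneth together with $w_0\cup w_0=0$), and satisfies $Q\cap P=0$ by applying $\beta$: if $v\cup v=w_0\cup v'$ then $w_0\cup(v'\cup v')=0$, forcing $v'\in W_0$, hence $v\cup v=0$.

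The main obstacle is exactly the reason Lemma B2 cannot be quoted here: over $\mathbb{F}_2$ the sequence $0\to V^{(1)}\to S^2V\to\Lambda^2V\to0$ need not split naturally, so there is no $\operatorname{Aut}(A)$‑canonical subsplitting of $H^2(A;\mathbb{F}_2)$, and one must settle for a non‑canonical but $N$‑invariant decomposition. The delicate points are (i) the Künneth bookkeeping certifying that the chosen $P$ and $Q$ are complementary, and (ii) the degenerate small cases (above all $\dim V=2$), where the argument passes to the $2$‑Sylow subgroup of $A$ and uses a direct inspection of the few unipotent $\mathbb{F}_2[N]$‑module structures that $H^2$ can carry; once these are dispatched the three cases above are exhaustive.
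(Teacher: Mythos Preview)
Your proof is correct and follows essentially the same approach as the paper's. Your $P=\operatorname{im}\beta$ is exactly the paper's $Sq(A^*)$, your $W_0=\ker\beta$ is the paper's canonical subspace $B^*$, and your $Q$ (in both the $\dim W_0\ge 2$ and $\dim W_0=1$ subcases) sits inside the paper's single submodule $c_A(B^*\odot A^*)$; the paper treats these two subcases uniformly, so your split is unnecessary. Your $\dim W_0=0$ argument via a $2$-dimensional $N$-submodule $V_2\subseteq V$ is a repackaging of the paper's explicit filtration computation---the fixed vectors $b\odot b$ and $a\odot(a+b)$ that the paper writes down are precisely a basis for $(S^2V_2)^N$ when $V_2=\langle a,b\rangle$.
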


\begin{proof}
We may assume that $A\cong{B}\oplus{E}$,
where $E\cong(\mathbb{Z}/2\mathbb{Z})^s\not=0$
and $B$ has no summand of order 2. 
The subspace $B^*$ of $A^*=Hom(A,\mathbb{F}_2)=H^1(A;\mathbb{F}_2)$ 
consisting of homorphisms which factor through homomorphisms to 
$\mathbb{Z}/4\mathbb{Z}$ is canonical.
Clearly $B^*\cong{Hom(B,\mathbb{F}_2)}$ and 
$A^*/B^*\cong{E^*}=Hom(E,\mathbb{F}_2)$.
Hence $A^*\cong{B^*}\oplus {E^*}$, but this splitting is not canonical.
Cup product induces a homomorphism 
$c_A:A^*\odot{A^*}\to{H^2(A;\mathbb{F}_2)}$,
with kernel $2A/4A\cong{B^*}$,
since $A$ is abelian  \cite{Hi87}.
There is also a natural squaring map 
$Sq:A^*\to{H^2(A;\mathbb{F}_2)}$ with kernel $B^*$.

If $B=0$ then $A$ is an elementary 2-group and $A^*=E^*$,
and $c_A$ is a monomorphism.
Let $A_1>\dots>A_{k+1}=0$ be a filtration of $A^*$ by 
$\mathbb{F}_p[N]$-submodules, as in Lemma \ref{filter}.
Then $A_k\odot{A_k}$ is fixed by $N$.
If $\dim_{\mathbb{F}_2}A_k>1$  then
$\dim_{\mathbb{F}_2}A_k\odot{A_k}\geq3$.
If $A_k$ has dimension 1,  and is generated by $b$ then
$b\odot{b}$ is fixed by $N$.
If $a\in{A_{k-1}}$ then each element of $N$ either fixes $a$ or 
sends it to $a+b$.
In either case $a\odot(a+b)$ is fixed by $N$.
Since $\dim_{\mathbb{F}_2}A_{k-1}\geq2$ the subspace
generated by $\{a\odot(a+b)\mid{a}\in{A_{k-1}}\}\cup\{b\odot{b}\}$
is fixed by $N$,
and so $\dim_{\mathbb{F}_2}H^2(A;\mathbb{F}_2)^N>1$.

The images of $B^*\odot{A^*}$ and $Sq(A^*)=Sq(E^*)$ are
canonical submodules of $H^2(A;\mathbb{F}_2)$,
with trivial intersection.
Hence they are invariant under the action 
of automorphisms of $A$,
and so if $B\not=0$ then we again have
$\dim_{\mathbb{F}_2}H^2(A;\mathbb{F}_2)^N>1$.
\end{proof}

\begin{cor*}
\label{cycboth}
Let $A$ be a finitely generated abelian group, $\psi$ be  a unipotent
automorphism of $A$, and $p$ be a prime. 
If $A$ has non-trivial $p$-torsion and $\dim_{\mathbb{F}_p}A/pA>1$ 
then $\dim_{\mathbb{F}_p}\mathrm{Ker}(H_2(\psi;\mathbb{F}_p)-I)=
\dim_{\mathbb{F}_p}\mathrm{Ker}(H^2(\psi;\mathbb{F}_p)-I)>1$.
\end{cor*}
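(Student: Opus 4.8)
The plan is to recognise the Corollary as the special case $N=\langle\psi\rangle$ of Lemmas B1--B3, once one checks that this cyclic (hence finitely generated nilpotent) group acts unipotently not only on $A$ but on the relevant mod-$p$ (co)homology. First I would verify the unipotence of the action of $\langle\psi\rangle$ on $A$: writing $\psi=\mathrm{id}_A+\nu$ with $\nu$ nilpotent, each power $\psi^{\pm k}-\mathrm{id}_A$ lies in the ideal generated by $\nu$ in the commutative subring $\mathbb{Z}[\nu]\subseteq\mathrm{End}(A)$, hence is nilpotent. Then, by Lemma \ref{unipotent} applied to the finitely generated nilpotent group $A$ and the unipotent automorphism $\psi$, the induced operators $H_2(\psi;\mathbb{F}_p)$ and $H^2(\psi;\mathbb{F}_p)$ are unipotent, so $\langle\psi\rangle$ acts unipotently on $V=H_2(A;\mathbb{F}_p)$ and on $H^2(A;\mathbb{F}_p)$.

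Next I would split according to the three regimes covered by the hypotheses $_pA\neq0$ and $\dim_{\mathbb{F}_p}A/pA>1$, which are jointly exhaustive. If $p$ is odd, or if $p=2$ and $A$ has no $\mathbb{Z}/2\mathbb{Z}$ summand, Lemma B2 says $H_2(A;\mathbb{F}_p)$ and $H^2(A;\mathbb{F}_p)$ are canonically subsplit for the natural action of subgroups of $\mathrm{Aut}(A)$; since $\psi\in\mathrm{Aut}(A)$, they are in particular canonically subsplit $\mathbb{F}_p[\langle\psi\rangle]$-modules. By the observation following the definition of ``canonically subsplit'' (which rests on Lemma B1), unipotence of the $\langle\psi\rangle$-action forces the fixed subspaces $V^{\langle\psi\rangle}=\mathrm{Ker}(H_2(\psi;\mathbb{F}_p)-I)$ and $\mathrm{Ker}(H^2(\psi;\mathbb{F}_p)-I)$ to have dimension $>1$. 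In the remaining case, $p=2$ with $A$ having a nontrivial summand of exponent $2$, Lemma B3 applies verbatim with $N=\langle\psi\rangle$ to give $\dim_{\mathbb{F}_2}\mathrm{Ker}(H^2(\psi;\mathbb{F}_2)-I)=\dim_{\mathbb{F}_2}H^2(A;\mathbb{F}_2)^{\langle\psi\rangle}>1$.

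Finally I would deduce the equality of the two kernel dimensions, and in the last case transfer the conclusion from cohomology to homology, using $\mathbb{F}_p$-duality: over the field $\mathbb{F}_p$ the cochain complex computing $H^*(A;\mathbb{F}_p)$ is the $\mathbb{F}_p$-dual of the chain complex computing $H_*(A;\mathbb{F}_p)$, so $H^2(A;\mathbb{F}_p)\cong H_2(A;\mathbb{F}_p)^*$ and $H^2(\psi;\mathbb{F}_p)$ is the transpose of $H_2(\psi;\mathbb{F}_p)$ up to replacing $\psi$ by $\psi^{-1}$; since $\dim\mathrm{Ker}(T-I)=\dim\mathrm{Ker}(T^{tr}-I)$ and $\mathrm{Ker}(T^{-1}-I)=\mathrm{Ker}(T-I)$, the homology and cohomology kernels have equal dimension. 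The one point genuinely requiring care is the step in the first paragraph: one must invoke Lemma \ref{unipotent}, not merely the unipotence of $\psi$ on $A$, because it is the unipotence of the \emph{induced} operators on $H_2$ and $H^2$ that is needed before Lemma B1 and the subsplitting argument can be brought to bear.
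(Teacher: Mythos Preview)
Your proof is correct and follows essentially the same approach as the paper: set $N=\langle\psi\rangle$, invoke Lemma~\ref{unipotent} for unipotence of the induced maps, apply Lemmas~B2/B3 according to the case, and use linear-algebraic duality to equate the kernel dimensions. Your three-way case split is slightly more careful than the paper's two-way split (which folds the $p=2$ case with no $\mathbb{Z}/2\mathbb{Z}$ summand into the B2 branch without comment), but the argument is the same.
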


\begin{proof}
Let $N$ be the cyclic subgroup of $Aut(A)$ generated by $\psi$.
We shall write $H_i(\psi)$ and $H^j(\psi)$ instead of 
$H_i(\psi;\mathbb{F}_p)$ and $H^j(\psi;\mathbb{F}_p)$,
for simplicity of notation.
Then $H_i(A;\mathbb{F}_p)^N=\mathrm{Ker}(H_i(\psi)-I)$
and $H^j(A;\mathbb{F}_p)^N=\mathrm{Ker}(H^j(\psi)-I)$, for any $i$.
If $\varphi$ is an endomorphism of a finite dimensional vector space $V$
then $\dim\mathrm{Cok}(\varphi)=\dim\mathrm{Ker}(\phi)$
and if $\varphi^*$ is the induced endomorphism of the dual vector space 
$V^*$ then $\varphi^*$ and $\varphi$ have the same rank.
Hence the corollary follows from Lemma B2,
 if $p$ is odd,
and from Lemma B3,
if $p=2$.
\end{proof} 

It does not seem obvious that 
$\dim_{\mathbb{F}_p}H_2(A;\mathbb{F}_p)^N$ and
$\dim_{\mathbb{F}_p}H^2(A;\mathbb{F}_p)^N$ are equal
when $N$ is not cyclic.

If $\dim_{\mathbb{F}_p}A/pA\geq4$ then the restriction of 
$H_2(\psi;\mathbb{F}_p)-I$ to $(A/pA)\wedge(A/pA)$ 
has kernel of dimension $>1$, 
and so $\dim_{\mathbb{F}_p}\mathrm{Ker}(H_2(\psi;\mathbb{F}_p)-I)>1$. 
In \cite{Hi22} a related observation for free abelian groups of rank 
$\geq4$ is used to show that if $G$ is a metabelian nilpotent group
with $h(G)>4$ then $\beta_2(G;\mathbb{Q})>\beta_1(G;\mathbb{Q})$.
One of the difficulties in extending the approach of this paper 
to more general nilpotent groups is the lack of  an analogue 
to the above lemmas for non-abelian $p$-groups.

\section*{Virtually cyclic groups: $h\leq1$}

A finitely generated nilpotent group $G$ is finite if and only if 
$\beta_1(G;\mathbb{Q})=0$ if and only if $h(G)=0$.
The Sylow subgroups of a finite nilpotent group $G$ are characteristic,
and $G$ is the direct product of its Sylow subgroups \cite[5.2.4]{Rob}.
It then follows from the K\"unneth Theorem that $H_2(G)=0$ 
if and only if $H_2(P)=0$ for all such Sylow subgroups $P$.
On the other hand, it is not clear that if $H_2(G)=0$ then
$G$ must have a balanced presentation, 
even if this is so for each of its Sylow subgroups.
(The examples in \cite{HW85} of finite perfect groups with trivial multiplicator
but without balanced presentations are not nilpotent.)

If $p$ is an odd prime then every 2-generator metacyclic $p$-group 
$P$ with $H_2(P)=0$ has a balanced presentation
\[
\langle{a,b}\mid{b^{p^{r+s+t}}=a^{p^{r+s}}},~bab^{-1}=a^{1+p^r}\rangle,
\]
where $r\geq1$ and $s,t\geq0$.
(The order of such a group is $p^{3r+2s+t}$.)
There are other metacyclic 2-groups and other $p$-groups 
with 2-generator balanced presentations.
A handful of 3-generated $p$-groups (for $p=2$ and 3)
are also known to have balanced presentations. 
(See \cite{HNO'B} for a survey of what was known in the mid-1990s.)

If  $T$ is one of the 2-generator metacyclic $p$-groups of \S2 
then $H^2(T;\mathbb{F}_p)$ has no canonically split subspace, 
and such groups do arise as the torsion subgroups of homologically balanced nilpotent groups $G$ with $h(G)=1$.
(See the final paragraphs of \S5 below.)
Can we at least use such an argument to show that the torsion subgroup
must be homologically balanced?

We include the following simple lemma as some of the observations 
are not explicit in our primary reference \cite{Rob}.

\begin{lem}
{\bf B4.}
\label{2end}
Let $G$ be a finitely generated nilpotent group,
and let $T$ be its torsion subgroup. 
Then the following are equivalent
\begin{enumerate}
\item$\beta_1(G;\mathbb{Q})=1$;
\item$h(G)=1$;
\item $G/T\cong\mathbb{Z}$;
\item$G\cong{T}\rtimes_\psi\mathbb{Z}$, 
where $\psi$ is an automorphism of $T$.
\end{enumerate}
\end{lem}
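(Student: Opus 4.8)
The plan is to prove the four statements equivalent by the cycle $(2)\Rightarrow(3)\Rightarrow(4)\Rightarrow(1)\Rightarrow(2)$, using throughout that the torsion subgroup $T$ of a finitely generated nilpotent group is finite \cite{Rob}. Two preliminary observations will be used repeatedly. First, since $T$ is finite its image in $G^{ab}$ is finite, so $G\to G/T$ induces an isomorphism after $-\otimes\mathbb{Q}$ on abelianizations; hence $\beta_1(G;\mathbb{Q})=\beta_1(G/T;\mathbb{Q})$, and similarly $h(G)=h(G/T)$. Second, a finitely generated nilpotent group is infinite if and only if its abelianization is infinite: if $G^{ab}$ is finite then each $\gamma_iG/\gamma_{i+1}G$, being generated by commutators and so a quotient of a tensor power of $G^{ab}$ (commutator calculus, \cite{Rob}), is finite, and hence $G$ is finite. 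In particular (1) and (2) both force $G$, and so $G/T$, to be infinite.

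The implication $(2)\Rightarrow(3)$ reduces, via $h(G/T)=h(G)=1$, to the assertion that a finitely generated torsion-free nilpotent group $N$ of Hirsch length one is infinite cyclic; this follows from the structure theory (the centre $\zeta N$ is nontrivial and torsion-free, forcing $h(N/\zeta N)=0$ and hence --- since $N/\zeta N$ is again torsion-free --- $N=\zeta N\cong\mathbb{Z}$), or from the key lemma of the next paragraph. For $(3)\Rightarrow(4)$: the extension $1\to T\to G\to G/T\cong\mathbb{Z}\to1$ splits because $\mathbb{Z}$ is free; a splitting sends a generator of $\mathbb{Z}$ to an element $t\in G$ of infinite order, so $G=T\rtimes_\psi\langle t\rangle\cong T\rtimes_\psi\mathbb{Z}$ with $\psi$ equal to conjugation by $t$, and $T$ is the torsion subgroup of $T\rtimes_\psi\mathbb{Z}$ since a torsion element must project trivially to $\mathbb{Z}$. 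For $(4)\Rightarrow(1)$: from $G\cong T\rtimes_\psi\mathbb{Z}$ the group $G^{ab}$ is an extension of $\mathbb{Z}$ by the finite image of $T$, so $\beta_1(G;\mathbb{Q})=1$.

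The one step with genuine content, and the main obstacle, is $(1)\Rightarrow(2)$. After replacing $G$ by $N=G/T$ it amounts to the key lemma: \emph{a finitely generated torsion-free nilpotent group $N$ with $\beta_1(N;\mathbb{Q})=1$ is infinite cyclic}. Equivalently, writing $N^{ab}\cong\mathbb{Z}\oplus F$ with $F$ finite, one must rule out $F\neq1$, and the plan is to show $N$ is abelian. The commutator map induces a surjection $N^{ab}\wedge N^{ab}\twoheadrightarrow\gamma_2N/\gamma_3N$, and $N^{ab}\wedge N^{ab}\cong F\oplus(F\wedge F)$ is finite, so $\gamma_2N/\gamma_3N$ is finite; then, using the surjections $N^{ab}\otimes(\gamma_{j-1}N/\gamma_jN)\twoheadrightarrow\gamma_jN/\gamma_{j+1}N$ from commutator calculus \cite{Rob} and induction on $j$, every $\gamma_jN/\gamma_{j+1}N$ with $j\geq2$ is finite. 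If $c$ is the nilpotency class of $N$, then $\gamma_cN=\gamma_cN/\gamma_{c+1}N$ is finite and, being a subgroup of the torsion-free group $N$, is trivial; descending through $\gamma_{c-1}N,\dots,\gamma_2N$ (each finite, being an extension of a finite group by the already-trivial next term, and each torsion-free as a subgroup of $N$) gives $\gamma_2N=1$. Hence $N=N^{ab}$ is torsion-free abelian of rank one, so $N\cong\mathbb{Z}$, and in particular $F=1$. Therefore $h(G)=h(G/T)=1$, which closes the cycle and shows that (1)--(4) are equivalent.
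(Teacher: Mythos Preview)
Your proof is correct, and the implications $(2)\Rightarrow(3)\Rightarrow(4)\Rightarrow(1)$ are essentially as in the paper. The genuine difference lies in $(1)\Rightarrow(2)$. The paper argues $(1)\Rightarrow(3)$ directly: choosing an epimorphism $f:G\to\mathbb{Z}$ with kernel $K$, it invokes Lemma~\ref{wang app} (the Wang sequence together with Hall's unipotence lemma) to conclude that $H_1(K;\mathbb{Q})=0$, hence $K$ is finite and equals $T$. Your route is instead to pass to $N=G/T$ and use commutator calculus: since $N^{ab}$ has rank~$1$, the lower central quotients $\gamma_jN/\gamma_{j+1}N$ are finite for $j\geq2$, and torsion-freeness of $N$ then forces $\gamma_2N=1$. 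This is more elementary and entirely self-contained, avoiding the Wang sequence machinery; the paper's argument, by contrast, slots into the spectral-sequence framework it develops for the harder results later in the chapter. One small remark: in your $(2)\Rightarrow(3)$ argument via the centre, the claim that $N/\zeta N$ is torsion-free for torsion-free nilpotent $N$ is a standard fact (isolation of the centre; see \cite[5.2.19]{Rob}), and in any case your alternative via the key lemma covers it.
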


\begin{proof}
In each case $G$ is clearly infinite, 
and so there is an epimorphism $f:G\to\mathbb{Z}$, 
with kernel $K$, say.
Since $G$ is finitely generated, so is $K$.
If $\beta_1(G;\mathbb{Q})=1$ then $K$ is finite, by Lemma \ref{wang app}.
If $h(G)=1$ then $h(K)=0$, so $K$ is again finite.
In each case, $K=T$ and $G/T\cong\mathbb{Z}$.
If $G/T\cong\mathbb{Z}$ and $t\in{G}$ represents a generator
of $G/T$ then conjugation by $t$ defines an automorphism $\psi$
of $T$, and $G\cong{T}\rtimes_\psi\mathbb{Z}$.
Finally, it is clear that (4) implies each of (1) and (2).
\end{proof}

We could also describe the groups considered on this lemma as
the nilpotent groups which are virtually $\mathbb{Z}$,
and as the nilpotent groups with two ends.

In the next lemma we do not assume that $G$ is nilpotent.

\begin{lem}
{\bf B5.}
\label{hbh1}
Let $G\cong{T}\rtimes_\psi\mathbb{Z}$ be a homologically balanced group, where $T$ is finite.
Then $H_2(G)$ is a finite cyclic group, 
and its order is divisible by the order of the torsion subgroup of $G^{ab}$.
\end{lem}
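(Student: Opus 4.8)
The plan is to read off everything from the homology Wang sequence for $G$ as an extension of $\mathbb{Z}$ by $T$, together with the Universal Coefficient Theorems of \S1.2. Writing $\psi_*$ for the action of a generator of $G/T$ on $H_*(T)$ (induced by $\psi$), and using that $\mathbb{Z}$ has cohomological dimension $1$, the Lyndon-Hochschild-Serre spectral sequence for $1\to T\to G\to\mathbb{Z}\to1$ has only the columns $p=0,1$ and so yields a short exact sequence
\[
0\to\mathrm{Cok}\big(\psi_*-1\colon H_2(T)\to H_2(T)\big)\to H_2(G)\to\mathrm{Ker}\big(\psi_*-1\colon H_1(T)\to H_1(T)\big)\to0
\]
and the tail $H_1(T)\xrightarrow{\psi_*-1}H_1(T)\to H_1(G)\to\mathbb{Z}\to0$. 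Since $T$ is finite, $H_1(T)=T^{ab}$ and $H_2(T)$ are finite, so $H_2(G)$ is finite, and the torsion subgroup $\tau$ of $G^{ab}=H_1(G)$ is identified with $\mathrm{Cok}(\psi^{ab}-1)$ on $T^{ab}$, where $\psi^{ab}$ is the automorphism of $T^{ab}$ induced by $\psi$.

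For the divisibility assertion I would invoke the elementary fact that any endomorphism $\phi$ of a finite abelian group $A$ satisfies $|\mathrm{Ker}\,\phi|=|\mathrm{Cok}\,\phi|$ (both equal $|A|/|\mathrm{Im}\,\phi|$). Taking $\phi=\psi^{ab}-1$ on $T^{ab}$ gives $|\mathrm{Ker}(\psi^{ab}-1)|=|\tau|$, so the displayed sequence gives $|H_2(G)|=|\mathrm{Cok}(\psi_*-1\ \text{on}\ H_2(T))|\cdot|\tau|$, and in particular $|\tau|$ divides $|H_2(G)|$. This part uses neither the homologically balanced hypothesis nor any nilpotence.

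Cyclicity is where the hypothesis is needed. The Universal Coefficient Theorems give $\beta_1(G;\mathbb{F}_p)=1+\dim_{\mathbb{F}_p}\tau/p\tau$ and $\beta_2(G;\mathbb{F}_p)=\dim_{\mathbb{F}_p}H_2(G)/pH_2(G)+\dim_{\mathbb{F}_p}{}_p\tau$, and $\dim_{\mathbb{F}_p}{}_p\tau=\dim_{\mathbb{F}_p}\tau/p\tau$ for the finite group $\tau$. Hence $\beta_2(G;\mathbb{F}_p)\le\beta_1(G;\mathbb{F}_p)$ forces $\dim_{\mathbb{F}_p}H_2(G)/pH_2(G)\le1$ for every prime $p$, and since $H_2(G)$ is finite this makes it cyclic. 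The proof is essentially a direct computation, so I do not expect a real obstacle; the only points to watch are that $H_1(T)$ and $H_2(T)$ are finite (legitimising the order-counting) and that the torsion of $H_1(G)$ contributes equally to $\beta_1$ and $\beta_2$ over $\mathbb{F}_p$, so that the inequality collapses precisely to the bound on $\dim_{\mathbb{F}_p}H_2(G)/pH_2(G)$.
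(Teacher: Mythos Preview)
Your proof is correct and follows essentially the same route as the paper: both use the Wang sequence to obtain finiteness and the divisibility of $|H_2(G)|$ by $|\tau|$ via the identity $|\mathrm{Ker}(\psi^{ab}-1)|=|\mathrm{Cok}(\psi^{ab}-1)|$, and then use the Universal Coefficient Theorem with $\mathbb{F}_p$ coefficients together with the homologically balanced hypothesis to bound $\dim_{\mathbb{F}_p}H_2(G)/pH_2(G)\le1$ for all $p$. Your write-up is slightly more explicit about the short exact sequence coming from the spectral sequence, but the argument is the same.
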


\begin{proof}
It is immediate from the Wang sequence for the integral homology 
of $G$ as an extension of $\mathbb{Z}$ by $T$ that 
$H_2(G)$ is finite.
It is also clear that $C=\mathrm{Cok}(\psi^{ab}-I)$ is
the torsion subgroup of $G^{ab}$.
Since $T$ is finite,
$|\mathrm{Ker}(\psi^{ab}-I)|=|\mathrm{Cok}(\psi^{ab}-I)|$,
and so $|C|$ divides the order of $H_2(G)$.

If $p$ is a prime then 
$\dim_{\mathbb{F}_p}Tor(\mathbb{F}_p,G^{ab})=
\dim_{\mathbb{F}_p}Tor(\mathbb{F}_p,C)=
\beta_1(G;\mathbb{F}_p)-1$,
since $G^{ab}\cong\mathbb{Z}\oplus{C}$.
Therefore 
\[
\dim_{\mathbb{F}_p}{Hom}(H_2(G),\mathbb{F}_p)=
\beta_2(G;\mathbb{F}_p)-\beta_1(G;\mathbb{F}_p)+1,
\]
by the Universal Coefficient exact sequences of Chapter 1.
Since $G$ is homologically balanced,
this is at most 1, for all primes $p$, 
and so $H_2(G)$ is cyclic.
\end{proof}

If $G$ is nilpotent then $H_2(\psi)-I$ is a nilpotent endomorphism of 
$H_2(T)$, 
and so $C$ and $H_2(G)$ have the same order if and only if $H_2(T)=0$.

\begin{thm}
{\bf B6.}
Let $G\cong{T}\rtimes_\psi\mathbb{Z}$, 
where $T$ is a finite nilpotent group and $\psi$ is a unipotent automorphism of $T$.
If $G$ is  homologically balanced then
\begin{enumerate}
\item{}$G$ can be generated by $2$ elements;
\item{} if the Sylow $p$-subgroup of $T$ is abelian then it is cyclic;
\item{}if $T$ is abelian then 
$G\cong\mathbb{Z}/m\mathbb{Z}\rtimes_n\mathbb{Z}$, 
for some $m,n\not=0$ such that $m$ divides a power of $n-1$.
\end{enumerate}
\end{thm}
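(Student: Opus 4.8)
The plan is to settle assertion (2) first, deduce (3) from it, and leave (1)---the delicate point---for last. Throughout write $G=T\rtimes_\psi\mathbb{Z}$ and fix a prime $p$. The Sylow $p$-subgroup $T_p$ of $T$ is characteristic, hence $\psi$-invariant; $\psi$ restricts to a unipotent automorphism of $T_p$ (its action on $T_p^{\mathrm{ab}}$, a direct factor of the unipotent action on $T^{\mathrm{ab}}$, is unipotent); and $H_i(T;\mathbb{F}_p)\cong H_i(T_p;\mathbb{F}_p)$ in positive degrees. By Lemma~\ref{wang app}(1) (which uses the unipotence supplied by Lemma~\ref{unipotent}, so that $H_2(\psi;\mathbb{F}_p)-I$ is nilpotent and kernel and cokernel have equal dimension), $\dim_{\mathbb{F}_p}H_2(T_p;\mathbb{F}_p)^{\psi}=\beta_2(G;\mathbb{F}_p)-\beta_1(G;\mathbb{F}_p)+1$; since $G$ is homologically balanced this forces
\[
\dim_{\mathbb{F}_p}H_2(T_p;\mathbb{F}_p)^{\psi}\le 1\qquad\text{for every prime }p .
\]
Also $G$ is finitely generated nilpotent, so its minimal number of generators is that of $G^{\mathrm{ab}}=\mathbb{Z}\oplus(T^{\mathrm{ab}})_\psi$, namely $d(G)=\max_p\beta_1(G;\mathbb{F}_p)=1+\max_p\dim_{\mathbb{F}_p}(T^{\mathrm{ab}}/p)^{\psi}$ (see \cite{Rob}, and note $(T^{\mathrm{ab}}/p)^{\psi}=(T_p^{\mathrm{ab}}/p)^{\psi}$). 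So (1) is exactly the statement that $\dim_{\mathbb{F}_p}(T_p^{\mathrm{ab}}/p)^{\psi}\le 1$ for all $p$.

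For (2): suppose $T_p$ is abelian and not cyclic, so $T_p\ne 1$ and $\dim_{\mathbb{F}_p}T_p/pT_p>1$. The corollary to Lemma~B3 of Appendix~B, applied to the unipotent automorphism $\psi|_{T_p}$, gives $\dim_{\mathbb{F}_p}\operatorname{Ker}(H_2(\psi|_{T_p};\mathbb{F}_p)-I)>1$, i.e. $\dim_{\mathbb{F}_p}H_2(T_p;\mathbb{F}_p)^{\psi}\ge 2$, contradicting the displayed bound. Hence $T_p$ is cyclic. Assertion (3) then follows at once: if $T$ is abelian, all its Sylow subgroups are cyclic by (2), so $T$ is a direct product of cyclic groups of pairwise coprime orders, hence cyclic, $T\cong\mathbb{Z}/m\mathbb{Z}$; a unipotent automorphism of $\mathbb{Z}/m\mathbb{Z}$ is multiplication by an integer $n$ with $(n,m)=1$ such that every prime dividing $m$ divides $n-1$, i.e. $m$ divides a power of $n-1$, and $G\cong\mathbb{Z}/m\mathbb{Z}\rtimes_n\mathbb{Z}$ (taking $n=2$ if $m=1$).

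The remaining step, and the main obstacle, is (1): we must pass from $\dim_{\mathbb{F}_p}H_2(T_p;\mathbb{F}_p)^{\psi}\le 1$ to $\dim_{\mathbb{F}_p}(T_p^{\mathrm{ab}}/p)^{\psi}\le 1$, i.e. $T_p^{\mathrm{ab}}$ must be uniserial (a single Jordan block) for $\psi$. When $T_p$ is abelian this is again exactly the corollary to Lemma~B3; for non-abelian $T_p$ no such tool is available, as is noted after that corollary, so a different route is needed. I would pass to the pro-$p$ completion: since $h(G)=1$, $\widehat{G}_p\cong T_p\rtimes_\psi\widehat{\mathbb{Z}}_p$ is a $1$-dimensional $p$-adic analytic pro-$p$ group; $\psi$, being unipotent, is a $p$-element of $\operatorname{Aut}(T_p)$ of some order $p^\ell$, so the closure of $\langle t^{p^\ell}\rangle$ is a central copy of $\widehat{\mathbb{Z}}_p$ with finite quotient the $p$-group $\widetilde{P}=T_p\rtimes_\psi(\mathbb{Z}/p^\ell\mathbb{Z})$, and $d(\widetilde{P})=\beta_1(G;\mathbb{F}_p)$. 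Using $p$-goodness of $G$ and the Gysin sequence of this central $\widehat{\mathbb{Z}}_p$ (the Lyndon--Hochschild--Serre spectral sequence degenerates to a long exact sequence since $H^{\ge 2}(\widehat{\mathbb{Z}}_p;\mathbb{F}_p)=0$, with $d_2$ cup product with the reduction $\chi\in H^2(\widetilde{P};\mathbb{F}_p)$ of the extension class) one obtains $\chi\ne 0$ and $\beta_2(G;\mathbb{F}_p)=r(\widetilde{P})-1+\dim_{\mathbb{F}_p}\operatorname{Ker}\bigl(\cup\chi\colon H^1(\widetilde{P};\mathbb{F}_p)\to H^3(\widetilde{P};\mathbb{F}_p)\bigr)$, so homological balance yields $r(\widetilde{P})\le d(\widetilde{P})+1$ together with near-injectivity of $\cup\chi$ on $H^1$. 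Combined with the Golod--Shafarevich bound $r(\widetilde{P})>d(\widetilde{P})^2/4$ (and the elementary refinement $r(P)\ge d(P)$, valid for every non-trivial finite $p$-group) this forces $d(\widetilde{P})\le 4$, recovering and slightly sharpening the generic bound of \cite[Theorem 2.7]{Lub83}; the hard part is to kill the remaining cases $d(\widetilde{P})=3,4$. I expect this to require genuinely exploiting the growth of $H^*(\widetilde{P};\mathbb{F}_p)$---e.g. that $\widehat{G}_p$ has virtual cohomological dimension $1$ while carrying $p$-torsion, via Quillen's description of $H^*(\widetilde{P};\mathbb{F}_p)$ modulo the ideal $(\chi)$---or else a direct argument with cup products and higher Bocksteins of $\psi$-fixed degree-$1$ classes in $H^*(T_p;\mathbb{F}_p)$, imitating Lemma~B3 one prime at a time. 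This is where the real difficulty lies.
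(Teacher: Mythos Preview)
Your treatment of parts (2) and (3) is correct and matches the paper's approach exactly: the corollary to Lemma~B3 forces any abelian Sylow $p$-subgroup to be cyclic, and (3) follows immediately.

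The gap is in part (1). You have correctly identified that the target is $\dim_{\mathbb{F}_p}(T^{\mathrm{ab}}/p)^{\psi}\le 1$ for every prime $p$, but your proposed route through pro-$p$ completions, Golod--Shafarevich, and Quillen's theorem is both incomplete and unnecessary. The paper's argument is elementary and you have all the ingredients at hand. The Universal Coefficient sequence
\[
0\to\mathbb{F}_p\otimes H_2(T)\to H_2(T;\mathbb{F}_p)\to \mathrm{Tor}(\mathbb{F}_p,T^{\mathrm{ab}})\to 0
\]
is natural, so $\mathrm{Tor}(\mathbb{F}_p,T^{\mathrm{ab}})={}_p(T^{\mathrm{ab}})$ is a $\psi$-equivariant quotient of $H_2(T;\mathbb{F}_p)$. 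Hence the cokernel of $H_2(\psi;\mathbb{F}_p)-I$ surjects onto the cokernel of $\psi-I$ on ${}_p(T^{\mathrm{ab}})$. Since $T^{\mathrm{ab}}$ is finite, ${}_p(T^{\mathrm{ab}})$ and $T^{\mathrm{ab}}/p$ have the same $\mathbb{F}_p$-dimension, and one argues that the coinvariant dimensions agree, so this cokernel has dimension $\beta_1(G;\mathbb{F}_p)-1$. Feeding this into the Wang sequence (your Lemma~\ref{wang app}(1)) gives
\[
\beta_2(G;\mathbb{F}_p)\ \ge\ \underbrace{(\beta_1(G;\mathbb{F}_p)-1)}_{\text{from }\mathrm{Tor}}\ +\ \underbrace{(\beta_1(G;\mathbb{F}_p)-1)}_{\text{from }\ker(H_1(\psi)-I)}\ =\ 2\bigl(\beta_1(G;\mathbb{F}_p)-1\bigr),
\]
and homological balance $\beta_2\le\beta_1$ then forces $\beta_1(G;\mathbb{F}_p)\le 2$. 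That is the whole argument; no appeal to $p$-adic analytic groups is needed.
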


\begin{proof}
Let $p$ be a prime.
Then $\dim_{\mathbb{F}_p}H_1(T;\mathbb{F}_p)=
\dim_{\mathbb{F}_p}Tor(T^{ab},\mathbb{F}_p)$,
since $T$ is finite.
Moreover,
$\psi^{ab}-I$ and $Tor(\psi^{ab},\mathbb{F}_p)-I$ have the same rank.
Since $Tor(T^{ab},\mathbb{F}_p)$ is a natural quotient of 
$H_2(T;\mathbb{F}_p)$,  exactness of the Wang sequence implies
that $\beta_2(G;\mathbb{F}_p)\geq2(\beta_1(G;\mathbb{F}_p)-1)$.
Since $G$ is homologically balanced, $\beta_1(G;\mathbb{F}_p)\leq2$.
Hence the $p$-torsion of $G^{ab}$ is cyclic.
Therefore $G^{ab}\cong\mathbb{Z}\oplus{C}$ for some finite cyclic group.
Since $G$ is nilpotent and $G^{ab}$ can be generated by $2$ elements, 
so can $G$.

The Sylow subgroups of $T$ are characteristic,
and $\psi$ restricts to a unipotent automorphism of each such subgroup.
Suppose that the Sylow $p$-subgroup of $T$ is an abelian group $A$.
Since $H^2(A;\mathbb{F}_p)=Hom(H_2(A;\mathbb{F}_p),\mathbb{F}_p)$,
the endomorphisms $H^2(\psi;\mathbb{F}_p)-I$ and 
$H_2(\psi;\mathbb{F}_p)-I$ 
have the same rank.
Hence
\[
\dim(\mathrm{Ker}(H^2(\psi;\mathbb{F}_p)-I)=
\dim(\mathrm{Ker}(H_2(\psi;\mathbb{F}_p)-I)=
\dim(\mathrm{Cok}(H_2(\psi;\mathbb{F}_p)-I)\leq1.
\]
Hence $A$ must be cyclic, by the corollary to Lemma B3.

It follows immediately that if $T$ is abelian then it is a direct product 
of cyclic groups of relatively prime orders,
and so is cyclic, of order $m$, say.
Hence $G\cong\mathbb{Z}/m\mathbb{Z}\rtimes_n\mathbb{Z}$, 
for some $n$ such that $(m,n)=1$.
Such a semidirect product is nilpotent if and only if 
$m$ divides some power of $n-1$.
\end{proof}

Every semidirect product $\mathbb{Z}/m\mathbb{Z}\rtimes_n\mathbb{Z}$ 
has a balanced presentation
\[
\langle{a,t}\mid{a^m=1},~tat^{-1}=a^n\rangle.
\]
The simplest examples with $T$ non-abelian are the groups 
$Q(8k)\rtimes\mathbb{Z}$,
with the balanced presentations 
$\langle{t,x,y}\mid{x^{2k}=y^2},~tx=xt,~tyt^{-1}=xy\rangle$,
which simplify to
\[
\langle{t,y}\mid[t,y]^{2k}=y^2,~[t,[t,y]]=1\rangle.
\]
Let $m=p^s$ , where $p$ is a prime and $s\geq1$, and let $G$ be the group with presentation
\[
\langle{t,x,y}\mid{txt^{-1}=y},~tyt^{-1}=x^{-1}y^2,~yxy^{-1}=x^{m+1}\rangle.
\]
Conjugating the final relation with $t$ gives $x^{-1}yx=y^{m+1}$,
and we see that the torsion subgroup $T$
has presentation $\langle{x,y}\mid{x^m=y^m},~yxy^{-1}=x^{m+1}\rangle$,
and so is one of the metacyclic $p$-groups mentioned above.
Moreover,  $G$ is nilpotent, 
$\zeta{G}=\langle{x^m}\rangle$ and $G'=\langle{x^m,x^{-1}y}\rangle$ is abelian.
Hence $G$ is metabelian.
Each of the groups that we have described here
is 2-generated and its torsion subgroup is homologically balanced. 

\section*{Metabelian nilpotent groups with Hirsch length $>1$}

All known examples of nilpotent groups with balanced presentations 
and Hirsch length $h>1$ are torsion-free. 
We have not yet been able to show that this must be so.
However,  if such a group is also metabelian, but not $\mathbb{Z}^3$,  then 
$h(G)\leq4$ and $\beta_1(G;\mathbb{Q})=2$ \cite[Theorems 7 and 15]{Hi22}.
Our main result implies that there are just three such groups 
with $G/G'\cong\mathbb{Z}^2$.
The argument again involves finding normal subgroups with
``large enough" homology to affect the Betti numbers of the group.
We develop a number of lemmas to this end.
(The Lyndon-Hochschild-Serre spectral sequences for the homology and cohomology 
of a group which is an extension of $\mathbb{Z}^2$ by a normal subgroup 
shall largely replace the Wang sequences used earlier.)

\begin{lem}
{\bf B7.}
\label{h=2L}
Let $G$ be a finitely generated nilpotent group,
and let $T$ be its torsion subgroup. 
Then the following are equivalent
\begin{enumerate}
\item$\beta_1(G;\mathbb{Q})=2$ and $\beta_2(G;\mathbb{Q})=1$;
\item$h(G)=2$;
\item$G/T\cong\mathbb{Z}^2$.
\end{enumerate}
If these conditions hold and $G$ is homologically balanced then
$H_2(G)\cong\mathbb{Z}\oplus\mathbb{Z}/e\mathbb{Z}$,
for some $e\geq1$.
\end{lem}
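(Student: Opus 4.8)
The plan is to dispose of the three equivalences with the tools already in hand and then to read off the structure of $H_2(G)$ from the Universal Coefficient Theorem, repeating the computation in the proof of Lemma \ref{hbh1}.

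For $(2)\Leftrightarrow(3)$ I would use that the torsion subgroup $T$ of a finitely generated nilpotent group is finite and characteristic, so $G/T$ is finitely generated torsion-free nilpotent with $h(G/T)=h(G)$; a finitely generated torsion-free nilpotent group of Hirsch length at most $2$ is $1$, $\mathbb{Z}$ or $\mathbb{Z}^2$ (a non-abelian torsion-free nilpotent group has Hirsch length at least $3$), so $h(G)=2$ forces $G/T\cong\mathbb{Z}^2$, and conversely $G/T\cong\mathbb{Z}^2$ gives $h(G)=h(T)+2=2$. For $(1)\Rightarrow(2)$: if $\beta_1(G;\mathbb{Q})=2$ and $\beta_2(G;\mathbb{Q})=1$ then $\beta_2<\beta_1$, so $h(G)\in\{1,2\}$ by Corollary \ref{wang cor}, and $h(G)=1$ is excluded since by Lemma \ref{2end} it would give $G/T\cong\mathbb{Z}$, whence $\beta_1(G;\mathbb{Q})=1$. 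For $(3)\Rightarrow(1)$: from the extension $1\to T\to G\to\mathbb{Z}^2\to1$ and the Lyndon--Hochschild--Serre spectral sequence with rational coefficients, $H_q(T;\mathbb{Q})=0$ for $q>0$ and $H_0(T;\mathbb{Q})=\mathbb{Q}$ with trivial action, so $E^2_{p,q}=H_p(\mathbb{Z}^2;\mathbb{Q})$ for $q=0$ and vanishes for $q>0$; the spectral sequence collapses and $H_n(G;\mathbb{Q})\cong H_n(\mathbb{Z}^2;\mathbb{Q})$ for all $n$, giving $\beta_1(G;\mathbb{Q})=2$ and $\beta_2(G;\mathbb{Q})=1$. (One could instead filter $\mathbb{Z}^2$ as an extension of $\mathbb{Z}$ by $\mathbb{Z}$ and apply the Wang sequences of Lemma \ref{wang app} twice.)

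For the last sentence, assume $(1)$--$(3)$ and that $G$ is homologically balanced. A finitely generated nilpotent group is finitely presentable, so $H_2(G)$ is finitely generated abelian of rank $\beta_2(G;\mathbb{Q})=1$; write $H_2(G)\cong\mathbb{Z}\oplus F$ with $F$ finite. The task is to show $F$ is cyclic, for which it suffices to bound $\dim_{\mathbb{F}_p}(F/pF)$ for each prime $p$. Writing $G^{ab}\cong\mathbb{Z}^2\oplus B$ with $B$ finite, one has $\mathrm{Tor}(\mathbb{F}_p,H_1(G))\cong{}_pB$, of dimension $\beta_1(G;\mathbb{F}_p)-2$, while $H_2(G)\otimes\mathbb{F}_p\cong\mathbb{F}_p\oplus(F/pF)$; the Universal Coefficient exact sequences of Chapter 1 then give
\[
\beta_2(G;\mathbb{F}_p)=\dim_{\mathbb{F}_p}(F/pF)+\beta_1(G;\mathbb{F}_p)-1 .
\]
Homological balance forces $\beta_2(G;\mathbb{F}_p)\leq\beta_1(G;\mathbb{F}_p)$, hence $\dim_{\mathbb{F}_p}(F/pF)\leq1$ for all $p$, so $F\cong\mathbb{Z}/e\mathbb{Z}$ for some $e\geq1$ and $H_2(G)\cong\mathbb{Z}\oplus\mathbb{Z}/e\mathbb{Z}$.

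Nothing here is deep: the one place to be careful is the mod-$p$ display, i.e. keeping straight which summands of $H_1(G)$ and $H_2(G)$ contribute to the torsion and tensor terms --- exactly the accounting done in Lemma \ref{hbh1}. I would also take care that the equivalences $(1)$--$(3)$ are proved without using the homologically balanced hypothesis, so that the final clause is a genuine addition rather than a tautology.
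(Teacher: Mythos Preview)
Your proof is correct and follows essentially the same route as the paper. The only difference is one of packaging: for the final assertion the paper simply invokes Lemma~\ref{h=2} (which says $G$ is homologically balanced iff $H_2(G)$ is a quotient of $\mathbb{Z}^\beta$) together with $\beta_2(G;\mathbb{Q})=1$, whereas you unpack the Universal Coefficient computation directly---but this is exactly the content of the proof of Lemma~\ref{h=2}, so the arguments coincide.
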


\begin{proof}
If (1) holds then $h(G)\geq2$,  and so $h(G)=2$,  
by the corollary to Lemma \ref{wang app}.
It is easy to see that (2) and (3) are equivalent, and imply (1).
The final assertion follows from Corollary \ref{wang cor} and Lemma \ref{h=2}.
\end{proof}

We could also describe the groups considered in this lemma as
the nilpotent groups which are virtually $\mathbb{Z}^2$.

\begin{lem}
{\bf B8.}
\label{euler}
Let $F$ be a field and $A$ be a finite dimensional $F[\mathbb{Z}^2]$-module,
and let $b_i=\dim_FH_i(\mathbb{Z}^2;A)$, for $i\geq0$.
Then $b_2=b_0$ and $b_1=b_0+b_2=2b_0$.
\end{lem}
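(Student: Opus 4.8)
The plan is to read off the relation $b_0-b_1+b_2=0$ from the short projective resolution of the trivial $F[\mathbb{Z}^2]$-module, to read off $b_2=b_0$ from Poincar\'e duality for $\mathbb{Z}^2$, and then to deduce $b_1=b_0+b_2=2b_0$. The step I expect to cost real work is the identification $b_2=b_0$, which I will reduce to an equality between the dimensions of the invariants and the coinvariants of $A$.

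First I would use that $\mathbb{Z}^2$ acts freely and cocompactly on $\mathbb{R}^2$, so that the cellular chain complex of the universal cover of the $2$-torus provides a length-two free resolution $0\to F[\mathbb{Z}^2]\to F[\mathbb{Z}^2]^2\to F[\mathbb{Z}^2]\to F\to0$ of the trivial module, namely the Koszul complex on $s-1$ and $t-1$. Tensoring its truncation $0\to F[\mathbb{Z}^2]\to F[\mathbb{Z}^2]^2\to F[\mathbb{Z}^2]\to0$ over $F[\mathbb{Z}^2]$ with $A$ gives a complex $0\to A\to A^2\to A\to0$ whose homology is $H_*(\mathbb{Z}^2;A)$. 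Since $A$ is finite-dimensional over $F$, the Euler characteristic of this complex computed from its terms equals the one computed from its homology, so $\dim_F A-2\dim_F A+\dim_F A=b_0-b_1+b_2$, that is, $b_0-b_1+b_2=0$. Hence once $b_2=b_0$ is known, $b_1=b_0+b_2=2b_0$ follows at once.

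For $b_2=b_0$ I would invoke Poincar\'e duality. Since $\mathbb{Z}^2\cong\pi_1(S^1\times S^1)$ is the fundamental group of a closed orientable aspherical surface, it is a $PD_2^+$-group, so $H_i(\mathbb{Z}^2;A)\cong H^{2-i}(\mathbb{Z}^2;A)$ for every $i$ and every module $A$. In degrees $0$ and $2$ this gives $b_2=\dim_F H^0(\mathbb{Z}^2;A)=\dim_F A^{\mathbb{Z}^2}$ and $b_0=\dim_F H_0(\mathbb{Z}^2;A)=\dim_F A_{\mathbb{Z}^2}$, reducing the lemma to the equality $\dim_F A^{\mathbb{Z}^2}=\dim_F A_{\mathbb{Z}^2}$; this is the main obstacle. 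I would attack it by extending scalars to the algebraic closure $\overline F$ of $F$ (forming invariants or forming coinvariants commutes with this flat base change, so none of the relevant dimensions change), and then decomposing $\overline F\otimes_F A$ into the joint generalized eigenspaces of the commuting operators determined by $s$ and $t$. A summand on which some element of $\mathbb{Z}^2$ acts with no eigenvalue equal to $1$ has that element acting as an invertible operator minus the identity, so both its invariants and its coinvariants vanish, and it contributes equally (by $0$) to $b_0$ and to $b_2$; on the single remaining summand $\mathbb{Z}^2$ acts unipotently, and there I would compare invariants with coinvariants by means of the characteristic filtration supplied by Lemma B1, on whose one-dimensional subquotients invariance and coinvariance agree, reassembling the contributions of all the summands to obtain $\dim_F A^{\mathbb{Z}^2}=\dim_F A_{\mathbb{Z}^2}$.
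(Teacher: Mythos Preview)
Your Euler-characteristic computation via the Koszul resolution is exactly what the paper does. For $b_2=b_0$, however, the paper avoids Poincar\'e duality, base change, and Lemma~B1 entirely: writing the Koszul complex as $0\to A\xrightarrow{\partial_2}A^2\xrightarrow{\partial_1}A\to0$ with $\partial_2(a)=((x-1)a,(y-1)a)$ and $\partial_1(a,b)=(y-1)a-(x-1)b$, one simply observes that the matrix of $\partial_2$ is, up to a sign in one block, the transpose of that of $\partial_1$. Hence the two maps have the same rank, and $b_2=\dim_F\ker\partial_2=\dim_FA-\mathrm{rank}=\dim_F\mathrm{coker}\,\partial_1=b_0$. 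Note incidentally that the Koszul complex already shows $H_2(\mathbb{Z}^2;A)=\ker\partial_2=A^{\mathbb{Z}^2}$ directly, so your appeal to Poincar\'e duality is correct but superfluous.

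More importantly, your final step in the unipotent case is a genuine gap. Lemma~B1 supplies a filtration with trivial action on the successive quotients, but neither the invariants functor $(-)^G$ nor the coinvariants functor $(-)_G$ is exact, so $\dim_FA^G$ and $\dim_FA_G$ need not equal the sums of the corresponding dimensions over the subquotients; the phrase ``reassembling the contributions'' hides exactly the difficulty you are trying to resolve. The transpose observation above proves $\dim_FA^{\mathbb{Z}^2}=\dim_FA_{\mathbb{Z}^2}$ in one stroke, with no reduction to the unipotent case needed.
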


\begin{proof}
We may compute $H_i(\mathbb{Z}^2;A)=Tor_i^{F[\mathbb{Z}^2]}(F,A)$ 
from the complex
\[
0\to{A}\to{A^2}\to{A}\to0,
\]
in which the differentials are $\partial^1=
\left[\smallmatrix(x-1)id_A\\(y-1)id_A\endsmallmatrix\right]$,
and $\partial^2=\left[\smallmatrix(y-1)id_A,(1-x)id_A\endsmallmatrix\right]$
where $\{x,y\}$ is a basis for $\mathbb{Z}^2$.
Since the matrix for $\partial^2$ is the transpose of that for
$\partial^1$ (up to a change of sign in the second block),
they have the same rank.
Hence 
$b_2=\dim_F\mathrm{Ker}(\partial_2)=\dim_F\mathrm{Cok}(\partial_1)=b_0$.
The final assertion follows since $b_0-b_1+b_2=1-2+1=0$ is the Euler characteristic of the complex.
\end{proof}

The  modules $H_2(\mathbb{Z}^2;A)$ and $H_0(\mathbb{Z}^2;A)$ 
are the submodule of fixed points and the coinvariant quotient modules 
of the $\mathbb{Z}^2$-action, respectively.
Minor adjustments give similar results for $\dim_FH^j(\mathbb{Z}^2;A)$.
(We may also use Poincar\'e duality for $\mathbb{Z}^2$
to relate homology and cohomology.)

Recall that if $K$ is a normal subgroup of a group $G$ then conjugation in $G$ induces a natural action of $G/K$ on the homology and cohomology of $K$.

\begin{lem}
{\bf B9.}
\label{metab1}
Let $G$ be a finitely generated nilpotent group 
with a normal subgroup $K$ such that $G/K\cong\mathbb{Z}^2$,
and suppose that $\beta_2(G;F)\leq\beta_1(G;F)$ for some field $F$.
Then $\dim_FH^2(K;F)^{G/K}\leq1$.
\end{lem}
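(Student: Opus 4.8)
The plan is to run the Lyndon--Hochschild--Serre spectral sequence in cohomology for $G$ regarded as an extension of $G/K\cong\mathbb{Z}^2$ by the normal subgroup $K$:
\[
E_2^{p,q}=H^p(\mathbb{Z}^2;H^q(K;F))\Longrightarrow H^{p+q}(G;F).
\]
Since $G$ is finitely generated nilpotent it is polycyclic, hence so is $K$, hence $K$ is of type $FP_\infty$ and each $H^q(K;F)$ is a finite-dimensional $F[G/K]$-module. Put $a_q=\dim_F E_2^{0,q}=\dim_F H^q(K;F)^{G/K}$, so that $a_0=1$ because $H^0(K;F)=F$ with trivial action. As $\mathbb{Z}^2$ has cohomological dimension $2$, only the columns $p=0,1,2$ are nonzero, and the cohomology analogue of Lemma~B8 (the Koszul complex computing $H^*(\mathbb{Z}^2;-)$ has second differential the transpose, up to sign, of the first, and Euler characteristic $0$) gives $\dim_F E_2^{1,q}=2a_q$ and $\dim_F E_2^{2,q}=a_q$ for all $q$. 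Inspecting the terms of total degree $\le 2$ shows that the only differentials which can be nonzero in this range are $d_2^{0,1}\colon E_2^{0,1}\to E_2^{2,0}$ and $d_2^{0,2}\colon E_2^{0,2}\to E_2^{2,1}$; every other $d_2$, and every $d_r$ with $r\ge 3$ touching a term of total degree $\le 2$, vanishes for dimension reasons.

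Next I would read off the $E_\infty$-page: $E_\infty^{0,0}=E_2^{0,0}$ has dimension $1$, $E_\infty^{1,0}=E_2^{1,0}$ has dimension $2$, $E_\infty^{1,1}=E_2^{1,1}$ has dimension $2a_1$, while $E_\infty^{0,1}=\ker d_2^{0,1}$, $E_\infty^{2,0}=\mathrm{Cok}\,d_2^{0,1}$, and $E_\infty^{0,2}=\ker d_2^{0,2}$. Hence
\[
\beta_1(G;F)=2+a_1-\mathrm{rk}(d_2^{0,1}),\qquad
\beta_2(G;F)=\bigl(a_2-\mathrm{rk}(d_2^{0,2})\bigr)+2a_1+\bigl(1-\mathrm{rk}(d_2^{0,1})\bigr).
\]
Substituting into the hypothesis $\beta_2(G;F)\le\beta_1(G;F)$, the contribution $\mathrm{rk}(d_2^{0,1})$ cancels and one is left with $a_1+a_2-\mathrm{rk}(d_2^{0,2})\le 1$. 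Finally, since $d_2^{0,2}$ maps into $E_2^{2,1}$, which has dimension $a_1$, one has $\mathrm{rk}(d_2^{0,2})\le a_1$, so $a_1-\mathrm{rk}(d_2^{0,2})\ge 0$ and therefore $a_2\le a_1+a_2-\mathrm{rk}(d_2^{0,2})\le 1$. As $a_2=\dim_F H^2(K;F)^{G/K}$, this is exactly the assertion.

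The argument is essentially bookkeeping once the spectral sequence and the $\mathbb{Z}^2$ Euler-characteristic identity are in hand, so there is no serious conceptual obstacle; the points needing care are (i) the justification that each $H^q(K;F)$ is finite-dimensional, which rests only on $K$ being finitely generated nilpotent, and (ii) the precise tally of differentials and ranks on the $E_2$-page, where a missed differential or an error in $\mathrm{rk}(d_2^{0,1})$ (which ultimately cancels) would be the easiest slip. I would also note that, unlike the earlier lemmas of this appendix, no use is made here of the unipotence of the $G/K$-action: the cohomology version of Lemma~B8 holds for an arbitrary finite-dimensional $F[\mathbb{Z}^2]$-module, and the nilpotence of $G$ enters only to control the finiteness properties of $K$.
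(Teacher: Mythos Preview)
Your proof is correct and uses the same core machinery as the paper (the LHS spectral sequence for $G$ as an extension of $\mathbb{Z}^2$ by $K$, together with the dimension identities of Lemma~B8), but your bookkeeping is cleaner. The paper first appeals to \cite[Theorem 2.7]{Lub83} to pin down $\beta_1(G;F)\in\{2,3\}$, then extracts two exact sequences from the spectral sequence and handles the cases $\beta_2(G;F)=2$ and $\beta_2(G;F)=3$ separately, with an extra argument in the latter case to rule out $d_2^{0,1}\ne0$. Your direct tally of the $E_\infty$-page produces the single inequality $a_1+a_2-\mathrm{rk}(d_2^{0,2})\le1$, and the bound $\mathrm{rk}(d_2^{0,2})\le\dim E_2^{2,1}=a_1$ finishes it uniformly with no case split and no need for Lubotzky's bound. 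Your closing remark is also correct: nilpotence of $G$ is used only to guarantee that $K$ is finitely generated nilpotent (hence each $H^q(K;F)$ is finite-dimensional), not for any unipotence of the $G/K$-action.
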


\begin{proof}
We note first that $\beta_1(G;F)=2$ or 3,
since $\beta_2(G;F)\leq\beta_1(G;F)$ 
\cite[Theorem 2.7]{Lub83}.
We may assume that  $A=H^1(K;F)\not=0$,
since $G$ is nilpotent. 
Let $N=G/K$ and $b_i=\dim_FH^i(N;A)$.
The LHS spectral sequence for cohomology with coefficients $F$
for $G$ as an extension of $N$ by $K$ 
gives two exact sequences
\begin{equation*}
\begin{CD}
0\to{H^1(N;F)}\to{H^1(G;F)}\to{A}^N@>d^{0,1}_2>>{H^2(N;F)}\to
{H^2(G;F)}\to{J}\to0
\end{CD}
\end{equation*}
and
\begin{equation*}
\begin{CD}
0\to{H^1(N;A)}\to{J}\to{H^2(K;F)^N}@>d^{0,2}_2>>{H^2(N;A)}.
\end{CD}
\end{equation*}
The first sequence gives $\dim_FJ\leq\beta_2(G;F)$.
Then  $b_1=b_0+b_2=2b_0$, by LemmaB8,
and $b_0>0$,
since $A\not=0$.
Hence $b_0-b_1<0$, and so the second sequence gives   
$\dim_FH^2(K;F)^N\leq
\beta_2(G;F)+b_0-b_1\leq\beta_2(G;F)-1$.
In particular, 
$\dim_FH^2(K;F)^N\leq1$ if $\beta_2(G;F)=2$.

If $\beta_2(G;F)=3$ then
$\beta_1(G;F)=3$ also, by Corollary \ref{wang cor}.
Therefore\\
$\dim_F\mathrm{Ker}(d^{0,1}_2)=1$.
If $d^{0,1}_2\not=0$ then $b_0=2$ and so $b_1=4$,
by Lemma B8.
But then $\beta_2(G;F)\geq4$.
Therefore $d^{0,1}_2=0$, and so $b_0=1$.
Hence $b_1=2$ and $b_2=1$,
and $d^{0,2}_2$ is a monomorphism.
Hence we again have $\dim_FH^2(K;F)^N\leq1$.
\end{proof}

In particular, $H^2(K;F)$ is not canonically subsplit.
A parallel argument using homology shows that
$\dim_FH_0(G/K;H_2(K;F))\leq1$.

\begin{lem}
{\bf B10.}
\label{K 2end}
Let $P$ be a non-trivial finite $p$-group and $K\cong\mathbb{Z}\times{P}$.
Then $H^2(K;\mathbb{F}_p)$ is canonically subsplit.
\end{lem}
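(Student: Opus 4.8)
The plan is to exhibit inside $H^2(K;\mathbb{F}_p)$ a nontrivial direct sum $U_1\oplus U_2$ of submodules that is invariant under \emph{every} automorphism of $K$; since any group that later acts on $H^2(K;\mathbb{F}_p)$ through conjugation does so via $\mathrm{Aut}(K)$ (indeed $\mathrm{Out}(K)$), this is exactly the assertion. The two summands come, respectively, from the characteristic subgroup structure of $K$ and from comparing integral with mod-$p$ coefficients.

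First I would note that the torsion subgroup of $K\cong\mathbb{Z}\times P$ is precisely $P$, hence characteristic, so every automorphism of $K$ carries $P$ to $P$, and the inclusion $i\colon P\hookrightarrow K$ together with the quotient $K\to K/P\cong\mathbb{Z}$ are canonical. Because the conjugation action of $K/P$ on $P$ is trivial (the extension is a direct product), the Wang sequence of \S1.5 for $K$ as an extension of $\mathbb{Z}$ by $P$, with coefficients in any ring $R$, has zero maps $H^{n-1}(P;R)\xrightarrow{0}H^{n-1}(P;R)$, so it breaks up into short exact sequences $0\to H^{n-1}(P;R)\to H^n(K;R)\xrightarrow{i^*}H^n(P;R)\to 0$. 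Taking $n=2$, $R=\mathbb{F}_p$, I put $U_1=\ker\bigl(i^*\colon H^2(K;\mathbb{F}_p)\to H^2(P;\mathbb{F}_p)\bigr)\cong H^1(P;\mathbb{F}_p)$. This is nonzero since $P$ is a nontrivial $p$-group, and it is $\mathrm{Aut}(K)$-invariant because $i^*$ is compatible with restriction of automorphisms to $P$.

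Next I would set $U_2=\mathrm{Im}\bigl(H^2(K;\mathbb{Z})\to H^2(K;\mathbb{F}_p)\bigr)$, the image of mod-$p$ reduction; this is canonical because change of coefficients is natural with respect to group automorphisms. Taking $R=\mathbb{Z}$ in the Wang sequence and using $H^1(P;\mathbb{Z})=\mathrm{Hom}(P,\mathbb{Z})=0$ shows $i^*\colon H^2(K;\mathbb{Z})\to H^2(P;\mathbb{Z})\cong P^{ab}$ is an isomorphism; in particular $H^2(K;\mathbb{Z})$ is a nontrivial finite $p$-group, so $U_2\cong H^2(K;\mathbb{Z})\otimes\mathbb{F}_p$ is nonzero. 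To see $U_1\cap U_2=0$ it is enough that $i^*$ be injective on $U_2$, and by naturality of reduction $i^*(U_2)=\mathrm{Im}\bigl(H^2(P;\mathbb{Z})\to H^2(P;\mathbb{F}_p)\bigr)$, whose $\mathbb{F}_p$-dimension is $\dim_{\mathbb{F}_p}P^{ab}/pP^{ab}$; via the isomorphism $H^2(K;\mathbb{Z})\cong H^2(P;\mathbb{Z})$ this is the same as $\dim_{\mathbb{F}_p}U_2$, so $i^*|_{U_2}$ is injective and $U_2\cap U_1=U_2\cap\ker i^*=0$. Thus $U_1\oplus U_2\subseteq H^2(K;\mathbb{F}_p)$ is the required nontrivial $\mathrm{Aut}(K)$-invariant direct sum.

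The step needing care is the word ``canonical''. The K\"unneth isomorphism already gives $H^2(K;\mathbb{F}_p)\cong\bigl(H^1(\mathbb{Z};\mathbb{F}_p)\otimes H^1(P;\mathbb{F}_p)\bigr)\oplus H^2(P;\mathbb{F}_p)$, and the first factor is nothing but $U_1$; but the second factor is the image of a non-canonical projection $K\to P$ and is moved by the ``shear'' automorphisms $(n,x)\mapsto(n,x\zeta^{n})$ with $\zeta\in\zeta P$ --- precisely the automorphisms (unipotent, hence the ones arising in the applications of this appendix) that one must control. Replacing that factor by the coefficient-reduction image $U_2$ cures this, since $U_2$ is manifestly natural. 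I expect this naturality bookkeeping, rather than any computation, to be the only real obstacle.
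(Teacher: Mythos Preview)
Your proof is correct and is essentially the paper's argument, rephrased. Your $U_1=\ker(i^*)$ coincides with the paper's summand $\eta\cup H^1(K;\mathbb{F}_p)$ (since $\eta$ restricts trivially to $P$, and the Wang sequence identifies this cup-product image with the kernel of restriction), and your $U_2=\mathrm{Im}\bigl(H^2(K;\mathbb{Z})\to H^2(K;\mathbb{F}_p)\bigr)$ is exactly the paper's $\mathrm{Ext}(K^{ab},\mathbb{F}_p)$, viewed through the Universal Coefficient sequence; your injectivity check for $i^*|_{U_2}$ is the same as the paper's observation that restriction gives an isomorphism $\mathrm{Ext}(K^{ab},\mathbb{F}_p)\cong\mathrm{Ext}(P^{ab},\mathbb{F}_p)$.
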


\begin{proof}
We shall use the Universal Coefficient exact sequence for cohomology.
The projection of $K$ onto $K/P\cong\mathbb{Z}$ determines (up to sign)
a class $\eta\in{H^1(K;\mathbb{F}_p)}=Hom(K,\mathbb{F}_p)$,
and cup product with $\eta$ maps $H^1(K;\mathbb{F}_p)$ non-trivially to 
$H^2(K;\mathbb{F}_p)$, by the K\"unneth Theorem.
The restriction from $Ext(K^{ab},\mathbb{F}_p)$ to 
$Ext(P^{ab},\mathbb{F}_p)$ is an isomorphism, 
and so  $Ext(K^{ab},\mathbb{F}_p)$ and 
$\eta\cup{H^1(K;\mathbb{F}_p)}$ have trivial intersection.
Hence 
$Ext(K^{ab},\mathbb{F}_p)\oplus(\eta\cup{H^1(K;\mathbb{F}_p)})$ 
is a subspace of $H^2(K;\mathbb{F}_p)$,
and the summands  are invariant under 
the action of automorphisms of $K$,
by the naturality of the Universal Coefficient Theorem.
The summands are non-trivial, since $P\not=1$.
\end{proof}

The next four lemmas (leading up to Theorem B15)
consider nilpotent groups which are extensions of $\mathbb{Z}^2$ 
by finite normal subgroups.

\begin{lem}
{\bf B11.}
\label{P factor}
Let $G$ be a finitely generated nilpotent group,
and let $T$ be its torsion subgroup.
Let $P$ be a non-trivial Sylow $p$-subgroup of $T$ and let 
$\gamma_p:G\to{Out}(P)$
be the homomorphism determined by conjugation in $G$.
If $G/T\cong\mathbb{Z}^2$ and the image of $\gamma_p$ is cyclic then 
$\beta_2(G;\mathbb{F}_p)>\beta_1(G;\mathbb{F}_p)$.
\end{lem}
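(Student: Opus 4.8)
The plan is to peel off an infinite cyclic quotient of $G$ so that the Wang sequence (Lemma~\ref{wang app}) applies, while arranging that the kernel is, up to prime-to-$p$ torsion, a genuine direct product $\mathbb{Z}\times P$, so that Lemma~\ref{K 2end} can be used. Write $T=P\times Q$, where $Q$ is the product of the Sylow $q$-subgroups of $T$ for $q\neq p$; both $P$ and $Q$ are characteristic in $T$, hence in $G$. Since $Q$ centralises $P$ we have $Q\leq C_G(P)$ and $T\leq P\,C_G(P)$. One first checks $\ker(\gamma_p)=P\,C_G(P)$: an element $g$ acts on $P$ as an inner automorphism exactly when $w^{-1}g\in C_G(P)$ for some $w\in P$. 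Hence $D:=\mathrm{im}(\gamma_p)\cong G/P\,C_G(P)$ is cyclic by hypothesis, and, because $T\leq P\,C_G(P)$, it is a quotient of $G/T\cong\mathbb{Z}^2$.

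Next I construct $L\trianglelefteq G$ with $T\leq L\leq P\,C_G(P)$ and with $L/T$ a rank-one direct summand of $G/T\cong\mathbb{Z}^2$. The kernel $M$ of the surjection $G/T\to D$ has cyclic quotient $\mathbb{Z}^2/M\cong D$, so its elementary divisors in $\mathbb{Z}^2$ are $(1,|D|)$ (or $M$ is itself a primitive rank-one sublattice if $D$ is infinite, or $M=G/T$ if $D$ is trivial); in every case $M$ contains a primitive vector of $G/T$, and I take $L/T$ to be the subgroup it generates. Every subgroup of $G$ containing $T$ is normal, since $[G,G]\leq T$ and so $G$ acts trivially by conjugation on $G/T$. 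Thus $G/L\cong(G/T)/(L/T)\cong\mathbb{Z}$, so $G\cong L\rtimes_\psi\mathbb{Z}$ where $\psi$ is conjugation by a preimage of a generator of $G/L$. Crucially, $L\leq P\,C_G(P)=\ker(\gamma_p)$, so the quotient $\overline{L}:=L/Q$ is an extension $1\to P\to\overline{L}\to\mathbb{Z}\to1$ in which $\mathbb{Z}$ acts on $P$ by \emph{inner} automorphisms; if a generator $t$ of $\overline{L}/P\cong\mathbb{Z}$ acts as conjugation by $w\in P$, then $tw^{-1}$ centralises $P$, has infinite order modulo $P$, and together with $P$ generates $\overline{L}$, so $\overline{L}\cong\mathbb{Z}\times P$.

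Now the computation is immediate. As $|Q|$ is prime to $p$, the Lyndon--Hochschild--Serre spectral sequence of $1\to Q\to L\to\overline{L}\to1$ with $\mathbb{F}_p$-coefficients collapses, giving a $\psi$-equivariant isomorphism $H^\ast(L;\mathbb{F}_p)\cong H^\ast(\overline{L};\mathbb{F}_p)$; by Lemma~\ref{K 2end}, $H^2(\overline{L};\mathbb{F}_p)$ is canonically subsplit for the natural $\mathrm{Aut}(\overline{L})$-action. The automorphism $\psi$ descends to $\overline{\psi}$ on $\overline{L}$ ($Q$ being characteristic in $L$), and $\overline{\psi}$ is unipotent because $G/Q$ is nilpotent, so it acts unipotently on $\overline{L}^{\,\mathrm{ab}}$, whence Lemma~\ref{unipotent} applies; then the canonical-subsplitting principle together with Lemma~\ref{filter} gives $\dim_{\mathbb{F}_p}H^2(\overline{L};\mathbb{F}_p)^{\overline{\psi}}>1$, hence $\dim_{\mathbb{F}_p}H^2(L;\mathbb{F}_p)^{\psi}\geq2$. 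Finally, Lemma~\ref{wang app}(1) applied to $G\cong L\rtimes_\psi\mathbb{Z}$ yields $\beta_2(G;\mathbb{F}_p)-\beta_1(G;\mathbb{F}_p)=\dim_{\mathbb{F}_p}\mathrm{Ker}(H^2(\psi;\mathbb{F}_p)-I)-1=\dim_{\mathbb{F}_p}H^2(L;\mathbb{F}_p)^{\psi}-1\geq1>0$, which is the assertion.

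The step that will need the most care is the construction of $L$ and the verification that $\overline{L}\cong\mathbb{Z}\times P$: everything rests on splitting off an infinite cyclic quotient of $G$ whose kernel, after discarding the prime-to-$p$ torsion, is a \emph{direct} product $\mathbb{Z}\times P$, and it is precisely here that the hypothesis that $\mathrm{im}(\gamma_p)$ is cyclic is used — it forces $M=\ker(G/T\to D)$ to contain a primitive vector of $G/T\cong\mathbb{Z}^2$, whereas for a non-cyclic image one could be forced to take $L=T$ and the argument would collapse (as it must, since the conclusion can fail then). A secondary point is making sure that the two actions invoked (the conjugation action of $G/Q$ on $\overline{L}^{\,\mathrm{ab}}$, and of $\langle\overline{\psi}\rangle$ on $H^2(\overline{L};\mathbb{F}_p)$) are genuinely unipotent, which follows from nilpotency of $G/Q$ by the now-standard filtration argument behind Lemma~\ref{filter}.
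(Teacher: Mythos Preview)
Your proof is correct and follows essentially the same route as the paper's: factor out the prime-to-$p$ torsion (you do this at the level of $L$, the paper does it for $G$ at the outset), use the cyclic-image hypothesis to find an epimorphism $G\to\mathbb{Z}$ whose kernel, modulo prime-to-$p$ torsion, is $\mathbb{Z}\times P$, then invoke Lemma~\ref{K 2end} and Lemma~\ref{wang app}. Your version spells out more carefully why such an $f$ exists (the primitive-vector argument in $M\leq\mathbb{Z}^2$) and why the resulting extension splits as a direct product, points the paper's proof passes over in a single sentence.
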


\begin{proof}
We may write $G\cong{K}\rtimes_\psi\mathbb{Z}$, 
where $\psi$ is a unipotent automorphism of $K$,
and $K$ is in turn an extension of $\mathbb{Z}$ by $T$.
Let $P$ be the Sylow $p$-subgroup of $T$,
and let $N$ be the product of the other Sylow subgroups of $T$.
Since the Sylow subgroups of $T$ are characteristic, 
conjugation in $G$ determines a homomorphism $\gamma_p:G\to{Out}(P)$.
Moreover, $N$ is normal in $G$,  and the projection of $G$ onto $G/N$ 
induces isomorphisms on homology and cohomology 
with coefficients $\mathbb{F}_p$.
Hence we may assume that $N=1$ and so $T=P$ is a non-trivial $p$-group.

If the image of $\gamma_P$ is cyclic then $\gamma_P$ 
factors through an epimorphism $f:G\to\mathbb{Z}$, 
with kernel $K\cong\mathbb{Z}\times{P}$.
Since $H^2(K;\mathbb{F}_p)$ has a subspace which is the direct sum of non-trivial canonical summands, by Lemma B10,
 $\dim_{\mathbb{F}_p}\mathrm{Ker}(H^2(\psi;\mathbb{F}_p)-I)>1$
(as in Lemma B2).
The result now follows from Lemma \ref{wang app}.
\end{proof}

Thus the group with presentation 
$\langle{x,y}\mid[x,[x,y]]=[y,[x,y]]=[x,y]^p=1\rangle$
mentioned  above does not have a balanced presentation.
Similarly, no nilpotent extension of $\mathbb{Z}^2$ by $Q(8)$ can have a balanced presentation,
since the abelian subgroups of $Out(Q(8))\cong\mathcal{S}_3$ are cyclic.

If $p$ is an odd prime and $C$ is a cyclic $p$-group then 
$\mathrm{Aut}(C)$ is cyclic, and so Lemma B11
may apply.
However, dealing with 2-torsion again requires more effort.

\begin{lem}
{\bf B12.}
\label{partial3}
Let $G$ be a finitely generated nilpotent group,
and let $T$ be its torsion subgroup.
If $G/T\cong\mathbb{Z}^2$ and the Sylow $2$-subgroup of $T$
is a nontrivial cyclic group then 
$\beta_2(G;\mathbb{F}_2)>\beta_1(G;\mathbb{F}_2)$. 
\end{lem}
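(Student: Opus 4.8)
I would prove Lemma~B12 by reducing it, in the by-now familiar way, to a single transgression computation in the Lyndon--Hochschild--Serre spectral sequence. First, let $N$ be the product of the odd Sylow subgroups of $T$; this is characteristic in $T$, hence normal in $G$, and the projection $G\to G/N$ induces isomorphisms on $\mathbb{F}_2$-homology and cohomology (the LHS spectral sequence of $1\to N\to G\to G/N\to1$ collapses because $|N|$ is odd). So we may assume $T=C$ is a nontrivial cyclic $2$-group, and then $G/C=G/T\cong\mathbb{Z}^2$. Now look at the LHS spectral sequence $E_2^{p,q}=H^p(\mathbb{Z}^2;H^q(C;\mathbb{F}_2))\Rightarrow H^{p+q}(G;\mathbb{F}_2)$ for $C\trianglelefteq G$. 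Because $C$ is a nontrivial cyclic $2$-group, $H^q(C;\mathbb{F}_2)\cong\mathbb{F}_2$ for every $q\ge0$, and $\mathbb{Z}^2$ acts trivially on this one-dimensional space; hence $E_2^{p,q}\cong H^p(\mathbb{Z}^2;\mathbb{F}_2)$, of dimension $1,2,1$ for $p=0,1,2$ and $0$ for $p\ge3$. The only differentials bearing on $H^1$ and $H^2$ are $d_2\colon E_2^{0,1}\to E_2^{2,0}$ and $d_2\colon E_2^{0,2}\to E_2^{2,1}$, and all $d_r$ with $r\ge3$ vanish for dimension reasons. Bookkeeping with $E_\infty$ then gives
\[
\beta_2(G;\mathbb{F}_2)-\beta_1(G;\mathbb{F}_2)=1-\mathrm{rk}\bigl(d_2\colon E_2^{0,2}\to E_2^{2,1}\bigr),
\]
so it suffices to show that this one $d_2$ is zero.

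The heart of the argument is that vanishing, and this is where cyclicity of $C$ is used in an essential way (it is exactly what fails for non-cyclic $2$-torsion, cf. Lemma~B3). The space $E_2^{0,2}=H^2(C;\mathbb{F}_2)$ is spanned by a class $y$ which, since $C$ is cyclic, is a (possibly higher) Bockstein of the generator $x$ of $H^1(C;\mathbb{F}_2)$: when $C\cong\mathbb{Z}/2$ one has $x^2=y$, and when $C\cong\mathbb{Z}/2^k$ with $k\ge2$ all lower Bocksteins vanish and $y=\beta_{(k)}(x)$. Bockstein operations are natural and commute with the spectral-sequence differential, so $d_2(y)$ equals the corresponding Bockstein of $d_2(x)$, which lies in $H^3(\mathbb{Z}^2;\mathbb{F}_2)=0$ (the relevant lower Bocksteins of $d_2(x)$ also land in $H^3(\mathbb{Z}^2;\mathbb{F}_2)=0$, so the higher Bockstein is defined); hence $d_2(y)=0$. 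Alternatively one argues via the integral LHS spectral sequence, using $H^1(C;\mathbb{Z})=0$. Either way, $\beta_2(G;\mathbb{F}_2)=\beta_1(G;\mathbb{F}_2)+1>\beta_1(G;\mathbb{F}_2)$.

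The step I expect to be the real obstacle is precisely this last one — establishing $d_2|_{E_2^{0,2}}=0$ when $\mathbb{Z}^2$ acts nontrivially on $C$, which does happen (for instance in $C\rtimes\mathbb{Z}^2$ with one generator acting by inversion, or in suitable non-split extensions). The subtlety is that for a nontrivial action a generator of $H^2(C;\mathbb{Z})$ is not $\mathbb{Z}^2$-fixed, so $y$ does not lift naively to the integral spectral sequence; the Bockstein formulation avoids this but relies on the (standard, though slightly fiddly) compatibility of the Bockstein spectral sequence with the LHS spectral sequence. Everything else — the reduction to cyclic $C$, the identification of the $E_2$-page, and the Euler-characteristic bookkeeping — is routine.
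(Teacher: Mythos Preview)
Your reduction to $T=C$ a nontrivial cyclic $2$-group and your spectral-sequence bookkeeping are correct: with $a=\mathrm{rk}(d_2\colon E_2^{0,1}\to E_2^{2,0})$ and $b=\mathrm{rk}(d_2\colon E_2^{0,2}\to E_2^{2,1})$ one indeed gets $\beta_2-\beta_1=1-b$, and the action of $\mathbb{Z}^2$ on each one-dimensional $H^q(C;\mathbb{F}_2)$ is trivial. For $C\cong\mathbb{Z}/2$ your Leibniz argument $d_2(x^2)=2x\,d_2(x)=0$ is fine.

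The gap is exactly where you suspect it: for $C\cong\mathbb{Z}/2^k$ with $k\geq2$, the claim $d_2(y)=0$ is not established. Your Bockstein argument does not go through as written. The higher Bockstein $\beta_{(k)}$ is a secondary operation, defined only on kernels of lower Bocksteins, and there is no standard theorem asserting it commutes with $d_2$ in the LHS spectral sequence (Kudo's theorem concerns transgressions and primary Steenrod operations). Moreover, your target computation is confused: $d_2(y)$ lies in $E_2^{2,1}=H^2(\mathbb{Z}^2;H^1(C;\mathbb{F}_2))$, not in anything built from $H^3(\mathbb{Z}^2;\mathbb{F}_2)$, so the sentence ``$d_2(y)$ equals the corresponding Bockstein of $d_2(x)$, which lies in $H^3(\mathbb{Z}^2;\mathbb{F}_2)=0$'' does not type-check. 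You correctly identify why the integral route fails: when the $\mathbb{Z}^2$-action on $C$ is nontrivial (e.g.\ includes inversion), $(\mathbb{Z}/2^k)^{\mathbb{Z}^2}$ maps to zero under mod-$2$ reduction, so surjectivity of the integral restriction onto the fixed points says nothing about the $\mathbb{F}_2$-restriction.

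The paper sidesteps this entirely. It first invokes Lemma~B11 to dispose of the case where the conjugation image in $\mathrm{Aut}(C)$ is cyclic (so one may assume $|C|\geq8$ and the action is genuinely $2$-dimensional), and then writes down an explicit presentation
\[
\langle x,y,z\mid [x,y]=z^f,\ z^k=1,\ xzx^{-1}=z^{-1},\ yzy^{-1}=z^\ell\rangle
\]
and computes directly with the Fox--Lyndon partial resolution, showing that every element of $\mathrm{Ker}(\partial_2)$ has all coordinates of even augmentation, whence $\mathbb{F}_2\otimes\partial_3=0$ and $\beta_2(G;\mathbb{F}_2)=\beta_1(G;\mathbb{F}_2)+1$. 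This is a hands-on chain-level calculation in $\mathbb{Z}[G]$ (using that $\mathbb{Z}[G]$ is a twisted polynomial extension of $\mathbb{Z}[z]/(z^k-1)$), quite different in spirit from your spectral-sequence approach. Your method would be more conceptual if the vanishing of $d_2|_{E_2^{0,2}}$ could be proved cleanly, but as it stands that is the missing idea.
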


\begin{proof}
We may factor out the maximal odd-order subgroup of $T$ without changing
the $\mathbb{F}_2$-homology, 
and so we may assume that $T\cong\mathbb{Z}/k\mathbb{Z}$,
where $k=2^n$,
for some $n\geq1$.
We may also assume that the action of $G$ on $T$ 
by conjugation does not factor through a cyclic group, 
by Lemma B10,
and so $k\geq8$.
Let $U$ be the subgroup of $(\mathbb{Z}/k\mathbb{Z})^\times$
represented by integers $\equiv1~mod~(4)$.
Then $Aut(\mathbb{Z}/k\mathbb{Z})\cong\{\pm1\}\times{U}$.
It is easily verified that noncyclic subgroups of $Aut(\mathbb{Z}/k\mathbb{Z})$
have $\{\pm1\}$ as a direct factor, and so $G$ has a presentation
\[
\langle{x,y,z}\mid[x,y]=z^f,~z^k=1,~xzx^{-1}=z^{-1},~yzy^{-1}=z^\ell\rangle,
\]
where $f$ divides $k$, 
$1<\ell<k$ and $\ell\equiv1~mod~(4)$.
Let $m$ be a mutiplicative inverse for $\ell~mod~(k)$,
so that $1<m<k$ and $m\ell=wk+1$ for some $w\in\mathbb{Z}$.
Note that $\beta_1(G;\mathbb{F}_2)=2$ if $f=1$ and is 3 if $f>1$.

The ring $\mathbb{Z}[G]$ is a twisted polynomial extension 
of $\mathbb{Z}[\mathbb{Z}/k\mathbb{Z}]=\mathbb{Z}[z]/(z^k-1)$, 
and so is  noetherian.
We may assume each monomial is normalized in alphabetical order: 
$x^hy^iz^j$, for exponents $h,i\in\mathbb{Z}$ and $0\leq{j}<k$.
Let $\nu=\Sigma_{i=0}^{k-1}z^i$ be the norm element for 
$\mathbb{Z}[\mathbb{Z}/k\mathbb{Z}]$.
Then $z\nu=\nu$, so $\nu^2=k\nu$, and $\nu$ is central in $\mathbb{Z}[G]$.
We shall use the fact that if $\gamma,\delta\in\mathbb{Z}[G]$
are such that $\gamma\nu=0$ and $\delta(z-1)=0$
then $\gamma=\gamma'(z-1)$ and $\delta=\delta'\nu$, 
for some $\gamma',\delta'\in\mathbb{Z}[G]$.
On the other hand, non-zero terms not involving $z$
are not zero-divisors in $\mathbb{Z}[G]$.

The augmentation module $\mathbb{Z}$ has a Fox-Lyndon partial resolution
\begin{equation*}
\begin{CD}
C_2@>\partial_2>>{C_1}@>\partial_1>>C_0=\mathbb{Z}[G]@>\varepsilon>>\mathbb{Z}\to0,
\end{CD}
\end{equation*}
where $\varepsilon:\mathbb{Z}[G]\to\mathbb{Z}$ is the augmentation
homomorphism,
$C_1\cong\mathbb{Z}[G]^3$ has basis $\{e_x,e_y,e_z\}$
corresponding to the generators and 
 $C_2\cong\mathbb{Z}[G]^4$ has basis $\{r,s,t,u\}$
corresponding to the relators $r=z^fyxy^{-1}x^{-1}$, 
$s=z^k$, $t=zxzx^{-1}$ and $u=z^\ell{y}z^{-1}y^{-1}$.
The differentials are given by
\[
\partial_1(e_x)=x-1, \quad \partial_1(e_y)=y-1\quad\mathrm{ and}\quad 
\partial(e_z)=z-1;\quad\mathrm{ and}
\]
\[
\partial_2(r)=(z^fy-1)e_x+(z^f-x)e_y+(\Sigma_{i=0}^{f-1}z^i )e_z,\quad
\partial_2(s)=\nu{e_z},
\]
\[
\partial_2(t)=(z-1)e_x+(1+zx)e_z\quad\mathrm{and}\quad
\partial_2(u)=(z^\ell-1)e_y+(\Sigma_{j=0}^{\ell-1}z^j-y)e_z.
\]
We may choose a homomorphism $\partial_3:C_3\to{C_2}$ with domain 
$C_3$ a  free $\mathbb{Z}[G]$-module and image $\mathrm{Ker}(\partial_2)$, 
which extends the resolution one step to the left.
(We may assume that $C_3$ is finitely generated, 
since $\mathbb{Z}[G]$ is noetherian.)
It is clear from the Fox-Lyndon partial resolution that 
$\dim_{\mathbb{F}_2}\mathrm{Ker}(\mathbb{F}_2\otimes_{\mathbb{Z}[G]}\partial_2)
=\beta_1(G;\mathbb{F}_2)+1$.
We shall show that $\mathbb{F}_2\otimes_{\mathbb{Z}[G]}\partial_3=0$,
and so $\beta_2(G;\mathbb{F}_2)=\beta_1(G;\mathbb{F}_2)+1$.

Let $\varepsilon_2:\mathbb{Z}[G]\to\mathbb{F}_2$ be the $mod~(2)$
reduction of $\varepsilon$.
Since $\mathrm{Im}(\partial_3)=\mathrm{Ker}(\partial_2)$,
it shall suffice to show that if
\[
\partial_2(ar+bs+ct+du)=0,
\]
for some $a,b,c,d\in\mathbb{Z}[G]$ then 
$\varepsilon_2(a)=\varepsilon_2(b)=
\varepsilon_2(c)=\varepsilon_2(d)=0$.

The coefficients $a,b,c,d$ must satisfy the three equations
\[
a(z^fy-1)+c(z-1)=0,
\]
\[
a(z^f-x)+d(z^\ell-1)=0
\]
and
\[
a(\Sigma_{i=0}^{f-1}z^i )+b\nu+c(zx+1)+d(\Sigma_{j=0}^{\ell-1}z^j-y)=0.
\]
Multiplying the first of these equations by $\nu$ gives $af\nu(y-1)=0$.
Hence $a\nu=0$ and so $a=A(z-1)$, 
for some $A\in\mathbb{Z}[G]$ not involving $z$.
The first equation becomes
\[
A(z-1)(z^fy-1)+c(z-1)=[A(yz^{fm}(\Sigma_{j=0}^{m-1}z^j)-1)+c](z-1)=0,
\]
and so $c=-A(yz^{fm}(\Sigma_{j=0}^{m-1}z^j)-1)+C\nu$,
for some $C\in\mathbb{Z}[G]$ not involving $z$.
Similarly, the second equation becomes
\[
A(z-1)(z^f-x)+d(z^\ell-1)=A(zx+z^f)(z^{m\ell}-1)+d(z^\ell-1)=0,
\]
and so $d=-A(zx+z^f)(\Sigma_{j=0}^{m-1}z^{j\ell})+D\nu$, 
for some $D\in\mathbb{Z}[G]$ not involving $z$.
At this point it  is already clear that $\varepsilon_2(a)=
\varepsilon_2(c)=\varepsilon_2(d)=0$.

Multiplying the third equation by $\nu$ gives
\[
kb\nu+c\nu(x+1)+d\nu(\ell-y)=0.
\]
Rearranged and written out in full, this becomes
\[
kb\nu=(A(ym-1)-Ck)(x+1)\nu+(A(x+1)m-Dk)(\ell-y)\nu.
\]
Since $yx=z^{-f}xy=xzy=xyz^{fm}$ we have $yx\nu=xy\nu$
and so this simplifies to 
\[
kb\nu=(A(m\ell-1)(x+1)-kC(x+1)-kD(\ell-y))\nu.
\]
Write $b=b_1+B(z-1)$, where $b_1$ does not involve $z$.
Then $b\nu=b_1\nu$.
Since the terms $b_1, A,C$ and $D$ do not involve $z$,
and since $m\ell-1=wk$, we get
\[
kb_1=k(Aw(x+1)-C(x+1)-D(\ell-y)).
\]
We may solve for $b_1$,  and so
\[
b=b_1+B(z-1)=wA(x-1)+B(z-1)-C(x+1)-D(\ell-y).
\]
Hence $\varepsilon_2(b)=0$ also, 
so $\mathbb{F}_2\otimes_{\mathbb{Z}[G]}\partial_3=0$ and thus
$\beta_2(G;\mathbb{F}_2)=\beta_1(G;\mathbb{F}_2)+1$.
\end{proof}

\begin{lem}
{\bf B13.}
\label{metab}
Let $G$ be a finitely generated nilpotent group,
and let $T$ be its torsion subgroup. 
If $G/T\cong\mathbb{Z}^2$ and the Sylow $p$-subgroup of $T$ 
is abelian and non-trivial then 
$\beta_2(G;\mathbb{F}_p)>\beta_1(G;\mathbb{F}_p)$. 
\end{lem}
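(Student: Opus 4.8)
The plan is to split according to whether the Sylow $p$-subgroup $P$ of $T$ is cyclic. If $P$ is cyclic and $p$ is odd, then $\mathrm{Out}(P)=\mathrm{Aut}(P)\cong(\mathbb{Z}/|P|\mathbb{Z})^{\times}$ is cyclic, so the image of the conjugation homomorphism $\gamma_p\colon G\to\mathrm{Out}(P)$ is cyclic, and the conclusion is immediate from Lemma B11. If $P$ is cyclic and $p=2$, the conclusion is precisely Lemma B12. So the substantive case is that in which $P$ is non-cyclic, i.e. $\dim_{\mathbb{F}_p}P/pP\geq2$; assume this from now on.

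For the non-cyclic case I would argue by contradiction, supposing $\beta_2(G;\mathbb{F}_p)\leq\beta_1(G;\mathbb{F}_p)$. First I would reduce to the situation $T=P$: the product $N'$ of the Sylow $q$-subgroups of $T$ for primes $q\neq p$ is characteristic in $T$, hence normal in $G$, and has order prime to $p$, so the quotient map $G\to G/N'$ induces isomorphisms on $\mathbb{F}_p$-homology and $\mathbb{F}_p$-cohomology and preserves the hypotheses (as in the proof of Lemma B11). Now $T=P$ is a normal subgroup with $G/P\cong\mathbb{Z}^2$, and since $P$ is abelian the conjugation action of $G$ on $P$ factors through $G/P$; this action is unipotent, because if $c$ is the nilpotency class of $G$ then $(\mathrm{conj}_g-1)^{c}(P)\subseteq P\cap\gamma_{c+1}G=1$ for every $g\in G$. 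Applying Lemma B9 with $K=P$ then yields $\dim_{\mathbb{F}_p}H^2(P;\mathbb{F}_p)^{G/P}\leq1$.

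To contradict this I would use the structure of $H^2$ of an abelian group under a unipotent action. If $p$ is odd, or if $p=2$ and $P$ has no direct summand of exponent $2$, then $H^2(P;\mathbb{F}_p)$ is canonically subsplit by Lemma B2 (here one uses that $P$ has non-trivial $p$-torsion and that $P/pP$ is not cyclic), and since the unipotent action of $G/P$ respects the canonical summands, the observation on canonically subsplit modules following Lemma B1 forces $\dim_{\mathbb{F}_p}H^2(P;\mathbb{F}_p)^{G/P}>1$, a contradiction. If $p=2$ and $P$ does have a summand of exponent $2$, then Lemma B3, applied to the unipotent action of $G/P\cong\mathbb{Z}^2$ on $P$, gives $\dim_{\mathbb{F}_2}H^2(P;\mathbb{F}_2)^{G/P}>1$, again contradicting the previous paragraph. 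Hence $\beta_2(G;\mathbb{F}_p)>\beta_1(G;\mathbb{F}_p)$.

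Since the proof is essentially an assembly of the preceding lemmas, I do not expect a genuine difficulty. The point that needs care is the case analysis at $p=2$: one must check that the three inputs --- Lemma B2 when $P$ has no summand of exponent $2$, Lemma B3 when it does, and Lemma B12 when $P$ is cyclic --- together exhaust all non-trivial abelian $2$-groups $P$, and one must confirm that the conjugation action of $G/P$ on $P$ is unipotent in the precise sense demanded by Lemmas B1--B3 so that the filtration argument applies.
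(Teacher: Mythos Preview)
Your proof is correct and follows essentially the same approach as the paper. The paper organizes the argument slightly differently---it first reduces to $T=P$, then under the assumption $\beta_2\leq\beta_1$ invokes Lemma B9 and then Lemmas B2 and B3 to force $P$ cyclic, and finally applies B11/B12---but the ingredients and logic are identical to yours; you simply split on cyclic versus non-cyclic at the outset and spell out the unipotence check and the B2/B3 case division a bit more explicitly.
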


\begin{proof}
Let $N$ be the product of all the Sylow $p'$-subgroups of $T$ with $p'\not=p$,
and let $A$ be the image of $T$ in $\overline{G}=G/N$.
Then $\beta_i(\overline{G};\mathbb{F}_p)=\beta_i(G;\mathbb{F}_p)$, 
for all $i$.
The Sylow $p$-subgroup of $T$ projects isomorphically onto $A$, 
and $\overline{G}/A\cong\mathbb{Z}^2$.
If $\beta_2(G;\mathbb{F}_p)\leq\beta_1(G;\mathbb{F}_p)$
then $\dim_{\mathbb{F}_p}H^2(A;\mathbb{F}_p)^{G/A}\leq1$,
by Lemma B9.
Hence $A$ is cyclic, by Lemmas B2 and B3.
If $p=2$ the result follows from Lemma B12
while if $p$ is odd it follows from Lemma B11
since the automorphism group of a cyclic group of odd $p$-power order is cyclic.
\end{proof}

For the next result we need an analogue of Lemma B10.

\begin{lem}
{\bf B14.}
\label{K 1end}
Let $K\cong{T}\rtimes\mathbb{Z}^2$, 
where $T$ is a finite $p$-group,
and $T\not\leq{K'}$.
Then $H^2(K;\mathbb{F}_p)$ is canonically subsplit.
\end{lem}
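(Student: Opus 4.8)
The plan is to produce two non-trivial subspaces of $H^2(K;\mathbb{F}_p)$, each invariant under every automorphism of $K$ (hence ``canonical''), whose intersection is $0$; their direct sum then witnesses that $H^2(K;\mathbb{F}_p)$ is canonically subsplit, exactly as in Lemma B10. The first subspace comes from the Universal Coefficient exact sequence
\[
0\to{Ext(K^{ab},\mathbb{F}_p)}\to{H^2(K;\mathbb{F}_p)}\xrightarrow{\theta}{Hom(H_2(K),\mathbb{F}_p)}\to0,
\]
which is natural in $K$, so $\mathrm{Ker}(\theta)=Ext(K^{ab},\mathbb{F}_p)$ is a canonical subspace. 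Since $T$ is a finite $p$-group and $T\not\leq{K'}$, the image of $T$ in $K^{ab}$ is a non-trivial finite abelian $p$-group $C$ and $K^{ab}\cong\mathbb{Z}^2\oplus{C}$; hence $Ext(K^{ab},\mathbb{F}_p)\cong{Ext(C,\mathbb{F}_p)}\cong{C/pC}\neq0$.

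For the second subspace I would use cup products of $1$-dimensional classes coming from the $\mathbb{Z}^2$ quotient. Note that $T$ is exactly the torsion subgroup of $K\cong{T}\rtimes\mathbb{Z}^2$ (an element $(t,v)$ has finite order only if $v=0$), so the projection $\pi\colon{K}\to{K/T}\cong\mathbb{Z}^2$ is canonical, and $V=\pi^*H^1(\mathbb{Z}^2;\mathbb{F}_p)\subseteq{H^1(K;\mathbb{F}_p)}$ is a canonical $2$-dimensional subspace. Cup product gives a canonical subspace $c_K(\wedge^2V)=\mathbb{F}_p\cdot(\eta_1\cup\eta_2)\subseteq{H^2(K;\mathbb{F}_p)}$, where $\eta_1,\eta_2$ are the pullbacks of a basis of $H^1(\mathbb{Z}^2;\mathbb{F}_p)$ and $\eta_1\cup\eta_2$ spans $H^2(\mathbb{Z}^2;\mathbb{F}_p)\cong\mathbb{F}_p$. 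The crux is to check that $\eta_1\cup\eta_2\neq0$ in $H^2(K;\mathbb{F}_p)$ and that $\theta(\eta_1\cup\eta_2)\neq0$; here I would use the hypothesis that the extension $1\to{T}\to{K}\to\mathbb{Z}^2\to1$ \emph{splits}. A section $s\colon\mathbb{Z}^2\to{K}$ makes $\pi^*$ a split injection on cohomology, so $\eta_1\cup\eta_2=\pi^*(\text{generator of }H^2(\mathbb{Z}^2;\mathbb{F}_p))\neq0$; and, writing $\sigma=s_*(\gamma)\in{H_2(K;\mathbb{Z})}$ for the image of a generator $\gamma$ of $H_2(\mathbb{Z}^2;\mathbb{Z})$, one has $\langle\eta_1\cup\eta_2,\sigma\rangle=\langle\pi^*\omega,s_*\gamma\rangle=\langle\omega,\pi_*s_*\gamma\rangle=\langle\omega,\gamma\rangle=1$, so $\theta(\eta_1\cup\eta_2)\neq0$. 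Since $\mathbb{F}_p\cdot(\eta_1\cup\eta_2)$ is one-dimensional and not contained in $\mathrm{Ker}(\theta)$, it meets $Ext(K^{ab},\mathbb{F}_p)$ trivially, and $\mathbb{F}_p\cdot(\eta_1\cup\eta_2)\oplus{Ext(K^{ab},\mathbb{F}_p)}$ is the desired subspace.

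I expect the only real subtlety, and the reason the hypothesis demands a \emph{split} extension, to be the non-vanishing of $\eta_1\cup\eta_2$ in $H^2(K;\mathbb{F}_p)$: for a general non-split nilpotent extension of $\mathbb{Z}^2$ by a finite $p$-group $T$ with $T\not\leq{K'}$ this can fail — e.g. $K=H\times\mathbb{Z}/p\mathbb{Z}$, where $H$ is the mod-$p$ Heisenberg group, has $T\not\leq{K'}$ but the transgression $d_2$ in the Lyndon--Hochschild--Serre spectral sequence for $1\to\mathbb{Z}/p\mathbb{Z}\to{H}\to\mathbb{Z}^2\to1$ kills $H^2(\mathbb{Z}^2;\mathbb{F}_p)$. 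So the argument genuinely uses the section $s$. Everything else is routine: $T$ is the torsion subgroup, so $K/T$, $V$, $c_K(\wedge^2V)$ and $Ext(K^{ab},\mathbb{F}_p)$ are all functorial in $K$ and hence preserved by $\mathrm{Aut}(K)$; $H^*(\mathbb{Z}^2;\mathbb{F}_p)$ is the exterior algebra on two degree-one classes (valid for all $p$, including $p=2$, since $\mathbb{Z}$ has no $p$-torsion), so $H^2(\mathbb{Z}^2;\mathbb{F}_p)$ is the line spanned by $\eta_1\cup\eta_2$; and $Ext(C,\mathbb{F}_p)\neq0$ because $C$ is a non-trivial $p$-group.
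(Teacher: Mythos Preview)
Your proof is correct and is essentially the paper's argument, only more fully spelled out: the paper takes the two canonical subspaces to be $Ext(K^{ab},\mathbb{F}_p)$ and $\mathrm{Im}(H^2(\alpha;\mathbb{F}_p))$ for the canonical epimorphism $\alpha:K\to\mathbb{Z}^2$, notes that the splitting makes $H^2(\alpha;\mathbb{F}_p)$ injective, and that $T\not\leq K'$ makes the $Ext$ term nonzero. Your $\mathbb{F}_p\cdot(\eta_1\cup\eta_2)$ is exactly $\mathrm{Im}(H^2(\alpha;\mathbb{F}_p))$, and your pairing computation $\langle\eta_1\cup\eta_2,s_*\gamma\rangle=1$ supplies the triviality of the intersection, which the paper leaves implicit.
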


\begin{proof}
Let $\alpha:K\to\mathbb{Z}^2$ be the canonical epimorphism.
Since $\alpha$ splits,
$H^2(\alpha;\mathbb{F}_p)$ is a monomorphism.
The other hypotheses imply $Ext(K^{ab},\mathbb{F}_p)\not=0$.
Hence
$Ext(K^{ab},\mathbb{F}_p)\oplus\mathrm{Im}(H^2(\alpha;\mathbb{F}_p))$ 
is a subspace of $H^2(K;\mathbb{F}_p)$ with the desired properties.
\end{proof}

If $G$ is a homologically balanced, 
metabelian nilpotent group then either $G\cong\mathbb{Z}^3$ 
or $\beta_1(G;\mathbb{Q})\leq2$ and $h(G)\leq4$.
In the latter case the torsion-free quotient $G/T$ is either free abelian 
of rank $\leq2$, 
or is the $\mathbb{N}il^3$-group $\Gamma_q$, for some $q\geq1$ 
\cite[Corollary 8 and Theorems 10 and 15]{Hi22}.

\begin{thm}
{\bf B15.}
Let $G$ be a homologically balanced nilpotent group with\\ 
$\beta_1(G;\mathbb{Q})=2$,
and let $T$ be its torsion subgroup.
Then
\begin{enumerate}
\item{}if $h(G)=2$ and $T$ is abelian then $G\cong\mathbb{Z}^2$; 
\item{}if $h(G)=3$  and the outer action $:G\to{Out(T)}$ determined by
conjugation in $G$ factors through $\mathbb{Z}^2$ then $G\cong\Gamma_q$,
for some $q\geq1$; 
\item{}if $h(G)=4$ and $G$ has an abelian normal subgroup $A$ with
$G/AT\cong\mathbb{Z}^2$ then $G\cong\Omega$.
\end{enumerate}
\end{thm}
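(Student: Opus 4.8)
The plan is to establish all three assertions by the same two-step scheme: first identify the torsion-free quotient $G/T$ from the Hirsch length, the hypothesis $\beta_1(G;\mathbb{Q})=2$, and the classification of torsion-free nilpotent groups of small Hirsch length; then force the finite torsion subgroup $T$ to be trivial by exhibiting a normal subgroup $K\trianglelefteq G$ with $G/K\cong\mathbb{Z}^2$ to which Lemma~\ref{metab1} applies, and combining it with the canonical-subsplitting lemmas and the unipotence of the action of a nilpotent group on (co)homology (Lemmas~\ref{unipotent} and \ref{filter}). The extra hypotheses in parts (2) and (3) are exactly what is needed to manufacture such a $K$.

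For (1): Lemma~\ref{h=2L} gives $G/T\cong\mathbb{Z}^2$, and if $T\neq1$ then, $T$ being abelian, Lemma~\ref{metab} gives $\beta_2(G;\mathbb{F}_p)>\beta_1(G;\mathbb{F}_p)$ for a prime $p\mid|T|$, contradicting homological balance; hence $T=1$ and $G\cong\mathbb{Z}^2$. For (2): $h(G)=3$ and $\beta_1(G;\mathbb{Q})=2$ force $G/T$ to be a torsion-free nilpotent group of Hirsch length $3$ with first rational Betti number $2$, hence a lattice in the Heisenberg group, so $G/T\cong\Gamma_q$ for some $q\geq1$. If $T\neq1$, let $K$ be the kernel of the epimorphism $G\to\mathbb{Z}^2$ through which the outer action $G\to\mathrm{Out}(T)$ factors; then $K\supseteq T$, $G/K\cong\mathbb{Z}^2$, $h(K)=1$, and $K$ acts on $T$ by inner automorphisms of $T$. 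By Lemma~\ref{2end}, $K\cong T\rtimes\mathbb{Z}$, and since that action is inner we may, after multiplying a lift of a generator of $K/T$ by a suitable element of $T$, assume it centralises $T$, so $K\cong T\times\mathbb{Z}$. Passing to the quotient of $G$ by the prime-to-$p$ part of $T$ (which alters neither $h(G)$, nor the hypotheses, nor $\mathbb{F}_p$-(co)homology) we reduce to $T=P$ a nontrivial $p$-group, so $K\cong P\times\mathbb{Z}$; Lemma~\ref{K 2end} then shows $H^2(K;\mathbb{F}_p)$ is canonically subsplit, and since $G/K\cong\mathbb{Z}^2$ acts unipotently (Lemma~\ref{unipotent}) and preserves the canonical summands, Lemma~\ref{filter} gives $\dim_{\mathbb{F}_p}H^2(K;\mathbb{F}_p)^{G/K}>1$, contradicting Lemma~\ref{metab1}. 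Hence $T=1$ and $G\cong\Gamma_q$.

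For (3): put $K=AT$, normal in $G$; from $h(G)=h(K)+h(G/AT)$ one gets $h(K)=2$, the torsion subgroup of $K$ is $T$, and $K/T\cong A/(A\cap T)$ is torsion-free abelian of rank $2$, so $K/T\cong\mathbb{Z}^2$. Lemma~\ref{metab1} gives $\dim_{\mathbb{F}_p}H^2(K;\mathbb{F}_p)^{G/K}\leq1$ for every prime $p$. To prove $T=1$, suppose $T\neq1$, fix $p\mid|T|$, and reduce as above to $T=P$ a nontrivial $p$-group, so $K=AP$; since $A$ is abelian and normal, $[A,P]\subseteq A\cap P$ and hence $K'\subseteq P$. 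If $K$ is abelian, then $K\cong\mathbb{Z}^2\oplus P$ has nontrivial $p$-torsion and $\dim_{\mathbb{F}_p}K/pK\geq3$, and Lemma~\ref{cycad} (or Lemma~\ref{cycad2} when $p=2$ and $P$ has a $\mathbb{Z}/2\mathbb{Z}$ summand), together with unipotence of the $G/K$-action, forces $\dim_{\mathbb{F}_p}H^2(K;\mathbb{F}_p)^{G/K}>1$. If $K$ is nonabelian with $P\not\leq K'$, then $K^{ab}$ is an extension of $K/P\cong\mathbb{Z}^2$ by $P/K'\neq1$, so has nontrivial $p$-torsion, giving one canonical summand $\mathrm{Ext}(K^{ab},\mathbb{F}_p)$ of $H^2(K;\mathbb{F}_p)$; and when the extension $1\to P\to K\to\mathbb{Z}^2\to1$ splits, Lemma~\ref{K 1end} provides a second, again yielding a contradiction. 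The residual cases — the extension fails to split, or $P\leq K'$ (so $K'=P\subseteq G'$, which instead lets one run Lemma~\ref{metab1} with $K=G'$, whose nontrivial $p$-torsion is then caught by Lemma~\ref{cycad}/\ref{cycad2} after a short reduction) — are where I expect the real work to lie: they are not covered by Lemmas~\ref{K 2end} and \ref{K 1end} as stated, and will need either an extension of Lemma~\ref{K 1end} to non-split extensions or an explicit partial-resolution computation in the style of the proof of Lemma~\ref{partial3}. Granting $T=1$, $G$ is a torsion-free metabelian nilpotent group (as $G/A\cong\mathbb{Z}^2$ forces $G'\subseteq A$ abelian) of Hirsch length $4$ with $\beta_1(G;\mathbb{Q})=2$; since $h(G)>2$, homological balance gives $H_2(G)\cong\mathbb{Z}^2$ by Lemma~\ref{h=2}, and the structure of such groups — their Malcev completion is the $4$-dimensional filiform Lie algebra, so they are lattices in $\mathbb{N}il^4$, among which $\Omega$ is the only one with torsion-free $H_2$ — gives $G\cong\Omega$ (equivalently, one may invoke the classification of homologically balanced torsion-free nilpotent groups of Hirsch length at most $5$).
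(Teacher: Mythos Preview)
Your treatment of parts (1) and (2) is essentially the paper's argument. The gap is in part (3), where you flag ``residual cases'' (the extension $1\to T\to K\to\mathbb{Z}^2\to1$ failing to split, or $T\leq K'$) that in fact never occur. The missing observation is that the abelian normal subgroup $A$ itself provides the splitting: since $A$ is abelian and $A/(A\cap T)\cong K/T\cong\mathbb{Z}^2$ is free abelian, the short exact sequence $0\to A\cap T\to A\to\mathbb{Z}^2\to0$ of abelian groups splits, so $A$ contains a copy of $\mathbb{Z}^2$ mapping isomorphically onto $K/T$. Hence $K\cong T\rtimes\mathbb{Z}^2$ always.

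Once $K$ is a semidirect product, the case $T\leq K'$ with $T\neq1$ is also impossible. In $T\rtimes\mathbb{Z}^2$ the image of $T$ in $K^{ab}$ is the coinvariant module $(T^{ab})_{\mathbb{Z}^2}$; since $K$ is nilpotent the $\mathbb{Z}^2$-action on $T^{ab}$ is unipotent (Lemma~\ref{unipotent}), and a unipotent action on a nonzero finite module has nonzero coinvariants (Lemma~\ref{filter}). Thus $T\neq1$ forces $T\not\leq K'$, and after passing to the quotient by the prime-to-$p$ part of $T$ you are exactly in the situation of Lemma~\ref{K 1end}, so Lemma~\ref{metab1} gives the contradiction directly. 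No extension of Lemma~\ref{K 1end} to non-split extensions, and no computation in the style of Lemma~\ref{partial3}, is needed.
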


\begin{proof}
Let $K=I(G)$.
Then $TG'\leq{K}$, $G/K\cong\mathbb{Z}^2$, and $K/G'$ is (finite) cyclic, 
since $\beta_1(G;\mathbb{F}_p)\leq3$ for all primes $p$.

If $h(G)=2$ then $K=T$ and $G/T\cong\mathbb{Z}^2$.
Hence if $T$ is abelian then $T=1$, by Lemma B13,
and so $G\cong\mathbb{Z}^2$.

If $h(G)=3$ and the outer action $:G\to{Out(T)}$ determined by
conjugation in $G$ factors through $\mathbb{Z}^2$
then $K\cong{T}\times\mathbb{Z}$.
Thus if $T\not=1$ then 
$\beta_2(G;\mathbb{F}_p)>\beta_1(G;\mathbb{F}_p)$,
for any prime $p$ dividing $|T|$,
by Lemmas B9 and B10.
This contradicts the hypothesis that $G$ has a balanced presentation.

If $h(G)=4$ and $G$ has an abelian normal subgroup $A$ with
such that $G/AT\cong\mathbb{Z}^2$ then $K=AT$,
and $\overline{A}=A/A\cap{T}\cong\mathbb{Z}^2$.
Since $K$ is nilpotent, 
the action of the finite group $T/A\cap{T}$ on $\overline{A}$ is trivial.
Hence $K/A\cap{T}\cong\overline{A}\times(T/A\cap{T})$,
and so $K\cong{T}\rtimes\mathbb{Z}^2$.
Moreover,  if $T\not=1$ then $T\not\leq{K'}$.
Lemmas B9 and B14 (together with Lemmas B2 and B3) 
then give a similar contradiction.

In parts (2) and (3) the group $G$ is torsion-free,
and so must be one of the known examples given above.
\end{proof}

Imposing a stronger constraint gives a clearer statement.

\begin{cor*}
\label{Z2cor}
If $G$ is a homologically balanced nilpotent group with 
an abelian normal subgroup $A$ such that $G/A\cong\mathbb{Z}^2$
then $G\cong\mathbb{Z}^2$,  $\Gamma_q$ (for $q\geq1$)
or $\Omega$. 
\qed
\end{cor*}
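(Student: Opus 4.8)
The plan is to feed the single hypothesis of the corollary into the three cases of Theorem B15, using the classification of homologically balanced metabelian nilpotent groups recalled just before that theorem only to bound the Hirsch length. First I would record the elementary consequences of the hypotheses. Since $G/A$ is abelian we have $G'\leq A$, and as $A$ is abelian this forces $G''=1$, so $G$ is metabelian. Since $G/A\cong\mathbb{Z}^2$ is torsion-free, the torsion subgroup $T$ of $G$ is contained in $A$; hence $T$ is abelian (being a subgroup of $A$), $A$ centralizes $T$, and $AT=A$. Finally $h(G)=h(A)+2\geq2$, and $\beta_1(G;\mathbb{Q})\geq2$ because $G/A\cong\mathbb{Z}^2$ is a quotient of $G^{ab}$.

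Next I would separate off the case $\beta_1(G;\mathbb{Q})=3$: here $G\cong\mathbb{Z}^3$ by \cite{Hi22}. (This instance does occur --- take $A$ a primitive infinite cyclic subgroup of $\mathbb{Z}^3$ --- so in a final write-up it should either be adjoined to the list in the statement or excluded by restricting to $\beta_1(G;\mathbb{Q})=2$.) So I would assume from now on that $\beta_1(G;\mathbb{Q})=2$; in particular $G\not\cong\mathbb{Z}^3$, and the classification recalled before Theorem B15 then gives $h(G)\leq4$, so that $h(G)\in\{2,3,4\}$.

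The last step is a three-way split on $h(G)$, where in each case the standing hypothesis $\beta_1(G;\mathbb{Q})=2$ of Theorem B15 has been arranged and only its extra hypothesis must be checked. If $h(G)=2$ then $T$ is abelian, so Theorem B15(1) yields $G\cong\mathbb{Z}^2$. If $h(G)=3$ then, because $A$ centralizes $T$, the conjugation action $G\to Aut(T)$, and hence the outer action $G\to Out(T)$, has $A$ in its kernel and so factors through $G/A\cong\mathbb{Z}^2$; Theorem B15(2) then yields $G\cong\Gamma_q$ for some $q\geq1$. If $h(G)=4$ then our $A$ is an abelian normal subgroup with $G/AT=G/A\cong\mathbb{Z}^2$, so Theorem B15(3) yields $G\cong\Omega$.

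I do not expect a real obstacle here: essentially all the work is the translation in the last paragraph from the uniform hypothesis ``$A$ abelian normal with $G/A\cong\mathbb{Z}^2$'' to the three individually tailored hypotheses of Theorem B15, and this rests only on the inclusion $T\leq A$ (which gives $T$ abelian and $AT=A$) together with the fact that an abelian group containing $T$ centralizes it. The genuine external inputs are Theorem B15 itself and the Hirsch-length bound from \cite{Hi22}; the single delicate point is the degenerate alternative $G\cong\mathbb{Z}^3$ flagged above, which is where a careful statement of the corollary needs attention.
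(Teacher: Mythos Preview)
Your approach is correct and is exactly what the paper intends: the corollary is marked \qed\ with no argument, so it is meant to follow immediately from Theorem~B15 once one checks that the single hypothesis ``$A$ abelian normal with $G/A\cong\mathbb{Z}^2$'' implies each of the tailored hypotheses in parts (1)--(3). Your verification of these (via $T\leq A$, hence $T$ abelian, $AT=A$, and $A$ centralizing $T$) is precisely the intended reduction, and your appeal to the metabelian classification recalled just before Theorem~B15 to bound $h(G)\leq4$ is also what is implicitly used.

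Your observation about $\mathbb{Z}^3$ is sharp and correct: taking $A$ to be a primitive rank-one direct summand of $\mathbb{Z}^3$ gives $G/A\cong\mathbb{Z}^2$, and $\mathbb{Z}^3$ is homologically balanced since $\beta_2(\mathbb{Z}^3;F)=\binom{3}{2}=3=\beta_1(\mathbb{Z}^3;F)$ for every field $F$. This case genuinely satisfies the hypotheses but is absent from the stated conclusion, so the corollary as written has a small gap in its statement (not its proof). The paper's parenthetical remark that the hypothesis ``includes all 2-generated metabelian nilpotent groups $G$ with $h(G)>1$'' suggests the author was thinking primarily of the $\beta_1(G;\mathbb{Q})=2$ situation, which would exclude $\mathbb{Z}^3$.
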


Note that the second hypotheses in parts (2) and (3) of the theorem 
are not by themselves equivalent to assuming that $G$ is metabelian,
while the hypothesis in the corollary is somewhat stronger.
(On the other hand, it includes all 2-generated metabelian nilpotent groups $G$
with $h(G)>1$.)

The above work leaves open the following questions, for $G$ a nilpotent group with torsion subgroup $T$ and a balanced presentation.
\begin{enumerate}
\item{}If $h(G)=1$ is $H_2(T)=0$?
\item If $h(G)>1$ is $T=1$?
\end{enumerate}

\chapter*{Appendix C. Some questions}

This list includes questions from \cite{BB22, Do15,IM20} and \cite{Liv05}.
Many of the following questions have obvious analogues for smooth embeddings.

As in the text, $M$ is a closed orientable 3-manifold, 
and if $j:M\to{S^4}$ is a locally flat embedding then the complementary regions 
$X$ and $Y$ are labelled so that $\chi(X)\leq\chi(Y)$.
The fundamental groups are $\pi=\pi_1M$, $\pi_X=\pi_1X$ and $\pi_Y=\pi_1Y$.

\begin{enumerate}
\item{}Is there an effective way of deciding whether a framed link in $S^3$ is equivalent under Kirby moves to a 0-framed bipartedly slice link?

\item{}If each  complementary region for an embedding $j$
may be obtained from the 4-ball by adding 1- and 2-handles,
must $j=j_L$ for some 0-framed bipartedly trivial link $L$?

\item{}[BB20] Is there a 3-manifold $M$ which embeds smoothl;iy in $S^4$, but which has no smooth embedding deriving from a 0-framed link presentation?

\item{}(Attributed to M. Freedman in \cite{BB22}.)
If $M\#(S^2\times{S^1})$ embeds, does $M$ embed?
\end{enumerate}

If $L$ is a slice link then the 3-manifold $M(L)$ obtained by 0-framed surgery on $L$ embeds, as we may use a set of slice discs in the upper half of 
$S^4=D^4_+\cup{D^4_-}$ to attach 2-handles to $D^4_-$ in side $S^4$. 
To what extent is the converse true?

\begin{enumerate}
\addtocounter{enumi}{4}
\item{}If $K$ is a knot such that $M(K)$ embeds must $K$ be (TOP) slice?
\end{enumerate}

Most of the algebraic arguments obstructing  embeddings of a given  3-manifold 
into $S^4$ that we shall use obstruct Poincar\'e embeddings in homology 4-spheres.

\begin{enumerate}
\addtocounter{enumi}{5}
\item{}Is there a 3-manifold $M$ which has a Poincar\'e embedding in a homology 
4-sphere but which does not embed in $S^4$?
\end{enumerate}

If $M$ embeds and has non-trivial homology then then $\pi_X\not=1$, 
and we may use 2-knot surgery to construct infinitely many embeddings of $M$, distinguished by the groups $\pi_X$.
On the other hand,   if $M=S^3$ then $\pi_X=\pi_Y=1$, and all
embeddings of $S^3$ are essentially equivalent, by the Schoenflies Theorem.
The question remains open for homology 3-spheres.
\begin{enumerate}
\addtocounter{enumi}{6}
\item{}Does every homology 3-sphere bound a non-simply connected acyclic 4-manifold? have an embedding with one complementary region not 1-connected?
have an embedding with neither complementary region 1-connected?
\end{enumerate}
Similarly, if $M=S^2\times{S^1}$ then $\pi_Y=1$.

\begin{enumerate}
\addtocounter{enumi}{7}
\item{}Does every homology handle other than $S^2\times{S^1}$ have embeddings with neither complementary region 1-connected?
\end{enumerate}

There are uniform non-embedding results for Seifert manifolds, 
but the known constructions apply only when there is no 2-torsion.

\begin{enumerate}
\addtocounter{enumi}{8}

\item{}What are the possible values of $\chi(X)$ for embeddings of the Seifert manifold $M(k;S)$?

\item{}Let $\ell$ be a hyperbolic linking pairing with homogeneous 2-primary summand 
$\ell_{(2)}$ (as defined in Appendix A).
Is $\ell$ realized by some Seifert manifold $M(0;S)$ which embeds in $S^4$?

\item{}Are there any restrictions related to 2-torsion in the cone point orders 
of the base orbifolds of Seifert manifolds which embed in $S^4$?

\item{}Do the results and conjectures of Donald and Issa and McCoy 
cited in Chapters 3 and 4 hold also for TOP locally flat embeddings?
\end{enumerate}
Donald and McCoy each use Donaldson's diagonalization theorem,
and so their arguments only apply for smooth embeddings.

Graph manifolds have natural parametrizations in terms of weighted plumbing graphs.
See \cite{Ne81} and \cite[Chapter 2]{Orl}.

\begin{enumerate}
\addtocounter{enumi}{12}
\item{}What can be said about embeddings of graph manifolds?
\end{enumerate}

In the remaining questions the emphasis is on the complementary regions.
\begin{enumerate}
\addtocounter{enumi}{13}
\item{}Is there a 3-manifold with at least two abelian embeddings
which are inequivalent? 
\item{}Is there an embedding with $X$ aspherical and $c.d.\pi_X=3$? 
\end{enumerate}

If $c.d.\pi_X=3$ then $\pi_X$ cannot be a $PD_3$-group, since $H_3(X)=0$.
See also \cite{DH23}.

\begin{enumerate}
\addtocounter{enumi}{15}
\item{}Is there a 3-manifold which embeds, 
but has no embedding with $\chi(X)=\chi(Y)=1$ (if $\beta$ is even) 
or $\chi(X)=0$ and $\chi(Y)=2$ (if $\beta$ is odd)?
In particular, 
is there an embedding of the Seifert manifold $M(-3;(1,6))$ with $\chi(X)=\chi(Y)=1$?

\item{}Does the $\mathbb{S}ol^3$-manifold $M_{2,4}$ have an abelian embedding?
\end{enumerate}
Whether there is an abelian embedding has been decided for all other
3-manifolds with virtually solvable fundamental groups.
(See Chapter 7.)

\begin{enumerate}
\addtocounter{enumi}{17}

\item{}What can be said about complementary regions of hypersurfaces 
with more than one component?

\item{}\cite{Liv05} Is a group $G$ the fundamental group of a homology 4-sphere if and only if it is the fundamental group of the complement of a contractible  
submanifold of $S^4$?

\end{enumerate}
One implication is clear. 
If $W$ is the closure of the complement of a contractible codimension-0 submanifold 
of $S^4$ then $W$ is acyclic,  the double $DW$ is a homology 4-sphere, 
and $\pi_1DW\cong\pi_1W$.

There is a smooth embedding $j$ such that one of the groups $\pi_X$ or $\pi_Y$ has an unsolvable word problem \cite{DR93}. 

\begin{enumerate}
\addtocounter{enumi}{19}

\item{}\cite{BB22} If $M$ embeds in $S^4$ does it have an embedding such that $\pi_X$ and $\pi_Y$ each have solvable word problem?

\end{enumerate}

\bibliographystyle{amsalpha}

\begin{theindex}

{\sl Expressions beginning with Greek 

characters and non-alphabetic symbols are
listed at the end of this index}.
\bigskip

\item{}abelian embedding, 14,84

\item{}algebraically slice (knot), 5

\item{}aspherical embedding, 77

\item{}Aitchison's Theorem, 69

\item{}balanced, 2

\item{}base orbifold, 27

\item{}bi-epic, 19

\item{}bipartedly (ribbon, slice, trivial), 15

\item{}Blanchfield pairing, 6

\item{}$Bo=6^3_2$ (Borromean rings), 20

\item{}Borromean rings $Bo$,  20

\item{}Brown's Collaring Theorem, 10

\item{}$BS(1,m)$, 1

\item{}cohomological dimension, 76

\item{}complementary region, 109

\item{}connected sum (of embeddings), 19

\item{}$D_\infty$, 1

\item$d(\ell)$ (determinantal invariant of linking pairing), 3

\item{}doubly slice, 5,18

\item{}embedding, 9

\item{}equivalent (embedding), 9

\item{}equivariant (cohomology), 3

\item{}essentially unique, 15

\item{}Euler number, 28

\item{}even (linking pairing), 3

\item{}expansion, 29

\item{}fibred sum, 28

\item{}$F(r)$ (free group), 1

\item{}generalized Euler number, 28

\item{}Generalized Schoenflies Theorem, 9, 68

\item$G$-Signature Theorem,7

\item$h(G)$ (Hirsch length), 1

\item{}Hall Lemma, 101

\item{}2-handlebody, 4

\item{}$H$-cobordism (over $R$), 4

\item{} Hirsch length ($h(G)$), 1

\item{}$Ho=2^2_1$ (Hopf link), 15, 53

\item{}homologically balanced, 2

\item{}homology handle, 5,88

\item{}homology sphere, 10,87

\item{}homotopically ribbon (knot), 5

\item{}Hopf link $Ho$, 15, 53

\item{}hyperbolic (linking pairing), 3

\item{}$I(G)=\gamma_2^\mathbb{Q}G$, 1

\item{}isolator ($I(G)$), 1

\item$j_L$, 15

\item{}$j_X$, $j_Y$, 11

\item{}Kawauchi Theorem, 14

\item{}Kawauchi-Kojima Lemma, 13

\item{}Kirby calculus, 16

\item{}$Kb$ (Klein bottle), 4

\item{}2-knot surgery, 21

\item{}$\ell_M$, 5

\item{}$\ell_w$, 3

\item{}Lickorish-Quinn Theorem, 16

\item{}linking pairing, 3

\item{}locally flat, 4

\item{}locally flat embedding, 9

\item$M_o$ (punctured manifold), 4

\item$M(g;S)$ (Seifert manifold), 27

\item$M(L)$ (surgery on link), 5

\item$M_{m,n}$, 55

\item{}Massey product, 4, 69

\item$N$ (mapping cylinder), 52

\item{}neutral (Blanchfield pairing), 6

\item{}nilpotent embedding, 14, 105

\item{}odd (linking pairing), 3

\item{}Poincar\'e embedding, 11

\item{}proper 2-knot surgery, 21

\item$R$-homology isomorphism, 4

\item{}$rk(\ell)$, 3

\item$P_\ell$, 4

\item{}$\#^c\mathbb{RP}^2$, 4

\item $R\Lambda$ (Laurent polynomial ring), 2

\item{}rank (of linking pairing), 3

\item{}restrained (embedding), 14, 83

\item{}restrained (group), 1

\item{}$s$-concordant, 14

\item{}Seifert data, 27

\item{}Seifert manifold, 27

\item{}$\mathrm{sign}(g,M)$, 8

\item{}skew-symmetric (Seifert data), 29

\item{}slice (link), 5

\item{} smooth embedding, 23

\item{}smoothable (embedding), 11

\item{}strict (Seifert data), 28

\item{}surgery,  6

\item{}$T$ and $T_g$, 4

\item{}twisted double, 5

\item$U$ (the unknot), 5

\item{}unipotent (automorphism), 101

\item{}$val_p$, 35

\item{}Van Kampen Theorem, 13

\item{}virtually solvable, 1

\item{}Wang sequence, 4

\item$Wh=5^2_1$ (Whitehead link), 53, 86

\item$W_+(\mathbb{Q}(t),\mathbb{Q}\Lambda)$ (Witt group), 6

\item$X,Y$ (complementary regions), 11

\item{}$X(L)$ (link complement), 5

\item{}$\mathbb{Z}$-homology cobordism, 4

\bigskip
{\bf Other symbols}

\item$\beta=\beta_1(M)$, 11 

\item{}$\gamma_kG$, 1

\item{}$\gamma_k^\mathbb{Q}G$, 1

\item$\Delta_0(L)$ (order), 2

\item$\varepsilon(M), \varepsilon_S$, 28

\item$\zeta{G}$, 1

\item$\eta$ ($=\Sigma\alpha_i\beta_i$), 43

\item$\Lambda=\mathbb{Z}[t,t^{-1}]$, 2

\item$\mu_M$, 12

\item$\pi=\pi_1M$, 11

\item$\pi{L}=\pi_1X(L)$, 5

\item{}$\tau_M$, 5

\item{}$\pi_X=\pi_1X, \pi_Y=\pi_1Y$, 11

\item$_pA$, 2

\item{}$[g,h]$, 1

\item{}$G'$ (commutator subgroup), 1

\item{}$G^{ab}=G/G'$ (abelianization), 1

\item$\langle{a,b,c}\rangle$ (Massey product), 4

\item{}$j\#^\pm{j'}$ (sum of embeddings), 19

\item$M\#_fM'$ (fibred sum), 28

\item$\widetilde{W}$ (universal cover of $W$), 6

\end{theindex}

\end{document}